\theoremstyle:=definition,remark,plain\do{%
        \expandafter\g@addto@macro\csname th@\theoremstyle\endcsname{%
            \addtolength\thm@preskip\parskip
            }%
        }
\newtheorem{theorem}{Theorem}[section]
\newtheorem{lemma}[theorem]{Lemma}
\newtheorem{corollary}[theorem]{Corollary}
\newtheorem{claim}[theorem]{Claim}
\theoremstyle{remark}
\newtheorem{definition}[theorem]{Definition}
\newtheorem{remark}[theorem]{Remark}
\newtheorem{justification}[theorem]{Justification}
\newcommand{\newcmd}[3]{\newcommand{#1}[#2]{#3\xspace}}
\newcmd{\df}{1}{\textbf{\textit{\color{cyan!10!black} #1}}}
\newcommand{\im}{\mathfrak{i}}
\newcommand{\mc}{\mathcal}
\newcommand{\mb}{\mathbb}
\newcommand{\mr}{\mathrm}
\newcommand{\eps}{\varepsilon}
\renewcommand{\phi}{\varphi}
\DeclareMathOperator{\id}{id}
\DeclareMathOperator{\rest}{rest}
\DeclareMathOperator{\dist}{dist}
\DeclareMathOperator{\cl}{cl}
\DeclareMathOperator{\pre}{pre}
\newcommand{\pto}{\rightharpoonup}
\DeclareMathOperator{\crr}{cr}
\DeclareMathOperator{\oc}{oc} %order complex
\DeclareMathOperator{\orth}{O}
\DeclareMathOperator{\sorth}{SO}
\DeclareMathOperator{\sphere}{\mathbf{S}}
\DeclareMathOperator{\disk}{\mathbf{D}}
\DeclareMathOperator{\mcp}{MacP}
\DeclareMathOperator{\omcp}{\widetilde{MacP}}
\DeclareMathOperator{\g}{G}
\DeclareMathOperator{\og}{\widetilde{G}}
\DeclareMathOperator{\stief}{V}
\DeclareMathOperator{\psv}{PsV}
\DeclareMathOperator{\psg}{PsG}
\DeclareMathOperator{\psog}{Ps\widetilde{G}}
\DeclareMathOperator{\sign}{sign}
\DeclareMathOperator{\ot}{ot}
\DeclareMathOperator{\om}{om}
\DeclareMathOperator{\cell}{cell}
\DeclareMathOperator{\cov}{cov}
\DeclareMathOperator{\csph}{csph}
\DeclareMathOperator{\Cov}{\widehat{cov}}
\DeclareMathOperator{\Csph}{\widehat{csph}}
\DeclareMathOperator{\wt}{wt} 
\DeclareMathOperator{\chiro}{chi}
\DeclareMathOperator{\upa}{upa}
\DeclareMathOperator{\maxcov}{maxcov}
\DeclareMathOperator{\supp}{supp}
\DeclareMathOperator{\van}{van}
\DeclareMathOperator{\vanrad}{vanrad}
\DeclareMathOperator{\inrad}{inrad}
\DeclareMathOperator{\minbig}{minbig}
\DeclareMathOperator{\maxlit}{maxlit}
\DeclareMathOperator{\hood}{hood}
\DeclareMathOperator{\cp}{C_p}
\DeclareMathOperator{\graph}{graph}
\DeclareMathOperator{\ext}{ext}
\DeclareMathOperator{\Ext}{Ext}
\DeclareMathOperator{\tailsup}{tailsup}
\DeclareMathOperator{\tailinf}{tailinf}
\DeclareMathOperator{\param}{param}
\DeclareMathOperator{\area}{area}
\DeclareMathOperator{\crush}{crush}
\DeclareMathOperator{\ebb}{ebb}
\DeclareMathOperator{\lbr}{lbr}
\DeclareMathOperator{\pref}{pref}
\DeclareMathOperator{\facet}{facet}
\DeclareMathOperator{\tight}{tight}
\DeclareMathOperator{\zone}{zone}
\DeclareMathOperator{\inner}{inner}
\DeclareMathOperator{\wider}{wider}
\DeclareMathOperator{\acc}{acc}
\DeclareMathOperator{\step}{step}
\DeclareMathOperator{\interp}{interp}
\DeclareMathOperator{\mox}{mox}
\DeclareMathOperator{\fol}{fol}
\DeclareMathOperator{\el}{el}
\DeclareMathOperator{\len}{len}
\DeclareMathOperator{\pri}{pre} % \pre 
\DeclareMathOperator{\DE}{ext}  % \ext
\DeclareMathOperator{\lsp}{fol}  % \fol
\DeclareMathOperator{\pstief}{PsV} % \psv
\DeclareMathOperator{\sep}{sep} % ?
\DeclareMathOperator{\projplane}{\mathbf{P}^2}
\begin{document}
\raggedright

\begin{center}
{\Large \noindent
Homotopy equivalence of Grassmannians and MacPhersonians \\ 
in rank 3}

\bigskip

%\begin{spacing}{1.5}
{\large \noindent
Michael~Gene~Dobbins%\textsuperscript{1}
}
%\end{spacing}

%\smallskip

\begin{minipage}{0.85\textwidth}
\raggedright \footnotesize \singlespacing
\noindent
%\llap{\textsuperscript{1}}
Department of Mathematical Sciences, Binghamton University (SUNY), Binghamton, \newline New York, USA. 
\texttt{mdobbins@binghamton.edu} 
\end{minipage}

\end{center}

\bigskip

\begin{abstract}
We confirm a long standing conjecture in the case of rank 3 that MacPhersonians are homotopy equivalent to Grassmannians. 
\end{abstract}

\section{Introduction}

%We prove that each MacPhersonian in rank 3 is homotopy equivalent to the corresponding real Grassmannian. 

\subsection{Motivation and History}

Vector configurations, affine points sets, and linear subspaces of a real vector space are closely related and ubiquitous in math, but they are uncountable objects, 
so we often need to use some finite representation, which sign covector sets provide. 
The sign covector set of a subspace of $\mb{R}^n$ records all possible sign patterns that the coordinates of a vector in that subspace can have. 
Unfortunately, given such data, it is computationally intractable to determine if there exists a subspace with the given data as its sign covector set \cite{shor1991stretchability}.  
%Actually, this was among the first problems known to be $\exists\mb{R}$-complete \cite{}.
Oriented matroids are defined by natural combinatorial axioms that are necessary but not sufficient for such sign data to arise from a linear subspace \cite{ziegler2024oriented,bjorner2000oriented}.  

Many geometric questions can be reformulated as a statement about the sign covector set of an appropriately defined linear subspace, 
and as such, we can ask analogous questions for oriented matroids. 
However, since oriented matroids are not always realizable, 
what is true for linear subspaces might not always hold for oriented matroids. 
A long standing open question is whether oriented matroids and linear subspaces have the same global topology. 
By the global topology of linear subspaces, 
we mean the homotopy type of the $(k,n)$ real Grassmannian, 
which is the space of all $k$-dimensional subspaces of $\mb{R}^n$. 
Oriented matroids have a natural partial order, and the $(k,n)$ geometric MacPhersonian is the geometric realization of the order complex of rank $k$ oriented matroids on index set $[n]$. 
More precisely, the question is whether each MacPhersonian is homotopy equivalent to the corresponding Grassmannian.  
Nicolai Mnëv and Günter Ziegler stated this as a folklore conjecture that originated from the work of Robert MacPherson \cite{mnev1993combinatorial}. 
Here we show that this conjecture holds in rank 3, Theorem \ref{theoremMacPG}.

Interest in the conjecture grew from the work of Gelfand and MacPherson 
on a combinatorial formula for the rational Pontrjagin cohomology classes of a smooth manifold.   
This is in the spirit of the Euler characteristic as a formula for the Euler class, but is much more technically involved.  
The formula for Pontrjagin classes uses combinatorial differential manifolds, 
which are combinatorial analogs of smooth manifolds where the tangent fibers are replaced by oriented matroids. 
\cite{gelfand1992combinatorial,gaifullin2010configuration,abawonse2022gelfand}.  

A further source of interest in the conjecture comes from matroid bundles. 
Matroid bundles are the combinatorial analog of vector bundles 
and generalize combinatorial differential manifolds much like 
vector bundles generalize tangent bundles. 
Matroid bundles are defined by an order preserving map from a poset, the base space,  
to the MacPhersonian. 
This corresponds to the classifying map from the base space of a vector bundle to the Grassmannian. 
Moreover, this correspondence gives a functor from the category of vector bundles to the category of matroid bundles \cite{anderson1999matroid}.  
Our hope is to use matroid bundles to develop data structures and algorithms for working with vector bundles. 
For this to be effective, we would want the classifying space of matroid bundles, the MacPhersonian, 
to have the same homotopy type as that of vector bundles, the Grassmannian.

Eric Babson showed that the MacPhersonian conjecture holds in rank 2 \cite{babson1993combinatorial}, 
and Olakunle Abawonse gave another proof that showed these spaces are actually homeomorphic in rank 2 \cite{abawonse2023triangulations}. 
An erroneous proof of the full conjecture was published and later retracted \cite{biss2003homotopy,biss2009erratum}. 
Analogous versions of this conjecture have been shown to fail for broader combinatorial analogs of Grassmannians \cite{mnev1993combinatorial,liu2017counterexample}.

The rank 3 case is particularly significant since Mnëv's universality theorem has been a chief 
reasons to doubt the MacPhersonian conjecture. 
There is a natural map from the Grassmannian to the MacPhersonian defined by sending each subspace to the oriented matroid defined by its sign covector set.   
An earlier conjecture of Gerhard Ringel posited that the fibers of this map, which are the realization spaces of oriented matroids, are connected \cite{ringel1956teilungen}.
Mnëv showed not only that this fails, but the realization space can have the homotopy type of any semialgebraic set, 
and moreover, this holds even in rank 3 \cite{mnev1985varieties}.  
Theorem \ref{theoremMacPG} indicates that Mnëv's universality theorem no longer serves as evidence against the MacPhersonian conjecture.

\subsection{Acknowledgments}

The author would like to thank 
Laura Anderson, Seunghun Lee, and Andreas Holmsen 
for helpful discussions and insights.
A significant part of this work was done while the author was visiting KAIST 
with support from KAIST Institute of Science-X (KAI-X).

\subsection{Main theorem}

Here we state the main theorem and give a minimal description of the objects involved. 
More detailed definitions will be given in Subsection \ref{subsectionOMCov}.
% and \ref{subsectionDef}.

%\subsection{MacPhersonians}

The \df{real Grassmannian} $\g_{k,n}$ for $k\leq n$ is the space of all $k$-dimensional linear subspaces of $\mb{R}^n$. 
Up to homeomorphism, 
this can be equivalently defined as the quotient space of all $n$-element spanning vector configurations in $\mb{R}^k$ 
modulo the general linear group.  
The \df{oriented real Grassmannian} $\g_{k,n}$ is the space of all oriented $k$-dimensional linear subspaces of $\mb{R}^n$, and is homeomorphic to the quotient space of all $n$-element spanning vector configurations in $\mb{R}^k$ 
modulo the special linear group.  
In both cases the correspondence between the two definitions is given by associating the columns of a full rank $k\times n$ matrix with its row space.

The \df{order type} $\ot(V)$ of a vector configuration $V = \{v_i\in \mb{R}^k : i\in I\}$ on index set $I$ is 
the map given by the sign of the determinant of each $k$-tuple 
\[
\ot(V):I^k\to\{0,+,-\}, \quad 
\ot(V,i_1,\dots,i_k) = \sign(\det(v_{i_1},\dots,v_{i_k})). 
\]
That is, $\ot(V)$ gives the orientation of each basis and is 0 for nonbases. 
A \df{rank} $k$ \df{chirotope} on ground set $I$ is a map $\chi : I^k \to \{0,+,-\}$ that satisfies the chirotope axioms \cite[Section~3.5]{bjorner2000oriented}, 
which are necessary but not sufficient for $\chi$ to be the order type of a spanning vector configuration.  
Chirotopes are widely regarded as the combinatorial analog of a vector configuration in an oriented vector space. 

We define a partial order $(\leq_\mr{v})$ on $\{0,+,-\}$ where $0<_\mr{v}(+)$, and $0<_\mr{v}(-)$, and $(+)$ and $(-)$ are incomparable. 
The \df{weak order} on chirotopes is the partial order where $\chi_0 \leq_\mr{w} \chi_1$ 
when $\chi_0(i_1,\dots,i_k) \leq_\mr{v} \chi_1(i_1,\dots,i_k)$ for all inputs $i_j$. 
This is the combinatorial analog of ordering vector configurations by degeneracy in the sense that 
$\chi_0 \leq_\mr{w} \chi_1$ when each basis of $\chi_1$ is either a basis in $\chi_0$ with the same orientation 
or is a dependent set in $\chi_0$. 
The \df{oriented MacPhersonian} $\omcp_{3,n}$ is the poset of all rank $k$ chirotopes on $[n]=\{1,\dots,n\}$ with the weak order, 
and the \df{geometric oriented MacPhersonian} $\|\omcp_{3,n}\|$ is the geometric realization of the order complex of $\omcp_{3,n}$. 

Oriented matroids are the discrete analogs of vector configurations in a vector space that lacks a specified orientation, and can be defined in several equivalent ways.  Here we define an oriented matroid to be a primitive object $\mc{M}$ with an associated pair of chirotopes $\chiro(\mc{M}) = \{\chi,-\chi\}$ of opposite sign, 
and we let $\mc{M} = \mc{N}$ when $\chiro(\mc{M}) = \chiro(\mc{N})$. 
The weak order on chirotopes induces a partial order on oriented matroids, and we define the (unoriented) MacPhersonian $\mcp_{k,n}$ 
and geometric (unoriented) MacPhersonian $\|\mcp_{k,n}\|$ analogously.

\begin{theorem}
\label{theoremMacPG}
The rank 3 geometric MacPhersonian 
$\|\mcp_{3,n}\|$ is homotopy equivalent to the corresponding real Grassmannian $\g_{3,n}$ for $n\leq\infty$. 
Also, the rank 3 oriented geometric MacPhersonian  
$\|\omcp_{3,n}\|$ is homotopy equivalent to the corresponding oriented real Grassmannian $\og_{3,n}$ 
for $n\leq\infty$.
\end{theorem}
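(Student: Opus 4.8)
The plan is to compare the Grassmannian $\g_{3,n}$ with the geometric MacPhersonian $\|\mcp_{3,n}\|$ through an intermediate \emph{space of pseudoconfigurations}. By the Folkman--Lawrence topological representation theorem, a rank $3$ oriented matroid on $[n]$ is precisely an arrangement of $n$ pseudo-great-circles on $\sphere^2$, each with a chosen positive side, recorded up to self-homeomorphism of $\sphere^2$; fixing instead an orientation of $\sphere^2$ yields chirotopes. Accordingly I would build a space $\pstief_{3,n}$ of such arrangements on a fixed sphere, carrying enough marking data that $\mr{Homeo}^+(\sphere^2)$ and $\mr{Homeo}(\sphere^2)$ --- the topological analogs of $\mathrm{SL}_3(\mb{R})$ and $\mathrm{GL}_3(\mb{R})$, each homotopy equivalent to $\sorth(3)$ --- act freely and properly, and set $\psog_{3,n}=\pstief_{3,n}/\mr{Homeo}^+(\sphere^2)$ and $\psg_{3,n}=\pstief_{3,n}/\mr{Homeo}(\sphere^2)$. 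A spanning vector configuration $(v_1,\dots,v_n)$ in $\mb{R}^3$ cuts the genuine great-circle arrangement $\{\,x:\langle x,v_i\rangle=0\,\}$, giving an equivariant \emph{realization map} $\stief_{3,n}\to\pstief_{3,n}$ and hence $\g_{3,n}\to\psg_{3,n}$ and $\og_{3,n}\to\psog_{3,n}$; and recording the combinatorial type of an arrangement, interpolated near cell boundaries so as to land in the geometric realization, gives a \emph{combinatorialization map} $\psg_{3,n}\to\|\mcp_{3,n}\|$ and $\psog_{3,n}\to\|\omcp_{3,n}\|$. Their composites with the realization maps are the natural ``take the oriented matroid'' maps, so the theorem will follow (and in the strong form) once both maps are shown to be homotopy equivalences.

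\emph{Combinatorialization.} The spaces $\psg_{3,n}$ and $\psog_{3,n}$ are stratified by oriented-matroid type, with open cells $\cell(\mc{M})$ and closed cells $\Cell(\mc{M})$, and the essential rank-$3$ input is that these reduced pseudo-realization cells are contractible: the space of pseudo-great-circle arrangements of a prescribed rank $3$ type, taken up to self-homeomorphism of $\sphere^2$, is contractible, as is each closed cell $\Cell(\mc{M})$ and, more generally, the set of arrangements whose type ranges over any order interval of $\omcp_{3,n}$. I would prove this by a sweeping deformation in the wiring-diagram / allowable-sequence model of Goodman--Pollack, canonically isotoping any admissible arrangement toward a fixed base arrangement of the given type; this is exactly where the two-dimensionality of $\sphere^2$ enters and is what fails in higher rank. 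Granting contractibility and the fact that the stratification is conical, $\psog_{3,n}$ is the homotopy colimit over the poset $\omcp_{3,n}$ of the diagram of these cells, and the standard identification of the homotopy colimit of a diagram of contractible spaces with the nerve of the indexing poset gives $\psog_{3,n}\simeq\|\omcp_{3,n}\|$; the identical argument over $\mcp_{3,n}$ gives $\psg_{3,n}\simeq\|\mcp_{3,n}\|$. (Equivalently, the preimages of the open stars of the order complex form a good open cover and one applies the nerve lemma.)

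\emph{Realization.} One cannot argue cell by cell, because non-stretchable rank $3$ oriented matroids exist and, by Mnëv's universality theorem, their genuine realization spaces can be arbitrarily complicated; the comparison $\g_{3,n}\simeq\psg_{3,n}$ --- equivalently $\og_{3,n}\simeq\psog_{3,n}$, equivalently $\stief_{3,n}\simeq\pstief_{3,n}$, the three being interchangeable via the $\sorth(3)$-bundles over the respective quotients --- must be global. The route I would take is to produce a deformation that smooths each pseudocircle toward an associated genuine great circle (say the one determined by the barycenters of its two sides) while perturbing in a controlled neighborhood, deforming $\pstief_{3,n}$ onto the genuine locus, or at least producing a homotopy inverse to the realization map. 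The point is that the obstruction which blocks naive simultaneous straightening --- forced non-realizable incidences --- is not fatal, because the combinatorial type is allowed to vary along the deformation; the delicate matter, and the step I expect to be the main obstacle, is to organize the smoothing so that it is continuous and equivariant despite the uncontrolled appearance and disappearance of coincidences, and so that it meshes with the stratification used in the previous step.

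\emph{Assembly.} Chaining the two equivalences gives $\g_{3,n}\simeq\psg_{3,n}\simeq\|\mcp_{3,n}\|$ and $\og_{3,n}\simeq\psog_{3,n}\simeq\|\omcp_{3,n}\|$ for every finite $n$ (the range $n<3$ being vacuous on both sides). The oriented and unoriented statements are linked throughout by the free $\mb{Z}/2$-actions realizing the covering maps $\og_{3,n}\to\g_{3,n}$, $\psog_{3,n}\to\psg_{3,n}$, and $\|\omcp_{3,n}\|\to\|\mcp_{3,n}\|$, with respect to which every map above is equivariant, so the two statements are equivalent. Finally, for $n=\infty$ one passes to the colimit along the inclusions $[n]\hookrightarrow[n+1]$: all the maps are compatible with adjoining an index, $\g_{3,\infty}$ and $\|\mcp_{3,\infty}\|$ are the corresponding colimits by definition, and a colimit of homotopy equivalences along closed cofibrations is a homotopy equivalence.
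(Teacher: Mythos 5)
The top-level structure matches the paper: pass through an intermediate space of pseudocircle arrangements, identify it with the Grassmannian on one side and with the geometric MacPhersonian on the other, and relate the oriented, unoriented, and $n=\infty$ statements at the end. The ``realization'' half, $\g_{3,n}\simeq\psg_{3,n}$, is imported by the paper as a prior result (Theorem \ref{theoremPsV}), and you are right to treat it as a separate ingredient, although the smoothing sketch you give is not how that prior work proceeds.

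The genuine gap is in the ``combinatorialization'' half. You deduce $\psog_{3,n}\simeq\|\omcp_{3,n}\|$ from contractibility of the pseudorealization strata either (i) by identifying the stratified space with the homotopy colimit of a diagram of contractible spaces over the poset, or (ii) by the nerve lemma applied to ``preimages of open stars of the order complex.'' Route (ii) is circular as phrased, since you would need the combinatorialization map already in hand to pull back stars; and the natural replacement, the open upsets $\upa(\mc{M})=\{A:\om(A)\geq_\mr{w}\mc{M}\}$, has the wrong nerve: whenever incomparable $\mc{M},\mc{N}$ admit a common upper bound in $\mcp_{3,n}$, one has $\upa(\mc{M})\cap\upa(\mc{N})\neq\emptyset$, so the nerve contains simplices that are not chains. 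Route (i) needs contractibility of the closed cells $\Cell(\mc{M})$, not just the open strata, together with a cofibrancy or regularity hypothesis on the stratification, and neither is established; the known contractibility (Theorem \ref{theoremCrushPsVM}) concerns the open stratum $\psv(\mc{M})/\orth_3$, and its closure mixes in smaller strata (where Mn\"ev-type pathology lives) in a way your ``sweeping'' deformation is not shown to control or even to extend continuously across. The paper's contribution is precisely to replace both routes with a quantitatively engineered open cover: it defines $\hood(\mc{M})\subsetneq\upa(\mc{M})$ by the inequality $\maxlit(\mc{M},A)<\minbig(\mc{M},A)$, so that incomparable oriented matroids get disjoint neighborhoods (Lemma \ref{lemmaHoodDisjoint}), intersections over chains are contractible (Lemma \ref{lemmaCrush}), and only then applies the nerve theorem. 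That neighborhood construction and the deformation $\crush$ that collapses chain intersections are the crux of the argument, occupying Sections \ref{sectionHood} through \ref{sectionEbbZ}, and they are absent from your proposal.
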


\subsection{Outline}

In the predecessor to this paper, the author introduced spaces of weighted pseudocircle arrangements $\psg_{3,n}$ and showed that these spaces are homotopy equivalent to the corresponding Grassmannians.  
We prove Theorem \ref{theoremMacPG} by showing that $\psg_{3,n}$ is also homotopy equivalent to the corresponding MacPhersonian. 
We focus mainly on the unoriented case, as the oriented case follows by the same arguments with minor adjustments. 

To obtain the desired homotopy equivalence, we construct an open cover of $\psg_{3,n}$ in Section \ref{sectionHood} with nerve complex isomorphic to the order complex of the MacPhersonian  
%show that all finite nonempty intersections to be contractible, 
and apply the nerve theorem \cite[Corollary 4G.3]{hatcher2002algebraic}. 
This cover consists of an open neighborhood $\frac{\hood}{\orth_3}(\mc{M}) \subset \psg_{3,n}$ 
of the subspace of topological reprepresentations of $\mc{M}$ 
for each oriented matroid $\mc{M}\in\mcp_{3,n}$. 
To use the nerve theorem, we need the following three ingredients.  
First, we show in Subsection \ref{subsectionHoodDisjoint} that if $\mc{M}$ and $\mc{N}$ are incomparable, then the associated neighborhoods are disjoint.  
Second, we show in Section \ref{sectionOpen} that $\frac{\hood}{\orth_3}(\mc{M})$ is open. 
Third, we show in Section \ref{sectionCrush} for each finite chain $\mc{C} \subset \mcp_{3,n}$, that the intersection 
of the associated neighborhoods 
$\frac{\hood}{\orth_3}(\mc{C})=\bigcap\{\frac{\hood}{\orth_3}(\mc{M}):\mc{M}\in\mc{C}\}$ 
is contractible by constructing a deformation retraction 
$\crush(\mc{C},\Omega)$ from $\frac{\hood}{\orth_3}(\mc{C})$ to a point. 
The construction of $\crush(\mc{C},\Omega)$ involves subdividing the sphere into zones in Section \ref{sectionZone} so that a topological representation in $\frac{\hood}{\orth_3}(\mc{C})$ has a simpler form in each zone, and then constructing appropriate deformations on the zones in Section \ref{sectionEbbZ}.  

In Section \ref{sectionTools}, we present tools that will be needed for the rest of the paper. 
This includes a metric on partial functions in Subsection \ref{subsectionPartialMapMetric}, and a canonical parametrization for paths in Subsection \ref{subsectionParam}, 
and a description of how a conformal parametrization (i.e., Riemann mapping) of a Jordan domain behaves 
as the region converges to a path in Subsection \ref{subsectionSquish}. 
In Section \ref{sectionMox}, we define a canonical parametrization of a region with a system of paths that makes the paths x-monotone, which is used to construct the zones in Section \ref{sectionZone} as well as the deformations in Section \ref{sectionEbbZ}.  
In the rest of the introduction, we give basic definitions. 

The paper is largely structured in a top-down fashion. 
For instance, 
we state three lemmas in Section \ref{sectionHood} needed for the proof of Theorem \ref{theoremMacPG}, 
and then prove the theorem. 
Those lemmas are then proven later in the paper, and we continue like this, 
stating a new lemma and then using it to prove a previous lemma, until we stop needing new lemmas.

\subsection{Oriented matroids and covectors}
\label{subsectionOMCov}

Here we define additional combinatorial data associated to oriented matroids. 
%This is standard except $\csph(\mc{M})$. 
% 
A \df{signed subset} of a set $I$ is a map $\sigma \in \{0,+,-\}^I$. 
We call $\sigma$ a signed set when $I$ is understood.  
We let $\sigma^0 = \{i:\sigma(i)=0\}$ and we define $\sigma^+$ and $\sigma^-$ analogously. 
We may denote the all zero singed set by $0$. 

Given a rank $k$ oriented matroid $\mc{M}$ on index set $I$ with chirotope $\chi \in \chiro(\mc{M})$, 
then $\sigma = \chi(i_1,\dots,i_{k-1})$ is the signed set 
where $\sigma(x) = \chi(i_1,\dots,i_{k-1},x)$.  
The \df{cocircuit set} of $\mc{M}$ is the collection of signed sets 
\[
\mc{C}^* = \{\chi(i_1,\dots,i_{k-1}) : \chi\in\chiro(\mc{M}),i_i\in I\}.
\] 
The poset $\{0,+,-\}^I$ ordered by $(\leq_\mr{v})$ together with a top element $\top$ 
forms a lattice, and the \df{topped covector sphere} $\Csph(\mc{M})$ of $\mc{M}$ is the 
join-semilattice generated by $\mc{C}^*$.  We also associate to $\mc{M}$ the sets 
\begin{align*}
\csph(\mc{M}) 
&= \Csph(\mc{M})\setminus\{\top\} 
&& \text{covector sphere}, \\ 
\cov(\mc{M}) 
&= \csph(\mc{M})\cup\{0\} 
&& \text{covector set}, \\ 
\Cov(\mc{M}) 
&= \cov(\mc{M})\cup\{\top\} 
&& \text{topped covector set}. 
\end{align*} 
In the case where $\mc{M}$ has rank 3, then 
$\csph(\mc{M})$ is a graded poset with 3 ranks of elements.  
%The top elements are the \df{topes} of $\mc{M}$ and the bottom elements are the \df{cocircuits} of $\mc{M}$. 
Here we will refer to the elements in the top rank as \df{facet covectors} or \df{topes}, and the elements in the next rank down as \df{edge covectors}, and the elements in the bottom rank as \df{vertex covectors} or \df{cocircuits}.

We may equivalently define an oriented matroid as a primitive object with an associated covector set $\cov(\mc{M})$ that satisfies the vector axioms of oriented matroids \cite[Section 3.7]{bjorner2000oriented}, 
and let $\mc{M}=\mc{N}$ when $\cov(\mc{M})=\cov(\mc{N})$. 
By \df{primitive object}, we mean an object that is not composed of other objects.  For example, numbers are generally treated as primitive objects, although 3 is sometimes iconoclastically defined for convenience in some contexts as the set $3=\{0,1,2\}$, which is not a primitive object. 
Here we define oriented matroids as primitive objects rather than as a pair of oppositely signed chirotopes or as its covector set to avoid conflict with other ways to define oriented matroids, such as by the set of vectors of $\mc{M}$.
This also provides some notational benefits similar to that provided by encapsulation in object oriented programming.

%The \df{weak order} $(\leq_\mr{w})$ on oriented matroids on $E$ 
%can be equivalently defined in terms of covectors by $\mc{M}_0 \leq_\mr{w} \mc{M}_1$ when for each $\sigma_0 \in \cov(\mc{M}_0)$ there is some $\sigma_1 \in \cov(\mc{M}_1)$ such that $\sigma_0 \leq_\mr{v} \sigma_1$. 

An element $i\in I$ is a \df{loop} of $\mc{M}$ 
when $\sigma(i)=0$ for all $\sigma \in \cov(\mc{M})$, 
and the \df{support} $\supp(\mc{M})$ of $\mc{M}$ is the set of nonloops. 
A set $B = \{i_1,\dots,i_k\} \subset I$ is a \df{basis} for a chirotope $\chi$ when 
$\chi(i_1,\dots,i_k) \neq 0$, 
and $J \subset I$ is an \df{independent} set when $J$ is a subset of a basis, 
and likewise for the oriented matroid $\mc{M}$ with $\chi \in \chiro(\mc{M})$.

\subsection{Pseudocircle arrangements}
\label{subsectionPseudocircle}

Jim Lawrence showed that $\|\csph(\mc{M})\|$ is homeomorphic to a $k$-sphere for every rank $k$ oriented matroid $\mc{M}$ \cite{folkman1978oriented}. 
This provides a topological representation for oriented matroids by pseudosphere arrangements, 
which are pseudocircle arrangements in rank 3.  
A \df{pseudocircle arrangement} on index set $I$ is a collection $A = \{S_i : i \in I\}$ of simple closed curves $S_i=S_i(A)$,
which we call pseudocircles, 
such that each pair $S_i,S_j$ either coincide or intersect in exactly 2 points, in which case any other 
pseudocircle $S_k$ either contains $S_i\cap S_j$ or separates the points $S_i\cap S_j$.  
Additionally, we define an \df{orientation} on each pseudocircle by specifying a postive and negative connected component of $\sphere^2 \setminus S_i$, which we call \df{pseudohemispheres} and denote by $S_i^+$ and $S_i^-$. 
We say $A$ is a \df{weighted pseudocircle arrangement} when each pseudocircle $S_i$ has an associated positive weight, which we denote by $\wt_i=\wt_i(A)$. 
We also allow the trivial pseudocircle where $S_i = \sphere^2$, and $S_i^+ = S_i^- = \emptyset$, and $\wt_i=0$, which we denote by $S_i = 0$. 
For a signed set $\sigma$ we let $S_i^\sigma = S_i^{\sigma(i)}$. 

The \df{pseudolinear Stiefel manifold} $\psv_{3,I}$ is the set of all spanning weighted pseudocircle arrangments indexed by $I$, 
and $\psv_{3,n} = \psv_{3,[n]}$. 
We endow $\psv_{3,I}$ with a metric defined by the maximum Fréchet distance between corresponding pseudocircles scaled by their associated weights.  That is, 
%\begin{align*}
\[
\dist(A,\widetilde A) 
= \max_{i\in I} \inf_{\phi,\widetilde\phi} \sup_{x\in\sphere^1} \|\wt_i\phi(x) - \widetilde\wt_i\widetilde\phi(x)\|
\]
%\end{align*}
where 
$\phi : \sphere^1 \to S_i$ is a homeomorphisms directed so that $S_i^+$ is on the left 
and analogously for $\widetilde\phi$, and $\widetilde\wt_i = \wt_i(\widetilde A)$. 
For more about Fréchet distance and additional notation,  
see Subsection \ref{subsectionDef}.

We define the left and right actions $(*)$ of $\hom(\sphere^2)$ on $\psv_{3,I}$. 
These actions do not change weights. 
The \df{left action} is given by 
$S_i(\phi*A) = \phi(S_i(A))$ 
and 
$S_i^+(\phi*A) = \phi(S_i^+(A))$ 
and similarly for $S_i^-$. 
For the \df{right action}, 
let $\theta_i(A):\sphere^2 \to \{0,+,-\}$ 
where $\theta_i(A,x) = (+)$ for $x\in S_i^+$ 
and similarly for $S_i^-$ and $S_i$. 
Then, 
$A*\phi$ is the pseudocircle arrangement where 
$\theta_i(A*\phi) = \theta_i(A)\circ\phi$. 
We also define $\phi*A$ for a continuous map $\phi : \sphere^2 \to \sphere^2$ by  
\[
S_i^+(\phi*A) = \{x: \phi^{-1}(x)\subseteq S_i^+(A) \},
\]
with $S_i^-$ defined analogously, 
and $S_i$ is the compliment of $S_i^+\cup S_i^-$; 
however, $\phi*A$ is not always guaranteed to be a pseudocircle arrangement is this case.

\begin{remark}
%Observe that 
$\phi^{-1}*A=A*\phi$ since 
\[
S_i^+(A*\phi) 
%= \{x : \theta_i(A*\phi,x) = +\}
= \{x : [\theta_i(A)\circ\phi](x) = (+)\}
%= \{x : \theta_i(A,\phi(x)) = +\}
= \{\phi^{-1}(y) : \theta_i(A,y) = (+)\}
= \phi^{-1}*S^+.
\]
\end{remark}
The \df{pseudolinear Grassmannian manifold} and \df{oriented pseudolinear Grassmannian manifold} are 
the quotient spaces 
%$\psg_{3,n} = \psv_{3,n}/\orth_3$ and $\psog_{3,n} = \psv_{3,n}/\sorth_3$. 
\[
\psg_{3,n} = \psv_{3,n}/\orth_3 \quad 
\text{and} \quad 
\psog_{3,n} = \psv_{3,n}/\sorth_3.
\]

To define the $\psv$-\df{realization space} $\psv(\mc{M})$ of an oriented matroid,    
let 
%$\sign(A):\sphere^2 \to \{0,+,-\}^n$ by 
\begin{align*}
\sign(A,x,i) 
&= \begin{cases} 
0 & x\in S_i \\
+ & x\in S_i^+ \\
- & x\in S_i^-, 
\end{cases} \\
\csph(A) 
&= \{\sign(A,x) : x \in \sphere^2\}, \\
%\cell(A,\sigma) 
%&= \{x \in \sphere^2 : \sign(A,x) = \sigma\}, \\
\psv(\mc{M}) 
&= \{A\in\psv_{3,n} : \csph(A) = \csph(\mc{M})\}, \\
\psg(\mc{M}) 
&= \psv(\mc{M})/\orth_3. 
\end{align*}
%and define $\psg(\mc{M})$ analogously.
Let $\om(A)$ be the oriented matroid with covector sphere $\csph(A)$  
and associate to $A$ the corresponding oriented matroid data such as $\cov(A)$, 
let $A \geq_\mr{w} \mc{M}$ denote $\om(A) \geq_\mr{w} \mc{M}$, and 
\[
\cell(A,\sigma) 
= \{x \in \sphere^2 : \sign(A,x) = \sigma\}. 
\]

To define the \df{pseudolinear realization space} $\psv(\chi)$ of a chirotope,    
let $\ot(A) : I^k \to \{0,+,-\}$ by 
$\ot(A,i_1,i_2,i_3) = 0$ unless 
$\cov(S_{i_1},S_{i_2},S_{i_3}) = \{0,+,-\}^3$
%$\cov(\rest(J,A)) =\{0,+,-\}^J$ where $J = \{i_1,i_2,i_3\}$, 
in which case $\ot(A,i_1,i_2,i_3) = (+)$ if $S_{i_1},S_{i_1},S_{i_1}$ appear in counter-clockwise order 
around the cell $S_{i_1}^+\cap S_{i_2}^+\cap S_{i_3}^+$;
%boundary of $\cell(\rest(J,A),\sigma)$ where $\sigma(i_j)=(+)$; 
otherwise $\ot(A,i_1,i_2,i_3) = (-)$.  
Let 
\begin{align*}
\psv(\chi) 
&= \{A\in\psv_{3,n} : \ot(A) = \chi\}, \\ 
\psog(\chi) 
&= \psv(\chi) /\sorth_3. 
\end{align*}
In the predeccessor to this paper, the author showed the following.

\begin{theorem}[{\cite[Theorem 2.7.2]{dobbins2021grassmannians}}]
\label{theoremCrushPsVM}
Given a rank 3 oriented matroid $\mc{M}\in \psv_{3,n}$ and a pseudolinear realization $\Omega \in \psv(\mc{M})$, 
there is a strong $\orth_3$-equivariant deformation retraction $\crush(\Omega)$ from the pseudolinear realization space $\psv(\mc{M})$ to the $\orth_3$-orbit of $\Omega$.
\end{theorem}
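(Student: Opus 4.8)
The plan is to exhibit $\crush(\Omega)$ as the concatenation of two explicit homotopies. The starting point is that $\csph(A)=\csph(\mc{M})=\csph(\Omega)$ forces $A$ and $\Omega$ to cut $\sphere^2$ into cell decompositions related by a canonical face-poset isomorphism, matching vertices $S_i\cap S_j$, edges and facets of $A$ with those of $\Omega$; so any combinatorial frame fixed in $\mc{M}$---say a single distinguished facet (tope)---names a corresponding facet in every $A\in\psv(\mc{M})$. Using this frame one puts each $A$ into a canonical normal form, interpolates normal forms linearly toward that of $\Omega$, and finally contracts a leftover homeomorphism of $\sphere^2$ into $\orth_3$, everything arranged so that the left $\orth_3$-action commutes with each step.

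\emph{Normal form.} For $A\in\psv(\mc{M})$ let $\beta_A\in\mathrm{Homeo}^+(\sphere^2)$ be the canonical homeomorphism adapted to $A$ that carries a fixed reference facet $F_0$ onto the distinguished facet of $A$ and, on the complementary disk, is the conformal (Riemann-map) parametrization of that Jordan domain---its continuity in $A$, and its behaviour as the domain degenerates toward an arc, being exactly what Subsection \ref{subsectionSquish} supplies---postcomposed with the $x$-monotonizing reparametrization of Section \ref{sectionMox} applied to the pseudocircle arcs. Then $W(A)$, the image of the oriented, weighted pseudocircles of $A$ under $\beta_A^{-1}$, is a system of $x$-monotone arcs and nested loops in the standard disk whose combinatorial type is that of $\mc{M}$, hence independent of $A$; this is the spherical counterpart of straightening a planar pseudoline arrangement into wiring-diagram form after a generic rotation. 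Every ingredient is natural, so $\beta_{\phi*A}=\phi\circ\beta_A$ and $W(\phi*A)=W(A)$ for all $\phi\in\orth_3$; in particular $W(\Omega)$ is a fixed target and $\beta_A$ recovers $A$ from $W(A)$.

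\emph{The homotopy.} First stage: two $x$-monotone arc-and-loop systems in the disk of the same combinatorial type, with crossings and boundary points matched, are joined by the straight-line homotopy of the arcs---each prescribed crossing persists because the two arcs keep their prescribed vertical order away from it, and no spurious crossing is created because the vertical order along the boundary is fixed, the loops being treated zone by zone as in Section \ref{sectionZone}; interpolating $W(A)$ to $W(\Omega)$ together with the weights gives, after applying $\beta_A$, a path in $\psv(\mc{M})$ from $A$ to $g_A*\Omega$ with $g_A=\beta_A\circ\beta_\Omega^{-1}\in\mathrm{Homeo}^+(\sphere^2)$ depending continuously on $A$ and equal to the identity for $A=\Omega$. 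Second stage: since $\mathrm{Homeo}^+(\sphere^2)$ deformation retracts $\sorth_3$-equivariantly onto $\sorth_3$ (Smale), apply it to $g_A$, ending at $\rho(g_A)*\Omega\in\sorth_3\Omega\subseteq\orth_3\Omega$. The result is strong because for $A\in\orth_3\Omega$ one has $W(A)=W(\Omega)$ and $g_A\in\orth_3$, so both stages are stationary, and it is $\orth_3$-equivariant because $\beta_{\phi*A}=\phi\circ\beta_A$ makes the first stage commute with $\phi*(-)$ while the Smale retraction is equivariant.

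\emph{Main obstacle.} The technical heart is the normal-form stage: $\beta_A$ must be single-valued, must depend continuously on $A$---which is precisely why one needs quantitative control of the Riemann map under degeneration of the Jordan domain, Subsection \ref{subsectionSquish}, not merely its existence---and must satisfy the strict naturality $\beta_{\phi*A}=\phi\circ\beta_A$ so that equivariance is automatic downstream; one must also verify that the straight-line interpolation of $x$-monotone systems never merges or creates a crossing, so the deformation never leaves $\psv(\mc{M})$. In effect $\beta_A$ and the interpolation are engineered to defuse the apparent circularity---choosing the contracting paths seems to presuppose that $\psv(\mc{M})$ is already well connected---by giving every contracting path an explicit formula rather than by a homotopy-lifting argument.
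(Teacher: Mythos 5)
Your overall architecture — pull every $A\in\psv(\mc{M})$ back by a continuous, $\orth_3$-equivariant homeomorphism $\beta_A$ to a common normal form, interpolate normal forms, then contract the residual homeomorphism $g_A$ into $\orth_3$ via a Kneser-type retraction — is the right skeleton, and the paper's remark after Theorem~\ref{theoremPsV} confirms that the cited source uses exactly that retraction, together with an interpolation of cell decompositions that Lemma~\ref{lemmaInterp} here says it shares. But two of the load-bearing steps in your write-up do not hold as stated. The central one is the claim that straight-line interpolation of two $x$-monotone arc systems with matching combinatorics stays inside $\psv(\mc{M})$, justified by the vertical order being fixed at the boundary. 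This is not a valid argument: let $f_t\equiv 0$ and let $d_0=g_0$ equal $+c$ on $[0,.3]$, drop with slope $-1$ to $-c$ on $[.3,.4]$, and stay $-c$ on $[.4,1]$, while $d_1=g_1$ has the same shape with the drop on $[.6,.7]$. Both endpoint systems are $1$-Lipschitz graphs with one crossing, the same boundary order, and the same oriented matroid, yet the midpoint $d_{1/2}=(d_0+d_1)/2$ vanishes identically on $[.4,.6]$, so at $t=\tfrac12$ the two pseudolines coincide along a whole arc and the arrangement drops to a strictly weaker oriented matroid — it leaves $\psv(\mc{M})$. Nothing about the boundary order prevents such cancellation in the interior; this is precisely the failure mode that forces the paper to interpolate cell-by-cell (the $\interp$ map of Lemma~\ref{lemmaInterp}, conformal on each $2$-cell) rather than averaging the graphs.

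A second, smaller inconsistency: you require $\beta_A\in\mathrm{Homeo}^+(\sphere^2)$ and also $\beta_{\phi*A}=\phi\circ\beta_A$ for all $\phi\in\orth_3$; if $\phi$ is a reflection the right-hand side is orientation-reversing, so $\beta_{\phi*A}\notin\mathrm{Homeo}^+(\sphere^2)$. You must let $\beta_A$ range over all of $\mathrm{Homeo}(\sphere^2)$ and replace Smale's retraction of $\mathrm{Homeo}^+(\sphere^2)$ onto $\sorth_3$ by Kneser's retraction of $\mathrm{Homeo}(\sphere^2)$ onto $\orth_3$ — otherwise the homotopy cannot even terminate on the full $\orth_3$-orbit (take $A=(-\id)*\Omega$, forcing $g_A=-\id\notin\mathrm{Homeo}^+(\sphere^2)$), and it cannot be stationary there as a strong deformation retraction must be. Finally, $\beta_A$ is not yet well-defined: the Riemann map onto the complementary disk is free up to a three-parameter Möbius group, and $\mox$ requires a choice of cardinal borders and a designated horizontal path; you need an explicit $\orth_3$-natural normalization of those choices (which three boundary points? which pseudocircle is sent to the $x$-axis? what if the chosen boundary vertices degenerate for a sequence of arrangements?), and only after fixing that convention can the naturality $\beta_{\phi*A}=\phi\circ\beta_A$ be verified rather than asserted. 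Subsection~\ref{subsectionSquish} is not the continuity statement you want here: on $\psv(\mc{M})$ the facet cells never collapse to arcs, so the relevant input is Radó's theorem (\ref{theoremRado}), not the squishing lemma.
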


\begin{theorem}[{\cite[Theorem 2.7.1]{dobbins2021grassmannians}}]
\label{theoremPsV}
There is a strong equivariant deformation retraction from the pseudolinear Stiefel manifold $\psv_{3,n}$ to the corresponding real Stiefel manifold $\stief_{3,n}$. 
\end{theorem}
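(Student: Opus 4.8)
\emph{Proof strategy.} The inclusion $\stief_{3,n}\hookrightarrow\psv_{3,n}$ sends a spanning configuration $(v_1,\dots,v_n)\subset\mb{R}^3$ to the weighted great-circle arrangement with $i$-th pseudocircle $\{x\in\sphere^2:\langle x,v_i\rangle=0\}$, oriented by $\langle\cdot,v_i\rangle$ and weighted by $|v_i|$ (and $S_i=0$ when $v_i=0$). A candidate homotopy inverse is a ``best-fit'' map $A\mapsto G(A)=(G_1(A),\dots,G_n(A))\in\stief_{3,n}$ built, for instance, from the moment vectors $w_i(A)=\int_{S_i^+(A)}x\,d\mu-\int_{S_i^-(A)}x\,d\mu$; such a map can be arranged to be continuous, $\orth_3$-equivariant, weight-preserving, and equal to the identity on $\stief_{3,n}$, with separate care where some $w_i$ vanishes. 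The plan is to promote this to a \emph{strong} $\orth_3$-equivariant deformation retraction by producing, continuously and equivariantly in $A$, a path inside $\psv_{3,n}$ from $A$ to $G(A)$ that is constant whenever $A$ already lies in $\stief_{3,n}$.

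I would build this path by straightening the pseudocircles one at a time. Stage $i$ is a strong equivariant deformation retraction from the set of arrangements in which $S_1,\dots,S_{i-1}$ are great circles to the set in which $S_1,\dots,S_i$ are great circles: one holds $S_1,\dots,S_{i-1}$ fixed and sweeps $S_i$ onto a suitable great circle $G_i$, letting the not-yet-straightened curves $S_{i+1},\dots,S_n$ bend as needed to keep all the pseudocircle-arrangement axioms (two intersection points per non-coincident pair, and each third pseudocircle containing or separating them) in force at every instant; the already-straightened curves can be held fixed since the required flexibility is absorbed by $S_{i+1},\dots,S_n$. To transport the rest of the arrangement continuously while $S_i$ is swept, I would parametrize the two regions bounded by $S_i$ by their canonical conformal (Riemann) maps and use their degeneration behavior from Subsection \ref{subsectionSquish}, together with the canonical path parametrization of Subsection \ref{subsectionParam} to interpolate. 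Concatenating stages $1$ through $n$ yields the deformation retraction onto $\stief_{3,n}$.

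A tempting shortcut is to deform everything at once and then finish stratum by stratum with Theorem \ref{theoremCrushPsVM}; but when $A$ represents a non-realizable oriented matroid, no great-circle arrangement lies in $\psv(\mc{M})$, so the deformation must pass through changes of oriented matroid, and because the strata $\psv(\mc{M})$ do not assemble into a filtration along which the retractions $\crush(\Omega)$ glue, a hands-on construction of the kind above appears to be needed.

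The main obstacle is stage $i$ itself: keeping $(S_i,\dots,S_n)$ a genuine weighted pseudocircle arrangement throughout the sweep of $S_i$ onto $G_i$, with the reconfiguration of the later curves proceeding continuously through the unavoidable changes of combinatorial type, uniformly over all $A\in\psv_{3,n}$ --- including degenerate arrangements where cells pinch to arcs or several pseudocircles become concurrent --- all while preserving weights and $\orth_3$-equivariance. Obtaining uniform control of the conformal parametrizations as cells degenerate is precisely what the squish estimates of Subsection \ref{subsectionSquish} are for, and a natural preliminary reduction is to first deformation-retract $\psv_{3,n}$ onto the subspace of arrangements whose pseudocircles are smooth and cross transversally, where these parametrizations vary tamely.
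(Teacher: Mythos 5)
This theorem is imported verbatim from the predecessor paper \cite[Theorem~2.7.1]{dobbins2021grassmannians}; the present paper does not reprove it, so there is no local proof to compare against. The only hint here about the cited argument is the remark following Theorem~\ref{theoremPsV}, which says that the proofs of Theorems~\ref{theoremCrushPsVM} and~\ref{theoremPsV} rest on Kneser's theorem that $\hom(\sphere^2)$ deformation retracts to $\orth_3$. That strongly suggests the actual construction is a single global deformation mediated by a retraction of the homeomorphism group, not the one-pseudocircle-at-a-time filtration you propose; you never invoke Kneser's theorem, and your strategy appears to be genuinely different from the cited one.

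Taking your proposal on its own terms, the decisive gap is exactly the one you flag but do not close: stage~$i$. You do not specify how the target great circle $G_i$, the intermediate pseudocircles $S_i(t)$, or --- most critically --- the compensating motions of $S_{i+1},\dots,S_n$ are produced as continuous, $\orth_3$-equivariant functions of the input arrangement while maintaining the pseudocircle-arrangement axioms at every instant; ``bend as needed'' is not a construction. The natural device of transporting $S_{i+1},\dots,S_n$ by a time-dependent homeomorphism $\phi_t$ carrying $S_i$ onto $G_i$ fails because $\phi_t$ would also move the great circles $S_1,\dots,S_{i-1}$ that you want to hold fixed; constraining $\phi_t$ to fix them is exactly as hard as straightening $S_i$ relative to them in the first place and, by your own observation about non-realizable oriented matroids, cannot always terminate at a great circle without uncontrolled combinatorial changes in the later curves --- which returns you to a global problem rather than a filtered one. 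Two secondary concerns: the moment-vector map $G$ is not obviously defined when some $w_i(A)=0$ or when $\{w_i(A)\}$ fails to span $\mb{R}^3$, and continuity across these degeneracy loci would need its own argument; and the ``preliminary reduction'' of $\psv_{3,n}$ onto smooth, transversally crossing arrangements is itself a nontrivial strong equivariant deformation retraction, not a cheap preprocessing step that can be waved in.
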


We say $A$ is \df{symmetric} when $\sign(A,-x)=-\sign(A,x)$, or equivalently, when each pseudocircle of $A$ is an antipodally symmetric curve. 

\begin{remark}
The author showed that the space of homeomorphisms of the projective plane strongly deformation retracts to the $\sorth_3$ using curvature flow \cite{dobbins2021strong,dobbins2023continuous}.   
Consequently, Theorems \ref{theoremCrushPsVM} and \ref{theoremPsV} also hold for the subspace of symmetric pseudocircle arrangments.
This follows by the same argument as in \cite{dobbins2021grassmannians}, except in one place. 
The proofs of Theorems \ref{theoremCrushPsVM} and \ref{theoremPsV} use a theorem of Kneser that the group of homeomorphisms of the 2-sphere deformation retracts to the orthogonal group, which we can replace by the analogous deformation for the projective plane. Similarly, the author has shown this for the subspace of nullity preserving homeomorphisms, 
%and for the subspace of biLipschitz homeomorphisms, 
and consequently we have analogs of Theorems \ref{theoremCrushPsVM} and \ref{theoremPsV} for the subspace of measure 0 pseudocircles, 
%and Lavrentiev pseudocircles, 
and we can additionally require symmetry or not.
Hence, each of these spaces is homotopic to the corresponding Grassmannian and MacPhersonian, 
and can serve as a classifying space for vector bundles. 
\end{remark}

\subsection{Basic definitions and notation}
\label{subsectionDef}

%Here we give basic definitions and notation. 
Many of the definitions and notation here are fairly standard, but some are not, 
and less common or nonstandard definitions are mostly recalled later when they are used.  
This is intended as a convenient place for the reader to refer back to later.  

Let $[n]$ or 
$[n]_\mb{N}$ denote the set $\{1,\dots,n\}$.  
Let $\disk$ denote the unit disk in $\mb{R}^2$, 
and $\sphere^d$ denote the unit $d$-sphere in $\mb{R}^{d+1}$.    
Let $\hom(\sphere^d)$ denote the space of homeomorphisms of the sphere, 
and let $\hom^+(\sphere^d)$ denote the space of orientation preserving homeomorphisms. 
Let $X\sqcup Y$ denote the disjoint union of $X$ and $Y$. 
For $X \subset U$, 
let $X^\mr{c} = U\setminus X$ denote the compliment of $X$,  
let $\cl(X)$ denote the closure of $X$ as a subset of a topological space $U$, 
and $X^\circ = X \setminus \cl(X^\mr{c})$ denote the interior, 
and let $\partial X = \cl(X)\cap\cl(X^\mr{c})$ denote the boundary of $X$ where the ambient space $U$ is understood from context. 
We may sometimes omit the head in set-builder notation when the meaning is unambiguous, 
so that $\{\Phi(x)\} = \{x:\Phi(x)\}$. 
We denote open and closed intervals in $\mb{R}$ by $(a,b)_\mb{R} = \{a<x<b\}$
and $[a,b]_\mb{R} = \{a\leq x\leq b\}$.

A \df{path} or \df{$1$-cell} is a homeomorphic embedding of a closed interval.  
Note that a single point is not a path, but we will often consider an object $P$ that is a path or a point, which we call a \df{possibly degenerate path} or a $({\leq}1)$-cell, and we say $P$ is degenerate when $P$ is a point. 
A path is \df{directed} when the embedding map is fixed up to an increasing reparameterization.
Equivalently, we specify one endpoint of $P$ as the \df{source} $s$, 
or specify the other endpoint as the \df{terminal} $t$,  
or by a total ordering $(<_P)$ of $P$ given by $x <_P y$ when $x$ separates $y$ from $s$.  
The \df{reverse} of $P$ is the path obtained by swapping the source and terminal. 
%The other endpoint is the \df{terminal}.  
A \df{closed curve} is a homeomorphic embedding of a circle, 
and a \df{curve} can be either a path or a closed curve.
A \df{directed closed curve} has a cyclic ordering preserved by the embedding.

A \df{metric disk} is the set of points on $\mb{R}^2$ or $\sphere^2$ that are within some positive distance $r>0$ of a center point $p$, provided this is not all of $\sphere^2$. 
A \df{topological disk} or \df{$2$-cell} is a homeomorphic embedding of a metric disk. 
A \df{$({\leq}2)$}-cell is a $k$-cell for $k\in\{0,1,2\}$. 
Disks are closed unless specified as open. 
A \df{Jordan domain} is an open topological disk $D$, such that $\partial D$ is a closed curve.

Given subsets $S,T\subset X$ of a metric space $(X,\dist)$ and $\delta\geq 0$, 
let 
\[
S\oplus\delta = \{x \in X : \exists s \in S : \dist(x,s) \leq \delta\}
\]
be the set of points that are within distance $\delta$ of some point of $S$. 
The \df{Hausdorff distance} $\dist_\mr{H}(T,S)$ between 
subsets $S,T\subset X$ %of a metric space $(X,\dist)$ is 
is the infimum among $\delta$ such that $S \subseteq T\oplus\delta$ and $T\subseteq S\oplus\delta$. 

The \df{undirected Fréchet distance} 
between $({\leq}1)$-cells $P_i$ in a metric space is 
\[
\dist_\mr{F}(P_1,P_0) = \inf_{\phi}\sup_{x\in P_0}\dist(\phi(x),x)
\]
among homeomorphisms $\phi:P_0\to P_1$ if $P_0$ and $P_1$ are not degenerate, 
or $\dist_\mr{F}(P_1,P_0) = \dist_\mr{H}(P_1,P_0)$ 
in the case where $P_0$ or $P_1$ is a point. 
The \df{Fréchet distance} 
between directed paths $P_i$ is 
as above, but where $\phi$ is order preserving. 
Fréchet distance and undirected Fréchet distance are defined analogously for closed curves. 

Let $f: X \pto Y$ denote a partial function from $X$ to $Y$, 
and $f(x)=\bot$ indicate that $f(x)$ is not defined. 
The \df{preimage} $\pre(f) = \{f(x)\neq \bot\}$ is the set of $x\in X$ where $f(x)$ is defined.  
We say $f(x)$ \df{varies continuously as $x$ varies} 
when $f$ is sequentially continuous. 
This will be convenient when we do not explicitly name $f$ as a function. 
Let $\rest(S,f)$ denote the \df{restriction} of $f$ to $S \subset X$. 
The \df{restriction} 
$\rest(J,\mc{M})$ 
to a subset $J\subset I$ 
is the oriented matroid with covectors 
%of an oriented matroid $\mc{M}$ to $I$ is the oriented matroid 
%\begin{align*}
\[
\cov(\rest(J,\mc{M})) 
=  
\{\rest(J,\sigma) : \sigma \in \cov(\mc{M})\}  
\quad \text{where} \quad 
\rest(J,\sigma,i) 
= \begin{cases}
\sigma(i) & i\in J \\
\bot & i \not\in J.
\end{cases} 
\]
%\end{align*}
%and $\bot$ indicates an undefined value. 
%
Similarly for a pseudocircle arrangement $A$, the restriction to $J$ is 
\[
\rest(J,A) = (T_1,\dots,T_n) 
\quad \text{where} \quad 
T_i = \begin{cases}
S_i & i\in J \\
\bot & i \not\in J.
\end{cases}
\]
Given an arrangement $A \in \psv_{3,J}$, 
the space of \df{extensions} of $A$ to index set $I$ is 
\[
\Ext(I,A) = \{X \in \psv_{3,I} : \rest(J,X) = A\}. 
\]
Let $\Ext(A) = \Ext([n],A)$. 

A \df{chain} of a poset $P$ is a totally ordered subset of $P$. 
The \df{order complex} $\oc(P)$ of $P$ is the set of all finite chains of $P$ ordered by containment. 
For $c \in P$, let 
\[
P_{\leq c} = \{x \in P : x \leq c\}
\]
and analogously for $\geq,<,>$. 
Given an abstract simplicial complex $\Delta$ on a ground set $E$, 
the geometric realization of $\Delta$ is the geometric simplicial complex 
\[
\|\Delta\| = \left\{ x \in \mb{R}^E : \supp(x) \in \Delta, \sum_{e\in E} x(e) = 1\right\}. 
\]
For a poset $P$, we let $\|P\| = \|\oc(P)\|$ denote the geometric realization of the order complex of $P$.

A \df{conformal} map is a continuous function that preserves both angle and orientation, 
and an \df{isogonal} map preserves angles, but not necessarily orientation. 
An \df{internally} conformal map is a map that is conformal on the interior of its domain, 
and the same goes for internally isogonal.
A \df{hyperbolic geodesic} in the open unit disk $\disk^\circ$ 
is the intersection of a circle with $\disk^\circ$ that meets the boundary $\sphere^1$ at right angles.  A path $g$ in a simply connected open domain $C$ is a hyperbolic geodesic when $g = h(\widetilde g)$ for a conformal embedding $h : \disk^\circ \to C$ and a hyperbolic geodesic $\widetilde g \subset \disk^\circ$.  We may include endpoints or not as convenient. 
We denote the cross-ratio by  
$\crr(w,x;y,z) = \frac{(w-y)(x-z)}{(w-z)(x-y)}$.

\section{Tools}
\label{sectionTools}

\subsection{The partial map metric}
\label{subsectionPartialMapMetric}

Here we extend the sup metric to partial maps. 
Given metric spaces $X,Y$, 
let $\cp(X,Y)$ be the space of continuous partial maps $f:X \pto Y$ 
where $\pre(f)$ is closed. 
We define the \df{partial map metric}
$\dist_{\cp}(g,f)$ on $\cp(X,Y)$ to be the Hausdorff distance between the respective graphs of $f$ and $g$ with respect to the max metric on the product space $X\times Y$. 
We simply write $\dist(g,f)$ for $\dist_{\cp}(g,f)$ when there is no ambiguity about the metric.
Note that this is an extended metric since Hausdorff distance is an extended metric on closed subsets of a metric space, and $\graph(f)$ is closed.  
Equivalently, $\dist(g,f)$ is the infimum among $\delta$ such that 
if $x \in \pre(f)$ then $f(x)\in g(x\oplus\delta)\oplus\delta$ 
and 
if $x \in \pre(g)$ then $g(x)\in f(x\oplus\delta)\oplus\delta$.

%Here are some helpful properties. 

\begin{lemma}
\label{lemmaCPDistSup}
If $X$ is compact, then the partial map distance on $\mr{C}(X,Y)$ is topologically equivalent 
to the sup metric. 
\end{lemma}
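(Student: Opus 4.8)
The plan is to show that the identity map of $\mr{C}(X,Y)$ is a homeomorphism between the partial map metric $\dist_{\cp}$ and the sup metric $\dist_{\sup}(f,g)=\sup_{x\in X}\dist(f(x),g(x))$, by comparing the two metrics in small balls around an arbitrary fixed $f$. One inequality is immediate and needs no hypothesis: $\dist_{\cp}(g,f)\leq\dist_{\sup}(g,f)$ for all $f,g\in\mr{C}(X,Y)$. Indeed, if $\dist_{\sup}(g,f)\leq\delta$, then for each $x\in X$ the points $(x,f(x))$ and $(x,g(x))$ are at max-distance $\dist(f(x),g(x))\leq\delta$ in $X\times Y$, so $\graph(f)\subseteq\graph(g)\oplus\delta$ and, symmetrically, $\graph(g)\subseteq\graph(f)\oplus\delta$, whence $\dist_{\mr{H}}(\graph(f),\graph(g))\leq\delta$. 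Thus every $\dist_{\cp}$-ball contains the concentric $\dist_{\sup}$-ball, so the sup topology is at least as fine as the partial map topology.

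For the reverse inclusion of topologies I would fix $f\in\mr{C}(X,Y)$ and $\eps>0$ and produce $\delta>0$ so that $\dist_{\cp}(g,f)<\delta$ forces $\dist_{\sup}(g,f)<\eps$ for every $g\in\mr{C}(X,Y)$. This is the only place compactness of $X$ is used: since $X$ is compact, $f$ is uniformly continuous, so there is $\eta>0$ with $\dist(x,x')<\eta\implies\dist(f(x),f(x'))<\eps/2$; set $\delta=\min(\eta,\eps/2)$. If $\dist_{\cp}(g,f)<\delta$, then in particular $\graph(g)\subseteq\graph(f)\oplus\delta$ in the max metric, so for any $x\in X$ the point $(x,g(x))$ lies within max-distance $\delta$ of some $(x',f(x'))\in\graph(f)$; hence $\dist(x,x')<\delta\leq\eta$ and $\dist(g(x),f(x'))<\delta\leq\eps/2$, and uniform continuity gives $\dist(f(x'),f(x))<\eps/2$, so $\dist(g(x),f(x))<\eps$. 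Taking the supremum over $x$ yields $\dist_{\sup}(g,f)\leq\eps$, so every $\dist_{\sup}$-ball about $f$ contains a $\dist_{\cp}$-ball about $f$. Combined with the previous paragraph, the two metrics induce the same topology.

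The argument is short, and the only subtle point — the "main obstacle," such as it is — is the asymmetry of Hausdorff distance: in the reverse direction one must invoke precisely the containment $\graph(g)\subseteq\graph(f)\oplus\delta$, so that a value of $g$ gets approximated by a nearby value of the \emph{fixed}, uniformly continuous map $f$, and the entire role of the compactness hypothesis is to upgrade continuity of $f$ to uniform continuity. Without it the statement fails: a wildly oscillating $f$ on a noncompact domain can be approximated arbitrarily well in graph-Hausdorff distance by reparametrizations of itself that remain far from $f$ in sup distance. Note also that the estimate requires neither continuity of $g$ nor compactness of $Y$, and that $\dist_{\cp}$ possibly taking the value $\infty$ is harmless since only small balls matter; these, together with the fact that the graphs in question are closed, are routine points already covered by the setup.
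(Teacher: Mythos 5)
Your proof is correct and takes essentially the same approach as the paper: the easy direction is a direct comparison of graphs, and the substantive direction uses compactness of $X$ only through the Heine--Cantor theorem to get uniform continuity of the fixed $f$, then unwinds the Hausdorff-distance containment $\graph(g)\subseteq\graph(f)\oplus\delta$. The closing remarks on which hypotheses are genuinely used, and the asymmetry in which containment matters, accurately pinpoint the content of the lemma.
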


\begin{proof}
Suppose $\dist(g(x),f(x)) < \eps$ for all $x$. 
Then,  
\[
g(x) \in f(x)\oplus\eps \subseteq f(x\oplus\eps)\oplus\eps  
\]
and $f(x) \in g(x\oplus\eps)\oplus\eps$ for all $x$, 
so $\dist(g,f)\leq\eps$. 
Hence, 
the sup metric ball of radius $\eps$ about $f$ 
is contained in 
the partial map metric ball of radius $\eps$ about $f$. 

For the other direction, consider $\eps>0$. 
Then, $f \in \mr{C}(X,Y)$ is uniformly continuous by the Heine-Cantor Theorem, 
so we can let $\delta>0$ such that 
%$\dist(x_1,x_0)\leq\delta$ implies $\dist(f(x_1),f(x_0))<\eps/2$
$f(x\oplus\delta)\subseteq f(x)\oplus\eps/2$ 
for all $x\in X$. 
Let $\eps_1 = \min(\eps/2,\delta)$ and consider $g\in \mr{C}(X,Y)$ such that 
$\dist(g,f)\leq\eps_1$. 
Then, 
\[
g(x) 
\in f(x\oplus\eps_1)\oplus\eps_1 
\subseteq f(x\oplus\delta)\oplus\eps/2 
\subseteq f(x)\oplus\eps.
\]
Hence, the partial map metric ball of radius $\eps_1$ about $f$ 
is contained in the sup metric ball of radius $\eps$ about $f$. 
\end{proof}

\begin{lemma}
\label{lemmaCPDistHDist}
$\dist(f,g) \geq \dist_\mr{H}(\pre(f),\pre(g))$. 
\end{lemma}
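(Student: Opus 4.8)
The plan is to unwind the definition of the partial map metric and observe that the projection $X \times Y \to X$ is $1$-Lipschitz with respect to the max metrics, so it cannot increase Hausdorff distance. Concretely, recall that $\dist(f,g)$ is the Hausdorff distance between $\graph(f)$ and $\graph(g)$ in $X \times Y$ with the max metric, and that the projection $\pi : X \times Y \to X$, $\pi(x,y) = x$, sends $\graph(f)$ onto $\pre(f)$ and $\graph(g)$ onto $\pre(g)$. Since $\dist\big(\pi(x,y),\pi(x',y')\big) = \dist(x,x') \leq \max\big(\dist(x,x'),\dist(y,y')\big) = \dist\big((x,y),(x',y')\big)$, the map $\pi$ is $1$-Lipschitz.

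The one step to spell out is that a $1$-Lipschitz map does not increase Hausdorff distance, which I would do directly from the characterization via $\oplus$: if $\graph(f) \subseteq \graph(g) \oplus \delta$ and $\graph(g) \subseteq \graph(f) \oplus \delta$, then applying $\pi$ and using $\pi(S \oplus \delta) \subseteq \pi(S) \oplus \delta$ (a consequence of $1$-Lipschitzness) gives $\pre(f) \subseteq \pre(g) \oplus \delta$ and $\pre(g) \subseteq \pre(f) \oplus \delta$. Taking the infimum over such $\delta$ yields $\dist_\mr{H}(\pre(f),\pre(g)) \leq \dist(f,g)$. Alternatively, and perhaps more in keeping with the style of the preceding lemma, I would argue pointwise: given $\delta > \dist(f,g)$ and $x \in \pre(f)$, the characterization recalled in the text gives $f(x) \in g(x \oplus \delta) \oplus \delta$, so in particular $g$ is defined somewhere on $x \oplus \delta$, i.e., there is $x' \in \pre(g)$ with $\dist(x,x') \leq \delta$; hence $\pre(f) \subseteq \pre(g) \oplus \delta$, and symmetrically $\pre(g) \subseteq \pre(f) \oplus \delta$.

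There is essentially no obstacle here; the only minor care needed is the edge case where one of $\pre(f)$, $\pre(g)$ is empty, in which case $\graph(f)$ or $\graph(g)$ is empty, both sides are the Hausdorff distance involving an empty set (conventionally $+\infty$ unless both are empty), and the inequality holds trivially as an inequality of extended reals — consistent with the fact, noted in the text, that $\dist_\cp$ is an extended metric.
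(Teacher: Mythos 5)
Both of your arguments are correct. Your second, pointwise argument is essentially the paper's proof in contrapositive form: the paper picks $r_1 > r_2$ below the Hausdorff distance of the preimages and produces a point $x \in \pre(f)$ with $g(x \oplus r_2) = \emptyset$, which forces $\dist_{\cp}(f,g) \geq r_2$; you instead start from $\delta > \dist(f,g)$ and deduce directly that $\pre(f) \subseteq \pre(g) \oplus \delta$. Same mechanism, opposite direction of inference. Your first argument — that the projection $\pi : X \times Y \to X$ is $1$-Lipschitz for the max metric and therefore does not increase Hausdorff distance — is a genuinely cleaner and more structural route that the paper does not take. It isolates the reusable fact that Lipschitz images shrink Hausdorff distance, so the inequality falls out immediately from $\pi(\graph(f)) = \pre(f)$, with no need to unwind the $\oplus$-characterization at all. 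That framing buys you a one-line proof and would transparently generalize to any Lipschitz map of metric spaces, at no real cost. Your remark about the empty-preimage edge case and extended metrics is also correct and is a detail the paper's proof glosses over.
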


\begin{proof}
Let $\dist_\mr{H}(\pre(f),\pre(g))>r_1>r_2$, 
and let us assume there is $x\in\pre(f)\setminus(\pre(g)\oplus r_1)$; 
otherwise swap $f$ and $g$. 
Then, $x\oplus r_2$ is disjoint from $\pre(g)$, 
so $g(x\oplus r_2)\emptyset$, 
so $f(x) \not\in g(x\oplus r_2)\oplus r_2$, 
so $\dist_{\cp}(f,g) \geq r_2$. 
Since this holds for all $r_2 < \dist_\mr{H}(\pre(f),\pre(g))$, 
we have 
$\dist_{\cp}(f,g) \geq \dist_\mr{H}(\pre(f),\pre(g))$.
\end{proof}

Let us see a description of convergence.

\begin{lemma}
\label{lemmaCPDistLimits}
Let $X$ be compact. 
Then, a sequence of maps $f_k\in\cp(X,Y)$ converges to a map in $\cp(X,Y)$ in the partial map metric 
if and only if 
$\pre(f_k)$ converges to a closed set in Hausdorff distance
and $f_k(x_k)$ converges for every convergent sequence $x_k \to x_\infty$ with $x_k\in \pre(f_k)$.  
\end{lemma}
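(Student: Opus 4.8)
The plan is to prove both directions by unpacking the definition of $\dist_{\cp}$ as Hausdorff distance between graphs in $X\times Y$ with the max metric, and exploiting compactness of $X$ to upgrade pointwise/subsequential information to the metric statement. Throughout I will use that $\graph(f_k)$ is a closed (hence compact, since $X$ is compact and $\pre(f_k)$ is closed) subset of $X\times Y$, and that Hausdorff convergence of compact sets $G_k \to G$ in a metric space is equivalent to the Kuratowski condition: every point of $G$ is a limit of points $g_k\in G_k$, and every convergent subsequence of points $g_{k_j}\in G_{k_j}$ has its limit in $G$.

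First, for the forward direction, suppose $f_k \to f_\infty$ in $\dist_{\cp}$, i.e. $\graph(f_k)\to\graph(f_\infty)$ in Hausdorff distance. Applying the projection $X\times Y \to X$, which is $1$-Lipschitz for the max metric, gives $\pre(f_k) = \pi_X(\graph(f_k)) \to \pi_X(\graph(f_\infty)) = \pre(f_\infty)$ in Hausdorff distance (this is exactly Lemma \ref{lemmaCPDistHDist} together with the triangle inequality, or can be cited directly). For the second condition, take $x_k \to x_\infty$ with $x_k\in\pre(f_k)$. Then $(x_k, f_k(x_k)) \in \graph(f_k)$, and since $\dist_{\cp}(f_k,f_\infty)\to 0$ there are points $(y_k, f_\infty(y_k))\in\graph(f_\infty)$ with $\dist((x_k,f_k(x_k)),(y_k,f_\infty(y_k))) \to 0$; in particular $y_k \to x_\infty$ and $f_k(x_k) - f_\infty(y_k) \to 0$. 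Since $x_\infty\in\pre(f_\infty)$ (it is the limit of $y_k\in\pre(f_\infty)$, which is closed) and $f_\infty$ is continuous on the closed set $\pre(f_\infty)$, we get $f_\infty(y_k)\to f_\infty(x_\infty)$, hence $f_k(x_k)\to f_\infty(x_\infty)$. This simultaneously shows convergence and identifies the limit.

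Second, for the converse, suppose $\pre(f_k)\to P$ in Hausdorff distance for some closed $P\subseteq X$, and every convergent sequence $x_k\to x_\infty$ with $x_k\in\pre(f_k)$ has $f_k(x_k)$ convergent. I first define the candidate limit $f_\infty$: for $x\in P$, pick any sequence $x_k\in\pre(f_k)$ with $x_k\to x$ (possible by Hausdorff convergence) and set $f_\infty(x) = \lim_k f_k(x_k)$. I must check this is well-defined (independent of the chosen sequence): given two sequences $x_k,x_k'\to x$, interleave them into one sequence still lying in $\pre(f_k)$ and converging to $x$; by hypothesis $f_k$ evaluated along the interleaved sequence converges, so the two limits agree. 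Next I check $f_\infty$ is continuous on $P$ and $\pre(f_\infty)=P$ is closed: if $p_j\to p$ in $P$, choose for each $j$ an index $k_j$ (increasing) and a point $x_{k_j}\in\pre(f_{k_j})$ with $\dist(x_{k_j},p_j)<1/j$ and $\dist(f_{k_j}(x_{k_j}), f_\infty(p_j))<1/j$; then $x_{k_j}\to p$, so $f_{k_j}(x_{k_j})\to f_\infty(p)$ by the well-definedness argument (padding to a full sequence over all $k$), hence $f_\infty(p_j)\to f_\infty(p)$. Finally I verify $\dist_{\cp}(f_k,f_\infty)\to 0$ using the Kuratowski criterion for $\graph(f_k)\to\graph(f_\infty)$: (a) every $(x,f_\infty(x))\in\graph(f_\infty)$ is a limit of points $(x_k,f_k(x_k))\in\graph(f_k)$ by construction; (b) if $(x_{k_j},f_{k_j}(x_{k_j}))$ converges, then $x_{k_j}\to x_\infty\in P$ (Hausdorff limit of the preimages) and $f_{k_j}(x_{k_j})\to f_\infty(x_\infty)$ by the hypothesis and well-definedness, so the limit lies in $\graph(f_\infty)$. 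Compactness of $X\times Y$... is not available in general, so to conclude Hausdorff convergence from the Kuratowski conditions I instead argue directly with the $\oplus\delta$ reformulation of $\dist_{\cp}$ given in the text, or restrict attention to a compact subset of $Y$ containing all relevant values; I will use that $\{x_k\}\cup\{x_\infty\}$-type arguments only ever involve the sequences at hand, so a subsequence-extraction argument suffices to bound $\dist_{\cp}(f_k,f_\infty)$ above by any $\eps$ eventually.

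**Main obstacle.** The routine parts are the two Kuratowski/Hausdorff translations; the delicate point is the converse direction, specifically establishing that $f_\infty$ is well-defined and continuous and that the graph Hausdorff distances go to $0$ \emph{uniformly}, because $Y$ need not be compact so I cannot simply extract convergent subsequences of values at will. I expect to handle this by arguing by contradiction: if $\dist_{\cp}(f_k,f_\infty)\not\to 0$, then along a subsequence there are points in one graph at distance $\geq\eps$ from the other; using Hausdorff convergence of preimages to locate nearby points and then the evaluation hypothesis to control the $Y$-coordinate yields a contradiction. This compactness-of-the-domain-but-not-the-codomain bookkeeping is the part requiring the most care.
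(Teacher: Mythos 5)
Your proposal is correct and follows essentially the same strategy as the paper's proof: define $f_\infty$ pointwise by choosing approximating sequences, verify well-definedness by the interleaving trick, establish continuity of $f_\infty$ by a diagonal argument, and conclude Hausdorff convergence of graphs by contradiction using compactness of $X$ (the paper likewise avoids any compactness assumption on $Y$ by this route). Your identification of the "main obstacle" and the plan for circumventing it matches the paper's actual argument, so only the final write-up of the contradiction argument remains to be filled in.
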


\begin{proof}
For the `if' direction, 
suppose 
$\pri(f_k) \to D$ compact in Hausdorff distance 
and $y_k = f_k(x_k) \to y_\infty$ for each convergent sequence $x_k \to x_\infty$.

Let us first define the map $f_\infty$ that will be the limit of the $f_k$. 
Let $D$ be the preimage of $f_\infty$, 
and let $f_\infty(x_\infty) = y_\infty$ 
where $y_\infty = \lim_{k\to\infty} y_k$ 
and $y_k = f_k(x_k)$ 
for a choice of $x_k \to x_\infty$.  
We know such a choice exists since 
$\pri(f_k) \to D$, so there is a sequence $x_k \in \pre(f_k)$ such that $\dist(x_k,x_\infty) \to 0$.

%Consider such a sequence $x_k \to x_\infty$. 
%Since $\pri(f_k) \to D$, there is $w_k \in D$ such that $\dist(w_k,x_k) \to 0$, so $w_k \to x_\infty$, so $x_\infty \in D$.
%Let $f_\infty$ be the partial map with preimage $D$ defined by $f_\infty(x_\infty)=y_\infty$. 

We claim $f_\infty$ is well-defined. 
Suppose there were some other choice of convergent sequence $\widetilde x_k \to x_\infty$ 
with $\widetilde x_k \in \pre(f_k)$ 
such that $f_k(\widetilde x_k) \to \widetilde y_\infty \neq y_\infty$.  Then, the sequence $x_1,\widetilde x_2,x_3,\widetilde x_4,\dots$ would converge to $x_\infty$, but the sequence of images $f_1(x_1),f_2(\widetilde x_2),\dots$ would not converge, contradicting our hypothesis.
Hence, $f_\infty$ is well-defined.   

Suppose $f_\infty$ were not continuous. 
Then there would be some convergent sequence $w_k \to x_\infty$ with $w_k \in D$ 
such that $f_\infty(w_k)$ is bounded apart from $f_\infty(x_\infty)$. 
That is, $\dist(f_\infty(w_k),f_\infty(x_\infty))>\delta>0$ for all $k$ sufficiently large. 
Since $\pre(f_k) \to D$ in Hausdorff distance, 
we can find a sequence $x_k \in \pri(f_k)$ such that $\dist(x_k,w_k) \to 0$. 
Hence, $x_k \to x_\infty$, so $f_k(x_k) \to f_\infty(x_\infty)$ by definition of $f_\infty$ since $f_\infty$ is well-defined. 
Also, for each $k$ there is a sequence $x_{k,j} \to w_k$ as $j \to \infty$ where $x_{k,j} \in \pri(f_j)$ since $\pre(f_j) \to D$, 
so $f_j(x_{k,j}) \to f_\infty(w_k)$, so 
$\dist(f_j(x_{k,j}),f_\infty(w_k))<\delta/2$ 
and $\dist(x_{k,j},w_k) < 1/k$ 
for $j \geq J_k$ sufficiently large. 

Consider a new sequence $\widetilde x_k = x_{k,j_k}$ where $j_k = \max(k,J_k)$. 
Then, $\dist(f_{j_k}(\widetilde x_k),f_\infty(w_k))<\delta/2$. 
Also, $\dist(\widetilde x_k, w_k) \to 0$ and $\dist(w_k,x_\infty) \to 0$, 
so $\widetilde x_k \to x_\infty$, 
so $f_{j_k}(\widetilde x_k) \to f_\infty(x_\infty)$, 
so $\dist(f_{j_k}(\widetilde x_k),f_\infty(x_\infty)) <\delta/2$ for $k$ sufficiently large, 
so $\dist(f_\infty(w_k),f_\infty(x_\infty))<\delta$, which is a contradiction. 
Thus, $f_\infty$ is continuous. 

For the `if' direction, 
it remains to show that $f_k \to f_\infty$ in the partial map metric. 

Suppose $\graph(f_k) \not\subseteq \graph(f_\infty)\oplus\delta$ for some $\delta>0$ 
infinitely often, and let us restrict to this subsequence. 
Then, there would be some sequence $(x_k,f_k(x_k))$ 
that is distance $\delta>0$ away from $\graph(f_\infty)$.
Let us also restrict to a subsequence where $x_k$ converges to a point $x_\infty$ 
since $X$ is compact, 
and $x_\infty \in \pre(f_\infty)$ and $f_k(x_k) \to f_\infty(x_\infty)$ 
by definition of $f_\infty$, which contradicts our choice of sequence $(x_k,f_k(x_k))$ bounded away from $\graph(f_\infty)$. 

Suppose $\graph(f_\infty) \not\subseteq \graph(f_k)\oplus\delta$ for some $\delta>0$ 
infinitely often, and again restrict to this subsequence. 
Then, there would be some sequence $(w_k,f_\infty(w_k))$ 
that is distance $\delta>0$ away from $\graph(f_k)$.
Since $\pre(f_k) \to \pre(f_\infty)$, we can find $x_k \in \pre(f_k)$ 
such that $\dist(x_k,w_k) \to 0$, 
and since $X$ is compact, we may restrict to a subsequence 
where $x_k \to x_\infty$, 
so $w_k \to w_\infty$, 
and $f_k(x_k) \to f_\infty(x_\infty)$ by definition of $f_\infty$, 
and $f_\infty(w_k) \to f_\infty(x_k)$ since $f_\infty$ is continuous, 
so $\dist(f_k(x_k),f_\infty(w_k)) \to 0$, 
which contradicts contradicts our choice of sequence $(w_k,f_\infty(w_k))$. 
Thus, $f_k \to f_\infty$.

For the `only if' direction, 
suppose $f_k \to f_\infty$. 
Then, $\pre(f_k)\to\pre(f_\infty)$ in Hausdorff distance by Lemma \ref{lemmaCPDistHDist}. 
Consider a convergent sequence $x_k \to x_\infty$ with $x_k \in\pre(f_k)$ and $\eps>0$.  
Since $X$ is compact, $f_\infty$ is uniformly continuous by the Heine-Cantor Theorem, 
so for some $\delta>0$, we have $f_\infty(x\oplus\delta)\subset f_\infty(x)\oplus\eps/2$.  
Let $r = \min(\delta,\eps)/2$. 
Then, 
for $k$ sufficiently large we have $x_k \in x_\infty\oplus\delta/2$ since $x_k \to x_\infty$, 
so $x_k\oplus r \subseteq x_\infty\oplus\delta$, 
and since $f_k \to f_\infty$, we have 
\[
f_k(x_k)
\in f_\infty(x_k\oplus r)\oplus r 
\subset f_\infty(x_\infty\oplus \delta)\oplus \eps/2  
\subset f_\infty(x_\infty)\oplus \eps.  
\]
Thus, $f_k(x_k) \to f_\infty(x_\infty)$. 
\end{proof}

Recall that $f(x)$ \df{varies continuously as $x$ varies} 
when the function $f$ is sequentially continuous. 
This will be convenient when we do not explicitly name $f$ as a function. 

%In particular, 
We show that 
operation that we like to use on functions are continuous. 
%the inversion operation $f \mapsto f^{-1}$ is continuous. 
Moreover, inversion is an isometry. 

%\begin{lemma}
%\label{lemmaCPDistInverse}
%If $f,g\in\cp(X,Y)$ are a homeomorphic embeddings, 
%then $\dist(f,g)=\dist(f^{-1},g^{-1})$. 
%Hence, $f^{-1}$ varies continuously as $f$ varies. 
%\end{lemma}

%\begin{proof}
%$\graph(f) = \graph(f^{-1})$ are the same subset of $X\times Y$. 
%\end{proof}

%Operation that we like to use on functions are continuous. 

\begin{lemma}
\label{lemmaCPDistContinuity}
Let $f\in\cp(X,Y)$ and $g\in\cp(W,X)$ for metric spaces $W,X,Y$, 
and let $W,X$ be compact.
\begin{description}
\item[Inversion]
If $f,\widetilde f \in\cp(X,Y)$ are homeomorphic embeddings 
then $\dist(f,\widetilde f)=\dist(f^{-1},\widetilde f^{-1})$. 
Hence, $f^{-1}$ varies continuously as $f$ varies.
\item[Composition]
$f\circ g$ varies continuously 
as $f$ and $g$ vary 
provided that  $g(W)\cap \pre(f)$ varies continuously, 
and in particular if $g(W)\subseteq \pre(f)$. 
%\item[Isometricity of inversion]
\item[Restriction]
$\rest(D,f)$ varies continuously 
as $f$ and $D$ vary 
provided that $D\cap\pre(f)$ varies continuously in Hausdorff distance, 
and in particular if $D\subset\pre(f)$.  
\item[Gluing] 
If $X = C_1\cup C_2$ is a closed cover, $C_i = \pre(f_i)$, and $f(x) = f_i(x)$ for $x \in C_i$
then $f$ varies continuously as the $f_i$ and $C_i$ vary. 
\item[Partial application]
On a product space $X=X_1\times X_2$, 
the map $f(x_1) \in \cp(X_2,Y)$ varies continuously as $f$ and $x_1$ vary 
%for a product space $X=X_1\times X_2$, 
provided that $\pre(f(x_1))$ varies continuously,
and in particular if 
$\pre(f) = S_1\times S_2$ factors as a product. 
%\item[Continuity of currying]
%$\rm{curry}:\cp(A\times B,Y) \to \cp(A,\cp(B,Y))$
%is continuous. 
\end{description}
\end{lemma}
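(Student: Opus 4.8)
The plan is to derive all five clauses from the sequential characterization of convergence in $\cp$ established in Lemma \ref{lemmaCPDistLimits}, which applies because every domain in sight is compact (for the last two clauses $X_1$ and $X_2$ are compact as factors of the compact space $X$). To show that an operation $\Phi$ on partial maps varies continuously, I would take an arbitrary convergent sequence of inputs, set $h_k=\Phi(\mathrm{input}_k)$ and let $h_\infty=\Phi(\mathrm{input}_\infty)$ be the candidate limit, and verify the two conditions of Lemma \ref{lemmaCPDistLimits}: (a) $\pre(h_k)\to\pre(h_\infty)$ in Hausdorff distance, and (b) $h_k(x_k)\to h_\infty(x_\infty)$ whenever $x_k\to x_\infty$ with $x_k\in\pre(h_k)$. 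In each clause, (a) is precisely the content of the ``provided that'' hypothesis — the set asserted to vary continuously is the preimage of the output — while (b) comes from pushing the convergent sequence through the sequential criteria of the constituent maps.

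\textbf{Inversion} is the one clause that sidesteps this and is immediate: $\graph(\widetilde f^{-1})$ is the image of $\graph(\widetilde f)$ under the coordinate swap $X\times Y\to Y\times X$, which is an isometry for the respective max metrics, so $\dist(f^{-1},\widetilde f^{-1})=\dist_\mr{H}(\graph(f^{-1}),\graph(\widetilde f^{-1}))=\dist_\mr{H}(\graph(f),\graph(\widetilde f))=\dist(f,\widetilde f)$, and an isometry is continuous.

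\textbf{Composition, restriction, partial application} all follow the template. For composition, given $g_k\to g_\infty$ and $f_k\to f_\infty$: for (b), a convergent sequence $w_k\to w_\infty$ with $w_k\in\pre(f_k\circ g_k)\subseteq\pre(g_k)$ has $g_k(w_k)\to g_\infty(w_\infty)$ by Lemma \ref{lemmaCPDistLimits} applied to the $g_k$, and since $g_k(w_k)\in\pre(f_k)$, the same lemma applied to the $f_k$ gives $f_k(g_k(w_k))\to f_\infty(g_\infty(w_\infty))$. For (a), I would examine subsequential Hausdorff limits of $\pre(f_k\circ g_k)$ inside the compact hyperspace of closed subsets of $W$: the inclusion ``limit $\subseteq\pre(f_\infty\circ g_\infty)$'' follows exactly as in (b), and the reverse inclusion is what the hypothesis on $g(W)\cap\pre(f)$ — which is what controls $\pre(f_k\circ g_k)=\{w\in\pre(g_k):g_k(w)\in\pre(f_k)\}$ — is there to supply. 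Restriction runs identically with $\pre(\rest(D,f))=D\cap\pre(f)$ handed over by hypothesis and (b) trivial since $\rest(D,f)$ agrees with $f$ on its preimage; partial application likewise, with (b) obtained by applying Lemma \ref{lemmaCPDistLimits} to the $f_k$ at the points $(x_{1,k},x_{2,k})\to(x_{1,\infty},x_{2,\infty})$.

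\textbf{Gluing and the main obstacle.} For gluing the output is total, so (a) is vacuous; for (b), given $x_k\to x_\infty$ I would pass to a subsequence on which $x_k$ lies in one fixed piece, say $x_k\in C_{1,k}=\pre(f_{1,k})$, note that $f_{1,k}\to f_{1,\infty}$ forces $C_{1,k}\to C_{1,\infty}$ in Hausdorff distance by Lemma \ref{lemmaCPDistHDist}, hence $x_\infty\in C_{1,\infty}$, apply Lemma \ref{lemmaCPDistLimits} to the $f_{1,k}$ to get $f_k(x_k)=f_{1,k}(x_k)\to f_{1,\infty}(x_\infty)=f_\infty(x_\infty)$, and conclude by the subsequence principle. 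The genuinely delicate point is condition (a) for composition: unlike (b) it is not automatic from convergence of $f_k$ and $g_k$, because a point of $\pre(f_\infty)$ pulled back through $g_k$ need not land in $\pre(f_k)$ — which is exactly why that clause carries its hypothesis — so the care goes into the preimage bookkeeping there and, throughout, into keeping track of which spaces must be compact for Lemma \ref{lemmaCPDistLimits} to apply. Everything else is routine once the template is set up.
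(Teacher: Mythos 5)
Your proposal is correct and follows essentially the same route as the paper: derive each clause (except inversion) from the sequential characterization in Lemma \ref{lemmaCPDistLimits}, verifying Hausdorff convergence of preimages and pointwise convergence along convergent sequences, while inversion is dispatched directly by noting that passing from $\graph(f)$ to $\graph(f^{-1})$ is a coordinate swap that preserves Hausdorff distance in the max metric. The one place you are more explicit than the paper is in flagging that condition (a) for composition in the general case is non-automatic and is exactly what the hypothesis on $g(W)\cap\pre(f)$ must supply; the paper only spells this out for the special case $g(W)\subseteq\pre(f)$, so your extra care there is a welcome clarification rather than a divergence in method.
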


\begin{proof}
%To show continuity of composition, 
Consider convergent sequences $f_k \to f_\infty$ and $g_k \to g_\infty$ as in the hypotheses.

For inversion, 
$\graph(f) = \graph(f^{-1})$ are the same subset of $X\times Y$. 

For continuity of composition, 
consider a convergent sequence $w_k \to w_\infty$ with $w_k \in \pre(f_k)$.  
$g_k(w_k) \to g_\infty(w_\infty)$ by Lemma \ref{lemmaCPDistLimits}, 
so $[f_k\circ g_k](w_k) \to [f_\infty\circ g_\infty](w_\infty)$ converges appropriately, 
so $f_k\circ g_k$ converges appropriately by Lemma \ref{lemmaCPDistLimits}.  
In the case where $g(W)\subseteq \pre(f)$, 
observe that $\pre(f\circ g) = g^{-1}(\pre(f)) = \pre(g)$ 
varies continuously in Hausdorff distance since $g$ varies continuously.

For restriction, 
consider 
$D_k \to D_\infty$ and $x_k\to x_\infty$ as in the hypotheses. 
Then, 
$f_k(x_k)$ converges appropriately for every convergent sequence $x_k \in D_k$, 
so $\rest(D_k,f_k)$ converges appropriately by Lemma \ref{lemmaCPDistLimits}.  

For gluing, 
consider a convergent sequence $x_k\to x_\infty$. 
If $x_k \in C_{1,k}$, then $x_\infty \in C_{1,\infty}$ since $C_{1,k} \to C_{1,\infty}$ in Hausdorff distance, so $f_k(x_k)$ converges appropriately since $f_1$ varies continuously, and similarly if $x_k \in C_{2,k}$. 
Hence, $f_k(x_k)$ converges appropriately since the subsequences with $x_k\in C_{i,k}$ converge appropriately.

For partial application, 
consider convergent sequences $a_k \to a_\infty$ and $b_k \to b_\infty$ in $X_1$ and $X_2$. 
Then, $f_k(a_k,b_k)$ converges appropriately, 
and since this holds for each convergent sequence $b_k \in X_2$, 
we have that $f_k(a_k)\to f_\infty(a_\infty)$ by Lemma \ref{lemmaCPDistLimits}.  
In the case where $\pre(f) = S_1\times S_2$ factors,
since $f_k$ converges appropriately, $\pre(f_k) = S_{1,k}\times S_{2,k}$ must converge appropriately, 
so both $S_{i,k}$ converge appropriately, 
so $\pre(f_k(a_k)) = S_{2,k}$ converges appropriately. 
\end{proof}

For us restriction to a path will be of particular importance. 

\begin{lemma}
\label{lemmaPMFrechetDistance}
If $P \subseteq \pre(f)$ is a path and $f$ is a topological embedding of $\pre(f)$, 
then $f(P)$ varies continuously in Fréchet distance as 
$f$ varies in the partial map metric and $P$ varies in Fréchet distance. 
\end{lemma}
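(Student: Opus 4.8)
The plan is to reduce the claim to the continuity properties already established in Lemma \ref{lemmaCPDistContinuity}, together with a comparison between the partial map metric on restrictions to $P$ and the Fréchet distance on the image curves. First I would fix a reference homeomorphism $\gamma : [0,1]_\mb{R} \to P$ and observe that $\rest(P,f) = f \circ \gamma$ followed by forgetting the parametrization; more precisely, the composite $f\circ\gamma : [0,1]_\mb{R} \to Y$ is a topological embedding whose image is $f(P)$. By the Composition clause of Lemma \ref{lemmaCPDistContinuity}, applied with $W = [0,1]_\mb{R}$ (which is compact) and the constant-domain map $\gamma$, the map $f \circ \gamma$ varies continuously in the partial map metric as $f$ varies, using that $\gamma(W) = P \subseteq \pre(f)$. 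If $P$ also varies, I would instead invoke the Restriction clause: $\rest(P,f)$ varies continuously in the partial map metric as $f$ varies in the partial map metric and $P$ varies in Hausdorff distance, since $P \subseteq \pre(f)$ makes $P \cap \pre(f) = P$ vary continuously in Hausdorff distance exactly when $P$ does.

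Next I would upgrade ``$P$ varies in Fréchet distance'' to ``$P$ varies in Hausdorff distance,'' which is immediate: the Hausdorff distance between two $({\leq}1)$-cells is at most their undirected Fréchet distance (take any homeomorphism $\phi$ realizing the Fréchet infimum up to $\eps$; its graph witnesses $P_0 \subseteq P_1 \oplus (\dist_\mr{F}+\eps)$ and symmetrically), so Fréchet convergence of the $P$'s implies Hausdorff convergence. Thus from the previous paragraph, $\rest(P_k,f_k) \to \rest(P_\infty,f_\infty)$ in the partial map metric whenever $f_k \to f_\infty$ and $P_k \to P_\infty$ in Fréchet distance.

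It then remains to pass from partial-map-metric convergence of $\rest(P_k,f_k)$ to Fréchet convergence of the image curves $f_k(P_k)$. Here is where the main work lies. Let $Q_k = f_k(P_k)$, a path in $Y$ (it is a path, not a point, since $f_k$ is an embedding and $P_k$ is a path). Using Lemma \ref{lemmaCPDistLimits}, the hypothesis $\rest(P_k,f_k) \to \rest(P_\infty,f_\infty)$ says $\pre(\rest(P_k,f_k)) = P_k \to P_\infty$ in Hausdorff distance and $f_k(x_k) \to f_\infty(x_\infty)$ for every convergent $x_k \in P_k$. I would construct near-optimal homeomorphisms $P_k \to P_\infty$ by transporting the arc-order: parametrize each $P_k$ by $[0,1]_\mb{R}$ via the unique order-isomorphism sending the ``limit'' of the source endpoints of $P_k$ to $0$, and compose with $f_k$; the key lemma is that the endpoints of $P_k$ converge to those of $P_\infty$ (a consequence of Hausdorff convergence plus the curve structure — a standard compactness argument, since any limit point of an endpoint sequence must be an endpoint of $P_\infty$), and then that $f_k \circ \gamma_k \to f_\infty \circ \gamma_\infty$ uniformly, which gives a uniform — hence Fréchet — bound between $Q_k$ and $Q_\infty$ with the induced direction.

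The main obstacle I expect is precisely this last construction: showing the induced parametrizations $\gamma_k : [0,1]_\mb{R} \to P_k$ converge uniformly to $\gamma_\infty$, so that $f_k \circ \gamma_k$ converges uniformly. Partial-map-metric convergence only controls values along sequences of points, not the parametrization, and Hausdorff convergence of sets does not by itself give convergence of arc-parametrizations (pathological oscillation could, a priori, make the order-isomorphism jump). I would handle this by an equicontinuity/uniform-convergence argument: if $\gamma_k \not\to \gamma_\infty$ uniformly, extract $t_k \to t_\infty$ with $\gamma_k(t_k)$ bounded away from $\gamma_\infty(t_\infty)$, pass to a subsequence where $\gamma_k(t_k) \to p \in P_\infty$ (Hausdorff convergence), and derive a contradiction from the fact that the arc-order of $P_k$ must converge to that of $P_\infty$ along the three marked points $\gamma_k(0), \gamma_k(t_k), \gamma_k(1)$ — using that betweenness on a $1$-cell is detected by Hausdorff-limit data. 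Once uniform convergence of the $\gamma_k$ is in hand, $\sup_t \dist(f_k(\gamma_k(t)), f_\infty(\gamma_\infty(t))) \to 0$ follows from the pointwise convergence $f_k(x_k)\to f_\infty(x_\infty)$ promoted to uniform convergence by compactness of $P_\infty$ and the embedding hypothesis, and this sup-bound is an upper bound for $\dist_\mr{F}(Q_k, Q_\infty)$, completing the proof.
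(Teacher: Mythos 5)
Your approach contains a genuine gap, and it is instructive to see where. You start by converting ``$P_k \to P_\infty$ in Fréchet distance'' into the weaker ``$P_k \to P_\infty$ in Hausdorff distance,'' then feed this into the Restriction clause of Lemma \ref{lemmaCPDistContinuity}. That gets you $\rest(P_k,f_k)\to\rest(P_\infty,f_\infty)$ in the partial map metric, which controls $f_k(P_k)$ only up to \emph{Hausdorff} distance. You then try to rebuild Fréchet convergence of the images by constructing parameterizations $\gamma_k:[0,1]\to P_k$ converging uniformly, arguing from arc-order and betweenness detected in the Hausdorff limit. This reconstruction does not go through from Hausdorff data alone: a sequence of paths can converge in Hausdorff distance to a limit path while having Fréchet distance bounded away from zero (e.g.\ a zig-zag that sweeps the limit segment back and forth several times at heights $O(1/k)$ converges in Hausdorff distance but not in Fréchet distance, and no uniformly convergent order-parameterizations exist). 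So the uniform convergence of the $\gamma_k$ you need is precisely the information you discarded when you downgraded from Fréchet to Hausdorff, and your sketched equicontinuity/betweenness argument cannot recover it.

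The fix is to \emph{not} take the Hausdorff detour. The hypothesis that $P_k \to P_\infty$ in Fréchet distance is by definition the statement that there exist parameterizations $\gamma_k:[0,1]\to P_k$ with $\sup_t \dist(\gamma_k(t),\gamma_\infty(t))\to 0$; this is exactly what the paper takes. Once you have $\gamma_k\to\gamma_\infty$ uniformly (hence in the partial map metric, by Lemma \ref{lemmaCPDistSup}), apply the Composition clause of Lemma \ref{lemmaCPDistContinuity} to get $f_k\circ\gamma_k\to f_\infty\circ\gamma_\infty$ in the partial map metric and hence in the sup metric, and note that $\dist_\mr{F}(f_k(P_k),f_\infty(P_\infty))\le \dist_{\sup}(f_k\gamma_k,f_\infty\gamma_\infty)$ because both are homeomorphisms onto the respective image paths. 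That one-step reduction is the entire argument; the Restriction clause, the Hausdorff comparison, and the equicontinuity sketch are superfluous.
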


Note that Lemma \ref{lemmaPMFrechetDistance} does not always hold if $f$ is not always an embedding as it varies. 

\begin{proof}
Consider convergent sequences $P_k \to P_\infty$ and $f_k \to f_\infty$. 
Then, there are parameterizations $\gamma_k : [0,1]_\mb{R} \to P_k$ such that 
$\sup \{\dist(\gamma_k(x),\gamma_\infty(x)) : x\in[0,1]\} \to 0$ as $k \to \infty$ by definition of Fréchet distance, 
which means $\gamma_k \to \gamma_\infty$ in the sup metric, 
so $f_k\gamma_k \to f_\infty\gamma_\infty$ in the partial map topology by Lemma \ref{lemmaCPDistContinuity}, 
and therefore in the sup metric as well, 
and 
\[\dist_\mr{F}(f_k(P_k),f_\infty(P_\infty)) \leq \dist_{\sup}(f_k\gamma_k,f_\infty\gamma_\infty)\]
since both $f_k\gamma_k$ and $f_\infty\gamma_\infty$ are homeomorphisms, 
so $f_k(P_k) \to f_\infty(P_\infty)$ in Fréchet distance. 
\end{proof}

%\begin{proof}
%\end{proof}

In the case of total functions, 
we have the following.

\begin{lemma}
\label{lemmaSupDistPartialApp}
Given $f:X \to Y$ with $X = A\times B$ and $B$ compact,   
if $f(x) \in Y$ varies continuously as $x \in X$ varies,  
then $f(a) \in Y^{B}$ varies continuously in the sup metric as $a \in A$ varies. 
Furthermore, the converse holds provided that $X$ is compact.
\end{lemma}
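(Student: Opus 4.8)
The plan is to recognize Lemma~\ref{lemmaSupDistPartialApp} as the metric-space form of the exponential correspondence between $\mr{C}(A\times B,Y)$ and $\mr{C}\bigl(A,\mr{C}(B,Y)\bigr)$, specialized to the convention that ``varies continuously'' means sequentially continuous. Thus both implications reduce to chasing convergent sequences, with compactness of $B$ doing the essential work of turning pointwise estimates into uniform ones. Throughout I use that for each $a$ the slice $f(a,\cdot)\colon B\to Y$ is continuous, being the composite of $b\mapsto(a,b)$ with $f$, and that since $B$ is compact its image is bounded, so $f(a)$ is a genuine element of $\mr{C}(B,Y)$ on which the sup metric is finite.

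For the forward implication, suppose $a_k\to a_\infty$ in $A$; I must show $\sup_{b\in B}\dist(f(a_k,b),f(a_\infty,b))\to 0$. Arguing by contradiction, if this fails then after passing to a subsequence there are $\eps>0$ and points $b_k\in B$ with $\dist(f(a_k,b_k),f(a_\infty,b_k))\ge\eps$ for all $k$. Since $B$ is a compact metric space it is sequentially compact, so after a further subsequence $b_k\to b_\infty$ while still $a_k\to a_\infty$. Then $(a_k,b_k)\to(a_\infty,b_\infty)$ and $(a_\infty,b_k)\to(a_\infty,b_\infty)$ in $A\times B$, so by sequential continuity of $f$ both $f(a_k,b_k)$ and $f(a_\infty,b_k)$ converge to $f(a_\infty,b_\infty)$; hence $\dist(f(a_k,b_k),f(a_\infty,b_k))\to 0$, contradicting the $\eps$-gap. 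Therefore $f(a_k)\to f(a_\infty)$ in the sup metric.

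For the converse, assume $X=A\times B$ is compact, so that $B$, as a continuous image of $X$, is compact, and assume $a\mapsto f(a,\cdot)$ is sequentially continuous in the sup metric. Given $(a_k,b_k)\to(a_\infty,b_\infty)$ in $X$, the triangle inequality gives $\dist(f(a_k,b_k),f(a_\infty,b_\infty))\le\dist(f(a_k,b_k),f(a_\infty,b_k))+\dist(f(a_\infty,b_k),f(a_\infty,b_\infty))$. The first term is at most $\dist_{\sup}(f(a_k,\cdot),f(a_\infty,\cdot))$, which tends to $0$ by hypothesis; the second term tends to $0$ because the slice $f(a_\infty,\cdot)$ is continuous and $b_k\to b_\infty$. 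Hence $f(a_k,b_k)\to f(a_\infty,b_\infty)$, so $f$ is sequentially continuous on $X$. (This direction is also the total-function counterpart of the partial-application case of Lemma~\ref{lemmaCPDistContinuity}, and could alternatively be routed through Lemma~\ref{lemmaCPDistLimits}.)

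The one genuinely load-bearing step is the extraction of a convergent subsequence of the witnesses $b_k$ in the forward direction: this is where compactness of $B$ is indispensable---the forward statement is false for noncompact $B$, e.g.\ $f(a,b)=ab$ on $\R^2$, where neighboring slices are at infinite sup distance---and it is the mechanism that upgrades the pointwise continuity of $f$ into uniform control over the fibre $B$. The converse is essentially just the triangle inequality; the hypothesis that $X$ is compact there serves only to keep $B$ compact so that the slices are bounded and the sup metric on them is finite-valued, matching the setting of Lemma~\ref{lemmaCPDistContinuity}. The only bookkeeping subtlety is to confirm at the outset that each $f(a,\cdot)$ is continuous, hence a bona fide point of $\mr{C}(B,Y)$, which is immediate.
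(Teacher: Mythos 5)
Your proof is correct, but you take a different route than the paper does. The paper routes both directions through the partial map metric machinery developed earlier in Section~\ref{subsectionPartialMapMetric}: for the forward direction it applies Lemma~\ref{lemmaCPDistLimits} to conclude that $f(a_k)$ converges in the partial map metric and then Lemma~\ref{lemmaCPDistSup} to translate to the sup metric; for the converse it runs the same two lemmas in the opposite order together with Lemma~\ref{lemmaCPDistContinuity}. You instead give a self-contained argument: for the forward direction you argue by contradiction and use sequential compactness of $B$ to extract a convergent subsequence of witnesses $b_k$, and for the converse you use the triangle inequality. Your argument is arguably cleaner and more portable (it does not depend on the $\cp$-framework), while the paper's argument is shorter in context because the heavy lifting is already done in Lemmas~\ref{lemmaCPDistSup}--\ref{lemmaCPDistContinuity}; both are valid. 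Your observation that the forward statement fails for noncompact $B$ (e.g.\ $f(a,b)=ab$) is correct and worth keeping as a sanity check.

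Two small points of commentary are off. First, you say the slices $f(a,\cdot)$ are continuous ``being the composite of $b\mapsto(a,b)$ with $f$,'' and that this is immediate; that justification only works in the forward direction where $f$ is assumed continuous. In the converse direction, continuity of $f$ is the conclusion, so continuity of the slices $f(a_\infty,\cdot)$ is an implicit standing hypothesis (both you and the paper need it, and the paper's $\cp$-framework quietly builds it in by working with $\mr{C}(B,Y)$). Your proof of the converse genuinely uses this assumption in the second term of the triangle inequality, so it should be stated rather than waved off as automatic. Second, you say the hypothesis ``$X$ compact'' in the converse ``serves only to keep $B$ compact''---but $B$ is already assumed compact for the whole lemma; the extra hypothesis forces $A$ compact as well, and in fact your converse argument does not use compactness of $A$ at all, so this aside is confused even though the proof itself is fine.
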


\begin{remark}
\label{remarkUniformConvergence}
By letting $A = \{\nicefrac1k,0:k\in\mb{N}\}$ and $f_k=f(\nicefrac1k):B \to Y$ with $B$ compact, we have 
$f_k \to f_\infty$ uniformly if and only if $f_k(b_k) \to f_\infty(b_\infty)$ 
for every convergent sequence $b_k \to b_\infty$ 
if and only if $f$ is continuous 
as a special case of Lemma \ref{lemmaSupDistPartialApp}. 
\end{remark}

\begin{proof}
%Consider a sequence $f_k \in Y^X$. 
Suppose that $f(x_k) \to f(x_\infty)$ for every convergent sequence $x_k=(a_k,b_k) \to x_\infty\in X$. 
Then, $f(a_k)$ converges appropriately in the partial map metric by Lemma \ref{lemmaCPDistLimits}, 
so $f(a_k)$ converges appropriately in the sup metric by Lemma \ref{lemmaCPDistSup}. 

Alternatively, suppose that $X$ is compact and 
that $f(a_k) \to f(a_\infty)$ in the sup metric 
for every convergent sequence $a_k \to a_\infty \in A$. 
Then, $f(a_k)$ converges appropriately in the partial map metric by Lemma \ref{lemmaCPDistSup}, 
so $f(a_k,b_k)$ converges appropriately  
for every convergent sequence $b_k \to b_\infty$ by Lemma \ref{lemmaCPDistContinuity}. 
Hence, $f(x_k)$ converges appropriately for every convergent sequence $x_k$.
\end{proof}

\subsection{Extension to $\mb{R}$ and tail extrema}

Here we treat convergence for the canonical extension of a function on an interval to $\mb{R}$. 
We also introduce tail extrema, which will be useful for finding a continuously varying monotone upper or lower bound for a given function.

Given a function $f : [a,b]_\mb{R} \to \mb{R}$, 
let $\ext_\mb{R}(f) : \mb{R} \to \mb{R}$ by 
\[
\ext_\mb{R}(f,x) = \begin{cases}
f(a) & x < a \\ 
f(x) & x \in [a,b] \\ 
f(b) & x > b,
\end{cases}
\]
and for an interval $I\subset\mb{R}$, let $\ext_I(f)=\rest(I,\ext_\mb{R}(f))$. 
%and let $\ext(f^{-1})$ be defined on strictly monotone continuous partial functions defined on an interval by defining $\ext(f^{-1})$ analagously for $f$ decreasing, and letting $\ext(f^{-1}) = \pri(f)$ if $f$ is constant. 

\begin{lemma}
\label{lemmaExtendedInverse}
$\ext_\mb{R}(f)$ varies continuously in the sup metric as $f$ varies in the partial map metric.
Moreover, 
if $f:\mb{R} \pto \mb{R}$ is a strictly monotonic partial function defined on a closed interval, 
then $\ext_\mb{R}(f^{-1})$ varies continuously. 
\end{lemma}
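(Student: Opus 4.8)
The plan is to prove the two claims in turn, reducing the first to the convergence criterion of Lemma \ref{lemmaCPDistLimits} and then Lemma \ref{lemmaCPDistSup}, and reducing the second to the first together with the inversion clause of Lemma \ref{lemmaCPDistContinuity}. For the first claim, I would fix a convergent sequence $f_k \to f_\infty$ in $\cp(\mb{R},\mb{R})$ with each $f_k$ defined on a closed interval. Since convergence in the partial map metric forces $\pre(f_k) \to \pre(f_\infty)$ in Hausdorff distance by Lemma \ref{lemmaCPDistHDist}, the endpoints $a_k, b_k$ of the domain intervals converge to the endpoints $a_\infty, b_\infty$ of $\pre(f_\infty)$. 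The key point is then to check the hypothesis of Lemma \ref{lemmaCPDistLimits} for the maps $\ext_\mb{R}(f_k)$: namely that $\ext_\mb{R}(f_k)(x_k)$ converges whenever $x_k \to x_\infty$ in $\mb{R}$. This splits into cases according to whether $x_\infty < a_\infty$, $x_\infty \in (a_\infty, b_\infty)$, $x_\infty > b_\infty$, or $x_\infty$ is an endpoint; in the interior and exterior cases one eventually has $x_k$ on the same side of $[a_k,b_k]$, so $\ext_\mb{R}(f_k)(x_k)$ equals $f_k$ evaluated at $x_k$, $a_k$, or $b_k$ respectively, and convergence follows from $f_k \to f_\infty$ via Lemma \ref{lemmaCPDistLimits}. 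The endpoint cases require a little care but follow because $\ext_\mb{R}(f_k)$ is constant on each side, so along any subsequence the value is either $f_k(a_k)$, $f_k(x_k')$ for some $x_k' \to a_\infty$ with $x_k' \in \pre(f_k)$, or $f_k(b_k)$, all of which converge to $f_\infty(a_\infty)$. Since the limit map is evidently $\ext_\mb{R}(f_\infty)$, we get convergence in the partial map metric, hence in the sup metric by Lemma \ref{lemmaCPDistSup} applied to the compact domain $X$ obtained by restricting attention to a large enough compact interval (or, since everything is eventually constant outside a compact set, directly).

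For the second claim, suppose each $f_k$ is strictly monotonic on a closed interval $[a_k,b_k]_\mb{R}$, say increasing (the decreasing case is symmetric). Then $f_k$ is a homeomorphic embedding of $[a_k,b_k]$ onto $[f_k(a_k), f_k(b_k)]$, so $f_k^{-1}$ is defined on that image interval, and by the inversion clause of Lemma \ref{lemmaCPDistContinuity} we have $f_k^{-1} \to f_\infty^{-1}$ in the partial map metric. Applying the first part of this lemma to the sequence $f_k^{-1}$ gives $\ext_\mb{R}(f_k^{-1}) \to \ext_\mb{R}(f_\infty^{-1})$ in the sup metric, which is exactly the assertion.

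The main obstacle I anticipate is the bookkeeping at the endpoints $x_\infty \in \{a_\infty, b_\infty\}$: there the sequence $x_k$ can straddle the interval $[a_k,b_k]$, so $\ext_\mb{R}(f_k)(x_k)$ may alternate between $f_k(a_k)$, an interior value $f_k(x_k)$, and never reach $b_k$ (if $x_\infty = a_\infty < b_\infty$) — one must argue uniformly that all these possibilities converge to the common value $f_\infty(a_\infty)$, using that $a_k \to a_\infty$, $f_k(a_k) \to f_\infty(a_\infty)$, and continuity of $f_\infty$. A secondary subtlety is that Lemma \ref{lemmaCPDistSup} is stated for compact $X$; since $\ext_\mb{R}(f)$ has domain all of $\mb{R}$, I would either phrase the sup-metric convergence directly (the graphs agree outside a large compact set once $k$ is large, because $a_k,b_k$ are eventually bounded) or restrict to an ambient compact interval containing all the relevant domains and note that the constant tails contribute nothing to the sup distance.
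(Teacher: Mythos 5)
Your proof is correct and takes essentially the same approach as the paper: reduce to Lemma \ref{lemmaCPDistLimits} on a large compact interval via the same case analysis on the position of $x_k$ relative to the domain endpoints, pass to the sup metric via Lemma \ref{lemmaCPDistSup}, and deduce the second claim from the inversion clause of Lemma \ref{lemmaCPDistContinuity}. The paper handles the compactness issue by working with $\ext_I$ for $I = \pre(f_\infty)\oplus 1$ and noting the constant tails, which matches the second alternative you propose.
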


\begin{proof}
Consider a sequence $f_k \to f_\infty$ that converges in the partial map metric.
Let $[a_k,b_k]=\pre(f_k)$. 
Then, $\pre(f_k) \to \pre(f_\infty)$, so for all $k$ sufficiently large, $\pre(f_k)$ is contained in a closed interval such as $I=\pre(f_\infty)\oplus 1$, which is compact. 
Consider a convergent sequence $x_k \to x_\infty$ in $I$. 
In the case where $x_k \in \pre(f_k)$, then $f_k(x_k) \to f_\infty(x_\infty)$ by Lemma \ref{lemmaCPDistLimits}.
Consider the case where $x_k < a_k$, then $f_k(x_k)=f_k(a_k)\to f_\infty(a_\infty)$ by Lemma \ref{lemmaCPDistLimits} since $a_k \to a_\infty$, and $x_\infty \leq a_\infty$ so $f_\infty(x_\infty)=f_\infty(a_\infty)$. 
This holds similarly in the case where $x_k>b_k$, 
so $f_k(x_k) \to f_\infty(x_\infty)$ for every convergent sequence in $I$, 
so $\ext_I(f_k) \to \ext_I(f_\infty)$ in the partial map metric by Lemma \ref{lemmaCPDistLimits}, 
so $\ext_I(f_k) \to \ext_I(f_\infty)$ in the sup metric by Lemma \ref{lemmaCPDistSup}, 
and $\ext_\mb{R}(f_k)$ on the rest of $\mb{R}$ agrees with $\ext_I(f_k)$ at the endpoints of $I$, 
so $\ext_\mb{R}(f_k) \to \ext_\mb{R}(f_\infty)$ in the sup metric by Lemma \ref{lemmaCPDistSup}.
The second part of the lemma then follows since inversion is isometric by Lemma \ref{lemmaCPDistContinuity}. 
\end{proof}

Later we will use $\ext_\mb{R}$ to find an input $s$ where some map $b_\mr{test} : [0,1] \to \mb{R}$ becomes sufficiently large or small, but $b_\mr{test}$ will not always be guaranteed to be monotonic.  Therefore, we will use $\tailsup$ or $\tailinf$ to replace a function $b_\mr{test}$ with a monotonic function $b$ and use $\ext_\mb{R}(b^{-1})$. 
See Definition \ref{defCrush} for example. 

Let $\tailsup,\tailinf:\mb{R}^{[0,1]} \times [0,1] \to \mb{R}$ by 
\begin{align*}
\tailsup(f,u) &= \sup\{f(x): x\in[u,1]\} \\ 
\tailinf(f,u) &= \inf\{f(x): x\in[u,1]\} \\ 
\end{align*}

\begin{lemma}
\label{lemmaTailsupContinuous}
$\tailsup(f)$ and $\tailinf(f)$ vary continuously as $f$ varies in the sup metric.  
\end{lemma}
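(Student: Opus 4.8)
The plan is to show directly that the map $f \mapsto \tailsup(f)$ is $1$-Lipschitz from $(\mb{R}^{[0,1]}, \dist_{\sup})$ to itself, equipped with the sup metric on the target $\mb{R}^{[0,1]}$; the statement for $\tailinf$ follows by applying the result to $-f$ and using $\tailinf(f) = -\tailsup(-f)$. So the core claim is: if $\dist_{\sup}(f,g) \leq \eps$, then $|\tailsup(f,u) - \tailsup(g,u)| \leq \eps$ for every $u \in [0,1]$.

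To prove this Lipschitz bound, I would fix $u$ and argue symmetrically. Since $\dist_{\sup}(f,g) \leq \eps$ means $f(x) \leq g(x) + \eps$ for all $x$, taking the supremum over $x \in [u,1]$ on both sides gives $\tailsup(f,u) \leq \tailsup(g,u) + \eps$, because $\sup_{x \in [u,1]} f(x) \leq \sup_{x \in [u,1]}(g(x) + \eps) = \tailsup(g,u) + \eps$. Swapping the roles of $f$ and $g$ gives the reverse inequality, hence $|\tailsup(f,u) - \tailsup(g,u)| \leq \eps$. Taking the supremum over $u \in [0,1]$ yields $\dist_{\sup}(\tailsup(f),\tailsup(g)) \leq \eps = \dist_{\sup}(f,g)$, which establishes that $\tailsup(\cdot)$ is $1$-Lipschitz and in particular continuous in the sup metric. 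One minor point to check is that $\tailsup(f,u)$ is indeed a well-defined real number: this holds because every $f \in \mb{R}^{[0,1]}$ appearing here is understood to be bounded (or one restricts attention to, say, continuous $f$ on the compact interval $[0,1]$), so the supremum over the nonempty compact-parameter set $[u,1]$ is finite; if the ambient space is all of $\mb{R}^{[0,1]}$ without boundedness, the identity $\tailsup(f,u) - \tailsup(g,u)$ still makes sense as the inequalities above show the two are within $\eps$ of each other whenever $\dist_{\sup}(f,g) \le \eps$, so finiteness of one forces finiteness of the other.

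I do not anticipate a real obstacle here — the argument is essentially the observation that $\sup$ is $1$-Lipschitz with respect to the sup norm and that restricting the index set to a sub-interval preserves this. The only thing requiring a moment's care is bookkeeping the direction of the two one-sided inequalities that together give the absolute-value bound, and confirming that continuity of the individual functions $f$ is not actually needed for this particular lemma (monotonicity or continuity of $\tailsup(f)$ as a function of $u$, which is where those hypotheses matter elsewhere, is not asserted in the statement). Thus the whole proof is short and self-contained, using nothing beyond the definition of the sup metric.
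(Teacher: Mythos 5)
Your proof is correct and takes essentially the same approach as the paper's: both reduce the statement to the elementary observation that a uniform bound $|f-g|\le\eps$ forces the restricted suprema $\tailsup(f,u)$ and $\tailsup(g,u)$ to differ by at most $\eps$. Your variant is in fact marginally cleaner, since the direct estimate $\sup_{[u,1]} f \le \sup_{[u,1]}(g+\eps) = \tailsup(g,u)+\eps$ avoids the paper's appeal to the supremum being \emph{attained} at some $x_{k,u}\in[u,1]$, a step that implicitly requires upper semicontinuity of $f_k$ even though the ambient space is stated as all of $\mb{R}^{[0,1]}$.
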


\begin{proof}
Suppose $f_k \to f_\infty$ in the sup metric and let $\eps > 0$.
Then, 
$|f_k(x)-f_\infty(x)|<\eps$ for $k$ sufficiently large.  
Since $[u,1]$ is compact, the supremum of $\{f_k(x): x\in[u,1]\}$ is attained at some $x_{k,u} \in [u,1]$, 
so $\tailsup(f_k,u) = f_k(x_{k,u}) \leq f_\infty(x_{k,u}) +\eps \leq \tailsup(f_\infty,u) +\eps$, 
and similarly $\tailsup(f_\infty,u) \leq \tailsup(f_k,u) +\eps$. 
Hence, $|\tailsup(f_k,u)-\tailsup(f_\infty,u)|<\eps$, 
which means that $\tailsup(f_k) \to \tailsup(f_\infty)$ in the sup metric.
\end{proof}

\subsection{Parameterization}
\label{subsectionParam}

Here we state some useful properties of conformal maps and we construct a canonical parameterization of paths in the sphere 
%in Definition \ref{} 
that will satisfy the following.

\begin{lemma}
\label{lemmaPathParam}
Given a directed $({\leq}1)$-cell $P\subset\sphere^2$, then 
$\param(P):[0,1] \to P$ satisfies the following.
\begin{enumerate}
\item
\label{itemPathParamHomeo} 
If $P$ is a 1-cell, then $\param(P)$ is a homeomorphism. 
\item
\label{itemPathParamContinuous}
$\param(P)$ varies continuously in the sup-metric 
as $P$ varies in Fréchet distance. 
Likewise, for $\param^{-1}(P) = [\param(P)]^{-1}$ in the partial map metric. 
\item 
\label{itemPathParamEquivariant}
$\param$ is $\orth_3$-equivariant.
\item 
\label{itemPathParamReverse}
$\param(P^-,t) = \param(P,1-t)$ where $P^-$ is the reverse of $P$. 
\end{enumerate}
\end{lemma}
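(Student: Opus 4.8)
The strategy is to build $\param(P)$ by transporting a fixed, canonical parameterization of a standard model cell through a Riemann-type map, and then to read off each of the four properties from well-known stability properties of that construction together with the lemmas already established in Subsection \ref{subsectionPartialMapMetric}. Concretely, for a nondegenerate $1$-cell $P\subset\sphere^2$ with source $s$ and terminal $t$, I would first thicken $P$ in a canonical way — e.g. take a conformal (stereographic) chart sending $P$ into $\mb{C}$, and realize $P$ as the image of $[0,1]_\mb{R}$ under a Riemann map of a canonically associated Jordan domain (a doubled neighborhood, or the complement of $P$ in $\sphere^2$ viewed as a slit domain) normalized by a cross-ratio condition at $s$, $t$, and one further marked point so that the map is unique. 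Pulling the canonical arclength parameterization of the boundary slit back through this normalized conformal map, and then composing with the chart's inverse, gives $\param(P):[0,1]\to P$. For a degenerate $P=\{p\}$ we set $\param(P)$ to be the constant map at $p$. This makes item \ref{itemPathParamHomeo} immediate: away from degeneracy the construction is a composition of homeomorphisms onto $P$.

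For item \ref{itemPathParamContinuous}, the point is that as $P$ varies in Fréchet distance the associated Jordan domain varies appropriately and, by the conformal-convergence results promised in Subsection \ref{subsectionSquish} (the ``squish'' analysis of how a Riemann map of a Jordan domain behaves as the region collapses onto a path), the normalized conformal map varies continuously; hence so does its boundary parameterization, and then $\param(P)$ varies continuously in the sup metric by Lemma \ref{lemmaSupDistPartialApp} and Lemma \ref{lemmaCPDistContinuity} (composition, using that the chart is fixed). The statement for $\param^{-1}(P)$ in the partial map metric follows because inversion of embeddings is an isometry for $\dist_{\cp}$ by the Inversion clause of Lemma \ref{lemmaCPDistContinuity}; one only needs to note that $\pre(\param^{-1}(P))=P$ varies continuously in Hausdorff distance, which follows from Fréchet convergence of $P$. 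The degenerate case is handled separately but is easy: if $P_k\to\{p\}$ in Fréchet distance then $\diam(P_k)\to 0$, and the normalization forces $\param(P_k)$ to converge uniformly to the constant map at $p$; conversely a limit of nondegenerate cells that stays nondegenerate is covered by the conformal-stability argument. Item \ref{itemPathParamEquivariant} is built in: every ingredient — the choice of doubled/slit domain, the cross-ratio normalization (cross-ratios on $\sphere^2$ are preserved by $\orth_3$ once one fixes the chart equivariantly, or more cleanly one phrases the normalization intrinsically), and the canonical arclength parameterization — is natural with respect to isometries of $\sphere^2$, so $\param(g\cdot P,t)=g\cdot\param(P,t)$ for $g\in\orth_3$. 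Item \ref{itemPathParamReverse} likewise is forced by choosing the marked-point normalization symmetrically in $s$ and $t$, so that reversing $P$ (swapping $s\leftrightarrow t$) precomposes the boundary parameterization with $t\mapsto 1-t$.

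The main obstacle will be item \ref{itemPathParamContinuous}, specifically controlling the conformal map uniformly as the Jordan domain degenerates: Riemann maps are notoriously unstable under Carathéodory-type degeneration, and a path has empty interior, so the domain one feeds to the Riemann mapping theorem is genuinely collapsing. This is exactly why the construction must route through the ``squish'' description in Subsection \ref{subsectionSquish} rather than a naive Riemann map of a neighborhood; I would lean on that subsection's quantitative statement to get uniform convergence of the boundary parameterizations, and use compactness of $\sphere^2$ (hence of $[0,1]$, $P$, and the relevant chart images) throughout so that Lemmas \ref{lemmaCPDistLimits}, \ref{lemmaCPDistSup}, and \ref{lemmaPMFrechetDistance} apply. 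A secondary nuisance is the seam between the degenerate and nondegenerate regimes — making sure $\param$ is genuinely sequentially continuous on all of the space of $({\leq}1)$-cells, including sequences of nondegenerate cells limiting to a point and mixed sequences — which I would dispatch by a short case analysis on $\liminf\diam(P_k)$.
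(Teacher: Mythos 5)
Your high-level strategy — uniformize the complement of $P$ conformally and read off a parameterization from the boundary behavior — is the same as the paper's, but the proposal is missing the one idea that makes the construction actually work, and it leans on the wrong stability result.

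The genuine gap is how to collapse the boundary data to a single parameterization of $P$. When you conformally map the disk (or $\overline{\mb{C}}\setminus[0,1]$) onto the slit domain $\sphere^2\setminus P$, the Carathéodory boundary extension is $2$-to-$1$ onto the interior of $P$: each interior point of $P$ has two prime ends, one on each side of the slit. So ``pulling back the canonical arclength parameterization of the boundary slit'' does not produce a map $[0,1]\to P$; it produces two arcs on $\sphere^1$ both mapping onto $P$, and these two arcs are \emph{not} related by a canonical symmetry. Your proposal also notes that fixing $s$ and $t$ leaves a one-parameter residual Möbius freedom, and you propose to kill it with ``one further marked point'' — but there is no canonical third marked point, and without one you lose both well-definedness and $\orth_3$-equivariance. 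The paper resolves both problems at once with a construction you did not anticipate: for each $y$ in the interior of $P$, take the hyperbolic geodesic of $\sphere^2\setminus P$ joining the two prime ends over $y$, and define $\param^{-1}(P,y)$ to be the (normalized) spherical area of the region bounded by that geodesic on the $s$-side. Both ``hyperbolic geodesic'' and ``area'' are intrinsic to the conformal structure, so the residual normalization freedom is automatically quotiented out, the two prime ends over $y$ are symmetrized away, and equivariance under $\orth_3$ (which preserves area and angles) is immediate. This is the step your argument cannot complete as written.

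Two secondary issues. First, you cite the ``squish'' analysis of Subsection \ref{subsectionSquish} for the continuity in item \ref{itemPathParamContinuous}, but that lemma is about a Jordan domain collapsing onto a path (two boundary arcs converging together); that is not the degeneration occurring here, and the paper uses it elsewhere. The continuity of the relevant Riemann map is instead handled by Radó's theorem (Theorem \ref{theoremRado}) after passing through the $2$-to-$1$ map $f(z)=\bigl(\tfrac{z+1}{z-1}\bigr)^2$, which turns the slit into an honest Jordan curve so that Radó and Carathéodory apply. Second, even granting a parameterization, continuity of the area functional is not automatic: area of the region bounded by a curve fails to be continuous in Fréchet distance in general (Osgood curves). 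The paper needs Lemma \ref{lemmaAreaContinuous}, which applies because hyperbolic geodesics are analytic except at one point and hence have measure zero; your proposal does not flag this and would need to.
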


\begin{definition}[parameterization of a path]
\label{defPathParam}
Given a path $P \subset \sphere^2$ directed from $s$ to $t$, 
and a point $y\in P$. 
Let $\param^{-1}(P) : P \to [0,1]_\mb{R}$ by 
$x = \param^{-1}(P,y) = \area(C)/4\pi$ 
where $C$ is the closed region bounded by the hyperbolic geodesic curve $g = g(P,y)$ from $y$ to $y$ through $\sphere^2 \setminus P$ that contains $s$ provided that $y \not\in \{s,t\}$. 
Otherwise, $g(P,y)=y$ for an endpoint, and $\param^{-1}(P,s) = 0$, and $\param^{-1}(P,t)=1$. 
Let $\param(P,x) = y$ provided $P$ is not degenerate, and let $\param(P,x) = P$ for a degenerate path.   
\end{definition}

Extensions of the Riemann mapping theorem by Constantin Carathéodory and Tibor Radó will be vital.  

\begin{theorem}[Carathéodory \cite{caratheodory1913gegenseitige}]
\label{theoremCaratheodory}
A conformal map from the open unit disk to a Jordan domain extends to a homeomorphism of their closures. 
\end{theorem}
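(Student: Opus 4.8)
The plan is to establish the classical Carathéodory extension theorem by controlling the boundary behavior of a conformal map $f:\disk^\circ \to D$ via a length–area (modulus) estimate. First I would record the two facts that make the proof run: (i) $f$ is a homeomorphism onto $D$, so it suffices to produce a continuous extension to $\cl(\disk^\circ)=\disk$ that is injective on $\sphere^1$, since a continuous bijection from the compact space $\disk$ to the Hausdorff space $\cl(D)$ is automatically a homeomorphism; and (ii) the Jordan curve theorem, which tells us $\partial D$ is a Jordan curve and in particular locally connected, with the property that for any two of its points the two complementary arcs are the only connected subsets joining them — this is what lets us recover a unique boundary point from a shrinking family of crosscuts.

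The core estimate is the following. Fix $\zeta \in \sphere^1$ and for small $r>0$ let $C_r = \{z\in\disk^\circ : |z-\zeta| = r\}$, a circular crosscut of the disk. Its image $f(C_r)$ is an arc in $D$; I would bound its Euclidean length by
\[
\mathrm{len}(f(C_r)) = \int_{C_r} |f'| \, ds \le \left(\pi r \int_{C_r} |f'|^2 \, ds\right)^{1/2}
\]
by Cauchy–Schwarz. Integrating $\frac{1}{r}\mathrm{len}(f(C_r))^2$ against $dr$ over $r\in[\rho,\sqrt\rho]$ and using that $\int\!\!\int |f'|^2\, dA = \area(f(\text{region})) \le \area(D) < \infty$, one gets that $\int_\rho^{\sqrt\rho} \frac{\mathrm{len}(f(C_r))^2}{r}\,dr \to 0$ as $\rho\to 0$; since the weight $\frac{dr}{r}$ has total mass $\frac12\log(1/\rho)\to\infty$ on that interval, there is a sequence $r_n\to 0$ with $\mathrm{len}(f(C_{r_n})) \to 0$. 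Hence the crosscut images $f(C_{r_n})$ have diameters tending to $0$; their endpoints lie on $\partial D$ (a limit point of $f(C_{r_n})$ in $\cl(D)$ cannot be interior, else $f$ would not be proper), so by local connectedness of the Jordan curve $\partial D$ the two endpoints of $f(C_{r_n})$ are joined on $\partial D$ by an arc of diameter $\to 0$. The crosscut $C_{r_n}$ separates $\disk^\circ$ into a small piece $U_n$ containing a neighborhood of $\zeta$ on $\sphere^1$ and the rest; $f(U_n)$ is the component of $D\setminus f(C_{r_n})$ bounded by $f(C_{r_n})$ together with the small boundary arc, so $\diam f(U_n)\to 0$. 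This forces $\bigcap_n \cl(f(U_n))$ to be a single point of $\partial D$; define $f(\zeta)$ to be that point.

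With the extension defined pointwise, I would finish in three short steps. Continuity at each $\zeta\in\sphere^1$: given $\eps>0$ pick $n$ with $\diam\cl(f(U_n))<\eps$; then $f$ maps the neighborhood $U_n\cup(U_n\cap\sphere^1)$ of $\zeta$ into an $\eps$-ball, using that points of $\sphere^1$ near $\zeta$ lie in $U_n$ and their images are defined as limits inside $\cl(f(U_n))$. Surjectivity onto $\cl(D)$: the extended $f$ is continuous on the compact connected $\disk$, so its image is compact, contains $D$, hence contains $\cl(D)$; and it is contained in $\cl(D)$. Injectivity on $\sphere^1$: if $f(\zeta_1)=f(\zeta_2)=p$ with $\zeta_1\ne\zeta_2$, the two open arcs of $\sphere^1$ between them map into $\cl(D)$, and a standard argument (the preimage of $p$ under the interior-injective map is just $\{\zeta_1,\zeta_2\}$, and the image of each arc together with $p$ is a closed curve) shows one of the two complementary Jordan subdomains would map under $f$ to a set whose boundary is a proper subarc of $\partial D$, contradicting that $f|_{\disk^\circ}$ is a bijection onto all of $D$ together with the fact that $\partial D\setminus\{p\}$ is connected; the cleanest route is to invoke that a conformal map onto a Jordan domain cannot collapse a boundary arc, since prime ends of a Jordan domain are in bijection with $\partial D$.

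The main obstacle is the second step — the modulus/length–area argument producing crosscuts of vanishing image diameter, and then correctly invoking the Jordan curve theorem (local connectedness of $\partial D$) to conclude that the corresponding subdomains $U_n$ also have images of vanishing diameter. The length–area inequality itself is a one-line Cauchy–Schwarz, but packaging it so that a good sequence $r_n$ exists, and then translating "the crosscut image is tiny" into "the region cut off is tiny" through the geometry of the Jordan domain, is where all the care is needed; the rest is soft point-set topology on compact Hausdorff spaces.
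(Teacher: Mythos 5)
The paper does not prove Theorem \ref{theoremCaratheodory}; it is stated as a classical result and cited to Carath\'eodory's 1913 paper, so there is no in-paper proof to compare against. On its own merits, your sketch is the standard length--area proof of the Carath\'eodory extension theorem and its main strategy (Cauchy--Schwarz on circular crosscuts $C_r$ centered at $\zeta\in\sphere^1$, a $dr/r$-integration to extract a sequence $r_n\to 0$ with $\mathrm{len}(f(C_{r_n}))\to 0$, local connectedness of $\partial D$ to turn small crosscut images into small cut-off subdomains, and nested compact sets to define and verify continuity of the boundary extension) is correct. Two smaller things to flesh out: you should note that $f(U_n)$ is identified as the small-diameter component of $D\setminus f(C_{r_n})$ by an area estimate ($\area(f(U_n))=\iint_{U_n}|f'|^2\,dA\to 0$, whereas the other component has area bounded below); and you should check that $f(\zeta)$ is independent of the choice of good sequence $r_n$, e.g.\ by interleaving two such sequences.

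The genuine gap is in the injectivity step, where you end with ``the cleanest route is to invoke that \dots prime ends of a Jordan domain are in bijection with $\partial D$.'' That is circular: the prime-end correspondence for Jordan domains \emph{is} a form of Carath\'eodory's theorem, so it cannot be used to prove it. Your first sketch of the argument is the right one, but it is missing its final step. Concretely: if $f(\zeta_1)=f(\zeta_2)=p$ with $\zeta_1\neq\zeta_2$, let $J$ be the Jordan curve formed by the images of the two radii $[0,\zeta_1]$ and $[0,\zeta_2]$; then $J\cap\partial D=\{p\}$ and the bounded complementary component $B$ of $J$ lies in $D$. The preimage $f^{-1}(B)$ is one of the two sectors of $\disk^\circ$ cut off by the radii, bounded on $\sphere^1$ by an arc $I$, and your definition of the boundary extension forces $f(I)\subset\cl(B)\cap\partial D\subset J\cap\partial D=\{p\}$, so $f$ is constant on $I$. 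To derive a contradiction from this you still need an analytic ingredient --- e.g.\ the Schwarz reflection principle (since $f-p$ extends continuously to $I$ and vanishes there, it extends analytically across $I$ and vanishes on an arc, hence $f\equiv p$ by the identity theorem) --- which your write-up does not supply. With that added, the proof is complete.
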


\begin{theorem}[Radó \cite{rado1923representation}; see also {\cite[Theorem~2.11]{pommerenke2013boundary}}]
\label{theoremRado}
Given a sequence of conformal maps $h_k$ from the open unit disk to a Jordan domain with $h_k(0)=0$ and $\mr{d}h_k(z)/\mr{d}z > 0$, if there is a parameterization of $\partial h_k(\disk)$ that converges uniformly, then $h_k$ converges uniformly.  
\end{theorem}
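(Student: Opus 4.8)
The plan is to realize each $h_k$ as the normalized Riemann map of the Jordan domain $D_k = h_k(\disk)$ bounded by the curve $\gamma_k : \sphere^1 \to \partial D_k$ of the hypothesis, show that $\{h_k\}$ is precompact in $C(\cl\disk)$ with a unique subsequential limit --- the normalized Riemann map $h_\infty$ of the domain $D_\infty$ bounded by $\gamma_\infty = \lim_k \gamma_k$ --- and conclude that the whole sequence converges uniformly. First I would record the consequences of uniform boundary convergence that the rest uses. Since $\gamma_k \to \gamma_\infty$ uniformly, $\partial D_k \to \partial D_\infty$ in Hausdorff distance, and a winding-number comparison (the straight-line homotopy from $\gamma_k$ to $\gamma_\infty$ misses any point farther than $\|\gamma_k - \gamma_\infty\|_\infty$ from $\partial D_\infty$) shows that every compact $K \subset D_\infty$ satisfies $K \subset D_k$ for large $k$, and likewise $\cl D_k \to \cl D_\infty$ in Hausdorff distance; in particular $0 \in D_\infty$, and $\dist(0,\partial D_k) \geq c > 0$ and $D_k \subseteq B(0,R)$ for all large $k$. (If $\gamma_\infty$ is degenerate, then $\diam D_k \to 0$ and $h_k \to 0$ uniformly, so assume $\gamma_\infty$ is a nondegenerate Jordan curve.) From $D_k \subseteq B(0,R)$ the family $\{h_k\}$ is uniformly bounded, hence normal on $\disk^\circ$ by Montel's theorem; from $B(0,c) \subseteq D_k$ and the Schwarz lemma applied to $h_k^{-1}$ we get $h_k'(0) \geq c$.

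Next I would prove the key estimate: $\{h_k\}$ is uniformly equicontinuous on $\cl\disk$. On compact subsets of $\disk^\circ$ this is immediate from the Koebe distortion theorem and Cauchy estimates together with the uniform bound $|h_k|\le R$. Near $\sphere^1$ I would use the length--area method: fixing $z_0 \in \sphere^1$ and small $\delta$, Cauchy--Schwarz together with $\area(D_k) \le \pi R^2$ produces a radius $\rho_k \in [\delta,\sqrt\delta]$ for which the crosscut $c_k = \{|z-z_0|=\rho_k\}\cap\disk^\circ$ has $\len(h_k(c_k)) \le \lambda(\delta)$ with $\lambda(\delta) = O(\delta^{1/4}/\sqrt{\log(1/\delta)}) \to 0$. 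This crosscut cuts off the lune $W_k = B(z_0,\rho_k)\cap\disk^\circ \supseteq B(z_0,\delta)\cap\disk^\circ$, and $\partial(h_k(W_k))$ is $h_k(c_k)$ together with a subarc of $\partial D_k$ whose endpoints are at distance $\le \lambda(\delta)$. The crucial point is that uniform convergence of the parametrizations makes the curves $\partial D_k$ \emph{uniformly} locally connected: any two points of $\partial D_k$ at distance $\le s$ are joined along $\partial D_k$ by a subarc of diameter $\le \mu(s)$ with $\mu(s) \to 0$ independently of $k$ --- this transfers the uniform local connectivity of the fixed Jordan curve $\gamma_\infty$ to $\gamma_k$ via $\gamma_\infty^{-1}$ and the uniform bound $\|\gamma_k-\gamma_\infty\|_\infty$. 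Hence one of the two subarcs of $\partial D_k$ joining the endpoints of $h_k(c_k)$ has diameter $\le \mu(\lambda(\delta))$; it must be the one bounding $h_k(W_k)$, for otherwise $h_k(\disk^\circ\setminus\cl W_k) \ni h_k(0)=0$ would lie within $\lambda(\delta)+\mu(\lambda(\delta))$ of $\partial D_k$, contradicting $\dist(0,\partial D_k)\ge c$ once $\delta$ is small. Therefore $\diam h_k(B(z_0,\delta)\cap\disk^\circ) \le \lambda(\delta)+\mu(\lambda(\delta))$ uniformly in $z_0$ and in large $k$, and combining with the interior estimate yields one modulus of continuity valid on all of $\cl\disk$ for all large $k$.

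Granting this, I would finish as follows. By Arzel\`a--Ascoli, pass to a subsequence with $h_{k_j} \to g$ uniformly on $\cl\disk$; then $g$ is holomorphic on $\disk^\circ$ with $g(0)=0$ and $g'(0) = \lim_j h_{k_j}'(0) \ge c > 0$, so $g$ is univalent by Hurwitz's theorem. Uniform convergence on $\sphere^1$ together with $\partial D_{k_j} \to \partial D_\infty$ gives $g(\sphere^1) \subseteq \partial D_\infty$, while $g(\disk^\circ)$ is open (nonconstant holomorphic image), is contained in $\cl D_\infty$ hence in $D_\infty$, and has $\partial(g(\disk^\circ)) \subseteq g(\sphere^1) \subseteq \partial D_\infty$; thus $g(\disk^\circ)$ is relatively clopen in the connected set $D_\infty$ and nonempty, so $g(\disk^\circ) = D_\infty$. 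By uniqueness of the normalized Riemann map, $g = h_\infty$ regardless of the subsequence, so the whole sequence $h_k \to h_\infty$ uniformly on $\cl\disk$. Throughout, the extensions of $h_k$ and $h_\infty$ to homeomorphisms $\cl\disk \to \cl D_k$ and $\cl\disk \to \cl D_\infty$ are supplied by Carath\'eodory's Theorem~\ref{theoremCaratheodory}.

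The hard part will be the uniform equicontinuity of $\{h_k\}$ up to $\sphere^1$: both the length--area estimate and, more importantly, the passage from uniform convergence of the boundary parametrizations to \emph{uniform} local connectivity of the boundary curves, together with the dichotomy that identifies which side of the short crosscut is the small one. The remaining ingredients --- normality, Hurwitz, the clopen surjectivity argument, and the subsequence principle --- are routine.
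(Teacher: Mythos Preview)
The paper does not supply a proof of this theorem: it is stated as a classical result and attributed to Rad\'o and to Pommerenke's book, and is used as a black box throughout. So there is no ``paper's own proof'' to compare against.

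That said, your proposal is essentially the standard proof one finds in the cited reference (Pommerenke, Theorem~2.11): normality on compacta via Montel, uniform equicontinuity up to the boundary via the length--area (Wolff) lemma combined with uniform local connectivity of the target boundaries, then Arzel\`a--Ascoli, Hurwitz, and uniqueness of the normalized Riemann map to identify the limit. The architecture is correct and the identification of the hard step --- transferring uniform local connectivity from $\gamma_\infty$ to the $\gamma_k$ uniformly in $k$, and the dichotomy picking out the small side of the crosscut --- is exactly right. One minor quibble: your stated bound $\lambda(\delta)=O(\delta^{1/4}/\sqrt{\log(1/\delta)})$ is not what the length--area argument actually gives; the standard estimate is $\lambda(\delta)=O(1/\sqrt{\log(1/\delta)})$ (no power of $\delta$). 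This is harmless since all you need is $\lambda(\delta)\to 0$, but it is worth correcting.
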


\begin{remark}
We can restate Radó's theorem as saying that $h$ varies continuously in the sup metric as $\partial h$ varies in Fréchet distance. 
Also, we can replace the conditions $h_k(0)=0$ and $\mr{d}h_k(z)/\mr{d}z > 0$ 
with the condition that there are 3 points on the unit circle where the image of the continuous extension of $h_k$ converges. 
\end{remark}

We say a path or a simple closed curve $S$ is \df{nonOsgood} when the 2-dimensional Hausdorff measure of $S$ is 0 \cite{osgood1903jordan}. 
We show that the area bounded by a nonOsgood curve is continuous. 
Note that the area of the region bounded by a curve is not continuous in general, as can be seen in the case of a sequence of nonOsgood curves converging to an Osgood curve. 

\begin{lemma}\label{lemmaAreaContinuous}
Let $S \subset \sphere^2$ be a nonOsgood directed closed curve and $C$ be the region to the left of $S$, 
then $\area(C)$ varies continuously as $S$ varies in Fréchet distance. 
\end{lemma}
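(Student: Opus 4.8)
The plan is to reduce the continuity of $\area(C)$ to the continuity of a conformal (Riemann) parametrization of the region $C$, which is controlled by Radó's theorem (Theorem~\ref{theoremRado}), combined with the fact that a nonOsgood curve contributes no area. First I would set up a sequence of nonOsgood directed closed curves $S_k \to S_\infty$ in Fréchet distance, where $S_\infty$ is also nonOsgood, and let $C_k$ be the region to the left of $S_k$. Since $S_k$ is a simple closed curve, $C_k$ is a Jordan domain, so by Carathéodory's theorem (Theorem~\ref{theoremCaratheodory}) there is a conformal homeomorphism $h_k : \cl(\disk) \to \cl(C_k)$, which we may normalize by fixing three boundary points: parametrize $\partial C_k = S_k$ by $\param(S_k)$ (which varies continuously in the sup metric as $S_k$ varies in Fréchet distance by Lemma~\ref{lemmaPathParam}, item~\ref{itemPathParamContinuous}), and require $h_k$ to send three fixed points of $\sphere^1$ to $\param(S_k)$ at three fixed parameter values. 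Then the boundary parametrizations $h_k|_{\sphere^1}$ converge uniformly, so by the remark following Radó's theorem, $h_k \to h_\infty$ uniformly on $\cl(\disk)$.

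Next I would express the area as an integral and pass to the limit. Working in a chart (or directly on $\sphere^2$ with its area form), write $\area(C_k) = \int_{\disk} |J_{h_k}|$, the integral of the Jacobian of $h_k$ pulled back from the spherical area form; since $h_k$ is orientation-preserving and conformal on $\disk^\circ$, the Jacobian is nonnegative and equals $|\nabla h_k|^2$ up to the conformal factor of the metric. The subtlety is that uniform convergence of $h_k$ does not by itself give convergence of the $L^1$ norms of the derivatives. To handle this I would instead use a boundary integral (a Green's-theorem / shoelace formula on the sphere, or in a stereographic chart the planar formula $\area = \frac12\oint (x\,dy - y\,dx)$) expressing $\area(C_k)$ in terms of $h_k|_{\sphere^1}$ alone. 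The obstruction is that $h_k|_{\sphere^1}$ need not be rectifiable, so the contour integral is not literally defined; here is where the nonOsgood hypothesis enters.

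The cleanest route around this, and the step I expect to be the main obstacle, is to control the area via an exhaustion: for $r<1$ the curves $h_k(r\sphere^1)$ are smooth, $\area(h_k(\disk_r)) = \frac12\oint_{h_k(r\sphere^1)}(x\,dy-y\,dx)$ is continuous in $k$ for fixed $r$ by uniform convergence of $h_k$ on $\cl(\disk)$, and $\area(h_k(\disk_r)) \nearrow \area(C_k)$ as $r\to1$. The point that needs the nonOsgood hypothesis is the \emph{uniformity} of this monotone convergence: I claim $\area(C_k) - \area(h_k(\disk_r)) = \area(h_k(\disk\setminus\disk_r)) \to 0$ as $r\to1$, uniformly in $k$. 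Since $h_k(\disk\setminus\disk_r)$ is an annular neighborhood of $S_k$ whose outer boundary is within Fréchet-distance (hence Hausdorff-distance) going to $0$ of $S_k$ uniformly in $k$ (using uniform convergence of $h_k$ near $\sphere^1$ and equicontinuity of the $h_k$, which follows from Radó since the limit $h_\infty$ is a fixed uniformly continuous map), this region is contained in a shrinking Hausdorff-neighborhood $S_k \oplus \delta(r)$ with $\delta(r)\to0$; and because each $S_k$ has $2$-dimensional Hausdorff measure $0$ with, by the same equicontinuity argument and the nonOsgood hypothesis on the common limit, a uniform modulus, the area of $S_k\oplus\delta$ tends to $0$ uniformly in $k$ as $\delta\to0$. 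Given this uniform tail bound, a standard $\eps/3$ argument (choose $r$ so both tails are $<\eps/3$, then $k$ large so $|\area(h_k(\disk_r)) - \area(h_\infty(\disk_r))| < \eps/3$) yields $\area(C_k) \to \area(C_\infty)$, completing the proof.
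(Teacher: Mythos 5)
Your approach would ultimately work, but it is a considerably heavier route than what the lemma requires, and the step you single out as ``the main obstacle'' is precisely where the phrasing slips. The paper dispenses with conformal parametrizations entirely and argues directly about the symmetric difference $\Delta_k = (C_k\setminus C_\infty)\cup(C_\infty\setminus C_k)$: it shows, via a homotopy/winding-number argument (spherical linear interpolation from $S_k$ to $S_\infty$ avoids any point of $\sphere^2\setminus(S_\infty\oplus\eps)$, hence such a point stays on the same side of $S_k$ as of $S_\infty$), that $\Delta_k\subseteq S_\infty\oplus\eps$ once $\dist_\mr{F}(S_k,S_\infty)<\eps$. From there the nonOsgood hypothesis on $S_\infty$ alone gives $\area(\Delta_k)\to 0$. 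No Carath\'eodory, no Rad\'o, no exhaustion by interior circles.

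The weak point in your write-up is the claim that the $S_k$ have Hausdorff measure zero ``with a uniform modulus'' deduced from equicontinuity and nonOsgoodness of the limit. That phrase doesn't carry content: there is no ``modulus'' attached to having measure zero. What you actually need, and what you gesture at but don't quite state, is the containment $S_k\oplus\delta\subseteq S_\infty\oplus(\dist_\mr{F}(S_k,S_\infty)+\delta)$ for all $k$; then for $k$ large, $\area(S_k\oplus\delta)$ is bounded by the area of a fixed small neighborhood of $S_\infty$, and for the finitely many remaining $k$ you use nonOsgoodness of each $S_k$ individually. Once you make that observation, the Riemann-map detour buys you nothing --- you can bound $\area(\Delta_k)$ (and hence $|\area(C_k)-\area(C_\infty)|$) directly, as in the paper, provided you also have the topological fact that $\Delta_k$ lies near $S_\infty$; but that topological fact is what actually requires an argument, and it is not supplied by the conformal machinery either. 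In short: your tail estimate reduces to the same ``neighborhood of $S_\infty$ has small area'' fact the paper uses, but you introduce Rad\'o/Carath\'eodory to control an exhaustion that ends up replaced by a Hausdorff-neighborhood containment anyway, while the genuinely nontrivial step (locating $\Delta_k$ near $S_\infty$, which the paper handles with the $p_0,p_1$ winding argument) is not addressed.
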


\begin{proof}%[Proof of Lemma \ref{lemmaAreaContinuous}]
Consider a convergent sequence of such curves $S_k \to S_\infty$ bounding respective regions $C_k$. 
We claim that if $\dist(S_k,S_\infty) < \eps$ for $\eps$ sufficiently small, then the symmetric difference $\Delta_k = (C_k \setminus C_\infty)\cup(C_\infty \setminus C_k)$ is contained in $S_\infty \oplus \eps$.
We may assume $\eps$ is small enough that there exists points $p_0,p_1$ such that 
$(p_0 \oplus \eps) \cap C_\infty = \emptyset$ and
$(p_1 \oplus \eps) \subset C_\infty$.
Since $\dist(S_k,S_\infty) < \eps$ there is a map $f : S_\infty \to S_k$ such that $\|f(x)-x\| < \eps$, so $x$ is always closer to $f(x)$ than to $p_0$ or $p_1$.
Therefore, a homotopy by spherical linear interpolation from $S_k$ to $S_\infty$ stays within $\sphere^2\setminus\{p_0,p_1\}$, 
which implies that $p_0 \not\in C_k$ and $p_1 \in C_k$.
Since this holds for each such pair $p_0,p_1$, 
each point of $\sphere^2 \setminus (S_\infty\oplus\eps)$ stays on the same side of $S_k$ throughout a homotopy to $S_\infty$, and as such is not a point of $\Delta_k$. 
Thus, $\Delta_k \subseteq S_\infty\oplus\eps$.

Consider $\eps_1 > 0$.
Since $S_\infty$ has area 0, there is an open cover $U$ of $S_\infty$ that has area at most $\eps_1$.
Since $\sphere^2\setminus U$ and $S_\infty$ are compact and disjoint, they are bounded apart by some $\eps_2 > 0$ depending on $\eps_1$. 
Since $S_k \to S_\infty$, we have for all $k$ sufficiently large that $\dist(S_k,S_\infty) < \eps_2$, 
so by the claim above, $\Delta_k \subseteq S_\infty \oplus \eps_2$, 
so $\Delta_k \subseteq U$, so $\Delta_k$ has area at most $\eps_1$.
Thus, $\area(\Delta_k) \to 0$ as $k\to \infty$, which implies that $\area(C_k) \to \area(C_\infty)$.
\end{proof}

\begin{claim}
\label{claimParamPathG}
$g$ in Definition \ref{defPathParam} varies continuously in Fréchet distance as $P$ and $y$ vary.
Moreover, if $y_1 <_P y_2$, then $C_1 \subset C_2^\circ$ where $C_i$ are the corresponding bounded regions.   
\end{claim}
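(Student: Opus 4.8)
The plan is to prove the two assertions separately, using the conformal-mapping machinery from Subsection \ref{subsectionParam} together with the continuity lemmas already established. For the first assertion, the key is to express the geodesic $g(P,y)$ as the image under a conformal (Riemann) map of a fixed family of hyperbolic geodesics in the disk, and then invoke Radó's theorem (Theorem \ref{theoremRado}, in the restated form of the remark following it) to get continuous dependence of that conformal map on the region, hence on $P$. Concretely, given a directed $1$-cell $P$ from $s$ to $t$, the complement $\sphere^2 \setminus P$ is a Jordan domain whose boundary traverses $P$ twice (once on each side), with $s$ and $t$ as the two ``corner'' points. Fix a conformal map $h_P : \disk^\circ \to \sphere^2 \setminus P$, normalized by sending three marked boundary points to fixed targets — say the two preimages of $s$ and $t$ to $\pm 1$ and some interior-of-$P$ marker to $i$. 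By Carathéodory's theorem (Theorem \ref{theoremCaratheodory}) $h_P$ extends to a homeomorphism of closures, and $\param^{-1}(P,y)$ is the normalized area of the region cut off by $g(P,y) = h_P(\widetilde g_y)$ where $\widetilde g_y$ is the hyperbolic geodesic in $\disk^\circ$ joining the two preimages of $y$ under the boundary extension of $h_P$.

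The main steps are then: (1) As $P \to P_\infty$ in Fréchet distance, the boundary curves $\partial(\sphere^2 \setminus P) = \partial h_P(\disk)$ converge in Fréchet distance (this is essentially a restatement of Fréchet convergence of $P$, since the boundary is $P$ run twice), and the three marked boundary points converge; so by the restated Radó theorem, $h_P \to h_{P_\infty}$ uniformly, and by Carathéodory the boundary extensions converge uniformly as well. (2) The point $y$ varying along $P$ corresponds, via $h_P^{-1}$, to its two preimages on $\sphere^1$ varying continuously; here I would use Lemma \ref{lemmaCPDistContinuity} (inversion) applied to the boundary homeomorphism to see that $h_P^{-1}(y)$ varies continuously as both $h_P$ and $y$ vary. (3) Hyperbolic geodesics in $\disk^\circ$ with continuously varying endpoints vary continuously in Fréchet distance (elementary, since they are circular arcs meeting $\sphere^1$ orthogonally, determined by their endpoints). (4) Composing, $g(P,y) = h_P(\widetilde g_y)$ varies continuously in Fréchet distance by Lemma \ref{lemmaPMFrechetDistance}, since $h_P$ is an embedding on $\cl(\disk^\circ)$ and $\widetilde g_y$ is a path varying in Fréchet distance. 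The degenerate-endpoint cases $y \in \{s,t\}$, where $g(P,y)$ collapses to a point, are handled by noting that as $y \to s$ the two boundary preimages of $y$ merge, so $\widetilde g_y$ shrinks to a point and hence so does its $h_P$-image.

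For the monotonicity assertion: if $y_1 <_P y_2$, then in the disk model the geodesics $\widetilde g_{y_1}$ and $\widetilde g_{y_2}$ are disjoint (their endpoint pairs do not interleave on $\sphere^1$, since $y_1$ separates $s$ from $y_2$ along $P$), and the region $\widetilde C_1$ cut off toward the $s$-side is strictly contained in the open region $\widetilde C_2$; applying the homeomorphism $h_P$ of closures and using that $h_P$ maps interiors to interiors gives $C_1 \subset C_2^\circ$. The one subtlety is the boundary behavior — one must check that $g(P,y_1)$ and $g(P,y_2)$ share no point of $P$, which holds because their only possible common boundary points would be among $\{s,t\}$, and by construction both geodesics pass through $\sphere^2 \setminus P$ away from $P$ except at their single shared endpoint $y_i$; since $y_1 \neq y_2$ they are interior-disjoint and $C_1 \subsetneq C_2$ with the containment open on the $C_2$ side.

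I expect the main obstacle to be step (1) — verifying carefully that Fréchet convergence of the directed $1$-cell $P$ translates into a convergence of the Jordan domains $\sphere^2 \setminus P$ to which Radó's theorem applies, in particular pinning down the three marked boundary points so that the normalization is stable in the limit (and dealing with the fact that the boundary of $\sphere^2\setminus P$ is a non-embedded traversal of $P$, so one is really applying the remark's three-point version of Radó rather than the literal statement). Everything downstream of that is an assembly of Lemmas \ref{lemmaCPDistContinuity}, \ref{lemmaPMFrechetDistance}, and \ref{lemmaAreaContinuous} (the last giving continuity of $\param^{-1}$ itself, though the claim as stated only asks about $g$).
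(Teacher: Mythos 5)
Your outline is structurally sound and close in spirit to the paper's proof, but it has a genuine gap at the foundational step, and it is exactly the step you flag as ``the main obstacle.''

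You treat $\sphere^2 \setminus P$ as a Jordan domain and invoke Carathéodory to get a homeomorphism $\cl(\disk^\circ) \to \cl(\sphere^2\setminus P)$ whose boundary restriction picks out, for each $y$ in the interior of $P$, two preimages on $\sphere^1$. That cannot work as stated: by the paper's own definition a Jordan domain is an open disk whose topological boundary is a simple \emph{closed} curve, and $\partial(\sphere^2\setminus P) = P$ is a $1$-cell, not a closed curve. Carathéodory (Theorem \ref{theoremCaratheodory}) does not give a homeomorphism of closures here; the Riemann map extends continuously to $\cl(\disk)$, but the boundary extension is $2$-to-$1$ on $\sphere^1 \setminus \{\pm 1\}$ (both arcs of the circle get mapped onto $P$). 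Likewise Radó (Theorem \ref{theoremRado}) is stated for Jordan domains with a uniformly converging \emph{boundary parameterization}; running $P$ twice is not a priori such a parameterization, and the ``three-point version'' in the remark doesn't by itself repair the non-injectivity. So the sentence ``$h_P$ extends to a homeomorphism of closures'' is where the argument would fail if pushed through literally, and the monotonicity argument inherits the same problem, since it rests on non-interleaving of boundary preimages under a supposed boundary homeomorphism.

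The paper's fix is the one genuinely non-obvious idea in the proof, and it is absent from your proposal: after normalizing $P$ to run from $0$ to $\infty$ and avoid $1$ (via the cross-ratio map $a_{s,t,q}$), it pulls back along the $2$-to-$1$ map $f(z)=\bigl(\tfrac{z+1}{z-1}\bigr)^2$. Because $f$ is branched precisely at $\pm 1$, the preimage $f^{-1}(P)$ is a genuine simple \emph{closed} curve, and the region $C$ it bounds containing $0$ is a genuine Jordan domain. Now Carathéodory and Radó apply on the nose, $h^{-1}f^{-1}(y)$ really is a continuously varying pair of boundary points, and $g = f\,h\,\widetilde g(P,y)$ is well-defined. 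Your steps (2)--(4) then go through exactly as you describe, and your monotonicity argument also becomes correct once phrased in $C$ rather than in $\sphere^2\setminus P$. In short: you identified the obstacle correctly, but the doubling map $f$ that resolves it is the essential missing ingredient, not a technicality to be handled ``by the remark's three-point version of Radó.'' Everything else in your outline --- the use of Lemma \ref{lemmaCPDistContinuity} for inversion, Lemma \ref{lemmaPMFrechetDistance} for pushing Fréchet convergence through an embedding, the collapse at endpoints, and the non-interleaving argument for the containment $C_1\subset C_2^\circ$ --- matches the paper.
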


\begin{proof}
Let us first assume that $P$ is a path from $0$ to $\infty$ that does not meet 1. 
Let $f(z) = \left(\frac{z+1}{z-1}\right)^2$.  
Then $f$ is 2-to-1 everywhere except at $f(1)=\infty$ and $f(-1)=0$.
Therefore, the preimage by $f$ of each point of $P$ is a pair of points except at the endpoints of $P$, and by analytic continuation of the square root along $P$, the preiamge of $P$ is a pair of paths that share common endpoints, but are otherwise disjoint.  Hence, $f^{-1}(P)$ is a closed curve, and since $P$ avoids 1, $f^{-1}(P)$ avoids $f^{-1}(1) = \{0,\infty\}$.  
Let $C \subset\mb{C}$ be the closure of the region bounded by $f^{-1}(P)$ that contains 0.  

Let $P$ vary in Fréchet distance and let $y\in P$ vary. 
Then, $f^{-1}(P)$ varies continuously in Fréchet distance with respect to the spherical metric on $\overline{\mb{C}}$ since $f$ is uniformly continuous. 
Also, there is a unique internally conformal homeomorphism 
$h:\disk \to C$ by Carathéodory's theorem (\ref{theoremCaratheodory}), 
and $h$ varies continuously in the sup-metric by Radó's theorem (\ref{theoremRado}), 
so $h^{-1}$ varies continuously in the partial map metric by Lemma \ref{lemmaCPDistContinuity}.  
In the case where $y$ is not an endpoint, then $h^{-1}f^{-1}(y)$ consists of a pair of points on the unit circle that vary continuously, and the hyperbolic geodesic $\widetilde g(P,y) \subset \disk$ between the points $h^{-1}f^{-1}(y)$ varies continuously, so $g = fh\widetilde g(P,y)$ is the hyperbolic geodesic from $y$ to itself through $\mb{C}\setminus P$, 
and $g$ varies continuously in Fréchet distance by Lemma \ref{lemmaPMFrechetDistance}. 
In the case where $y$ converges to an endpoint, then $h^{-1}f^{-1}(y)$ is a single point, and $\widetilde g$ converges to $h^{-1}f^{-1}(y)$, so $g$ converges to the same endpoint as $y$. 
In either case $g$ varies continuously. 

Consider the case where $P$ is an arbitrary directed path, 
and let $P$ and $y\in P$ and $q\not\in P$ vary. 
Let $a(z) = a_{s,t,q}(z) = \crr(z,q; s,t)$ be the conformal automorphism of $\sphere^2$ 
that sends the source $s$ to 0, the terminal $t$ to $\infty$, and $q$ to 1. 
Then, $a = a_{s,t,q}$ varies continuously in the sup-metric as $q$ and the endpoints of $P$ vary, 
so $a(P)$ varies continuously in Fréchet distance by Lemma \ref{lemmaPMFrechetDistance}, 
so $ga(P,y)$ vary continuously. 
Also, $a^{-1}$ varies continuously by Lemma \ref{lemmaCPDistContinuity}, 
so $a^{-1}ga(P,y)$ varies continuously and is the hyperbolic geodesic from $y$ to itself through $\sphere^2 \setminus P$.

For the second part, suppose $y_1 <_P y_2$. 
If $y_1 =s$, then $C_2^\circ$ contains $s$ by definition, so let us consider the case where $y_1 \neq s$.  
Then, $h^{-1}f^{-1}(y_1)$ is a pair of points that separate $h^{-1}f^{-1}(y_2)$ from $h^{-1}f^{-1}(s)$ in $\sphere^1$ by Carathéodory's theorem, 
so $\widetilde g(P,y_1)$ separates $\widetilde g(P,y_2)$ from $h^{-1}f^{-1}(s)$ in $\disk$, 
so $g(P,y_1)$ separates $g(P,y_2)$ from $s$ in $\sphere^2$, 
so $C_1 \subset C_2^\circ$. 
\end{proof}

\begin{proof}[Proof of Lemma \ref{lemmaPathParam}]
Let us start with continuity of the inverse. 
Consider convergent sequences $P_k \to P_\infty$ and $y_k \to y_\infty$ as in Definition \ref{defPathParam}.  

Consider first the case where $y_\infty$ is not an endpoint of $P_\infty$, 
and let $C_k$ be the region bounded by the hyperbolic geodesic $g_k = g(P_k,y_k)$. 
%and $x_k = \param^{-1}(P_k,y_k) = \mu_k(C_k)$. 
Then, 
$g_k \to g_\infty$ by Claim \ref{claimParamPathG}, 
and $g_k$ is analytic except at a single point, 
so in particular $g_k$ is nonOsgood,   
so $\param^{-1}(P_k,y_k) = \area(C_k) \to \area(C_\infty)$
by Lemma \ref{lemmaAreaContinuous}. 

In the case where $y_\infty$ is the source of $P_\infty$, 
if $y_k$ is also the source of $P_k$ for $k<\infty$, 
then $\param^{-1}(P_k,y_k) = 0 = \param^{-1}(P_\infty,y_\infty)$, 
otherwise $g_k$ converges to the point $g_\infty = y_\infty$ by Claim \ref{claimParamPathG}, so $C_k$ converges to $y_\infty$, 
so $\param^{-1}(P_k,y_k) \to 0$.  
In either case $\param^{-1}(P_k,y_k) \to \param^{-1}(P_\infty,y_\infty)$.

Hence, $\param^{-1}(P)$ varies continuously in the partial map metric as $P$ varies in Fréchet distance by Lemma \ref{lemmaCPDistLimits}, 
and $\param^{-1}(P)$ is strictly monotone with respect to the total order on $P$ 
by the second part of Claim \ref{claimParamPathG}, 
so $\param(P)$ varies continuously in the the partial map metric by Lemma \ref{lemmaCPDistContinuity}, 
and $\param(P)$ varies continuously in the the sup-metric by Lemma \ref{lemmaCPDistSup}.  
Thus, parts \ref{itemPathParamHomeo} and \ref{itemPathParamContinuous} hold. 

Part \ref{itemPathParamEquivariant} follows from the observation that orthogonal transformations preserve area and are isogonal, so hyperbolic geodesics are sent to hyperbolic geodesics. 
Part \ref{itemPathParamReverse} follows from the observation that the area of the complement of $C$ in Definition \ref{defPathParam} is $4\pi -\area(C)$ and contains $t$. 
\end{proof}

\subsection{Extension to a disk and interpolation}

Here we present useful properties and implications of the Douady-Earle extension, 
which extends a homeomorphism of the circle to a homeomorphism of the disk. 
More generally, we use the Douady-Earle extension to extend a homeomorphism of the boundary of a 2-cell to the rest of the 2-cell,
%between closed curves to a homeomorphism between the 2-cells bounded by those curves, 
which we then use with Lemma \ref{lemmaPathParam} to define an interpolation map between regular cell decompositions.

\begin{lemma}
\label{lemmaInterp}
Given an isomorphism $\lambda : \mc{C}_0\to \mc{C}_1$ 
between regular cell decompositions $\mc{C}_k$ of $X_k \subset \sphere^2$, 
then 
\[ \interp(\lambda): X_0 \to X_1 \]
is a homeomorphism satisfying the following.  
\begin{enumerate}
\item 
\label{itemInterpLFace}
Each face $F \in \mc{C}_0$ has 
$\interp(\lambda;F) = \lambda(F)$.
\item
\label{itemInterpLContinuous}
$\interp(\lambda)$ varies continuously in the partial map metric 
(and in the sup-metric if $X_0$ is fixed)   
as $\mc{C}_0$ and $\mc{C}_1$ vary in Fréchet distance on 1-cells  
and Hausdorff distance on 2-cells. 
\item 
\label{itemInterpLEquivariant}
$\interp$ is $\orth_3$-equivariant on both sides.  
That is,
$\interp(Q_1\circ \lambda\circ Q_0) = Q_1\circ \interp(\lambda)\circ Q_0$.
for $Q_i \in \orth_3$. 
\item 
\label{itemInterpLID}
$\interp(\id) = \id$.
%\item 
%\label{itemInterpLNullity}
%If each edge of $\mc{L}_1$ and $\mc{L}_2$ is non-Osgood, 
%then $\interp(\lambda)$ is nullity preserving.
\end{enumerate}
\end{lemma}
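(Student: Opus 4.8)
\textbf{Proof proposal for Lemma \ref{lemmaInterp}.}

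The plan is to build $\interp(\lambda)$ by induction on the skeleta of $\mc{C}_0$. On the $0$-skeleton set $\interp(\lambda;v) = \lambda(v)$ for each vertex $v$. For each $1$-cell $E \in \mc{C}_0$ with endpoints $v,w$, direct $E$ arbitrarily (say by the order of its endpoints under a fixed ordering of vertices), direct $\lambda(E)$ compatibly, and define $\interp(\lambda)$ on $E$ by transporting the canonical parameterization: $\interp(\lambda)\circ\param(E) = \param(\lambda(E))$, i.e. $\interp(\lambda;x) = \param\bigl(\lambda(E),\param^{-1}(E,x)\bigr)$ for $x \in E$. By part \ref{itemPathParamHomeo} of Lemma \ref{lemmaPathParam} this is a homeomorphism $E \to \lambda(E)$ fixing endpoints appropriately, and by part \ref{itemPathParamReverse} the construction does not depend on the chosen direction. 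Now for each $2$-cell $F \in \mc{C}_0$, the map on $\partial F$ has already been defined (as a union of the maps on its boundary $1$-cells, which agree at shared vertices), and it is a homeomorphism $\partial F \to \partial \lambda(F)$; extend it to a homeomorphism $F \to \lambda(F)$ using the Douady--Earle extension (after transferring to the disk via Riemann maps, which exist and are controlled by Carathéodory's theorem \ref{theoremCaratheodory}). Gluing these together over all faces, using that the definitions agree on overlaps, yields the homeomorphism $\interp(\lambda): X_0 \to X_1$, and part \ref{itemInterpLFace} holds by construction.

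For part \ref{itemInterpLContinuous}, I would argue skeleton by skeleton using the gluing and restriction clauses of Lemma \ref{lemmaCPDistContinuity}. On $1$-cells, as $\mc{C}_0,\mc{C}_1$ vary in Fréchet distance the cells $E$ and $\lambda(E)$ vary in Fréchet distance, so $\param^{-1}(E)$ and $\param(\lambda(E))$ vary continuously in the partial map metric by part \ref{itemPathParamContinuous} of Lemma \ref{lemmaPathParam}, and their composite varies continuously by the composition clause of Lemma \ref{lemmaCPDistContinuity} (the preimage condition being automatic since $\param^{-1}(E)$ is onto $[0,1]$, the domain of $\param(\lambda(E))$). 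On $2$-cells one invokes continuity of the Riemann maps (Radó's theorem \ref{theoremRado}, as restated in the remark) together with continuity of the Douady--Earle extension in the boundary data; since a $2$-cell varies in Hausdorff distance and its boundary curve then varies in Fréchet distance, the extension varies continuously in the partial map metric. Finally the gluing clause of Lemma \ref{lemmaCPDistContinuity}, applied to the closed cover of $X_0$ by its closed faces (each varying continuously), assembles these into continuity of $\interp(\lambda)$; if $X_0$ is held fixed, Lemma \ref{lemmaCPDistSup} upgrades this to the sup metric.

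Parts \ref{itemInterpLEquivariant} and \ref{itemInterpLID} are then formal. For \ref{itemInterpLID}, when $\lambda = \id$ each of the defining maps (the identity reparameterization on $1$-cells, the Douady--Earle extension of the identity on $2$-cells) is the identity, using that the Douady--Earle extension of the identity boundary map is the identity. For \ref{itemInterpLEquivariant}, the $\orth_3$-equivariance of $\param$ (part \ref{itemPathParamEquivariant} of Lemma \ref{lemmaPathParam}) handles the $1$-skeleton, and the conformal naturality of Riemann maps together with the conformal (indeed Möbius) naturality of the Douady--Earle extension handles the $2$-cells; orthogonal maps of $\sphere^2$ act conformally or anticonformally, and in the anticonformal case one composes with a reflection, under which both the canonical parameterization and the Douady--Earle extension are also equivariant.

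I expect the main obstacle to be part \ref{itemInterpLContinuous} on the $2$-cells: one must ensure that the Douady--Earle extension depends continuously not just on a circle homeomorphism but on a homeomorphism of a varying Jordan curve, which requires conjugating by Riemann maps and checking that the normalization (three boundary points, or $h(0)=0$ with positive derivative) can be chosen to vary continuously so that Radó's theorem applies uniformly across the cover; the bookkeeping to make the normalizations on adjacent $2$-cells mutually consistent with the already-fixed $1$-skeleton map is the delicate point.
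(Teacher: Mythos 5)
Your proposal is correct and follows essentially the same construction as the paper: vertices via $\lambda$, 1-cells via the canonical parameterization $\param$, 2-cells via Douady--Earle extension conjugated by Riemann maps, then glued over the closed cover of faces. The bookkeeping worry you flag at the end dissolves thanks to the isogonal naturality of the Douady--Earle extension (Theorem~\ref{theoremDE} part~\ref{itemDEConformal}), which makes the extended map on each 2-cell independent of the choice of Riemann maps $h_0,h_1$; one therefore only needs \emph{some} continuously varying choice (say, normalized by three boundary points read off via $\param$) to invoke Radó's theorem, with no cross-cell consistency required beyond the automatic fact that $\ext$ restricts to the already-defined 1-skeleton map (Theorem~\ref{theoremDE} part~\ref{itemDEBoundary}).
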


This is essentially the same as the map interp in \cite[Subsection 3.2]{dobbins2021grassmannians}, except there the cell decomposition consisted of the cells of a pseudocircle arrangement.
Here we give a more elegant proof using Douady-Earle extension.

%\begin{definition}[Conformal barycenter]
%\label{defConformalBarycenter}
%Given a probability measure $\mu$ on $\sphere^1$ 
%where no single point has probability $\nicefrac12$ or more, 
%the \df{conformal barycenter} $\cb(\mu)$ of $\mu$ 
%is a point in $\disk^\circ$ 
%satisfying the following two conditions.
%First, $\cb(\mu) = 0$ 
%if the barycenter of $\mu$ is 0, 
%i.e., $\int_{z\in\sphere^1} z\mr{d}\mu = 0$. 
%Second, 
%$\cb$ is conformally equivariant, i.e., 
%$\cb(h_*(\mu))= h(\cb(\mu))$ 
%where $h$ is a conformal automorphism of the disk 
%and $h_*$ is the pushforward 
%\cite{douady1986conformally}. 
%\end{definition}

\begin{theorem}[Douady and Earle, 1986 \cite{douady1986conformally}]
\label{theoremDE}
Given a homeomorphism $\phi:\sphere^1 \to \sphere^1$, 
then $\DE(\phi) : \disk \to \disk$ is a homeomorphism that satisfies the following. 
\begin{enumerate}
\item 
\label{itemDEBoundary}
$\DE(\phi)$ restricts to $\phi$ on the boundary, i.e.,
$\rest(\sphere^1,\DE(\phi)) = \phi$. 
\item 
\label{itemDEId}
$\DE(\id) = \id$. 
\item 
\label{itemDEContinuous}
$\DE(\phi)$ varies continuously as $\phi$ varies in the sup-metric. 
\item 
\label{itemDEConformal}
$\DE$ is isogonally equivariant on the left and right, i.e., 
\[\DE(g\phi h) = g\DE(\phi)h\]
for isogonal automorphisms $g,h$ of the disk.%
\footnote{Douady and Earle defined conformal maps to be angle preserving, which we call isogonal.}
%\item 
%\label{itemDEBarycenter}
%$\DE(\phi,z)$ is the conformal barycenter of the pushforward measure $\phi_*(\mu_z)$ 
%by $\phi$ 
%of the harmonic measure of $z$ on $\sphere^1$. 
\end{enumerate}
\end{theorem}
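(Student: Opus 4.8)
The plan is to recall the Douady--Earle construction via the \emph{conformal barycenter} of a probability measure on $\sphere^1$ and to derive (1)--(4) from it. For a Borel probability measure $\nu$ on $\sphere^1$ having no atom of mass at least $\tfrac12$ --- in particular any atomless $\nu$ --- write $M_w$ for the M\"obius automorphism of $\disk$ with $M_w(w)=0$, and define the conformal barycenter $B(\nu)\in\disk^\circ$ to be the unique $w$ with
\[
V_\nu(w)\;:=\;\int_{\sphere^1} M_w(\xi)\,d\nu(\xi)\;=\;0,
\]
that is, the point at which the ordinary barycenter of the pushforward $(M_w)_{*}\nu$ is the origin. First I would check that $B(\nu)$ is well defined: the planar vector field $V_\nu$ points strictly inward along $\sphere^1$ (a winding-number argument using the hypothesis on atoms), which forces a zero, and a monotonicity estimate on the derivative $dV_\nu$ gives uniqueness of that zero. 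I would then set $\DE(\phi)(w)=B(\phi_{*}\sigma_w)$, where $\sigma_w$ is the Poisson (harmonic) probability measure on $\sphere^1$ seen from $w$; since $\sigma_w$ is absolutely continuous and $\phi$ is a homeomorphism, $\phi_{*}\sigma_w$ is atomless, so $\DE(\phi)$ is well defined on $\disk^\circ$.

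Next I would prove that $\DE(\phi)$ is a real-analytic diffeomorphism of $\disk^\circ$ onto itself. Applying the implicit function theorem to $\Psi(w,z):=\int_{\sphere^1}M_z(\phi(\xi))\,d\sigma_w(\xi)=0$, whose solution is $z=\DE(\phi)(w)$, the crux is that $\partial_z\Psi$ is invertible; after conjugating by M\"obius maps so that $w$ and $z=\DE(\phi)(w)$ both sit at the origin, this reduces to showing that $dV_{\phi_{*}\sigma_w}$ at $0$ is a negative-definite $2\times2$ real matrix, which is an explicit integral estimate against the atomless measure $\phi_{*}\sigma_w$. This is the technical heart, and the step I expect to be the main obstacle: the same estimate simultaneously yields uniqueness of the conformal barycenter and the local-diffeomorphism property. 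Granting it, $\DE(\phi)$ is real-analytic and a local diffeomorphism, and properness onto $\disk^\circ$ --- hence global injectivity and surjectivity --- will follow from the boundary analysis below.

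For the boundary behaviour and item (1): as $w\to\xi_0\in\sphere^1$ the measures $\sigma_w$ converge weakly to $\delta_{\xi_0}$, so $\phi_{*}\sigma_w\to\delta_{\phi(\xi_0)}$; since the conformal barycenter is weak-continuous on the open set of measures with no atom of mass $\geq\tfrac12$ and satisfies $B(\nu)\to\xi$ whenever $\nu$ concentrates near $\xi$, and $\phi$ is uniformly continuous on the compact $\sphere^1$, the map $\DE(\phi)$ extends continuously to $\disk$ with boundary values $\phi$. A continuous bijection from the compact space $\disk$ to the Hausdorff space $\disk$ is a homeomorphism, so $\DE(\phi)$ is a homeomorphism of $\disk$. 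For naturality (4) I would use the transformation rules $h_{*}\sigma_w=\sigma_{h(w)}$ for a conformal automorphism $h$ of $\disk$ and $B(g_{*}\nu)=g(B(\nu))$ for a conformal automorphism $g$ --- the latter because $M_{g(v)}\circ g$ is a rotation followed by $M_v$ and rotations preserve the equation $V_\nu=0$ --- both of which persist for anti-conformal automorphisms by composing with complex conjugation; chaining them gives $\DE(g\phi h)(w)=B\bigl(g_{*}\phi_{*}\sigma_{h(w)}\bigr)=g\bigl(\DE(\phi)(h(w))\bigr)$, i.e.\ $\DE(g\phi h)=g\,\DE(\phi)\,h$ for all isogonal automorphisms $g,h$. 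Item (2) is the special case $\DE(\id)(w)=B(\sigma_w)=B\bigl((M_w^{-1})_{*}\sigma_0\bigr)=M_w^{-1}\bigl(B(\sigma_0)\bigr)=M_w^{-1}(0)=w$, since the uniform measure $\sigma_0$ has $V_{\sigma_0}(0)=\int_{\sphere^1}\xi\,d\sigma_0=0$.

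Finally, for continuity (3): if $\phi_k\to\phi_\infty$ uniformly on $\sphere^1$, then $(\phi_k)_{*}\sigma_w\to(\phi_\infty)_{*}\sigma_w$ weakly, uniformly for $w$ in compact subsets of $\disk^\circ$ because the Poisson kernel depends continuously on $w$, so $\DE(\phi_k)\to\DE(\phi_\infty)$ locally uniformly in the interior; combined with $\DE(\phi_k)|_{\sphere^1}=\phi_k\to\phi_\infty$ uniformly on the boundary and the fact that each $\DE(\phi_k)$ is a homeomorphism of $\disk$, an equicontinuity argument upgrades this to uniform convergence on all of $\disk$, which is exactly the asserted sequential continuity of $\DE$ in the sup metric (cf.\ Remark \ref{remarkUniformConvergence}). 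Apart from the Jacobian estimate of the second paragraph, every part of the statement is thus a soft consequence of weak convergence of measures together with the M\"obius transformation rules, and for the purposes of this paper one may simply invoke \cite{douady1986conformally}.
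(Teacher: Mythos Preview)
The paper does not prove this theorem at all: it is stated as a black-box result cited from \cite{douady1986conformally} and used as a tool in constructing the interpolation map. So there is no ``paper's own proof'' to compare against.

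Your sketch is a faithful outline of the actual Douady--Earle construction via the conformal barycenter, and you correctly identify the Jacobian estimate as the technical heart and correctly note at the end that one may simply invoke the reference. For the purposes of this paper that invocation is exactly what is done, so your final sentence is the entire content needed here; the rest is a nice summary of the cited work but is not required.
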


%Note that part \ref{itemDEBarycenter} may be taken as the definition of $\ext(\phi)$. 

\begin{definition}[Interpolation]
We build up $\iota = \interp(\lambda)$ one dimension at a time, starting with vertices. 
On each vertex $v \in \mc{C}_0$, let $\iota(v)=\lambda(v)$. 
On each 1-cell $P \in \mc{C}_0$, 
choose a direction on $P$, direct $\lambda(P)$ as induced by $\lambda$, and 
let $\iota = \param(\lambda(P))\param^{-1}(P)$. 
Finally, on each 2-cell $C \in \mc{C}_0$, 
choose conformal maps $h_0 : \disk \to C$ and $h_1 : \disk \to \lambda(C)$,
and let $\iota = h_1\ext(h_1^{-1}\iota h_0)h_0^{-1}$. 
We will show that this is well-defined in the proof of Lemma \ref{lemmaInterp}. 
\end{definition}

\begin{proof}[Proof of Lemma \ref{lemmaInterp}]
To see that $\interp(\lambda)$ is well defined on a 1-cell $P$, 
let $\widetilde \iota$ be define with the opposite choice of direction on $P$. 
Then for $x\in P$ we have 
\[
\widetilde \iota(x) 
= \param(\lambda(P)^-)\param^{-1}(P^-) 
= \param(\lambda(P),1-(1-\param^{-1}(P,x))) 
= \iota(x)
\]
by Lemma \ref{lemmaPathParam} part \ref{itemPathParamReverse}.
For a 2-cell $C$, 
let $\widetilde \iota$ be defined by some other choice of conformal maps $g_i$. 
Then, $g_0^{-1}h_0$ and $h_1^{-1}g_1$ are conformal automorphisms of the unit disk, so 
\begin{align*}
\widetilde \iota 
&=g_1\ext(g_1^{-1}\iota g_0)g_0^{-1} \\ 
&=g_1g_1^{-1}h_1h_1^{-1}g_1\ext(g_1^{-1}\iota g_0)g_0^{-1}h_0h_0^{-1}g_0g_0^{-1} \\
&=h_1\ext(h_1^{-1}g_1g_1^{-1}\iota g_0g_0^{-1}h_0)h_0^{-1} \\
&=\iota 
\end{align*} 
by Theorem \ref{theoremDE} part \ref{itemDEConformal}. 
The restriction of $\iota$ to the 1-skeleton is a well defined homeomorphism since $\param(P)$ is a homeomorphism by Lemma \ref{lemmaPathParam}, and the source of $P$ is $\param(P,0)$, 
which $\iota$ sends to the source $\param(\lambda(P),0)$ of $\lambda(P)$, 
and similarly for the terminal endpoint. 
On the boundary of a 2-cell $C$, we have 
\[
h_1\ext(h_1^{-1}\iota h_0)h_0^{-1}
= h_1h_1^{-1}\iota h_0h_0^{-1}
=\iota
\]
by Theorem \ref{theoremDE} part \ref{itemDEBoundary}. 
so the definition of $\iota$ on the boundary of $C$ agrees with that on the 1-skeleton. 
Thus, $\iota$ is well-defined homeomorphism by the gluing lemma. 

Each face $F$ has $\iota(F)=\lambda(F)$ directly from the definition, 
so part \ref{itemInterpLFace} holds. 

By Lemma \ref{lemmaPathParam} part \ref{itemPathParamContinuous}, 
$\param^{-1}(P)$ and $\param(P)$ vary continuously in the partial map metric as 1-cells of $\mc{C}_0$ and $\mc{C}_1$ vary in Fréchet distance, 
so $\iota$ varies continuously on the 1-skeleton by Lemma \ref{lemmaCPDistContinuity}. 
We may choose $h_i$ that vary continuously as $\lambda$ varies by Radó's theorem, 
so $h_i^{-1}$ vary continuously in the partial map metric by Lemma \ref{lemmaCPDistContinuity}, 
so $h_1h_0^{-1}$ varies continuously in the sup-metric by Lemma \ref{lemmaCPDistSup}, 
so $\ext(h_1h_0^{-1})$ varies continuously in the sup-metric by Theorem \ref{theoremDE} part \ref{itemDEContinuous}, 
so $\iota$ varies continuously by Lemma \ref{lemmaCPDistContinuity}, 
which means part \ref{itemInterpLContinuous} holds. 

Since orthogonal maps are isogonal, $\interp$ is equivariant by 
Lemma \ref{lemmaPathParam} part \ref{itemPathParamEquivariant} 
and Theorem \ref{theoremDE} part \ref{itemDEConformal}, 
so part \ref{itemInterpLEquivariant} holds. 

Finally, if $\lambda=\id$, then $\param(\lambda(P))\param^{-1}(P)=\id$, 
so $\iota=\id$ on the 1-skeleton, 
and we can choose $h_0=h_1$ in each 2-cell, 
so $\iota=\id$ by Theorem \ref{theoremDE} part \ref{itemDEId}, 
so part \ref{itemInterpLID} holds. 
\end{proof}

\subsection{Squishing conformal maps}
\label{subsectionSquish}

Here we study the behavior of conformal parameterizations of a Jordan domain that converges to a path. 
Specifically, we show that if the parameterizations converge anywhere then the image converges to a single point along the path.

\begin{lemma} 
\label{lemmaSquishConformalPath}
${}$
\begin{itemize}
\item 
Let $A_k,B_k$ be a pair of paths from $p_{k}$ to $q_{k}$ for $k\in\{1,\dots,\infty\}$ 
that each converge in Fréchet distance as $k \to \infty$ 
to a path $A_\infty = B_\infty = C_\infty$ 
such that $A_k\cup B_k$ is a closed curve bounding a closed 2-cell $C_k$ for $k < \infty$.
\item 
Let $x_k,y_k \in C_k$ respectively converge to distinct points $x_\infty, y_\infty \in C_\infty \setminus\{p_{\infty},q_{\infty}\}$. 
\item 
Let $h_k : \disk \to C_k$ for $k < 1$ be internally conformal maps such that $h_k(1) = p_{k}$, $h_k(-1) = q_{k}$ 
and $|h_k^{-1}(x_{k})|$ is bounded away from $\{1,-1\}$, 
i.e., 
$|h_k^{-1}(x_{k})-1| >\eps$ and $|h_k^{-1}(x_{k})+1| >\eps$ for some $\eps>0$. 
\end{itemize}
If $y_\infty$ is on the arc of $C_\infty$ from $p_{\infty}$ to $x_\infty$, 
then $h_k^{-1}(y_k) \to 1$ as $k \to \infty$. 
Alternatively, if $y_\infty$ is on the arc of $C_\infty$ from $q_{\infty}$ to $x_\infty$, 
then $h_k^{-1}(y_k) \to -1$.  
Hence, if $z_k \in \disk$ is bounded away from $\{1,-1\}$, 
then $h_k(z_k) \to x_\infty$. 
\end{lemma}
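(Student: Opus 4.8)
The plan is to reduce everything to the behavior of conformal maps of the disk whose boundary curve degenerates to a path, and to exploit the fact that the combinatorics of a Jordan curve (which arc a point lies on) is controlled by the boundary correspondence from Carathéodory's theorem. First I would set up a normalization: after composing with Möbius automorphisms of $\disk$ that fix $1$ and $-1$ (these form a one-parameter group, "hyperbolic translations" along the geodesic from $-1$ to $1$), there is no further freedom to exploit, so I keep the $h_k$ as given. The boundary of $\disk$ is split by $\{1,-1\}$ into an upper arc $\sphere^1_+$ and a lower arc $\sphere^1_-$; by Carathéodory's theorem (Theorem \ref{theoremCaratheodory}) each $h_k$ extends to a homeomorphism of closures sending $\sphere^1_+$ to $A_k$ (say) and $\sphere^1_-$ to $B_k$, with $1 \mapsto p_k$ and $-1 \mapsto q_k$. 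So $h_k^{-1}(x_k)$ and $h_k^{-1}(y_k)$ are well-defined points of $\overline{\disk}$, and the hypothesis is that $h_k^{-1}(x_k)$ stays at hyperbolic-bounded distance from the two fixed boundary points.

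The key step is to show that $h_k^{-1}(y_k) \to 1$ when $y_\infty$ lies on the sub-arc of $C_\infty$ between $p_\infty$ and $x_\infty$. Suppose not; then along a subsequence $h_k^{-1}(y_k) \to w_\infty \in \overline{\disk}$ with $w_\infty \neq 1$. I would argue that $w_\infty \neq -1$ as well, and in fact $w_\infty = h_\infty^{-1}(x_\infty)$ along that subsequence, which contradicts $y_\infty \neq x_\infty$. The mechanism: consider the hyperbolic geodesic $\gamma_k \subset \disk$ joining $h_k^{-1}(x_k)$ to $1$. Its image $h_k(\gamma_k)$ is, by the argument already used in Claim \ref{claimParamPathG} (pull back by the conformal map, take a hyperbolic geodesic, push forward), a crosscut of $C_k$ separating $p_k$ from $q_k$ and passing through $x_k$; equivalently it cuts $C_k$ into two pieces, one containing the boundary arc from $p_k$ to $x_k$ along $A_k$ and the corresponding arc along $B_k$. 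Since $A_k, B_k \to C_\infty$ in Fréchet distance and $x_k \to x_\infty$, this crosscut $h_k(\gamma_k)$ converges (in Hausdorff distance, using that a hyperbolic geodesic is nonOsgood and the diameter estimates coming from the Fréchet convergence) to the sub-path of $C_\infty$ from $p_\infty$ to $x_\infty$. Now $y_\infty$ lies on that sub-path by hypothesis, so $y_k$ is "trapped on the $p$-side" of the crosscut $h_k(\gamma_k)$ for large $k$; pulling back, $h_k^{-1}(y_k)$ lies on the $1$-side of $\gamma_k$. Combined with the symmetric statement obtained by swapping the roles of $p$ and $q$ (using a crosscut through $h_k^{-1}(x_k)$ to $-1$), one pins $h_k^{-1}(y_k)$ into smaller and smaller hyperbolic neighborhoods of $1$; more carefully, I would use that the region of $\disk$ bounded by $\gamma_k$ and the boundary arc from $1$ to $h_k^{-1}(x_k)$ has image under $h_k$ of diameter $\to 0$ (since that image is squeezed between $h_k(\gamma_k)$ and the boundary arc, both converging to the same sub-path of $C_\infty$), so $y_k$ can only sit there if its preimage is near $1$.

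The cleanest way to make "diameter $\to 0$" rigorous, and the step I expect to be the main obstacle, is the following: I want to say that as the 2-cell $C_k$ degenerates to the path $C_\infty$, any portion of it cut off near $p_\infty$ (respectively $q_\infty$) shrinks, and this controls the conformal parameterization. This is essentially a "no-bottleneck/uniform-degeneration" statement. I would prove it by contradiction using normal families: if $h_k^{-1}(y_k)$ did not converge to $1$, pass to a subsequence where it converges to $w_\infty$ with $w_\infty \notin\{1\}$ and also $h_k^{-1}(x_k)\to z_\infty\notin\{1,-1\}$ (using the hypothesis); the maps $h_k$, suitably renormalized, would converge locally uniformly on $\disk^\circ$ to a conformal (hence injective, or constant) limit by the standard normal-families argument à la Radó (Theorem \ref{theoremRado}), and since the boundary curves $\partial C_k$ all converge in Fréchet distance to the degenerate curve traversing $C_\infty$ back and forth, the boundary-correspondence forces the limit map to be constant along every radius — but a nonconstant conformal limit is impossible and a constant limit forces $x_\infty = y_\infty$, a contradiction. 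I would phrase this carefully enough to cover the possibility that the limit is constant: in that degenerate case $h_k(z)\to$ a single point for every interior $z$ bounded away from $\{1,-1\}$, which is exactly the last sentence of the lemma, so that case is not an obstruction but rather the conclusion.

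Finally, the last assertion — if $z_k \in \disk$ is bounded away from $\{1,-1\}$ then $h_k(z_k) \to x_\infty$ — follows by applying what was just proved with $y_k \dfeq h_k(z_k)$: whatever arc of $C_\infty$ the limit $y_\infty$ lands on, the dichotomy forces $h_k^{-1}(y_k) = z_k \to 1$ or $\to -1$, contradicting that $z_k$ is bounded away from both, unless $y_\infty = x_\infty$; and compactness of $\overline{C_\infty}$ gives that some subsequential limit $y_\infty$ exists, so every subsequential limit equals $x_\infty$, whence $h_k(z_k)\to x_\infty$.
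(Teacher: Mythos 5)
Your proof has a genuine gap, and it sits precisely at the step you flagged as the "main obstacle."

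First, a smaller problem in the crosscut construction: the hypothesis is only that $x_k \in C_k$, not $x_k \in \partial C_k$, so $h_k^{-1}(x_k)$ may be an interior point of $\disk$. The hyperbolic geodesic $\gamma_k$ from $1$ to $h_k^{-1}(x_k)$ then has only one boundary endpoint, and its image $h_k(\gamma_k)$ is an arc from $p_k$ to the interior point $x_k$, not a crosscut of $C_k$ — it separates nothing. (This is different from Claim~\ref{claimParamPathG}, where both endpoints of the geodesic in $\disk$ lie on $\sphere^1$, being the two Carathéodory preimages of a single interior boundary point $y$.)

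The more serious gap is in the normal-families fallback. Montel/Radó gives you a subsequence with $h_k \to h_\infty$ \emph{locally uniformly on $\disk^\circ$}, and the degenerate-image argument does force $h_\infty \equiv c$ for some constant $c$. But you then conclude $c = x_\infty$ and $c = y_\infty$ by evaluating along $z_k = h_k^{-1}(x_k)$ and $w_k = h_k^{-1}(y_k)$. That evaluation is only valid when $z_k$ and $w_k$ stay in a \emph{compact subset of the open disk}: local uniform convergence on $\disk^\circ$ tells you nothing about $h_k(z_k)$ when $|z_k| \to 1$. The hypothesis "$h_k^{-1}(x_k)$ is bounded away from $\{1,-1\}$" explicitly permits $h_k^{-1}(x_k)$ to run out to a boundary point of $\sphere^1$ other than $\pm1$, and in that regime your contradiction never materializes — you cannot identify $c$ with $x_\infty$. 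Renormalizing by the Möbius automorphisms of $\disk$ fixing $\pm1$ does not repair this: those automorphisms move points only along hypercycles equidistant from the axis $[-1,1]$, so they cannot pull a point approaching $\sphere^1 \setminus \{1,-1\}$ back into a fixed compact set. The paper's proof sidesteps the whole issue by measuring the separation with extremal length: crosscuts of $C_k$ near $x_\infty$ and $y_\infty$ (built in Claim~\ref{claimLittleCut}) bound a conformal quadrilateral $\widetilde C_k$ whose "thin direction" has vanishing area while its "long direction" has length bounded below, so $\el(\Gamma_k)\to\infty$ and hence $\el(\Gamma_k^*)\to 0$; conformal invariance transports this to $\disk$, producing short crosscuts of $\disk$ that pin $h_k^{-1}(y_k)$ to $1$ regardless of whether the preimages sit in the interior or drift toward $\sphere^1$. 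That uniformity is exactly what the normal-families argument lacks.
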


\begin{claim}
\label{claimLittleCut}
There is a sequence of paths $M_k$ in $C_k$ from $A_{k}$ to $B_{k}$ that converge to $x_\infty$.
\end{claim}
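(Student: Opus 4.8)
\textbf{Proof proposal for Claim \ref{claimLittleCut}.}

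The plan is to produce the cut paths $M_k$ by pulling back a fixed geodesic in the disk through the conformal maps $h_k$, and then use Lemma \ref{lemmaSquishConformalPath}'s own squishing mechanism --- or rather, the analysis that precedes it --- to show the images collapse to $x_\infty$. Concretely: let $z_k = h_k^{-1}(x_k) \in \disk$, which by hypothesis stays bounded away from $\{1,-1\}$. Pass to a subsequence so that $z_k \to z_\infty \in \disk$ with $z_\infty \neq 1, -1$. Let $\widetilde M_k \subset \disk$ be the hyperbolic geodesic through $z_k$ that meets $\sphere^1$ at two points, chosen so that these two points separate $1$ from $-1$ on the circle; e.g. take the geodesic through $z_k$ perpendicular to the geodesic joining $1$ and $-1$, so its endpoints vary continuously with $z_k$ and stay bounded away from $\{1,-1\}$. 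Set $M_k = h_k(\widetilde M_k)$. Since the endpoints of $\widetilde M_k$ separate $h_k^{-1}(p_k) = 1$ from $h_k^{-1}(q_k) = -1$ on $\sphere^1$, Carath\'eodory's theorem (\ref{theoremCaratheodory}) applied to $h_k$ (internally conformal, extending to a homeomorphism of closures) gives that the two endpoints of $M_k$ lie one on the arc $A_k$ and one on the arc $B_k$; hence $M_k$ is a path in $C_k$ from $A_k$ to $B_k$, and $x_k = h_k(z_k) \in M_k$.

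It remains to show $M_k \to x_\infty$ in Fréchet distance, equivalently (since these are possibly-degenerate cells with $M_\infty$ a point) that $\diam(M_k) \to 0$ and $M_k \to \{x_\infty\}$ in Hausdorff distance. Every point of $\widetilde M_k$ is bounded away from $\{1,-1\}$: the endpoints are, by construction, and interior points are even further, since a hyperbolic geodesic has its two endpoints as its only accumulation points on $\sphere^1$ and the geodesic through $z_k$ stays in a fixed compact region of $\disk$ away from $\{1,-1\}$ as $z_k$ ranges over a bounded-away set. By the final sentence of Lemma \ref{lemmaSquishConformalPath} --- ``if $z_k \in \disk$ is bounded away from $\{1,-1\}$, then $h_k(z_k) \to x_\infty$'' --- applied to any sequence of points $w_k \in \widetilde M_k$, we get $h_k(w_k) \to x_\infty$. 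This is exactly uniform convergence of $M_k$ to the point $x_\infty$: more carefully, if $M_k \not\to \{x_\infty\}$ then there is $\delta > 0$ and points $w_k \in \widetilde M_k$ with $\dist(h_k(w_k), x_\infty) > \delta$ infinitely often, contradicting the displayed conclusion. Hence $M_k \to x_\infty$, completing the claim. Since the conclusion of the claim does not depend on the subsequence, and every subsequence has a further subsequence for which the argument runs, the full sequence $M_k$ converges to $x_\infty$.

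The main obstacle --- and the reason this claim is stated and proved separately rather than inlined --- is the bookkeeping needed to guarantee $M_k$ genuinely runs from $A_k$ to $B_k$ rather than, say, from $A_k$ to $A_k$. This is where the choice of $\widetilde M_k$ matters: its circle endpoints must \emph{separate} $1$ from $-1$, and one must invoke Carath\'eodory to transport the separation statement on $\sphere^1$ to the separation statement that the endpoints of $M_k$ lie on opposite arcs $A_k$, $B_k$ of $\partial C_k$. A secondary subtlety is that Lemma \ref{lemmaSquishConformalPath} as stated concludes ``$h_k(z_k)\to x_\infty$'' for a fixed sequence $z_k$; to upgrade this to uniform collapse of the whole path $M_k$ one should either re-examine its proof (which presumably already shows $h_k$ restricted to any compact subset of $\disk \setminus \{1,-1\}$ converges to the constant $x_\infty$ uniformly) or run the sequential argument above with an arbitrary choice of $w_k \in \widetilde M_k$, which suffices since Fréchet convergence to a point is equivalent to uniform convergence of parameterizations and hence follows from convergence along every sequence of sample points.
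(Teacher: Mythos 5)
Your proof is circular. Claim \ref{claimLittleCut} is an ingredient in the paper's proof of Lemma \ref{lemmaSquishConformalPath}: it supplies the cross-cuts $M_{\mathrm{x},k}$ and $M_{\mathrm{y},k}$ that bound the conformal rectangle $(\Gamma_k,\Gamma_k^*)$ whose extremal length drives the whole argument. Your central step, however, derives the convergence $M_k \to x_\infty$ from the final sentence of Lemma \ref{lemmaSquishConformalPath} (``if $z_k \in \disk$ is bounded away from $\{1,-1\}$, then $h_k(z_k)\to x_\infty$''). That sentence is a conclusion of the very lemma whose proof needs Claim \ref{claimLittleCut}, so it is not available here; and contrary to your hedge (``the analysis that precedes it''), nothing about $h_k$ collapsing compacta to $x_\infty$ is established before the extremal-length argument, which comes after this claim. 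The first half of your construction --- taking $M_k = h_k(\widetilde M_k)$ for a hyperbolic geodesic $\widetilde M_k$ through $z_k$ whose boundary endpoints separate $1$ from $-1$, and using Carath\'eodory to place the endpoints of $M_k$ on $A_k$ and $B_k$ --- does produce genuine cross-cuts, but the convergence to $x_\infty$ is left unsupported.

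The paper's proof avoids the conformal maps entirely. It fixes a short arc $M$ transversal to $C_\infty$ only at $x_\infty$, notes that $A_k$ and $B_k$ must eventually intersect $M$ since both converge to $C_\infty$ in Fr\'echet distance and pass near $x_\infty$, extracts from $M$ a subarc $M_k$ running from the last contact with $A_k$ to the next contact with $\partial C_k$, and rules out $M_k$ escaping $C_k$ by a cross-cut parity argument in a Jordan domain $D$ against an auxiliary path $P$ disjoint from a neighborhood of $C_\infty$. Convergence $M_k \to x_\infty$ is then immediate by compactness: $M_k \subseteq M$ has small diameter and its endpoints can accumulate only on $M \cap C_\infty = \{x_\infty\}$. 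Any repair of your approach would require a standalone argument that $h_k$ collapses compact subsets of $\disk\setminus\{1,-1\}$ to $x_\infty$, which is essentially the content of the lemma this claim is meant to help prove.
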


\begin{proof}
Let $\eps>0$ be sufficiently small that $p_{\infty},q_{\infty},x_\infty$ are all more than $3\eps$ away from each other.
Let $P$ be a path from $p_{\infty}\oplus\eps$ to $q_{\infty}\oplus\eps$ 
that is disjoint from $C_\infty$,  
and let $\eps_1>0$ be sufficiently small that $P$ is disjoint from $C_\infty\oplus\eps_1$ and $\eps_1\leq\eps$. 
Let $k$ be sufficiently large that $C_k \subset C_\infty\oplus\eps_1$,   
and 
$A_{k}$ and $B_{k}$ pass within distance $\eps_1$ of $x_\infty$.

Let $M$ be a path of diameter at most $\eps_1$ that cross $C_\infty$ at $x_\infty$ and meets $C_\infty$ at no other point. 
Then, $A_{k}$ intersects $M$ for $k$ sufficiently large since $A_{k} \to C_\infty$, 
and likewise for $B_{k}$. 
Choose an arc $N_k$ of $M$ from a point of $A_{k}$ to a point of $B_{k}$, 
and let $M_k$ be the arc along $N_k$ from the last time $N_k$ meets $A_{k}$ to the next time $N_k$ meets $\partial C_k$. 
Then, $M_k$ must be a path from $A_{k}$ to $B_{k}$ 
through the complement of $\partial C_k$. 

Suppose for the sake of contradiction that $M_k$ were not in $C_k$. 
Then, we would have a Jordan domain $D$ bounded by circular arcs of radius $\eps$ about $p_{\infty}$ and $q_{\infty}$ and an arc of $A_{k}$ and $B_{k}$.
Also, $P$ and $M_k$ would be cross-cuts of $D$ with endpoints alternating around the boundary of $D$, so $P$ and $M_k$ must intersect, which is a contradiction since $M_k$ is contained within $x_\infty\oplus\eps_1$, which is disjoint from $P$. 
Thus, $M_k$ is a path in $C_k$. 

Each subsequence of $M_k\cap A_{k}$ has a convergent subsubsequence 
since $M$ is compact, 
and a limit point of $M_k\cap A_{k}$ must be on both $M$ and $C_\infty$, 
so $M_k\cap A_{k} \to x_\infty$, 
and likewise for $M_k\cap B_{k}$, so $M_k \to x_\infty$.
\end{proof}

To complete the proof of Lemma \ref{lemmaSquishConformalPath} we will use extremal length. 

\begin{definition}[Extremal length]
Let $\mr{L^2}(\Omega)$ denote the space of square-integrable functions on $\Omega\subset\mb{C}$, 
and given $\rho \in \mr{L^2}(\Omega)$, let 
\[\area(\Omega,\rho) = \int_{\Omega}\rho(z)^2\mr{d}z. \]
Given a continuous map $\gamma : [0,1] \to \Omega$, the length of $\gamma$ with respect to $\rho$ is 
\[\len(\gamma,\rho) = \int_{\gamma}\rho(s)\mr{d}s.\]
where $\mr{d}s$ is arc-length. 
We say $\gamma$ is a \df{rectifiable curve} when its arc-length is finite. 
Let $\area(\Omega) = \area(\Omega,1)$ 
and $\len(\gamma) = \len(\gamma,1)$. 
The \df{extremal length} of a collection of rectifiable curves $\Gamma$ in $\Omega$ is 
\[
\el(\Gamma) 
= \sup_{\rho\in \mr{L^2}(\Omega)} \inf_{\gamma \in \Gamma} 
\frac{\len^2(\gamma,\rho)}{\area(\Omega,\rho)}
\]
We call $(\Gamma,\Gamma^*)$ a \df{conformal rectangle} 
%in a 2-cell $\Omega$ 
when $\Gamma$ is the collection of rectifiable curves in a 2-cell $\Omega$ between a pair of disjoint arcs $E,F$ on the boundary of $\Omega$, 
and $\Gamma^*$ is the collection of rectifiable curves in $\Omega$ separating $E$ and $F$. 
\end{definition}

Extremal length has two features that are important for us. 
First, extremal length is a conformal invariant. 
Second, 
if $(\Gamma,\Gamma^*)$ is a conformal rectangle, 
then $\el(\Gamma)\el(\Gamma^*) = 1$
\cite[Chapter 4, see examples]{beliaev2019conformal}.

\begin{proof}[Proof of Lemma \ref{lemmaSquishConformalPath}]
We will just consider the case where $y_\infty$ is on the arc of $C_\infty$ from $p_{\infty}$ to $x_\infty$.  The other case is similar.

Let $\eps_1$ be sufficiently small that $p_{\infty}$, $q_{\infty}$, $x_\infty$, $y_\infty$ are all at least distance $5\eps_1$ from each other. 
Let $\widetilde y\in C_\infty$ be the first point of $C_\infty$ from $x_\infty$ to $y_\infty$ that is distance $\eps_1$ from $y_\infty$,
and let $\eps_2 > 0$ be at most $\nicefrac{1}{4}$ the distance between 
the arc of $C_\infty$ from $p_{\infty}$ to $y_\infty$ 
and the arc of $C_\infty$ from $\widetilde y$ to $q_{\infty}$. 
Let $M_{\mathrm{y},k}$ be a path from $A_k$ to $B_k$ 
that converges to $\widetilde y$ as implied by Claim \ref{claimLittleCut},
and let $k$ be sufficiently large that $M_{\mathrm{y},k}$ is within distance $\eps_2$ of $\widetilde y$; 
see Figure \ref{figureSquish}.

\begin{figure}
\centering
\includegraphics[scale=1]{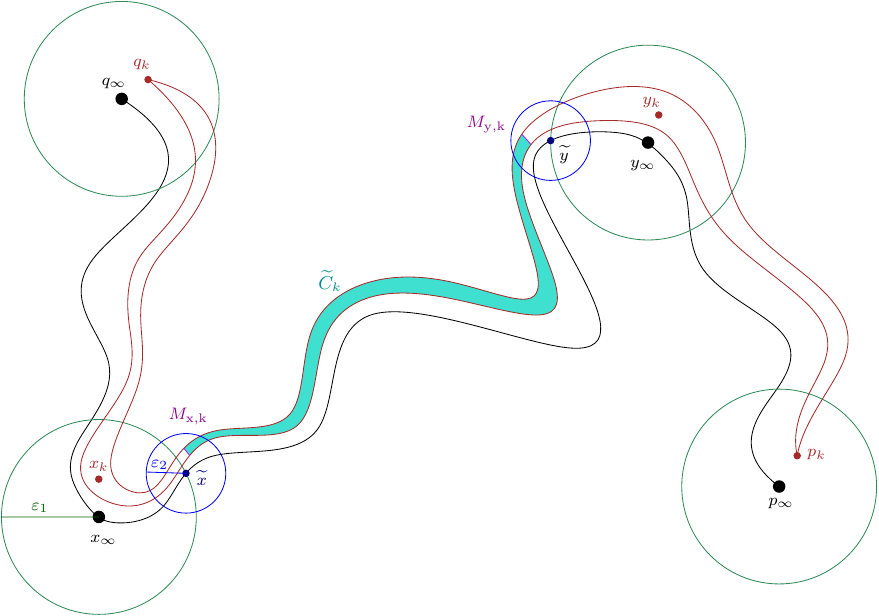}
\caption{Construction of $\widetilde C_k$ in the proof of Lemma \ref{lemmaSquishConformalPath}}
\label{figureSquish}
\end{figure}

Let $\eps_2$ also be sufficiently small and $k$ be sufficiently large for $\widetilde x$ and $M_{\mathrm{x},k}$ to be defined analogously.
That is, $\widetilde x$ is the last point of $C_\infty$ from $x_\infty$ to $y_\infty$ that is distance $\eps_1$ from $x_\infty$.
The arcs of $C_\infty$ from $q_{\infty}$ to $x_\infty$ and from $\widetilde x$ to $p_{\infty}$ are at least $4\eps_2$ apart. 
And, $M_{\mathrm{x},k}$ is a cross-cut of $C_k$ from $A_k$ to $B_k$ that converges to $\widetilde x$ and is within distance $\eps_2$ of $\widetilde x$. 

Let $\widetilde C_k$ be the region of $C_k$ that is 
between $M_{\mathrm{x},k}$ and $M_{\mathrm{y},k}$,
and let $\widetilde C_\infty$ be the arc of $C_\infty$ from $\widetilde x$ to $\widetilde y$.
Note that $\eps_2$ was chosen so that  
$x_k$ is separated from $\widetilde C_k$ by $M_{\mathrm{x},k}$ and 
$y_k$ is separated from $\widetilde C_k$ by $M_{\mathrm{y},k}$ for $k$ sufficiently large. 
Let $\Gamma_k$ be the family of all rectifiable curves from $M_{\mathrm{x},k}$ to $M_{\mathrm{y},k}$ though $\widetilde C_k$.
Let $\widetilde A_k$ and $\widetilde B_k$ respectively be the arcs of $A_k$ and $B_k$ along the boundary of $\widetilde C_k$, 
and let $\Gamma_k^*$ be the family of all rectifiable curves from $\widetilde A_k$ to $\widetilde B_k$ though $\widetilde C_k$. 
Since $(\Gamma_k,\Gamma_k^*)$ is a conformal rectangle, we have 
$\mathrm{el}(\Gamma_k)\mathrm{el}(\Gamma_k^*) = 1$ 
\cite[Chapter 4, examples]{beliaev2019conformal}.

We claim that $\el(\Gamma_k) \to \infty$ 
as $k \to \infty$. 
Consider first the case where $\widetilde C_\infty$ is a rectifiable path. 
Then, $\area(\widetilde C_k) \to 0$ since 
we can cover $\widetilde C_\infty$ with $\len(C_\infty)/\eps_3$ disks of radius $\eps_3>0$ spaced evenly along $\widetilde C_\infty$, and eventually $\widetilde C_k \subset \widetilde C_\infty\oplus\eps_3$, 
so $\area(\widetilde C_k) \leq \pi\len(C_\infty)\eps_3$. 
For each curve $\gamma \in \Gamma_k$, we have 
\[
\len(\gamma) \geq \|\widetilde y-\widetilde x\|-2\eps_2 \geq \|y_\infty -x_\infty\|-4\eps_1 \geq \eps_1, 
\]
since the paths $M_{\mathrm{x},k}$ and $M_{\mathrm{y},k}$ are respectively within $\eps_2$ of $\widetilde x$ and $\widetilde y$, so 
\begin{gather*}
\el(\Gamma_k) \geq \inf_{\gamma\in\Gamma_k} \frac{\len^2(\gamma)}{\area(\widetilde C_k)} 
\geq \frac{\eps_1^2}{\area(\widetilde C_k)}
\to \infty. 
\end{gather*}

Consider the case where $\widetilde C_\infty$ is not rectifiable. 
Then, $\area(\widetilde C_k) \leq A$ for some fixed $A$ since $\widetilde C_\infty$ is bounded. 
Also, for each $L>0$, every $\gamma_k \in \Gamma_k$ for $k$ large enough has 
$\len(\gamma_k) > L$ 
since $\widetilde C_\infty$ has a piecewise linear approximation $P$ 
of length at least $2L$ 
with vertices on $\widetilde C_\infty$, 
and eventually every $\gamma_k \in \Gamma_k$ must pass within $\delta>0$ of each vertex in order along $\widetilde C_\infty$ 
where $\delta$ is chosen so that each pair of vertices of $P$ is at least $4\delta$ apart. 
Letting $L \to \infty$, we have 
\[
\el(\Gamma_k) \geq \inf_{\gamma\in\Gamma_k} \frac{\len^2(\gamma)}{\area(\widetilde C_k)} 
\geq \frac{L^2}{A}
\to \infty. 
\]

In any case $\el(\Gamma_k) \to \infty$, so $\el(\Gamma_k^*) \to 0$, 
and so $\el(h_k^{-1}(\Gamma_k^*)) \to 0$ 
since extremal length is a conformal invariant, 
so there is a sequence of rectifiable curves $\zeta_k \in h_k^{-1}(\Gamma_k^*)$ 
from the arc $h_k^{-1}(\widetilde A_{k})$ of the upper unit semicircle 
to the arc $h_k^{-1}(\widetilde B_{k})$ of the lower unit semicircle, 
and $\len(\zeta_k) \to 0$ as $k\to \infty$. 
%so a subsequence of $\zeta_k$ must converge to either $1$ or $-1$. 
Also, $h^{-1}(x_k)$ is on the same side of $\zeta_k$ as $-1$ 
since we are in the case where $y_\infty$ is on the arc of $C_\infty$ from $p_{\infty}$ to $x_\infty$, 
and $h^{-1}(x_k)$ is bounded away from $-1$, 
so some ark of $\zeta_k$ must be bounded away from $-1$, 
and so $\zeta_k$ is bounded away from $-1$ since $\len(\zeta_k) \to 0$.   

Hence, $\zeta_k$ is a sequence of paths of vanishing length 
from the upper unit semicircle 
to the lower unit semicircle bounded away from $-1$, 
and as such, must converge to 1.
Thus, $h^{-1}(y_k) \to 1$ since $h^{-1}(y_k)$ is on the same side of $\zeta_k$ as 1.
\end{proof}

\section{The neighborhood.}
\label{sectionHood}

\FloatBarrier

Here we will prove Theorem \ref{theoremMacPG} using the nerve theorem.  
To do so, we define an open neighborhood of the set of topological representations of a rank 3 oriented matroid such that for every incomparable pair of oriented matroids, the corresponding neighborhoods are dsjoint; see Lemma \ref{lemmaHoodDisjoint}, and for every chain of oriented matroids, the corresponding intersection of neighborhoods is contractible; see Lemma \ref{lemmaCrush}. 

Informally, 
an arrangement $A$ will be in the neighborhood corresponding to an oriented matroid $\mc{M}$ 
when the geometric features of $A$ that correspond to nondegenerate features of $\mc{M}$ 
are larger than those that correspond to degenerate features of $\mc{M}$. 
We define the neighborhood in terms of parameters $\minbig(\mc{M},A)$, which measures the minimum size of a big feature of $A$, i.e., a nondegenerate feature of $\mc{M}$, 
and $\maxlit(\mc{M},A)$, which measures the maximum size of a little feature, i.e., degenerate in $\mc{M}$.  
These parameters will use a map $\maxcov(\mc{M},A)$ to associate features of $\mc{M}$ to features of $A$. 

An arrangement $A$ is in the intersection of neighborhoods corresponding to a chain $\mc{C}\subset\mcp_{3,n}$ when features that correspond to a degeneracy are smaller the lower that degeneracy appears in the chain.  
Conceptually, 
%a small feature may appear degenerate at a low resolution, so 
$A$ can appear to represent a different oriented matroid at different magnifications.  
For example, two distinct pseudocircles that are very close together might appear to coincide in a low resolution image. 
%At a higher resolution, we can distinguish smaller features, so fewer features appear degenerate. 
In this sense, higher oriented matroids in the chain $\mc{C}$ correspond to viewing $A$ at a higher magnification, 
and at a higher magnification we can distinguish smaller features, 
so fewer features appear degenerate; see Figure \ref{figureMagnification}. 

\begin{figure}
\centering
\includegraphics[scale=1]{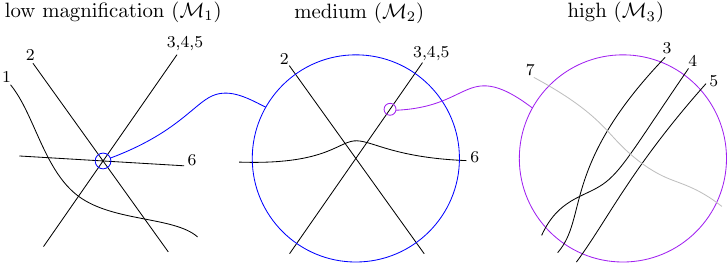}
\caption{Part of an arrangement $A \in \hood(\{\mc{M}_1,\mc{M}_2,\mc{M}_3\})$ 
at different magnifications.
Only $\mc{M}_1$ has a vertex covector $\sigma$ with $\{2,\dots,6\} \subseteq \sigma^0$. 
In both $\mc{M}_1$ and $\mc{M}_2$, the elements 3,4,5 are parallel and element 7 is a loop, but not in $\mc{M}_3$. 
Pseudocircles 2 through 6 appear to meet at a common point at low magnification, but not at medium or high. 
Pseudocircles 3, 4, and 5 coincide and pseudocircle 7 vanishes at low and medium magnification, but not at high. 
}
\label{figureMagnification}
\end{figure}

\begin{definition}[neighborhoods]
Given oriented matroids $\mc{M}_0 \leq_\mr{w} \mc{M}_1$, let 
\begin{gather*}
\maxcov(\mc{M}_0,\mc{M}_1):\cov(\mc{M}_1) \to \cov(\mc{M}_0) \text{ by } \\ 
%\maxcov(\sigma_1) = \max\{\sigma_0 \in \cov(\mc{M}_0) : \sigma_0\leq\sigma_1 \}.
\maxcov(\sigma_1) = \max(\cov(\mc{M}_0)_{\leq\sigma_1}).
\end{gather*}
Laura Anderson showed that the map $\maxcov(\mc{M}_0,\mc{M}_1)$ is well-defined and surjective 
for $\mc{M}_0\leq_\mr{w}\mc{M}_1$ 
\cite{anderson2001representing}

Given a rank 3 oriented matroid $\mc{M}$, 
and an arrangement $A \geq_\mr{w} \mc{M}$,  
and $x\in\sphere^2$, 
let 
\begin{gather*}
\maxcov(\mc{M},A) : \sphere^2 \to \cov(\mc{M}) \text{ by }  \\
\maxcov(\mc{M},A,x) = \maxcov(\mc{M},\om(A),\sign(A,x)) 
\end{gather*}

We may simply write $\maxcov = \maxcov(\mc{M},A)$ when $\mc{M}$ and $A$ are understood from context, 
and let $\maxcov^{-1} = \maxcov^{-1}(\mc{M},A) = [\maxcov(\mc{M},A)]^{-1}$.  
In particular, for $\sigma \in \cov(\mc{M})$, we have  
\[
%\maxcov^{-1}(\mc{M},A,\sigma) = \{x \in \sphere^2: \max\{\cov(\mc{M})\leq\aim(A,x)\}=\sigma \}
\maxcov^{-1}(\sigma) = \{x \in \sphere^2: \max(\cov(\mc{M})_{\leq\sign(A,x)})=\sigma \};
\]
see Figure \ref{figureMaxcov}.
Let \[
\van(\mc{M},A) = \{p \in \sphere^2 : \exists i \in \supp(\mc{M}) : \maxcov(\mc{M},A,p,i)=0  \}.
\]

\begin{figure}
\centering
\includegraphics[scale=1]{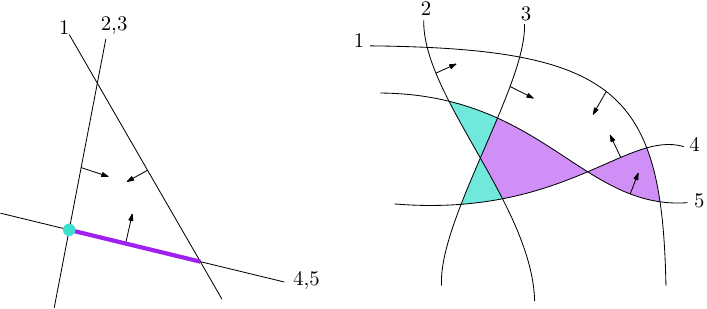}
\caption{Left:~Oriented matroid $\mc{M}$ represented by an oriented line arrangement. \newline
Right:~Part of a pseudocircle arrangement, and 
$\maxcov^{-1}(\sigma)$ for $\sigma = (+,0,0,0,0)$ in teal and for $\sigma = (+,+,+,0,0)$ in purple. 
Arrows indicate the positive side of each pseudocircle. 
}
\label{figureMaxcov}
\end{figure}

The \df{inradius} of a set $X$ is the radius of the largest open metric disk contained in $X$. 
Let $\vanrad(\mc{M},A)$ be the inradius of $\van(\mc{M},A)$, 
and let $\maxlit(\mc{M},A)$ be the maximum value among $\vanrad(\mc{M},A)$ and the weights of loops of $\mc{M}$. 

For a facet covector $\sigma$ of $\mc{M}$, 
let $\inrad(\mc{M},\sigma,A)$ be the inradius of $\maxcov^{-1}(\mc{M},A,\sigma)$. 
Let $\minbig(\mc{M},A)$ be the minimum among the value of $\inrad(\mc{M},\sigma,A)$ among facet covectors $\sigma$ of $\mc{M}$ and the weights of nonloops of $\mc{M}$.

Let 
\begin{align*}
\upa(\mc{M}) &= \{A \in \psv_{3,n} : A \geq_\mr{w} \mc{M} \}, \\
\hood(\mc{M}) &= \{A \in \upa(\mc{M}) : \maxlit(\mc{M},A) < \minbig(\mc{M},A)\}.
\end{align*}
Given a chain $\mc{C} = \{\mc{M}_1<\dots<\mc{M}_L\}$, let 
\[
\hood(\mc{C}) = \bigcap_{k=1}^L \hood(\mc{M}_k). 
\]
Let $\upa(\chi),\hood(\chi)\subset\psv_{3,n}$ be defined analogously for a chirotope $\chi\in\omcp_{3,n}$.
\end{definition}

\begin{lemma}[Open neighborhood]
\label{lemmaHoodSymOpen}
$\hood(\mc{M})$ is an $\orth_3$-symmetric open neighborhood of $\psv(\mc{M})$. 
\end{lemma}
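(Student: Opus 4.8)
The plan is to establish the two assertions separately: first that $\hood(\mc{M})$ is $\orth_3$-symmetric and contains $\psv(\mc{M})$, then that it is open. Symmetry is essentially bookkeeping: the order type $\ot$, the covector sphere $\csph$, and hence $\om(A)$ and the relation $A\geq_\mr{w}\mc{M}$ are all preserved under the left $\orth_3$-action, and $\maxcov(\mc{M},A)$ is defined pointwise from $\sign(A,x)$, so $\maxcov(\mc{M},Q{*}A,Qx)=\maxcov(\mc{M},A,x)$ for $Q\in\orth_3$. Consequently $\van(\mc{M},Q{*}A)=Q(\van(\mc{M},A))$, and since orthogonal maps are isometries of $\sphere^2$, both $\vanrad$ and each $\inrad(\mc{M},\sigma,A)$ are $\orth_3$-invariant; weights are unchanged by the action. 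Hence $\maxlit$ and $\minbig$ are $\orth_3$-invariant, so $\hood(\mc{M})$ is $\orth_3$-symmetric. For containment of $\psv(\mc{M})$: if $A\in\psv(\mc{M})$ then $\om(A)=\mc{M}$, so $A\geq_\mr{w}\mc{M}$ and $A\in\upa(\mc{M})$; moreover $\maxcov(\mc{M},A)$ is the identity-type map on $\cov(\mc{M})$, so $\van(\mc{M},A)$ is exactly the union of the pseudocircles $S_i$ with $i\in\supp(\mc{M})$, a finite union of curves with empty interior, giving $\vanrad(\mc{M},A)=0$, and $\mc{M}$ has no loops among its nonloops by definition so $\maxlit(\mc{M},A)=0<\minbig(\mc{M},A)$ (the latter is positive since the facet cells of a nondegenerate rank-3 arrangement have nonempty interior and nonloop weights are positive).

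The substantive part is openness. I would prove it by a sequential argument: take $A_k\to A_\infty$ in $\psv_{3,n}$ with $A_\infty\in\hood(\mc{M})$, and show $A_k\in\hood(\mc{M})$ for $k$ large. The key intermediate claim is a semicontinuity statement: for $A$ near $A_\infty$, $\maxlit(\mc{M},A)$ is close to (or at least not much larger than) $\maxlit(\mc{M},A_\infty)$, while $\minbig(\mc{M},A)$ stays bounded below near $\minbig(\mc{M},A_\infty)$. Since the strict inequality $\maxlit<\minbig$ is open in the pair of values, this suffices, together with the fact that $\upa(\mc{M})=\{A:A\geq_\mr{w}\mc{M}\}$ is itself open — which follows because the weak order condition $\om(A)\geq_\mr{w}\mc{M}$ says every basis/nonzero covector pattern forced by $\mc{M}$ persists, and small perturbations of a pseudocircle arrangement can only make the order type \emph{less} degenerate (bases stay bases, separating pseudocircles keep separating), so $\ot(A_k)\geq_\mr{w}\ot(A_\infty)\geq_\mr{w}\chi$ for $k$ large; this is a standard stability property of pseudoline/pseudocircle arrangements that I would cite from the predecessor paper \cite{dobbins2021grassmannians} or from the combinatorial structure of $\psv_{3,n}$.

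To control the radii I would argue as follows. For $\minbig$: each facet covector $\sigma$ of $\mc{M}$ corresponds via $\maxcov^{-1}(\mc{M},A_\infty,\sigma)$ to a region with a metric disk of radius $r_\sigma=\inrad(\mc{M},\sigma,A_\infty)>0$ inside it; for $A_k$ near $A_\infty$, the cells of $A_k$ that make up $\maxcov^{-1}(\mc{M},A_k,\sigma)$ are close (in Hausdorff distance, using that cell boundaries are unions of pseudocircle arcs varying continuously — Lemma \ref{lemmaPMFrechetDistance} and the metric on $\psv_{3,n}$) to those of $A_\infty$, so they still contain a metric disk of radius $r_\sigma-o(1)$; nonloop weights vary continuously and stay positive. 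For $\maxlit$: one shows $\van(\mc{M},A_k)\to\van(\mc{M},A_\infty)$ in Hausdorff distance, or more precisely that $\van(\mc{M},A_k)\subseteq\van(\mc{M},A_\infty)\oplus\eps$ for $k$ large — this is the crux, since $\van$ is defined by a pointwise condition $\maxcov(\mc{M},A,p,i)=0$ that could in principle jump. Here I would use that $\maxcov(\mc{M},A,p,i)=0$ forces $p$ to lie on $S_i(A)$ together with enough other pseudocircles to realize a covector of $\mc{M}$ no larger than a loop-containing one; as $A_k\to A_\infty$ such configurations of pseudocircle arcs cannot be created away from where they already approximately occur in $A_\infty$ (again by the stability of the combinatorial type and continuity of the $S_i$), so the van-set does not grow, whence $\vanrad(\mc{M},A_k)\le\vanrad(\mc{M},A_\infty)+o(1)$; loop weights of $\mc{M}$ are fixed numbers independent of $A$. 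Combining, $\maxlit(\mc{M},A_k)<\minbig(\mc{M},A_k)$ for $k$ large, so $\hood(\mc{M})$ is open. The main obstacle I anticipate is precisely the upper-semicontinuity of $\vanrad$ — making rigorous that the van-set cannot suddenly contain a large disk under perturbation — and I would handle it by reducing to the Hausdorff convergence of the relevant finite unions of pseudocircle arcs via the tools of Section \ref{sectionTools}.
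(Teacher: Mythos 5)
Your overall strategy matches the paper's: show $\orth_3$-symmetry and $\psv(\mc{M})\subseteq\hood(\mc{M})$ directly, and get openness from openness of $\upa(\mc{M})$ (via lower semicontinuity of $\om$, which is Lemma~\ref{lemmaOmSemicontinuous}) together with continuity/semicontinuity of $\minbig$ and $\maxlit$ (which the paper establishes as full continuity in Lemma~\ref{lemmaBigLittleContinuous}). Your symmetry and containment arguments are essentially the paper's. You also correctly identify the upper semicontinuity of $\vanrad$ as the crux.

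However, the specific mechanism you propose for that crux does not work. You claim that $\van(\mc{M},A_k)\subseteq\van(\mc{M},A_\infty)\oplus\eps$ for $k$ large would give $\vanrad(\mc{M},A_k)\le\vanrad(\mc{M},A_\infty)+o(1)$. This implication is false in general: inradius is not controlled by a one-sided Hausdorff bound on the set itself, because $\van(\mc{M},A_\infty)\oplus\eps$ can have much larger inradius than $\van(\mc{M},A_\infty)$ whenever the complement has a thin feature (e.g.\ an annulus of inner radius $<\eps$ thickens to a full disk), and your containment does not prevent $\van(\mc{M},A_k)$ from occupying that larger set. What is actually needed — and what the paper proves in Lemma~\ref{lemmaMaxcovContinuous} — is Hausdorff continuity of the \emph{complement} $\van(\mc{M},A)^\mr{c}$, i.e.\ of the union of the facet regions $\maxcov^{-1}(\sigma)$. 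The relevant direction is $\van(\mc{M},A_\infty)^\mr{c}\subseteq\van(\mc{M},A_k)^\mr{c}\oplus\delta$: the facet cells do not recede, so any disk in $\van(\mc{M},A_k)$ of radius exceeding $\vanrad(\mc{M},A_\infty)+\eps$, once shrunk slightly, would still be bounded away from $\van(\mc{M},A_k)^\mr{c}$ and hence disjoint from $\van(\mc{M},A_\infty)^\mr{c}$, giving a disk inside $\van(\mc{M},A_\infty)$ of radius larger than its inradius — a contradiction. So the gap is not in your overall plan but in replacing ``the van-set does not grow'' by the Hausdorff statement on the complement of the van-set, which is the substantive content of Lemmas~\ref{lemmaMaxcovBorder} and~\ref{lemmaMaxcovContinuous}.
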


\begin{lemma}[Disjoint neighborhoods]
\label{lemmaHoodDisjoint}
If $\mc{M}$ and $\mc{N}$ are incomparable in the weak order, 
then $\hood(\mc{M})$ and $\hood(\mc{N})$ are disjoint. 
\end{lemma}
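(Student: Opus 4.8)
The goal is to show that if $\mc{M}$ and $\mc{N}$ are incomparable in the weak order, then no arrangement $A$ lies in both $\hood(\mc{M})$ and $\hood(\mc{N})$. I would argue by contradiction: suppose $A \in \hood(\mc{M}) \cap \hood(\mc{N})$. By definition of $\hood$, this forces $A \geq_\mr{w} \mc{M}$ and $A \geq_\mr{w} \mc{N}$, so $\om(A)$ is a common upper bound of $\mc{M}$ and $\mc{N}$ in the weak order. The strategy is then to use the ``big vs.\ little'' inequalities $\maxlit(\mc{M},A) < \minbig(\mc{M},A)$ and $\maxlit(\mc{N},A) < \minbig(\mc{N},A)$ to derive a contradiction with incomparability, by showing that these size constraints actually pin down a comparison between $\mc{M}$ and $\mc{N}$.

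\textbf{Key steps.} First I would make precise the dictionary between features of $\mc{M}$ and regions of $\sphere^2$ via $\maxcov(\mc{M},A)$: for a facet covector $\sigma$ of $\mc{M}$, the region $\maxcov^{-1}(\mc{M},A,\sigma)$ is ``big'' (contains a metric disk of radius $\geq \minbig(\mc{M},A) > \maxlit(\mc{M},A)$), while $\van(\mc{M},A)$ (where some nonloop of $\mc{M}$ degenerates in $A$) and the pseudocircles of $A$ corresponding to loops of $\mc{M}$ are ``little'' (inradius, resp.\ weight, $\leq \maxlit(\mc{M},A)$). Next, I would compare the two sets of loops: if $i$ is a loop of $\mc{M}$ but a nonloop of $\mc{N}$, then $\wt_i(A) \leq \maxlit(\mc{M},A) < \minbig(\mc{M},A)$, but also $\wt_i(A) \geq \minbig(\mc{N},A) > \maxlit(\mc{N},A)$ since nonloop weights bound $\minbig$ from below. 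Doing this symmetrically, together with the analogous argument on the ``vanishing'' sets, I would show the loops and the collapsed cocircuit/parallel structure of $\mc{M}$ and $\mc{N}$ are forced to be nested — i.e.\ one of $\csph(\mc{M}), \csph(\mc{N})$ is obtained from the other (through $\om(A)$) by collapsing a subset of its degeneracies. The final step is to conclude that this nesting is exactly the statement $\mc{M} \leq_\mr{w} \mc{N}$ or $\mc{N} \leq_\mr{w} \mc{M}$ (using the characterization that $\chi_0 \leq_\mr{w} \chi_1$ iff every basis of $\chi_1$ is a co-oriented basis or a dependent set of $\chi_0$, translated into covector-sphere language via the factorization $\om(A) \geq_\mr{w} \mc{M}$ through $\maxcov$), contradicting incomparability.

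\textbf{Main obstacle.} The delicate part is converting purely metric inequalities about inradii into a combinatorial comparison. A big region $\maxcov^{-1}(\mc{M},A,\sigma)$ for a facet covector $\sigma$ of $\mc{M}$ must, because of its size, contain points whose $\mc{N}$-covector under $\maxcov(\mc{N},A,\cdot)$ is also a facet covector of $\mc{N}$ (it cannot be entirely swallowed by the little set $\van(\mc{N},A) \cup \bigcup_{j \text{ loop of }\mc{N}} S_j$, which has small inradius / small measure near the loop circles). This gives a well-defined partial correspondence of facet covectors, and I would need to check it is monotone and that it extends down to edge and vertex covectors compatibly with $\leq_\mr{v}$, so that it assembles into a join-semilattice map between $\Csph(\mc{M})$ and $\Csph(\mc{N})$ in one direction or the other. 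Showing that the correspondence goes \emph{all} one way — rather than ``$\mc{M}$ refines $\mc{N}$ here but $\mc{N}$ refines $\mc{M}$ there'' — is where the hypothesis that $A$ lies in \emph{both} neighborhoods with a \emph{common} witness $\om(A)$ is essential: the single ordering of feature sizes in $A$ cannot simultaneously make a feature of $\mc{M}$ both strictly bigger and strictly smaller than a corresponding feature of $\mc{N}$. I expect the cleanest writeup routes everything through $\maxcov(\mc{M},\om(A))$ and $\maxcov(\mc{N},\om(A))$ and the surjectivity result of Anderson quoted above, reducing the geometry to the single observation that a little set of $\mc{M}$ contains no big region of $\mc{N}$ and vice versa.
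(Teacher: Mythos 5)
Your overall instinct is right and you have identified the correct key observation — that a facet region of one matroid, because of its inradius, cannot fit inside the vanishing set (or collapsed loop circles) of the other — and this is indeed the geometric heart of the paper's argument (it is exactly the content of Lemma~\ref{lemmaBasisBLIneq}). Where you diverge is in architecture. You propose to build a join-semilattice correspondence between $\Csph(\mc{M})$ and $\Csph(\mc{N})$, prove it is coherently ``one-sided,'' and then conclude weak comparability to contradict the hypothesis. That is a long detour and the hardest part (coherence of the correspondence) is left vague. The paper instead reduces at the combinatorial level first: since $\mc{M}, \mc{N} \leq_\mr{w} \om(A)$ live on the same ground set and incomparability means each has a basis triple the other lacks (agreement on any common basis is forced by agreement with $\om(A)$), one only needs to convert a \emph{single} such basis difference into the inequality $\minbig(\mc{M}_1,A) \leq \maxlit(\mc{M}_0,A)$. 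Your loop case and your ``big region inside the vanishing set'' case are precisely the two cases of that lemma. Doing this once in each direction and chaining with $\maxlit(\cdot,A) < \minbig(\cdot,A)$ (the defining inequality of $\hood$) gives $\minbig(\mc{M},A) \leq \maxlit(\mc{N},A) < \minbig(\mc{N},A) \leq \maxlit(\mc{M},A) < \minbig(\mc{M},A)$, which is the contradiction outright — no nesting of covector spheres or global correspondence required. One small slip in your writeup: you say ``nonloop weights bound $\minbig$ from below,'' but since $\minbig$ is a minimum that ranges over nonloop weights, it is bounded \emph{above} by them (your inequality $\wt_i(A) \geq \minbig(\mc{N},A)$ is nonetheless the correct one).
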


In Definition \ref{defCrush}, we will define a map $\crush(\mc{C},\Omega)$ that satisfies the following. 

\begin{lemma}[Crush]
\label{lemmaCrush}
Given a finite chain $\mc{C} \subset \mcp_{3,n}$ and $\Omega\in\upa(\max(\mc{C}))$, 
then the map $\crush(\mc{C},\Omega)$ is a strong $\orth_3$-equivariant deformation retraction from 
$\hood(\mc{C})\cap\upa(\Omega)$ to the $\orth_3$-orbit of $\Omega$. 
Hence, $\hood(\mc{C})/\orth_3$ is contractible. 
\end{lemma}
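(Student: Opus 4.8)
The plan is to prove Lemma \ref{lemmaCrush} by constructing the deformation retraction $\crush(\mc{C},\Omega)$ from the outermost layer of the chain inward, peeling off one degeneracy level at a time. Given the chain $\mc{C} = \{\mc{M}_1 <_\mr{w} \dots <_\mr{w} \mc{M}_L\}$ and $\Omega \in \upa(\mc{M}_L)$, I would first reduce to the case $\mc{M}_L = \om(\Omega)$ is the top of the chain; indeed, any $A \in \hood(\mc{C}) \cap \upa(\Omega)$ satisfies $A \geq_\mr{w} \om(\Omega) \geq_\mr{w} \mc{M}_L$, and the condition $A \in \hood(\mc{M}_L)$ already forces the features degenerate in $\mc{M}_L$ to be smaller than those that are nondegenerate. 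The idea is then an induction on $L$: the last homotopy stage should be the map $\crush(\Omega)$ from Theorem \ref{theoremCrushPsVM}, which strongly $\orth_3$-equivariantly deformation retracts $\psv(\om(\Omega)) = \psv(\mc{M}_L)$ onto the orbit of $\Omega$. So the real content is: given $A \in \hood(\mc{C})$, first deform $A$ within $\hood(\mc{C})$ so that the features degenerate in $\mc{M}_1$ actually collapse (pseudocircles that are loops of $\mc{M}_1$ shrink to points or vanish, parallel classes coincide, the sets $\maxcov^{-1}$ of vertex covectors of $\mc{M}_1$ shrink to points), landing in $\hood(\{\mc{M}_2 < \dots < \mc{M}_L\})$, and repeat. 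After $L-1$ such stages we are in $\psv(\mc{M}_L)$, where we apply $\crush(\Omega)$.

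The main construction of a single stage uses the machinery built in the paper: for $A \in \hood(\mc{M}_1 < \dots)$, the set $\van(\mc{M}_1,A)$ has small inradius compared to the big features, so I can use the zone decomposition of Section \ref{sectionZone} (which is referenced but not in this excerpt) to subdivide $\sphere^2$ into regions where $A$ has a controlled form, and then apply the zone deformations of Section \ref{sectionEbbZ}. Concretely, within each zone one uses the canonical path parametrization $\param$ (Lemma \ref{lemmaPathParam}), the interpolation map $\interp$ between regular cell decompositions (Lemma \ref{lemmaInterp}), and the squishing behavior of conformal maps (Lemma \ref{lemmaSquishConformalPath}) to continuously push the pseudocircles that are ``little'' with respect to $\mc{M}_1$ onto their collapsed positions, while keeping the ``big'' features rigidly positioned as dictated by $\maxcov(\mc{M}_1, A)$. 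Continuity of the resulting homotopy in $A$ follows from the continuity statements in Lemmas \ref{lemmaCPDistContinuity}, \ref{lemmaPMFrechetDistance}, \ref{lemmaExtendedInverse}, and \ref{lemmaTailsupContinuous}; the use of $\tailsup$ / $\tailinf$ together with $\ext_\mb{R}$ of an inverse (as foreshadowed by the remark preceding Lemma \ref{lemmaTailsupContinuous} and Definition \ref{defCrush}) is what turns the a priori non-monotone ``size'' functions $\minbig$, $\maxlit$ into monotone time-reparametrizations, ensuring the homotopy stays inside $\hood(\mc{C})$ throughout and does not overshoot. $\orth_3$-equivariance is inherited from equivariance of $\param$, $\interp$, the zone construction, and $\crush(\Omega)$; the fact that $\Omega$ itself is fixed throughout (it only enters through the last stage $\crush(\Omega)$ and through the target orbit) makes it a \emph{strong} deformation retraction.

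For the final sentence of the lemma, once $\crush(\mc{C},\Omega)$ is a strong deformation retraction from $\hood(\mc{C}) \cap \upa(\Omega)$ to the $\orth_3$-orbit of $\Omega$, I would observe that $\hood(\mc{C}) \cap \upa(\Omega)$ is $\orth_3$-invariant and the deformation is equivariant, so it descends to a deformation retraction of $(\hood(\mc{C}) \cap \upa(\Omega))/\orth_3$ onto a point. Since $\Omega \in \upa(\mc{M}_L)$ was arbitrary and every $A \in \hood(\mc{C})$ lies in $\upa(A) \subseteq \upa(\om(A))$ with $\om(A) \geq_\mr{w} \mc{M}_L$, the sets $\hood(\mc{C}) \cap \upa(\Omega)$ over all relevant $\Omega$ cover $\hood(\mc{C})$; but to conclude contractibility of the whole quotient I would instead just take any single $\Omega \in \psv(\mc{M}_L) \subseteq \hood(\mc{C})$ (nonempty by topological representability of oriented matroids in rank $3$, Lawrence's theorem via Folkman--Lawrence), note $\hood(\mc{C}) \subseteq \upa(\mc{M}_L) = \upa(\om(\Omega)')$ is not quite $\upa(\Omega)$, so more carefully: the deformation retraction for a fixed such $\Omega$ has domain all of $\hood(\mc{C})$ provided $\upa(\Omega) \supseteq \hood(\mc{C})$, which fails in general, so the clean statement is that $\hood(\mc{C})$ deformation retracts onto $\psv(\mc{M}_L) \cap \hood(\mc{C}) = \psv(\mc{M}_L)$ via the first $L-1$ stages (independent of $\Omega$), and then $\psv(\mc{M}_L)/\orth_3 = \psg(\mc{M}_L)$ is contractible by Theorem \ref{theoremCrushPsVM}. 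The main obstacle I anticipate is making the first $L-1$ stages genuinely well-defined and continuous simultaneously across all of $\hood(\mc{C})$ — in particular handling the boundary case where a ``little'' feature has size exactly matching the threshold, and ensuring the zone decomposition varies continuously as the combinatorics of $A$ jumps; this is exactly why the paper isolates the zone and ebb constructions into their own sections and relies on the partial-map metric throughout.
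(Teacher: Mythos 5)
Your plan reproduces the right scaffolding (recursion on chain length, final stage $\crush(\Omega)$ from Theorem~\ref{theoremCrushPsVM}, zone/ebb machinery for a single stage, continuity via the partial-map metric and $\tailsup$/$\tailinf$ with an extended inverse, inherited equivariance), but the core of the staged construction is wrong in a way that cannot be patched: you propose to first collapse the $\mc{M}_1$-degenerate features of $A$ and land in $\hood(\{\mc{M}_2<\dots<\mc{M}_L\})$, but collapsing to $\psv(\mc{M}_1)$ produces arrangements with $\om(\cdot)=\mc{M}_1 <_\mr{w} \mc{M}_2$, which lie \emph{outside} $\upa(\mc{M}_2)\supseteq\hood(\mc{M}_2)$. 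So after the very first stage you would have left $\hood(\mc{C})$, and $\Omega$ (with $\om(\Omega)\geq_\mr{w}\mc{M}_L$) would be unreachable without un-collapsing, breaking the deformation-retraction structure. Reversing the order does not fix it either: after collapsing to $\psv(\mc{M}_L)$ one would then need to collapse further to $\psv(\mc{M}_{L-1})$, again exiting $\upa(\mc{M}_L)$.

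The paper's construction avoids this by not passing through intermediate strata $\psv(\mc{M}_k)$ at all. In Definition~\ref{defCrush}, the base case ($\mc{C}=\emptyset$) is already the full trajectory $\crush(\Omega)\circ\ebb_\mr{R}(\om(\Omega),0)$ from $\upa(\Omega)$ down to the orbit of $\Omega$; the recursion on $L$ does not add new collapse stages but instead \emph{corrects} the recursive path: at each time $t$ it applies $\ebb_\mr{R}(\mc{C},r)$ to $A_{\mr{rec},t}$ for a carefully chosen stopping time $s_t$ so that the corrected path lies in $\hood(\mc{M}_L)$ while still agreeing with the recursive path at $t\in\{0,1\}$ (Claims~\ref{claimCrushZero}--\ref{claimCrushSContinuous}). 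The roles of $\mc{M}_1,\dots,\mc{M}_{L-1}$ are as \emph{constraints} (through the $\hood(\mc{M}_k)$ bounds in Lemma~\ref{lemmaEbbR} and the preference $(\lesssim)$ in Definition~\ref{defEbbD}), not as a sequence of intermediate targets. Your final paragraph correctly senses the domain-coverage subtlety for the ``Hence'' clause; the clean resolution is to take $\Omega\in\psv(\mc{M}_L)$, so that $\hood(\mc{C})\cap\upa(\Omega)=\hood(\widetilde{\mc{C}})$ for one of the two chirotope chains and the quotient is handled exactly as in Corollaries~\ref{corollaryChiHood}--\ref{corollaryChiCrush}.
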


We will prove Lemma \ref{lemmaHoodDisjoint} in the next subsection, 
and Lemma \ref{lemmaHoodSymOpen} at the end of Section \ref{sectionOpen}, 
and Lemma \ref{lemmaCrush} at the end of Subsection \ref{subsectionEbbCrush}. 

Note that each oriented matroid $\mc{M}$ corresponds to a pair of chirotopes $\chi,-\chi$.  
Moreover, $\ot(-A) = -\ot(A)$, and order type is $\sorth_3$-invariant,  
so $\hood(\mc{M})$ is the disjoint union of $\hood(\chi)$ and $\hood(-\chi)$  
since $\hood(\mc{M})$ is $\orth_3$-symmetric by Lemma \ref{lemmaHoodSymOpen},  
so by Lemmas \ref{lemmaHoodSymOpen} and \ref{lemmaHoodDisjoint} we have the following.  

\begin{corollary}
\label{corollaryChiHood}
If $\chi,\psi$ are incomparable chirotopes, then 
$\hood(\chi)$ is a $\sorth_3$-symmetric open neighborhood of $\psv(\chi)$, 
and $\hood(\chi)$ and $\hood(\psi)$ are disjoint. 
\end{corollary}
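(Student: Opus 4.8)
The plan is to derive the corollary as a formal consequence of Lemmas \ref{lemmaHoodSymOpen} and \ref{lemmaHoodDisjoint} together with the decomposition $\hood(\mc{M}) = \hood(\chi)\sqcup\hood(-\chi)$ noted in the paragraph preceding the corollary. First I would record why that decomposition holds: if $A\in\hood(\mc{M})$ then $\om(A)\geq_\mr{w}\mc{M}$, so $\ot(A)$ is one of the two chirotopes weakly above $\chi$ or above $-\chi$; since $\chiro(\om(A))=\{\ot(A),-\ot(A)\}$ and $\ot(-A)=-\ot(A)$, exactly one of $A,-A$ lies in $\upa(\chi)$ and the other in $\upa(-\chi)$. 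The parameters $\minbig,\maxlit,\vanrad,\inrad$ depend on $A$ only through $\om(A)$ and metric data invariant under $\orth_3$, hence $A\in\hood(\chi)$ iff $-A\in\hood(-\chi)$, giving $\hood(\mc{M})=\hood(\chi)\cup\hood(-\chi)$; the union is disjoint because $\upa(\chi)\cap\upa(-\chi)=\emptyset$ (a configuration cannot have order type $\chi$ and $-\chi$ simultaneously unless it is the degenerate all-loops case, which is excluded by spanning).

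Next I would establish that $\hood(\chi)$ is $\sorth_3$-symmetric and open. Symmetry under $\sorth_3$ is immediate since $\sorth_3\subset\orth_3$ preserves each of $\hood(\chi)$ and $\hood(-\chi)$: an element of $\sorth_3$ fixes $\ot(A)$, so it cannot move a point of $\upa(\chi)$ into $\upa(-\chi)$, and $\hood(\mc{M})$ is $\orth_3$-symmetric by Lemma \ref{lemmaHoodSymOpen}. Openness of $\hood(\chi)$ follows because $\hood(\mc{M})$ is open in $\psv_{3,n}$ by Lemma \ref{lemmaHoodSymOpen}, and $\hood(\chi)$ is the intersection of the open set $\hood(\mc{M})$ with $\upa(\chi)$, which is relatively open in $\upa(\chi)\cup\upa(-\chi)$; more directly, $\hood(\chi)$ and $\hood(-\chi)$ are disjoint (by the previous paragraph), so $\hood(\chi)=\hood(\mc{M})\setminus\hood(-\chi)$, and since $\hood(-\chi)$ is itself a translate of $\hood(\chi)$ under any orientation-reversing orthogonal map, both are open as the two "halves" of the open set $\hood(\mc{M})$ that are separated because order type is locally constant on $\upa(\chi)\cup\upa(-\chi)$ in the relevant sense. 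Finally, $\hood(\chi)$ is a neighborhood of $\psv(\chi)$ because $\psv(\chi)\subseteq\psv(\mc{M})\subseteq\hood(\mc{M})$ and $\psv(\chi)\subseteq\upa(\chi)$, so $\psv(\chi)\subseteq\hood(\mc{M})\cap\upa(\chi)=\hood(\chi)$.

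For the disjointness clause, suppose $\chi$ and $\psi$ are incomparable chirotopes. If $\chi$ and $\psi$ correspond to the same oriented matroid then $\psi=-\chi$ and $\hood(\chi)\cap\hood(-\chi)=\emptyset$ by the decomposition above. Otherwise $\chi$ and $\psi$ correspond to distinct oriented matroids $\mc{M}\neq\mc{N}$; these must be incomparable in the weak order on oriented matroids, since a comparability $\mc{M}\leq_\mr{w}\mc{N}$ would lift to a comparability between one of $\pm\chi$ and one of $\pm\psi$, contradicting incomparability of $\chi$ and $\psi$. Hence Lemma \ref{lemmaHoodDisjoint} gives $\hood(\mc{M})\cap\hood(\mc{N})=\emptyset$, and since $\hood(\chi)\subseteq\hood(\mc{M})$ and $\hood(\psi)\subseteq\hood(\mc{N})$, we conclude $\hood(\chi)\cap\hood(\psi)=\emptyset$.

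The only genuinely delicate point is verifying that order type is "locally constant" on $\upa(\chi)\cup\upa(-\chi)$ in the sense needed to conclude each of $\hood(\chi),\hood(-\chi)$ is open rather than merely that their union is — but this is exactly the content of the observation that $\hood(\mc{M})$ is the disjoint union of the two, so no separate argument is required; everything else is bookkeeping with the already-established lemmas.
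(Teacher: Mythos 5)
The decomposition $\hood(\mc{M}) = \hood(\chi)\sqcup\hood(-\chi)$, the $\sorth_3$-symmetry, the openness argument via $\hood(\chi) = \hood(\mc{M})\cap\upa(\chi)$, and the neighborhood claim are all fine and track the paper's reasoning. But the disjointness clause contains a genuine gap.

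You assert that if $\chi$ and $\psi$ are incomparable chirotopes with distinct underlying oriented matroids $\mc{M}\neq\mc{N}$, then $\mc{M}$ and $\mc{N}$ must be incomparable in the weak order, because a comparability $\mc{M}\leq_\mr{w}\mc{N}$ ``would lift to a comparability between one of $\pm\chi$ and one of $\pm\psi$, contradicting incomparability of $\chi$ and $\psi$.'' This inference is false: $\mc{M}\leq_\mr{w}\mc{N}$ means only that \emph{some} chirotope from $\{\pm\chi\}$ is $\leq_\mr{w}$ some chirotope from $\{\pm\psi\}$, and the possibility $\chi\leq_\mr{w}-\psi$ (equivalently $-\chi\leq_\mr{w}\psi$) is entirely consistent with $\chi,\psi$ themselves being incomparable. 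A concrete example: take any non-uniform chirotope $\chi$, let $\phi$ be a uniform chirotope with $\chi\leq_\mr{w}\phi$, and set $\psi=-\phi$. Then $\chi\leq_\mr{w}-\psi$ forces $\mc{M}<_\mr{w}\mc{N}$, yet $\chi$ and $\psi$ are incomparable (any comparability between $\chi$ and $\psi=-\phi$ would combine with $\chi\leq_\mr{w}\phi$ to put a chirotope below both $\phi$ and $-\phi$ or force $\chi\leq_\mr{w}-\chi$, both impossible). So Lemma \ref{lemmaHoodDisjoint} cannot be invoked in this case, and your case analysis is incomplete.

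The good news is the corollary's conclusion still holds in that missing case, but it needs its own one-line argument rather than an appeal to Lemma \ref{lemmaHoodDisjoint}: if $\chi\leq_\mr{w}-\psi$, then $\upa(\chi)\cap\upa(\psi)=\emptyset$. Indeed, pick a basis $(i,j,k)$ of $\chi$, so $\chi(i,j,k)=-\psi(i,j,k)\neq 0$. Any chirotope $\tau$ with $\tau\geq_\mr{w}\chi$ and $\tau\geq_\mr{w}\psi$ would have to satisfy $\tau(i,j,k)=\chi(i,j,k)$ and $\tau(i,j,k)=\psi(i,j,k)=-\chi(i,j,k)$ simultaneously, forcing $\chi(i,j,k)=0$, a contradiction. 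Hence no arrangement $A$ satisfies $\ot(A)\geq_\mr{w}\chi$ and $\ot(A)\geq_\mr{w}\psi$, so $\hood(\chi)\cap\hood(\psi)\subseteq\upa(\chi)\cap\upa(\psi)=\emptyset$. (Note this is exactly the same observation you already used to show $\upa(\chi)\cap\upa(-\chi)=\emptyset$; it just needs to be applied to the pair $\chi,\psi$ when $\chi\leq_\mr{w}-\psi$, not merely to $\chi,-\chi$.) You should replace the false claim that $\mc{M},\mc{N}$ are necessarily incomparable with this direct check for the comparable case.
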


Similarly a chain of oriented matroids $\mc{C}$ 
corresponds to a pair of chains $\widetilde{\mc{C}},-\widetilde{\mc{C}}$ of chirotopes, 
and $\hood(\mc{C})$ is the disjoint union of $\hood(\widetilde{\mc{C}})$ and $\hood(-\widetilde{\mc{C}})$. 
Also, the action of $\orth_3$ on $\psv_{3,n}$ is free by \cite[Lemma 5.0.1]{dobbins2021grassmannians}, 
so the $\orth_3$-orbit of an arrangement $\Omega$ has two connected components, 
namely the $\sorth_3$-orbit of $\Omega$ and that of $-\Omega$, 
and by Lemma \ref{lemmaCrush} we have the following. 

\begin{corollary}
\label{corollaryChiCrush}
If $\widetilde{\mc{C}} \subset \omcp_{3,n}$ is a chain,
then $\hood(\widetilde{\mc{C}})/\sorth_3$ is contractible. 
\end{corollary}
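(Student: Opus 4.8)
The plan is to deduce the corollary from the contractibility of $\hood(\mc{C})/\orth_3$, which Lemma \ref{lemmaCrush} already supplies, by identifying the $\sorth_3$-quotient of $\hood(\widetilde{\mc{C}})$ with the $\orth_3$-quotient of $\hood(\mc{C})$. Write $\mc{C} = \{\mc{M}_1 <_\mr{w} \dots <_\mr{w} \mc{M}_L\}$ for the chain of oriented matroids with $\chiro(\mc{M}_k) = \{\chi_k,-\chi_k\}$, where $\widetilde{\mc{C}} = \{\chi_1 <_\mr{w} \dots <_\mr{w} \chi_L\}$. As recorded in the discussion preceding the corollary (applying Corollary \ref{corollaryChiHood} to the incomparable pair $\chi_k,-\chi_k$), one has $\hood(\mc{C}) = \hood(\widetilde{\mc{C}}) \sqcup \hood(-\widetilde{\mc{C}})$, and since $\hood(\pm\widetilde{\mc{C}}) = \bigcap_k \hood(\pm\chi_k)$ is open in $\psv_{3,n}$, each summand is an open --- hence also closed --- subset of $\hood(\mc{C})$.

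Next I would use that in rank $3$ we have $\orth_3 = \sorth_3 \sqcup (-\id)\,\sorth_3$: the subgroup $\sorth_3$ preserves $\hood(\widetilde{\mc{C}})$ since $\ot$ is $\sorth_3$-invariant, while $-\id$ carries $\hood(\widetilde{\mc{C}})$ onto $\hood(-\widetilde{\mc{C}})$ because $\ot(-A) = -\ot(A)$. Hence every $\orth_3$-orbit in $\hood(\mc{C})$ meets $\hood(\widetilde{\mc{C}})$, and two arrangements of $\hood(\widetilde{\mc{C}})$ share an $\orth_3$-orbit exactly when they share an $\sorth_3$-orbit --- otherwise some point of $\hood(\widetilde{\mc{C}})$ would also lie in $\hood(-\widetilde{\mc{C}})$, contradicting disjointness. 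Therefore the inclusion $\hood(\widetilde{\mc{C}}) \hookrightarrow \hood(\mc{C})$ descends to a continuous bijection $\hood(\widetilde{\mc{C}})/\sorth_3 \to \hood(\mc{C})/\orth_3$, and this map is open because the $\orth_3$-saturation of an open subset of the open set $\hood(\widetilde{\mc{C}}) \subseteq \hood(\mc{C})$ is again open (a union of translates); so it is a homeomorphism, and the contractibility of $\hood(\mc{C})/\orth_3$ from Lemma \ref{lemmaCrush} transfers to $\hood(\widetilde{\mc{C}})/\sorth_3$.

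A more hands-on variant, paralleling the proof of Lemma \ref{lemmaCrush}, is to choose $\Omega \in \hood(\widetilde{\mc{C}})$ with $\ot(\Omega) = \max(\widetilde{\mc{C}})$ (so that $\hood(\mc{C}) \subseteq \upa(\Omega)$ and Lemma \ref{lemmaCrush} applies with this $\Omega$), to observe that because $\hood(\widetilde{\mc{C}})$ is clopen in $\hood(\mc{C})$ the track of each point of $\hood(\widetilde{\mc{C}})$ under $\crush(\mc{C},\Omega)$ stays in $\hood(\widetilde{\mc{C}})$ by connectedness of $[0,1]$, and to conclude that $\crush(\mc{C},\Omega)$ restricts to a strong $\sorth_3$-equivariant deformation retraction of $\hood(\widetilde{\mc{C}})$ onto the $\sorth_3$-orbit of $\Omega$, which is one of the two connected components of $\orth_3\cdot\Omega$. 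Either way, I do not anticipate a real obstacle: the entire content is bookkeeping for the double cover $\sorth_3 \hookrightarrow \orth_3$, and the only points needing a little care are the openness of the induced quotient map in the first variant (handled by the clopen decomposition coming from Corollary \ref{corollaryChiHood}) and, in the second variant, ensuring the chosen $\Omega$ actually lies in $\hood(\widetilde{\mc{C}})$ rather than merely realizing $\max(\widetilde{\mc{C}})$.
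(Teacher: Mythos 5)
Your proposal is correct, and your ``hands-on'' second variant is essentially the paper's argument: the paper notes that $\hood(\mc{C}) = \hood(\widetilde{\mc{C}}) \sqcup \hood(-\widetilde{\mc{C}})$ (a clopen decomposition), that the $\orth_3$-orbit of $\Omega$ splits into the two $\sorth_3$-orbits of $\Omega$ and $-\Omega$ since the $\orth_3$-action is free, and then invokes Lemma \ref{lemmaCrush}; the implicit step is exactly your observation that the strong $\orth_3$-equivariant deformation retraction restricts to each clopen piece by connectedness of the tracks, yielding a strong $\sorth_3$-equivariant deformation retraction of $\hood(\widetilde{\mc{C}})$ onto $\sorth_3\cdot\Omega$.

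Your first variant is a genuinely different, and arguably cleaner, route: rather than restricting the deformation, it identifies $\hood(\widetilde{\mc{C}})/\sorth_3$ with $\hood(\mc{C})/\orth_3$ outright via the inclusion, using that $-\id$ interchanges the two clopen halves to get surjectivity and disjointness to get injectivity, and openness of the induced map from openness of the $\orth_3$-saturation of an open set. This has the advantage of quoting only the stated conclusion of Lemma \ref{lemmaCrush}, so it entirely sidesteps the subtlety you flag in the second variant about choosing $\Omega$ to lie in $\hood(\widetilde{\mc{C}})$. For completeness, that subtlety can be resolved: starting from any $A \in \hood(\widetilde{\mc{C}})$, the deformation $\ebb_\mr{R}(\mc{C},r;A,\cdot)$ with $r = \lbr(\mc{M}_L,A)$ produces an endpoint in $\hood(\mc{C}_{L-1})\cap\psv(\mc{M}_L)$ that stays in $\hood(\mc{M}_L)$ throughout by Lemma \ref{lemmaEbbR}, and stays in the clopen piece $\hood(\widetilde{\mc{C}})$ by connectedness, so it lands in $\hood(\widetilde{\mc{C}})\cap\psv(\chi_L)$ and can serve as the required $\Omega$.
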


The main theorem now follows  
from the above lemmas and their corollaries.

\begin{proof}[Proof of Theorem \ref{theoremMacPG}]
Let 
\[
\mc{U} = \{ \hood(\mc{M})/\orth_3 : \mc{M} \in \mcp_{3,n} \}. 
\]
Then, $\mc{U}$ 
is an open cover of $\psg_{3,n}$  
by Lemma \ref{lemmaHoodSymOpen}. 
Also, every nonempty intersection of finitely many sets among $\mc{U}$ is contractible 
by Lemmas \ref{lemmaHoodDisjoint} and \ref{lemmaCrush},
so $\psg_{3,n}$ is homotopy equivalent to the nerve of $\mc{U}$ 
by the nerve theorem \cite[Corollary 4G.3]{hatcher2002algebraic}.
Moreover, a collection of sets among $\mc{U}$ have a nonempty intersection precisely when the corresponding oriented matroids form a chain,   
so the nerve of $\mc{U}$ is isomorphic as a simplicial complex to the geometric realization of the order complex of $\mcp_{3,n}$, 
which is $\|\mcp_{3,n}\|$. 
Thus, $\|\mcp_{3,n}\|$ is homotopy equivalent to $\psg_{3,n}$, 
which is homotopy equivalent to $\g_{3,n}$ by Theorem \ref{theoremPsV}. 
Similarly, $\|\omcp_{3,n}\|$ is homotopy equivalent to $\og_{3,n}$ 
by Corollaries \ref{corollaryChiHood} and \ref{corollaryChiCrush}. 
\end{proof}

While $\mcp_{3,\infty}$ contains infinite ascending chains, 
we only need to consider finite chains, since only finite chains define a simplex of the nerve, 
or a simplex in the geometric realization of $\mcp_{3,\infty}$.

\subsection{Disjointness}
\label{subsectionHoodDisjoint}

In this subsection we prove Lemma \ref{lemmaHoodDisjoint}.

\begin{lemma}
\label{lemmaBasisBLIneq}
If there is some triple $I\in {[n]\choose 3}$ that is a basis for $\mc{M}_1$ but not for $\mc{M}_0$ 
and $A \in \upa(\mc{M}_0)\cap\upa(\mc{M}_1)$, 
then $\minbig(\mc{M}_1,A) \leq \maxlit(\mc{M}_0,A)$. 
\end{lemma}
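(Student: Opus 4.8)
The plan is to distinguish two cases according to whether some element of $I=\{i_1,i_2,i_3\}$ is a loop of $\mc{M}_0$; recall that all the quantities in the statement make sense because $A\geq_\mr{w}\mc{M}_0$ and $A\geq_\mr{w}\mc{M}_1$, and that no relation between $\mc{M}_0$ and $\mc{M}_1$ is needed. Suppose first that some $i_j$ is a loop of $\mc{M}_0$. Since a loop cannot lie in the basis $I$, the element $i_j$ is a nonloop of $\mc{M}_1$, so $\wt_{i_j}(A)$ occurs among the weights of nonloops of $\mc{M}_1$ in the definition of $\minbig(\mc{M}_1,A)$ and among the weights of loops of $\mc{M}_0$ in the definition of $\maxlit(\mc{M}_0,A)$. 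Hence $\minbig(\mc{M}_1,A)\leq\wt_{i_j}(A)\leq\maxlit(\mc{M}_0,A)$, which settles this case.

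Now suppose every $i_j$ lies in $\supp(\mc{M}_0)$. Here the plan is to exhibit a facet covector $\sigma$ of $\mc{M}_1$ with $\maxcov^{-1}(\mc{M}_1,A,\sigma)\subseteq\van(\mc{M}_0,A)$; since the inradius is monotone under inclusion this gives $\inrad(\mc{M}_1,\sigma,A)\leq\vanrad(\mc{M}_0,A)\leq\maxlit(\mc{M}_0,A)$, and therefore $\minbig(\mc{M}_1,A)\leq\maxlit(\mc{M}_0,A)$. To build $\sigma$, note that since $I$ is a basis of $\mc{M}_1$, the restriction $\rest(I,\mc{M}_1)$ has rank $3$ on the three-element set $I$, so every sign vector $\eta\in\{+,-\}^I$ is a tope of $\rest(I,\mc{M}_1)$. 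On the other hand, since $I$ is dependent in $\mc{M}_0$ but each $i_j$ is a nonloop, $\rest(I,\mc{M}_0)$ has rank at most $2$ and no loops, so it has fewer than eight topes; and every non-tope covector has a zero coordinate, so $\{+,-\}^I\not\subseteq\cov(\rest(I,\mc{M}_0))$. Pick $\eta\in\{+,-\}^I$ that is a tope of $\rest(I,\mc{M}_1)$ but not a covector of $\rest(I,\mc{M}_0)=\{\rest(I,\tau):\tau\in\cov(\mc{M}_0)\}$, and take $\sigma$ to be any facet covector of $\mc{M}_1$ restricting to $\eta$ on $I$, which exists because the topes of $\mc{M}_1$ restrict onto those of $\rest(I,\mc{M}_1)$.

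To verify the inclusion, suppose $\maxcov(\mc{M}_1,A,x)=\sigma$. Then $\sigma\leq_\mr{v}\sign(A,x)$ together with $\sigma(i_j)=\eta(i_j)\neq 0$ forces $\sign(A,x,i_j)=\eta(i_j)$ for each $j$, so $\rest(I,\sign(A,x))=\eta$. Set $\tau=\maxcov(\mc{M}_0,A,x)\in\cov(\mc{M}_0)$; from $\tau\leq_\mr{v}\sign(A,x)$ we get $\tau(i_j)\in\{0,\eta(i_j)\}$ for each $j$, and if all the $\tau(i_j)$ were nonzero then $\rest(I,\tau)=\eta$ would be a covector of $\rest(I,\mc{M}_0)$, contradicting the choice of $\eta$. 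Hence $\tau(i_j)=0$ for some $j$, and since $i_j\in\supp(\mc{M}_0)$ and $\maxcov(\mc{M}_0,A,x,i_j)=\tau(i_j)=0$, we conclude $x\in\van(\mc{M}_0,A)$, as needed. The step I expect to be the main obstacle is the combinatorial input in the second paragraph — making precise that a loopfree oriented matroid of rank at most $2$ on three elements cannot realize all of $\{+,-\}^I$ as topes (equivalently, that some full sign vector on $I$ available from $\mc{M}_1$ disappears as a covector of $\mc{M}_0$), together with the lifting of $\eta$ to a genuine facet covector of $\mc{M}_1$; the inradius monotonicity and the loop case are routine.
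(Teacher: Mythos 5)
Your proof is correct and takes essentially the same approach as the paper: the same two-case split (some element of $I$ is a loop of $\mc{M}_0$, versus none is), the same key step of finding a sign vector in $\{+,-\}^I$ that is a tope restriction for $\mc{M}_1$ but not a covector restriction for $\mc{M}_0$, and the same inradius comparison via a disk inside $\maxcov^{-1}(\mc{M}_1,A,\sigma)$ being forced into $\van(\mc{M}_0,A)$. You give slightly more detail than the paper in two places --- a justification that the needed $\eta$ exists and that it lifts to a facet covector of $\mc{M}_1$, and a direct proof that the entire region $\maxcov^{-1}(\mc{M}_1,A,\sigma)$ sits inside $\van(\mc{M}_0,A)$ rather than just a maximal inscribed disk --- but these are cosmetic differences, not a different route.
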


\begin{proof}
Consider first the case where there is $i \in I$ such that 
$i$ is a loop of $\mc{M}_0$ but not $\mc{M}_1$.  
Then, by definitions of minbig and maxlit, we have 
\[
\minbig(\mc{M}_1,A) \leq \wt_i(A) \leq \maxlit(\mc{M}_0,A).
\]

Next, consider the case where no $i \in I$ is a loop of $\mc{M}_0$. 
Since $I$ is not a basis for $\mc{M}_0$, there is some 
$\sigma \in \{+,-\}^I$ that is not in the restriction of $\cov(\mc{M}_0)$ to $I$, 
but $I$ is a basis for $\mc{M}_1$, 
so every element of $\{+,-\}^I$ appears in the restriction of $\cov(\mc{M}_1)$ to $I$, 
so there is a facet covector $\tau$ of $\mc{M}_1$ such that $\rest(I,\tau) = \sigma$. 
Let $r$ be the maximum radius of a disk $D$ in $\maxcov^{-1}(\mc{M}_1,A,\tau)$. 
Then, $\rest(I,\sign(A,p))=\sigma$ on each point $p \in D$, 
so $\maxcov(\mc{M}_0,A,p)$ must vanish on some $i \in I$, 
which is a nonloop of $\mc{M}_0$ in this case, 
so $D \subset \van(\mc{M}_0,A)$. 
Thus, 
\[
\minbig(\mc{M}_1,A) \leq r \leq \maxlit(\mc{M}_0,A). \qedhere
\]
\end{proof}

\begin{proof}[Proof of Lemma \ref{lemmaHoodDisjoint}]
Suppose there were $A \in \hood(\mc{M})\cap\hood(\mc{N})$. 
Then, there would be some triple that is a basis for $\mc{M}$ but not for $\mc{N}$, 
since $\mc{M} \not< \mc{N}$ and both are below $\om(A)$ in the weak order, so 
\[
\minbig(\mc{M},A) \leq \maxlit(\mc{N},A) < \minbig(\mc{N},A)
\] 
by Lemma \ref{lemmaBasisBLIneq}. 
Likewise, there would be some other triple that is a basis for $\mc{N}$ but not $\mc{M}$, 
so $\minbig(\mc{N},A) < \minbig(\mc{M},A)$,
which is a contradiction. 
\end{proof}

%\section{Continuously varying features of pseudocircle arrangements}
\section{Openness and borders}
\label{sectionOpen}

Here we will prove Lemma \ref{lemmaHoodSymOpen}.  Most of the work will be to show that $\hood(\mc{M})$ is open.   
To this end, we show that $\minbig$ and $\maxlit$ are continuous, by showing that the boarders between regions $\maxcov^{-1}(\sigma)$ vary continuously. 
This is captured by the following lemma, which will also be useful for showing continuity in the proof of Lemma \ref{lemmaCrush}.

\begin{lemma}
\label{lemmaMaxcovBorder}
Let $\Sigma = \{\upsilon<\sigma\} \subset \csph(\mc{M})$ 
and consider an arrangement $A\geq_\mr{w}\mc{M}$. 
Then, 
\begin{enumerate}
\item 
\label{itemMaxcovBorderPath}
$B = \cl(\maxcov^{-1}(\sigma))\cap\cl(\maxcov^{-1}(\upsilon))$ 
%is a path or point. 
is a $({\leq}1)$-cell. 
\item 
\label{itemMaxcovBorderEndpoints}
There are precisely 2 covectors $\tau_0,\tau_1 \in \csph(\mc{M})$ such that $\Sigma\cup\{\tau_i\}$ is a chain. Moreover, the endpoints of $B$ are $p_i = B\cap\cl(\maxcov^{-1}(\tau_i))$. 
\item 
\label{itemMaxcovBorderContinuous}
$B$ directed from $p_0$ to $p_1$ varies continuously in Fréchet distance 
as the arrangement $A$ varies. 
\end{enumerate}
\end{lemma}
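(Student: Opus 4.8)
The plan is to work entirely within the combinatorics of the rank 3 covector sphere and then transfer it to the topology of the arrangement $A$. First I would analyze the poset-theoretic situation: $\Sigma=\{\upsilon<\sigma\}$ is a chain of length two in $\csph(\mc{M})$, a graded poset with three ranks (vertex/edge/facet covectors). I would split into cases according to the ranks of $\upsilon$ and $\sigma$. If $\upsilon$ is a vertex and $\sigma$ an edge, the two covectors $\tau_i$ with $\Sigma\cup\{\tau_i\}$ a chain are the two facets above $\sigma$; if $\upsilon$ is an edge and $\sigma$ a facet, the $\tau_i$ are the two vertices below $\upsilon$; if $\upsilon$ is a vertex and $\sigma$ a facet, one checks the interval $[\upsilon,\sigma]$ is a ``diamond'' with exactly two intermediate edges, but then $\Sigma\cup\{\tau\}$ being a chain forces $\tau$ to be comparable to both, and the only such covectors are the two edges. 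In each case the key combinatorial input is that in a rank 3 oriented matroid each edge covector covers exactly two vertex covectors and is covered by exactly two facet covectors (this is the local structure of the pseudocircle arrangement, where an edge of the arrangement has two endpoints and borders two 2-cells). This gives part \ref{itemMaxcovBorderEndpoints} at the level of $\mc{M}$; I then pull it back through $\maxcov(\mc{M},\om(A),-)$, using Anderson's result that this map is a well-defined order-preserving surjection.

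Next, for part \ref{itemMaxcovBorderPath}, I would identify $B$ geometrically. A point $x$ lies in $\cl(\maxcov^{-1}(\sigma))\cap\cl(\maxcov^{-1}(\upsilon))$ iff $\maxcov(\mc{M},A,x)\in\{\upsilon,\sigma\}$ and $x$ is a limit of points mapping to the other one. Since $\maxcov^{-1}(\sigma)$ and $\maxcov^{-1}(\upsilon)$ are unions of cells of the arrangement $A$ (each $\cell(A,\sigma')$ maps to a single covector of $\mc{M}$), $B$ is a union of cells and their common boundaries; the chain structure $\upsilon<\sigma$ forces $B$ to be connected and one-dimensional — concretely, $B$ is a single arc made up of the edges and vertices of $A$ that get collapsed onto the edge covector of the pair (or, in the vertex--facet case, onto the relevant structure). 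Being a finite union of $1$-cells and $0$-cells that is connected, simply connected (it separates $\maxcov^{-1}(\sigma)$ from $\maxcov^{-1}(\upsilon)$ locally on a sphere), and with exactly two ``free ends'' at the $p_i$, it is a $({\leq}1)$-cell; degeneracy happens exactly when the relevant feature of $A$ has collapsed to a point. The endpoints are then $p_i=B\cap\cl(\maxcov^{-1}(\tau_i))$ because moving off either end of the arc $B$ one enters the cell structure associated to $\tau_i$.

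For part \ref{itemMaxcovBorderContinuous}, the strategy is to express $B$, directed from $p_0$ to $p_1$, as a continuously-varying concatenation of edges of $A$ and then invoke the tools of Section \ref{sectionTools}. As $A$ varies in $\psv_{3,n}$, the individual pseudocircles vary in Fréchet distance (scaled by weight) by definition of the metric, so each arc of each pseudocircle, and each intersection point, varies continuously; gluing these finitely many pieces and using Lemma \ref{lemmaPMFrechetDistance} and the gluing clause of Lemma \ref{lemmaCPDistContinuity} gives continuity of $B$ in Fréchet distance, with the orientation pinned down by the endpoints $p_0,p_1$ which themselves vary continuously. The main obstacle I anticipate is the bookkeeping at the ``free ends'': one must verify that the combinatorial identity of which edges of $A$ comprise $B$, and the assignment of endpoints $p_0,p_1$ to $\tau_0,\tau_1$, is locally constant on $\upa(\mc{M})$ — i.e., that $B$ does not suddenly grow or shrink a cell as $A$ moves — which requires knowing that $\om(A)$ itself, restricted to the features relevant to $\Sigma$, is locally constant on $\upa(\mc{M})$, or equivalently that the only way the decomposition $\{\maxcov^{-1}(\rho)\}$ changes is by a cell of $A$ shrinking continuously to empty (handled by allowing $B$ to be degenerate). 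This is where I'd need to be careful, and it is essentially the same subtlety that makes the openness argument in Section \ref{sectionOpen} nontrivial.
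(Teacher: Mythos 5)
Your proposal has the right shape — identify $B$ as a subcomplex of the arrangement, read off the endpoints from the thinness of $\Cov(\mc{M})$, and prove continuity by decomposing $B$ into arcs and gluing — but it skips the two places where the real work lives.

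For part \ref{itemMaxcovBorderPath}, you assert that ``the chain structure $\upsilon<\sigma$ forces $B$ to be connected and one-dimensional,'' that $B$ is ``simply connected,'' and that it has ``exactly two free ends,'' none of which is obvious. The paper first shows $B\subset\bigcup_{i\in\upsilon^0\setminus\sigma^0}S_i$ using lower semicontinuity of $\sign$ (Lemma \ref{lemmaAimSemicontinuous}), which gives one-dimensionality, and then runs a local degree analysis at each vertex $v$ of $B$: depending on whether zero, one, or two facet covectors $\tau>\sigma$ have $v\in\partial\maxcov^{-1}(\tau)$, the vertex has degree $2$, $1$, or $0$ in $B$. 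Degree $1$ vertices are produced only via Lemma \ref{lemmaFlagPoint}, which you don't invoke, and cycles must then be ruled out by a Jordan-curve/path-connectedness argument. Your plan as written would leave all of these as unjustified claims.

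For part \ref{itemMaxcovBorderContinuous}, you correctly flag the central subtlety, but the resolution you sketch would fail. You write that one ``requires knowing that $\om(A)$ itself ... is locally constant on $\upa(\mc{M})$.'' It isn't: by Lemma \ref{lemmaOmSemicontinuous}, $\om$ is only \emph{lower} semicontinuous, so the limit arrangement $A_\infty$ can have strictly more degeneracies than the $A_k$. Concretely, as $A_k\to A_\infty$, several of the pseudocircles carrying arcs of $B_k$ may merge, and entire arcs of $B_k$ may shrink to a point in $B_\infty$ — the combinatorial identity of $B$ does change in the limit. Allowing $B$ to degenerate as a whole does not handle this, because $B_\infty$ may still be a nondegenerate path built from a strictly smaller set of arcs. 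The paper handles this by restricting to a subsequence where the list $S_{j_1},\dots,S_{j_m}$ of carrying curves is fixed, partitioning it into maximal blocks that converge to a common curve versus intervening blocks that collapse, and applying Lemma \ref{lemmaConvergentPathSequence} to the former and Lemma \ref{lemmaConvergentCrossingPoint} to the latter. Without some version of that even/odd partition, a proof along your lines would not go through.
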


Recall we order define a partial order $\leq_\mr{v}$ on signs $\{0,+,-\}$ by 
$0 <_\mr{v} (+)$, and $0 <_\mr{v} (-)$, and $(+),(-)$ are incomparable. 

\begin{lemma}
\label{lemmaAimSemicontinuous}
$\sign : \psv_{3,n} \times \sphere^2 \to \{0,+,-\}^n$ is lower semicontinuous for $n\leq\infty$. 
%$\sign(A,x)$ is lower semicontinuous as a function of $A \in \psv_{3,n}$ and $x\in\sphere^2$. 
That is, if $A_t \to A_\infty$ and $x_t \to x_\infty$ are convergent sequences, 
then $\sign(A_\infty,x_\infty) \leq_\mr{v} \sign(A_t,x_t)$ for all $t$ sufficiently large. 
\end{lemma}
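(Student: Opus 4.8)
The plan is to unwind the definition of $\sign$ and use the Fréchet-distance metric on $\psv_{3,n}$ to control each pseudocircle individually. Fix a convergent sequence $A_t\to A_\infty$ in $\psv_{3,n}$ and $x_t\to x_\infty$ in $\sphere^2$, and fix an index $i$; it suffices to show that if $\sign(A_\infty,x_\infty,i)=(+)$ then $\sign(A_t,x_t,i)=(+)$ for all large $t$, and symmetrically for $(-)$, since the case $\sign(A_\infty,x_\infty,i)=0$ is vacuous (every sign is $\geq_\mr{v} 0$). The key point is that $x_\infty\in S_i^+(A_\infty)$, an open subset of $\sphere^2$, so $x_\infty$ lies at some positive distance $3\delta$ from the pseudocircle $S_i(A_\infty)$, and hence the metric ball $x_\infty\oplus 2\delta$ is contained in $S_i^+(A_\infty)$ and disjoint from $S_i(A_\infty)$.

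First I would note that $\dist(A_t,A_\infty)\to 0$ forces, for each $i$ with $\wt_i(A_\infty)>0$, the scaled curves $\wt_i(A_t)S_i(A_t)$ to converge to $\wt_i(A_\infty)S_i(A_\infty)$ in Fréchet distance; dividing out the weights (which converge to a positive number, so are eventually bounded below), $S_i(A_t)\to S_i(A_\infty)$ in Hausdorff distance. In particular, for $t$ large, $S_i(A_t)\subset S_i(A_\infty)\oplus\delta$, so $S_i(A_t)$ is disjoint from $x_\infty\oplus\delta$, and since $x_t\to x_\infty$ we also have $x_t\in x_\infty\oplus\delta$ for $t$ large, so $x_t\notin S_i(A_t)$; thus $\sign(A_t,x_t,i)\neq 0$. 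It remains to rule out $\sign(A_t,x_t,i)=(-)$, i.e.\ to show $x_t$ lies on the positive side. For this I would use that the Fréchet convergence of the \emph{oriented} curves (the homeomorphisms $\phi$ in the metric are directed so that $S_i^+$ is on the left) gives a matching between $S_i(A_\infty)$ and $S_i(A_t)$ respecting the left/right sides; since $x_\infty\oplus 2\delta$ is a connected region lying entirely on the positive side of $S_i(A_\infty)$ and far from it, and $S_i(A_t)$ stays inside a $\delta$-collar of $S_i(A_\infty)$, a short argument (e.g.\ spherical linear interpolation from $S_i(A_t)$ to $S_i(A_\infty)$ stays away from $x_t$, so $x_t$ cannot switch sides) shows $x_t\in S_i^+(A_t)$. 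Hence $\sign(A_t,x_t,i)=(+)=\sign(A_\infty,x_\infty,i)$ for $t$ large; the $(-)$ case is identical with signs reversed.

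Finally I would observe that there are only finitely many indices to check when $n<\infty$, so one common threshold $T$ works for all $i$ simultaneously, giving $\sign(A_\infty,x_\infty)\leq_\mr{v}\sign(A_t,x_t)$ for $t\geq T$. For $n=\infty$ a threshold ``for each coordinate'' still suffices, since the ordering $\leq_\mr{v}$ on $\{0,+,-\}^n$ is defined coordinatewise and the statement only asserts the inequality holds for all $t$ sufficiently large at each entry — but I would double-check whether the intended reading of the lemma for $n=\infty$ demands a uniform threshold; if so, one needs that only finitely many coordinates can be nonzero at $(A_\infty,x_\infty)$ outside the loops, which follows from $\sign(A_\infty,x_\infty)$ being a fixed element whose nonzero support is finite for a fixed arrangement, OR one restricts attention to the finitely many coordinates $i$ with $\sign(A_\infty,x_\infty,i)\neq 0$. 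The main obstacle is the orientation bookkeeping in the second step — going from Hausdorff/Fréchet convergence of the pointsets to the conclusion that the positive side is preserved near $x_\infty$ — which is exactly the kind of ``no side-switching'' argument already used in the proof of Lemma \ref{lemmaAreaContinuous}, so I would model the topology here on that argument (using a point known to be deep inside $S_i^+(A_\infty)$ as the anchor).
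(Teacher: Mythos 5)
Your proof is correct and follows essentially the same route as the paper's: reduce to a single coordinate $i$, dismiss the case $\sign(A_\infty,x_\infty,i)=0$ as vacuous, and for $\sign(A_\infty,x_\infty,i)=(+)$ use that the metric on $\psv_{3,n}$ forces $S_i(A_t)\to S_i(A_\infty)$ in Fréchet distance (once the weight is bounded below), so that $x_t$, which stays a definite distance from $S_i(A_\infty)$, cannot be hit by $S_i(A_t)$ and cannot switch sides. The paper compresses the side-preservation step into the assertion that $S_i^+(A_t)$ is Hausdorff-close to $S_i^+(A_\infty)$; your version spells out the underlying no-side-switching homotopy argument, citing the same idea used in Lemma \ref{lemmaAreaContinuous}, which is if anything a bit more careful than the source. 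Your aside on $n=\infty$ identifies a real subtlety the paper's proof does not explicitly address (the threshold in $t$ depends on $i$ through $\delta$ and $\wt_i$), but this does not affect the correctness of the coordinatewise argument, and it is the coordinatewise version that is actually invoked downstream in Lemma \ref{lemmaOmSemicontinuous}.
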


\begin{proof}
Let $\sigma_t = \sign(A_t,x_t)$. 
In the case where $\sigma_\infty(i)=0$, then $\sigma_\infty(i)\leq\sigma_t(i)$ regardless of the value of $\sigma_t(i)$, so let us assume by symmetry that $\sigma_\infty(i) = (+)$. 
Let $\delta$ be the distance between $x_\infty$ and $S_i(A_\infty)$. 
Then, $\wt_i(A_\infty)>0$, and $\wt_i(A_t) \to \wt_i(A_\infty)$, so $\wt_i(A_t)>0$ for $t$ sufficiently large,  
and so $S_i(A_t) \to S_i(A_\infty)$ in Fréchet distance,  
so $S_i^+(A_t)$ is within Hausdorff distance $\delta/3$ of $S_i^+(A_\infty)$ for $t$ sufficiently large. 
Also, $x_t$ is within $\delta/3$ of $x_\infty$ for $t$ sufficiently large, 
so $x_t \in S_i^+(A_t)$, so $\sigma_\infty(i)=\sigma_t(i)$ in the case where $\sigma_\infty(i)\neq 0$. 
\end{proof}

\begin{lemma}
\label{lemmaOmSemicontinuous}
$\om : \psv_{3,n} \to \mcp_{3,n}$ is lower semicontinuous for $n\leq\infty$. 
Thas is, if $A_t \to A_\infty$,
% as $k \to \infty$, 
then $A_\infty \leq_\mr{w} A_t$ for $t$ sufficiently large. 
Likewise, $\ot$ is lower semicontinuous. 
\end{lemma}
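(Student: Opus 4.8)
The plan is to deduce lower semicontinuity of $\om$ directly from Lemma \ref{lemmaAimSemicontinuous}. Recall that $\om(A)$ is the oriented matroid whose covector sphere is $\csph(A) = \{\sign(A,x) : x \in \sphere^2\}$, and that the weak order on oriented matroids (via chirotopes, equivalently via covectors) is characterized so that $\mc{M}_0 \leq_\mr{w} \mc{M}_1$ exactly when every covector of $\mc{M}_1$ is $\geq_\mr{v}$ some covector of $\mc{M}_0$ on a coordinatewise basis — more precisely, when $\cov(\mc{M}_1) \subseteq \{\sigma : \sigma \geq_\mr{v} \tau \text{ for some } \tau \in \cov(\mc{M}_0)\}$, which for rank $3$ follows by comparing cocircuits and using that $\csph$ is the join-semilattice they generate. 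So it suffices to show: for a convergent sequence $A_t \to A_\infty$, every covector of $A_\infty$ is coordinatewise $\geq_\mr{v}$ a covector of $A_t$, for all $t$ sufficiently large. The subtlety is making ``for all $t$ sufficiently large'' uniform over the (finitely many, for $n < \infty$) covectors of $A_\infty$, and for $n = \infty$ over the covectors of the finitely many nonloop restrictions that actually matter.

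First I would fix a covector $\tau \in \csph(A_\infty)$, so $\tau = \sign(A_\infty, x_\infty)$ for some $x_\infty \in \sphere^2$. I want to produce, for all large $t$, a point $x_t$ with $\sign(A_t, x_t) \leq_\mr{v} \tau$; then $\sign(A_t, x_t)$ is a covector of $A_t$ dominated by $\tau$. Taking $x_t = x_\infty$ constant and applying Lemma \ref{lemmaAimSemicontinuous} to the constant sequence gives $\tau = \sign(A_\infty, x_\infty) \leq_\mr{v} \sign(A_t, x_\infty)$ for $t \geq t_\tau$; that is the \emph{wrong} direction. Instead, the right move is: $\tau$ as a covector of $A_\infty$ is a join of cocircuits of $A_\infty$, and for cocircuits one can perturb. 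Actually the cleanest route is to recall that $\csph(A_\infty)$ is determined by its cocircuits, and a cocircuit $\upsilon = \sign(A_\infty, v)$ for a vertex $v$ of the arrangement $A_\infty$ — i.e., a $0$-cell of $\cell(A_\infty,\cdot)$. Since $\sign$ is lower semicontinuous, points near $v$ in $A_t$ have sign $\geq_\mr{v} \upsilon$; choosing $x_t$ in the interior of the appropriate adjacent cell, we get a cocircuit of $A_t$ that is $\leq_\mr{v} \upsilon$ on its support after restricting. This requires knowing that the cell structure of $A_t$ near $v$ refines toward that of $A_\infty$, which again is lower semicontinuity of $\sign$.

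So the key steps, in order, are: (1) reduce the weak-order inequality $A_\infty \leq_\mr{w} A_t$ to a statement about cocircuits, using that $\csph$ is the join-semilattice generated by cocircuits and that $\leq_\mr{w}$ on rank-$3$ oriented matroids is detected on cocircuits (cite \cite[Section 3.7]{bjorner2000oriented}); (2) for each cocircuit $\upsilon$ of $A_\infty$, realized at a vertex $v$, use Lemma \ref{lemmaAimSemicontinuous} — applied to $x_t = v$ and also to points $x_t \to v$ from the cells of $A_\infty$ incident to $v$ — to locate, for all $t \geq t_\upsilon$, a cocircuit of $A_t$ lying $\leq_\mr{v}$-below $\upsilon$; (3) take $t_0 = \max_\upsilon t_\upsilon$ over the finitely many cocircuits of $A_\infty$ (finitely many because, restricting to $\supp(A_\infty)$, there are finitely many; loops of $A_\infty$ may be nonloops of $A_t$ but contribute $0 \leq_\mr{v}$ anything), giving $A_\infty \leq_\mr{w} A_t$ for $t \geq t_0$; (4) for $\ot$, run the identical argument replacing covectors by the sign-of-determinant data $\ot(A,i_1,i_2,i_3)$, which is likewise determined by whether $\cov(S_{i_1},S_{i_2},S_{i_3}) = \{0,+,-\}^3$ — a condition that is stable under small perturbations when it holds, by lower semicontinuity — so $\ot(A_\infty) \leq_\mr{v} \ot(A_t)$ coordinatewise for $t$ large; and for $n = \infty$ note only finitely many triples $\{i_1,i_2,i_3\} \subset \supp(A_\infty)$ are bases, so finitely many conditions must be checked.

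The main obstacle I expect is step (2): Lemma \ref{lemmaAimSemicontinuous} gives $\sign(A_\infty, x_\infty) \leq_\mr{v} \sign(A_t, x_t)$, which controls $A_t$'s signs from below by $A_\infty$'s, whereas the weak-order statement $A_\infty \leq_\mr{w} A_t$ needs each covector of $A_\infty$ to dominate some covector of $A_t$ — the opposite direction of domination. The resolution is that a cocircuit $\upsilon$ of $A_\infty$ is the \emph{minimum} sign over a small neighborhood of its vertex $v$ (all nearby signs are $\geq_\mr{v} \upsilon$), while a nearby lower-dimensional cell of $A_t$ has sign that on the common support is $\leq_\mr{v}$ the signs in that neighborhood of $A_\infty$; one has to chase that the covector of $A_t$ one extracts is genuinely $\leq_\mr{v} \upsilon$ coordinatewise, not merely comparable, and that taking joins preserves this — essentially because $\maxcov$-type arguments or the vector axioms let one push the cocircuit-level inequality up to all covectors. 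Getting this bookkeeping right, uniformly in $t$, is the crux; everything else is a finiteness count and an application of the already-proved Lemma \ref{lemmaAimSemicontinuous}.
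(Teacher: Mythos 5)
Your main step goes in the wrong direction, and the detour you take to compensate for it cannot work.

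The characterization of the weak order that you state, $\cov(\mc{M}_1) \subseteq \{\sigma : \sigma \geq_\mr{v} \tau \text{ for some } \tau \in \cov(\mc{M}_0)\}$, is vacuously true: $0 \in \cov(\mc{M}_0)$ and $0 \leq_\mr{v} \sigma$ for every $\sigma$, so the condition says nothing. The same problem afflicts the reformulation you then try to prove, namely that every covector $\tau$ of $A_\infty$ should dominate some covector of $A_t$ — again trivially satisfied by the zero covector, so it cannot be a characterization of $A_\infty \leq_\mr{w} A_t$. The usable covector-level criterion, once one restricts to the support of $A_\infty$, is the opposite nesting: every $\tau \in \cov(A_\infty)$ should lie \emph{below} some covector of $A_t$, i.e.\ $\cov(A_\infty) \subseteq \{\tau : \tau \leq_\mr{v} \sigma \text{ for some } \sigma \in \cov(A_t)\}$. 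This forces topes of $A_\infty$ (restricted to its support) to be topes of $A_t$, and tope containment on a common support gives the weak order on chirotopes.

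With the criterion stated the right way around, the application of Lemma~\ref{lemmaAimSemicontinuous} you dismiss as ``the wrong direction'' is actually exactly the step that proves the lemma: for $\tau = \sign(A_\infty, x_\infty)$, the constant sequence $x_t = x_\infty$ gives $\tau \leq_\mr{v} \sign(A_t, x_\infty) \in \cov(A_t)$ for $t$ large, and then a finiteness count over $\{0,+,-\}^{\supp(A_\infty)}$ makes the bound uniform. That is the paper's entire argument. The cocircuit-and-cell-refinement route you sketch to produce a covector of $A_t$ lying $\leq_\mr{v} \upsilon$ coordinatewise for each cocircuit $\upsilon$ of $A_\infty$ cannot succeed nontrivially: if, say, $\upsilon = (0,0,0,+,+)$ because three pseudocircles of $A_\infty$ pass through a common point, but $A_t$ has perturbed that triple into general position, then $A_t$ has no covector with zeros at positions $1,2,3$ other than the all-zero covector, so no such cocircuit of $A_t$ exists. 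The obstacle you flag at the end of your proposal is not a bookkeeping issue but a sign that the argument is aimed at a condition that is not the one to prove.

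Your final point about $\ot$ is closer to the mark, but it still needs a separate stability argument for the actual sign (the cyclic order of the three pseudocircles around the $(+,+,+)$ cell), not just for nondegeneracy; the paper is terse here too, but the fix is local continuity of the cell structure rather than anything involving cocircuits.
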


\begin{proof}
Consider $\sigma_\infty\in\cov(A_\infty)$. 
Then, there is $p\in\sphere^2$ such that $\sign(A_\infty,p)=\sigma_\infty$. 
We may assume that $A_t$ has the same support as $A_\infty$; otherwise $A_t \geq_\mr{w} A_\infty$ provided that $\rest(\supp(\mc{M}),A_t) \geq_\mr{w} A_\infty$. 
%since $\sign(A_t,x,i) \geq_\mr{v} 0 = \sign(A_\infty,y,i)$ holds for all $x,y\in\sphere^2$ if $i$ is a loop of $A_\infty$. 
Hence, there are only finitely many $\sigma_\mr{t} \in \{0,+,-\}^n$ such that $\sign(A_t,p) = \sigma_\mr{t}$ occurs infinity often as $t\to\infty$, 
and $\sigma_\infty \leq_\mr{v} \sigma_\mr{t}$ by Lemma \ref{lemmaAimSemicontinuous}, 
so $A_\infty \leq_\mr{w} A_t$ in each of these cases, 
so $A_\infty \leq_\mr{w} A_t$ for $t$ sufficiently large since there are finitely many cases. 
\end{proof}

Let us next show that the endpoints of the border between adjacent regions defined by $\maxcov$ are indeed well-defined points. 

\begin{lemma}
\label{lemmaFlagPoint}
\[
p = \bigcap_{\sigma \in \Sigma}\cl(\maxcov^{-1}(\sigma))
\]
is a single point for each maximal chain $\Sigma\subset\csph(\mc{M})$, 
and each arrangement $A \geq_\mr{w} \mc{M}$.  
\end{lemma}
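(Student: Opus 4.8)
The plan is to identify $p$ with a cocircuit (vertex covector) of $\mc{M}$ in a natural way, and then use the structure of rank-3 oriented matroids together with the fact that $\|\csph(\mc{M})\|$ is a $2$-sphere. A maximal chain $\Sigma \subset \csph(\mc{M})$ consists of exactly three covectors $\upsilon_0 <_\mr{v} \upsilon_1 <_\mr{v} \upsilon_2$: a cocircuit $\upsilon_0$ (rank $1$), an edge covector $\upsilon_1$ (rank $2$), and a facet covector/tope $\upsilon_2$ (rank $3$), since $\csph(\mc{M})$ is graded with $3$ ranks. Geometrically, in any pseudocircle arrangement $A$ with $\om(A) = \mc{M}$ these correspond to a vertex, an incident edge, and an incident facet of the arrangement, and the flag vertex--edge--facet determines a single point. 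The work is to transfer this to the case $A \geq_\mr{w} \mc{M}$ strictly, where $\maxcov$ collapses finer cells of $A$ onto the covectors of $\mc{M}$.

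First I would establish that $\maxcov^{-1}(\sigma)$ is nonempty and that its closure is well-behaved: since $\maxcov(\mc{M},\om(A))$ is surjective by Anderson's result, each fiber $\maxcov^{-1}(\sigma)$ is a nonempty union of cells of $A$. Next I would show the containment of closures is monotone along the chain, i.e.\ $\cl(\maxcov^{-1}(\upsilon_2)) \supseteq$ a neighborhood structure forcing $\cl(\maxcov^{-1}(\upsilon_0)) \cap \cl(\maxcov^{-1}(\upsilon_1)) \cap \cl(\maxcov^{-1}(\upsilon_2))$ to be exactly the $A$-vertex whose sign vector $\sigma$ satisfies $\maxcov(\sigma) = \upsilon_0$. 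The key point is that $\maxcov(\mc{M},\om(A),\sign(A,x))$ is, by definition, the $\leq_\mr{v}$-largest covector of $\mc{M}$ below $\sign(A,x)$; as $x$ ranges over the closed star of an $A$-vertex $q$ realizing $\upsilon_0$, the values $\maxcov$ takes are exactly $\{\upsilon_0\}$ together with covectors of $\mc{M}$ strictly above $\upsilon_0$, and $q$ is the unique point whose neighborhood meets all of $\maxcov^{-1}(\upsilon_0), \maxcov^{-1}(\upsilon_1), \maxcov^{-1}(\upsilon_2)$.

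More precisely, I would argue: (i) any point $p$ in the triple intersection lies in $\cl(\maxcov^{-1}(\upsilon_0))$, and since $\upsilon_0$ is a cocircuit of $\mc{M}$, the set $\maxcov^{-1}(\upsilon_0)$ is the union of $A$-cells on which $\sign(A,\cdot)$ has exactly the zero-support of $\upsilon_0$ among coordinates in $\supp(\mc{M})$ and agrees with $\upsilon_0$ elsewhere; this is a union of faces of $A$ forming a cell complex whose closure is a closed disk $D_0$, and its relative boundary meets the other fibers. (ii) Walking up the chain, $\cl(\maxcov^{-1}(\upsilon_1))$ is an "edge region" whose closure meets $D_0$ in an arc, and $\cl(\maxcov^{-1}(\upsilon_2))$ is a "facet region" whose closure meets that arc in a single point, forced by the fact that $\Sigma$ is maximal (so no covector of $\mc{M}$ sits strictly between consecutive $\upsilon_i$), combined with a local planarity/parity argument around $p$ in $\sphere^2$. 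Uniqueness then follows because two distinct points in the triple intersection would give two distinct $A$-flags inducing the same maximal chain $\Sigma$, contradicting that each $A$-vertex incident to a given facet-region in a prescribed edge-direction is unique.

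The main obstacle I anticipate is part (ii): controlling the local picture at $p$ when $A$ is strictly finer than $\mc{M}$, so that $p$ need not be a vertex of $A$ at all but only a limit point of cells of $A$ collapsing under $\maxcov$. Handling this cleanly will require using that $\maxcov^{-1}(\sigma)$ is a union of open cells of $A$ together with a description of which cells of $A$ get identified — essentially that the "big" cell of $A$ carrying the maximal covector $\upsilon_2$ has $p$ on its boundary, the cells carrying $\upsilon_1$ form a fan converging to $p$, and the cells carrying $\upsilon_0$ likewise — and then invoking that $\|\csph(\mc{M})\|$ is a $2$-sphere (Lawrence's theorem, cited via \cite{folkman1978oriented}) to rule out the triple intersection being empty or a nondegenerate continuum. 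I would likely phrase the uniqueness via: the triple intersection, being closed, is either a point or contains a path; if it contained a path it would have to lie in $B = \cl(\maxcov^{-1}(\upsilon_2))\cap\cl(\maxcov^{-1}(\upsilon_1))$, which is a $({\leq}1)$-cell whose endpoints are distinguished by the two covectors completing $\{\upsilon_1,\upsilon_2\}$ to a maximal chain (anticipating Lemma \ref{lemmaMaxcovBorder}\ref{itemMaxcovBorderEndpoints}), and only one endpoint is compatible with $\upsilon_0$, giving the point.
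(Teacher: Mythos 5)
Your proposal runs into a genuine circularity problem. To establish uniqueness, you want to invoke Lemma~\ref{lemmaMaxcovBorder} part~\ref{itemMaxcovBorderEndpoints} to say that $B = \cl(\maxcov^{-1}(\upsilon_1))\cap\cl(\maxcov^{-1}(\upsilon_2))$ is a $({\leq}1)$-cell with endpoints distinguished by the completing covectors. But look at the paper's proof of Lemma~\ref{lemmaMaxcovBorder} parts~\ref{itemMaxcovBorderPath} and~\ref{itemMaxcovBorderEndpoints}: after establishing that $B$ is a union of vertices and edges of $A$ with controlled degrees, the argument that $B$ has exactly the right number of degree-1 vertices and then rules out cycle components explicitly invokes Lemma~\ref{lemmaFlagPoint} (``By Lemma~\ref{lemmaFlagPoint}, $B$ intersects each closed region $\cl(\maxcov^{-1}(\tau))$ \dots at a point''). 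You flag this yourself with the word ``anticipating,'' but anticipation is exactly what you cannot do here: Lemma~\ref{lemmaFlagPoint} has to be established \emph{first}, by an independent argument, precisely so it can serve as an input to the border lemma. Any route through Lemma~\ref{lemmaMaxcovBorder} would have to first reprove its parts~\ref{itemMaxcovBorderPath}--\ref{itemMaxcovBorderEndpoints} without Lemma~\ref{lemmaFlagPoint}, and you give no indication of how to do that.

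Beyond the circularity, the intermediate steps (i) and (ii) are also underspecified in ways that matter. You claim the $\maxcov^{-1}(\upsilon_0)$-cells ``form a cell complex whose closure is a closed disk $D_0$'' and that $\cl(\maxcov^{-1}(\upsilon_1))$ meets $D_0$ in an arc and $\cl(\maxcov^{-1}(\upsilon_2))$ meets that arc in a single point. Each of these is essentially a restatement of what needs to be proved; the difficulty is exactly controlling how cells of $A$ that collapse under $\maxcov$ accumulate near $p$, and you do not give a mechanism for that control. The paper sidesteps all of this by choosing a normal form: it represents $\om(A)$ by x-monotone pseudolines in $\projplane$ with $\cell(A,\tau_{\mathrm{top}})$ the region above the upper envelope, places $\pi(\cell(A,\tau))$ as a bounded cell $C$ in $\mb{R}^2$, identifies the upper envelope of $C$ with $E = \maxcov^{-1}(\sigma)\cap\cl(\maxcov^{-1}(\tau))$ using the subtope characterization \cite[Lemma~4.2.2]{bjorner2000oriented}, and then reads off $p$ as one endpoint of the arc $E$ via lower semicontinuity of $\sign$. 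That choice of coordinates is what makes the ``arc meeting in a single point'' structure visible; without it, the local picture near $p$ is precisely the hard part, and your plan leaves it unresolved.
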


\begin{proof}
Let $\Sigma = \{\upsilon<\sigma<\tau\}$ and let $\tau_\mr{top} > \sigma$ be the only facet covector other than $\tau$ that is above $\sigma$. 
Then, $\om(A)$ can be represented by x-monotone pseudolines in the projective plane $\projplane$ as the closure of $\mb{R}^2$ such that $\cell(A,\tau_\mr{top})$ is the region above the upper envelope of each curve \cite[Theorem 6.3.3]{bjorner2000oriented}.  
Let us assume that $\pi(A)$ is such a pseudoline arrangement 
where $\pi : \sphere^2 \to \projplane$ is the standard covering map. 
Moreover, 
let us assume by choice of horizon of the projective plane on the 2-sphere that $C = \pi(\cell(A,\tau)) \subset \mb{R}^2 \subset \projplane$ 
is a cell in the Euclidean plane. 
Since $\sigma = \tau\wedge\tau_\mr{top}$ is an edge covector, i.e., a subtope, 
each nonloop $i$ satisfies 
$\tau(i)\neq\tau_\mr{top}(i)$ if and only if $i\in\sigma^0$ 
\cite[Lemma 4.2.2]{bjorner2000oriented}. 
Hence, the curves above $C$ are precisely the curves $S_i$ for $i\in\sigma^0$, 
so the upper envelope of $C$ is $E = \maxcov^{-1}(\sigma)\cap\cl(\maxcov^{-1}(\tau))$, 
and 
since $\sign(A)$ is lower semicontinuous,  
$p$ is an endpoint of $E$   
\end{proof}

\begin{proof}[Proof of Lemma \ref{lemmaMaxcovBorder} parts \ref{itemMaxcovBorderPath} and \ref{itemMaxcovBorderEndpoints}]

Consider the case where $\Sigma = \{\upsilon<\sigma\}$ consists of a vertex covector and an edge covector, 
and let $p \in B$. 
Then, $p$ is the limit point of one sequence $p_{\sigma,k}$ in $\maxcov^{-1}(\sigma)$ 
and the limit point of another sequence $p_{\upsilon,k}$ in $\maxcov^{-1}(\upsilon)$, 
so $\sign(A,p_{\upsilon,k}) <_\mr{v} \sigma \leq_\mr{v} \sign(A,p_{\sigma,k})$, 
so $\sign(A,p) <_\mr{v} \sigma$ since $\sign$ is lower semicontinuous, 
so $p\in S_i=S_i(A)$ for some $i \in \upsilon^0\setminus\sigma^0$. 
Hence, $B$ is a union of vertices and edges of $A$. 
% Uses sign is lower semicontinuous with A fixed, which is immediate from the definition, so we do not need the lemma where A varies.
% add a remark?

Consider a vertex $v$ of $B$, 
and let $I_v = \{i:\sign(A,v)=0\}$. 
Then, $\maxcov^{-1}(\sigma)$ coincides locally around $v$ with the intersection of the open pseudohemispheres $S_i^{\sigma(i)}$ for $i \in I_v\setminus \sigma^0$ 
and closed pseudohemispheres $\cl(S_j^{-\tau(j)})$ for $j \in (I_v\cap\sigma^0)\setminus \tau^0$ 
where $\tau > \sigma$ is a facet covector. 

In the case where there is no such facet covector $\tau$, then $\maxcov^{-1}(\sigma)$ coincides locally with an intersection of open pseudohemispheres, and the compliment coincides locally with $\maxcov^{-1}(\upsilon)$, 
so two of the corresponding pseudocircles are on the boundary between $\maxcov^{-1}(\sigma)$ and $\maxcov^{-1}(\upsilon)$, 
so $v$ has degree 2 in $B$. 
 
In the case where there is only one such $\tau$, then 
$\maxcov^{-1}(\sigma)$ coincides locally with an intersection of 
open pseudohemispheres $S_i^{\sigma(i)}$ 
and one closed pseudohemisphere $\cl(S_j^{-\tau(j)})$. 
%The compliment of each open pseudohemisphere is locally contained in $\maxcov^{-1}(\upsilon)$ 
%Since $\tau$ is a facet covector of $\mc{M}$, 
Also, $\tau$ must be a facet covector of $A$ as well as $\mc{M}$ since $A \geq_\mr{w} \mc{M}$, 
so there is an open cell $C = \cell(A,\tau)$ and $v$ is on the boundary of this cell. 
If the compliments $\cl(S_i^{-\sigma(i)})$ of the open pseudohemispheres were to locally cover the 
compliment $S_j^{\tau(j)}$ of the closed pseudohemisphere, 
then for every point $p$ sufficiently close to $v$, if $\sign(A,p,j) = \tau(j)$ then $\sign(A,p,i) \neq \sigma(i) = \tau(i)$ for some $i \in I_v\setminus \sigma^0$ depending on $p$, 
which would mean $p \not\in C$, but that would contradict that $v$ is on the boundary of $C$. 
Hence, the closed pseudohemisphere $\cl(S_j^{-\tau(j)})$ is not covered by the closure of the open pseudohemispheres, 
so one of the pseudocircles $S_i$ is on the boundary between $\maxcov^{-1}(\sigma)$ and $\maxcov^{-1}(\upsilon)$ and another $S_j$ is on the boundary between $\maxcov^{-1}(\sigma)$ and $\maxcov^{-1}(\tau)$, 
so $v$ has degree 1 in $B$.  

In the case where there are two such $\tau$, then $\maxcov^{-1}(\sigma)$ locally intersects the boundaries of $\maxcov^{-1}(\tau)$ for each such $\tau$, 
and this locally covers the boundary of $\maxcov^{-1}(\sigma)$ by a similar argument as above, 
so $v$ is an isolated vertex of $B$.  

By Lemma \ref{lemmaFlagPoint}, 
$B$ intersects each closed region $\cl(\maxcov^{-1}(\tau))$ for $\tau>\sigma$ at a point, 
so $B$ has either 2 vertices of degree 1 or has a single isolated vertex, 
and all other vertices, if there be any, have degree 2. 
Therefore, $B$ consists of one connected component that is either an isolated vertex or a path along with a disjoint union of cycles. 
%In particular $B$ is nonempty.  

Suppose $B$ had a cycle $C$. 
Then, $C$ would divide the sphere into two regions by the Jordan curve theorem with $\maxcov^{-1}(\upsilon)$ and $\maxcov^{-1}(\sigma)$ locally on either side of $C$. 
Also,  
$\maxcov^{-1}(\upsilon)$ and $\maxcov^{-1}(\sigma)$ are path connected and no path through either set could cross $C$, 
since that would give $B$ a vertex of degree at least 3, 
so $\maxcov^{-1}(\upsilon)\setminus C$ and $\maxcov^{-1}(\sigma)$ would be  separated by $C$, but that contradicts that $B$ also has a connected component that is a path or point. 
Thus, the disjoint union of cycles is empty, and so $B$ is a path or point. 
An analogous argument holds in the cases where $\Sigma$ consists of a vertex and facet covector or of a edge and facet covector, which means part \ref{itemMaxcovBorderPath} holds, 
and since the vertices of degree 1 in the case where $B$ is a path are those on a closed pseudohemisphere $\cl(S_j^{-\tau(j)})$, part \ref{itemMaxcovBorderEndpoints} holds. 
\end{proof}

To complete the proof of Lemma \ref{lemmaMaxcovBorder}, 
it remains to show that the boarder $B$ varies continuously.  
We first show that objects appearing in the definition of $B$ vary continuously.

\begin{lemma}
\label{lemmaConvergentCrossingPoint}
If $P,Q$ are a pair of directed paths that meet at a single point, 
then $P\cap Q$ varies continuously as $P$ and $Q$ vary in Fréchet distance. 
This holds analogously for undirected paths and for closed curves in the projective plane. 
\end{lemma}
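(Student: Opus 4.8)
The plan is to prove Lemma \ref{lemmaConvergentCrossingPoint} by reducing the general statement to a purely local topological argument about how a single transverse crossing of two paths persists under small Fréchet perturbations. Suppose $P_k \to P_\infty$ and $Q_k \to Q_\infty$ in Fréchet distance, where $P_\infty \cap Q_\infty = \{x_\infty\}$ is a single point. Since $[0,1]$ is compact and Fréchet convergence gives parameterizations converging uniformly by Lemma \ref{lemmaPMFrechetDistance} (or directly from the definition of Fréchet distance), any sequence of points $x_k \in P_k \cap Q_k$ has a convergent subsequence, and any limit point lies in $P_\infty \cap Q_\infty = \{x_\infty\}$. So the only thing that can fail is that $P_k \cap Q_k$ is empty (or, potentially, not a single point) for infinitely many $k$; once we show $P_k \cap Q_k$ is a single point for all large $k$, continuity follows since every subsequential limit equals $x_\infty$.

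The first main step is to rule out that $P_k$ and $Q_k$ become disjoint. The key hypothesis is that $P_\infty$ and $Q_\infty$ meet at exactly one point, which forces the crossing to be \emph{topologically transverse}: $x_\infty$ is not an endpoint of $P_\infty$ or $Q_\infty$ (if it were, one path would locally sit on one side and they could be pulled apart), so locally near $x_\infty$, the path $Q_\infty$ separates a small disk $D$ around $x_\infty$ into two components, with the two ends of $P_\infty \cap D$ lying in opposite components. I would make this precise using the Jordan curve theorem: extend $P_\infty$ slightly past $x_\infty$ on both sides to get points $a, b$ on $\partial D$ with $a, b$ in different components of $D \setminus Q_\infty$. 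For $k$ large, $Q_k$ is Fréchet-close to $Q_\infty$, hence (restricted to a slightly larger disk) still separates $a$ from $b$ in $D$, and $P_k$ is close enough to $P_\infty$ that it still runs from near $a$ to near $b$ through $D$; therefore $P_k$ must cross $Q_k$ inside $D$. This gives $P_k \cap Q_k \neq \emptyset$ for all large $k$.

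The second step is to show $P_k \cap Q_k$ is a single point for large $k$, not just nonempty. This does \emph{not} follow from transversality alone — two paths can cross once and then recross — so I would argue by contradiction using the established fact that all intersection points converge to $x_\infty$. Suppose infinitely often $P_k$ meets $Q_k$ in at least two points $x_k \neq x_k'$; pass to a subsequence so both converge, necessarily both to $x_\infty$. Then I work in the small disk $D$ above: for $k$ large both crossings lie in $D$, where $P_k$ is a crosscut of $D$ (an arc with endpoints on $\partial D$, close to the crosscut $P_\infty$) and $Q_k$ is a crosscut close to $Q_\infty$. A crosscut of a disk separates it into exactly two pieces, and two crosscuts whose endpoints \emph{link} on $\partial D$ (which holds here since the endpoints of $P_k$ near $a,b$ and of $Q_k$ near $c,d$ alternate, because $P_\infty, Q_\infty$ cross transversally) must meet in an odd number of points if the arcs are in general position — but more robustly, I would note that the two nearby crosscuts $P_k, Q_k$ of $D$, having linked endpoints, cannot meet in a set that disconnects; since by shrinking $D$ we may assume $P_k \cap Q_k \subset D$ has arbitrarily small diameter, and each such intersection point must still be a genuine transverse-type crossing forced by the linking, a second crossing would force the endpoint pattern on $\partial D$ to change, contradicting that $P_k, Q_k$ are uniformly close to the single crosscuts $P_\infty, Q_\infty$.

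The main obstacle is precisely this uniqueness argument in the second step: ruling out extra crossings requires care because Fréchet closeness of $P_k$ to $P_\infty$ does not by itself prevent wiggling, and one genuinely needs the hypothesis that the \emph{limit} intersection is a single point to propagate backward. The cleanest route is likely to fix once and for all a small disk $D$ around $x_\infty$ disjoint from the endpoints of $P_\infty$ and $Q_\infty$ and from $P_\infty \cap Q_\infty$ outside $D$ (possible since that set is just $\{x_\infty\}$), observe that outside $D$ the paths $P_\infty, Q_\infty$ stay a positive distance apart so for large $k$ the paths $P_k, Q_k$ cannot meet outside $D$, and inside $D$ invoke the crosscut/linking dichotomy. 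The statements for undirected paths and for closed curves in the projective plane then follow verbatim, since the argument only used the local picture near the crossing and the separation properties of arcs in a disk, which are unaffected by forgetting orientations or by passing to the projective plane (where a small disk is still an ordinary disk).
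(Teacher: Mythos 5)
The lemma should be read as asserting continuity of the map $(P,Q)\mapsto P\cap Q$ on the space of pairs that \emph{already} satisfy the ``meet at a single point'' hypothesis: by the convention of this paper (compare Lemma \ref{lemmaAreaContinuous}, whose proof considers ``a convergent sequence of \emph{such} curves''), both the approximating pairs $(P_k,Q_k)$ and the limit $(P_\infty,Q_\infty)$ are assumed to intersect in exactly one point. Read that way, your opening compactness observation is already the entire proof: any subsequential limit of $x_k = P_k\cap Q_k$ lies in $P_\infty\cap Q_\infty=\{x_\infty\}$ because Fréchet convergence implies Hausdorff convergence and $P_\infty$, $Q_\infty$ are closed; since the ambient space is compact and Hausdorff and every subsequence of $(x_k)$ has a subsubsequence converging to $x_\infty$, we get $x_k\to x_\infty$. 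That argument is insensitive to orientation and works unchanged on the projective plane, so the ``analogous'' cases follow immediately. (The paper itself omits the proof, deferring to an earlier paper of the author, so there is no written-out argument here to compare against; but this is clearly the intended content, and it is short.)

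Everything else in your write-up --- the transversality analysis, the crosscut and linking argument, the discussion of ruling out re-crossings --- is aimed at a different and stronger claim: that the property of meeting at a single point is \emph{stable} under Fréchet-small perturbations. That is not what the lemma says, and stated that way it would actually be false. Your specific claim that a single-point intersection forces a transverse crossing away from the endpoints is incorrect: take $P_\infty$ a straight segment and $Q_\infty$ a circular arc tangent to it at one interior point, or take $P_\infty$ and $Q_\infty$ to be two segments sharing only a common endpoint and lying on the same line; in both cases $P_\infty\cap Q_\infty$ is one point, yet an arbitrarily small Fréchet perturbation makes the intersection empty. So Step~1 of your plan does not go through, and Step~2 (uniqueness via a linking argument) is both unnecessary and, as you yourself note, left at a hand-wave that would not survive scrutiny. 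None of this damages the result, because the part you had right --- the compactness argument --- already closes the lemma; the transversality and linking material should simply be deleted.
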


This is essentially the same as \cite[Lemma 3.1.3]{dobbins2021grassmannians}. 
We do not repeat the argument here.

\begin{lemma}
\label{lemmaConvergentPathSequence}
If $P_{i,k} \to P_\infty$ is a sequence of directed paths that converges in Fréchet distance as $k \to \infty$ for each $i\in[n]_\mb{N}$,
and $Q_k$ is a directed path that fist traverses an arc of $P_{1,k}$, and then an arc of $P_{2,k}$, and so on until traversing an arc of $P_{n,k}$, then $Q_k \to P_\infty$ in Fréchet distance. 
\end{lemma}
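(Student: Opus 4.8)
The plan is to reduce the statement to a single application of Lemma \ref{lemmaConvergentCrossingPoint} together with the gluing behavior of paths under Fréchet convergence. The key observation is that $Q_k$ is obtained by concatenating $n$ subarcs, the $i$-th of which is an arc of $P_{i,k}$, and the consecutive arcs are joined at the crossing point $P_{i,k}\cap P_{i+1,k}$ (or more precisely at the point where $Q_k$ switches from following $P_{i,k}$ to following $P_{i+1,k}$). Since all the $P_{i,k}$ converge to the \emph{same} path $P_\infty$, each such junction point must converge to a point of $P_\infty$, and the subarc of $P_{i,k}$ that $Q_k$ traverses is squeezed between two points that both converge into $P_\infty$.

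First I would fix parameterizations. Let $\gamma_{i,k}:[0,1]\to P_{i,k}$ be parameterizations realizing Fréchet convergence, so that $\gamma_{i,k}\to\gamma_{i,\infty}=\gamma_\infty$ uniformly for each $i$, where $\gamma_\infty:[0,1]\to P_\infty$ is a fixed parameterization. Let $[a_{i,k},b_{i,k}]\subseteq[0,1]$ be the parameter interval of the arc of $P_{i,k}$ that $Q_k$ traverses, with $b_{i,k}$ the parameter of the junction to the next arc, so $\gamma_{i,k}(b_{i,k})=\gamma_{i+1,k}(a_{i+1,k})$, and $a_{1,k}$, $b_{n,k}$ are the endpoints of $Q_k$. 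Passing to a subsequence we may assume each of the finitely many parameters $a_{i,k},b_{i,k}$ converges, say to $a_{i,\infty},b_{i,\infty}$. Because $\gamma_{i,k}\to\gamma_\infty$ uniformly, the junction condition forces $\gamma_\infty(b_{i,\infty})=\gamma_\infty(a_{i+1,\infty})$, and since $\gamma_\infty$ is injective this gives $b_{i,\infty}=a_{i+1,\infty}$; call this common value $t_i$. Likewise set $t_0=a_{1,\infty}$ and $t_n=b_{n,\infty}$. Thus the limiting ``recipe'' traverses $\gamma_\infty$ on $[t_0,t_1]$, then on $[t_1,t_2]$, and so on — i.e., it traverses $\gamma_\infty$ on $[t_0,t_n]\subseteq[0,1]$, which is an arc of $P_\infty$, hence $P_\infty$ itself only if $[t_0,t_n]=[0,1]$. (If $Q_k$ is required to be all of the concatenation with $Q_k$ ranging over full paths, one has $t_0=0$, $t_n=1$; otherwise the statement should be read as ``$Q_k$ converges to an arc of $P_\infty$,'' and the same argument applies.)

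Second I would assemble the explicit parameterization of $Q_k$ that realizes the convergence: define $\eta_k:[0,1]\to Q_k$ by splitting $[0,1]$ into $n$ consecutive subintervals $[s_{i-1},s_i]$ with $s_i\to$ (suitably rescaled $t_i$) and letting $\eta_k$ follow $\gamma_{i,k}$ reparameterized affinely from $[a_{i,k},b_{i,k}]$ onto $[s_{i-1},s_i]$. Each block converges uniformly to the corresponding block of $\gamma_\infty$ since $\gamma_{i,k}\to\gamma_\infty$ uniformly and the affine reparameterizations converge; concatenating finitely many uniformly convergent pieces that agree at the breakpoints gives uniform convergence of $\eta_k$ to a parameterization of the limiting arc of $P_\infty$. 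Hence $\dist_{\mathrm F}(Q_k,P_\infty)\to 0$. Since every subsequence has a further subsequence converging to $P_\infty$ (the limit being independent of the subsequential choices of $a_{i,\infty},b_{i,\infty}$, as it is always the full image traversed monotonically), the whole sequence converges.

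The main obstacle I expect is the bookkeeping at the junctions: one must verify that $Q_k$, built by "switching" from one $P_{i,k}$ to the next, is actually a path (a homeomorphic image of an interval) whose parameterization is monotone, and that the switch points behave well even though the $P_{i,k}$ need not cross transversally or even meet in a single point for finite $k$ — here is where Lemma \ref{lemmaConvergentCrossingPoint} is invoked to control $P_{i,k}\cap P_{i+1,k}$, or, failing a unique crossing, the hypothesis on $Q_k$ already pins down the junction point as part of the data and one only needs its convergence, which follows from uniform convergence of the $\gamma_{i,k}$ as above. Once the junctions are under control, the rest is the standard fact that a finite concatenation of uniformly convergent path-pieces matching at endpoints converges uniformly.
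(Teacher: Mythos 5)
Your argument is correct, but it is a genuinely different proof from the one in the paper, so let me compare the two. The paper fixes $\eps>0$, covers $P_\infty$ by arcs of diameter $\eps$, and then for $n=2$ defines a stopping parameter $b_k$ (the first time after the junction $q_k$ that $\phi_{2,k}$ leaves the $\eps$-ball around $q_k$), shows the intermediate arc $\phi_{2,k}([c_k,b_k])$ has diameter $O(\eps)$, builds an explicit piecewise parameterization $\psi_k$ of $Q_k$, estimates $\|\psi_k-\phi_\infty\|<9\eps$, and lets $\eps\to 0$; the general case is then by induction on $n$. You instead work with all $n$ blocks at once, treat the $2n$ parameter endpoints $a_{i,k},b_{i,k}$ as a point in the compact cube $[0,1]^{2n}$, pass to a convergent subsequence, use injectivity of $\gamma_\infty$ to identify the limiting junction values $t_i$ and order them, assemble a concatenated affinely reparameterized $\eta_k\to\eta_\infty$ uniformly, and then invoke the standard "every subsequence has a sub-subsequence converging to the same limit" trick. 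The paper's route is quantitative (a single $9\eps$ estimate, no subsequences) and avoids having to handle the case where a block collapses ($a_{i,\infty}=b_{i,\infty}$) via the limiting parameterization becoming non-injective; your route is conceptually cleaner at the cost of that subsequence maneuver and the small technicality (which you glide over, as does the paper in an analogous place) that $\eta_\infty$ may fail to be a homeomorphism when a block degenerates — this can be patched by perturbing to strictly monotone parameterizations before comparing, since the Fréchet infimum is over homeomorphisms.

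One small further observation: your opening claim that the result reduces to Lemma \ref{lemmaConvergentCrossingPoint} is not borne out by your own argument — the hypothesis that $Q_k$ is given as a concatenated path already provides the junction points as data, so no control of $P_{i,k}\cap P_{i+1,k}$ is needed, and indeed the paper's proof does not invoke that lemma either. You correctly retract this in your final paragraph, but the stated "plan" should be adjusted to avoid suggesting a dependency that isn't there. Finally, your discussion of whether $[t_0,t_n]=[0,1]$ is apposite: the paper implicitly reads the hypothesis as $Q_k$ running from the source of $P_{1,k}$ to the terminal of $P_{n,k}$ (its $\psi_k$ is defined with $\psi_k(0)=\phi_{1,k}(0)$ and $\psi_k(1)=\phi_{2,k}(1)$), so under that reading $t_0=0$, $t_n=1$ and the limiting arc is all of $P_\infty$, as you note.
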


\begin{proof}%
%[Proof of Lemma \ref{lemmaConvergentPathSequence}]
Let us assume that $n=2$ since the general case will follow by induction using the same argument as in the $n=2$ case.

Let $\phi_{i,k} : [0,1]_\mb{R} \to P_{i,k}$ be parameterizations that converge to some parameterization $\phi_\infty : [0,1]_\mb{R} \to P_{\infty}$ in the sup-metric as in the hypothesis of the Lemma. 
Note what we may assume that these converge to the same $\phi_\infty$ for both $i\in\{1,2\}$; otherwise we can reparameterize appropriately.
Let $a_k \in [0,1]$ be the last time where $P_{1,k}$ traverses $Q_k$, i.e., 
\[
a_k = \sup \{t \in [0,1]: \phi_{1,k}([0,t])\subset Q_k\},
\]
and let $q_k = \phi_{1,k}(a_k)$. 
Observe that every point of $Q_k$ after $q_k$ must be in $P_{2,k}$, 
and in particular, $q_k \in P_{1,k}\cap P_{2,k} \cap Q_k$.

Fix $\eps>0$, and cover $P_\infty$ by open arcs of diameter at most $\eps$. 
Since $P_\infty$ is compact, we may choose a finite subcover.  
Let $C$ by the finite cover of $P_\infty$ by the closure of each arc, 
and let $\delta$ be the minimum separation distance between disjoint arcs of this cover. 
Note that $\delta > 0$ since the arcs are each compact. 
Let us assume that $\delta<\eps$ and $k$ is sufficiently large that $\|\phi_{i,k}-\phi_\infty\|(t) <\delta/4$.

Let $b_k \in [a_k,1]$ be the first time after $a_k$ where $P_{2,k}$ is at least $\eps$ away from $q_k$, or 1 if this never occurs, i.e., 
\[
b_k = \inf \{1,t \in [a_k,1] : \|\phi_{2,k}(t)-q_k\| \geq \eps \}.
\]
Then, $\|\phi_{2,k}-\phi_{1,k}\|(t) <\delta/2 <\eps/2$, 
so $\|\phi_{2,k}(a_k)-q_k\| <\eps/2$, so $b_k > a_k$.

Let $c_k = \phi_{2,k}^{-1}(q_k)$. 
Then, $\|\phi_\infty(c_k) -q_k\| < \delta/4$ and likewise for $a_k$, 
so $\|\phi_\infty(c_k) -\phi_\infty(a_k)\| < \delta/2$, 
so $\phi_\infty(a_k)$ and $\phi_\infty(c_k)$ cannot be in disjoint arcs of the cover $C$, 
so $\phi_\infty([c_k,a_k])$ has diameter no more than $2\eps$, 
so $\phi_{2,k}([c_k,a_k])$ has diameter less than $3\eps$, 
so $\phi_{2,k}([c_k,b_k])$ has diameter less than $5\eps$. 
Note that we do not know the order of $a_k$ and $c_k$, but if $a_k\leq c_k$, then $\phi_{2,k}([c_k,b_k]) \subseteq \phi_{2,k}([a_k,b_k])$ has diameter at most $2\eps$. 

We now define a parameterization $\psi_k$ of $Q_k$, which also depends on $\eps$.
For $t\leq a_k$, let $\psi_k(t) = \phi_{1,k}(t)$. 
For $t \geq b_k$, let $\psi_k(t) = \phi_{2,k}(t)$. 
And for $t \in [a_k,b_k]$, let $\psi_k$ be an arbitrarily chosen parameterization of the arc $\phi_{2,k}([c_k,b_k]) \subseteq P_{2,k}$ from $q_k$ to $\phi_{2,k}(b_k)$. 
Then, $\|\psi_k-\phi_\infty\|(t) < \eps/4$ for $t\not\in (a_k,b_k)$ by choice of $k$ sufficiently large. 
Also, $\|\psi_k-\phi_\infty\|(t) < 9\eps$ for $t\in [a_k,b_k]$ 
since the arc $\psi_k([a_k,b_k]) =\phi_{2,k}([c_k,b_k])$ has diameter less than $5\eps$, 
and the arc $\phi_\infty([a_k,b_k])$ has diameter less than $3\eps$,
and $\|\psi_k-\phi_\infty\|(b_k) < \delta/4<\eps$. 
Hence $\dist_\mr{F}(Q_k,P_\infty) <9\eps$. 

By letting $\eps \to 0$, we have $Q_k \to P_\infty$ in Fréchet distance.
\end{proof}

\begin{proof}%
[Proof of Lemma \ref{lemmaMaxcovBorder} part \ref{itemMaxcovBorderContinuous}]

Consider a convergent sequence $A_k \to A_\infty$. 
and let $B_k$ and $p_{i,k}$ be defined from $A_k$ as in the statement of the lemma. 

In the case where $B_k$ is a point for $k$ sufficiently large, 
there is some curve $S_j$ through $B_k$ with $j \in \upsilon^0 \setminus \sigma^0$ that crosses a curve along the boundary of $\maxcov^{-1}(\tau_0)$ at $p_{0,k} = B_k$, 
so $B_k \to B_\infty$ by Lemma \ref{lemmaConvergentCrossingPoint}.

Consider the case where $B_k$ is not a point for all $k$ sufficiently large. 
We already have that $B_k$ consists of a sequence of arcs along curves $S_{j_1}(A_k),\dots,S_{j_m}(A_k)$ from $p_{0,k}$ to $p_{1,k}$ from the proof of parts \ref{itemMaxcovBorderPath} and \ref{itemMaxcovBorderEndpoints} above.
Since it suffices to show convergence in finitely many cases, we may assume that the sequence $j_1,\dots,j_m$ is fixed for each $k<\infty$.  
The curve $B_\infty$ also consists of such a sequence of arcs, but possibly with additional degeneracies since $A_\infty \leq A_k$ by Lemma \ref{lemmaOmSemicontinuous}, so we can partition 
$j_1,\dots,j_m$ into subintervals $j_{1,1},\dots,j_{1,m_1},j_{2,1},\dots,j_{n,m_n}$ as follows. 
For $i$ even, 
each curve of the subinterval $S_{j_{i,1}},\dots,S_{j_{i,m_i}}$ converges to a common curve that traverses an arc of $B_\infty$ that is not a single point.  
Let the $i$th subinterval for $i$ odd be the intervening curves. 
Note that the subinterval could be empty in the case where $i$ is odd, or all curves could be contained in a single subinterval, which could be $i=1$ or $i=2$, but at least one subinterval is nonempty. 
Also, $B_\infty$ is a sequence of arcs along $S_{j_{2,1}},S_{j_{4,1}}\dots,S_{j_{n-1,1}}$.

The even subintervals converge appropriately by Lemma \ref{lemmaConvergentPathSequence}, 
so let us consider the case where $i$ is odd. 
To reduce the need for special cases, 
let $S_{j_0,1}$ be a curve along the boundary of $\maxcov^{-1}(\tau_0)$ through $p_{0,k}$ 
and let $S_{j_{n+1},1}$ be that of $\maxcov^{-1}(\tau_1)$ through $p_{1,k}$.  
Then, curves $S_{j_{i-1,m_{i-1}}}$ and $S_{j_{i+1,1}}$ never become parallel, so their intersection varies continuously by Lemma \ref{lemmaConvergentCrossingPoint}.   
Similarly, $S_{j_{i-1,m_{i-1}}}\cap S_{j_{i,h}}$ and $S_{j_{i,h}}\cap S_{j_{i+1,1}}$ 
vary continuously,
provided that the $i$-th subinterval is nonempty. 
Moreover, 
the order type of $[S_{j_{i-1,m_{i-1}}},S_{j_{i,h}},S_{j_{i+1,1}}](A_\infty)$ is either the same as that in $A_k$ or 0 by Lemma \ref{lemmaOmSemicontinuous}. 
In the former case,  
a curve of an odd interval would intersect $B_\infty$ in more than one point contradicting the choice of partition into even and odd intervals. 
Hence, the later case must hold, which means the curves $[S_{j_{i-1,m_{i-1}}},S_{j_{i,h}},S_{j_{i+1,1}}](A_\infty)$ 
meet at a common point, 
and  
the arc of $B_k$ along the $i$-th interval converges to that point by Lemma \ref{lemmaConvergentCrossingPoint}. 
Thus, $B_k \to B_\infty$. 
\end{proof}

Next we show that objects and parameters in the definition of $\hood(\mc{M})$ vary continuously.

\begin{lemma}
\label{lemmaMaxcovContinuous}
Given a covector $\sigma \in \csph(\mc{M})$,  
then $\maxcov^{-1}(\mc{M},A;\sigma)$ varies continuously in Hausdorff distance 
as $A \geq_\mr{w} \mc{M}$ varies. 
Hence, $\van(\mc{M},A)^\mr{c}$ varies continuously in Hausdorff distance. 
\end{lemma}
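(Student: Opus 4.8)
The plan is to verify Kuratowski set-convergence of $\cl(\maxcov^{-1}(\mc{M},A;\sigma))$, which in the compact space $\sphere^2$ is equivalent to convergence in Hausdorff distance. Fix a convergent sequence $A_k\to A_\infty$ with every $A_k\geq_\mr{w}\mc{M}$. Since $A\geq_\mr{w}\mc{M}$ forces $\supp(\mc{M})\subseteq\supp(A)$, each $i\in\supp(\mc{M})$ has $\wt_i(A_k)$ positive and converging to $\wt_i(A_\infty)>0$, so $S_i(A_k)\to S_i(A_\infty)$ in Fréchet distance by the definition of the metric on $\psv_{3,n}$, and hence each closed pseudohemisphere $\cl(S_i^{+}(A_k))$ converges in Hausdorff distance, and likewise $\cl(S_i^{-}(A_k))$ — this is exactly the symmetric-difference estimate $\Delta_k\subseteq S_\infty\oplus\eps$ established inside the proof of Lemma \ref{lemmaAreaContinuous}, reread as Hausdorff convergence of the bounded region rather than of its area. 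I would also record the combinatorial description $\maxcov^{-1}(\mc{M},A;\sigma)=E_\sigma(A)\setminus\bigcup_{\nu}E_\nu(A)$, where $E_\rho(A)=\{x:\rho\leq_\mr{v}\sign(A,x)\}=\bigcap\{S_i^{\rho(i)}(A):\rho(i)\neq0\}$ is a nonempty (because $\maxcov$ is surjective) intersection of open pseudohemispheres and $\nu$ runs over the covers of $\sigma$ in the graded poset $\csph(\mc{M})$; this holds because $\max(\cov(\mc{M})_{\leq\tau})=\sigma$ exactly when $\sigma\leq_\mr{v}\tau$ and no cover of $\sigma$ is $\leq_\mr{v}\tau$. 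Finally, for the ``Hence'' clause, observe that a covector of $\mc{M}$ has support equal to $\supp(\mc{M})$ precisely when it is a facet covector, so $\van(\mc{M},A)^\mr{c}$ is the disjoint union of the finitely many sets $\maxcov^{-1}(\mc{M},A;\sigma)$ with $\sigma$ a facet covector, and finite unions preserve Hausdorff continuity, so it suffices to treat one $\sigma$ at a time.

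When $\sigma$ is a facet covector there are no covers $\nu$, so $\maxcov^{-1}(\mc{M},A;\sigma)=E_\sigma(A)=\bigcap_{i\in\supp(\mc{M})}S_i^{\sigma(i)}(A)$, a ``pseudoconvex'' open topological disk whose closure is $\bigcap_{i\in\supp(\mc{M})}\cl(S_i^{\sigma(i)}(A))$, a regular closed set (the closure of its nonempty interior). Its Hausdorff continuity then follows from that of the closed hemispheres: the $\limsup$ inclusion is immediate from $\cl(E_\sigma(A_k))\subseteq\cl(S_i^{\sigma(i)}(A_k))$ for every $i$, and the $\liminf$ inclusion holds because any point of the open set $E_\sigma(A_\infty)$ lies a positive distance inside each open hemisphere and so lies in $E_\sigma(A_k)$ once $k$ is large, and such points are dense in $\cl(E_\sigma(A_\infty))$. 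This already gives the ``Hence'' clause of the lemma.

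For a general $\sigma$ the same outline applies, but $\maxcov^{-1}(\mc{M},A;\sigma)$ need not be open or top-dimensional. Here I would invoke the structure theory for weak maps of oriented matroids (Anderson \cite{anderson2001representing}) that the sets $\maxcov^{-1}(\mc{M},A;\rho)$, $\rho\in\cov(\mc{M})$, form a regular cell decomposition of $\sphere^2$ in which the faces of $\maxcov^{-1}(\mc{M},A;\sigma)$ are the $\maxcov^{-1}(\mc{M},A;\rho)$ with $\rho<_\mr{v}\sigma$. Thus $\cl(\maxcov^{-1}(\mc{M},A;\sigma))$ is a closed $({\leq}2)$-cell whose topological boundary (or the cell itself, when it is $1$- or $0$-dimensional) is a finite union of sets $B$ as in Lemma \ref{lemmaMaxcovBorder}, each with $\Sigma$ a two-element chain one of whose elements is $\sigma$, hence varying continuously in Fréchet distance by Lemma \ref{lemmaMaxcovBorder} part \ref{itemMaxcovBorderContinuous}. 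The $\liminf$ half of Kuratowski convergence then follows as in the facet case: any point of the cell is a limit of points interior to its top-dimensional faces, whose signs are covectors of $A_\infty$ of maximal support among those $\leq_\mr{v}$ them, and this property passes to $A_k$ by lower semicontinuity of $\sign$ (Lemma \ref{lemmaAimSemicontinuous}), so those interior points remain in $\maxcov^{-1}(\mc{M},A_k;\sigma)$.

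The $\limsup$ half is the step I expect to be the main obstacle, because the cell may drop dimension in the limit — a thin strip between two almost-parallel pseudocircles collapsing onto a segment, say. A convergent sequence $x_k\in\maxcov^{-1}(\mc{M},A_k;\sigma)$ lies in $\cl(E_\sigma(A_k))$ and avoids each open set $E_\nu(A_k)$ (since $\cl(X)$ is disjoint from an open set whenever $X$ is), so by the Hausdorff convergences set up above its limit $x_\infty$ lies in $\cl(E_\sigma(A_\infty))\setminus\bigcup_\nu E_\nu(A_\infty)$; unfortunately this set can be strictly larger than $\cl(\maxcov^{-1}(\mc{M},A_\infty;\sigma))$, the surplus being one-dimensional arcs of $\partial E_\sigma(A_\infty)$ that abut a full tope. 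To kill these surplus arcs I would argue locally: near such a point $z$, the arrangement $A_\infty$ is, on the side meeting $E_\sigma$, deep inside a cell $\cell(A_\infty,\tau)$ whose $\maxcov$-image is different from $\sigma$, and by lower semicontinuity of $\sign$ the same is true for $A_k$ near $A_\infty$, so $\maxcov^{-1}(\mc{M},A_k;\sigma)$ stays uniformly away from $z$; combined with the Fréchet convergence of the bounding arcs from Lemma \ref{lemmaMaxcovBorder}, which squeezes the collapsing cell into a neighborhood of its limit, this forces $\cl(\maxcov^{-1}(\mc{M},A_k;\sigma))\subseteq\cl(\maxcov^{-1}(\mc{M},A_\infty;\sigma))\oplus\eps$ for $k$ large, completing the argument.
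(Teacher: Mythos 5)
The facet-covector case you give is sound and does deliver the ``Hence'' clause, and the decomposition $\maxcov^{-1}(\mc{M},A;\sigma)=E_\sigma(A)\setminus\bigcup_\nu E_\nu(A)$ is a useful reformulation the paper does not make explicit. But the general-covector case has real gaps, and they concentrate in exactly the degenerate situation you yourself flag as the main obstacle.

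First, the appeal to Anderson \cite{anderson2001representing} is too strong: what is established there (and used in the paper) is only that $\maxcov(\mc{M}_0,\mc{M}_1)$ is well-defined and surjective. That the preimages $\maxcov^{-1}(\mc{M},A;\rho)$ form a regular cell decomposition of $\sphere^2$ is a genuine claim, and in the paper it is essentially the content of Lemma \ref{lemmaMaxcovBorder} and Lemma \ref{lemmaFlagPoint} rather than an imported fact.

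Second, your $\liminf$ argument fails when $\maxcov^{-1}(\mc{M},A_\infty;\sigma)$ drops dimension. Take $A_\infty \in \psv(\mc{M})$ and $\sigma$ an edge covector, so $R_\infty = \cell(A_\infty,\sigma)$ is a $1$-cell and its ``top-dimensional faces'' are edges of $A_\infty$. For $x$ interior to such an edge, $\sign(A_\infty,x)=\sigma$, and lower semicontinuity of $\sign$ gives only $\sign(A_k,x)\geq_\mr{v}\sigma$, hence $\maxcov(\mc{M},A_k,x) \geq \sigma$, not $=\sigma$. Indeed the perturbed arrangement $A_k$ may resolve the degeneracy so that $x$ lands inside a full tope cell of $\maxcov^{-1}(\mc{M},A_k;\tau)$ with $\tau$ a facet covector $>\sigma$. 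So keeping $x$ fixed does not witness the $\liminf$; one must move to a nearby $x_k$ chosen to track the collapsing cell, and that choice is what needs justification.

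Third, the $\limsup$ half is left as a hand-wave. The ``argue locally near a surplus point $z$'' step asserts, without proof, that $\maxcov^{-1}(\mc{M},A_k;\sigma)$ stays uniformly away from $z$, and the final sentence folds in Fréchet convergence of the border arcs without saying how it yields the needed inclusion. The paper avoids this entirely: using Lemma \ref{lemmaMaxcovBorder} to parameterize $\partial R_k$ as a closed curve $\phi_k : \sphere^1 \to \partial R_k$ (a cycle, thanks to thinness of $\Cov(\mc{M})$ in rank 3), it establishes $\phi_k \to \phi_\infty$ uniformly and decides membership in $R_k$ by winding number. That single argument handles both the $\liminf$ and the $\limsup$ simultaneously, and in particular degrades gracefully when $R_\infty$ degenerates to an arc or a point, which is precisely where your case analysis comes apart. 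The missing idea, in short, is to promote the Fréchet convergence of the individual borders from Lemma \ref{lemmaMaxcovBorder} into a uniform parametrized convergence of the whole boundary cycle and then argue by winding number, rather than to argue Kuratowski-pointwise on the two sides separately.
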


\begin{proof}
Consider a convergent sequence $A_k \to A_\infty$ in $\upa(\mc{M})$, 
and let $R_k = R_k(\sigma) = \maxcov^{-1}(\mc{M},A_k,\sigma)$.  
Since $\Cov(\mc{M})$ is thin and $\mc{M}$ has rank 3, 
the intersection graph of pairs $\{\tau,\upsilon\}$ that form a chain with $\sigma$ 
is a regular graph $G$ of degree 2, and $G$ is connected by the circuit elimination axiom, 
so $G$ is a cycle. 
Also, $\partial R_k$ is covered by 
$({\leq}1)$-cells of the form 
$B_{\upsilon,k} = \cl(R_k)\cap\cl(\maxcov^{-1}(\upsilon))$ 
with endpoints of the form 
%$p_{\upsilon,\tau,k} = B_{\upsilon,k}\cap\cl(\maxcov^{-1}(\tau))$
$p_{\upsilon,\tau,k}= B_{\upsilon,k}\cap B_{\tau,k}$  
by Lemma \ref{lemmaMaxcovBorder} parts \ref{itemMaxcovBorderPath} and \ref{itemMaxcovBorderEndpoints},  
and so we have surjective maps $\phi_k : \sphere^1 \to \partial R_k$ 
such that $\phi_k \to \phi_\infty$ in the sup-metric  
by Lemma \ref{lemmaMaxcovBorder} part \ref{itemMaxcovBorderContinuous}. 
Also, $R_k$ is on the same side of $B_{\upsilon,k}$ for all $k$ sufficiently large since $A_k \to A_\infty$, so membership in $R_k$ is determined by winding number of $\phi_k$. 
If $x \in \sphere^2$ is at least distance $\eps>0$ away from $\partial R_\infty$, 
then 
the winding number of $\phi_k$ around $x$ is the same as that of $\phi_\infty$ 
for $k$ sufficiently large, 
so $x \in R_k$ if and only if $x \in R_\infty$. 
Hence, the Hausdorff distance between $R_k$ and $R_\infty$ 
is at most $\eps$, so $R_k \to R_\infty$. 
For the last part, observe that 
\[
\dist_\mr{H}(\van(\mc{M},A_k)^\mr{c},\van(\mc{M},A_\infty)^\mr{c})
\leq \max_{\sigma} \dist_\mr{H}(R_k(\sigma),R_\infty(\sigma))
\]
among facet covectors $\sigma$. 
\end{proof}

\begin{lemma}
\label{lemmaBigLittleContinuous}
The maps $\vanrad(\mc{M}),\inrad(\mc{M},\sigma),\minbig(\mc{M}),\maxlit(\mc{M}): \upa(\mc{M}) \to \mb{R}$ are each continuous. 
\end{lemma}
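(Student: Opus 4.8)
The plan is to reduce all four continuity statements to one elementary fact about inradii together with Lemma~\ref{lemmaMaxcovContinuous}. The fact is that for any $X\subseteq\sphere^2$ with nonempty complement,
\[
\inrad(X)=\sup_{p\in\sphere^2}\dist(p,X^\mr{c}),
\]
because an open metric disk $B(p,r)$ is contained in $X$ exactly when $\dist(p,X^\mr{c})\geq r$. For each fixed $p$ the function $Y\mapsto\dist(p,Y)$ is $1$-Lipschitz with respect to Hausdorff distance, so $|\inrad(X)-\inrad(X')|\leq\dist_\mr{H}(X^\mr{c},(X')^\mr{c})$ whenever both complements are nonempty; in other words, the inradius is continuous in the \emph{complement} of its argument. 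We first record that $\van(\mc{M},A)$ and each $\maxcov^{-1}(\mc{M},A,\sigma)$ have nonempty complement: for any facet covector $\tau$ of $\mc{M}$ the region $\maxcov^{-1}(\mc{M},A,\tau)$ is nonempty, and on it $\maxcov(\mc{M},A,x,i)=\tau(i)\neq 0$ for every $i\in\supp(\mc{M})$, so it is disjoint from $\van(\mc{M},A)$; taking $\tau$ distinct from $\sigma$ (for instance $\tau=-\sigma$) it is also disjoint from $\maxcov^{-1}(\mc{M},A,\sigma)$. When $\van(\mc{M},A)=\emptyset$ the displayed formula correctly gives $\inrad=0$.

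For $\vanrad$, Lemma~\ref{lemmaMaxcovContinuous} already states that $\van(\mc{M},A)^\mr{c}$ varies continuously in Hausdorff distance as $A\in\upa(\mc{M})$ varies, so $\vanrad(\mc{M},A)=\inrad(\van(\mc{M},A))$ varies continuously by the fact above. For $\inrad(\mc{M},\sigma)$ with $\sigma$ a facet covector, what is needed is that $\maxcov^{-1}(\mc{M},A,\sigma)^\mr{c}$ varies continuously in Hausdorff distance, and this is obtained by revisiting the proof of Lemma~\ref{lemmaMaxcovContinuous}. There $\partial\maxcov^{-1}(\mc{M},A,\sigma)$ is a single closed curve, parameterized by maps $\phi_k:\sphere^1\to\partial\maxcov^{-1}(\mc{M},A_k,\sigma)$ that converge in the sup-metric for any convergent sequence $A_k\to A_\infty$ in $\upa(\mc{M})$, and membership in $\maxcov^{-1}(\mc{M},A_k,\sigma)$ is decided by the winding number of $\phi_k$. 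The complementary region is bounded by the same curve and consists of the points of opposite winding number, so the identical argument shows that a point bounded away from $\partial\maxcov^{-1}(\mc{M},A_\infty,\sigma)$ eventually lies in $\maxcov^{-1}(\mc{M},A_k,\sigma)^\mr{c}$ if and only if it lies in $\maxcov^{-1}(\mc{M},A_\infty,\sigma)^\mr{c}$; hence $\maxcov^{-1}(\mc{M},A_k,\sigma)^\mr{c}$ converges to $\maxcov^{-1}(\mc{M},A_\infty,\sigma)^\mr{c}$ in Hausdorff distance, and $\inrad(\mc{M},\sigma,A)$ varies continuously for each facet covector $\sigma$.

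Finally, the weights $\wt_i$ vary continuously on $\psv_{3,n}$ (indeed $1$-Lipschitzly, directly from the metric), and $\mc{M}$ has only finitely many facet covectors and finitely many elements, so
\[
\maxlit(\mc{M},A)=\max\bigl(\{\vanrad(\mc{M},A)\}\cup\{\wt_i(A):i\text{ a loop of }\mc{M}\}\bigr)
\]
and
\[
\minbig(\mc{M},A)=\min\bigl(\{\inrad(\mc{M},\sigma,A):\sigma\text{ a facet covector of }\mc{M}\}\cup\{\wt_i(A):i\text{ a nonloop of }\mc{M}\}\bigr)
\]
are each a maximum or minimum of finitely many continuous functions, hence continuous.

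The only real friction we anticipate is in the second paragraph: one must work with the \emph{complement} $\maxcov^{-1}(\mc{M},A,\sigma)^\mr{c}$ rather than $\maxcov^{-1}(\mc{M},A,\sigma)$ itself, since the inradius is genuinely discontinuous in the set under Hausdorff distance (a dense open subset and its closure have Hausdorff distance zero but different inradii), so this step cannot be bypassed and relies on reading the winding-number argument out of the proof of Lemma~\ref{lemmaMaxcovContinuous}; beyond that there is only the minor bookkeeping that rules out empty or full regions in degenerate cases.
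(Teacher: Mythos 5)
Your proof is correct, and it takes a somewhat cleaner route than the paper's. Both ultimately rest on the same two inputs — Lemma~\ref{lemmaMaxcovContinuous} for Hausdorff convergence of $\van(\mc{M},A)^\mr{c}$, and the winding-number/Fréchet-distance machinery of Lemmas~\ref{lemmaMaxcovBorder} and~\ref{lemmaMaxcovContinuous} for the boundary of $\maxcov^{-1}(\sigma)$ — but your unifying observation that $\inrad(X)=\sup_{p}\dist(p,X^\mr{c})$ is $1$-Lipschitz in $\dist_\mr{H}(X^\mr{c},\cdot)$ replaces two separate arguments with one. The paper instead runs a two-case $\eps$-estimate with concentric disks for $\vanrad$, and for $\inrad(\mc{M},\sigma)$ it simply asserts that the inradius of a region varies continuously when its boundary curve does, without further justification; your Lipschitz lemma in the complement is exactly the missing glue behind that assertion. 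You are also right that the distinction between the set and its complement cannot be waved away (your dense-open-subset example is the standard witness), which is a point the paper's terse sentence for $\inrad(\mc{M},\sigma)$ elides. The nonemptiness checks and the closing reduction of $\minbig$ and $\maxlit$ to a finite max/min of continuous functions match the paper exactly.
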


\begin{proof}
We start with $\vanrad(\mc{M})$, which will take most of the work.
Consider a convergent sequence $A_m \to A_\infty$, 
and suppose $r_m = \vanrad(\mc{M},A_m)$ does not converge to $r_\infty = \vanrad(\mc{M},A_\infty)$.
Since the range of $\vanrad(\mc{M})$ is compact, 
we may assume $r_m \to \widetilde r_\infty$. 

Consider the case where $\widetilde r_\infty \leq r_\infty -\eps$ with $\eps>0$. 
Then, $\van(\mc{M},A_\infty)$ contains a metric disk $D$ of radius $r_\infty$. 
Also, $\van(\mc{M},A_m)^\mr{c} \to \van(\mc{M},A_\infty)^\mr{c}$ in Hausdorff distance by Lemma \ref{lemmaMaxcovContinuous}, 
so $\van(\mc{M},A_m)^\mr{c} \subseteq \van(\mc{M},A_\infty)^\mr{c}\oplus(\eps/3)$ for all $m$ sufficiently large. 
Let $C$ be the disk concentric with $D$ of radius $r_\infty -(\eps/2)$.
Then, each point of $C$ is at least distance $(\eps/2)$ from any point in $\van(\mc{M},A_\infty)^\mr{c}$, 
whereas each point of $\van(\mc{M},A_m)^\mr{c}$ is within distance $(\eps/3)$ of some point in $\van(\mc{M},A_\infty)^\mr{c}$, so $C$ is disjoint from $\van(\mc{M},A_m)^\mr{c}$, 
so $C \subset \van(\mc{M},A_m)$, so $r_m \geq r_\infty -(\eps/2)$, 
so $\widetilde r_\infty \geq r_\infty -(\eps/2)$,
which is a contradiction. 

Consider the case where $\widetilde r_\infty \geq r_\infty +\eps$ with $\eps>0$. 
Then, $\van(\mc{M},A_m)$ contains a disk $D_m$ of radius $r_\infty +(\eps/2)$
for all $m$ sufficiently large.
Since $\sphere^2$ is compact, we may restrict to a subsequence where the center points of these disks converge to point, and this is the center point of a disk $D$ of radius $r_\infty+(\eps/3)$, which is contained in $\van(\mc{M},A_m)$ for $m$ sufficiently large. 
Again by Lemma \ref{lemmaMaxcovContinuous}, we have $\van(\mc{M},A_m)^\mr{c} \to \van(\mc{M},A_\infty)^\mr{c}$ in Hausdorff distance, 
so $\van(\mc{M},A_\infty)^\mr{c} \subseteq \van(\mc{M},A_m)^\mr{c}\oplus(\eps/6)$ for all $m$ sufficiently large. 
Let $C$ be a disk concentric with $D$ of radius $r_\infty+(\eps/9)$. 
Then, each point of $C$ is at least distance $(r_\infty+(\eps/3)) - (r_\infty+(\eps/9)) = (2\eps/9)$
from any point in $\van(\mc{M},A_m)^\mr{c}$, whereas each point of $\van(\mc{M},A_\infty)^\mr{c}$ is within distance $(\eps/9)$ of some point in $\van(\mc{M},A_m)^\mr{c}$, 
so $C$ is disjoint from $\van(\mc{M},A_\infty)^\mr{c}$, so $C\subset \van(\mc{M},A_\infty)$, 
so $r_\infty \geq r_\infty +(\eps/9)$, which is a contradiction. 
In each case we have a contradiction, so we must have $r_\infty = r_\infty$, 
and so $\vanrad(\mc{M})$ is continuous. 

Since $\partial\maxcov^{-1}(\sigma)$ varies continuously in Fréchet distance 
by Lemma \ref{lemmaMaxcovBorder}, 
the radius of the largest disk contained in the region bounded by the closed curve $\partial\maxcov^{-1}(\sigma)$ also varies continuously, 
so $\inrad(\mc{M},\sigma)$ is continuous. 
Also, weights $\wt_i$ vary continuously by definition of the metric on $\psv_{3,n}$. 
With this, $\minbig$ and $\maxlit$ are respectively a minimum and a maximum of a finite collection of continuous functions, and as such are continuous
\end{proof}

%\begin{lemma}
%\label{lemmaMBSign}
%$\minbig(\mc{M},A) \geq 0$ with `>' if and only if $A\in \upa(\mc{M})$. 
%\end{lemma}

%\begin{lemma}
%\label{lemmaMLSign}
%Given $A\in \upa(\mc{M})$, then 
%$\maxlit(\mc{M},A)\geq 0$ with `=' if and only if $A \in \psv(\mc{M})$.
%\end{lemma}

%\begin{proof}
%\end{proof}

\begin{proof}[Proof of Lemma \ref{lemmaHoodSymOpen}]
Consider $A \in \psv(\mc{M})$. Then, $\van(\mc{M},A)$ is a collection of pseudocircles, so $\vanrad(\mc{M},A)=0$, and loops of $\mc{M}$ are loops of $A$, so $\maxlit(\mc{M},A)=0$. 
Also, $\maxcov^{-1}(\sigma)$ is a Jordan domain for each facet covector $\sigma$, so $\inrad(\mc{M},\sigma,A)>0$, and nonloops of $\mc{M}$ are nonloops of $A$, so $\minbig(\mc{M},A)>0$. 
Hence, $A \in \hood(\mc{M})$, so $\psv(\mc{M})\subseteq \hood(\mc{M})$. 

We have $\upa(\mc{M}) \subset \psv_{3,n}$ is open by Lemma \ref{lemmaOmSemicontinuous}, 
and $\hood(\mc{M}) \subset \upa(\mc{M})$ is defined by a strict inequality between continuous functions by Lemma \ref{lemmaBigLittleContinuous}, so $\hood(\mc{M})$ is open. 

Also, $\van(\mc{M},QA)=Q\van(\mc{M},A)$ and $\maxcov^{-1}(\mc{M},QA;\sigma) = Q\maxcov^{-1}(\mc{M},A;\sigma)$ for $Q \in \orth_3$, 
so $\vanrad(\mc{M})$ and $\inrad(\mc{M},\sigma)$ are $\orth_3$-invariant, 
and weights of pseudocircles are also invariant, so $\minbig$ and $\maxlit$ are invariant, 
so $\hood(\mc{M})$ is $\orth_3$-symmetric. 
\end{proof}

%\clearpage

%\begin{lemma}
%$\maxcov^{-1}(\mc{M},A;\sigma)$ varies continuously in Hausdorff distance as $A$ varies. 
%\end{lemma}

%\begin{proof}

%Consider a convergent sequence $A_k \to A_\infty$, 
%and let $R_k = \maxcov^{-1}(\mc{M},A_k;\sigma)$. 

%%Let $p_k\in R_k$

%Let $\tau\geq\sigma$ be a facet covector of $\mc{M}$, 
%and $p\in \maxcov^{-1}(\mc{M},A_\infty;\tau)$. 
%Then, $\sign(A_k,p)\geq\sign(A_\infty,p)$ for $k$ sufficiently large by Lemma \ref{lemmaAimSemicontinuous}, 
%so $\maxcov(\mc{M},A_k,-p)=-\tau$, 
%so $-p$ is bounded away from $R_k$.  

%Consider $\eps>0$. 
%Since $\cl(R_\infty)$ is contractible by \ref{}, 
%$\sphere^2 \setminus \cl(R_\infty)$ is also contractible by Alexander duality, 
%so there is a topological disk $S \subset \sphere^2$ 
%that is disjoint from $\cl(R_\infty)$
%such that $R_\infty \subset S \subset R_\infty\oplus\eps$. 

%Also, $\partial R_k \subset S$ for $k$ sufficiently large, 
%and $p \not\in R_\infty\oplus\eps$ for $\eps$ sufficiently small,
%and $\sphere^2 \setminus S$ is path connected,
%so $R_k \subset S$. 
%Otherwise, if there were $q\in R_k \setminus S$, 
%then there would be a path from $p$ to $q$ in $\sphere^2 \setminus S$, 
%but that path would have to cross $\partial R_k\subset S$, which is a contradiction. 
%Hence, $R_k \subset R_\infty\oplus\eps$. 

%\bigskip 
%\hrule 
%\bigskip

%\end{proof}

%\bigskip 
%\hrule 
%\bigskip

%\bigskip 
%\hrule 
%\bigskip

\section{Crushing neighborhoods}
\label{sectionCrush}

\subsection{Ebb to Crush}
\label{subsectionEbbCrush}

Here we define %in Definition \ref{defCrush} 
the deformation retraction 
$\crush(\mc{C},\Omega)$ of Lemma \ref{lemmaCrush}
%from $\hood(\mc{C})\cap\upa(\Omega)$ to the orbit of $\Omega$   
and prove the lemma.
For this we use the deformation retraction $\crush(\Omega)$
of Theorem \ref{theoremCrushPsVM} from $\psv(\Omega)$ to the orbit of $\Omega$, 
and another deformation $\ebb_\mr{R}(\mc{C},r)$ in Definition \ref{defEbbR}, 
which will use another deformation  $\ebb_\mr{D}(\mc{C},\eps)$ in Definition \ref{defEbbD}. 
These deformations will make use of the following parameter. 

\begin{definition}[Little-big ratio]
\label{defLBR}
\[
\lbr(\mc{M},A) = \frac{\maxlit(\mc{M},A)}{\minbig(\mc{M},A)}
\]
\end{definition}
Note that $\lbr(\mc{M})$ tests membership in both $\hood(\mc{M})$ and $\psv(\mc{M})$ 
among $A\in\upa(\mc{M})$ 
as 
\begin{align*}
\hood(\mc{M}) = \{A\in\upa(\mc{M}):\lbr(\mc{M},A)<1\}, \\ 
\psv(\mc{M}) = \{A\in\upa(\mc{M}):\lbr(\mc{M},A)=0\}. 
\end{align*}

The properties of $\ebb_\mr{R}$ we use are as follows.

\begin{lemma}[Ratio ebb]
\label{lemmaEbbR}
Let $\mc{C} = \mc{C}_{L-1}\sqcup \{\mc{M}_L\} = \{\mc{M}_1<\dots<\mc{M}_{L-1} < \mc{M}_L\}$ be a nonempty chain of oriented matroids and $r \in [0,1)_\mb{R}$.
Then, $\phi = \ebb_\mr{R}(\mc{C},r)$ is a strong equivariant deformation retraction 
from $\hood(\mc{C}_{L-1})\cap\upa(\mc{M}_L)$ to $\hood(\mc{C}_{L-1})\cap\psv(\mc{M}_L)$ satisfying the following. 
\begin{enumerate}
\item 
\label{itemEbbContinuous}
$\phi(A,t)$ varies continuously as $(r,A,t)$ vary. 
\item 
\label{itemEbbOT}
$\ot(\phi(A,t)) = \ot(A)$ is unchanging for $t<1$. 
\item 
\label{itemEbbR}
If $\lbr(\mc{M}_L,A)\leq r$, then $\phi(A,t) \in \hood(\mc{M}_L)$ for all $t$. 
\end{enumerate}
\end{lemma}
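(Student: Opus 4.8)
\emph{Proof plan.}
The plan is to obtain $\ebb_\mr{R}(\mc{C},r)$ by running the deformation $\ebb_\mr{D}(\mc{C},\eps)$ of Definition~\ref{defEbbD} at a scale $\eps$ taken large enough that the whole van region $\van(\mc{M}_L,A)$ gets collapsed, so that $\phi(A,1)$ lies in $\psv(\mc{M}_L)$. Concretely I would set $\phi(A,t)=\ebb_\mr{D}(\mc{C},\eps(\mc{C},A,r))(A,t)$ for a scale $\eps(\mc{C},A,r)$ that depends continuously on $(\mc{C},A,r)$ and is built from the continuous parameters $\maxlit(\mc{M}_L,A)$, $\minbig(\mc{M}_L,A)$ of Lemma~\ref{lemmaBigLittleContinuous} so as to dominate every radius occurring inside $\van(\mc{M}_L,A)$. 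The work then splits into establishing the properties of $\ebb_\mr{D}(\mc{C},\eps)$ I would isolate in a separate lemma --- that it is an $\orth_3$-equivariant deformation of $\hood(\mc{C}_{L-1})\cap\upa(\mc{M}_L)$, varying continuously in $(\eps,A,t)$, freezing the order type for $t<1$, collapsing all of $\van(\mc{M}_L,\cdot)$ at $t=1$ once $\eps$ exceeds a bound computable from $\maxlit$ and $\minbig$, and supported near $\van(\mc{M}_L,\cdot)$ in the sense that it leaves the big cells $\maxcov^{-1}(\mc{M}_L,\cdot,\sigma)$ unshrunk and creates no degeneracy outside $\van(\mc{M}_L,\cdot)$ --- and then reading off Lemma~\ref{lemmaEbbR} from those properties. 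The ratio $r$ enters $\eps(\mc{C},A,r)$ only as a conservative margin used for item~\ref{itemEbbR}.

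Granting those properties, the four assertions follow fairly directly. That $\phi(A,\cdot)$ is a homotopy with $\phi(A,0)=A$ and $\phi(A,1)\in\hood(\mc{C}_{L-1})\cap\psv(\mc{M}_L)$ is immediate from the endpoint behavior of $\ebb_\mr{D}$ and the saturating choice of $\eps$; that it stays inside $\hood(\mc{C}_{L-1})\cap\upa(\mc{M}_L)$ for all $t$ uses that $\ebb_\mr{D}$ creates no degeneracy outside $\van(\mc{M}_L,\cdot)$, and, for the $\hood(\mc{C}_{L-1})$ part, that $\van(\mc{M}_i,\cdot)\supseteq\van(\mc{M}_L,\cdot)$ for each $\mc{M}_i$ in the chain, so crushing a feature of $\mc{M}_L$ only shrinks a sub-part of a feature of $\mc{M}_i$ while its big cells are left alone --- this is the precise form of the ``magnification'' picture and I would record it as a small lemma before Definition~\ref{defEbbR}. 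It is a \emph{strong} deformation retraction because $A\in\hood(\mc{C}_{L-1})\cap\psv(\mc{M}_L)$ has $\van(\mc{M}_L,A)$ one--dimensional and $\mc{M}_L$ loop-free, so $\maxlit(\mc{M}_L,A)=\lbr(\mc{M}_L,A)=0$, the scale $\eps(\mc{C},A,r)$ degenerates, and $\phi(A,t)=A$ for all $t$. Equivariance is inherited from that of $\ebb_\mr{D}$ together with the $\orth_3$-invariance of $\van$, $\maxlit$, $\minbig$ from the proof of Lemma~\ref{lemmaHoodSymOpen}. Item~\ref{itemEbbContinuous} reduces to joint continuity of $\eps(\mc{C},A,r)$ (from Lemma~\ref{lemmaBigLittleContinuous} and its explicit formula) composed with continuity of $\ebb_\mr{D}$ in $(\eps,A,t)$, via the partial-map continuity results of Subsection~\ref{subsectionPartialMapMetric}. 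Item~\ref{itemEbbOT} is the order-type freezing property of $\ebb_\mr{D}$. For item~\ref{itemEbbR}, one checks that when $\lbr(\mc{M}_L,A)\le r$ the deformation keeps $\maxlit(\mc{M}_L,\phi(A,t))$ below $\minbig(\mc{M}_L,\phi(A,t))$ throughout --- crushing only removes little features of $\mc{M}_L$, while the big cells $\maxcov^{-1}(\mc{M}_L,\cdot,\sigma)$ are at worst slightly perturbed by an amount controlled by $r$ --- so $\phi(A,t)\in\hood(\mc{M}_L)$.

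The step I expect to be the main obstacle is continuity of $\phi$ at arrangements $A$ whose combinatorial type $\om(A)$ --- and hence the zone decomposition underlying $\ebb_\mr{D}$ and the shape of $\van(\mc{M}_L,A)$ --- is not locally constant: as $A$ varies, zones and components of the van region can appear, disappear, or fuse, so one must show the crushing construction and the chosen scale glue continuously across these strata, leaning on lower semicontinuity of $\om$ (Lemma~\ref{lemmaOmSemicontinuous}) and on continuity of the border curves (Lemma~\ref{lemmaMaxcovBorder}) to control how the relevant regions and their inradii converge. In effect essentially all of the genuine difficulty of Lemma~\ref{lemmaEbbR} is pushed into the construction and continuity of $\ebb_\mr{D}$, which is why I would treat that as the substantive lemma and keep the present argument a matter of bookkeeping with the continuous parameters of Section~\ref{sectionOpen}.
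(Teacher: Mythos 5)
Your plan inverts the role of the parameter $\eps$ in $\ebb_\mr{D}$, and this is not a cosmetic mismatch: it is the load-bearing gap. In the paper, $\ebb_\mr{D}(\mc{C},\eps)$ is \emph{already} a strong equivariant deformation retraction from $\upa(\mc{M}_L)$ to $\psv(\mc{M}_L)$ for every $\eps>0$ (Lemma \ref{lemmaEbbD}), so the collapse of $\van(\mc{M}_L,\cdot)$ at $t=1$ happens automatically and needs no largeness of $\eps$ at all --- indeed Definition \ref{defEbbD} caps $\eps_\mr{Z}$ at $\maxlit(\mc{M}_L,A)$, so increasing $\eps$ beyond that has no effect whatsoever. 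What $\eps$ actually controls is the size of the perturbation along the way: parts \ref{itemEbbDMinbig} and \ref{itemEbbDMaxlit} of Lemma \ref{lemmaEbbD} only guarantee $\minbig(\mc{M}_k,\phi(A,t))\geq\minbig(\mc{M}_k,A)-\eps$ and $\maxlit(\mc{M}_k,\phi(A,t))\leq\maxlit(\mc{M}_k,A)+\eps$. Your assumed property that $\ebb_\mr{D}$ ``leaves the big cells unshrunk and creates no degeneracy outside $\van(\mc{M}_L,\cdot)$'' is not what the paper proves and is not achievable: crushing van features of $\mc{M}_L$ necessarily moves the pseudocircles that bound the big $\maxcov^{-1}(\mc{M}_k,\cdot,\sigma)$ cells of the lower $\mc{M}_k$, so both $\minbig$ and $\maxlit$ really do get perturbed, and the van region of each $\mc{M}_k$ really can grow (this is exactly Claim \ref{claimEbbDVanGrow}).

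The correct direction, and the one Definition \ref{defEbbR} takes, is to choose $\eps$ \emph{small}: $\eps\leq\tfrac13(M_k-m_k)$ for $k<L$ so the $\pm\eps$ perturbation cannot flip the inequality $\maxlit(\mc{M}_k,\cdot)<\minbig(\mc{M}_k,\cdot)$ defining $\hood(\mc{M}_k)$, and $\eps\leq\tfrac13(1-r)M_L$ so that, under the hypothesis $\lbr(\mc{M}_L,A)\leq r$, the ratio $\tfrac{m_L+\eps}{M_L-\eps}$ stays below $1$. This is also the reason the hypothesis $\lbr(\mc{M}_L,A)\leq r$ is present in part \ref{itemEbbR} at all: you need the margin $M_L-m_L\geq(1-r)M_L$ to absorb the $\pm\eps$ slop. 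Under your assumed (monotone, perturbation-free) model of $\ebb_\mr{D}$, that conclusion would hold unconditionally, which is a warning sign that the assumed model is too strong. Your observation that $\van(\mc{M}_k,A)\supseteq\van(\mc{M}_L,A)$ for $\mc{M}_k<\mc{M}_L$ is correct as far as it goes, but it does not by itself control $\maxlit(\mc{M}_k,\cdot)$ through the deformation, because the van region of $\mc{M}_k$ can acquire new inscribed disks as pseudocircles move. Finally, the strong-retraction claim is reached more directly than your argument: it is inherited wholesale from the strong retraction property of $\ebb_\mr{D}$ on $\psv(\mc{M}_L)$, without needing to argue that the scale ``degenerates''; and note that $\mc{M}_L$ is not assumed loop-free --- what makes $\maxlit(\mc{M}_L,A)=0$ on $\psv(\mc{M}_L)$ is that loops there have weight $0$ and $\van(\mc{M}_L,A)$ is a union of pseudocircles, not that loops are absent.
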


\begin{definition}[Crush]
\label{defCrush}
Here we globally define the map   
$\crush(\mc{C},\Omega) : X \times [0,1]_\mb{R} \to X$  
on $X = \hood(\mc{C})\cap\upa(\Omega)$. 
%which we locally denote by $A_t = $
Let $A_t = \crush(\mc{C},\Omega;A,t)$ locally within this subsection. 

We define crush recursively on the length of $\mc{C}$. 
Given $A \in \hood(\mc{C})$, 
let 
\[
\crush(\emptyset,\Omega;A,t) = A_t = [\crush(\Omega)\cdot\ebb_\mr{R}(\om(\Omega),0)](A,t)
\] 
be the deformation in the case where the chain is empty, 
%, and denote this arrangement by $A_{0,t}$.
and for the recursive step with $\mc{C} = \{\mc{M}_1,\dots,\mc{M}_{L}\}$ nonempty, let  
\begin{align*}
\crush(\mc{C},\Omega;A,t)
= A_t 
&=\ebb_\mr{R}(\mc{C},r;A_{\mr{rec},t},s_t)
\intertext{where}
A_{\mr{rec},t}
&=\crush(\mc{C}_{L-1},\Omega;A,t), \\
\mc{C}_{L-1}
&=\{\mc{M}_1,\dots,\mc{M}_{L-1}\}, \\
r 
&=\max\{\lbr(\mc{M}_L;A_\mr{x}) : A_\mr{x} \in \{A,\Omega\}\},
\end{align*}
%be denoted by $A_{k,t}$
%where $\mc{C}_k = \{\mc{M}_1,\dots,\mc{M}_k\}$, and where 
%$r_k = r_k(A_0)$ is the maximum of $\lbr(\mc{M}_k;A_0)$ and $\lbr(\mc{M}_k;\Omega)$, 
and where $s_t = s_t(A)$ is a stopping time defined as follows.
We want $s_t$ to be large enough to guarantee that $\ebb_\mr{R}(\mc{C},r;A_{\mr{rec},t},s_t) \in \hood(\mc{M}_L)$, and $s_t$ should depend continuously on $A$ and $t$. 
To this end, we should use some parameter that tells us when the arrangement is in $\hood(\mc{M}_L)$ and we can stop applying ebb.  The little-big ratio is such a parameter, but while ebb will eventually reduce the little-big ratio to a sufficiently small value, we would rather have a parameter that is strictly decreasing as a function of stopping time so that our stopping time will be well defined.  
Instead, we will define a strictly decreasing upper bound $b_t$ for the little-big ratio. 
Formally, let 
\begin{align*}
s_t 
&= \ext(b_t^{-1},y_t) = 
\begin{cases}
b_t^{-1}(y_t) & b_t(0) > y_t \\
0 & \text{otherwise} 
\end{cases} \\ 
\intertext{where}  
b_t(x) 
&= \tailsup(b_\mr{test}(A_{\mr{rec},t}),x) +c(1-x) \\
b_\mr{test}(A_\mr{x};x) 
&= \lbr(\mc{M}_L,\ebb_\mr{R}(\mc{C},r;A_\mr{x},x)) \\ 
y_t 
&= tb_1(0) +(1-t)b_0(0). \\ 
c 
%&= c(A_0) 
&= (\nicefrac12)(1- \sup\{b_\mr{test}(A_\mr{x},x) : A_\mr{x} \in \{A,\Omega\}, x\in[0,1]_\mb{R} \}) 
\end{align*}
\end{definition}

We will show in Claim \ref{claimCrushBDecreasing} that the two expressions for $s_t$ are equivalent and that $s_t$ is well defined.

\begin{lemma}
\label{lemmaLBR}
Given $A\in \upa(\mc{M})$, then 
$\minbig(\mc{M},A) > 0$ and 
$\maxlit(\mc{M},A)\geq 0$ with `=' if and only if $A \in \psv(\mc{M})$.
Hence, 
$\lbr(\mc{M})$ is well defined, nonnegative, and continuous on $\upa(\mc{M})$, 
and $\lbr(\mc{M},A)=0$ if and only if $A\in\psv(\mc{M})$. 
\end{lemma}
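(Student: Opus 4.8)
The plan is to establish the two displayed assertions --- $\minbig(\mc M,A)>0$, and $\maxlit(\mc M,A)\geq 0$ with equality precisely when $A\in\psv(\mc M)$ --- and then deduce the ``Hence'' clause formally, using as black boxes only the continuity of $\minbig(\mc M)$ and $\maxlit(\mc M)$ from Lemma~\ref{lemmaBigLittleContinuous} and the inequality of Lemma~\ref{lemmaBasisBLIneq}.

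For $\minbig(\mc M,A)>0$: recall that $\minbig(\mc M,A)$ is the minimum of two finite families, the weights $\wt_i(A)$ over nonloops $i$ of $\mc M$ and the inradii $\inrad(\mc M,\sigma,A)$ over facet covectors $\sigma$ of $\mc M$, so I would show every member is positive. Since $A\geq_\mr w\mc M$, the bases of $\mc M$ lie among the bases of $\om(A)$, hence each nonloop of $\mc M$ is a nonloop of $\om(A)$ and so carries positive weight in $A$ by the definition of a weighted pseudocircle arrangement. For the inradii, the crux is that $\maxcov^{-1}(\mc M,A,\sigma)$ contains a nonempty open $2$-cell of $A$: as $\maxcov(\mc M,\om(A))$ is surjective there is $\tau\in\cov(\om(A))$ with $\max(\cov(\mc M)_{\leq\tau})=\sigma$, and choosing a tope $\rho$ of $\om(A)$ with $\rho\geq_\mr v\tau$ and using that the facet covector $\sigma$ is maximal in $\cov(\mc M)$ forces $\max(\cov(\mc M)_{\leq\rho})=\sigma$ as well; therefore $\cell(A,\rho)\subseteq\maxcov^{-1}(\mc M,A,\sigma)$, and a nonempty open set contains an open metric disk, so the inradius is positive. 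Thus $\minbig(\mc M,A)$ is a minimum of finitely many positive numbers.

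For $\maxlit$: nonnegativity is clear, being a maximum of inradii and weights. If $A\in\psv(\mc M)$ then $\om(A)=\mc M$, so $\maxcov(\mc M,A,x)=\sign(A,x)$ for all $x$ and $\van(\mc M,A)=\bigcup_{i\in\supp(\mc M)}S_i(A)$, a union of pseudocircles and so of empty interior, which gives $\vanrad(\mc M,A)=0$; moreover loops of $\mc M=\om(A)$ are loops of $A$ and carry weight $0$; hence $\maxlit(\mc M,A)=0$, as already noted in the proof of Lemma~\ref{lemmaHoodSymOpen}. Conversely, if $A\notin\psv(\mc M)$ then $\mc M<_\mr w\om(A)$ strictly, so, the bases of $\mc M$ forming a proper subfamily of those of $\om(A)$, some triple is a basis of $\om(A)$ but not of $\mc M$; since $A\in\upa(\mc M)\cap\upa(\om(A))$, Lemma~\ref{lemmaBasisBLIneq} yields $\minbig(\om(A),A)\leq\maxlit(\mc M,A)$, and $\minbig(\om(A),A)>0$ by the previous paragraph applied with $\om(A)$ in place of $\mc M$; hence $\maxlit(\mc M,A)>0$.

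Finally, $\minbig(\mc M,A)>0$ makes $\lbr(\mc M,A)=\maxlit(\mc M,A)/\minbig(\mc M,A)$ a well-defined real number, which is nonnegative (numerator $\geq 0$, denominator $>0$), continuous on $\upa(\mc M)$ as a quotient of the continuous functions of Lemma~\ref{lemmaBigLittleContinuous} with nonvanishing denominator, and zero exactly when $\maxlit(\mc M,A)=0$, i.e.\ exactly when $A\in\psv(\mc M)$. I expect the one genuinely delicate point to be the claim that $\maxcov^{-1}(\mc M,A,\sigma)$ is $2$-dimensional rather than merely nonempty; this is precisely where surjectivity of $\maxcov$ and the maximality of facet covectors are used, and everything else is bookkeeping with the weak order.
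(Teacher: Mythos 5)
Your proof is correct and follows the same three-part structure as the paper's: positivity of $\minbig$ from positive weights of nonloops and positive inradius of $\maxcov^{-1}(\sigma)$, the equivalence $\maxlit=0\Leftrightarrow A\in\psv(\mc M)$ via the empty-interior observation for $\van(\mc M,A)$ in one direction and Lemma~\ref{lemmaBasisBLIneq} in the other, and then the ``Hence'' clause as a formal consequence. The only difference is that you spell out in more detail why $\maxcov^{-1}(\mc M,A,\sigma)$ is nonempty and open (via surjectivity of $\maxcov$ and passing to a tope $\rho\geq_\mr v\tau$) where the paper simply asserts it; your detour through the tope is correct but slightly more than needed, since once $\sigma\leq_\mr v\tau$ for some $\tau\in\csph(\om(A))$, the cell of $\tau$ already lies in the open set $\maxcov^{-1}(\sigma)$.
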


%\begin{lemma}
%\label{lemmaMBMLSign}
%Given $A\in \upa(\mc{M})$, then 
%$\minbig(\mc{M},A) > 0$ and 
%$\maxlit(\mc{M},A)\geq 0$ with `=' if and only if $A \in \psv(\mc{M})$.
%\end{lemma}

\begin{proof}
Both $\minbig$ and $\maxlit$ are defined as extrema of weights of pseudocircles and radii of disks, so both are nonnegative. 

If $A\in \upa(\mc{M})$, then each pseudocircle of $A$ corresponding to a nonloop of $\mc{M}$ has positive weight, 
and $\maxcov^{-1}(\mc{M},A,\sigma)$ is a nonempty open region for each facet covector $\sigma$, 
and as such contains a disk of positive radius, 
so $\minbig(\mc{M},A) > 0$ in the case where $A \in \upa(\mc{M})$. 

%Alternatively, if $A\not\in \upa(\mc{M})$, then 
%there is some $\sigma \cov(\mc{M})$ such that $A$ has no covector above $\sigma$. 
%We may assume $\sigma$ is a facet covector; otherwise choose a facet covector above $\sigma$ instead.  Then, $\maxcov^{-1}(\mc{M},A,\sigma)$ is empty, so $\inrad(\mc{M},A,\sigma)=0$, 
%so $\minbig(\mc{M},A)=0$ in the case where $A\not\in \upa(\mc{M})$. 

If $A\in \psv(\mc{M})$, then each pseudocircle of $A$ corresponding to a loop of $\mc{M}$ has weight 0, 
and each facet covector of $A$ is a facet covector of $\mc{M}$, 
so $\van(\mc{M},A)$ is a finite collection of pseudocircles, 
so $\van(\mc{M},A)$ does not contain a disk of positive radius, 
so $\maxlit(\mc{M},A) = 0$ in the case where $A \in \psv(\mc{M})$.

If $A\in \upa(\mc{M})\setminus\psv(\mc{M})$, then
$\om(A) >_\mr{w} \mc{M}$, so 
some triple $I \in {[n]\choose 3}$ is a basis for $A$ but not for $\mc{M}$, 
so 
by Lemma \ref{lemmaBasisBLIneq}, 
$0<\minbig(\om(A),A)\leq\maxlit(\mc{M},A)$ 
in this case. 

For the last part, 
$\lbr(\mc{M},A)$ is well-defined 
since $\minbig(\mc{M},A) > 0$, 
and $\lbr(\mc{M},A)=0$ if and only if $A\in\psv(\mc{M})$ since the same holds for $\maxlit(\mc{M},A)$. 
Also, $\lbr(\mc{M})$ is continuous since $\minbig(\mc{M})$ and $\maxlit(\mc{M})$ are continuous by Lemma \ref{lemmaBigLittleContinuous}. 
\end{proof}

%\begin{proof}[Proof of Lemma \ref{lemmaLBR}]
%For $A \in \upa(\mc{M})$, 
%we have $\minbig(\mc{M},A) > 0$, so $\lbr(\mc{M},A)$ is well-defined, 
%and $\lbr(\mc{M},A)=0$ if and only if $A\in\psv(\mc{M})$ by Lemma \ref{lemmaMBMLSign}. 
%Also, $\lbr(\mc{M})$ is continuous since $\minbig(\mc{M})$ and $\maxlit(\mc{M})$ are continuous by Lemma \ref{lemmaBigLittleContinuous}. 
%\end{proof}

%Let $A_t = \crush(\mc{C},\Omega;A,t)$ locally within this subsection. 

\begin{claim}
\label{claimCrushZero}
$s_0= s_1 = 0$ 
and $A_0 = A_{\mr{rec},0} = A$.
\end{claim}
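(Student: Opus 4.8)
The plan is to induct on the length $L$ of the chain $\mc{C}$, mirroring the recursive definition of $\crush$ in Definition \ref{defCrush}. In the base case $L=0$ the chain is empty and only the assertion $A_0=A$ is at stake, since $A_{\mr{rec}}$ and $s_t$ do not appear; here $A_t=[\crush(\Omega)\cdot\ebb_\mr{R}(\om(\Omega),0)](A,t)$, and each factor is a deformation retraction ($\crush(\Omega)$ by Theorem \ref{theoremCrushPsVM}, $\ebb_\mr{R}(\om(\Omega),0)$ by Lemma \ref{lemmaEbbR}), hence each is the identity at time $0$, so their concatenation is the identity at time $0$ and $A_0=A$.

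For the inductive step with $L\geq 1$ I would first dispatch the stopping-time part by plugging $t\in\{0,1\}$ into the definition $y_t=tb_1(0)+(1-t)b_0(0)$, which gives $y_0=b_0(0)$ and $y_1=b_1(0)$. In each case the guard $b_t(0)>y_t$ in the piecewise formula for $s_t$ fails (it reads $b_0(0)>b_0(0)$, resp. $b_1(0)>b_1(0)$), so $s_0=s_1=0$ directly; I would read $s_t$ off this piecewise formula rather than off $\ext(b_t^{-1},y_t)$, so as not to depend on Claim \ref{claimCrushBDecreasing}, where the two expressions are reconciled. Next, the induction hypothesis applied to the shorter chain $\mc{C}_{L-1}$ gives $A_{\mr{rec},0}=\crush(\mc{C}_{L-1},\Omega;A,0)=A$. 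Combining $A_{\mr{rec},0}=A$ with $s_0=0$ yields $A_0=\ebb_\mr{R}(\mc{C},r;A_{\mr{rec},0},s_0)=\ebb_\mr{R}(\mc{C},r;A,0)$, and this equals $A$ because $\ebb_\mr{R}(\mc{C},r)$ is a deformation retraction of $\hood(\mc{C}_{L-1})\cap\upa(\mc{M}_L)$ by Lemma \ref{lemmaEbbR} and hence is the identity at time $0$; here one uses that $A\in\hood(\mc{C})\cap\upa(\Omega)\subseteq\hood(\mc{C}_{L-1})\cap\upa(\mc{M}_L)$, which is immediate from $\hood(\mc{C})=\bigcap_k\hood(\mc{M}_k)$ together with $\hood(\mc{M}_L)\subseteq\upa(\mc{M}_L)$. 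This gives $A_0=A_{\mr{rec},0}=A$, completing the induction.

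I do not expect a genuine obstacle: the statement is essentially an unwinding of the recursion at the endpoints $t=0$ and $t=1$. The only points that want a moment of care are (i) confirming that $A$, hence $A_{\mr{rec},0}$, lies in the domain on which $\ebb_\mr{R}(\mc{C},r)$ is asserted to be a deformation retraction, so that ``identity at time $0$'' legitimately applies, and (ii) taking $s_t$ from the piecewise formula rather than the $\ext$ expression, so the argument does not circularly invoke Claim \ref{claimCrushBDecreasing}.
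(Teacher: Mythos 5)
Your proof is correct and follows the same route as the paper's: plug $t\in\{0,1\}$ into $y_t$ to see the guard $b_t(0)>y_t$ fails so $s_0=s_1=0$, then apply the induction hypothesis for $A_{\mr{rec},0}=A$ and the deformation-retraction property of $\ebb_\mr{R}$ at time $0$. The extra care you take — reading $s_t$ off the piecewise branch rather than via $\ext(b_t^{-1},\cdot)$ to avoid leaning on Claim \ref{claimCrushBDecreasing}, and checking that $A\in\hood(\mc{C}_{L-1})\cap\upa(\mc{M}_L)$ so the retraction's domain is respected — is sound and matches what the paper implicitly relies on.
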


\begin{proof}
We have $y_0 = b_0(0)$ and $y_1 = b_1(0)$ so $s_0= s_1 = 0$ by definition.  
Hence,  
\[
A_0  
=\ebb_\mr{R}(\mc{C},r;A_{\mr{rec},0},0)
=A_{\mr{rec},0} 
\]
since $\ebb_\mr{R}(\mc{C},r)$ is a deformation retraction by \ref{lemmaEbbR}, 
and 
\[
A_{\mr{rec},0}
= \crush(\mc{C}_{L-1},\Omega;A,0) = A 
\]
since $\crush(\mc{C}_{L-1},\Omega)$ is a deformation retraction by induction on $L$.
\end{proof}

\begin{claim}
\label{claimCrushCPositive}
$c > 0$ and $0 < y_t<1$. 
\end{claim}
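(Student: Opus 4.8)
The plan is to derive both assertions from the single estimate
\[
S \;:=\; \sup\{\, b_\mr{test}(A_\mr{x},x) : A_\mr{x}\in\{A,\Omega\},\ x\in[0,1]_\mb{R}\,\} \;<\; 1,
\]
after which $c=\tfrac12(1-S)>0$ is immediate from the definition of $c$.

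To prove $S<1$, I would first record that $A$ and $\Omega$ both lie in the domain $\hood(\mc{C}_{L-1})\cap\upa(\mc{M}_L)$ of $\ebb_\mr{R}(\mc{C},r)$: for $A$ because $A\in\hood(\mc{C})\subseteq\hood(\mc{C}_{L-1})\cap\hood(\mc{M}_L)\subseteq\hood(\mc{C}_{L-1})\cap\upa(\mc{M}_L)$, and for $\Omega$ because the inductive hypothesis --- Lemma \ref{lemmaCrush} applied to the strictly shorter chain $\mc{C}_{L-1}$ --- puts $\Omega$ in the retract $\orth_3\cdot\Omega\subseteq\hood(\mc{C}_{L-1})\cap\upa(\Omega)\subseteq\hood(\mc{C}_{L-1})\cap\upa(\mc{M}_L)$. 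By the very definition $r=\max\{\lbr(\mc{M}_L,A),\lbr(\mc{M}_L,\Omega)\}$ we have $\lbr(\mc{M}_L,A_\mr{x})\le r$ for each $A_\mr{x}\in\{A,\Omega\}$, so Lemma \ref{lemmaEbbR} part \ref{itemEbbR} gives $\ebb_\mr{R}(\mc{C},r;A_\mr{x},x)\in\hood(\mc{M}_L)$ for every $x\in[0,1]_\mb{R}$, and hence $b_\mr{test}(A_\mr{x},x)=\lbr(\mc{M}_L,\ebb_\mr{R}(\mc{C},r;A_\mr{x},x))<1$ by the description of $\hood(\mc{M}_L)$ following Definition \ref{defLBR}. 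Now $x\mapsto b_\mr{test}(A_\mr{x},x)$ is continuous on the compact interval $[0,1]_\mb{R}$, since $\ebb_\mr{R}(\mc{C},r;A_\mr{x},\cdot)$ varies continuously by Lemma \ref{lemmaEbbR} part \ref{itemEbbContinuous} and $\lbr(\mc{M}_L)$ is continuous by Lemma \ref{lemmaLBR}, so for each of the two choices of $A_\mr{x}$ the function $b_\mr{test}(A_\mr{x},\cdot)$ attains its supremum on $[0,1]_\mb{R}$ at a value $<1$; taking the larger of the two shows $S<1$, and hence $c>0$.

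For $0<y_t<1$, I would use that $y_t=tb_1(0)+(1-t)b_0(0)$ is, for $t\in[0,1]_\mb{R}$, a convex combination of $b_0(0)$ and $b_1(0)$, so it suffices to place both of these in $(0,1)$. From the definition, $b_t(0)=\tailsup(b_\mr{test}(A_{\mr{rec},t}),0)+c=\sup_{x\in[0,1]_\mb{R}}b_\mr{test}(A_{\mr{rec},t},x)+c$, which is at least $c>0$ since $b_\mr{test}$ is a value of the nonnegative function $\lbr(\mc{M}_L)$ (Lemma \ref{lemmaLBR}). For the upper bound, Claim \ref{claimCrushZero} gives $A_{\mr{rec},0}=A$, so $b_0(0)=\sup_x b_\mr{test}(A,x)+c\le S+c=\tfrac{1+S}{2}<1$; and the inductive hypothesis gives $A_{\mr{rec},1}=\crush(\mc{C}_{L-1},\Omega;A,1)\in\orth_3\cdot\Omega$, so, since $\ebb_\mr{R}$ is $\orth_3$-equivariant (Lemma \ref{lemmaEbbR}) and $\lbr(\mc{M}_L)$ is $\orth_3$-invariant (it is built from the invariant quantities $\minbig$ and $\maxlit$, as in the proof of Lemma \ref{lemmaHoodSymOpen}), $b_\mr{test}(A_{\mr{rec},1},x)=b_\mr{test}(\Omega,x)$ and therefore $b_1(0)=\sup_x b_\mr{test}(\Omega,x)+c\le S+c<1$. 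Thus $b_0(0),b_1(0)\in(0,1)$, whence $0<y_t<1$.

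The arithmetic here is routine; the part that takes the most care is the bookkeeping --- checking that $A$ and $\Omega$ lie where $\ebb_\mr{R}(\mc{C},r)$ and Lemma \ref{lemmaEbbR} part \ref{itemEbbR} apply, which is where the induction on $|\mc{C}|$ enters --- and, for $t=1$, recognizing that $\orth_3$-equivariance of $\ebb_\mr{R}$ together with $\orth_3$-invariance of $\lbr$ lets one replace $A_{\mr{rec},1}$ by $\Omega$, so that no estimate beyond $S<1$ is needed.
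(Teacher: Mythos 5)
Your proof is correct and follows essentially the same route as the paper: bound the supremum $S$ of $b_\mr{test}$ strictly below $1$ via Lemma \ref{lemmaEbbR} part \ref{itemEbbR}, read off $c>0$, and then bound $b_0(0)$ and $b_1(0)$ in $(0,1)$ using $A_{\mr{rec},0}=A$ and the inductive control of $A_{\mr{rec},1}$. One small point in your favor: the paper's proof asserts ``$A_{\mr{rec},1}=\Omega$ by induction,'' whereas the inductive hypothesis only gives $A_{\mr{rec},1}\in\orth_3\cdot\Omega$; your use of the $\orth_3$-equivariance of $\ebb_\mr{R}$ together with the $\orth_3$-invariance of $\lbr(\mc{M}_L)$ is the correct way to reduce to $b_\mr{test}(\Omega,\cdot)$, so your version is slightly more careful on this detail.
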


\begin{proof}
We have $r \geq \lbr(\mc{M}_L,A_\mr{x})$ for $A_\mr{x} \in \{A,\Omega\}$ by definition, 
so $\ebb_\mr{R}(\mc{C},r;A_\mr{x},x) \in \hood(\mc{M}_L)$ by Lemma \ref{lemmaEbbR} part \ref{itemEbbR}, 
so $b_\mr{test}(A_\mr{x},x) < 1$ for each $x \in [0,1]$ by definition of hood, 
so $\sup\{b_\mr{test}(A_\mr{x},x) : x \in [0,1]\} < 1$ since $x$ is in a compact domain, 
so $c>0$. 

Also, $A_{\mr{rec},0} = A$ by Claim \ref{claimCrushZero}, so 
\begin{align*}
f_0(0) 
&= \sup\{b_\mr{test}(A,x) : x \in [0,1]\} +c 
\leq (\nicefrac12)(1+\sup\{b_\mr{test}(A,x) : x \in [0,1]\}) 
<1.
\end{align*} 
Similarly $b_1(0) < 1$ since $A_{\mr{rec},1} = \Omega$ by induction 
using Lemma \ref{lemmaCrush}.
Hence, $y_t < 1$. 
Also, $\lbr$ is always nonnegative, so $b_\mr{test}$ is nonnegative, so $b_t(0)>0$ since $c>0$, 
so $y_t>0$. 
\end{proof}

\begin{claim}
\label{claimCrushBDecreasing}
$b_t$ is strictly decreasing 
and $b_t(1) = 0$.   
Hence, $s_t<1$ is well defined. 
\end{claim}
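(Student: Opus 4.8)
The plan is to verify, in order, that $b_t$ is a continuous strictly decreasing function of $x$ with $b_t(1)=0$, and then to read off well-definedness of $s_t$ together with the bound $s_t<1$ using Claims \ref{claimCrushZero} and \ref{claimCrushCPositive}.

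First I would check that $x\mapsto b_t(x)$ is defined and continuous on $[0,1]_\mb{R}$. By the inductive hypothesis (Lemma \ref{lemmaCrush} for $\mc{C}_{L-1}$), $A_{\mr{rec},t}=\crush(\mc{C}_{L-1},\Omega;A,t)$ lies in $\hood(\mc{C}_{L-1})$; it also lies in $\upa(\mc{M}_L)$, since $\crush(\mc{C}_{L-1},\Omega)$ is assembled from $\ebb_\mr{R}$ and $\crush(\Omega)$, which preserve the oriented matroid for $t<1$ by Lemma \ref{lemmaEbbR}(\ref{itemEbbOT}) and Theorem \ref{theoremCrushPsVM}, so $\om(A_{\mr{rec},t})=\om(A)\geq_\mr{w}\mc{M}_L$ for $t<1$, while $A_{\mr{rec},1}=\Omega\in\upa(\mc{M}_L)$. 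Hence $\ebb_\mr{R}(\mc{C},r;A_{\mr{rec},t},\cdot)$ is defined and stays in $\hood(\mc{C}_{L-1})\cap\upa(\mc{M}_L)$ throughout by Lemma \ref{lemmaEbbR}, where $\lbr(\mc{M}_L,\cdot)$ is continuous by Lemma \ref{lemmaLBR}; combined with Lemma \ref{lemmaEbbR}(\ref{itemEbbContinuous}) this shows $x\mapsto b_\mr{test}(A_{\mr{rec},t};x)$ is continuous on the compact interval $[0,1]_\mb{R}$. Consequently $x\mapsto\tailsup(b_\mr{test}(A_{\mr{rec},t}),x)$ is continuous (the running supremum of a continuous function on a compact interval is continuous, by uniform continuity), and adding the continuous affine term $c(1-x)$ shows $b_t$ is continuous in $x$.

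For strict monotonicity, note that $\tailsup(b_\mr{test}(A_{\mr{rec},t}),x)=\sup\{b_\mr{test}(A_{\mr{rec},t};u):u\in[x,1]\}$ is nonincreasing in $x$ because $[x,1]$ shrinks as $x$ grows, while $c(1-x)$ is strictly decreasing since $c>0$ by Claim \ref{claimCrushCPositive}; the sum of a nonincreasing and a strictly decreasing function is strictly decreasing. For the value at $x=1$, we have $\tailsup(b_\mr{test}(A_{\mr{rec},t}),1)=b_\mr{test}(A_{\mr{rec},t};1)=\lbr(\mc{M}_L,\ebb_\mr{R}(\mc{C},r;A_{\mr{rec},t},1))$, and $\ebb_\mr{R}(\mc{C},r)$ retracts onto $\hood(\mc{C}_{L-1})\cap\psv(\mc{M}_L)$ by Lemma \ref{lemmaEbbR}, so $\ebb_\mr{R}(\mc{C},r;A_{\mr{rec},t},1)\in\psv(\mc{M}_L)$, whence $\lbr(\mc{M}_L,\cdot)=0$ by Lemma \ref{lemmaLBR}; together with $c(1-1)=0$ this gives $b_t(1)=0$.

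Finally, a continuous strictly decreasing $b_t$ on $[0,1]_\mb{R}$ with $b_t(1)=0$ and $b_t(0)>0$ is a homeomorphism onto $[0,b_t(0)]_\mb{R}$, so $b_t^{-1}$ is defined on that interval and $\ext(b_t^{-1},y_t)$ is defined for every $y_t$; comparing the three cases $y_t<b_t(0)$, $y_t=b_t(0)$, $y_t>b_t(0)$ shows the closed-form branch expression for $s_t$ in Definition \ref{defCrush} agrees with $\ext(b_t^{-1},y_t)$, so $s_t$ is well-defined. Since $0<y_t$ by Claim \ref{claimCrushCPositive}: if $y_t\geq b_t(0)$ then $s_t=0$, and if $y_t<b_t(0)$ then $s_t=b_t^{-1}(y_t)<1$, because $b_t^{-1}(y_t)=1$ would force $y_t=b_t(1)=0$; either way $s_t<1$. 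I expect the only real friction to lie in the first step above — confirming that every arrangement fed into $\ebb_\mr{R}$ and $\lbr$ genuinely sits in $\hood(\mc{C}_{L-1})\cap\upa(\mc{M}_L)$, where the cited continuity statements apply — rather than in the monotonicity or boundary computations, which are immediate.
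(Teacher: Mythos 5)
Your proof is correct and follows essentially the same structure as the paper's: decompose $b_t$ into the nonincreasing $\tailsup$ term plus the strictly decreasing $c(1-x)$ term (using Claim \ref{claimCrushCPositive}), evaluate $b_t(1)$ via Lemma \ref{lemmaLBR} and the fact that $\ebb_\mr{R}(\mc{C},r;\cdot,1)\in\psv(\mc{M}_L)$, and then split into cases on $y_t$ versus $b_t(0)$. Two small remarks. First, your argument for $s_t<1$ is slightly more economical than the paper's: you get it directly from strict monotonicity together with $b_t(1)=0$ and $y_t>0$, whereas the paper takes a detour through Lemma \ref{lemmaEbbR}(\ref{itemEbbOT}) to argue $b_t(x)=0$ only at $x=1$ — this is harmless, and both are valid, but since strict monotonicity was already established the detour is unnecessary. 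Second, you explicitly verify continuity of $b_t$ in $x$ before invoking the intermediate value theorem to place $y_t$ in the range of $b_t$; the paper leaves this tacit here and only establishes continuity (for a stronger conclusion) in Claim \ref{claimCrushSContinuous}. Your reasoning that $A_{\mr{rec},t}\in\upa(\mc{M}_L)$ works, though the cleanest route is just the inductive hypothesis on $\crush(\mc{C}_{L-1},\Omega)$: it keeps $A_{\mr{rec},t}$ in $\upa(\Omega)$, and $\Omega\in\upa(\mc{M}_L)$ implies $\upa(\Omega)\subseteq\upa(\mc{M}_L)$ by transitivity of $\leq_\mr{w}$, with no need to track order-type preservation through $\ebb_\mr{R}$ and $\crush(\Omega)$ separately.
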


\begin{proof}
$b_t(x)$ is the sum of a nonincreasing function of $x$, 
namely $\tailsup(b_\mr{test}(A_{\mr{rec},t}),x)$, 
and $c(1-x)$, which is a strictly decreasing since $c>0$ by Claim \ref{claimCrushCPositive}. 
Since $\ebb_\mr{R}(\mc{C},r)$ is a deformation retraction to a subset of $\psv(\mc{M}_L)$, 
we have 
\begin{align*}
b_t(1) 
&= \tailsup(b_\mr{test}(A_{\mr{rec},t}),1) 
= b_\mr{test}(A_{\mr{rec},t},1) 
= \lbr(\mc{M}_L,\ebb_\mr{R}(\mc{C},r;A_{\mr{rec},t},1)) 
= 0, %\qedhere
\end{align*} 
by Lemma \ref{lemmaLBR}. 
Either $y_t \geq b_t(0)$, in which case $s_t=0$, 
or $0 < y_t < b_t(0)$, in which case $y_t$ is in the range of $b_t$, 
so $s_t = b_t^{-1}(y_t)$ is well defined since $b_t$ is strictly decreasing. 
Moreover, 
$\ebb_\mr{R}(\mc{C},r;A_{\mr{rec},t},x) \in \psv(\mc{M})$ only at $x=1$ 
by Lemma \ref{lemmaEbbR} part \ref{itemEbbOT}, 
so $b_t(x)=0$ only at $x=1$ by Lemma \ref{lemmaLBR}, 
so $b_t^{-1}(y)$ only attains the value 1 at $y=0$, 
but $y_t>0$ by Claim \ref{claimCrushCPositive}, 
so $s_t<1$.  
\end{proof}

\begin{claim}
\label{claimCrushSContinuous}
$s_t$ varies continuously as $A,t$ vary. 
\end{claim}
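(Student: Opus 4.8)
The plan is to peel the definition of $s_t$ in Definition \ref{defCrush} from the outside in, working inside the induction on $L=|\mc{C}|$ used to prove Lemma \ref{lemmaCrush}. Thus we may assume that $\crush(\mc{C}_{L-1},\Omega)$ is an equivariant deformation retraction from $\hood(\mc{C}_{L-1})\cap\upa(\Omega)$ onto the orbit of $\Omega$; being a deformation retraction it is continuous, so $A_{\mr{rec},t}=\crush(\mc{C}_{L-1},\Omega;A,t)$ varies continuously as $(A,t)$ vary, it lies in $\hood(\mc{C}_{L-1})\cap\upa(\Omega)\subseteq\hood(\mc{C}_{L-1})\cap\upa(\mc{M}_L)$ since $\Omega\geq_\mr{w}\mc{M}_L$, and $A_{\mr{rec},0}=A$, $A_{\mr{rec},1}=\Omega$ by Claim \ref{claimCrushZero}; moreover $r\in[0,1)$, so Lemma \ref{lemmaEbbR} applies to $\ebb_\mr{R}(\mc{C},r)$.

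First I would record that the scalars $r$ and $c$ vary continuously in $A$. Since $A,\Omega\in\upa(\mc{M}_L)$ and $\lbr(\mc{M}_L)$ is continuous on $\upa(\mc{M}_L)$ by Lemma \ref{lemmaLBR}, the value $r=\max\{\lbr(\mc{M}_L,A),\lbr(\mc{M}_L,\Omega)\}$ is continuous in $A$. For $c$, note that $b_\mr{test}(A_\mr{x};x)=\lbr(\mc{M}_L,\ebb_\mr{R}(\mc{C},r;A_\mr{x},x))$ varies continuously jointly in $(r,A_\mr{x},x)$: the deformation $\ebb_\mr{R}(\mc{C},r)$ keeps the arrangement inside $\hood(\mc{C}_{L-1})\cap\upa(\mc{M}_L)$ and varies continuously by Lemma \ref{lemmaEbbR} part \ref{itemEbbContinuous}, while $\lbr(\mc{M}_L)$ is continuous there. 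As $[0,1]$ is compact, the forward direction of Lemma \ref{lemmaSupDistPartialApp} then shows that $b_\mr{test}(A_\mr{x})$, as a function of $x\in[0,1]$, varies continuously in the sup metric as $(r,A_\mr{x})$ vary, so $\sup\{b_\mr{test}(A_\mr{x},x):A_\mr{x}\in\{A,\Omega\},x\in[0,1]\}$ varies continuously in $A$ (using that $\Omega$ is fixed and $r$ is continuous in $A$), and hence so does $c$.

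Next I would show that $b_t$, as a function of $x\in[0,1]$, varies continuously in the sup metric as $(A,t)$ vary. Substituting $A_\mr{x}=A_{\mr{rec},t}$ in the previous step, $b_\mr{test}(A_{\mr{rec},t})$ varies continuously in the sup metric as $(A,t)$ vary, so $\tailsup(b_\mr{test}(A_{\mr{rec},t}))$ does too by Lemma \ref{lemmaTailsupContinuous}, and adding $c(1-x)$ with $c$ continuous in $A$ gives the claim for $b_t$. By Claim \ref{claimCrushBDecreasing} each $b_t$ is a strictly decreasing continuous function on the fixed compact interval $[0,1]$ with $b_t(1)=0$, hence a strictly monotone partial function defined on a closed interval; it varies continuously in the partial map metric by Lemma \ref{lemmaCPDistSup}, so its canonically extended inverse $\ext_\mb{R}(b_t^{-1})$ varies continuously in the sup metric by the second part of Lemma \ref{lemmaExtendedInverse}. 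Evaluating $b_t$ at the fixed point $0$ shows $b_0(0)$ and $b_1(0)$ vary continuously in $A$ (recall $A_{\mr{rec},0}=A$, $A_{\mr{rec},1}=\Omega$), so $y_t=tb_1(0)+(1-t)b_0(0)$ varies continuously as $(A,t)$ vary.

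Finally, $s_t=\ext(b_t^{-1},y_t)$ is the value of the function $\ext_\mb{R}(b_t^{-1})$, which varies continuously in the sup metric as shown above, at the point $y_t$, which varies continuously; such an evaluation is continuous, since if $g_m\to g_\infty$ in the sup metric and $y_m\to y_\infty$ then $|g_m(y_m)-g_\infty(y_\infty)|\leq\dist_{\sup}(g_m,g_\infty)+|g_\infty(y_m)-g_\infty(y_\infty)|\to0$ as $g_\infty$ is continuous. Hence $s_t$ varies continuously as $A,t$ vary. The steps requiring genuine care are the domain bookkeeping that keeps every arrangement fed to $\crush(\mc{C}_{L-1},\Omega)$, $\ebb_\mr{R}$, and $\lbr(\mc{M}_L)$ inside the space for which each cited lemma is stated, and the inversion step, which rests on $b_t$ being strictly decreasing, a consequence of $c>0$ from Claim \ref{claimCrushCPositive} that is recorded in Claim \ref{claimCrushBDecreasing}.
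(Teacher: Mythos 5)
Your argument follows the paper's proof step for step: $A_{\mr{rec},t}$ continuous by induction, $r$ and $c$ continuous via Lemma \ref{lemmaLBR} and Lemma \ref{lemmaTailsupContinuous}, $b_\mr{test}(A_{\mr{rec},t})$ continuous in the sup metric by the compact-domain direction of Lemma \ref{lemmaSupDistPartialApp}, hence $b_t$ and $y_t$ continuous, and then $\ext_\mb{R}(b_t^{-1})$ continuous via Claim \ref{claimCrushBDecreasing} and Lemma \ref{lemmaExtendedInverse}. You are somewhat more explicit than the paper about the final evaluation step and the domain bookkeeping, but the route is the same.
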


\begin{proof}
$A_{\mr{rec},t}$ varies continuously by induction, 
and $\ebb_\mr{R}(\mc{C})$ is continuous by Lemma \ref{lemmaEbbR} part \ref{itemEbbContinuous}, 
and $\lbr(\mc{M}_L)$ is continuous by Lemma \ref{lemmaLBR}, 
so $r=\max(\lbr(\mc{M}_L,\{A,\Omega\}))$ varies continuously, 
so $b_\mr{test}(A_{\mr{rec},t},x) = \lbr(\mc{M}_L,\ebb_\mr{R}(\mc{C},r,A_{\mr{rec},t},x))$ 
varies continuously as $x$ varies. 
Hence, $b_\mr{test}(A_{\mr{rec},t})$ varies continuously in the sup-metric 
by Lemma \ref{lemmaSupDistPartialApp}, 
so $\tailsup(b_\mr{test}(A_{\mr{rec},t}))$ and $c$ vary continuously 
by Lemma \ref{lemmaTailsupContinuous}, 
so $b_t$ varies continuously in the sup-metric, 
so $y_t$  varies continuously, 
and $b_t$ is strictly decreasing by Claim \ref{claimCrushBDecreasing}, 
%so $b_t^{-1}$ varies continuously in the partial map metric  
%by Lemma \ref{lemmaCPDistContinuity}, 
so $s_t$ varies continuously by Lemma \ref{lemmaExtendedInverse}.
\end{proof}

\begin{proof}[Proof of Lemma \ref{lemmaCrush}]

We proceed by induction on the length of a chain. 
Let us start with the base case where $\mc{C} = \emptyset$.
Then, $\ebb_\mr{R}(\om(\Omega),0)$ is a strong equivariant deformation retraction from 
$\upa(\Omega)$ to $\psv(\Omega)$ by Lemma \ref{lemmaEbbR}. 
Also, $\crush(\Omega)$ is a strong equivariant deformation retraction from $\psv(\om(\Omega))$ to the orbit of $\Omega$ 
by Theorem \ref{theoremCrushPsVM}.
Hence, the lemma holds for the empty chain. 

Let us assume by induction that $\crush(\mc{C}_\mr{rec},\Omega)$ is such a deformation retraction 
for $\mc{C}_\mr{rec} = \{\mc{M}_1,\dots,\mc{M}_{L-1}\}$. 
Let 
$A_{t} = \crush(\mc{C},\Omega;A,t)$ for $\mc{C} = \{\mc{M}_1,\dots,\mc{M}_{L}\}$
and $A_{\mr{rec},t} = \crush(\mc{C}_\mr{rec},\Omega;A,t)$.  

Let us first show that $\crush(\mc{C},\Omega)$ is continuous. 
We have $A_{\mr{rec},t} = \crush(\mc{C}_{\mr{rec}},\Omega;A,t)$ varies continuously as a function of $(A,t)$ by inductive assumption.  
Also, $r$ is continuous by Lemma \ref{lemmaLBR}, 
and the stopping time $s_t$ is continuous by Claim \ref{claimCrushSContinuous}, 
so $\ebb(\mc{C},r;A_{\mr{rec},t},s_t)$ varies continuously by Lemma \ref{lemmaEbbR} part \ref{itemEbbContinuous}. 
Hence, $\crush(\mc{C},\Omega)$ is continuous.

Let us next show that the range of $\crush(\mc{C},\Omega)$ is in $\hood(\mc{C})\cap\upa(\Omega)$. 
We have $A_{\mr{rec},t} \in \hood(\mc{C}_\mr{rec})\cap\upa(\Omega)$ by induction, 
so $A_t\in \hood(\mc{C}_\mr{rec})\cap\upa(\mc{M}_L)$ by Lemma \ref{lemmaEbbR}. 
Also, $s_t<1$ by Claim \ref{claimCrushBDecreasing}, 
so $\ot(A_t) = \ot(A_{\mr{rec},t})$ by Lemma \ref{lemmaEbbR} part \ref{itemEbbOT}, 
so $A_t\in\upa(\Omega)$.
%It remains to show that $A_t \in \hood(\mc{M}_L)$.
Also, 
\begin{align*}
\lbr(\mc{M}_L,A_t)
&= \lbr(\mc{M}_L,\ebb_\mr{R}(\mc{C},r;A_{\mr{rec},t},s_t)) 
= b_\mr{test}(A_{\mr{rec},t},s_t) 
\leq b_t(s) 
\leq y_t 
< 1
\end{align*}
by Claim \ref{claimCrushCPositive}, 
so $A_t \in \hood(\mc{M}_L)$.
Thus, $A_t \in\hood(\mc{C})\cap\upa(\Omega)$.

Let us show that $\crush(\mc{C},\Omega)$ is trivial on the orbit of $Q$. 
Consider the case where $A=\Omega*Q$. 
Then, 
$r(A)=r(\Omega)$ since little-big ratio is constant on $\orth_3$-orbits, 
and $\ebb_\mr{R}(\mc{C},r;A,x) = \ebb_\mr{R}(\mc{C},r;\Omega,x)*Q$, since $\ebb_\mr{R}(\mc{C},r)$ is equivariant by Lemma \ref{lemmaEbbR}, 
so $b_\mr{test}(A,x) = \lbr(\mc{M},\ebb_\mr{R}(\mc{C},r;A,x)) = b_\mr{test}(\Omega,x)$ for all $x\in[0,1]$ 
since $\lbr(\mc{M}_L)$ is $\mr{O}_3$-invariant, 
so $f_1(0) = f_0(0)$, so $y_t=y_0$ is unchanging. 
Also, $A_{\mr{rec},t} = \crush(\mc{C}_\mr{rec},\Omega;A,t) = A$
since we assume by induction that $\crush(\mc{C}_\mr{rec},\Omega)$ is a strong deformation retraction, 
so $b_t(0) = b_0(0)$ is unchanging,
so $b_t(0) = b_0(0) = y_0 = y_t$, so $s_t=0$, 
so $A_t = \ebb_\mr{R}(\mc{C},r;A,0) = A$ in this case. 

Finally, 
We have $A_0 = A$ by Claim \ref{claimCrushZero}. 
Also, $s_1 = 0$ by Claim \ref{claimCrushZero}, 
so $A_1 = \ebb_\mr{R}(\mc{C},r;A_{\mr{rec},1},0) = A_{\mr{rec},1}$ 
since $\ebb_\mr{R}(\mc{C},r)$ is a deformation by Lemma \ref{lemmaEbbR}, 
and $A_{\mr{rec},1}$ is in the orbit of $\Omega$ by induction, 
so $\crush(\mc{C},\Omega)$ is a strong deformation retraction to the orbit of $\Omega$. 
Also, $\crush(\mc{C},\Omega)$ is equivariant since $\ebb_\mr{R}(\mc{C},r)$ is equivariant by Lemma \ref{lemmaEbbR} and $\crush(\mc{C}_\mr{rec},\Omega)$ is equivariant by induction. 
\end{proof}

\subsection{Diff ebb to ratio ebb}

The ratio between maxlit and minbig has the advantage that it distinguishes both membership in $\hood(\mc{M})$ as well as membership in $\psv(\mc{M})$, which was useful in constructing the deformation $\crush$. 
On the other hand, the difference between maxlit and minbig has the advantage that it can be controlled by bounding how far pseudocircles can move, making it easier to construct a deformation.  
Here we switch between this ratio and difference.

\begin{lemma}[Diff ebb]
\label{lemmaEbbD}
Let $\mc{C} = \{\mc{M}_1<\dots<\mc{M}_L\}$ be a nonempty chain of oriented matroids 
%and $\leq_\mr{pref}$ favor nonloops of $\mc{C}$,  
and $\eps > 0$.  
Then, 
$\phi = \ebb_\mr{D}(\mc{C},\eps)$ is a strong equivariant deformation retraction 
from $\upa(\mc{M}_L)$ to $\psv(\mc{M}_L)$ satisfying the following: 
\begin{enumerate}
\item 
\label{itemEbbDContinuous}
$\phi(A,t)$ varies continuously as $\eps,A,t$ vary.
\item 
\label{itemEbbDOT}
$\ot(\phi(A,t)) = \ot(A)$ is unchanging for $t<1$. 
\item 
\label{itemEbbDMinbig}
$\minbig(\mc{M}_k,\phi(A,t)) \geq \minbig(\mc{M}_k,A) -\eps$ 
for each $k \in [L]$.
\item 
\label{itemEbbDMaxlit}
$\maxlit(\mc{M}_k,\phi(A,t)) \leq \maxlit(\mc{M}_k,A) +\eps$
for each $k \in [L]$.
\end{enumerate}
\end{lemma}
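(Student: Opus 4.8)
The plan is to construct $\ebb_\mr{D}(\mc{C},\eps)$ directly as an explicit homotopy that shrinks the pseudocircles and features that are degenerate in $\mc{M}_L$ towards their degenerate positions, while moving everything by a controlled amount bounded in terms of $\eps$. First I would recall that an arrangement $A \in \upa(\mc{M}_L)$ differs from a member of $\psv(\mc{M}_L)$ only in that certain pseudocircles that should be loops of $\mc{M}_L$ have positive weight, and certain features (the regions $\maxcov^{-1}(\sigma)$ for degenerate covectors, captured by $\van(\mc{M}_L,A)$) have positive size rather than being collapsed. The homotopy will (i) linearly decay to $0$ the weights of pseudocircles corresponding to loops of $\mc{M}_L$, and (ii) apply a family of homeomorphisms of $\sphere^2$ that contract the ``little'' regions of $A$ relative to $\mc{M}_L$ down to the pseudocircles/vertices they degenerate to, leaving the combinatorial data $\om(A)$ (hence $\ot(A)$) unchanged until time $t=1$, at which point $\om(A_1) = \mc{M}_L$ exactly. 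The natural tool for (ii) is the interpolation map $\interp(\lambda)$ of Lemma \ref{lemmaInterp} between the cell decomposition of $\sphere^2$ induced by $A$ and a nearby cell decomposition in which the small features have been shrunk by a factor depending on $t$; continuity of this construction in $(A,t,\eps)$ follows from Lemma \ref{lemmaInterp} part \ref{itemInterpLContinuous} together with Lemmas \ref{lemmaMaxcovBorder} and \ref{lemmaMaxcovContinuous}, which say the borders and regions vary continuously. Equivariance on both sides comes from Lemma \ref{lemmaInterp} part \ref{itemInterpLEquivariant} and the $\orth_3$-invariance of weights.

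The heart of the argument is the quantitative control in parts \ref{itemEbbDMinbig} and \ref{itemEbbDMaxlit}, which must hold \emph{simultaneously for every $k\in[L]$}, not just for $\mc{M}_L$. The key observation is that a feature that is big in $\mc{M}_k$ for some $k<L$ is automatically big in $\mc{M}_L$ (since $\mc{M}_k \leq_\mr{w} \mc{M}_L$, a basis of $\mc{M}_L$ may fail to be a basis of $\mc{M}_k$ but not conversely; more precisely each facet covector of $\mc{M}_L$ refines one of $\mc{M}_k$, so $\maxcov^{-1}(\mc{M}_k,\sigma)$ is a union of closures of regions $\maxcov^{-1}(\mc{M}_L,\tau)$). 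Thus if we only ever shrink regions that are degenerate relative to $\mc{M}_L$, we never shrink a region that is nondegenerate relative to any $\mc{M}_k$; this keeps $\inrad(\mc{M}_k,\sigma)$ from dropping, except that the ambient homeomorphism moves points, so I would build the homotopy to be supported within an $\eps$-neighborhood of $\van(\mc{M}_L,A)$ — i.e. it is the identity outside $\van(\mc{M}_L,A)\oplus\eps$. Since $\van(\mc{M}_L,A) \supseteq \van(\mc{M}_k,A)$ for $k\le L$ as well (loops of $\mc{M}_k$ are loops of $\mc{M}_L$ and degenerate covectors persist), actually I would instead make the homotopy move no point by more than $\eps$: then any disk of radius $\rho$ witnessing $\minbig(\mc{M}_k,A)$ contains a concentric disk of radius $\rho-\eps$ whose image still lies in the appropriate region (because the homotopy preserves the combinatorial type up to $t<1$ and is $\eps$-small), giving \ref{itemEbbDMinbig}; dually, a disk in $\van(\mc{M}_k,\phi(A,t))$ of radius $\rho$ pulls back to one of radius $\geq \rho - \eps$ in $\van(\mc{M}_k,A)$, since $\van$ can only shrink under the contraction, giving \ref{itemEbbDMaxlit}. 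The weight decay contributes at most its own magnitude to the change in $\maxlit$ and $\minbig$, so scaling the weight-decay rate so that weights change by at most $\eps$ handles that contribution.

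The main obstacle I expect is making the ``$\eps$-small homeomorphism that contracts exactly the $\mc{M}_L$-degenerate features'' both precise and continuous in $A$: one must choose, continuously in $A$, a target cell decomposition $\mc{C}_1(A)$ that agrees with that of $A$ outside an $\eps$-neighborhood of $\van(\mc{M}_L,A)$, collapses each border $B = \cl(\maxcov^{-1}(\sigma))\cap\cl(\maxcov^{-1}(\upsilon))$ for $\upsilon<\sigma$ with $\upsilon$ degenerate appropriately, and still is a genuine regular cell decomposition isomorphic to the original (so that $\interp$ applies); degeneracies where many small features coincide (as in Figure \ref{figureMagnification}) make the bookkeeping delicate. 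I would handle this by using the x-monotone normal form from Section \ref{sectionMox}/\ref{sectionZone} — representing $A$ by x-monotone pseudolines as in the proof of Lemma \ref{lemmaFlagPoint}, where each $\van$-region is an x-monotone sliver, and contracting each sliver vertically by the factor $t$ toward its limiting curve — and then transport back via $\interp$; continuity throughout follows from the continuity statements already proved (Lemmas \ref{lemmaMaxcovBorder}, \ref{lemmaMaxcovContinuous}, \ref{lemmaBigLittleContinuous}, \ref{lemmaInterp}, \ref{lemmaPMFrechetDistance}, and \ref{lemmaTailsupContinuous}). The strong-deformation-retraction and equivariance properties are then formal. I would defer the full construction of $\ebb_\mr{D}$ and its continuity to later sections (Sections \ref{sectionZone} and \ref{sectionEbbZ}), stating here only what is needed.
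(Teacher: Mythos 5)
Your plan for parts \ref{itemEbbDMinbig}--\ref{itemEbbDMaxlit} rests on building the homeomorphisms so that ``no point moves by more than $\eps$.'' That constraint is incompatible with the conclusion $\phi(A,1)\in\psv(\mc{M}_L)$. The domain is all of $\upa(\mc{M}_L)$ (not $\hood(\mc{M}_L)$), so $\vanrad(\mc{M}_L,A)$ can be arbitrarily large compared with $\eps$; collapsing $\van(\mc{M}_L,A)$ to a measure-zero union of curves then forces some points to move a distance on the order of that inradius, which may far exceed $\eps$. So the step ``any disk of radius $\rho$ witnessing $\minbig(\mc{M}_k,A)$ contains a concentric disk of radius $\rho-\eps$ whose image still lies in the appropriate region'' cannot be run, because its hypothesis is unattainable.

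The paper obtains the quantitative control by a different mechanism: rather than bounding pointwise displacement, it fixes a ``safe'' zone $Z(\sigma)\subseteq\maxcov^{-1}(\mc{M}_L,A,\sigma)$ for each facet covector $\sigma$, constructed by $\zone(\mc{M}_L,\lesssim,\eps_\mr{Z},A)$ in Section \ref{sectionZone} to be $\eps_\mr{Z}$-tight (so $\partial Z(\sigma)$ is within Fr\'echet distance $\leq\eps$ of $\partial\maxcov^{-1}(\sigma)$), and then arranges via Lemma \ref{lemmaGebbWrap} part \ref{itemGebbWrapGuard} that $\facet(Z)$ accommodates $A_t$ for all $t$ — i.e.\ the pseudocircles never cross into any $Z(\sigma)$ during the flow, however far they move elsewhere. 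Combined with Lemma \ref{lemmaWeakTope} (every facet covector of $\mc{M}_k$ is a facet covector of $\rest(N_k,\mc{M}_L)$ with the same $\maxcov^{-1}$ region) and the nested-disk estimate of Lemma \ref{lemmaNestedDisks}, this yields the $\eps$-bounds on $\inrad(\mc{M}_k,\sigma,A_t)$ and on $\vanrad(\mc{M}_k,A_t)$ simultaneously for all $k$, with no global displacement bound. This protected-core idea, rather than an $\eps$-small homeomorphism, is the essential ingredient missing from your sketch, and is the reason the deformation is assembled zone-by-zone in Sections \ref{sectionZone}--\ref{sectionEbbZ}.

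A smaller bookkeeping error: since $\mc{M}_k\leq_\mr{w}\mc{M}_L$, loops of $\mc{M}_L$ are loops of $\mc{M}_k$ (hence $\supp(\mc{M}_k)\subseteq\supp(\mc{M}_L)$), not the other way around as you assert; the claimed containment $\van(\mc{M}_L,A)\supseteq\van(\mc{M}_k,A)$ does not follow from your stated reason (though you abandon that route anyway).
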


\begin{definition}[Ratio ebb]
\label{defEbbR}
\[
\ebb_\mr{R}(\mc{C},r;A,t) 
= \ebb_\mr{D}(\mc{C},\eps;A,t)
\]
where 
$M_k = \minbig(\mc{M}_k,A)$, and 
$m_k = \maxlit(\mc{M}_k,A)$, and  
$\eps =\eps(A,r)$ is the minimum among 
$(\nicefrac13)(1-r)M_L$ and   
$(\nicefrac13)(M_k - m_k)$ for $k \in [L-1]$. 
\end{definition}

\begin{proof}[Proof of Lemma \ref{lemmaEbbR}]
Let $A_t = \ebb(\mc{C},r;A,t)$. 
The parameters $\minbig(\mc{M}_k)$ and $\maxlit(\mc{M}_k)$ are continuous by Lemma \ref{lemmaBigLittleContinuous}, 
so $\eps(A,r)$ in the definition of ebb is continuous as function of $r$ and $A$, so $A_t = \ebb(\mc{C},r;A,t)$ is continuous as a function of $r$, $A$, and $t$ by Lemma \ref{lemmaEbbD} part \ref{itemEbbDContinuous}, 
which gives us part \ref{itemEbbContinuous} of Lemma \ref{lemmaEbbR}.

Each $k \in [L-1]_\mb{N}$ satisfies  
$m_k < M_k$ where $m_k = \maxlit(\mc{M}_k,A)$ and $M_k = \minbig(\mc{M}_k,A)$
since $A \in \hood(\mc{M}_k)$, 
and 
$\eps \leq (\nicefrac13)(M_k - m_k)$ 
by choice of $\eps$, 
so 
by Lemma \ref{lemmaEbbD} parts \ref{itemEbbDMinbig} and \ref{itemEbbDMaxlit}, 
we have   
\begin{align*}
\maxlit(\mc{M}_k;A_t) 
& \leq m_k +\eps 
\leq m_k +(\nicefrac13)(M_k - m_k) 
%= (\nicefrac13)M_k +(\nicefrac23)m_k 
%& < 
%(\nicefrac23)M_k +(\nicefrac13)m_k =
< M_k -(\nicefrac13)(M_k - m_k) 
\leq M_k -\eps \\ 
&\leq \minbig(\mc{M}_k;A_t), 
\end{align*}
so $A_t \in \hood(\mc{M}_k)$. 

Hence, $A_t \in \hood(\mc{C}\setminus\mc{M}_L)\cap\upa(\mc{M}_L)$, 
so $\ebb(\mc{C},r)$ is a strong deformation retraction from 
$\hood(\mc{C}\setminus\mc{M}_L)\cap\upa(\mc{M}_L)$ to $\hood(\mc{C}\setminus\mc{M}_L)\cap\psv(\mc{M}_L)$ 
by Lemma \ref{lemmaEbbD}.
Also, $\minbig(\mc{M}_k)$ and $\maxlit(\mc{M}_k)$ are constant on orbits, 
so $\eps$ is constant on orbits as well, 
so $\ebb(\mc{C},r)$ is equivariant.

Suppose $\lbr(\mc{M}_L,A) \leq r$. Then, 
$m_L \leq rM_L$. 
Also, $\eps < (\nicefrac12)(1-r)M_L$
by choice of $\eps$ in the definition of $\ebb_\mr{R}$, so 
%\begin{align*}
\[
\lbr(\mc{M}_L,A_t) 
\leq \frac{m_L +\eps}{M_L -\eps} 
< \frac{rM_L +(\nicefrac12)(1-r)M_L}{M_L -(\nicefrac12)(1-r)M_L} 
= \frac{(\nicefrac12)(1+r)M_L}{(\nicefrac12)(1+r)M_L} = 1. 
\]
%\end{align*}
Thus, $A_t \in \hood(\mc{M}_L)$ in the case where $\lbr(\mc{M}_L,A_0) \leq r$,  
so part \ref{itemEbbR} of Lemma \ref{lemmaEbbR} holds.
\end{proof}

%\bigskip 
%\hrule 
%\bigskip

%\bigskip 
%\hrule 
%\bigskip

\section{Mox}
\label{sectionMox}

In this section, we introduce a map $\mox$ in Lemma \ref{lemmaMox} which sends systems of paths to x-monotone paths, i.e., implicit functions.  
%state and prove Lemma \ref{lemmaMox}, which maps systems of paths to x-monotone paths, i.e., implicit functions.  
This map will then be used to define the deformation $\ebb_\mr{D}$ from Lemma \ref{lemmaEbbD}.  
Specifically, $\ebb_\mr{D}$
will deform $A \in \upa(\mc{M})$ to $\psv(\mc{M})$ 
by first defining a region $Z(\Sigma)$ of the sphere for each chain of covectors $\Sigma \subset \csph(\mc{M})$; 
see Section \ref{sectionZone}, 
and then defining a deformation separely in each of these regions; see Definition \ref{defEbbD}. 

In the case of edge covectors, the pseudocircles of $A$ corresponding to nonloops of $\sigma^0$  intersect $Z(\sigma)$ as a collection of paths, and $\ebb_\mr{D}$ will deform this collection paths to a single path.  
To do so, we use the map $\mox$ to send these paths to a collection of x-monotone paths, so that we can simply deform each path to the same horizonal segment by scaling each path vertically. 
We will also use $\mox$ to define the regions $Z(\Sigma)$.

\begin{definition}[Pseudomonotonic path system]
\label{defEastwardSystem}
A \df{cardinal region} is a compact contractible set $R \subset \sphere^2$ 
with a set 
$B = \{B_\mr{N},B_\mr{E},B_\mr{S},B_\mr{W}\}$ consisting of $({\leq}1)$-cells indexed by cardinal direction such that 
\begin{itemize}
\item 
$\bigcup B = \partial R$,
\item 
$B_i\cap B_j$ is a single point for consecutive cardinal directions $i,j$, 
%each consecutive pair of paths in $B$ meet at a single point, 
\item 
$B_\mr{E}$ and $B_\mr{W}$ are disjoint, 
\item 
either $B_\mr{N}$ and $B_\mr{S}$ intersect at finitely many points without crossing, 
or $R=B_\mr{N}=B_\mr{S}$ is a path with endpoints 
$B_\mr{W}$,  
$B_\mr{E}$.   
\end{itemize} 
We call $B_\mr{N}$ the \df{northern border} of $R$ 
and $B_\mr{NE} = B_\mr{N}\cup B_\mr{E}$ the \df{northeastern border}   
and analogously for the other cardinal and intercardinal directions. 

An \df{pseudomonotonic path system} through $R$ is
a finite family of paths $P = \{P_i \subset R:i \in I\}$ from $B_\mr{W}$ to $B_\mr{E}$ that 
covers $B_\mr{N}$ and $B_\mr{S}$, 
and where each pair of paths 
either coincide, are disjoint, or 
intersect at a single point and cross at that point. 

A \df{latitude} for $P$ is a map $h : J \to \mb{R}$ 
%$h : J \to \mb{R}$ 
for $J \subseteq I \subseteq [n]_\mb{N}$ %with $|J|\geq 1$ 
such that the curves $P_j$ for $j\in J$ 
\begin{itemize} 
\item 
are pairwise either disjoint or the same, 
\item 
appear along $B_\mr{W}$ in the same order as $h_j$.  
\end{itemize} 
%We may simply call such a collection $(R,B,P,h)$ 
%an \df{eastward system}.
A pseudomonotonic path system may include $h$ or not. 
\end{definition}

\begin{lemma}[Mox]
\label{lemmaMox}
Let $E=(R,B,P,h)$ be pseudomonotonic path system as in Definition \ref{defEastwardSystem}. 
%with $0 \in J$. 
Then, 
$%\[
\xi= \mox(E) : R \to  [0,1]_\mb{R} \times \mb{R} 
$
%\]
is an embedding of $R$ in $\mb{R}^2$ satisfying the following.
\begin{enumerate}
\item
\label{itemMoxContinuous}
$\xi$ varies continuously in the partial map metric as 
$R$ varies in Hausdorff distance,  
each curve of $B$ and $P$ varies in Fréchet distance, 
and 
$h_j$ varies for each $j\in J$. 
%with $J$ fixed.  
%\item
%\label{itemMoxNullityPreserving}
%$\mu$ is analytic almost everywhere. 
%Hence, $\mu$ is nullity preserving. 
\item 
\label{itemMoxEquivariant} 
$\mox$ is $\orth_3$-equivariant, i.e., 
$\mox(Q(R,B,P),h) = \xi \circ Q^*$.
\item 
\label{itemMoxSides}
$\xi(B_\mr{W}) \subset 0\times \mb{R}$,\quad   
$\xi(B_\mr{E}) \subset 1\times \mb{R}$.\quad    
\item 
\label{itemMoxH}
$\xi(P_{j}) = [0,1]\times h_j$ for each $j\in J$. 
\item 
\label{itemMoxMonotone}
$\xi(P_i)$ is an x-monotone curve for each $i\in{I}$.
\item 
\label{itemMoxAntipodal} 
$\xi(R,\widetilde B,P,-h;p) = (x,-y)$ 
where $(x,y) = \xi(p)$ and 
\[
(\widetilde B_\mr{W},\widetilde B_\mr{E}) = (B_\mr{W},B_\mr{E}), \quad
(\widetilde B_\mr{N},\widetilde B_\mr{S}) = (B_\mr{S},B_\mr{N}).
\]
\end{enumerate}
\end{lemma}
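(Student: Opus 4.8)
The plan is to construct $\xi = \mox(E)$ in two stages: first fix the $x$-coordinate, then fix the $y$-coordinate, using the canonical path parametrization $\param$ from Lemma \ref{lemmaPathParam} as the basic tool throughout. For the $x$-coordinate, I would use a conformal (Riemann) map $\psi : R \to [0,1]_\mb{R}\times\mb{R}$-shaped region, or more directly proceed as follows: since $R$ is a cardinal region, the northern and southern borders $B_\mr{N}, B_\mr{S}$ run from $W := B_\mr{W}\cap B_\mr{N}\cap B_\mr{S}$-side to $E$-side; for each point $p \in R$ there is a unique path $P_i$ of the system (or a limiting position between two consecutive ones) through $p$, and I assign $x(p) = \param^{-1}(P_i, p)$ after directing $P_i$ from $B_\mr{W}$ to $B_\mr{E}$. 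This is well-defined because distinct paths of the system either coincide or cross exactly once, so the foliation of $R$ by the paths $P_i$ (together with $B_\mr{N}$ and $B_\mr{S}$ as the extreme leaves) is a genuine foliation of $R$ by arcs from $B_\mr{W}$ to $B_\mr{E}$; a point not on any $P_i$ lies between two consecutive leaves and gets the interpolated value via Douady–Earle extension applied in the strip between those leaves, exactly as in Definition of $\interp$. For the $y$-coordinate, having fixed $x$, I restrict to the vertical fiber $\{x = x_0\}$, which is an arc crossing $B_\mr{W}$ once and $B_\mr{E}$ once, and set $y = $ the value determined by requiring $\xi(P_j) = [0,1]\times h_j$ for $j \in J$ (item \ref{itemMoxH}); between consecutive latitude-indexed leaves I again interpolate by $\param^{-1}$ along the fiber, rescaled affinely to land in the interval $[h_{j}, h_{j'}]$ between consecutive latitudes, and extend monotonically (by $\ext_\mb{R}$ from Lemma \ref{lemmaExtendedInverse}) outside the range of $h$.

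Granting this construction, I would verify the listed properties in turn. Item \ref{itemMoxSides} is immediate since $\param^{-1}$ sends the source to $0$ and terminal to $1$, so every leaf $P_i$ has its $B_\mr{W}$-endpoint at $x = 0$ and $B_\mr{E}$-endpoint at $x = 1$; item \ref{itemMoxH} holds by construction of $y$; item \ref{itemMoxMonotone} holds because each $P_i$ is a single leaf of the $x$-foliation, hence $\xi(P_i)$ meets each vertical line $\{x = x_0\}$ exactly once, which is the definition of x-monotone. Item \ref{itemMoxEquivariant} follows from $\orth_3$-equivariance of $\param$ (Lemma \ref{lemmaPathParam} part \ref{itemPathParamEquivariant}) and of $\DE$/$\interp$ (Theorem \ref{theoremDE} part \ref{itemDEConformal}, Lemma \ref{lemmaInterp} part \ref{itemInterpLEquivariant}), since all data defining $\xi$ is built from these equivariant operations. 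Item \ref{itemMoxAntipodal} follows because swapping $B_\mr{N}\leftrightarrow B_\mr{S}$ reverses the orientation of the vertical fibers, and replacing $h$ by $-h$ reverses the target intervals, so the $y$-coordinate flips sign while $x$ is unaffected (the $x$-foliation is unchanged by the $N\leftrightarrow S$ swap). That $\xi$ is an embedding follows from the gluing lemma applied across leaves, exactly as in the proof of Lemma \ref{lemmaInterp}: the map is a homeomorphism on each leaf and on each strip between consecutive leaves, and these agree on shared boundaries.

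The main obstacle — and the bulk of the work — is item \ref{itemMoxContinuous}, continuity in the partial map metric as all the data varies. The strategy is to reduce everything to the continuity results already assembled in Section \ref{sectionTools}. The key point is that the $x$-foliation varies continuously: each path $P_i$ varies in Fréchet distance by hypothesis, so $\param^{-1}(P_i)$ varies continuously in the partial map metric by Lemma \ref{lemmaPathParam} part \ref{itemPathParamContinuous}; assembling these over the finitely many strips between consecutive leaves and applying the gluing clause of Lemma \ref{lemmaCPDistContinuity} gives continuity of $x(\cdot)$. One delicate case is when two consecutive leaves of the system collide in the limit (the strip between them degenerates to a path), and here is exactly where Lemma \ref{lemmaSquishConformalPath} is needed: it guarantees that the interpolating conformal maps on a degenerating strip still converge correctly, so the limiting $x$-value is the common value of the two colliding leaves rather than something discontinuous. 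A symmetric collision can occur among latitude-indexed leaves for the $y$-coordinate, handled the same way together with Lemma \ref{lemmaExtendedInverse} for the monotone extension past the range of $h$, and Lemma \ref{lemmaTailsupContinuous}/\ref{lemmaSupDistPartialApp} to keep the fiberwise parametrizations varying continuously in the sup metric. Once both coordinates vary continuously in the partial map metric, so does $\xi = (x, y)$, and the sup-metric refinement when $R$ is fixed follows from Lemma \ref{lemmaCPDistSup}.
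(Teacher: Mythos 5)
Your plan treats the paths $P_i$ as a foliation of $R$ and assigns the $x$-coordinate by $x(p) = \param^{-1}(P_i,p)$ along the unique leaf through $p$. This is the step that fails: in a pseudomonotonic path system the $P_i$ are allowed to \emph{cross} (``each pair of paths either coincide, are disjoint, or intersect at a single point and cross at that point''), so they do not form a foliation. At a crossing point $q = P_1\cap P_2$ your definition would require $\param^{-1}(P_1,q)=\param^{-1}(P_2,q)$, which has no reason to hold, and near $q$ the ``strip between consecutive leaves'' degenerates to a wedge whose two bounding leaves change order as you pass the crossing, so the interpolated $x$ would be discontinuous there. The degenerating-strip/squish lemma you invoke handles two boundary arcs converging to a common path, which is a different phenomenon from two arcs that genuinely cross at a point. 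As a symptom of the same issue, item \ref{itemMoxMonotone} under your scheme would make every $P_i$ a full horizontal leaf, but crossing paths cannot all be simultaneously realized as horizontal graphs.

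The paper's construction avoids this entirely by not using the $P_i$ as a foliation. Instead (Lemma \ref{lemmaLSP} and Definition \ref{defMox}) it builds an auxiliary pair of transverse foliations $(\mc{L}_+,\mc{L}_-)$ of $R$ in the diagonal (NE--SW and NW--SE) directions, obtained from conformal maps to the upper half-plane in each cell of $R$ cut by $P$, and stipulates that $\xi$ send the leaves of $\mc{L}_+$ to lines of slope $1$ and of $\mc{L}_-$ to lines of slope $-1$. The leaves of these foliations do not coincide with the $P_i$; rather, each $P_i$ crosses each leaf at most once and always in the correct sense (Lemma \ref{lemmaLSP} part \ref{itemLSPEastward}), which is exactly what makes $\xi(P_i)$ the graph of a $1$-Lipschitz (hence x-monotone) function. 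Well-definedness and continuity then require the separate machinery of zigzag paths (Claims \ref{claimMoxWellDefined}--\ref{claimMoxEWGap}) and the full content of Lemma \ref{lemmaLSP}, which your outline bypasses. So the proposal is not a salvageable variant of the paper's proof: it is missing the key idea of introducing a genuine pair of transverse foliations distinct from the $P_i$.
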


We will construct $\mox$ in Definition \ref{defMox} 
using a pair of foliations $\fol(R,B,P)$ from Lemma \ref{lemmaLSP}, 
%partitions of $R$ into paths 
which will be sent to lines of slope $1$ and $-1$.   
That is, we partition $R$ into level sets of a northeast-southeast coordinate system, 
which we call \df{leaves}. 

\begin{remark}[Cardinal direction]
\label{remarkCardinalDirection} 
We consider a directed path $L$ through $R$ from $B_\mr{SW}$ to $B_\mr{NE}$ to be directed northward and eastward.  
If $L$ is directed the opposite way, then we consider $L$ to be directed southward and westward. 
The component of $R\setminus L$ that contains $B_\mr{W}\cap B_\mr{N}$ is to the west and to the north of $L$, and the other component is to the south and to the east. 
We also use analogous language for paths from $B_\mr{NW}$ to $B_\mr{SE}$.
\end{remark}

\begin{lemma}[Coordinate foliations]
\label{lemmaLSP}
%Given the set up of Lemma \ref{lemmaMox}, 
Given a pseudomonotonic path system $(R,B,P)$ as in Lemma \ref{lemmaMox},  
$(\mc{L}_+,\mc{L}_-) = \lsp(R,B,P)$ 
is a pair of partitions of $R$ into $({\leq}1)$-cells 
called \df{leaves}  
satisfying the following. 
\begin{enumerate}
\item 
\label{itemLSPEndpoints}
Each $L_+ \in \mc{L}_+$ is a path 
from $B_\mr{SW} = B_\mr{S}\cup B_\mr{W}$
to $B_\mr{NE} = B_\mr{N}\cup B_\mr{E}$, 
or a point on $B_\mr{SW}\cap B_\mr{NE}$.  
Also,  
each $L_- \in \mc{L}_-$ is a path 
from $B_\mr{NW} = B_\mr{N}\cup B_\mr{W}$
to $B_\mr{SE} = B_\mr{S}\cup B_\mr{E}$,
or a point on $B_\mr{NW}\cap B_\mr{SE}$. 
\item 
\label{itemLSPEquivariant}
$\lsp$ is $\orth_3$-equivariant. 
\item 
\label{itemLSPContinuous}
The path $L_+(q) \in \mc{L}_+$ through a given point $q$ varies continuously in Fréchet distance 
as $(R,B,P,q)$ vary, 
and likewise for $\mc{L}_-$. 
\item 
\label{itemLSPProduct}
Each pair $(L_+,L_-) \in \mc{L}_+\times \mc{L}_-$ cross at most once and have no other intersection.
\item 
\label{itemLSPEastward}
Each path $P_i \in P$ intersects each $L_+ \in\mc{L}_+$ at most once, 
in which case $P_i \cap L_+$ is either an endpoint of $L_+$ 
or $P_i$ directed eastward crosses $L_+$ from northwest to southeast. 
Likewise for $L_-\in\mc{L}_-$, except $P_i$ crosses $L_-$ from southwest to northeast. 
\item 
\label{itemLSPFinite}
If $L$ is a directed path consisting of an alternating sequence of leaves of $\mc{L}_+$ and $\mc{L}_-$ directed southward, then the sequence is finite.  
\item 
\label{itemLSPAntipodal} 
$\lsp(D,\widetilde B, P) = (\mc{L}_-,\mc{L}_+)$ 
with $\widetilde B$ as in Lemma \ref{lemmaMox}.
\end{enumerate}
\end{lemma}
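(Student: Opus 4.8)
The plan is to build the two foliations $\mc{L}_+$ and $\mc{L}_-$ by the same kind of conformal/harmonic construction used for $\param$ and $\interp$ earlier, adapted so that the leaves are forced to be monotone with respect to the path system $P$. First I would reduce to the nondegenerate case where $R$ is a genuine $2$-cell; the degenerate case $R = B_\mr{N} = B_\mr{S}$ is a single path and both foliations are the trivial partition into points, for which all seven items are immediate. For the $2$-cell case, I would fix a conformal homeomorphism $h : \disk \to R$ sending the four points $B_i \cap B_j$ to the four corners $\pm 1, \pm\im$, which exists and is essentially unique by Carathéodory's theorem (\ref{theoremCaratheodory}), and then transport the standard ``slope $+1$'' and ``slope $-1$'' foliations of the square (the level sets of the two coordinates of the Riemann map of $\disk$ onto a square sending corners to corners) back to $R$ via $h$. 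Equivalently, $\mc{L}_+$ is the set of level curves of the harmonic function on $R$ that is $0$ on $B_\mr{NW}$ and $1$ on $B_\mr{SE}$, and $\mc{L}_-$ from the harmonic function that is $0$ on $B_\mr{SW}$ and $1$ on $B_\mr{NE}$; item \ref{itemLSPEndpoints} and item \ref{itemLSPProduct} then follow from the standard structure of the square, and item \ref{itemLSPAntipodal} follows from the symmetry swapping $B_\mr{N} \leftrightarrow B_\mr{S}$, which swaps the two harmonic functions. Item \ref{itemLSPEquivariant} holds because orthogonal transformations are isogonal, hence conformal, so they commute with the Riemann map construction.

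The substantive content is items \ref{itemLSPEastward} and \ref{itemLSPFinite}, i.e. that each $P_i$ is ``monotone'' with respect to the foliation and that chains of alternating leaves terminate; this is where the hypothesis that $P$ is a pseudomonotonic path system (each pair of paths crossing at most once, the paths covering $B_\mr{N}$ and $B_\mr{S}$) has to be used, since the bare conformal construction knows nothing about $P$. My intended fix is to not take the plain square foliation but rather to first apply the map $\mox$ of Lemma \ref{lemmaMox} --- wait, that is circular, since $\mox$ is built from $\lsp$ --- so instead I would build the foliations and the proof of monotonicity together: choose the conformal parametrization of $R$ by a rectangle, and argue that because the paths $P_i$ are mutually non-recrossing and run from $B_\mr{W}$ to $B_\mr{E}$ covering $B_\mr{N}\cup B_\mr{S}$, the region between two consecutive leaves of $\mc{L}_+$ is itself a cardinal region in which the restricted path system is again pseudomonotonic, so a leaf $L_+$ can meet a path $P_i$ at most once by an Euler-characteristic / Jordan-curve argument: a second crossing would produce a bigon bounded by an arc of $L_+$ and an arc of $P_i$, inside which some other path, or $P_i$ itself, would have to recross, contradicting the system being pseudomonotonic. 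The direction of crossing (northwest-to-southeast for $L_+$) is then forced by the corner conditions on $h$ together with the fact that $P_i$ goes from $B_\mr{W}$ to $B_\mr{E}$. For item \ref{itemLSPFinite}, an alternating southward path of leaves is strictly decreasing in the $\mc{L}_+$-coordinate minus the $\mc{L}_-$-coordinate (or some similar monovariant built from the two harmonic functions), and it can only turn at points of $B_\mr{N}\cup B_\mr{S}$, of which there are finitely many crossing points by the definition of cardinal region, so the sequence is finite.

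Continuity, item \ref{itemLSPContinuous}, would be handled exactly as in the proof of Lemma \ref{lemmaInterp} and Claim \ref{claimParamPathG}: choose $h$ to depend continuously on $(R,B)$ by Rad\'o's theorem (\ref{theoremRado}) --- using that the four corner points converge --- so that $h$ varies continuously in the sup metric, hence $h^{-1}$ varies continuously in the partial map metric by Lemma \ref{lemmaCPDistContinuity}; the level curves of the standard square coordinates are fixed, so the leaf $L_+(q) = h(\text{level curve through } h^{-1}(q))$ varies continuously in Fr\'echet distance by Lemma \ref{lemmaPMFrechetDistance} as $(R,B,q)$ vary, and likewise for $\mc{L}_-$. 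The main obstacle, as flagged, is making the monotonicity arguments (items \ref{itemLSPEastward} and \ref{itemLSPFinite}) rigorous: one has to show that the naive conformal foliation is actually compatible with $P$, which is not automatic, and the cleanest route may instead be to \emph{perturb} or \emph{modify} the foliation near $P$ --- for instance replacing the harmonic function by one adapted to a metric that is large transverse to the paths $P_i$ --- so I would expect the real work of this lemma, deferred to the relevant later section, to be in setting up such an adapted conformal structure and verifying that the resulting leaves still satisfy the endpoint and crossing-once properties.
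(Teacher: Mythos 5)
Your plan rests on a single global conformal map $h:\disk\to R$, pulling back the slope-$\pm 1$ foliations of a square. You correctly flag that this construction ``knows nothing about $P$'' and that items \ref{itemLSPEastward} and \ref{itemLSPFinite} will not follow; but the fix you sketch --- treating the region between two consecutive leaves as a cardinal region, or perturbing the harmonic functions near $P$ --- is not actually carried out and would not straightforwardly close the gap. A path $P_i$ is only assumed to be well-behaved relative to the other paths of the system (crossing each at most once) and to the cardinal boundary; nothing constrains it relative to a global harmonic coordinate, so a leaf of your global foliation can recross a single $P_i$ many times, and the Euler-characteristic/bigon argument you invoke is circular: the ``bigon'' you want to rule out is bounded by an arc of $L_+$ and an arc of $P_i$, and the hypothesis only gives you information about pairs of paths in $P$, not about a path of $P$ against a leaf.

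The paper's construction (Definition~\ref{defLSP}) takes the opposite order: it first subdivides $R$ by the path system $P$ into closed $2$-cells $\mc{C}$, proves (Claims~\ref{claimLSPNoCycle} and \ref{claimLSPSource}) that each cell boundary directed northeastward has a unique source and sink, chooses internally isogonal maps $h_\pm$ \emph{per cell} to the upper half-plane with prescribed corner conditions, and takes leaves to be preimages of the specific semicircles $\Theta(-r,3r)$, $\Theta(r,3r)$. Global leaves are then maximal unions of cell-leaves, and the partition property requires a separate argument (Claims~\ref{claimLSPHalfOpenCells}, \ref{claimLSPMaxPath}). In this set-up item \ref{itemLSPEastward} is automatic \emph{within} a cell --- a leaf meets a path of $P$ only at the cell boundary, in the predicted direction --- and the residual work is the gluing across cells, which is handled by the monotonicity Claim~\ref{claimLSPMonotone} resting on the cross-ratio computations in Claim~\ref{claimLSPInequalities}. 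Item \ref{itemLSPFinite} also turns out to be nontrivial: it is not a counting argument on boundary vertices, since a zigzag path could a priori oscillate infinitely inside one cell; the paper rules this out with a fixed-point argument for the Möbius map $\hat g_\mr{SW}\hat g_\mr{SE}$ on a boundary arc. Your continuity plan (Radó plus Lemma~\ref{lemmaCPDistContinuity}) is fine in spirit but again insufficient because the cells can degenerate as $(R,B,P)$ vary; that is why the paper needs Lemma~\ref{lemmaSquishConformalPath}. Finally, note the paper uses the semicircle families $\Theta(\mp r, 3r)$ rather than level sets of $|h_\pm|$ precisely because $h_+$ and $h_-$ can coincide in degenerate cells; your harmonic-level-set picture runs into the same problem without the scaling trick.

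In short, the missing idea is to subdivide $R$ by $P$ \emph{before} choosing any conformal structure, so that item~\ref{itemLSPEastward} holds by construction cell-by-cell and the only real work is in gluing and in ruling out infinite zigzags. Your global-Riemann-map approach does not give you this for free, and you have not supplied a replacement argument.
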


\begin{definition}[Mox]
\label{defMox}
We first define $\xi = \mox(R,B,P,h)$ in the case where $J=\{0\}$ is a singleton. 
%in terms of a map $\xi : R \to \mb{R}^2$.  
On $P_0$, let 
\[\xi = \param^{-1}(P_0)\times h_0\]
%For $q\in P_0$, let 
%\[\rest(\xi,P_0,q) = (\param^{-1}(P_0,q),h_0)\] 
with $P_0$ directed eastward 
and $\param^{-1}$ as in Definition \ref{defPathParam}.  
On the rest of $R$, let $\xi$ be the unique continuous map 
that sends each path of $\mc{L}_+$ to a segment with slope 1, 
and sends each path of $\mc{L}_-$ to a segment with slope $-1$
where $(\mc{L}_+,\mc{L}_-) = \lsp(R,B,P)$. 
In the case where $|J|>1$, we first construct $\xi$ as above, and then modify $\xi$ to send each path $P_j$ to $[0,1]\times h_j$ for $j\in J$ by linear interpolation in each vertical line, and by vertical translation outside the extrema of $h$. 
\end{definition}

\begin{claim}
\label{claimMoxWellDefined}
$\xi$ is well-defined.
\end{claim}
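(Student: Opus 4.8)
The plan is to verify, stage by stage in the order Definition~\ref{defMox} builds $\xi$, that each stage yields a definite object and that consecutive stages agree on their overlap, so the recipe determines a single map on $R$. First I would dispose of the restriction to $P_0$: the eastward direction on $P_0$ is forced by the cardinal structure $B$ (it is the direction from $B_\mr{W}$ to $B_\mr{E}$), and with that direction Lemma~\ref{lemmaPathParam} part~\ref{itemPathParamHomeo} makes $\param^{-1}(P_0)$ a well-defined homeomorphism onto $[0,1]$, strictly monotone along $P_0$; since $h_0$ is a constant, $\xi|_{P_0}=\param^{-1}(P_0)\times h_0$ is a well-defined embedding of $P_0$ onto $[0,1]\times\{h_0\}$, and for a degenerate $P_0$ the convention $\param(P_0,x)=P_0$ gives a well-defined constant map.

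For the map on $R\setminus P_0$ I would pass to diagonal target coordinates: a map into $\mb{R}^2$ sends each leaf of $\mc{L}_+$ to a slope-$1$ segment and each leaf of $\mc{L}_-$ to a slope-$(-1)$ segment exactly when $\alpha:=\xi_2-\xi_1$ is constant on each $\mc{L}_+$ leaf and $\beta:=\xi_2+\xi_1$ is constant on each $\mc{L}_-$ leaf, with $\xi=\big(\tfrac{\beta-\alpha}{2},\tfrac{\alpha+\beta}{2}\big)$; so it suffices to build continuous $\alpha,\beta$ on $R$, constant on the respective leaves, restricting to $h_0-\param^{-1}(P_0)$ and $h_0+\param^{-1}(P_0)$ on $P_0$. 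By Lemma~\ref{lemmaLSP} the leaves $L_+(p),L_-(p)$ through a point $p$ are unique (the $\mc{L}_\pm$ are partitions) and meet only at $p$ (part~\ref{itemLSPProduct}), so the two prescribed diagonal lines intersect in the single point that becomes $\xi(p)$, and $L_\pm(p)$ vary continuously in $p$ by part~\ref{itemLSPContinuous}, which will give continuity of $\xi$. The substance is the assignment of $\alpha$: since $P_0$ directed eastward meets each $\mc{L}_+$ leaf at most once and meets them in an order consistent with the leaf order (part~\ref{itemLSPEastward}) while $\param^{-1}(P_0)$ is monotone along $P_0$, the value of $\alpha$ is forced, monotonically in the leaf order, on every $\mc{L}_+$ leaf that $P_0$ meets; I would then extend $\alpha$ to the leaves $P_0$ misses — which form at most two order-intervals past the ends of $P_0$ — by the constant equal to the nearest assigned value, and check this is the unique leafwise-constant continuous extension for which the resulting $\xi$ also satisfies $\xi(B_\mr{W})\subset\{0\}\times\mb{R}$ and $\xi(B_\mr{E})\subset\{1\}\times\mb{R}$. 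The same construction produces $\beta$ on $\mc{L}_-$, the pieces agree with $\xi|_{P_0}$ by construction, and uniqueness follows since any competitor has its $\alpha,\beta$ forced the same way.

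For $|J|>1$ the initial $\xi$ is built exactly as in the singleton case, and it sends each $P_j$ with $j\in J$ to an x-monotone curve, since its image meets each vertical fiber — itself a concatenation of slope-$\pm1$ images of leaves — at most once, and each $P_j$ runs from $B_\mr{W}$ to $B_\mr{E}$, so $\xi(P_j)$ is a graph over $[0,1]$. The latitude axioms of Definition~\ref{defEastwardSystem} say the $P_j$ for $j\in J$ are pairwise disjoint or equal and occur along $B_\mr{W}$ in the order of the $h_j$, so for each $x\in[0,1]$ the heights at which the curves $\xi(P_j)$ cross $\{x\}\times\mb{R}$ are ordered in the same order as the $h_j$; remapping each fiber by the increasing piecewise-linear homeomorphism $\rho_x$ of $\mb{R}$ that carries each such height to the corresponding $h_j$ (affine between consecutive heights, a translation beyond the extreme ones) gives a family of homeomorphisms varying continuously in $x$, so composing $\xi$ with the fiberwise remapping $(x,y)\mapsto(x,\rho_x(y))$ yields a well-defined continuous map sending each $P_j$ to $[0,1]\times\{h_j\}$.

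The step I expect to be the main obstacle is the existence half of the second paragraph: propagating the leafwise-constant functions $\alpha,\beta$ forced on the leaves that $P_0$ meets to continuous leafwise-constant functions on all of $R$ — in particular over the leaves $P_0$ never touches, and with no monodromy. This is exactly where the transversality and finiteness properties of $\lsp$ (Lemma~\ref{lemmaLSP} parts~\ref{itemLSPProduct}, \ref{itemLSPEastward}, \ref{itemLSPFinite}) and the contractibility of the cardinal region $R$ do the real work, and I would lean on the fact that the two foliations exhibit $R$ as a topological square whose two coordinate projections onto the leaf spaces are intervals.
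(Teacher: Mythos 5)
The reformulation in terms of diagonal coordinates $\alpha=\xi_2-\xi_1$ (constant on $\mc{L}_+$ leaves) and $\beta=\xi_2+\xi_1$ (constant on $\mc{L}_-$ leaves) is a clean way to state what must be built, and the handling of $\xi|_{P_0}$ and of the $|J|>1$ remapping is fine. You also correctly identify where the weight of the argument lies. But the specific extension you propose for $\alpha$ on leaves that miss $P_0$ — ``the constant equal to the nearest assigned value'' — is not the right one, and the subsequent ``check'' you invoke would fail. To see this concretely, run it in the target coordinates: take $R=[0,1]\times[-3,3]$, $P_0=[0,1]\times\{0\}$, $h_0=0$, so the $\mc{L}_+$ leaves are the slope-$1$ segments $\{y-x=c\}$. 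The leaves meeting $P_0$ are those with $c\in[-1,0]$, and on them $\alpha=c$. The correct value of $\alpha$ on the leaf with parameter $c\in(0,3]$ is $c$ itself, not the nearest assigned value $0$; with your extension, $\xi_2$ would be constant on the portion of $B_\mr{W}$ above $P_0$, destroying injectivity and the constraint $\xi(B_\mr{W})\subset\{0\}\times\mb{R}$.

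The reason ``nearest value'' cannot be right in general is exactly the monodromy-like coupling you flag at the end: on $B_\mr{W}$ and $B_\mr{E}$ one needs $\alpha(L_+(q))=\beta(L_-(q))$ (both equal $\xi_2(q)$, since $\xi_1$ is pinned to $0$ or $1$ there), so fixing $\alpha$ on a new $\mc{L}_+$ leaf propagates to an $\mc{L}_-$ leaf, which propagates to another $\mc{L}_+$ leaf at the opposite wall, and so on. A point far from $P_0$ may require several such bounces before hitting a leaf whose value is already known, and each bounce shifts the assigned value. This is precisely what the paper's proof encodes with the zigzag path $Z(q)$: it counts the number $m$ of bounces and sets $\xi_2(q)=m-1+\xi_1(z(q))$ or $m-\xi_1(z(q))$, then uses Lemma~\ref{lemmaLSP} part~\ref{itemLSPFinite} to guarantee $m$ is finite, and Claims~\ref{claimMoxBIncreasing} and~\ref{claimMoxEWGap} to verify monotonicity and the fact that the two endpoints $\xi(q_\mr{W})$, $\xi(q_\mr{E})$ of a leaf land close enough vertically for a single unimodal slope-$\pm1$ path $\Lambda$ to join them, whose apex gives $\xi(q)$. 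Your uniqueness argument is morally correct — the boundary coupling forces the answer — but the existence half, which you yourself single out as the main obstacle, is missing, and the placeholder you substituted for it is a definite step that does not work. You would need to replace the nearest-value extension with an explicit bounce-counting (or equivalent) construction and then establish the two monotonicity/gap facts that make the resulting $\xi$ both well-defined and consistent with the defining ``leaves to slope $\pm1$'' property.
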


To show $\xi$ is well-defined, we give an equivalent construction 
starting with $P_0 \cup B_\mr{W} \cup B_\mr{E}$ for the case where $J=\{0\}$. 
%We introduce some additional notation to prove Lemma \ref{lemmaMox}. 
%
For $q\in B_\mr{W}$ that is north of $P_0$, let 
$Z(q)$ be the path emanating from $q$ that traverses a leaf of $\mc{L}_-$
until either reaching $P_0$, or reaching $B_\mr{E}$, 
in which case $Z(q)$ then traverses a leaf of $\mc{L}_+$ 
until either reaching $P_0$, or reaching $B_\mr{W}$, 
in which case $Z(q)$ continues alternating between leaves of $\mc{L}_-$ and $\mc{L}_+$ in this manner.  
We call $Z(q)$ a \df{zigzag path}.
and we call each such leaf an \df{edge} of $Z(q)$. 

The zigzag path $Z(q)$ can only have finitely many such edges 
by Lemma \ref{lemmaLSP} part \ref{itemLSPFinite},  
and our starting point $q$ is north of $P_0$, 
so $Z(q)$ must end at some point $z(q) \in P_0$. 
If the last edge of $Z(q)$ is a path of $\mc{L}_-$, 
then let $\xi_2(q) = m-1 +\xi_1(z(q))$ 
where $\xi=\xi_1\times\xi_2$ 
and $m$ is the number of edges of $Z(q)$. 
Otherwise, the last edge of $Z(q)$ is a path of $\mc{L}_+$, 
in which case $\xi_2(q) = m -\xi_1(z(q))$. 
Let $Z(q)$ and $\xi_2$ be defined analogously on $B_\mr{E}$.

\begin{claim}
\label{claimMoxBIncreasing}
$\xi_2$ is increasing northward on $B_\mr{W}$ and likewise on $B_\mr{E}$. 
\end{claim}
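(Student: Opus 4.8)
The plan is to prove the (a priori stronger, but clean) statement that $\xi_2$ is \emph{strictly} increasing northward, by comparing the two zigzag paths $Z(q)$ and $Z(q')$ directly. It suffices to treat two points $q,q'$ on $B_\mr{W}$ lying north of $P_0$ with $q$ strictly north of $q'$ and to show $\xi_2(q)>\xi_2(q')$: any remaining portion of $B_\mr{W}$ is handled by the symmetric construction and the two pieces match continuously at $P_0\cap B_\mr{W}$, and the statement for $B_\mr{E}$ is entirely symmetric (interchange $\mc{L}_+$ with $\mc{L}_-$, $B_\mr{W}$ with $B_\mr{E}$, and the source $P_0\cap B_\mr{W}$ with the terminal $P_0\cap B_\mr{E}$). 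Write $R_\mr{N}$ for the component of $R\setminus P_0$ whose closure contains $B_\mr{N}$. Recall that $Z(q)$ is a finite path whose edges are arcs of leaves, alternating between $\mc{L}_-$ and $\mc{L}_+$ and beginning with a leaf of $\mc{L}_-$, with bounce points lying alternately on $B_\mr{E}$ and on $B_\mr{W}$ and the path terminating at $z(q)\in P_0$; since a leaf meets the path $P_0\in P$ at most once (Lemma~\ref{lemmaLSP} part~\ref{itemLSPEastward}), every edge of $Z(q)$ preceding its terminal stays in $\cl(R_\mr{N})$, so all bounce points lie on the parts of $B_\mr{W}$ and $B_\mr{E}$ north of $P_0$.

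The first main step is a monotonicity-of-bounce-points lemma: by induction on $k$, as long as neither $Z(q)$ nor $Z(q')$ has terminated within its first $k$ edges, the $k$-th bounce point of $Z(q)$ is strictly north of that of $Z(q')$ on whichever boundary arc they share ($B_\mr{E}$ for $k$ odd, $B_\mr{W}$ for $k$ even). The base case $k=0$ is the hypothesis $q$ north of $q'$. For the inductive step: the $k$-th edges of $Z(q)$ and $Z(q')$ are arcs of distinct leaves of the same foliation, hence disjoint by Lemma~\ref{lemmaLSP} part~\ref{itemLSPEndpoints}; the leaf carrying $Z(q)$'s $k$-th edge is a cross-cut of $R$, and the $(k{-}1)$-st bounce point of $Z(q')$ lies, in the cardinal language of Remark~\ref{remarkCardinalDirection}, on the ``southern'' component of the complement of that leaf; since $Z(q')$'s $k$-th edge is connected, disjoint from that leaf, and begins on that component, it remains there, so its endpoint on the next boundary arc is again the more southern of the two. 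I expect this step — propagating the separation bookkeeping cleanly through an arbitrary number of reflections, and pinning down exactly which complementary component is ``southern'' in each reflection — to be the main obstacle.

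The second step locates where a zigzag edge meets $P_0$: among leaves of $\mc{L}_-$ emanating from $B_\mr{W}$ and meeting $P_0$, the crossing point moves toward the terminal $P_0\cap B_\mr{E}$ (equivalently, $\param^{-1}(P_0)$ increases there) as the emanation point moves north; symmetrically, among leaves of $\mc{L}_+$ emanating from $B_\mr{E}$ and meeting $P_0$, $\param^{-1}(P_0)$ at the crossing decreases as the emanation point moves north. This is again a Jordan-curve argument from Lemma~\ref{lemmaLSP} part~\ref{itemLSPEastward}: $P_0$ directed eastward crosses an $\mc{L}_-$-leaf from its southwest side to its northeast side, so the part of $P_0$ on the southwest side of a given such leaf is precisely the part west of the crossing point; a more southern $\mc{L}_-$-leaf emanating from $B_\mr{W}$ lies entirely on that southwest side, hence meets $P_0$ strictly west of the crossing; and symmetrically for $\mc{L}_+$ using that $P_0$ crosses $\mc{L}_+$-leaves from northwest to southeast.

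Finally I would combine the two steps. Let $Z(q')$ terminate after $m'$ edges, so its last edge is a leaf of $\mc{L}_-$ when $m'$ is odd and of $\mc{L}_+$ when $m'$ is even, giving $\xi_2(q')=m'-1+\xi_1(z(q'))$ or $\xi_2(q')=m'-\xi_1(z(q'))$ respectively, and in either case $\xi_2(q')\in[m'-1,m']$. By the bounce-point monotonicity, $Z(q)$'s $m'$-th edge emanates from a point north of where $Z(q')$'s does. If $Z(q)$ also terminates after exactly $m'$ edges, the location comparison gives $\xi_1(z(q))>\xi_1(z(q'))$ for $m'$ odd and $\xi_1(z(q))<\xi_1(z(q'))$ for $m'$ even, and substituting into the formula for $\xi_2$ yields $\xi_2(q)>\xi_2(q')$ in both parities. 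Otherwise $Z(q)$'s $m'$-th edge overshoots the relevant endpoint of $P_0$ and reaches $B_\mr{E}$ or $B_\mr{W}$, so $Z(q)$ has at least $m'+1$ edges and $\xi_2(q)\ge m'$, while $\xi_2(q')<m'$ unless $z(q')$ is an endpoint of $P_0$; that last transitional case I would settle by advancing $Z(q)$ one further step and applying the location comparison once more. In every case $\xi_2(q)>\xi_2(q')$, which proves the claim for $B_\mr{W}$, and the case of $B_\mr{E}$ follows symmetrically. Apart from the separation bookkeeping of the second paragraph, the only remaining nuisances are the finitely many transitional configurations in which a zigzag edge ends exactly at $P_0\cap B_\mr{W}$ or $P_0\cap B_\mr{E}$.
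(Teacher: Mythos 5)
Your argument is essentially the paper's: both proceed by comparing the zigzag paths $Z(q)$ and $Z(q')$ inductively on the number of edges, using that leaves of one foliation are pairwise disjoint cross-cuts so that northern bounce points stay north and $P_0$-crossings move accordingly. Two small details to close: the case where $Z(q)$ terminates \emph{before} $Z(q')$ should be explicitly ruled out (it follows by the same separation argument applied to the $B_\mr{SE}$ or $B_\mr{SW}$ endpoint of the leaf rather than to its $P_0$-crossing), and the disjointness of same-foliation leaves should be attributed to $\mc{L}_\pm$ being partitions of $R$, as stated in the preamble of Lemma~\ref{lemmaLSP}, rather than to part~\ref{itemLSPEndpoints}.
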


\begin{proof}
Consider the case where $q_k\in B_\mr{W}$ 
with $q_2$ north of $q_1$ north of $P_0$ 
and where $Z(q_k)$ has 1 edge. 
Then $Z(q_k)$ for each $k$ consists of a single arc of a path in $\mc{L}_-$, 
and $Z(q_2)$ is entirely north of $Z(q_1)$ 
as in Remark \ref{remarkCardinalDirection}  
since $\mc{L}_-$ is a partition, 
which is also east of $Z(q_1)$, 
so $z(q_2)$ is east of $z(q_1)$, 
so $\xi_2(z(q_2)) = \xi_1(z(q_2))>\xi_1(z(q_1)) =\xi_1(z(q_1))$. 
The claim holds by a similar argument on $B_\mr{E}$ and south of $P_0$ in the case where $Z(q_k)$ has a single edge, 
and the claim holds on the rest of $B_\mr{W}$ and $B_\mr{E}$ by induction on the number of edges of $Z(q_k)$. 
\end{proof}

Now consider a point $q \in R$, and let $q_\mr{W}$ be the endpoint of the path $L_+(q) \in \mc{L}_+$ through $q$ on $P_0\cup B_\mr{W}$, and define $q_\mr{E}$ analogously. 

\begin{claim}
\label{claimMoxEWGap}
If $q_\mr{E} \in P_0$, then $|\xi_2(q_\mr{W})| \leq \xi_1(q_\mr{E})$. 
If $q_\mr{W} \in P_0$, then $|\xi_2(q_\mr{E})| \leq 1-\xi_1(q_\mr{W})$. 
Otherwise, $|\xi_2(q_\mr{E}) -\xi_2(q_\mr{W})|\leq 1$. 
\end{claim}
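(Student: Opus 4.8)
The plan is to exploit that $\xi$ carries each leaf of $\mc{L}_+$ to a segment of slope $1$, so that along the leaf $L_+(q)$ the two coordinates $\xi_1,\xi_2$ change by equal amounts; the three alternatives of the claim then follow by reading off one coordinate of $q_\mr{W}$ and one of $q_\mr{E}$ from the data already fixed on $P_0\cup B_\mr{W}\cup B_\mr{E}$. Throughout we may normalize $h_0=0$, since in the case $J=\{0\}$ the latitude $h$ enters the construction only as an overall vertical translation, which is how the zigzag formulas defining $\xi_2$ were written.

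First I would record the partial data available on the three relevant boundary pieces: $\xi_1=\param^{-1}(P_0)$ takes values in $[0,1]$ along $P_0$ and $\xi_2\equiv 0$ there; $\xi_1\equiv 0$ along $B_\mr{W}$ and $\xi_1\equiv 1$ along $B_\mr{E}$; and $\xi_2\ge 0$ on the portion of $B_\mr{W}\cup B_\mr{E}$ lying north of $P_0$ while $\xi_2\le 0$ on the portion south of it --- this last fact being immediate from the zigzag formula, since $Z(q)$ has $m\ge 1$ edges and $\xi_2(q)$ equals $m-1+\xi_1(z(q))$ or $m-\xi_1(z(q))$ with $\xi_1(z(q))\in[0,1]$. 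Next I would establish the identity \[\xi_2(q_\mr{E})-\xi_2(q_\mr{W})=\xi_1(q_\mr{E})-\xi_1(q_\mr{W}),\] which says precisely that $\xi$ restricted to the arc of $L_+(q)$ from $q_\mr{W}$ to $q_\mr{E}$ is a segment of slope $1$. Since global well-definedness of $\xi$ is still pending (Claim \ref{claimMoxWellDefined}), I would argue this directly from the zigzag construction: that arc is a single edge of a zigzag path, and the recursive rule defining $\xi_2$ on $B_\mr{W}\cup B_\mr{E}$, together with $\xi_1=\param^{-1}(P_0)$ on $P_0$, was arranged so that consecutive zigzag edges match with alternating slope $\pm1$; tracing this one edge yields the identity. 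This is also the step at which one checks, using Lemma \ref{lemmaLSP} parts \ref{itemLSPEndpoints} and \ref{itemLSPEastward} and that $P_0$ separates $R$ into its north and south parts, that $q_\mr{W}$ lies on $P_0\cup B_\mr{W}$ and $q_\mr{E}$ on $P_0\cup B_\mr{E}$, and that if both lie on $P_0$ then $q_\mr{W}=q_\mr{E}$.

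With the identity in hand the claim is a short case split. If $q_\mr{E}\in P_0$ then $\xi_2(q_\mr{E})=0$, and either $q_\mr{W}=q_\mr{E}$ (trivial) or $q_\mr{W}\in B_\mr{W}$ so $\xi_1(q_\mr{W})=0$, whence $\xi_2(q_\mr{W})=-\xi_1(q_\mr{E})$ and $|\xi_2(q_\mr{W})|=\xi_1(q_\mr{E})$. If $q_\mr{W}\in P_0$ then symmetrically $\xi_2(q_\mr{W})=0$ and, barring $q_\mr{W}=q_\mr{E}$, $q_\mr{E}\in B_\mr{E}$ so $\xi_1(q_\mr{E})=1$, whence $\xi_2(q_\mr{E})=1-\xi_1(q_\mr{W})$ and $|\xi_2(q_\mr{E})|=1-\xi_1(q_\mr{W})$. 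In the remaining case $q_\mr{W}\in B_\mr{W}$ and $q_\mr{E}\in B_\mr{E}$, so $\xi_1(q_\mr{W})=0$, $\xi_1(q_\mr{E})=1$, and $|\xi_2(q_\mr{E})-\xi_2(q_\mr{W})|=1$. The main obstacle is the middle step: making the slope-$1$ identity, and the claimed locations of $q_\mr{W}$ and $q_\mr{E}$, rigorous from the piecewise zigzag construction while $\xi$ is not yet known to be globally well-defined. Once that bookkeeping is set up, the case analysis above is immediate and in fact gives equality in each case, so the stated inequalities hold a fortiori.
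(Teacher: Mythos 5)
Your proposal rests on a misreading of which leaves produce $q_\mr{W}$ and $q_\mr{E}$. You take both to be endpoints of the same leaf $L_+(q)$, so that the arc from $q_\mr{W}$ to $q_\mr{E}$ lies entirely on a slope-$1$ leaf and you can assert the identity $\xi_2(q_\mr{E})-\xi_2(q_\mr{W})=\xi_1(q_\mr{E})-\xi_1(q_\mr{W})$. But the paper's ``define $q_\mr{E}$ analogously'' replaces $\mc{L}_+$ by $\mc{L}_-$ (and $B_\mr{W}$ by $B_\mr{E}$): $q_\mr{E}$ is the endpoint of $L_-(q)$ on $P_0\cup B_\mr{E}$. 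This is forced by the proof of Claim~\ref{claimMoxWellDefined}, which needs $q_\mr{W}$ constant along each $L_+$ (giving the slope-$1$ side of $\Lambda$) \emph{and} $q_\mr{E}$ constant along each $L_-$ (giving the slope-$(-1)$ side); under your reading both would be constant along $L_+(q)$, making the apex independent of $q$ and the construction vacuous. It is also visible in the paper's own proof of Claim~\ref{claimMoxEWGap}, which immediately reasons about the relative position of $q_\mr{W}$ and the leaf $L_-(q)$.

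With the corrected reading the slope-$1$ identity does not hold, and in fact your derived ``equality'' is contradicted by the sign data you yourself set up: if $q_\mr{E}\in P_0$ and $q_\mr{W}\in B_\mr{W}$ with $q$ north of $P_0$, then $q_\mr{W}$ is also north of $P_0$ (since $q_\mr{E}$ is the unique crossing of $L_+(q)$ with $P_0$ only under your reading; under the correct one, $q_\mr{W}$ north follows because the SW arc of $L_+(q)$ from $q$ never crosses $P_0$ once it lands on $B_\mr{W}$), so $\xi_2(q_\mr{W})\ge 0$, whereas your equality gives $\xi_2(q_\mr{W})=-\xi_1(q_\mr{E})\le 0$. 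The claim is genuinely an inequality. The actual argument compares the two $\mc{L}_-$ leaves $L_-(q_\mr{W})$ and $L_-(q)$ (they either coincide or nest, since $\mc{L}_-$ is a partition), shows $L_-(q_\mr{W})$ meets $P_0$ at a point $z(q_\mr{W})$ no farther east than $q_\mr{E}$, and reads off $\xi_2(q_\mr{W})=\xi_1(z(q_\mr{W}))\le \xi_1(q_\mr{E})$ from the one-edge zigzag formula. The third case ($q_\mr{W}\in B_\mr{W}$, $q_\mr{E}\in B_\mr{E}$) then needs the monotonicity of $\xi_2$ established in Claim~\ref{claimMoxBIncreasing} to rule out a gap larger than $1$. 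None of this bookkeeping is present in your sketch, and the case split you wrote down produces conclusions that are false under the correct definitions.
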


\begin{proof}
Let us assume that $q$ is north of $P_0$; otherwise we can replace $\xi_2$ with $-\xi_2$.

Consider the case where $q_\mr{E} \in P_0$.  
Then, $q_\mr{W}$ is either to the south of or on $L_-(q)$ 
as in Remark \ref{remarkCardinalDirection}, 
which is also to the west of $L_-(q)$.  
Hence, either $q_\mr{W} \in P_0$, in which case $\xi_2(q_\mr{W}) = 0$, 
or $q_\mr{W} \in B_\mr{W}$, 
in which case $L_-(q_\mr{W})$ is either entirely to the west of $L_-(q)$ or coincides with $L_-(q)$ 
since $\mc{L}_-$ is a partition. 
Also, the arc of $B_\mr{E}$ that is to the north of $P_0$ is entirely east of $L_-(q)$,  
so $L_-(q_\mr{W})$ meets $P_0$ at $z(q_\mr{W})$, which is either at or to the west of $q_\mr{E}$, 
so $\xi_2(q_\mr{W}) = \xi_1(z(q_\mr{W})) \leq \xi_1(z(q_\mr{E}))$. 
The claim follows by a similar argument  
in the case where $q_\mr{W} \in P_0$.

Consider the case where $q_\mr{E},q_\mr{W} \not\in P_0$.  
Then, $q_\mr{W} \in B_\mr{W}$ and $q_\mr{E} \in B_\mr{E}$. 
Suppose we had $\xi_2(q_\mr{W}) > \xi_2(q_\mr{E})+1$, 
and let $q_\mr{WE}$ be the endpoint of $L_-(q_\mr{W})$ that is not on $B_\mr{W}$. 
Then, we would have $\xi_2(q_\mr{WE}) = \xi_2(q_\mr{W})-1 > \xi_2(q_\mr{E})$, 
but $q$ is to the north of $L_-(q_\mr{W})$, 
so $q_\mr{E}$ is to the north of $q_\mr{WE}$, 
which contradicts Claim \ref{claimMoxBIncreasing}. 
Hence, we must have $\xi_2(q_\mr{W}) \leq \xi_2(q_\mr{E})+1$, 
and by a similar argument we have $\xi_2(q_\mr{E}) \leq \xi_2(q_\mr{W})+1$, 
so $|\xi_2(q_\mr{E}) -\xi_2(q_\mr{W})|\leq 1$. 
\end{proof}

\begin{proof}[Proof of Claim \ref{claimMoxWellDefined}]
By Claim \ref{claimMoxEWGap}, $\xi(q_\mr{W})$ and $\xi(q_\mr{E})$ are close enough vertically to be connected by a unimodal path $\Lambda$ with slope 1 and $-1$.  Let $\xi(q)$ be the coordinates at the highest point of $\Lambda$. 
The point $q_\mr{W}$ is the same for each choice of $q$ on a path $L_+\in\mc{L}_+$, 
so the increasing portion of $\Lambda$ is on the same line for each $q \in L_+$, 
so the image of $L_+$ by $\xi$ is a segment of slope 1,
and similarly the image of $L_- \in \mc{L}_-$ has slope $-1$, 
which is the defining feature of $\xi$ on $R\setminus P_0$. 
Thus, $\xi$ in Definition \ref{defMox} is equivalent to the construction of $\xi$ using zigzag paths.
\end{proof}

\begin{proof}[Proof of Lemma \ref{lemmaMox}]
We have that $\xi$ is equivariant and varies continuously on $S_0$ by Lemma \ref{lemmaPathParam}, 
and the levels sets $(\mc{L}_+,\mc{L}_-)$ vary equivariantly and continuously 
by Lemma \ref{lemmaLSP}, 
so $\xi$ varies equivariantly and continuously by induction on the number of edges of $Z(q_\mr{W})$ and $Z(q_\mr{E})$, 
which means that parts \ref{itemMoxContinuous} and \ref{itemMoxEquivariant} hold. 
We have 
$\mu(B_\mr{W}) \subset 0\times \mb{R}$, and 
$\mu(B_\mr{E}) \subset 1\times \mb{R}$, and     
$\mu(P_{j}) = [0,1]\times h_j$ for each $j\in J$ 
by construction, so parts \ref{itemMoxSides} and \ref{itemMoxH} hold.

Since $\xi$ sends each path of $\mc{L}_+$ and $\mc{L}_-$ to a segment of slope 1 and $-1$ respectively by construction, 
and each path of $\mu({P_i})$ only crosses these segments from east to west 
by Lemma \ref{lemmaLSP} part \ref{itemLSPEastward}, 
the path $\mu(P_i)$ is the graph of a 1-Lipschitz function, and in particular is x-monotone, 
which means that part \ref{itemMoxMonotone} holds.  

Let $\widetilde \xi$ be defined as in Definition \ref{defMox}, 
but with $B$ replaced with $\widetilde B$, and $S^+$ with $-S^{+}$, and $h$ with $-h$ 
as in part \ref{itemMoxAntipodal}. 
Replacing $S^+$ with $-S^+$ in the definition of $\xi$ on $P_0$ swaps $\xi_+$ and $\xi_-$, 
so $\xi_1 = \tfrac{\xi_++\xi_-}{2}$ is unchanged on $P_0$, 
so $\widetilde \xi = (\xi_1,-h_0) = [(x,y) \mapsto (x,-y)] \circ \xi$, 
so part \ref{itemMoxAntipodal} holds on $P_0$. 
The map $[(x,y) \mapsto (x,-y)]$ swaps segments with slope 1 and segments with slope $-1$, 
and replacing $B$ with $\widetilde B$ in $\lsp(D,B,P)$ swaps $\mc{L}_+$ and $\mc{L}_-$
by Lemma \ref{lemmaLSP} part \ref{itemLSPAntipodal}, 
so $[(x,y) \mapsto (x,-y)]\circ \widetilde \xi$ 
sends $\mc{L}_+$ to the class of segments with slope 1 
and sends $\mc{L}_-$ to the class of segments with slope $-1$, 
and agrees with $\xi$ on $P_0$, which are the defining characteristics of $\xi$, 
so part \ref{itemMoxAntipodal} holds.
\end{proof}

\subsection{Coordinate foliations} 

In this subsection we define the function $\fol$ and prove Lemma \ref{lemmaLSP}. 
We will define foliations in each cell of $R$ subdivided by $P$ using conformal maps, 
and then glue the foliations of adjacent cells together. 
We first prove two claims to show the existence and uniqueness of the northeast most point of a cell, which will be used in the definition. 

%\begin{definition}
Let $\mc{C}$ be the set of closed 2-cells of $R$ subdivided by $P$.
%\end{definition}

\begin{claim}
\label{claimLSPNoCycle}
$B\cup P$ has no northeastward directed cycles. 
Likewise southeastward. 
\end{claim}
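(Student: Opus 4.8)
The plan is to argue by contradiction. Suppose $B\cup P$ has a northeastward directed cycle and choose a minimal one; it is then a simple closed curve $\gamma\subset R$ each of whose edges is either an eastward sub-arc of some $P_i$ (arcs lying along $B_\mr{N}$ or $B_\mr{S}$ are of this kind, since $B_\mr{N},B_\mr{S}\subseteq\bigcup P$) or a north-directed sub-arc of $B_\mr{W}$ or $B_\mr{E}$.

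First I would observe that, in the northeastward orientation, the corner $B_\mr{SW}$ is a source and the corner $B_\mr{NE}$ is a sink. Indeed, the only edges pointing toward a vertex of $B_\mr{W}$ are the north-directed sub-arcs of $B_\mr{W}$ arriving from the south, so no edge points into $B_\mr{SW}$; dually, the only edges leaving a vertex of $B_\mr{E}$ in a northeastward direction are the north-directed sub-arcs of $B_\mr{E}$ (paths begin on $B_\mr{W}$, never on $B_\mr{E}$), so no edge leaves $B_\mr{NE}$. Consequently, if $\gamma$ met $B_\mr{W}$, tracing $\gamma$ backwards from such a point would be forced to descend along $B_\mr{W}$ and reach the source $B_\mr{SW}$, contradicting that $\gamma$ is a cycle; and if $\gamma$ met $B_\mr{E}$, tracing it forwards would be forced to ascend along $B_\mr{E}$ and reach the sink $B_\mr{NE}$, again a contradiction. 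Hence $\gamma$ consists entirely of eastward sub-arcs of the paths in $P$.

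It remains to exclude a simple closed curve $\gamma$ built from eastward path-arcs, and here the hypothesis that the paths pairwise coincide, are disjoint, or cross once is essential. By the Jordan curve theorem $\gamma$ bounds a disk $D$; moreover I may assume $D\subseteq R$ and that $R$ is a disk, reducing the pinched case (where $B_\mr{N}$ meets $B_\mr{S}$) to it by noting that a simple closed curve through a cut point of $R$ has connected complement at that point, hence lies in the closure of a single lobe, which is a disk. Orienting $\gamma=\partial D$ so that $D$ lies on its left, the hard part will be to derive the contradiction; the route I would take is an induction on $|P|$: cut $R$ along one of the paths $P^{(j)}$ carrying an arc of $\gamma$, splitting $R$ into two half-disks, and check that the cycle restricts to a strictly smaller configuration of the same type in one of them (the delicate point being what happens to paths that cross $P^{(j)}$, which become "half-paths" ending on the cut). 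An alternative I would try in parallel is to show directly that each $P^{(j)}$, continued eastward past the end of its arc on $\partial D$, immediately leaves $D$ — otherwise it must re-cross $\gamma$ to reach its endpoint on $B_\mr{E}$ — and then chase these exits around $\partial D$: since each pair of paths meets at most once, a consistent such chase is impossible (it forces either a vertex of degree $\ge 3$ on $\gamma$ or two participating paths to meet twice). I expect this step to be the main obstacle, since it is exactly where the combinatorics of pseudolines must be exploited.

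Finally, the southeastward statement follows by applying the northeastward case to the pseudomonotonic path system $(R,\widetilde B,P)$ in which the northern and southern borders are interchanged: this interchange converts southeastward directions for $(R,B,P)$ into northeastward directions for $(R,\widetilde B,P)$, and the defining conditions of a pseudomonotonic path system are symmetric under swapping $B_\mr{N}$ and $B_\mr{S}$.
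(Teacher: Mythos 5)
Your preliminary steps are sound and parallel the paper's: arcs of $\gamma$ on $B_\mr{W}$ would force a descent to the western corner (a source) and arcs on $B_\mr{E}$ an ascent to the eastern corner (a sink), so a minimal cycle lies entirely in $\bigcup P$; and the southeastward case does follow by swapping $B_\mr{N}$ with $B_\mr{S}$, which exchanges the two diagonal direction classes while preserving the axioms of a pseudomonotonic path system (the paper simply declares this ``likewise''). However, there is a genuine gap at the step you yourself flag as ``the main obstacle'': you do not actually exclude a simple closed curve made of eastward path-arcs. Neither of your two sketches closes it. The cut-and-recurse sketch leaves unresolved the very issue you note about truncated half-paths, and the chase sketch only establishes that $P^{(j)}$ extended eastward must re-cross $\gamma$; that is not by itself contradictory, since the re-crossing may be with a \emph{different} path, at a vertex not already on $\gamma$, and $P^{(j)}$ may cross $\gamma$ along arcs of several distinct paths before reaching $B_\mr{E}$, so neither a degree-$\geq 3$ vertex nor a double crossing is forced.

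The missing move, which the paper uses, is to push minimality past simplicity: if any path of $P$ cuts across the interior of the disk bounded by $\gamma$, replace $\gamma$ by a smaller northeastward cycle on one side of that path, so that after finitely many steps $\gamma = \partial C$ for a single 2-cell $C$ of $R$ subdivided by $P$. Once $C$ is a cell, consecutive boundary arcs $E_j \subset P_j$ and $E_{j+1}\subset P_{j+1}$ meet at the unique crossing of $P_j$ with $P_{j+1}$, and $C$ lies entirely on one side of each $P_j$; reading off the local picture at that crossing forces $P_{j+1}\cap B_\mr{W}$ to lie strictly south of $P_j\cap B_\mr{W}$ along $B_\mr{W}$. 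Following this around the cycle returns to $P_1$ with a strict inequality, a contradiction. This monotone tracking of the $B_\mr{W}$-intersections is precisely what converts the ``cross at most once'' hypothesis into a finished argument, and it is absent from your proposal.
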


\begin{proof}
A northeastward directed cycle cannot include $B_\mr{W}$ since there are no paths directed toward $B_\mr{W}$. 
Also, a northeastward directed cycle cannot include $B_\mr{E}$ since there are no paths directed away from $B_\mr{W}$. 
Hence, we are left with paths of $P$. 

Suppose we had such a cycle $\partial C$ with edges $E_1,\dots,E_m$ along paths $P_1,\dots,P_m$ in that order directed northeastward. 
We will get a contradiction from the order that the paths intersect $B_\mr{W}$. 

We may assume that $\partial C$ is a simple closed curve; otherwise traverse the cycle until the first time a point is repeated, and take the directed cycle at that point. 
Then, $\partial C$ bounds a cell $C \in \mc{C}$. 
%and we may assume that $C$ a cell of $R$ subdivided by $P$.  
Otherwise, if there is some path $P_j$ passing through the interior of $C$, 
we can find a new directed cycle $\partial C'$ that bounds a proper subset $C' \subset C$ 
on one side of $P_j$, and by induction on the number of cells in $C$, we get a minimal such $C$, which is a cell of $R$. 

Consider how $P_1,P_2$ cross at the vertex $v$ of the cycle. 
We may assume by symmetry that $E_2$ is north of $P_1$. 
Then, $P_2\cap B_\mr{W}$ must be south of $P_1\cap B_\mr{W}$ along $B_\mr{W}$. 
Also, $C$ is on or to the north of $P_2$ since $C$ is a cell, so $E_3$ must be north of $P_2$, 
so $P_3\cap B_\mr{W}$ must be south of $P_2\cap B_\mr{W}$ along $B_\mr{W}$, 
and continuing likewise, 
$P_{j+1}\cap B_\mr{W}$ must be south of $P_j\cap B_\mr{W}$, 
and $P_1\cap B_\mr{W}$ must be south of $P_m\cap B_\mr{W}$, 
which is south of $P_1\cap B_\mr{W}$ along $B_\mr{W}$, 
which is a contradiction. 
\end{proof}

\begin{claim}
\label{claimLSPSource}
The boundary of a cell $C \in \mc{C}$ directed northeastward has a unique source and a unique sink. 
Likewise directed southeastward. 
\end{claim}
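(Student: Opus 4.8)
The plan is to work with the boundary cycle $\partial C$ of a cell $C \in \mc{C}$, directed so as to be traversed northeastward, and argue about its structure using Claim \ref{claimLSPNoCycle}. Recall $\partial C$ is a simple closed curve built out of arcs along the borders $B$ and the paths $P_i \in P$, and the orientation ``northeastward'' means each such arc is directed from $B_\mr{SW}$-side toward $B_\mr{NE}$-side in the sense of Remark \ref{remarkCardinalDirection}. A \emph{source} of this directed boundary is a vertex $v$ of $\partial C$ where both incident arcs are directed away from $v$, and a \emph{sink} is one where both are directed toward $v$; since $\partial C$ is a simple closed curve, the arcs alternate in direction precisely at sources and sinks, so the number of sources equals the number of sinks, and it suffices to show there is at least one source and at most one source.

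For existence: I would first observe that some vertex must be a source, by a standard extremal argument. Consider the vertex of $\partial C$ (or, if needed, a point of an arc of $\partial C$) that is northeast-most; more carefully, I would take a leaf-agnostic ``height function'' — for instance, restrict to a small neighborhood and use that each arc of $\partial C$ is pseudomonotonic, hence carries a well-defined sense of increase. Since $\partial C$ has no northeastward directed cycle by Claim \ref{claimLSPNoCycle}, the directed boundary cannot have every vertex be a ``through'' vertex (one in-arc, one out-arc): if it did, following out-arcs around would produce a northeastward directed cycle, contradicting the claim. Hence at least one vertex is a source (and dually at least one is a sink).

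For uniqueness of the source: suppose $\partial C$ had two sources $u$ and $v$. They divide $\partial C$ into two arcs $\alpha, \beta$, each running from one source to the other. Along each of $\alpha$ and $\beta$, starting at a source both arcs point forward, and arriving at the other source both point backward, so each of $\alpha,\beta$ contains at least one sink in its interior; thus there are at least two sinks. Now I would use the cell structure: because $C$ is a single 2-cell of the subdivision by $P$ (with no path of $P$ crossing its interior) and each $P_i$ is x-monotone — more precisely, each path $P_i$ crosses each leaf at most once and meets $B_\mr{W}, B_\mr{E}$ exactly once, so behaves monotonically — the boundary $\partial C$ is the concatenation of an ``upper'' pseudomonotonic arc from its westmost to its eastmost vertex (along $B_\mr{N}$ and arcs of paths) and a ``lower'' one. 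This upper-lower decomposition has exactly one local extreme in each monotone direction: the northeast-most vertex is the unique source and the southwest-most vertex is the unique sink. I would make this precise by showing that between the westmost vertex $w$ and the eastmost vertex $e$, each of the two boundary arcs is pseudomonotonic with respect to the northeast direction in the sense inherited from the fact that paths of $P$ only cross leaves one way (Lemma \ref{lemmaLSP} part \ref{itemLSPEastward} will not yet be available, so instead I use directly that each $P_i$ meets each vertical-like cut once), which forbids a second source or sink. Combining, $\partial C$ has exactly one source and one sink.

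The southeastward statement follows verbatim by the symmetry that swaps the roles of ``north'' and ``south'': apply the same argument to the cell $C$ viewed inside $(R,\widetilde B, P)$ with $\widetilde B$ as in Lemma \ref{lemmaMox}, under which northeastward directed cycles become southeastward ones and Claim \ref{claimLSPNoCycle} supplies the needed acyclicity. The main obstacle I anticipate is making the ``upper arc / lower arc'' monotonicity argument airtight in the degenerate situations allowed by Definition \ref{defEastwardSystem} — namely when some paths coincide, when $B_\mr{N}$ and $B_\mr{S}$ touch at finitely many points, or when $R$ degenerates to a path — since in those cases the boundary of a cell may be pinched or an arc may be a single point; I would handle these by treating coinciding arcs as a single arc and checking that pinch points are neither sources nor sinks (they are through-vertices for the directed boundary).
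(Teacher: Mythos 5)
Your existence argument is fine (and matches the spirit of the paper: no northeastward directed cycle by Claim \ref{claimLSPNoCycle} forces at least one source and sink). The trouble is in uniqueness, where your plan has a genuine circularity. You propose to decompose $\partial C$ into an ``upper'' arc and a ``lower'' arc joining a westmost to an eastmost vertex and then argue pseudomonotonicity of each; but the very existence of a well-defined westmost and eastmost vertex of $\partial C$ is equivalent to uniqueness of source and sink, which is what you are trying to prove. If there are two sources, the directed boundary has two local minima, and there is no canonical ``upper/lower'' split. You notice the problem and try to replace leaves with ``vertical-like cuts,'' but no such transverse family is available at this point in the development — constructing exactly that kind of coordinate structure on $R$ is the purpose of Lemma \ref{lemmaLSP}, and Claim \ref{claimLSPSource} is one of its ingredients. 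Invoking leaves or cuts here, even informally, is using the output of the lemma to prove a step inside it.

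The paper sidesteps this by a purely topological argument: extend the acyclic relation from Claim \ref{claimLSPNoCycle} to a total order $(\leq_\mr{NE})$, take alternating sources $s_1,s_2$ and sinks $t_1,t_2$ on $\partial C$ with $\{s_1,s_2\}<_\mr{NE}\{t_1,t_2\}$, route a path $P_\mr{s}$ from $s_1$ to $s_2$ via $B_\mr{W}$ that stays $\leq_\mr{NE}$ both sources, and a path $P_\mr{t}$ from $t_1$ to $t_2$ that stays $\geq_\mr{NE}$ both sinks. Since $P_\mr{s}$ and $P_\mr{t}$ are separated by the order, they are disjoint, and neither crosses $\partial C$ since $C$ is a face; together with an interior point of $C$ joined to the four vertices this yields a planar $K_5$, a contradiction. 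That argument needs no monotone coordinate on the cell and is not circular. If you want to keep your structure, you would need to find a substitute for ``vertical-like cuts'' that is available before $\fol$ is constructed; the natural such substitute is exactly the $(\leq_\mr{NE})$-ordering and the planarity obstruction, i.e., the paper's route.
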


\begin{proof}
The boundary of a cell $C$ has at least one sink and source by Claim \ref{claimLSPNoCycle}. 
Suppose there were more than one.  
Directing curves northeastward defines a partial order on $P\cup B$ 
by Claim \ref{claimLSPNoCycle},  
and we extend this to a total order $(\leq_\mr{NE})$. 
Then, the sources and sinks are the respective local minima and maxima with respect to $(\leq_\mr{NE})$, which alternate in the cyclic order around $\partial C$.  
We can find sources $\{s_1,s_2\}$ and sinks $\{t_1,t_2\}$ alternating around $\partial C$ such that  
$\{s_1,s_2\} <_\mr{NE} \{t_1,t_2\}$. 
For instance, let $t_1$ be a global maximum and $s_1,s_2$ be the sources adjacent to some other sink $t_2$. 
We can find a walk $W$ from $s_1$ to $s_2$ by first traversing a path of $P$ from $s_1$ southwest to $B_\mr{W}$, then traversing $B_\mr{W}$ to a path of $P$ that we can traverse northeast to $s_2$.  Every point of this walk is either west along a path through $s_1$ or $s_2$ or south of such a path along $B_\mr{W}$, so $W \leq_\mr{NE} \{s_1,s_2\}$.  Then, $W$ is connected, so $W$ has a path $P_\mr{s}$ from $s_1$ to $s_2$. 
Similarly, we can find a path $P_\mr{t}$ from $t_1$ to $t_2$ such that 
\[
P_\mr{s} \leq_\mr{NE} \{s_1,s_2\} <_\mr{NE} \{t_1,t_2\} \leq_\mr{NE} P_\mr{t},  
\]
so the paths $P_\mr{s}$ and $P_\mr{t}$ cannot cross each other, and cannot cross any edge of the cycle $\partial C$, 
since $\partial C$ bounds a cell, so $\partial C \cup P_\mr{s} \cup P_\mr{t}$ is a planar embedding of a 4-clique. 
We can then add a point $c \in C^\circ$ with disjoint paths from $c$ to each of the points $s_1,s_2,t_1,t_2$ in $C$ 
to get a planar embedding of a 5-clique, which is impossible. 
Thus, $\partial C$ has one source and one sink.   
\end{proof}

\begin{definition}[Coordinate foliations]
\label{defLSP}
We first define the foliations on a cell $C \in \mc{C}$ where $\mc{C}$ is the set of closed 2-cells of $R$ subdivided by $P$. 
Let $p_\mr{SW}$ be the source and $p_\mr{NE}$ be the sink of $\partial C$ directed northeastward as in Claim \ref{claimLSPSource}, and define $p_\mr{NW}$ and $p_\mr{SE}$ analogously. 
Let $C_\mr{N}$ be the path from $p_{NW}$ to $p_{NE}$ 
with $C$ to the south 
as in Remark \ref{remarkCardinalDirection}, 
and let $C_\mr{NE} = C_\mr{N}\cup C_\mr{E}$ 
with other cardinal and intercardinal directions defined analogously.

For $k \in \{+,-\}$, let 
$h_k : C \to \mb{C}$ be an internally isogonal map sending $C$ to the upper half-plane of $\mb{C}$ satisfying the following.  
\begin{align*}
h_+(C_\mathrm{SW}) &= [0,\infty]_\mb{R}, & 
h_+(C_\mathrm{NE}) &= [-\infty,0]_\mb{R}, & 
h_+(p_\mathrm{NW}) &= 0, & 
h_+(p_\mathrm{SE}) &= \infty, \\
h_-(C_\mathrm{SE}) &= [0,\infty]_\mb{R}, & 
h_-(C_\mathrm{NW}) &= [-\infty,0]_\mb{R}, & 
h_-(p_\mathrm{SW}) &= 0, & 
h_-(p_\mathrm{NE}) &= \infty. 
\end{align*}
Note that the $h_k$ are only defined up to a positive scalar multiple. 
%Observe that $|h_+|$ is increasing in the southeast direction.  
Let $\Theta(c,r)$ be the semicircle with center $c$ and radius $r$. 
Let 
\[\mc{L}_+(C) = \{h_+^{-1}(\Theta(-r,3r)) : r\in [0,\infty]\}, \quad 
\mc{L}_-(C) = \{h_-^{-1}(\Theta(r,3r)) : r\in [0,\infty]\}.\]
Let each path of $\mc{L}_+(C)$ be directed northeast from $C_\mr{SW}$ to $C_\mr{NE}$, 
and let $\mc{L}_+$ be the set of all maximal directed paths $L$ 
formed by a union of paths among $\mc{L}_+(\mc{C}) = \bigcup\{\mc{L}_+(C) : C \in \mc{C}\}$, 
define $\mc{L}_-$ analogously, 
and let $\lsp(R,B,P)=(\mc{L}_+,\mc{L}_-)$.  
\end{definition}

\begin{remark}
\label{remarkLSPhScalar} 
$h_+$ is only defined up to a positive scalar multiple, 
but $\mc{L}_+$ does not depend on the choice of scalar multiple. 
Likewise for $h_-$ and $\mc{L}_-$.
Also, the $h_k$ are internally conformal provided that east is clockwise of north, 
which we may assume by symmetry. 
\end{remark}

\begin{remark}
\label{remarkLSPEWDisjoint}
Note that on $C_\mr{W}$ northeastward coincides with northwestward,  
so $C_\mr{W}$ is either an arc along $B_\mr{W}$ or a single point, 
which is the source directed northeastward and southeastward. 
Similarly, $C_\mr{E}$ is either an arc along $B_\mr{E}$ of a single point,  
which is the sink directed northeastward and southeastward, 
so $C_\mr{W}$ and $C_\mr{E}$ must be disjoint. 
Hence, $C_\mr{N}$ and $C_\mr{S}$ are 1-cells, 
$p_\mr{NW}\neq p_\mr{NE}$, 
and $p_\mr{SW}\neq p_\mr{SE}$. 
\end{remark}

\begin{justification}
We use preimages of $\Theta(-r,3r)$ and $\Theta(r,3r)$ 
rather than the level sets of $|h_+|$ and $|h_-|$ 
because in some cases we can have $p_\mr{NW}=p_\mr{SW}$ and $p_\mr{NE}=p_\mr{SE}$, 
in which case we have $h_+=h_-$, but we cannot allow $\mc{L}_+$ and $\mc{L}_-$ to coincide in a cell. 
\end{justification}

We will show in Claim \ref{claimLSPMaxPath} that $\mc{L}_+$ and $\mc{L}_-$ are partitions. 
To prove this, we first partition $D$ into regions that determine how paths of $\mc{L}_+$ continue northeastward. 
Let 
\[
\mc{D}_\mr{NE} = \{ B_\mr{NE},\ C{\setminus} C_\mr{NE} : C \in \mc{C} \}. 
\]

\begin{claim}
\label{claimLSPHalfOpenCells}
$\mc{D}_\mr{NE}$ is a partition of $R$, and likewise for $\mc{D}_\mr{SE}$ defined analogously. 
\end{claim}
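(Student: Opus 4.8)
The plan is to show that $\mc{D}_\mr{NE}$ covers $R$ and that its members are pairwise disjoint. The key point is that the members of $\mc{D}_\mr{NE}$ are $B_\mr{NE}$ together with, for each closed $2$-cell $C$ of the subdivision of $R$ by $P$, the set $C \setminus C_\mr{NE}$, where $C_\mr{NE} = C_\mr{N}\cup C_\mr{E}$ is the northeastern part of $\partial C$ as defined via Claim \ref{claimLSPSource}. So I need two things: (i) every point of $R$ lies in exactly one such set, and (ii) the global $B_\mr{NE}$ and the relative complements $C \setminus C_\mr{NE}$ interact correctly.

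First I would recall that the paths of $P$ subdivide $R$ into the closed $2$-cells of $\mc{C}$, so the $2$-cells cover $R$ and any two of them meet only along shared boundary arcs and vertices, which lie on $\bigcup P \cup \partial R$. For a point $x$ in the interior of a cell $C$, $x$ belongs to $C \setminus C_\mr{NE}$ and to no other member: it is not on $B_\mr{NE}$ (which is part of $\partial R$), and it is not in any other cell. The real work is for a point $x$ on an edge or vertex of the subdivision. Each such edge $e$ is an arc of some path $P_i \in P$ or an arc of $\partial R$, and it borders either one cell (if $e \subset \partial R$) or exactly two cells $C_1, C_2$ (if $e$ is interior). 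I claim $x$ lies in the ``northeastern'' boundary $C_\mr{NE}$ of exactly one of the incident cells (or, if $x \in B_\mr{NE}$, in $B_\mr{NE}$ and in no $C\setminus C_\mr{NE}$). This is where the orientation structure does the work: directing $P\cup B$ northeastward gives a partial order by Claim \ref{claimLSPNoCycle}, and for an interior edge $e$ on path $P_i$, the two cells $C_1,C_2$ lie on opposite sides of $P_i$ near $e$; by Claim \ref{claimLSPSource} and Remark \ref{remarkCardinalDirection}, $e$ is part of the northern/eastern boundary of the cell lying to the south/west of $P_i$ and part of the southern/western boundary of the cell to the north/east. So $e \subset C_j \setminus C_{j,\mr{NE}}$ for exactly one $j$, and $e \subset C_{j',\mr{NE}}$ for the other, hence $e$ is excluded from $C_{j'}\setminus C_{j',\mr{NE}}$ but included in $C_j \setminus C_{j,\mr{NE}}$. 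For $e \subset \partial R$ bordering a single cell $C$: if $e$ is on the northeastern border $B_\mr{NE}$ of $R$ then $e \subset C_\mr{NE}$ (so $e$ is excluded from $C\setminus C_\mr{NE}$ but covered by $B_\mr{NE}$), and if $e$ is on the southwestern border $B_\mr{SW}$ then $e \subset C\setminus C_\mr{NE}$ and $e \not\subset B_\mr{NE}$, so again $e$ is in exactly one member. Vertices are handled by the same case analysis applied to the finitely many edges meeting at the vertex, using that the source $p_\mr{SW}$ and sink $p_\mr{NE}$ of each $\partial C$ are unique.

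The statement for $\mc{D}_\mr{SE}$ follows verbatim by replacing ``northeast'' with ``southeast'' throughout, since Claims \ref{claimLSPNoCycle} and \ref{claimLSPSource} were both proved in the southeastward version as well.

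The main obstacle I anticipate is the bookkeeping at vertices where several paths of $P$ cross or where a path meets $\partial R$: one must check that, going around such a vertex, the incident edges are partitioned between ``northeastern'' arcs of some incident cells and ``southwestern'' arcs of the others in a way that leaves the vertex in exactly one member of $\mc{D}_\mr{NE}$. The cleanest way to handle this is to use the total order $(\leq_\mr{NE})$ extending the partial order from Claim \ref{claimLSPNoCycle}: a vertex $v$ is the $\mr{NE}$-sink of precisely those incident cells whose two boundary arcs at $v$ both point into $v$, it lies strictly inside $C_\mr{NE}\setminus\{p_\mr{NE},p_\mr{NW}\}$ for cells where one arc points in and one out along the northern side, and it lies in $C\setminus C_\mr{NE}$ for the remaining incident cells; checking that exactly one of these ``$C\setminus C_\mr{NE}$'' cases occurs (or that $v\in B_\mr{NE}$ and none does) is the crux, and it reduces to the fact that the cyclic sequence of edges around $v$ has a single local $\mr{NE}$-maximum and a single local $\mr{NE}$-minimum, which is exactly the content of Claim \ref{claimLSPSource} applied locally.
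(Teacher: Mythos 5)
Your overall structure—covering $R$, handling interiors, then edges, then vertices—matches the paper's, and the interior and edge cases are fine. The gap is in the vertex case, and it has two parts.

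First, your categorization of the incident cells at a crossing vertex $v$ misplaces the \emph{source} cells, i.e.\ the wedge cells bounded by two edges both directed outward (eastward) from $v$. You classify a cell as ``sink'' if both arcs point in, as ``northern pass-through'' if one arc points in and one out, and lump everything else—including the source cells—into ``$v \in C \setminus C_\mr{NE}$.'' But for a source cell $C$, the two boundary arcs at $v$ are arcs of paths of $P$, which are directed eastward, and eastward is simultaneously the northeastward \emph{and} southeastward direction on a path from $B_\mr{W}$ to $B_\mr{E}$. Hence both arcs also point outward in the southeastward sense, so $v$ is a local SE-source of $\partial C$, and by Claim~\ref{claimLSPSource} (SE version) it is \emph{the} unique SE-source: $v = p_\mr{SW} = p_\mr{NW}$. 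Since $p_\mr{NW}$ is an endpoint of $C_\mr{N}$, we have $v \in C_\mr{NE}$, not $v \in C \setminus C_\mr{NE}$. With $n$ paths crossing at $v$ there are $n-1$ such source cells, so under your categorization you would find many ``$C \setminus C_\mr{NE}$'' cells and your ``exactly one'' crux would fail. After correcting this, only the single cell with $v$ in the interior of its southern border remains in the $C\setminus C_\mr{NE}$ class, which is what the claim requires.

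Second, the statement that the cyclic sequence of edges around $v$ has a single local NE-maximum and a single local NE-minimum is \emph{not} the content of Claim~\ref{claimLSPSource} applied locally. That claim is a statement about the boundary cycles of the $2$-cells $C \in \mc{C}$, and the star of $v$ is not such a cell, so the citation does not deliver what you need. The fact you actually want—that, in the cyclic order of eastward-directed edges around a crossing, the incoming edges form one contiguous arc and the outgoing edges form another—is a direct consequence of the requirement that the paths of $P$ pairwise intersect in at most one point, where they cross. This is a separate observation from Claim~\ref{claimLSPSource}, and the paper invokes it directly rather than deducing it from the source/sink claim. You need to make this argument explicitly. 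Without it, and without the corrected categorization above, the vertex case is not established.
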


\begin{proof}
$B_\mr{NE}$ is only on the northeast boarder of cells by definition, so $B_\mr{SE}$ does not intersect any other set of $\mc{D}_\mr{NE}$. 
If $q$ is in the interior of a cell $C$, then $q \in C\setminus C_\mr{NE}$ and is not in any other cell, so $q$ is in a unique set of $\mc{D}_\mr{NE}$. 
If $q$ is on an edge of $P \cup B$ that is not along $B_\mr{N}$ or $B_\mr{E}$, 
then $q$ is only on the southwest boarder of one cell $C$ and possibly on the northeast boarder of another, 
so $q$ is in a unique set of $\mc{D}_\mr{NE}$. 

One case remains, which is where $q$ is at a point where paths of $P \cup B$ meet. 
Consider the case where $q \in B_\mr{W}$ and $q$ is in one or more path of $P$. 
Then each edge incident to $q$ directed southeastward is directed outward
except for the edge $E$ extending northward along $B_\mr{W}$, 
so $q$ is the southeastward source $p_\mr{NW}$ for each adjacent cell $C$ except the cell $C_0$ that 
contains the edge $E$, 
and $p_\mr{NW}$ is an endpoint of $C_\mr{NE}$, so $q \not\in C \setminus C_\mr{NE}$. 
There is only one cell $C_0$ that contains $E$ since $E$ is on the boundary of $R$, 
so $q$ is in a unique set of $\mc{D}_\mr{NE}$. 

Consider the case where $q$ is at a point where paths of $P$ cross. 
Note that this also includes the case where $q \in B_\mr{S}$, since $P$ covers $B_\mr{S}$. 
Let $P_1,\dots,P_n$ be the paths ordered by the position of their endpoints on $B_\mr{W}$ directed northward. 
Then, the cyclic order of eastward directed edges around $q$ consists of edges on the paths $P_1,\dots,P_n$ directed inward to $q$ and then paths $P_1,\dots,P_n$ directed outward from $q$ 
since the paths of $P$ pairwise intersect at a single point where they cross. 
Hence, $q$ is either a source or sink for each cell incident to $q$ 
except the pair of cells incident to $P_1$ and $P_n$. 
One cell has $q$ on its northern border, 
and the other has $q$ on its southern border, 
since the paths separate $B_\mr{N}$ and $B_\mr{S}$.  
Thus, $q$ is in a unique member of $\mc{D}_\mr{NE}$. 
\end{proof}

\begin{claim}
\label{claimLSPMaxPath}
Each point $q \in R$ is in at most one maximal path $L_+(q) \in \mc{L}_+$. 
Hence, $\mc{L}_+$ is a partition of $R$ into $({\leq}1)$-cells, and likewise for $\mc{L}_-$. 
\end{claim}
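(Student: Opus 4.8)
The plan is to reduce the claim to two facts: first, that inside each closed $2$-cell $C\in\mc{C}$ the family $\mc{L}_+(C)$ is a partition of $C$ into $({\leq}1)$-cells; and second, that the partition $\mc{D}_\mr{NE}$ of Claim~\ref{claimLSPHalfOpenCells}, together with its southwestward analogue $\mc{D}_\mr{SW}$, makes the continuation of leaves deterministic, so that the maximal path of $\mc{L}_+$ through a given point is forced.

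\textbf{Step 1 (leaves partition each cell).} First I would check that the semicircular arcs $\Theta(-r,3r)$ for $r\in[0,\infty]$ partition the closed upper half-plane with the point at infinity adjoined: the arc for $r\in(0,\infty)$ has endpoints $-4r\in[-\infty,0]_\mb{R}$ and $2r\in[0,\infty]_\mb{R}$ and relative interior in the open half-plane, while $r=0$ gives the point $0$ and $r=\infty$ degenerates to $\infty$. Since $h_+$ is an internally conformal homeomorphism of the closed $2$-cell $C$ onto the closed half-plane (Remark~\ref{remarkLSPhScalar}), extended to the boundary by Carathéodory's theorem (Theorem~\ref{theoremCaratheodory}), pulling this family back through $h_+$ exhibits $\mc{L}_+(C)$ as a partition of $C$: each member with $r\in(0,\infty)$ is a $1$-cell from a point of $C_\mr{SW}=h_+^{-1}([0,\infty]_\mb{R})$ to a point of $C_\mr{NE}=h_+^{-1}([-\infty,0]_\mb{R})$ whose relative interior lies in $C^\circ$, and $r=0,\infty$ give the points $p_\mr{NW},p_\mr{SE}$. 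Independence from the positive-scalar ambiguity in $h_+$ is Remark~\ref{remarkLSPhScalar}. In particular, because leaf interiors lie in cell interiors, a leaf of one cell can meet another cell only within $\partial C\cap\partial C'$, and only at its own endpoints.

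\textbf{Step 2 (deterministic continuation).} Fix $q\in R$. By Claim~\ref{claimLSPHalfOpenCells}, $q$ lies in exactly one member of $\mc{D}_\mr{NE}$. If that member is $B_\mr{NE}$, then any cell $C'$ containing $q$ must have $q\in C'_\mr{NE}$ (otherwise $q\in C'\setminus C'_\mr{NE}$, a different member), so $q$ is the northeastward endpoint of the leaf of $\mc{L}_+(C')$ through it, or that leaf is a point; hence no leaf extends northeastward past $q$. If instead the member is $C\setminus C_\mr{NE}$ for a (unique) cell $C$, the same reasoning gives $q\in C'_\mr{NE}$ for every other cell $C'\ni q$, so the leaf $\ell_+(q)$ of $\mc{L}_+(C)$ through $q$ is the only leaf of $\mc{L}_+(\mc{C})$ through $q$ that does not terminate at $q$, and its northeastward endpoint $\beta(q)\in C_\mr{NE}$ is determined by $q$. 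Running the same argument with $\mc{D}_\mr{SW}$ produces a determined southwestward predecessor $\alpha(q)$, undefined exactly when $q\in B_\mr{SW}$.

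\textbf{Step 3 (conclusion).} Any maximal directed path $L\in\mc{L}_+$ containing $q$ must, traversed northeastward from $q$, follow $q,\beta(q),\beta^2(q),\dots$ (absorbing the point-leaves met at vertices), and southwestward follow $q,\alpha(q),\alpha^2(q),\dots$; since the continuation on either side is forced at every point, $L$ is uniquely determined by $q$, which is the first assertion. The iterations terminate: $\mc{C}$ is finite and no cell is revisited, for a revisit would allow the intervening leaf arcs to be spliced along cell boundaries into a northeastward-directed closed curve in $B\cup P$, contradicting Claim~\ref{claimLSPNoCycle}. Hence $L$ is a finite concatenation of $({\leq}1)$-cells meeting only at shared endpoints, so it is itself a $({\leq}1)$-cell, with its ends on $B_\mr{NE}$ and $B_\mr{SW}$. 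By Step~1 the leaves cover $R$, so the maximal paths cover $R$, and uniqueness makes them pairwise disjoint; thus $\mc{L}_+$ partitions $R$ into $({\leq}1)$-cells. The statement for $\mc{L}_-$ follows by the identical argument with the cardinal directions reflected (equivalently, it is the $\widetilde B$ instance of Lemma~\ref{lemmaLSP} part~\ref{itemLSPAntipodal}).

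The main obstacle is Step~2: one must verify that at a vertex where three or more cells meet — a crossing point of $P$, a point of $B_\mr{W}$ where paths of $P$ originate, or a point of $B_\mr{S}$ — exactly one leaf continues northeastward. This is precisely the content packaged in the partition property of $\mc{D}_\mr{NE}$ (Claim~\ref{claimLSPHalfOpenCells}), so the real work is the bookkeeping that turns ``$q\in C\setminus C_\mr{NE}$'' into ``the $\mc{L}_+(C)$-leaf is the unique northeastward-extending leaf at $q$,'' handling also the boundary and corner cases (for instance $q$ in the relative interior of $B_\mr{N}$, or at a corner of $R$, where the maximal path through $q$ is a single point). The no-revisit/finiteness step is a secondary point needing the same style of reasoning as Claim~\ref{claimLSPNoCycle}.
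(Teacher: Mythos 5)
Your uniqueness argument is essentially the paper's, recast from contradiction to construction: both proofs hinge on Claim~\ref{claimLSPHalfOpenCells}. You use it to show the successor $\beta(q)$ and predecessor $\alpha(q)$ are forced, so the maximal path through $q$ is determined by iteration; the paper instead supposes two distinct maximal paths through $q$, locates the branching point $q_0$, and applies Claim~\ref{claimLSPHalfOpenCells} at $q_0$ to conclude both must continue along $L_+(C,q_0)$ for the same cell $C$, a contradiction. Your Step~1, spelling out that $\mc{L}_+(C)$ partitions each closed $2$-cell, is a reasonable thing to make explicit that the paper leaves implicit.

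Where you go beyond the paper is the finiteness step, and the argument you give for it does not hold up. A revisit to a cell $C$ does not yield a northeastward-directed closed curve in $B\cup P$: the portion of the maximal path between leaving $C$ and re-entering it runs through the interiors of other cells, not along $B\cup P$; and if you try to replace those stretches by arcs of the visited cell boundaries between consecutive crossing points, the resulting curve need not be monotone northeastward (for example, an arc of $\partial C'$ from a point of $C'_\mr{W}$ to a point of $C'_\mr{E}$ routed through $p'_\mr{NW}$ must finish by traveling southwest along $C'_\mr{E}$ from the sink $p'_\mr{NE}$). So Claim~\ref{claimLSPNoCycle} does not apply directly. The mechanism the paper actually uses for this kind of termination comes later, in Remark~\ref{remarkLSPIS}: passing from $q$ to the northeast endpoint of its leaf strictly shrinks the finite set $I_\mr{S}(q)$ of paths of $P$ north of $q$, bounding the iteration by $n$. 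That said, the paper's own proof of this claim also confines itself to the uniqueness statement, so the finiteness gap is a flaw in an embellishment rather than in the core argument you were asked to supply.
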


Let $L_+(C,q)$ be the path of $\mc{L}_+(C)$ through $q \in C$ 
and let $L_+(q)$ be the path of $\mc{L}_+$ through $q \in R$.  

\begin{proof} 
%[Proof of Claim \ref{claimLSPMaxPath}]
%Each point $q \in D$ is in a cell $C$, so $q$ is in some path of $\mc{L}_+(C)$, 
%so $q$ is in some maximal path of $\mc{L}_+$. 
Suppose for the sake of contradiction that $q$ is in two distinct maximal paths $L_1,L_2\in \mc{L}_+$. 
Since these paths are distinct, there must be some $p_1 \in L_1 \setminus L_2$, 
and we may assume by symmetry that $p_1$ is northeast of $q$ along $L_1$. 
Since $L_2$ is maximal, there must also be a point $p_2 \in L_2 \setminus L_1$ 
that is northeast of $q$ along $L_2$; 
otherwise we could extend $L_2$ with the arc of $L_1$ from $q$ to $p_1$. 
Let $q_0$ be the last point on the arc $A_1$ of $L_1$ from $q$ to $p_1$ that is on $L_2$. 

Then, $L_1$ and $L_2$ must continue northeastward along separate paths of the form 
$L_+(C_1,q_0)$ and $L_+(C_2,q_0)$ by construction, 
and $q_0$ cannot be the northeast endpoint of $L_+(C_1,q_0)$, 
so $q_0 \in C_1 \setminus C_{1,\mr{NE}}$, and likewise $q_0 \in C_2 \setminus C_{2,\mr{NE}}$, 
so $C_1=C_2$ by Claim \ref{claimLSPHalfOpenCells}, 
so $L_1$ and $L_2$ continue along the same path, 
which is a contradiction.
\end{proof}

\begin{proof}[Proof of Lemma \ref{lemmaLSP} parts \ref{itemLSPEndpoints} and \ref{itemLSPEquivariant}]
Consider the case where $q$ is in a path $L\in\mc{L}_+$. 
Then, the sink $t$ of $L$ directed northeastward cannot be on the southwest boarder of any cell; 
otherwise $L$ could be extended in that cell, which would contradict the maximality of $L$. 
Hence, $t \in B_\mr{NE}$, and similarly the source of $L$ is in $B_\mr{SW}$. 
In the case where $q$ is not in a path of $\mc{L}_+$, 
then $q$ cannot be in the southwest boarder of cell or in the northeast boarder, so $q\in B_\mr{SW}\cap B_\mr{NE}$.
An analogous argument applies for $\mc{L}_-$, 
so part \ref{itemLSPEndpoints} holds.

Consider a cell $C\in\mc{C}$ of $R$ subdivided by $P$, and $Q \in\orth_3$. 
Then, $QC\in Q\mc{C}$ is a cell of $QD$ subdivided by $QP$, 
and $h_+Q^{-1}$ is an internally angle preserving map from $QC$ to the upper half-plane 
that sends $Q C_\mr{SW}$ to $[0,\infty]_\mb{R}$ 
and analogously for the other defining conditions of $h_+$, 
%and $h_+Q^{-1}(QL) = \Theta(-r,3r)$ 
and for each $L\in\mc{L}_+$ there is some $r\in[0,\infty]$ such that 
$h_+(L) = \Theta(-r,3r)$, 
so $h_+Q^{-1}(QL) = \Theta(-r,3r)$, 
and analogously for $\mc{L}_-$. 
Thus, $\lsp(Q(R,B,P))=(Q\mc{L}_+,Q\mc{L}_-)$, 
which means part \ref{itemLSPEquivariant} holds.
\end{proof}

\begin{proof}[Proof of Lemma \ref{lemmaLSP} part \ref{itemLSPContinuous}]
Consider a sequence of valid inputs $(D_k,B_k,P_k,q_k)$ to $\lsp$ that converge to $(D_\infty,B_\infty,P_\infty,q_\infty)$, 
and let $(\mc{L}_{+,k},\mc{L}_{-,k}) = \lsp(D_k,B_k,P_k,q_k)$ and similarly add a corresponding subscript to each object in Definition \ref{defLSP}. 

Let $L_k = L_{+,k}(q_k)$, 
and consider the case where $L_k=q_k$ infinitely often.
Then, $q_k = L_k \in B_{\mr{SW},k}\cap B_{\mr{NE},k}$,
and since $q_k$ and $B_k$ converge appropriately, 
$q_\infty \in B_{\mr{SW},\infty}\cap B_{\mr{NE},\infty}$, 
so $L_\infty=q_\infty$ is the limit point of $L_k$. 
Let us then assume that $L_k$ is not a point.

Suppose for the sake of contradiction that $L_k = L_{+,k}(q_k)$ does not converge to 
$L_\infty$ 
%$L_{+,\infty}(q_\infty)$ 
in Fréchet distance. 
Then, we may assume that $L_k$ crosses the paths $P_k$ in a fixed order for $k<\infty$; 
otherwise restrict to such a subsequence. 
If each edge of $L_k$ subdivided by $P_k$ were to converge to a corresponding edge or vertex of $L_\infty$ in Fréchet distance, then $L_k$ would converge to $L_\infty$ in Fréchet distance, so there must be some edge $L'_k$ of $L_k$ that does not converge to an edge or vertex of $L_\infty$.

Let us choose $L'_k$ to be separated from $q_k$ by the fewest number of paths of $P_k$ among arcs that do not converge appropriately.
Then, either $q_k \in L'_k$ or $L'_k$ shares an endpoint with an arc that does converge appropriately, so in either case some point $q'_k$ of $L'_k$ converges to a point $q'_\infty$ on $L_\infty$.  Let us assume that $q_k \in L'_k$; otherwise we will get a contradiction by the same argument with $q'_k$ instead $q_k$. 
Hence, $q_k$ is in a cell $C_k \in \mc{C}_k$ with $I_\mr{N}(C_k)$ and $I_\mr{S}(C_k)$ fixed 
and $L'_k = L_{+,k}(C_k,q_k)$ for $k<\infty$, 
and $L'_k$ neither converges to 
an edge nor to a vertex of $L_\infty$ in Fréchet distance that is on or between the corresponding paths of $P_\infty$. 
%nor does $L'_k$ converge to 
%a vertex of $L_\infty$ in Hausdorff distance that is on or between the corresponding paths of $P_\infty$. 

If $C_k$ converges to a point 
in Hausdorff distance, 
then $C_k$ converges to $q_\infty$, which is a vertex of $P_\infty$,  
so $L'_k \subset C_k$ converges to a vertex of $P_\infty$, which is a contradiction. 
Therefore, we shall assume that $C_k$ does not converge to a point.    

Let us restrict to a subsequence where the cyclic order of paths along $\partial C_k$ is fixed for $k<\infty$.  Since $\sphere^2$ is compact, we may restrict to a subsequence where each vertex of $\partial C_k$ converges. 
Then, 
$p_{\mr{NW},k}$ and $p_{\mr{SE},k}$ respectively converge to points 
$\widetilde p_{\mr{NW}}$ and $\widetilde  p_{\mr{SE}}$, 
and $\widetilde p_{\mr{NW}} \neq \widetilde  p_{\mr{SE}}$, 
otherwise $C_k$ would converge to a point. 
Also, 
$C_{\mr{NE},k}$ and $C_{\mr{SW},k}$ respectively converge to paths 
$\widetilde C_\mr{NE}$ and $\widetilde C_\mr{SW}$ 
from $\widetilde p_{\mr{NW}}$ to $\widetilde  p_{\mr{SE}}$ 
in Fréchet distance by Lemma \ref{lemmaConvergentPathSequence}.

Consider the case where $q_k = p_{\mr{NW},k}$. 
Then, $L'_k = q_k$, and 
$\widetilde p_{\mr{NW}} = q_\infty$ is a vertex of $L_\infty$, 
since $p_{\mr{NW},k}$ is on a path of $P_k$, so $\widetilde p_{\mr{NW}}$ is on a path of $P_\infty$.  Hence, $L'_k$ converges to a vertex of $L_\infty$.  
A similar argument shows convergence in the case where $q_k = p_{\mr{SE},k}$. 
Therefore, let us assume that $q_k \not\in\{p_{\mr{NW},k},p_{\mr{SE},k}\}$.

We split into cases depending on whether $\widetilde C_\mr{NE}$ and $\widetilde C_\mr{SW}$ 
intersect at a point other than $\widetilde p_{\mr{NW}}$ or $\widetilde p_{\mr{SE}}$.

Consider the case where there is a point $v \in\widetilde C_\mr{NE} \cap \widetilde C_\mr{SW} \setminus \{\widetilde p_{\mr{NW}},\widetilde p_{\mr{SE}}\} \neq \emptyset$. 
We claim that $\widetilde C_\mr{NE}=\widetilde C_\mr{SW}$ in this case. 
Since $C_{\mr{NE},k} \to \widetilde C_\mr{NE}$ in Fréchet distance, 
there is a sequence of points $v_{k} \in C_{\mr{NE},k}$ such that 
$v_{k} \to v$, 
so there is some path $P_{i,k} \in P_k$ along $C_{\mr{NE},k}$ such that $v_{k} \in P_{i,k}$ infinitely often, so $v \in P_{i,\infty}$. 
Similarly, there is some $P_{j,k}$ along $C_{\mr{SW},k}$ such that $v \in P_{j,\infty}$. 

Suppose $P_{i,\infty}$ and $P_{j,\infty}$ crossed at $v$. 
Let us assume by symmetry that $P_{i,\infty}$ directed eastward crosses $P_{j,\infty}$ from north to south at $v$. 
Then, $\widetilde p_\mr{SE}$ would be east of $v$ along paths of $P_\infty$, so $\widetilde p_\mr{SE}$ would be 
%on or south of $P_{j,\infty}$ and 
on or north of $P_{i,\infty}$, 
but $p_{\mr{SE},k}$ is 
%on or north of $P_{j,k}$ and 
on or south of $P_{i,k}$, 
so $\widetilde p_\mr{SE}$ would have to be on $P_{i,\infty}$, 
and similarly $\widetilde p_\mr{SE}$ would be on $P_{j,\infty}$, 
so $P_{i,\infty}$ and $P_{j,\infty}$ meet at $\widetilde p_\mr{SE}\neq v$,
which contradicts the condition that distinct curves of $P_\infty$ intersect in at most one point where they cross. 
Hence, $P_{i,\infty}$ and $P_{j,\infty}$ cannot cross at $v$, so we must have $P_{j,\infty} = P_{i,\infty}$.

Also, $C_{\mr{NE},k} \cup C_{\mr{SW},k}$ is on or south of $P_{i,k}$, 
so $\widetilde C_\mr{NE}\cup \widetilde C_\mr{SW}$ is on or south of $P_{i,\infty}$, 
and similarly $\widetilde C_\mr{NE}\cup \widetilde C_\mr{SW}$ is on or north of $P_{j,\infty}$, 
so $\widetilde C_\mr{NE}\cup \widetilde C_\mr{SW}$ is on $P_{i,\infty} = P_{j,\infty}$, 
and $C_{\mr{NE},k}$ is an eastward directed path from $p_{\mr{NW},k}$ to $p_{\mr{SW},k}$, 
so $\widetilde C_\mr{NE}$ is an eastward directed arc from $\widetilde p_{\mr{NW}}$ to $\widetilde p_{\mr{SE}}$ along $P_{i,\infty}$ and likewise for $\widetilde C_\mr{SW}$. 
Hence, $\widetilde C_\mr{NE}=\widetilde C_\mr{SW}$.

Let $\eta(z) = \frac{\im-z}{\im+z}$ be the conformal map sending the upper half-plane to the unit disk, so $\eta(0) = 1$ and $\eta(\infty) = -1$ and $\eta(\im)=0$. 
Then, $[\eta h_{+,k}]^{-1}$ satisfies the conditions of Lemma \ref{lemmaSquishConformalPath}.
Recall that $h_{+,k}$ is only defined up to a positive scalar factor, 
and let us choose this factor so that $|h_{+,k}(q_k)|=1$. 
Let 
$\widehat q_k$ be the point on $L'_k$ that is farthest away from $q_k$. 
Then, $\nicefrac12 \leq|h_{+,k}(\widehat q_k)|\leq 2$ by the definition of $\mc{L}_+(C)$ 
in Definition \ref{defLSP}, 
so $|h_+(\widehat q_k)|$ is bounded away from 0 and $\infty$ in the spherical metric, 
so $\eta h_+(q_k)$ and $\eta h_+(\widehat q_k)$ are both bounded away from 1 and $-1$, 
so $\widehat q_k \to q_\infty$ by Lemma \ref{lemmaSquishConformalPath}. 
%since $q_k \to q_\infty$. 
Hence $L'_k \to q_\infty$ 
since $\widehat q_k$ is the farthest point from $q_k$ on $L'_k$, 
and $q_\infty$ is the vertex of $L_\infty$ at the point of intersection with $P_{i,\infty}$, 
which is a contradiction since we chose $L'_k$ to be an edge that does not converge to a vertex of $L_\infty$. 
This completes the case where $\widetilde C_\mr{NE} \cap \widetilde C_\mr{SW} \setminus \{\widetilde p_{\mr{NW}},\widetilde p_{\mr{SE}}\} \neq \emptyset$.

Consider the case where $\widetilde C_\mr{NE} \cap \widetilde C_\mr{SW} \setminus \{\widetilde p_{\mr{NW}},\widetilde p_{\mr{SE}}\} = \emptyset$. 
Then, $\partial C_k \to \partial C_\infty$ in Fréchet distance for a cell $C_\infty \in \mc{C}_\infty$, 
%with $I_\mr{N}(C_\infty) = I_\mr{N}(C_k)$ and $I_\mr{S}(C_\infty) = I_\mr{S}(C_k)$.  
so with an appropriate choice of scalar factor in the definition of $h_+$, 
we have $[\eta h_{+,k}]^{-1} \to [\eta h_{+,\infty}]^{-1}$ in the sup-metric 
by Radó's theorem, 
%by Lemma \ref{lemmaParamDiip} 
so $h_{+,k}^{-1} \to h_{+,\infty}^{-1}$ with the spherical metric on $\overline{\mb{C}}$, 
and so $h_{+,k} \to h_{+,\infty}$ in the partial map topology by Lemma \ref{lemmaCPDistContinuity},
so $h_{+,k}(q_k) \to h_{+,\infty}(q_\infty)$, 
so the sequence of semicircles $\Theta(-r_k,3r_k)$ that pass through $h_{+,k}(q_k)$ converge appropriately in Fréchet distance, 
so $L'_k \to L_+(C_\infty,q_\infty)$ in Fréchet distance by Lemma \ref{lemmaPMFrechetDistance}, 
and $L_+(C_\infty,q_\infty)$ is an edge of $L_\infty$, 
which is a contradiction. 

Thus, $L_k = L_{+,k}(q_k)$ converges to $L_\infty$ in Fréchet distance, 
and by a similar argument $L_{-,k}(q_k)$ converges to $L_{-,\infty}(q_\infty)$.
\end{proof}

\subsection{Journey to the boarder}

To prove the rest of Lemma \ref{lemmaLSP}, 
we need to follow the paths of $\mc{L}_+$ or $\mc{L}_-$ to the boarder of $R$. 
By symmetry, it suffices to follow one of these. 

For a point $q \in C \in \mc{C}$, let $q_\mr{NE} = L_+(C,q) \cap C_\mr{NE}$. 
Let $\hat g_\mr{NE} : R \to R$ by 
$\hat g_\mr{NE}(q) = q_\mr{NE}$ as above for the cell $C$ 
such that $q \in C \setminus C_\mr{NE}$. 
Note that $\hat g_\mr{NE}$ is well-defined on $R$ by Claim \ref{claimLSPHalfOpenCells}. 
Let $g_\mr{NE} : R \to B_\mr{NE}$ by 
\[
g_\mr{NE}(q) = \begin{cases}
q & q \in B_\mr{NE} \\ 
g_\mr{NE}(\hat g_\mr{NE}(q)) & q \in B_\mr{NE}. 
\end{cases}
\]
Define the corresponding points for the other intercardinal directions analogously. 
Let $I_\mr{S}(q)$ denote the set of $i$ such that $q$ is to the south of $P_i$ 
as in Remark \ref{remarkCardinalDirection} 
%(in a component of $D \setminus P_i$ that intersects $B_\mr{N}$) 

\begin{remark}
\label{remarkLSPIS} 
If $q \in R \setminus B_\mr{NE}$, then 
$I_\mr{S}(\hat g_\mr{NE}(q)) \subset I_\mr{S}(q)$ 
since $q$ is in a cell $C$ that is to the south of some path through $\hat g_\mr{NE}(q)$ 
and $q$ is not on that path, 
and $q$ is not on or separated from $\hat g_\mr{NE}(q)$ any path north of $\hat g_\mr{NE}(q)$, 
since $\hat g_\mr{NE}(q)\in C$ and $q \not\in C_\mr{NE}$. 
Therefore, iterating $\hat g_\mr{NE}$ eventually stabilizes at a point on $B_\mr{NE}$.  
Hence, $g_\mr{NE}$ is well-defined. 
\end{remark}

\begin{claim}
\label{claimLSPqNE}
$L_+(C,q)\cap L_-(C,q) = q$. 
Also, $q_\mathrm{NE}$ is either northeast of $q_\mathrm{NW}$ or $q_\mathrm{SE}$ along $\partial C$, and $q_\mathrm{NE}$ is strictly northeast of one of these points unless $q \in C_\mr{NE}$. 
This holds analogously for the other intercardinal directions.
\end{claim}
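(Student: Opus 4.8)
The plan is to verify the three assertions of Claim \ref{claimLSPqNE} directly from Definition \ref{defLSP} and the description of $\mc{L}_+(C)$ and $\mc{L}_-(C)$ as preimages of semicircles under the isogonal maps $h_+$ and $h_-$. First I would work entirely inside a single cell $C \in \mc{C}$, whose boundary is subdivided into the four intercardinal arcs $C_\mr{SW}, C_\mr{NE}, C_\mr{NW}, C_\mr{SE}$ by the four extreme vertices $p_\mr{SW}, p_\mr{NE}, p_\mr{NW}, p_\mr{SE}$ (which are genuinely distinct on the relevant pairs by Remark \ref{remarkLSPEWDisjoint} and Claim \ref{claimLSPSource}). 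Pulling everything back to the upper half-plane via $h_+$, the leaves of $\mc{L}_+(C)$ become the semicircles $\Theta(-r,3r)$ centered at $-r$ of radius $3r$, which foliate the upper half-plane; through each interior point there is exactly one such semicircle, and its two endpoints lie on $[0,\infty]_\mb{R} = h_+(C_\mr{SW})$ and $[-\infty,0]_\mb{R} = h_+(C_\mr{NE})$. Similarly $h_-$ carries $\mc{L}_-(C)$ to the semicircles $\Theta(r,3r)$, whose endpoints lie on $h_-(C_\mr{SE}) = [0,\infty]_\mb{R}$ and $h_-(C_\mr{NW}) = [-\infty,0]_\mb{R}$.

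For the first assertion, $L_+(C,q) \cap L_-(C,q) = q$: both leaves pass through $q$, so it remains to rule out a second intersection point. By Lemma \ref{lemmaLSP} part \ref{itemLSPProduct} (which I may assume, as it is established for the already-proven parts, or alternatively argue directly) any leaf of $\mc{L}_+$ and any leaf of $\mc{L}_-$ cross at most once; restricting to the arcs $L_+(C,q)$ and $L_-(C,q)$ inside $C$ gives the claim. If I instead want a self-contained argument, I would note that $h_+ \circ h_-^{-1}$ is an isogonal automorphism of the upper half-plane hence a Möbius transformation, and a direct computation shows it sends a semicircle $\Theta(r,3r)$ to a curve meeting each $\Theta(-r',3r')$ in a single point; the key input is that the two semicircle families have the property that one family's members each cross one angularly to the other, which follows from their shared $1:3$ ratio of offset to radius making the crossing angle constant and nonzero.

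For the second assertion, I would analyze where $q_\mr{NE} = L_+(C,q) \cap C_\mr{NE}$ sits along $\partial C$ directed northeastward relative to $q_\mr{NW} = L_-(C,q) \cap C_\mr{NW}$ and $q_\mr{SE} = L_-(C,q) \cap C_\mr{SE}$ (note $L_-(C,q)$ runs from $C_\mr{NW}$ to $C_\mr{SE}$ by part \ref{itemLSPEndpoints}, so these are its two endpoints). The arc $C_\mr{NE}$ runs from $p_\mr{NW}$ to $p_\mr{SE}$; its near-$p_\mr{NW}$ portion is ``more northwest'' and its near-$p_\mr{SE}$ portion is ``more southeast'' in the sense of Remark \ref{remarkCardinalDirection}. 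The leaf $L_-(C,q)$ separates $C$ into a northeast part and a southwest part (it is a coordinate curve of the ``$-$'' foliation), with $q$ on it; its endpoint $q_\mr{NW}$ on $C_\mr{NW}$ and its endpoint $q_\mr{SE}$ on $C_\mr{SE}$ are, in the northeastward direction around $\partial C$, respectively a lower bound and an upper bound for the position of $q_\mr{NE}$ — because the leaf $L_+(C,q)$ through $q$ cannot cross $L_-(C,q)$ a second time, so it stays on the appropriate side and exits through the arc $C_\mr{NE}$ strictly between $q_\mr{NW}$ and $q_\mr{SE}$. Working in the $h_+$ picture makes the monotonicity transparent: as $q$ moves along $L_-(C,q)$ toward $q_\mr{NW}$, the semicircle $\Theta(-r,3r)$ through $q$ shrinks, pushing $q_\mr{NE}$ toward $p_\mr{NW}$, and symmetrically toward $p_\mr{SE}$. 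The strictness, unless $q \in C_\mr{NE}$, comes from the fact that $q$ is then interior to the leaf $L_+(C,q)$, so $q_\mr{NE} \neq q$, and one of the two endpoints $q_\mr{NW}, q_\mr{SE}$ of $L_-(C,q)$ is distinct from $q$ as well.

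The main obstacle I anticipate is keeping the ``directed northeastward along $\partial C$'' bookkeeping precise and consistent with the cardinal-direction conventions of Remark \ref{remarkCardinalDirection}, especially in the degenerate configurations allowed by Definition \ref{defLSP} and the accompanying Justification — for instance when $p_\mr{NW} = p_\mr{SW}$ or $p_\mr{NE} = p_\mr{SE}$, so that $h_+ = h_-$ and the two foliations of $C$ are defined by genuinely different semicircle families ($\Theta(-r,3r)$ versus $\Theta(r,3r)$) rather than being forced apart by distinct boundary data. In those cases I would check the crossing-at-one-point and the ordering claims by the explicit half-plane computation rather than by soft arguments. The remaining intercardinal directions (SE, SW, NW in place of NE) follow by relabeling the four boundary arcs and the two foliations, using the $\orth_3$-equivariance and the antipodal symmetry already recorded in Lemma \ref{lemmaLSP}, so I would state that the argument ``holds analogously'' rather than repeating it four times.
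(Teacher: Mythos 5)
Your overall picture is right — pull back to the upper half-plane via $h_+$, view both leaf families as semicircle arcs, and read off the ordering from positions on $\overline{\mb{R}}$ — but the proposal has three concrete problems.

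First, appealing to Lemma \ref{lemmaLSP} part \ref{itemLSPProduct} for part one is circular: in the paper that part is proved \emph{after} Claim \ref{claimLSPqNE}, via Claim \ref{claimLSPMonotone}, which itself relies on Claim \ref{claimLSPqNE}. So the fallback ``argue directly'' is the only admissible route. But your direct argument is wrongly justified: the claim that the $1{:}3$ offset-to-radius ratio makes the crossing angle between the two families constant and nonzero is false (the angle varies with position), and in any case it is beside the point. The paper's argument is simpler and is what you want: $\Theta_+ = h_+(L_+(C,q))$ is a semicircle orthogonal to $\overline{\mb{R}}$ by construction, and $h = h_+h_-^{-1}$ is a M\"obius automorphism of the upper half-plane, so $h(\Theta_-)$ is another semicircle orthogonal to $\overline{\mb{R}}$; two distinct such semicircles (hyperbolic geodesics) meet in at most one point, and both pass through $h_+(q)$.

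Second and more importantly, you never invoke Claim \ref{claimLSPInequalities}, which is where the paper does essentially all of the work. The published proof of Claim \ref{claimLSPqNE} is a two-case corollary of the inequalities $x_\mr{NE} \leq x_\mr{NW} \leq x_\mr{SW} \leq x_\mr{SE}$ (or their mirror), read against the monotonicity of $h_+$ along $C_\mr{N}$ and $C_\mr{E}$. Your monotonicity argument (``as $q$ moves toward $q_\mr{NW}$, the $\Theta(-r,3r)$ through it shrinks'') captures the intuition, but it is not a proof: it neither determines \emph{which} of the two possible cyclic orderings of the four endpoints $\{x_\mr{NE},x_\mr{NW},x_\mr{SW},x_\mr{SE}\}$ on $\overline{\mb{R}}$ actually occurs, nor handles the equality cases (those are exactly the ``unless $q\in C_\mr{NE}$'' part of the statement). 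Determining the orientation and pinning down the boundary cases is exactly what the case analysis and M\"obius computations in Claim \ref{claimLSPInequalities} accomplish — for instance the separate treatment of $x_\mr{SE}=0$, $x_\mr{NW}=\infty$, $p_\mr{NE}=p_\mr{SE}$, and the explicit formula for $h\circ\tfrac{-1}{2}h^{-1}$. You correctly anticipate needing ``explicit half-plane computation'' in the degenerate configurations, but that computation is the bulk of the argument, not an edge case, and without it the proposal remains a plausibility sketch rather than a proof.

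In short: the student's frame is compatible with the paper's, but the proposal skips the lemma (Claim \ref{claimLSPInequalities}) that carries the load, contains one circular dependency, and one incorrect intermediate claim.
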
 

\begin{claim}
\label{claimLSPMonotone}
If $q_1$ is strictly southeast of $q_0$ 
along paths of $P$, $B$, and $\mc{L}_-$, 
then $g_\mr{NE}(q_1)$ is strictly southeast of $g_\mr{NE}(q_0)$.  
%This holds analogously for $\mc{L}_+$ and $g_\mr{SE}$.
\end{claim}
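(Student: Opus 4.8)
The plan is to identify $g_\mr{NE}(q)$ with the northeast endpoint of the maximal leaf $L_+(q)\in\mc{L}_+$ through $q$, and then deduce the claim from the fact that the leaves of $\mc{L}_+$ are pairwise non‑crossing, hence linearly ordered, so that a southeastward path can only cross them monotonically.

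First I would record that $g_\mr{NE}(q)$ equals the northeast endpoint of $L_+(q)$. If $q\in B_\mr{NE}$, then $q$ is that endpoint and $g_\mr{NE}(q)=q$ by definition. Otherwise iterating $\hat g_\mr{NE}$ traces $L_+(q)$ one cell at a time: $\hat g_\mr{NE}(q)=L_+(C,q)\cap C_\mr{NE}$ lies on the northeast border of its cell $C$, hence in $C'\setminus C'_\mr{NE}$ for the adjacent cell $C'$, and by Remark \ref{remarkLSPIS} the sequence stabilizes after finitely many steps at a point of $B_\mr{NE}$, which by Lemma \ref{lemmaLSP} part \ref{itemLSPEndpoints} is the northeast endpoint of $L_+(q)$. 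Now $B_\mr{NE}$, directed from the northwest corner $B_\mr{N}\cap B_\mr{W}$ through $B_\mr{N}$ and $B_\mr{E}$ to the southeast corner $B_\mr{E}\cap B_\mr{S}$, is a southeastward directed path built from arcs of $B$ (its endpoints lie on $B_\mr{NW}$ and $B_\mr{SE}$, as in Remark \ref{remarkCardinalDirection}). So it suffices to show: if $q_1$ is strictly southeast of $q_0$, then the northeast endpoints $e_0,e_1$ of $L_+(q_0),L_+(q_1)$ satisfy $e_1$ strictly southeast of $e_0$ along $B_\mr{NE}$.

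Next I would fix a simple directed arc $\gamma$ from $q_0$ to $q_1$ that is a concatenation of sub‑arcs, each a sub‑arc of some $P_i$ directed eastward, of some $B_j$, or of some leaf of $\mc{L}_-$ directed from $B_\mr{NW}$ to $B_\mr{SE}$, and analyze how $\gamma$ meets the leaves of $\mc{L}_+$. A $P_i$ sub‑arc meets any leaf $L\in\mc{L}_+$ in at most one point, either at an endpoint of $L$ or crossing $L$ from the side containing the corner $B_\mr{N}\cap B_\mr{W}$ to the other side, by Lemma \ref{lemmaLSP} part \ref{itemLSPEastward}; an $\mc{L}_-$ sub‑arc meets $L$ in at most one point by Lemma \ref{lemmaLSP} part \ref{itemLSPProduct}, crossing it in the same southeast sense, which one reads off the cell‑level picture of Definition \ref{defLSP} together with Claim \ref{claimLSPqNE}; and a $B_j$ sub‑arc meets $L$ at most at an endpoint of $L$, since a leaf of $\mc{L}_+$ meets $\partial R$ only at its two endpoints (on $B_\mr{SW}$ and on $B_\mr{NE}$). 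Consequently, as one traverses $\gamma$ the set of leaves of $\mc{L}_+$ already met only accumulates on the southeast side and $\gamma$ never re‑crosses a leaf. Moreover no constituent curve of $\gamma$ contains a nontrivial sub‑arc of an $\mc{L}_+$ leaf, so $\gamma$ — not being a single point — cannot be contained in a single leaf, and therefore crosses at least one leaf of $\mc{L}_+$.

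Finally I would translate this into the statement about endpoints. The leaves of $\mc{L}_+$, being pairwise disjoint by Claim \ref{claimLSPMaxPath} and each running from $B_\mr{SW}$ to $B_\mr{NE}$, are linearly ordered from the $B_\mr{N}\cap B_\mr{W}$ side to the $B_\mr{E}\cap B_\mr{S}$ side, and this order agrees — checked cell by cell via Claim \ref{claimLSPqNE} — with the order of their northeast endpoints along $B_\mr{NE}$ (and of their southwest endpoints along $B_\mr{SW}$). Since traversing $\gamma$ moves monotonically in this order and crosses at least one leaf, $L_+(q_1)$ is strictly southeast of $L_+(q_0)$ in the leaf order, so $e_1$ is strictly southeast of $e_0$ along $B_\mr{NE}$; the analogous statement for the other intercardinal directions follows by symmetry. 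The main obstacle is the monotone–crossing bookkeeping of the third paragraph, especially pinning down that an $\mc{L}_-$ leaf crosses an $\mc{L}_+$ leaf in the southeast sense and handling degenerate local configurations (coinciding arcs, cell corners, and point‑leaves on $B_\mr{SW}\cap B_\mr{NE}$), where one must be careful about what ``southeast'' means at a single point; this is of the same planar‑order flavor as Claims \ref{claimLSPNoCycle} and \ref{claimLSPSource}.
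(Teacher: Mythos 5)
Your proof has a circularity problem. To get the monotone-crossing bookkeeping, you invoke Lemma \ref{lemmaLSP} parts \ref{itemLSPProduct} and \ref{itemLSPEastward} --- that each $L_+\in\mc{L}_+$ meets each $L_-\in\mc{L}_-$ and each $P_i\in P$ at most once, with a definite crossing sense. But those parts of Lemma \ref{lemmaLSP} are proved in the paper \emph{after} Claim \ref{claimLSPMonotone}, and the proof given there uses this claim directly: if $L_+$ met $L_-$ (or $P_i$) at two distinct points $q_0,q_1$, then $g_\mr{NE}(q_0)=L_+\cap B_\mr{NE}=g_\mr{NE}(q_1)$, but one of $q_0,q_1$ is strictly southeast of the other along $L_-$ (or $P_i$), contradicting Claim \ref{claimLSPMonotone}. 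So as written your argument assumes what it is trying to prove, and the ``main obstacle'' you flag at the end is in fact the whole content of the claim.

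Your first observation is correct and useful: $g_\mr{NE}(q)$ is the northeast endpoint of $L_+(q)$, since each application of $\hat g_\mr{NE}$ stays on the same maximal $\mc{L}_+$-leaf and terminates on $B_\mr{NE}$ by Remark \ref{remarkLSPIS}, and the leaves of $\mc{L}_+$ are pairwise disjoint paths from $B_\mr{SW}$ to $B_\mr{NE}$, hence linearly ordered. But to show that a southeastward path crosses that linear order monotonically you cannot cite the global one-crossing facts; you have to establish them locally. That is what the paper's proof does: inside a single cell $C$, the map $h_+$ sends $\mc{L}_+(C)$ to nested semicircles $\Theta(-r,3r)$, and moving southeastward along $C_\mr{SW}$ or along $L_-(C,q)$ (using Claim \ref{claimLSPqNE} to locate $q_\mr{SE}$ outside the semicircle $K_0$) strictly decreases $h_+\hat g_\mr{NE}$; one then inducts on the number of regions of $\mc{D}_\mr{NE}$ traversed. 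If you repair your proof by re-deriving the local crossing behavior cell-by-cell from Claim \ref{claimLSPqNE} and the semicircle picture rather than citing Lemma \ref{lemmaLSP}, you end up essentially running the same induction, so the leaf-order framing is a nice way to state what $g_\mr{NE}$ is, but it does not buy a shorter proof here.
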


\begin{proof}
Let $L_i = L_+(C,q_i)$, and $K_i = h_+(L_i)$,.

Consider the case where $q_1,q_0 \in C_\mr{SW}$ for some cell $C$. 
Then, $h_+(q_1) > h_+(q_0) > 0$, so 
the semicircle $K_0$ is contained in the half-disk bounded by $K_1$, 
so $h_+\hat g_\mr{NE}(q_1) < h_+\hat g_\mr{NE}(q_0) < 0$, 
so $\hat g_\mr{NE}(q_1)$ is strictly southeast of $\hat g_\mr{NE}(q_0)$, 
so $g_\mr{NE}(q_1)$ is strictly southeast of $g_\mr{NE}(q_0)$ 
by induction on the number of iterations for $\hat g_\mr{NE}$ reach $B_\mr{NE}$. 

Next, consider the case where $q_1,q_0\in L \in \mc{L}_-$ 
in the same region $C \setminus C_\mr{NE}$ of $\mc{D}_\mr{NE}$.  
Then, 
$q_\mr{SE} = q_{1,\mr{SE}} = q_{0,\mr{SE}}$ 
is strictly southeast of one of the points 
$\{q_{0,\mr{NE}},q_{0,\mr{SW}}\} = L_0 \cap \partial C$
by Claim \ref{claimLSPqNE}, 
so $h_+(q_\mr{SE})$ is outside the semicircle $K_0$.
Also, $q_1$ appears between $q_0$ and $q_\mr{SE}$ on $L$ 
since $q_1$ is strictly southeast of $q_0$, 
so $h_+(q_1)$ appears between between $h_+(q_0)$ and $h_+(q_\mr{SE})$ on $h_+(L)$, 
so $h_+(q_1)$ is outside the semicircle $K_0$, 
so $K_1$ is outside the semicircle $K_0$,
%so $K_1 = r K_0$ for some $r>0$,
so $h_+\hat g_\mr{NE}(q_1) < h_+\hat g_\mr{NE}(q_0) \leq 0$, 
so $g_\mr{NE}(q_1)$ is strictly southeast of $g_\mr{NE}(q_0)$ as above. 

Paths of $P$ and $B$ are either along the southwest boarder of a cell or on $B_\mr{NE}$, so the above cases cover the cases where $q_1$ and $q_0$ are in the same region of $\mc{D}_\mr{NE}$. 
In general we have that $g_\mr{NE}(q_1)$ is strictly southeast of $g_\mr{NE}(q_0)$ by induction on the number of regions of $\mc{D}_\mr{NE}$ along the southeastward path from $q_0$ to $q_1$. 
\end{proof}

To prove Claim \ref{claimLSPqNE}, 
we first prove the analogous numerical claim, 
namely Claim \ref{claimLSPInequalities}. 
Let 
\[
x_k = h_+(q_k), \quad
y_k = h_-(q_k)
\]
for each intercardinal direction $k$. 
%\begin{remark} By Definition \ref{defLSP}
%%\[
%%h_+(q_\mr{NE}) = -2h_+(q_\mr{SW}), \quad 
%%h_-(q_\mr{NW}) = \tfrac{-1}2 h_-(q_\mr{SE}).
%%\]
%\[
%x_\mr{NE} = -2x_\mr{SW}, \quad 
%y_\mr{NW} = \tfrac{-1}2 y_\mr{SE}.
%\]
%\end{remark}

\begin{claim} 
\label{claimLSPInequalities}
If $x_\mathrm{SE} \neq 0$ and $x_\mathrm{NW} \neq \infty$, then one of the following four strings of inequalities holds, 
\begin{align*}
& x_\mathrm{NE} < x_\mathrm{NW} \leq x_\mathrm{SW} \leq x_\mathrm{SE} > 0, \\
& x_\mathrm{NE} \leq x_\mathrm{NW} \leq x_\mathrm{SW} < x_\mathrm{SE} > 0, \\
0 > x_\mathrm{SE} <\ & x_\mathrm{NE} \leq x_\mathrm{NW} \leq x_\mathrm{SW}, \\
0 > x_\mathrm{SE} \leq\ & x_\mathrm{NE} \leq x_\mathrm{NW} < x_\mathrm{SW}. 
\end{align*}
If $x_\mathrm{SE} = 0$, then $x_\mathrm{NE} = x_\mathrm{NW} = x_\mathrm{SW} = 0$.
If $x_\mathrm{NW} = \infty$, then $x_\mathrm{SW} = x_\mathrm{SE} = x_\mathrm{NE} = \infty$.
%Also, $L_1 \cap L_2 = h_1(q)$.
\end{claim}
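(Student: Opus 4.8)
The plan is to fix a cell $C$ of $R$ subdivided by $P$ and work in the conformal coordinate $h_+$ of Definition \ref{defLSP}, in which $C$ becomes the closed upper half-plane of $\mb{C}$ and each leaf of $\mc{L}_+(C)$ or $\mc{L}_-(C)$ becomes a hyperbolic geodesic (a semicircle meeting $\mb{R}\cup\{\infty\}$ orthogonally). By Remark \ref{remarkLSPhScalar} we may take both $h_+$ and $h_-$ internally conformal, so the composite $T = h_-\circ h_+^{-1}$ is an orientation-preserving M\"obius automorphism of the upper half-plane; writing $a = h_+(p_\mr{SW})$ and $b = h_+(p_\mr{NE})$, the corner correspondences in Definition \ref{defLSP} give $T(a)=0$, $T(b)=\infty$, and $T(\infty)=h_-(p_\mr{SE})>0$, hence $T(z) = h_-(p_\mr{SE})\,\tfrac{z-a}{z-b}$. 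In the generic case $0<a<\infty$ and $-\infty<b<0$; the degeneracies $a=0$ (that is, $p_\mr{NW}=p_\mr{SW}$) or $b=\infty$ (that is, $p_\mr{NE}=p_\mr{SE}$) make $T$ affine and are handled by the same method with minor changes.

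First I would record what the two leaves $L_\pm = L_\pm(C,q)$ look like in this coordinate. Since $h_+(L_+) = \Theta(-r,3r)$ meets $\mb{R}\cup\{\infty\}$ at $2r$, which lies on $h_+(C_\mr{SW}) = [0,\infty]$, and at $-4r$, which lies on $h_+(C_\mr{NE}) = [-\infty,0]$, we get $x_\mr{SW}=2r$ and $x_\mr{NE}=-4r$, so in particular $x_\mr{NE}=-2x_\mr{SW}$; these are both $0$ exactly when $r=0$, i.e.\ $L_+=\{p_\mr{NW}\}$, and both $\infty$ exactly when $r=\infty$, i.e.\ $L_+=\{p_\mr{SE}\}$. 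Likewise $h_-(L_-) = \Theta(s,3s)$ gives $h_-(q_\mr{NW})=-2s$ and $h_-(q_\mr{SE})=4s$, hence the conjugacy relation $h_-(q_\mr{SE})=-2\,h_-(q_\mr{NW})$, equivalently $T(x_\mr{SE})=-2\,T(x_\mr{NW})$. Finally, Definition \ref{defLSP} places $x_\mr{NW}=h_+(q_\mr{NW})$ on the arc $h_+(C_\mr{NW}) = [b,a]$ (the one containing $0$) and $x_\mr{SE}=h_+(q_\mr{SE})$ on the complementary arc $h_+(C_\mr{SE})$ (the one containing $\infty$); in particular $x_\mr{SE}\ge a$ when $q_\mr{SE}\in C_\mr{S}$, and $x_\mr{SE}\le b$ when $q_\mr{SE}\in C_\mr{E}$.

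The core of the argument is that $L_+$ and $L_-$ meet at $q$, so unless they coincide as geodesics (a case I would dispose of directly) they cross transversally there, which forces their four boundary endpoints to strictly interleave on $\mb{R}\cup\{\infty\}$. I would exploit this by ruling out each way a weak inequality of the claim could fail. For instance, if $x_\mr{SW}<x_\mr{NW}$, then $q_\mr{NW}$ sits on $C_\mr{W}$ strictly past $q_\mr{SW}$, so $2r<x_\mr{NW}\le a$, whence $T(x_\mr{NW})\le 0$ and, using $T(x_\mr{SE})=-2T(x_\mr{NW})\ge 0$ together with the explicit $T$ and the boundedness $T(x_\mr{NW})\ge T(0)$, a short computation shows $x_\mr{SE}\notin(-4r,2r)=(x_\mr{NE},x_\mr{SW})$; but $x_\mr{NW}>2r$ is also outside that arc, so $L_+$ and $L_-$ do not interleave, contradicting that they cross. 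The other failure modes (for $x_\mr{NE}\le x_\mr{NW}$, and symmetrically) are excluded the same way. Hence $x_\mr{NE}\le x_\mr{NW}\le x_\mr{SW}$ and $x_\mr{SE}\notin(x_\mr{NE},x_\mr{SW})$, and since $x_\mr{NE}\le 0\le x_\mr{SW}$ the latter means either $x_\mr{SW}\le x_\mr{SE}$ with $x_\mr{SE}>0$ (so $q_\mr{SE}\in C_\mr{S}$) or $x_\mr{SE}\le x_\mr{NE}$ with $x_\mr{SE}<0$ (so $q_\mr{SE}\in C_\mr{E}$). In the first case one of the first two displayed chains must hold, in the second one of the last two; which one is governed by strictness, and $x_\mr{NW}=x_\mr{NE}$ (resp.\ $x_\mr{NW}=x_\mr{SW}$, $x_\mr{SW}=x_\mr{SE}$, $x_\mr{NE}=x_\mr{SE}$) occurs precisely when $q$ lies on $C_\mr{N}$ (resp.\ $C_\mr{W}$, $C_\mr{S}$, $C_\mr{E}$). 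Since a point of $C$ cannot lie on both $C_\mr{W}$ and $C_\mr{E}$ by Remark \ref{remarkLSPEWDisjoint}, and (when $x_\mr{SE}\ne 0$) not on both $C_\mr{N}$ and $C_\mr{S}$, one of the relevant chains always applies.

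The two degenerate statements go directly: if $x_\mr{SE}=0$ then $q_\mr{SE}=p_\mr{NW}$, and since $q_\mr{SE}\in C_\mr{SE}$ while $C_\mr{NW}\cap C_\mr{SE}=\{p_\mr{SW},p_\mr{NE}\}$ and $p_\mr{NW}\ne p_\mr{NE}$ by Remark \ref{remarkLSPEWDisjoint}, necessarily $p_\mr{NW}=p_\mr{SW}$; then $a=0$ forces $s=0$, so $L_-=\{p_\mr{NW}\}$ and therefore $q=p_\mr{NW}$, and likewise $L_+=\{p_\mr{NW}\}$, so $q_\mr{NE}=q_\mr{NW}=q_\mr{SW}=p_\mr{NW}$ and $x_\mr{NE}=x_\mr{NW}=x_\mr{SW}=0$; the case $x_\mr{NW}=\infty$ is symmetric, forcing $p_\mr{NE}=p_\mr{SE}$ and $q=p_\mr{NE}=p_\mr{SE}$, whence $x_\mr{SW}=x_\mr{SE}=x_\mr{NE}=\infty$. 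The main obstacle is the case bookkeeping of the third paragraph: there are several ways for a weak inequality to fail, and for each one must track which of the arcs $h_+(C_\mr{N}),h_+(C_\mr{E}),h_+(C_\mr{S}),h_+(C_\mr{W})$ the four points occupy, respect the direction conventions of Remark \ref{remarkCardinalDirection}, and verify that it is the conjugacy relation $T(x_\mr{SE})=-2T(x_\mr{NW})$ --- not merely that $x_\mr{NW}$ and $x_\mr{SE}$ lie on complementary arcs --- that prevents the endpoints from interleaving.
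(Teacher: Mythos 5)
Your overall framework matches the paper's: you use the same map $T = h_-h_+^{-1}$ (the inverse of the paper's $h = h_+h_-^{-1}$), the same identities $x_\mr{NE}=-2x_\mr{SW}$ and $T(x_\mr{SE})=-2T(x_\mr{NW})$, the same cross-ratio normalization, and the same treatment of the degenerate cases $x_\mr{SE}=0$ and $x_\mr{NW}=\infty$. Where you diverge is the core: the paper first uses the crossing of $\Theta_+$ and $h(\Theta_-)$ only to get the weak alternation dichotomy (``clockwise'' vs.\ ``counter-clockwise''), and then rules out the clockwise case by an explicit algebraic contradiction that combines the conjugacy $x_\mr{NW} = h\circ\tfrac{-1}{2}h^{-1}(x_\mr{SE})$ with the monotonicity of $h\circ\tfrac{-1}{2}h^{-1}$ (computed derivative $\tfrac{-2(r+1)^2}{(3z-r+2)^2}$) and a further sign analysis of a quadratic in $x_\mr{SW}$ or $x_\mr{NE}$, split into sub-cases $p_\mr{NE}=p_\mr{SE}$, $x_\mr{SE}>0$, and $x_\mr{SE}<0$. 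You instead try to derive the inequalities directly by ruling out failure modes with a strict-interleaving contradiction.

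The genuine gap is the step you describe as ``a short computation shows $x_\mr{SE}\notin(-4r,2r)$.'' The premises you cite are $T(x_\mr{NW})\le 0$, $T(x_\mr{SE})=-2T(x_\mr{NW})\ge 0$, and $T(x_\mr{NW})\ge T(0)$. But $T(x_\mr{SE})\ge 0$ only places $x_\mr{SE}$ in $[a,\infty]\cup[-\infty,b]$, and $[-\infty,b]$ is not automatically disjoint from $(x_\mr{NE},x_\mr{SW})=(-4r,2r)$: when $q_\mr{NE}\in C_\mr{E}$ one has $-4r\le b$, so $(-4r,b]\subseteq(-4r,2r)$ is available to $x_\mr{SE}$ and must be excluded by a sharper estimate. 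The inequality $T(x_\mr{NW})\ge T(0)$ you propose is strictly weaker than what one actually knows ($T(x_\mr{NW})>T(2r)$, since $x_\mr{NW}>2r$ under the hypothesis being contradicted), and even the sharper version does not close the argument without using the exact factor $-2$ in the conjugacy relation in a quantitative way --- which is precisely what the paper's derivative computation and the positivity of $6x_\mr{SW}^2+3x_\mr{SW}+3r$ (resp.\ $3x_\mr{NE}^2+3rx_\mr{NE}+6r$) are doing. Calling this a ``short computation'' hides what is the entire nontrivial content of the claim. Relatedly, the strict-interleaving framing only applies when $q$ is interior to $C$; when $q\in\partial C$ the two geodesics share a boundary endpoint and do not cross transversally at an interior point, so the equalities in the claim's weak chains come from a separate boundary case analysis, which you gesture at but do not carry out. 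You would need to explicitly separate (i) the boundary case where a shared endpoint forces an equality from (ii) the interior case, and in (ii) work out the quantitative estimate in full rather than asserting it.
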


Note that the values $x_k$ are in the one point compactification of the real line, $\overline{\mb{R}} = \mb{R} \cup \infty$. 
Also, $(\leq)$ is only a total order on $\mb{R}$.  
We use the convention that if $x \neq \infty$, then both $x < \infty$ and $\infty < x$ hold, and we write the latter inequality as $-\infty < x$, although the meaning is the same.

\begin{proof}
We start by dealing with several special cases. 
We first deal with special cases where $x_\mathrm{SE} = 0$, and then where $x_\mathrm{NW} = \infty$, and then where $q$ is one of the points $p_\mathrm{NW},p_\mathrm{NE},p_\mathrm{SE}$, or $p_\mathrm{SW}$.
We then deal with the cases where $x_\mathrm{SE} = \infty$, 
$x_\mathrm{SE} > 0$, and $x_\mathrm{SE} < 0$.

The cases where $x_\mathrm{SE} > 0$ and $x_\mathrm{SE} < 0$ 
will use calculations involving $h_+$ and $h_-$. 
Recall that the maps $h_k$ were only defined up to positive scaling.  
We would like to choose these scaling factors
so that $h_+(p_\mr{NE}) = -1$ and $h_-(p_\mr{SE}) = 1$, 
but to do so we will need to deal with another special case 
where $p_\mr{NE} = p_\mr{SE}$ to ensure that 
$h_+(p_\mr{NE})$ and $h_-(p_\mr{SE})$ are finite.

Let $h = h_+h_-^{-1}$, 
and let 
\[\Theta_+ = h_+(\mc{L}_+(C,q)),\quad \Theta_- = h_-(\mc{L}_-(C,q)).\]   
Observe that $\Theta_+$, $\Theta_-$, and $h(\Theta_-)$ are the possibly degenerate upper semicircles centered on the real line 
where $\Theta_+$ has endpoints $x_\mr{SW}$ and $x_\mr{NE}$, 
and $\Theta_-$ has endpoints $y_\mr{NW}$ and $y_\mr{SE}$, 
and $h(\Theta_-)$ has endpoints $x_\mr{NW}$ and $x_\mr{SE}$. 
Also, 
\[
x_\mr{NE} = -2x_\mr{SW}, \quad 
y_\mr{NW} = \tfrac{-1}2 y_\mr{SE}
\]
by Definition \ref{defLSP}.

{Consider the case where $x_\mathrm{SE} = 0$.}
Then, $q_\mathrm{SE} = h_+^{-1}(0) = p_\mathrm{NW} \in C_\mathrm{NW}$, 
%and $q_\mathrm{SE} \in C_\mathrm{SE}$, 
so $q_\mathrm{SE} \in C_\mathrm{NW} \cap C_\mathrm{SE} = \{p_\mathrm{NE},p_\mathrm{SW}\}$, 
so $q_\mathrm{SE} = p_\mathrm{NW} = p_\mathrm{SW}$ by Remark \ref{remarkLSPEWDisjoint}, 
and so $\Theta_-$ has endpoints $y_\mr{SE} = h_-(q_\mathrm{SE}) = h_-(q_\mathrm{SW}) =0$ and $y_\mr{NW} = \frac{-1}{2}y_\mr{SE}=0$, 
so $\Theta_- = 0$, 
so $q = p_\mathrm{SW} = p_\mathrm{NW}$, 
so $q_\mr{NE} = q_\mr{NW} = q_\mr{SW} = p_\mr{NW}$, 
so $x_\mathrm{NE} = x_\mathrm{NW} = x_\mathrm{SW} = 0$.

{Consider the case where $x_\mathrm{NW} = \infty$.}
Then, $q_\mathrm{NW} = h_+^{-1}(\infty) = p_\mathrm{SE} \in C_\mathrm{SE}$, 
so $q_\mathrm{NW} \in C_\mathrm{NW} \cap C_\mathrm{SE} = \{p_\mathrm{NE},p_\mathrm{SW}\}$, 
so $q_\mr{NW} = p_\mr{SE} = p_\mr{NE}$ by Remark \ref{remarkLSPEWDisjoint}, 
so $y_\mr{NW} = h_-(q_\mr{NW}) = h_-(p_\mr{NE}) = \infty$, 
and so $\Theta_-$ is the possibly degenerate semicircle with endpoints 
$y_\mr{NW}=\infty$ and $y_\mr{SE}=-2 y_\mr{NW}=\infty$, 
so $\Theta_-=\infty$, 
and $h_-(q) \in \Theta_-$, 
so $q = p_\mr{NE} = p_\mr{SE}$, 
so $q_\mr{NE} = q_\mr{SE} = q_\mr{SW} = p_\mr{SE}$, 
so $x_\mathrm{SW} = x_\mathrm{SE} = x_\mathrm{NE} = \infty$.

{Next, we deal with the cases where 
$q \in \{p_\mathrm{NE},p_\mathrm{NW},p_\mathrm{SW},p_\mathrm{SE}\}$}
and $x_\mr{SE} \neq 0$, $x_\mr{NW}\neq\infty$. 

If $q = p_\mathrm{SE}$, 
then $q_\mr{NE} = q_\mr{SE} = q_\mr{SW} = p_\mr{SE}$, 
so $-\infty = x_\mathrm{NE} < x_\mathrm{NW} < x_\mathrm{SW} = x_\mathrm{SE} = \infty$. 

If $q = p_\mathrm{NW}$, 
then $q_\mr{NE} = q_\mr{NW} = q_\mr{SW} = p_\mr{NW}$, 
so $x_\mathrm{NE} = x_\mathrm{NW} = x_\mathrm{SW} = 0$. 

If $q = p_\mathrm{NE}$, 
then $q_\mr{NW} = q_\mr{NE} = q_\mr{SE}$, 
so $x_\mathrm{SE} = x_\mathrm{NE} = x_\mathrm{NW} < 0 \leq x_\mathrm{SW}$. 

If $q = p_\mathrm{SW}$, 
then $q_\mr{NW} = q_\mr{SW} = q_\mr{SE}$, 
so $x_\mathrm{NE} \leq 0 < x_\mathrm{NW} = x_\mathrm{SW} = x_\mathrm{SE}$.

For the rest of the proof let us assume that 
$q \not\in \{p_\mathrm{NE},p_\mathrm{NW},p_\mathrm{SW},p_\mathrm{SE}\}$
and $x_\mr{SE} \neq 0$, $x_\mr{NW}\neq\infty$. 
This means that $\Theta_+$ and $h(\Theta_-)$ are nondegenerate semicircles that 
pass through $h_+(q)$, 
and therefore 
$\infty\not\in\{x_\mr{NE},x_\mr{SW}\}$ 
and the endpoints of these semicircles alternate on $\overline{\mb{R}}$. 
Hence, 
$x_\mr{SE}\neq x_\mr{NW}$ and $x_\mr{SW}\neq x_\mr{NE}$, and 
\[
\begin{array}{lccrl}
\text{either} &
x_\mr{NE} \leq x_\mr{SE} \leq x_\mr{SW} &
\text{and} &
x_\mr{NW} \not\in (x_\mr{NE},x_\mr{SW})_\mb{R} 
& \text{(clockwise)} , \\ 
\text{or} & 
x_\mr{NE} \leq x_\mr{NW} \leq x_\mr{SW} &
\text{and} &
x_\mr{SE} \not\in (x_\mr{NE},x_\mr{SW})_\mb{R}
& \text{(counter-clockwise)}.
\end{array}
\]
To distinguish these, 
in the first case we say the $x_k$ increase clockwise, 
and in the second case say the $x_k$ increase counter-clockwise.  
This is consistent with the convention that southeast is clockwise of northeast. 
To complete the proof, we show that the $x_k$ increase counter-clockwise. 

If $x_\mr{SE} = \infty$, 
then $x_\mr{SE} \not\in [x_\mr{NE},x_\mr{SW}]_\mb{R}$, 
so the $x_k$ increase counter-clockwise.

For the sake of contradiction, let us assume that the $x_k$ are finite and increase clockwise.

Consider the case where $p_\mathrm{NE}=p_\mathrm{SE}$.
Then,   
\[h_+^{-1}(\infty) = p_\mr{SE} = p_\mr{NE} = h_-^{-1}(\infty),\] 
so $h$ is a linear fractional transformation that preserves the real line and preserves the point at $\infty$, and since $h_+$ and $h_-$ are only defined up to positive scalar multiple, which we are free to choose, we may choose these so that $h$ is a translation. 
Also, $h(0) = h_+\circ h_-^{-1}(0) = h_+(p_\mr{SW}) \geq 0$, since $p_\mr{SW} \in C_\mr{SW}$, 
so $h(z) = z +h_+(p_\mr{SW})$, 
and $h_+(p_\mr{SW}) < \infty$, since $p_\mr{SW} \neq p_\mr{NE}=p_\mr{SE}$. 
Hence, 
\[ 
x_\mr{NW} 
= h(y_\mr{NW})
= h(\tfrac{-1}{2} y_\mr{SE})
= h(\tfrac{-1}{2} h^{-1}(x_\mr{SE}))
= \tfrac{-1}{2}x_\mr{SE} +\tfrac{3}{2}h_+(p_\mr{SW})
\geq \tfrac{-1}{2}x_\mr{SE}. 
\] 
Also, 
$C_\mathrm{E} = p_\mathrm{SE}$ is a single point, so $q_\mr{SE} \in C_\mr{SE} = C_\mr{S}$, so 
$x_\mr{SE} \geq h_+(p_\mr{SW}) \geq 0$, 
and we have already eliminated the case where $x_\mr{SE}=0$, 
so $x_\mr{SE} > 0$.  
Since we assume the $x_k$ increase clockwise, we have 
$x_\mr{NE} \leq x_\mr{SE} \leq x_\mr{SW}$ and 
$x_\mr{NW} \not\in (x_\mr{NE},x_\mr{SW})_\mb{R}$,
so 
\[x_\mr{NW} \geq \tfrac{-1}{2}x_\mr{SE} \geq \tfrac{-1}{2}x_\mr{SW} > -2x_\mr{SW} = x_\mr{NE},\]
so $x_\mr{NW} > x_\mr{NE}$, which implies that $x_\mr{NW} \geq x_\mr{SW}$, so 
\[x_\mr{NW} \geq x_\mr{SW} \geq x_\mr{SE} \geq h_+(p_\mr{SW}) \geq x_\mr{NW},\]
%which is a contradiction.
but that contradicts $x_\mr{NW} \neq x_\mr{SE}$.

For the rest of the proof let us assume that $p_\mathrm{NE}\neq p_\mathrm{SE}$. 
Then, $h_+(p_\mathrm{NE}) \in (-\infty,0)$ 
and $h_-(p_\mathrm{SE}) \in (0,\infty)$, 
and $h$ preserves cross-ratios, so 
\[
\frac{h_-(p_\mr{NW})}{h_-(p_\mr{SE})}
= \crr(0,\infty;h_-(p_\mr{NW}),h_-(p_\mr{SE}))
= \crr(h_+(p_\mr{SW}),h_+(p_\mr{NE});0,\infty)
= \frac{h_+(p_\mr{SW})}{h_+(p_\mr{NE})}. 
\]
Since $h_+$ and $h_-$ 
were only defined up to positive scalar, let us choose these scalars and $r>0$ so that  
\[
h_+(p_\mathrm{NE}) = -1, \quad h_+(p_\mathrm{SW}) = r, \quad h_-(p_\mathrm{SE}) = 1, \quad h_-(p_\mathrm{NW}) = -r.
\]
Then, $h$ is the linear fractional transformation such that 
$h(1)=h_+(p_\mathrm{SE})=\infty$, $h(-r)=h_+(p_\mathrm{NW})=0$, and $h(\infty)=h_+(p_\mathrm{NE})=-1$, so  
\[
h(z) = \frac{-z-r}{z-1}, \quad
h^{-1}(z) = \frac{z-r}{z+1}.
\]
Since $q_\mr{NW} \in C_\mr{NW}$ and  
$q_\mr{SE} \in C_\mr{SE}$, we have
\[
-1 \leq x_\mr{NW} \leq  r 
\quad \text{and} \quad
x_\mr{SE} \not\in (-1,r). 
\]

Consider the case where $x_\mathrm{SE} > 0$.
Then, 
$q_\mr{SE} \in C_\mr{S}$, so 
$x_\mr{SE} \geq r$.
%\[
%x_\mr{SE} = h_+(q_\mr{SE}) \geq h_+(p_\mr{SW}) = r.
%\] 
Also,  
\[
h\circ\tfrac{-1}{2}h^{-1}(z) 
=\frac{(-1+2r)z+3r}{3z-r+2}
\]
%so $h\circ\frac{-1}{2}h^{-1}$ 
is decreasing 
except at $\frac{r-2}{3}$
since 
$
\tfrac{\mr{d}}{\mr{d}z}[h\circ\tfrac{-1}{2}h^{-1}](z) 
= \frac{-2(r^2+2r+1)}{(3z-r+2)^2} < 0
$,
and 
$
\tfrac{r-2}{3}< r \leq x_\mr{SE} \leq x_\mr{SW}
$
by our assumption that the $x_k$ increase clockwise, so 
\begin{gather*}
3x_\mathrm{SW} -r+2 > 0, \\
 ((-1+2r)x_\mathrm{SW} +3r) + 2x_\mathrm{SW} (3x_\mathrm{SW} -r+2) 
= 6x_\mathrm{SW}^2 +3x_\mathrm{SW} +3r > 0, \\
x_\mr{NW}
= h\circ\tfrac{-1}{2}h^{-1}(x_\mr{SE}) 
\geq h\circ\tfrac{-1}{2}h^{-1}(x_\mr{SW}) 
= \frac{-(1+2r)x_\mr{SW}+3r}{3x_\mr{SW}-r+2}
>-2x_\mr{SW} 
= x_\mr{NE}. 
\end{gather*}
The inequality $x_\mr{NW}>x_\mr{NE}$ 
together with our assumption that the $x_k$ increase clockwise 
implies that $x_\mr{NW}\geq x_\mr{SW}$, 
so $r \leq x_\mr{SE} \leq x_\mr{SW} \leq x_\mr{NW} \leq r$, 
but that contradicts $x_\mr{NW} \neq x_\mr{SE}$.

Consider the case where $x_\mathrm{SE} < 0$.
Then, 
$q_\mr{SE} \in C_\mr{E} \setminus\{p_\mr{NE},p_\mr{SE}\}$, so 
$x_\mr{SE} \leq -1$,
%\[
%x_\mr{SE} = h_+(q_\mr{SE}) \leq h_+(p_\mr{NE}) = -1,
%\] 
and 
$
\tfrac{r-2}{3}> -1 \geq x_\mr{SE} \geq x_\mr{NE}
$
by our assumption that the $x_k$ increase clockwise, so 
\begin{gather*} 
2 ((-1+2r)x_\mathrm{NE} +3r) + x_\mathrm{NE} (3x_\mathrm{NE} -r+2) 
= 3x_\mr{NE}^2 +3rx_\mr{NE} +6r > 0, \\
x_\mathrm{NW} = h\circ \tfrac{-1}{2}h^{-1}(x_\mathrm{SE}) \leq h_3\circ \tfrac{-1}{2}h^{-1}(x_\mathrm{NE}) = \frac{(-1+2r)x_\mathrm{NE} +3r}{3x_\mathrm{NE} -r+2}  < \tfrac{-1}{2} x_\mathrm{NE} = x_\mathrm{SW}. 
\end{gather*}
The inequality $x_\mr{NW}<x_\mr{SW}$ 
together with our assumption that the $x_k$ increase clockwise 
implies that $x_\mr{NW}\leq x_\mr{NE}$, 
so $-1 \leq x_\mr{NW} \leq x_\mr{NE} \leq x_\mr{SE} \leq -1$, 
but that contradicts $x_\mr{NW} \neq x_\mr{SE}$.   

Thus, the $x_k$ increase counter-clockwise. 
\end{proof}

\begin{proof}[Proof of Claim \ref{claimLSPqNE}]
Let us use the same notation as in the proof of Claim \ref{claimLSPInequalities}. 
$\Theta_+$ and $h(\Theta_-)$ are either distinct upper semicircles centered on the real line 
or a single point, 
so $\Theta_+$ and $h(\Theta_-)$ cannot intersect at multiple points, 
so $\Theta_+\cap h(\Theta_-) = h_+(q)$, 
so $L_+(C,q)\cap L_-(C,q) = q$. 

For the second part of the proof, 
we split into cases where $q_\mr{NE}$ is in $C_\mr{N}$ or $C_\mr{E}$.

Consider the case where $q_\mr{NE} \in C_\mr{N}$.
Then, $x_\mr{NE} \leq x_\mr{NW}$ by Claim \ref{claimLSPInequalities}. 
If $x_\mr{NE} = x_\mr{NW}$, then $\Theta_+ \cap h(\Theta_-) = x_\mr{NE}$, since $x_\mr{NE}$ is an endpoint of $\Theta_+$ and $x_\mr{NW}$ is an endpoint of $h(\Theta_-)$, 
so $L_+(C,q) \cap L_-(C,q) = q_\mr{NE} = q_\mr{NW} = q$, 
so $q \in C_\mr{NE}$ and $q_\mr{NE}$ is northeast of $q_\mr{NW}$, so the claim holds. 
If $x_\mr{NE} < x_\mr{NW}$, then $h_+(q_\mr{NE}) < h_+(q_\mr{NW})$ and $h_+$ is decreasing northeastward on $C_\mr{N}$, so $q_\mr{NE}$ is strictly northeast of $q_\mr{NW}$. 
Thus, the claim holds in the case where $q_\mr{NE} \in C_\mr{N}$.

Consider the case where $q_\mr{NE} \in C_\mr{E}$.
If $q_\mr{SE} \in C_\mr{S}$, then $q_\mr{NE}$ is strictly northeast of $q_\mr{SE}$ and the claim holds, so let us assume that $q_\mr{SE} \not\in C_\mr{S}$. 
Then, $q_\mr{SE} \in C_\mr{E} \setminus p_\mr{SE}$, so $x_\mr{SE} < 0$, so $x_\mr{SE} \leq x_\mr{NE}$ by Claim \ref{claimLSPInequalities}. 
If $x_\mr{SE} = x_\mr{NE}$, then $q = q_\mr{NE} = q_\mr{SE} \in C_\mr{NE}$ and $q_\mr{NE}$ is northeast of $q_\mr{SE}$ as above, so the claim holds.
If $x_\mr{SE} < x_\mr{NE}$, then $h_+(q_\mr{SE}) < h_+(q_\mr{NE})$ and $h_+$ is increasing northeastward on $C_\mr{E}$, so $q_\mr{NE}$ is strictly northeast of $q_\mr{SE}$. 
Thus, the claim holds in the case where $q_\mr{NE} \in C_\mr{E}$.

Analogous statements for the other intercardinal directions follow similarly. 
\end{proof}

%\begin{claim}
%\end{claim}

\begin{proof}[Proof of Lemma \ref{lemmaLSP} parts \ref{itemLSPProduct} and \ref{itemLSPEastward}]
%Let us show part \ref{itemLSPProduct}.
Suppose there were $L_+ \in \mc{L}_+$ and $L_- \in \mc{L}_-$ 
with distinct points $q_0,q_1$ on $L_+\cap L_-$.  
Then, $g_\mr{NE}(q_0) = L_+\cap B_\mr{NE} = g_\mr{NE}(q_1)$, 
but one of the points must be southeast of the other on $L_-$, 
which would contradict Claim \ref{claimLSPMonotone}. 
Similarly, if there were distinct points $q_0,q_1$ on $L_+\cap P_i$ for $P_i\in P$, 
then we would have a contradiction in the same way. 
Suppose $q=L_+\cap P_i$ is not an endpoint of $L_+$.
Then $q \in C_\mr{SW} {\setminus} \{p_\mr{NW},p_\mr{SE}\}$ 
and $P_i$ directed eastward traverses $C_\mr{SW}$ directed from $p_\mr{NW}$ to $p_\mr{SE}$, 
which is from the northwest of $L_+$ to the southeast. 
A similar argument shows that we cannot have distinct points on $L_-\cap P_i$, and $P_i$ directed eastward crosses from southwest to northeast.
Thus, parts \ref{itemLSPProduct} and \ref{itemLSPEastward} hold. 
\end{proof}

\begin{proof}[Proof of Lemma \ref{lemmaLSP} part \ref{itemLSPFinite}]
Consider a southward directed zigzag path $Z$ that alternates between leaves of $\mc{L}_+$ and $\mc{L}_-$ as in part \ref{itemLSPFinite}.
Since the path $Z$ is directed southward, 
$I_N(q)$ for $q\in Z$ shrinks each time $Z$ crosses a path of $P$
by Remark \ref{remarkLSPIS}, 
so the path cannot return to a cell more than once. 
Hence, there must be a cell $C\in\mc{C}$ where $Z$ traverses infinitely many leaves of $\mc{L}_+$ and $\mc{L}_-$.
Since $B_\mr{W}$ and $B_\mr{E}$ are disjoint, 
the southern boarder $C_\mr{S}$ cannot be a single point, 
so $p_\mr{SW} \neq p_\mr{SE}$, 
and $p_\mr{SW} \in C_\mr{SW}$, 
so $0\leq h_+(p_\mr{SW}) < h_+(p_\mr{SE}) = \infty$. 

Let $q_0$ be the southwest most point of an edge $L_+(q_0) \in \mc{L}_+$ of $Z$. 
We claim that there are only finitely many edge of $Z$ in $C$ south of $q_0$. 
If $q_0$ is not an endpoint of $L_+(q_0)$, then $L_+(q_0)$ continues into another cell south of $C$, so $L_+(q_0)$ is the last edge of $Z$ through $C$ and the claim holds. 
Let us suppose $q_0$ is an endpoint of $L_+(q_0)$. 
If $q_0 \in C_\mr{S}$, then $q_0 \in B_\mr{S}$ is the southeast endpoint of the edge $L_-(q_0)$, 
so the leaf $L_-(q_0)$ extends northward from $q_0$, 
so $Z$ cannot continue along a leaf of $\mc{L}_-$, 
which again means that $L_+(q_0)$ is the last edge. 

Let us suppose $q_0 \not\in C_\mr{S}$, and let $q_1,q_2,\dots$ be the sequence of vertices of $Z$ continuing south from $q_0$.  
Then, $q_1 \in C_\mr{SE}$ since $q_1$ is the southeast endpoint of $L_-(q_0)$, 
and if $q_1 \in C_\mr{S}$ then $q_1$ is the southwest endpoint of $L_+(q_1)$ and $Z$ cannot continue southward along a leaf of $\mc{L}_+$ in $C$.

Let us suppose that $q_1 \not\in C_\mr{S}$. 
Then, $q_1 = \hat g_\mr{SE}(q_0) \in C_\mr{E}$, so $q_1$ is not comparable to $\hat g_\mr{SW}(q_0) \in C_\mr{SW}$ on $\partial C$ directed southeastward, 
so $q_1$ is strictly southeast of $\hat g_\mr{NE}(q_0)$ by Claim \ref{claimLSPqNE},
%so $h_+(q_1) < h_+(\hat g_\mr{NE}(q_0)) = {-2}h_+(q_0)$, 
so
\[
h_+(q_2) = h_+(\hat g_\mr{SW}(q_1)) 
= \tfrac{-1}2 h_+(q_1) 
< \tfrac{-1}2 h_+(\hat g_\mr{NE}(q_0)) 
= h_+(q_0),
\]
which means that $q_2$ is strictly southeast of $q_0$. 

Let us suppose that $q_1,q_2,q_3,\dots \not\in C_\mr{S}$. 
Then, $q_2,q_4,q_6,\dots \in C_\mr{W}$ is monotonic on $C_\mr{W}$ by the argument above, and since $C_\mr{W}$ is compact, the sequence converges to a point $q_{2\infty}\in C_\mr{W}$. 
Also, $q_{2i+2} = \hat g_\mr{SW}\hat g_\mr{SE}(q_{2i})$, and the restriction of $\hat g_\mr{SW}\hat g_\mr{SE}$ to $S_\mr{W}$ is a Möbius transformation, and in particular is continuous, 
so $q_{2\infty} \in C_\mr{SW}$ is a fixed-point of $\hat g_\mr{SW}\hat g_\mr{SE}$,  
and also $q_1,q_3,q_5,\dots$ converges to a fixed-point $q_{2\infty+1} = \hat g_\mr{SE}(q_{2\infty})$ of $\hat g_\mr{SE}\hat g_\mr{SW}$ on $C_\mr{E}$, 
but by the same argument as above, $\hat g_\mr{SW}\hat g_\mr{SE}(q_{2\infty})$ is strictly southeast of $q_{2\infty}$, which is a contradiction. 
Thus, there is no southward zigzag path with infinitely many edges, 
which means part \ref{itemLSPFinite} holds.
\end{proof}

\begin{proof}[Proof of Lemma \ref{lemmaLSP} part \ref{itemLSPAntipodal}] 
Let $(\widetilde{\mc{L}}_+,\widetilde{\mc{L}}_-) = \lsp(R,\widetilde B,P)$ where $B_\mr{N}$ and $B_\mr{S}$ are swapped in $\widetilde B$ and denote the corresponding objects in the construction of $\lsp(R,\widetilde B,P)$ in Definition \ref{defLSP} with the diacritic. 
%$\widetilde{\phantom{x}}$. 
Then, we have the same cells, but $\widetilde C_\mr{N} = C_\mr{S}$ and $\widetilde C_\mr{S} = C_\mr{N}$, 
%$\widetilde{p}_\mr{NW} = p_\mr{SW}$ and $\widetilde{p}_\mr{SE} = p_\mr{NE}$ 
%and $\widetilde{C}_\mr{NE} = C_\mr{SE}$, 
so $-h_-(\widetilde{p}_\mr{NW}) = -h_-(p_\mr{SW}) = 0$,  
and $-h_-(\widetilde{p}_\mr{SE}) = -h_-(p_\mr{NE}) = \infty$
and $-h_-(\widetilde{C}_\mr{NE}) = -h_-(C_\mr{SE}) = [-\infty,0]$, 
and $-h_-$ is internally isogonal, 
so $\widetilde{h}_+ = -h_-$ with appropriate choice of positive scalar as in Remark \ref{remarkLSPhScalar}, 
so $\widetilde{h}_+^{-1}(rK_+) = h_-^{-1}(-rK_+) = h_-^{-1}(rK_-)$, 
so $\widetilde{\mc{L}}_+ = \mc{L}_-$, 
and similarly $\widetilde{\mc{L}}_- = \mc{L}_+$. 
Thus, part \ref{itemLSPAntipodal} holds.   
\end{proof}

\section{Zoning}
\label{sectionZone}

The construction of $\ebb_\mr{D}$ from Lemma \ref{lemmaEbbD} in Definition \ref{defEbbD}, 
will involve subdividing the sphere into a face decomposition, called a zoning, and defining the deformation separately in each face. 
While there are many ways that this could potentially be done, the decomposition that we use is the result of a 3-way compromise between 
simplicity of the definition, ease of construction, and applicability to constructing an appropriate deformation.

This decomposition will include a subregion within each facet covector region that pseudocircles will be forbidden from moving into so as to prevent the inradius of a facet covector region from shrinking too much, which will ensure that part \ref{itemEbbDMinbig} of Lemma \ref{lemmaEbbD} holds. 
This will also prevent the inradius of the vanishing region from growing too much, 
which will ensure that part \ref{itemEbbDMaxlit} holds.
%; see Figure \ref{}.

\begin{definition}[Zoning]
\label{defZoning}
A \df{generic $\mc{M}$-zoning} is a face lattice decomposition $Z$ of the 2-sphere with a 2-cell $Z(\Sigma)$ for each chain of $\csph(\mc{M})$ that intersect as in Figure \ref{figureZoneEg}.  
Specifically, cells $Z(\Sigma_1)$ and $Z(\Sigma_2)$ share a common edge when 
$\Sigma_1 \subset \Sigma_2$ and $|\Sigma_2| = |\Sigma_1|+1$ or when 
$\Sigma_2 = \{\upsilon,\sigma,\tau\}$ and $\Sigma_1 = \{\tau\}$
%$(\Sigma_1, \Sigma_2)$ is one of the pairs 
%$(\{\tau,\sigma\},\{\sigma,\upsilon\})$ or $(\{\tau,\upsilon\},\{\sigma,\upsilon\})$ 
for a maximal chain $\upsilon < \sigma < \tau$. 
%Also, $Z$ is symmetric in the sense that $Z(-\Sigma) = -Z(\Sigma)$. 

\begin{figure}[h]
\centering
\includegraphics[scale=1]{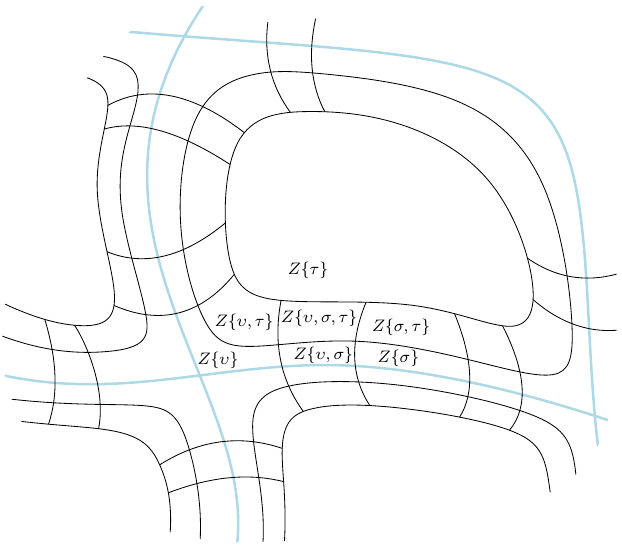}
\caption{A portion of an $\mc{M}$-zoning in black with an accomodated arrangement in light blue.  Zones $Z(\Sigma)$ for $\Sigma \subset \{\upsilon,\sigma,\tau\}$ are labeled.}
\label{figureZoneEg}
\end{figure}

Note that a generic $\mc{M}$-zoning $Z$ has a $2k$-gonal cell $Z\{\upsilon\}$ for each vertex covector 
$\upsilon$ of degree $k$, 
and has a $4k$-gonal cell $Z\{\tau\}$ for each facet covector 
$\tau$ of degree $k$, 
and has a square cell for each edge covector. 
$Z$ also has a square cell $Z\{\sigma,\tau\}$ for comparable pair $\sigma<\tau$ of covectors, which is adjacent to the cells $Z\{\sigma\}$ and $Z\{\tau\}$, 
and $Z$ has a square cell $Z(\Sigma)$ for each maximal chain $\Sigma$, which is adjacent to each cell corresponding to a pair of covectors in $\Sigma$ and the the cell corresponding to the facet covector; see Figure \ref{figureZoneEg}. 

We generally just define the 2-cells of $Z$, which we call \df{zones}, since this determines all lower dimensional cells.
We may write $Z(\sigma)$ for $Z\{\sigma\}$, 
and we  write $Z(\Sigma_1,\Sigma_2)$ for the face $Z(\Sigma_1)\cap Z(\Sigma_2)$. 
%We write $Z(\pi)$ for the vertex that is incident to 
%$Z(\pi_1)$, $Z(\{\pi_1,\pi_2\})$, and $Z(\{\pi_1,\pi_2,\pi_3\})$ 
%for a permutation $\pi$ of a maximal chain.  

For the sake of some compactness arguments later, we define a \df{degenerate} $\mc{M}$-zoning $Z$ 
by  
\[Z(\Sigma) = \bigcap \{\cl(\maxcov^{-1}(\mc{M},A;\sigma)):\sigma\in\Sigma\}.\] 
for some arrangement $A \in \upa(\mc{M})$. 
We say $Z$ is \df{fully degenerate} when $A\in\psv(\mc{M})$, and we may identify $Z$ with $A$ when convenient. 
A $\mc{M}$-zoning could be generic or degenerate. 

We measure \df{distance} between $\mc{M}$-zonings by the maximum Fréchet distance between corresponding edges of the face decomposition. 
That is, 
\[
\dist(Z_1,Z_0) = \max_{\Sigma,\Sigma'}\dist_\mr{F}(Z_1(\Sigma,\Sigma'), Z_0(\Sigma,\Sigma'))
\]
where $\Sigma,\Sigma'$ are an adjacent pair of chains 
in the face lattice of a generic $\mc{M}$-zoning 
with $Z(\Sigma,\Sigma')$ directed according to some fixed ordering on covectors. 
Note that this is well-defined for degenerate zonings since $Z(\Sigma,\Sigma')$ is a path or point by Lemma \ref{lemmaMaxcovBorder} part \ref{itemMaxcovBorderPath}.

Let $Z(\Sigma)$ be an \df{inner $(i,+)$-zone} 
when $i\in\sigma^+$ for each $\sigma\in\Sigma$, 
and let
$Z(\Sigma)$ be a \df{wider $(i,+)$-zone} 
when $i\in\sigma^+$ for some $\sigma\in\Sigma$. 
Let $\inner(Z,i,+)$ be the union of the inner $(i,+)$-zone 
and $\wider(Z,i,+)$ be the union of the wider $(i,+)$-zone. 
That is, 
\begin{align*}
\inner(Z,i,+)
&=\bigcup\left\{{\textstyle Z(\Sigma) : i\in\bigcap\Sigma^+ }\right\}, \\
\wider(Z,i,+)
&=\bigcup\left\{{\textstyle Z(\Sigma) : i\in\bigcup\Sigma^+ }\right\}, 
\end{align*}
where 
$\Sigma^+ = \{\sigma^+:\sigma\in\Sigma\}$.
We define $\Sigma^-$ and $\Sigma^0$ 
and inner and wider $(i,-)$-zones and $(i,0)$-zones analogously. 
\end{definition}

\begin{remark}
\label{remarkInnerWiderSubdivision}
\begin{gather*}
\left\{
\inner(Z,i,0), \wider(Z,i,+), \wider(Z,i,-)
\right\}, \\ 
\left\{
\wider(Z,i,0), \inner(Z,i,+), \inner(Z,i,-)
\right\}\phantom{,} 
\end{gather*}
are subdivisions of $\sphere^2$.
\end{remark}

\begin{definition}[Accommodating an arrangement]
\label{defAccommodate}
We say a generic $\mc{M}$-zoning ${Z}$ \df{accommodates} an arrangement $A$
when for each nonloop $i$ of $\mc{M}$ in the ground set of $A$, 
\[
\inner(Z,i,0)^\circ \supset S_i, \quad 
\wider(Z,i,+) \subset S_i^+, \quad 
\wider(Z,i,-) \subset S_i^-,
\]
and $S_i\cap Z(\Sigma)$ is a path with endpoints $S_i\cap \partial Z(\Sigma)$ 
for each inner $(i,0)$-zone $Z(\Sigma)$. 
Also, the fully degenerate zoning defined by $A$ accommodates $A$ as well as the restriction to a smaller ground set. 

%when the following is satisfied. 
%For each nonloop $i$,  
%if $Z$ is an inner $(i,0)$-zone, then  
%$S_i\cap Z$ is a path with endpoints $S_i\cap \partial Z$

%if $\sigma(i)=0$ for each $\sigma\in\Sigma$, then 
%$S_i\cap Z(\Sigma)$ is a path with endpoints 
%%on the boundary of $Z(\Sigma)$,
%$S_i\cap \partial Z(\Sigma)$, 
%and if not, then $Z(\Sigma) \subset S_i^{\sigma(i)}$ for $\sigma \in \Sigma$ with $\sigma(i)\neq 0$. 
%That is, 

Let $\facet(Z)$ denote the set of zones $Z(\sigma)$ for each facet covector $\sigma$ of $\csph(\mc{M})$. 
We say $F = \facet(Z)$ \df{accommodates} $A$ when the above holds for each zone $F(\sigma)$. 
That is, $F(\sigma) \subset S_i^\sigma$ for each $i\in\supp(\sigma)$.

We say that $Z$ accommodates $A$ \df{$\eps$-tightly} when 
each facet covector $\sigma$ has the boundary of its zone $\partial Z(\sigma)$ 
within Fréchet distance $\eps$ of $\partial\maxcov^{-1}(\sigma)$, 
and 
%$Z\left\{i\in\bigcup\Sigma^0\right\} \subset S_i\oplus\eps$
%\[
%\inner(Z,i,+) \cup (S^-\oplus\eps) 
%= \inner(Z,i,-) \cup (S^+\oplus\eps) 
%= \sphere^2
%\]
$\wider(Z,i,0) \subset S_i\oplus\eps$ 
for each nonloop $i$.
We also say the fully degenerate zoning defined by $A$ accommodates $A$ 0-tightly. 
Let $\tight(Z,A)$ be the infimum of $\eps$ such that $Z$ accommodates $A$ $\eps$-tightly. 
\end{definition}

\begin{definition}[Preference]
\label{defPreference}
Given subsets $N_1\subset\dots\subset N_L\subseteq[n]_\mb{N}$, 
let 
\[(\lesssim) = \pref(N_1,\dots,N_L)\]  
be the total preorder on $[n]_\mb{N}$ where 
$j \lesssim i$ when each of the sets $N_k$ that contain $i$ also contain $j$. 
That is, the $N_k$ are upper sets of $(\lesssim)$.
We may also call $(\lesssim)$ a \df{preference}. 
We write $j \lnsim i$ when $j \lesssim i$, but the reverse does not hold, 
or equivariantly, stricly more of the sets $N_k$ contain $i$ than contain $j$.
%i.e., $j \not\lesssim i$. 

The \df{greedy choice} $i_\mr{g}$ from $I\subseteq [n]$ is the minimum element in the natural order on $[n]$ among the maximal elements of $I$ with respect to $(\lesssim)$.  That is, 
\[
i_\mr{g} = \min\{i\in I : \forall j\in I,\, j\lesssim i\}. 
\]
We say $i$ is a \df{greedy element} of $(\mc{M},\lesssim)$ when $i$ is the greedy choice from its span.

%Given an oriented matroid $\mc{M}$, 
We say $(\lesssim)$ \df{favors nonloops} of $\mc{M}$ 
when if $j$ is a loop of $\mc{M}$ but $i$ is not, then $j \lesssim i$.  
%A \df{preferred set} is an upper set consisting of nonloops of $\mc{M}$.  
We say $(\lesssim)$ favors nonloops of a chain $\mc{C}$ 
when $(\lesssim)$ favors nonloops for each oriented matroid in $\mc{C}$.
A \df{preferred set} is a full rank upper set.

A \df{generic $(\mc{M},\lesssim)$-zoning} $Z$ consists of a $\mc{N}_k$-zoning $Z_k$ 
for each restriction $\mc{N}_k = \rest(N_k,\mc{M})$ 
to a preferred set $N_k$ satisfying the following.
If $N_j \subset N_k$ and $i\in N_j$, then we require 
\[
\inner(Z_j,i,+)^\circ \supset \wider(Z_k,i,+)
\quad \text{and} \quad 
\inner(Z_j,i,-)^\circ \supset \wider(Z_k,i,-). 
\]
Equivalently, 
if $Z_j(\Sigma_1) \cap Z_k(\Sigma_2) \neq \emptyset$ for $j<k$,
then 
$\sigma_j \geq_\mr{v} \rest(N_j,\sigma_k)$  
for each pair of $\sigma_j \in \Sigma_j$, $\sigma_k \in \Sigma_k$. 
We also define a \df{degenerate} $(\mc{M},\lesssim)$-zoning 
as a sequence of $\mc{N}_k$-zonings where one or more $Z_k$ is degenerate and  
\[
\inner(Z_j,i,+) \supseteq \wider(Z_k,i,+)
\quad \text{and} \quad 
\inner(Z_j,i,-) \supseteq \wider(Z_k,i,-) 
\]
for $i\in N_j \subset N_k$
and we say $Z$ is \df{fully degenerate} when each $Z_k$ is fully degenerate.

We say that $Z$
%A $(\mc{M},\lesssim)$-zoning 
\df{accommodates} $A$ when each $Z_k$ accommodates $A$, 
and $\tight(Z,A)$ is the maximum among $\tight(Z_k,A)$.  
To measure distance between $(\mc{M},\lesssim)$-zonings, we take the maximum distance between 
the corresponding $\mc{N}_k$-zonings. 
%the restrictions among each preferred set of full $\mc{M}$ rank. 
\end{definition}

\subsection{Zone map}

\begin{lemma}[Zone map]
\label{lemmaZoneMap}
Given $A \in \upa(\mc{M})$ and $\eps\geq 0$, 
then  
$Z = \zone(\mc{M},\lesssim,\eps,A)$ 
is a $(\mc{M},\lesssim)$-zoning 
that $\eps$-tightly accommodates $A$ and depends continuously and $\orth_3$-equivariantly on $(\eps,A)$. 
Also, $Z$ is generic if $\eps>0$. 
\end{lemma}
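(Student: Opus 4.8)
The statement to prove is Lemma \ref{lemmaZoneMap}: constructing, for each $A \in \upa(\mc{M})$ and $\eps \geq 0$, a $(\mc{M},\lesssim)$-zoning $Z = \zone(\mc{M},\lesssim,\eps,A)$ that $\eps$-tightly accommodates $A$, varies continuously and $\orth_3$-equivariantly in $(\eps,A)$, and is generic when $\eps > 0$. My approach is to build $Z$ one preferred set at a time, processing the nested sequence $N_1 \subset \dots \subset N_L$ from smallest to largest (or some canonical enumeration of all preferred sets compatible with inclusion), so that the nesting conditions relating $Z_j$ and $Z_k$ in Definition \ref{defPreference} become constraints to be satisfied inductively. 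For each restriction $\mc{N}_k = \rest(N_k,\mc{M})$, I would construct an $\mc{N}_k$-zoning $Z_k$ by starting from the degenerate zoning given by $A$ itself — whose cells are $Z(\Sigma) = \bigcap\{\cl(\maxcov^{-1}(\mc{M},A;\sigma)) : \sigma \in \Sigma\}$, which is a well-defined $(\le 1)$-cell decomposition by Lemma \ref{lemmaMaxcovBorder} — and then ``inflating'' each degenerate cell into a genuine 2-cell of controlled diameter $\le \eps$ using the $\mox$ map from Lemma \ref{lemmaMox} to put the pieces into x-monotone normal form.

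\textbf{Key steps.} First, I would fix the combinatorial skeleton: the face lattice of a generic $\mc{M}$-zoning is determined purely by $\csph(\mc{M})$ (a $2k$-gon for each degree-$k$ vertex covector, a $4k$-gon for each degree-$k$ facet covector, squares for edge covectors and for the comparable pairs and maximal chains), so the task is to realize this fixed abstract cell complex in $\sphere^2$ near the degenerate zoning of $A$. Second, for each facet covector $\sigma$ I would carve out the inner zone $Z(\sigma)$ as a slightly shrunk copy of $\maxcov^{-1}(\mc{M},A;\sigma)$ — shrunk by at most $\eps$ in Fréchet distance on the boundary, using $\param$ from Lemma \ref{lemmaPathParam} to push boundary curves inward by a controlled amount — guaranteeing $\partial Z(\sigma)$ is within Fréchet distance $\eps$ of $\partial\maxcov^{-1}(\sigma)$ and that $F(\sigma) \subset S_i^\sigma$ for every $i \in \supp(\sigma)$. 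Third, in the leftover collar between these shrunk facet zones — which near each edge covector $\sigma$ looks like a thin strip containing the pseudocircles $S_i$ for $i \in \sigma^0$ as a pseudomonotonic path system — I would apply $\mox$ to straighten those curves and then slice the strip by vertical lines into the square cells $Z\{\upsilon\}$, $Z\{\sigma\}$, $Z\{\sigma,\tau\}$, $Z(\Sigma)$ demanded by Definition \ref{defZoning}, choosing the slicing positions so that $\wider(Z,i,0) \subset S_i \oplus \eps$ for each nonloop $i$. Fourth, to handle the full $(\mc{M},\lesssim)$-zoning I would do this for each preferred set $N_k$ with a strictly decreasing budget as $|N_k|$ grows, so that the inclusion $\inner(Z_j,i,+)^\circ \supset \wider(Z_k,i,+)$ holds by making the $Z_k$ for larger $N_k$ use strictly thinner collars — this is where the condition that $(\lesssim)$ favors nonloops gets used, to ensure loops never obstruct the nesting. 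Fifth, continuity in $(\eps,A)$ follows from continuity of $\maxcov^{-1}$ in Hausdorff distance (Lemma \ref{lemmaMaxcovContinuous}), of the borders $B$ in Fréchet distance (Lemma \ref{lemmaMaxcovBorder} part \ref{itemMaxcovBorderContinuous}), of $\param$ (Lemma \ref{lemmaPathParam}), and of $\mox$ (Lemma \ref{lemmaMox} part \ref{itemMoxContinuous}), composed appropriately; the $\eps = 0$ case degenerates exactly to the fully degenerate zoning of $A$, matching the convention stated in Definitions \ref{defZoning} and \ref{defAccommodate}. Equivariance follows because every ingredient — $\maxcov$, $\param$, $\mox$, $\lsp$ — is $\orth_3$-equivariant.

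\textbf{Main obstacle.} The hard part will be the simultaneous realization of all the nesting constraints between the $Z_k$ for different preferred sets while keeping everything within the $\eps$-tightness budget and continuous in $A$: the zones $Z_k(\Sigma)$ for a larger ground set must be strictly nested inside the corresponding zones $Z_j$ for smaller ground sets, and at the same time each $Z_k$ must still accommodate the restriction of $A$, which constrains how thin the collars can be made. Managing these competing inclusions — essentially choosing a coherent family of ``shrink amounts'' indexed by the poset of preferred sets, decreasing along inclusions, all bounded by $\eps$, and all varying continuously in $A$ — is the technical crux; the $\mox$ normalization is what makes it tractable, because once the pseudocircles in each strip are x-monotone the nesting reduces to comparing x-monotone functions, and shrinking a zone is just translating or scaling in the vertical coordinate. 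I expect the continuity argument at the degenerate limit $\eps \to 0$ or as $A$ approaches $\psv(\mc{M})$ to require the most care, since cells collapse to lower-dimensional cells there and one must invoke the partial-map-metric machinery of Subsection \ref{subsectionPartialMapMetric} (especially Lemmas \ref{lemmaCPDistLimits} and \ref{lemmaCPDistContinuity}) rather than the sup metric.
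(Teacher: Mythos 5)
Your high-level plan — restrict to each preferred set $N_k$, build an $\mc{N}_k$-zoning by normalizing the pseudocircles inside each $\maxcov^{-1}(\sigma)$ region with $\mox$ and slicing, and recurse over the poset of preferred sets — is essentially the paper's approach. But the way you handle the nesting between the $Z_k$ contains a direction error that would make the construction fail outright.

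You say you would process the preferred sets from smallest to largest with a \emph{strictly decreasing budget as $|N_k|$ grows}, ``making the $Z_k$ for larger $N_k$ use strictly thinner collars.'' Unwind what Definition \ref{defPreference} requires: for $N_j \subset N_k$ (so $j$ indexes the \emph{smaller} ground set) one needs $\inner(Z_j,i,+)^\circ \supset \wider(Z_k,i,+)$. Using Remark \ref{remarkInnerWiderSubdivision}, $\wider(Z_k,i,+)$ extends inward to the boundary of $\inner(Z_k,i,0)$, and $\inner(Z_j,i,+)$ extends inward only to the boundary of $\wider(Z_j,i,0)$. So the containment is equivalent to $\wider(Z_j,i,0) \subset \inner(Z_k,i,0)$: the \emph{smaller} ground set $N_j$ must carry the \emph{thinner} collar. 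Giving the larger $N_k$ the thinner collar pushes $\wider(Z_k,i,+)$ close to $S_i$ while leaving $\inner(Z_j,i,+)$ further away, so the required containment cannot hold. The paper runs the recursion in the opposite direction: $\mc{N}_L$ (the largest preferred set) receives the full budget $\eps$, and the subordinate chain $N_1 \subset \dots \subset N_{L-1}$ receives a strictly smaller budget $\eps_\mr{rec}$, computed \emph{after} $Z_\mr{top}$ is built as half the distance from $\wider(Z_\mr{top},i,s)$ to $S_i^{-s}$.

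That last point is the second gap: an a priori budget schedule depending only on $|N_k|$ cannot work, because $\eps$-tightness only bounds $\wider(Z,i,0)$ from above and says nothing about how thick $\inner(Z,i,0)$ is. To guarantee the nesting one must choose the subordinate budget \emph{relative to the actual geometry} of the already-constructed larger zoning, which forces a top-down recursion (largest ground set first). Your bottom-up order would require anticipating how close the not-yet-constructed $\wider(Z_k,i,+)$ will approach $S_i$, for which you have no handle. Finally, you assert continuity at $\eps=0$ and as $A \to \psv(\mc{M})$ but give no mechanism; the paper achieves this by parameterizing a family $Z(A,x)$ of zonings with tightness strictly decreasing in $x$, forming a monotone upper bound $b$ via $\tailsup$, and setting $s = \ext(b^{-1},\eps)$ so that the degenerate limit $\eps \to 0$ corresponds continuously to $s \to 1$. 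Without a strictly monotone parameter to invert, the slicing positions you describe would not vary continuously as the zones collapse.
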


\begin{definition}[Zone map]
\label{defZone}
%Note that if $i$ is a loop of $\mc{M}$, then $S_i$ has no effect of whether a $\mc{M}$-zoning accommodates $A$. 
%Therefore, let us assume that $\mc{M}$ has no loops; otherwise restrict to nonloops of $\mc{M}$. 

%The zoning may become degenerate 
%in the case where $x=1$, 
%but is generic for $x<1$. 

To construct $\zone(\mc{M},\lesssim,\eps,A)$, 
we first construct a continuous family of $\mc{M}$-zonings $Z(A,x)$ for $x\in [\nicefrac23,1]_\mb{R}$ for which the tightness decreases to 0 as $x \to 1$.
We will construct regions  
\[Z_\omega(\Sigma) = Z(A,x,\Sigma)\cap\cl(\maxcov^{-1}(\omega))\] 
for each $\omega \in \csph(\mc{M})$ and chain $\Sigma \leq \omega$.
% = \{\upsilon <\sigma <\tau\}$ 
That is, $\sigma \leq \omega$ for each $\sigma \in \Sigma$.  

In the case where $\omega =\upsilon$ is a vertex covector of $\mc{M}$, 
let $Z_\upsilon(\upsilon) = \maxcov^{-1}(\upsilon)$.

Consider the case where $\omega=\sigma$ is an edge covector of $\csph(\mc{M})$. 
Since $\Cov(\mc{M})$ is thin, 
there are exactly two vertex covectors $\{\upsilon_1,\upsilon_2\}<\sigma$, 
and two facet covectors $\{\tau_1,\tau_2\}>\sigma$. 
Let $R = \cl(\maxcov^{-1}(\sigma))$, 
\begin{align*}
B_\mr{N} &= R\cap \cl(\maxcov^{-1}(\tau_1)), &
B_\mr{W} &= R\cap \maxcov^{-1}(\upsilon_1), \\ 
B_\mr{S} &= R\cap \cl(\maxcov^{-1}(\tau_2)), &  
B_\mr{E} &= R\cap \maxcov^{-1}(\upsilon_2).  
\end{align*}
%$B_\mr{N} = R\cap \cl(\maxcov^{-1}(\tau_1))$, 
%$B_\mr{W} = R\cap \maxcov^{-1}(\upsilon_1)$,  
%and define $B_\mr{S}$ and $B_\mr{E}$ analogously. 
Let $P$ be the restriction of pseudocircles of $A$ that vanish on $\sigma$ to $R$. 
Let $i_0$ be the greedy choice from $\sigma^0$,
and $h(i_0)=0$.  
Let 
$\xi = \mox(R,B,P,h)$, 
and let $\xi_1$ be the projection of $\xi$ to the first coordinate. 
Let 
\begin{align*}
Z_\sigma(\upsilon_1) &= \xi_1^{-1}[0,\tfrac{1-x}2], \\
Z_\sigma(\{\upsilon_1,\sigma\}) &= \xi_1^{-1}[\tfrac{1-x}2,1-x], \\
Z_\sigma(\sigma) &= \xi_1^{-1}[1-x,x], \\
Z_\sigma(\{\upsilon_2,\sigma\}) &= \xi_1^{-1}[x,\tfrac{x+1}2], \\
Z_\sigma(\upsilon_2) &= \xi_1^{-1}[\tfrac{x+1}2,1].
\end{align*}
%where we assume $x \in [\nicefrac23,1]$; otherwise we replace $x$ with $\nicefrac23$. 

Consider the case where $\omega = \tau$ is a facet covector of $\csph(\mc{M})$.
Let $(\Sigma_0,\Sigma_1,\Sigma_2)$ be the greedily chosen triple of extensions of $\tau$ to maximal chains of $\csph(\mc{M})$ that only have $\tau$ in common, 
and let 
\[
p_k = \bigcap_{\sigma\in\Sigma_k} \cl(\maxcov^{-1}(\sigma))
\]
as in Lemma \ref{lemmaFlagPoint}, 
and let 
$\phi: \disk \to C_\tau = \cl(\maxcov^{-1}(\tau))$ be the internally isogonal map 
where $\phi(e^{2\pi k\im/3}) = p_k$. 
Note that our greedy choice is by support, so for $\omega=-\tau$ we choose the chains $(-\Sigma_0,-\Sigma_1,-\Sigma_2)$. 
Let 
\begin{align*}
Z_\tau(\tau) 
&= \phi(x\disk),\\ 
C_\Sigma  
&= \left\{ru : r \in [0,1]_\mb{R}, 
u \in \phi^{-1}\left(Z_{\widetilde\omega}(\Sigma)\right), 
\widetilde\omega <\omega \right\}, \\
Z_\tau(\Sigma)  
&= \phi(C_\Sigma \setminus \tfrac{1+x}2\disk^\circ) \\
Z_\tau(\Sigma\cup\{\tau\})  
&= \phi(C_\Sigma \cap \tfrac{1+x}2\disk \setminus x\disk^\circ)
\end{align*} 
That is, $C_\Sigma$ is the cone emanating from the origin that meets the circle at the union of arcs covered by $Z_{\widetilde\omega}(\Sigma)$ among $\widetilde\omega<\tau$, 
which is where the portion of $Z(\Sigma)$ that we just defined above meets $C_\tau$, 
and $Z_\tau(\Sigma)$ and $Z_\tau(\Sigma\cup\{\tau\})$ are portions of the cone $C_\Sigma$ in annuli close the unit circle.

Let $Z(A,x)$ be the subdivision of $\sphere^2$ into zones 
\[
Z(A,x,\Sigma) = \bigcup\left\{ Z_\omega(\Sigma) : \omega \in \csph(\mc{M}) \right\}.
\]

To construct $\zone(\mc{M},\lesssim,\eps,A)$ from $Z(A,x)$, 
we start with the coarsest preference $(\lesssim_\mc{M})$ favoring nonloops of $\mc{M}$ and define zonings for finer preferences recursively on the number of preferred sets. 
That is, $i\lesssim_\mc{M} j$ unless $j$ is a loop and $i$ is a nonloop. 
Let 
\begin{align*}
\zone(\mc{M},\lesssim_\mc{M},\eps,A) 
&= Z(A,s) \quad \text{where} \\
%\intertext{with $Z(A,x)$ as above and} 
s 
&= \ext(b^{-1},\eps) 
=\begin{cases}
b^{-1}(\eps) & b(\nicefrac23) > \eps \\
\nicefrac23 & \text{otherwise}, \\
\end{cases} \\
b(x) 
&= \tailsup(b_\mr{test};x) + (1-x), \\
b_\mr{test}(x) 
&= \tight(Z(A,x),A). 
\end{align*}

We next define zones for a general preference. 
Consider %a preference 
$(\lesssim) = \pref(N_1,\dots, N_L)$ favoring nonloops of $\mc{M}$.
Let 
%$\mc{N}_k = \rest(N_k,\mc{M})$ and 
\begin{align*}
Z_\mr{top} 
&= \zone(\mc{N}_L,\lesssim_{\mc{N}_L},\eps,A), \\
Z_\mr{rec} 
&= \zone(\mc{N}_{L-1},\lesssim_\mr{rec},\eps_\mr{rec},A), \\
\mc{N}_{k} 
&= \rest(N_{k},\mc{M}), \\
\lesssim_\mr{rec} 
&= \pref(N_1,\dots, N_{L-1}), 
\end{align*}
and $\eps_\mr{rec}$ is half the minimum distance between 
$\wider(Z_\mr{top},i,s)$ and $S_i^{-s}$. 
That is, $\eps_\mr{rec}$ is the minimum value satisfying 
\begin{align*}
(\wider(Z_\mr{top},i,+) \oplus 2\eps_\mr{rec}) &\subseteq \cl(S_i^+), \\
(\wider(Z_\mr{top},i,-) \oplus 2\eps_\mr{rec}) &\subseteq \cl(S_i^-).
\end{align*}
for each $i \in N_{L-1}$. 
Let $\zone(\mc{M},\lesssim,\eps,A)$ be the set of zonings
$Z_\mr{rec}$ together with $Z_\mr{top}$. 
\end{definition}

\begin{claim}
\label{claimZoneAXContinuous}
$Z(A,x)$ varies continuously as $(A,x)$ vary.
\end{claim}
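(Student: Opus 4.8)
The plan is to unwind the recursive definition of $Z(A,x)$ from Definition \ref{defZone} and verify continuity cell-by-cell, using the continuity results already established for all the constituent operations. Since $Z(A,x)$ is assembled from the regions $Z_\omega(\Sigma)$ for $\omega$ ranging over the three ranks of $\csph(\mc{M})$, and distance between $\mc{M}$-zonings is measured as the maximum Fréchet distance between corresponding edges, it suffices to show that each edge $Z(A,x,\Sigma,\Sigma')$ varies continuously in Fréchet distance as $(A,x)$ vary. I would treat the three cases (vertex, edge, facet covector $\omega$) in turn, working from the bottom rank up, since the facet-covector construction refers back to the regions already defined at lower ranks.

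First, for $\omega = \upsilon$ a vertex covector, $Z_\upsilon(\upsilon) = \maxcov^{-1}(\upsilon)$, and by Lemma \ref{lemmaMaxcovContinuous} this varies continuously in Hausdorff distance; its boundary is a union of the borders $B$ of Lemma \ref{lemmaMaxcovBorder}, which vary continuously in Fréchet distance by Lemma \ref{lemmaMaxcovBorder} part \ref{itemMaxcovBorderContinuous}. Next, for $\omega = \sigma$ an edge covector, the region $R = \cl(\maxcov^{-1}(\sigma))$ and its four borders $B_\mr{N},B_\mr{E},B_\mr{S},B_\mr{W}$ vary continuously by the same lemma; the path system $P$ (restrictions of vanishing pseudocircles to $R$) varies continuously in Fréchet distance by Lemma \ref{lemmaPMFrechetDistance} since restriction to a path is continuous; the greedy choice $i_0$ from $\sigma^0$ is locally constant; hence $\xi = \mox(R,B,P,h)$ varies continuously in the partial map metric by Lemma \ref{lemmaMox} part \ref{itemMoxContinuous}, so $\xi_1^{-1}$ of each closed subinterval varies continuously, and the boundary curves between consecutive pieces $Z_\sigma(\cdot)$ are preimages $\xi_1^{-1}$ of vertical lines, which vary continuously in Fréchet distance (here I would invoke Lemma \ref{lemmaPMFrechetDistance} applied to $\xi^{-1}$, which is an embedding of the x-monotone picture back to $R$). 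Finally, for $\omega = \tau$ a facet covector, the points $p_k$ vary continuously by Lemma \ref{lemmaFlagPoint} together with the continuity of the borders, the greedily-chosen triple of maximal chains is locally constant, so $\phi : \disk \to C_\tau$ varies continuously in the sup metric by Radó's theorem (Theorem \ref{theoremRado}) — using the three boundary points $p_k$ — and then $\phi^{-1}$ varies continuously by Lemma \ref{lemmaCPDistContinuity}; the cones $C_\Sigma$ are built from $\phi^{-1}(Z_{\widetilde\omega}(\Sigma))$ for $\widetilde\omega < \tau$, which we have already shown vary continuously, and intersecting with fixed annuli $\{r \in [\,x\,, \tfrac{1+x}{2}\,]\}$ and re-applying $\phi$ preserves continuity. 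Gluing the pieces $Z_\omega(\Sigma)$ over all $\omega$ via the gluing portion of Lemma \ref{lemmaCPDistContinuity} gives that each zone $Z(A,x,\Sigma)$, hence each edge, varies continuously.

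The one point requiring a little care is that the pieces at different ranks must agree on shared boundaries and that the whole thing remains a face-lattice decomposition as $(A,x)$ vary; but this is a structural fact about the construction that does not depend on continuity, and the continuity of the overlap curves $Z_\upsilon(\Sigma) \cap Z_\sigma(\Sigma)$ etc. follows because they are common sub-curves computed in either chart. I would phrase the argument so that continuity of $Z_\omega(\Sigma)$ in Fréchet/Hausdorff distance is proved by a single induction on $\mr{rank}(\omega)$, the base case being vertex covectors and the inductive step feeding the lower-rank regions into the facet-covector construction through $\phi^{-1}$.

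The main obstacle I anticipate is the facet-covector case: controlling $\phi$ and $\phi^{-1}$ as $A$ varies requires that $C_\tau = \cl(\maxcov^{-1}(\tau))$ be a Jordan domain with boundary varying continuously in Fréchet distance and with the three marked boundary points $p_k$ converging, so that Radó's theorem applies; one must check that the marked points stay distinct on the boundary circle (they do, because $p_0,p_1,p_2$ are the flag points of three maximal chains sharing only $\tau$, so they lie in the closures of three distinct facet-covector regions other than... wait, of distinct lower cells) and that the cone construction, which mixes a conformal chart with a radial/annular cut, does not destroy continuity at the seams where $C_\Sigma$ meets the unit circle — this is where I would lean on Lemma \ref{lemmaConvergentPathSequence} to splice the arc pieces of $\partial Z_\tau(\Sigma)$ coming from the annulus boundary and from the lower-rank regions. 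Everything else is a routine composition of the continuity lemmas from Section \ref{sectionTools}.
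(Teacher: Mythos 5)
Your overall approach matches the paper's: treat the three ranks of $\omega$ in turn and reduce continuity of $Z(A,x)$ to Fréchet continuity of each edge $Z(A,x,\Sigma,\Sigma')$, assembling the constituent maps through the continuity lemmas of Section~\ref{sectionTools}. The vertex-covector case, the use of $\mox$ and $\xi$ with Lemma~\ref{lemmaMox}, and the use of Radó's theorem for $\phi$ all line up with the paper (the paper dismisses the vertex-covector case even more briefly by noting that no edge of the zoning enters $\maxcov^{-1}(\upsilon)$, so one need not track it at all — distance between zonings is measured by edges only).

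There is, however, one genuine gap in the edge-covector step. You assert that the path system $P$, i.e.\ the arcs $S_i(A)\cap R$, varies continuously in Fréchet distance ``by Lemma~\ref{lemmaPMFrechetDistance} since restriction to a path is continuous.'' This does not follow from Lemma~\ref{lemmaPMFrechetDistance}. That lemma gives continuity of $f(P)$ when $f$ is a topological embedding varying in the partial map metric \emph{and} $P$ already varies in Fréchet distance; if you try to apply it with $f$ a parametrization of $S_i$, the domain arc $\gamma_i^{-1}(R)$ is precisely the thing whose Fréchet continuity is in question, so the invocation is circular. The paper does something different here: it first argues that the endpoints of $P_i$ — which are crossing points of $S_i$ with the borders $B_\mr{W}$ and $B_\mr{E}$ — vary continuously by Lemma~\ref{lemmaConvergentCrossingPoint}, and then appeals to a lemma from the predecessor paper (\cite[Lemma~3.2.4]{dobbins2021grassmannians}) for the step from ``endpoints vary continuously and the ambient pseudocircle varies continuously'' to ``the arc between them varies continuously in Fréchet distance.'' That missing step is not a routine application of the general partial-map machinery of Section~\ref{sectionTools}, and your argument should not present it as such. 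Once $P$ is known to vary continuously, the remainder of your edge-covector argument (via $\xi$, $\xi^{-1}$, and Lemma~\ref{lemmaPMFrechetDistance} applied to preimages of vertical lines) goes through, and your facet-covector treatment, while more detailed than the paper's, is consistent with it.
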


\begin{proof}
We first show continuity of $Z_\sigma$ for an edge covector $\sigma$. 
The region $R=\maxcov^{-1}(\sigma)$ varies continuously in Hausdorff distance by Lemma \ref{lemmaMaxcovContinuous},  
and the borders $B$ vary continuously in Fréchet distance by
Lemma \ref{lemmaMaxcovBorder}, 
so the endpoints of each $P_i$ path of $P$ vary continuously by Lemma \ref{lemmaConvergentCrossingPoint}, 
so $P_i$ varies continuously in Fréchet distance \cite[Lemma 3.2.4]{dobbins2021grassmannians},  
so $\xi = \mox(R,B,P,h)$ varies continuously in the partial map metric by Lemma \ref{lemmaMox}, 
so the edges of the cell decomposition $Z_\sigma(\Sigma)$ vary continuously in Fréchet distance by Lemma \ref{lemmaPMFrechetDistance}. 
Next consider a facet covector $\tau$. 
The map $\phi$ varies continuously in the sup-metric by Radó's theorem, 
so the edges of $Z_\tau(\Sigma)$ vary continuously in Fréchet distance by Lemma \ref{lemmaPMFrechetDistance}.  
None of the edges of $Z(A,x)$ pass through $\maxcov^{-1}(\upsilon)$ for a vertex covector $\upsilon$, so we need not consider this case. 
Thus, edges of $Z_\omega(\Sigma)$ vary continuously in Fréchet distance for each pair $(\omega,\Sigma)$, 
so $Z(A,x)$ varies continuously. 
\end{proof}

\begin{proof}[Proof of Lemma \ref{lemmaZoneMap}]
We will induct on the number of preferred sets of $(\lesssim)$. 
We start with $(\lesssim_\mc{M})$.

We first show that $Z = \zone(\mc{M},\lesssim_\mr{M},\eps,A)$ accommodates $A$. 
We only have regions $Z_\omega(\Sigma) = Z(\Sigma)\cap\maxcov^{-1}(\omega)$ for chains $\Sigma \leq \omega$, so if there is $\sigma \in \Sigma$ with $\sigma(i)=(+)$, then $\omega(i)=(+)$, 
so $Z(\Sigma) \subset S_i^+$, and similarly for the case where $\sigma(i)=(-)$.

Consider the case where $\sigma(i)=0$ for some nonloop $i$ and all $\sigma \in \Sigma$, 
and $\omega$ is an edge covector. 
Then, the arc $P_i$ of $S_i$ is among the paths $P$, in the construction of $Z_\omega(\Sigma)$, 
so $S_i$ traverses $R$ from $B_\mr{W}$ to $B_\mr{E}$, 
and $\xi(S_i)$ is an x-monotone curve, which crosses the vertical lines at $x$ and $1-x$, 
which are the endpoints of $\xi(S_i\cap Z(\Sigma))$. 
The pseudocircle $S_i$ might traverse some portion of $B_\mr{N}$, 
but $Z_\tau(\Sigma)$ for $\tau>\sigma$ is a region of $Z(\Sigma)$ on the other side of $S_i$, 
and similarly for $B_\mr{S}$, so $S_i\cap Z(\Sigma)$ is a path that intersects $\partial Z(\Sigma)$ at its endpoints. 
The chain $\Sigma$ cannot include a facet covector, 
and this also holds by a similar argument in the case where $\Sigma$ is a vertex covector or a pair of covectors, so $Z$ accommodates $A$.

%Next, we show that $s$ is well defined, and that if $\eps > 0$, then $s<1$.  
Next we show that $Z$ accommodates $A$ $\eps$-tightly.
Observe that $\tailsup(b_\mr{test})$ is non-increasing, 
so $b$ is strictly decreasing,
so $s = \ext(b^{-1},\eps)$ is well-defined. 
Also, $\tight(Z,A) = b_\mr{test}(s) < b(s) \leq \eps$, 
so $Z$ accommodates $A$ $\eps$-tightly. 
%
%Let us show that $G$ is degenerate if and only if $\eps=0$. 

Also, 
$b(1) = \tailsup(b_\mr{test};1) = b_\mr{test}(1) = \tight(Z(A,1);A) = \tight(A;A) = 0$, 
so if $\eps>0$, then $b^{-1}(\eps) < 1$ since $b$ is strictly decreasing, so $s<1$.
By construction we have $Z(A,s;\Sigma)$ has nonempty interior for each $\Sigma$ if $s<1$, 
so $Z$ is not degenerate if $\eps>0$.

Next we show continuity. 
The boundary of the zone $Z(A,x;\Sigma)$ 
%for a facet covector $\tau$ 
varies continuously in Fréchet distance 
by Claim \ref{claimZoneAXContinuous}, 
so $b_\mr{test}(x) = \tight(Z(A,x),A)$ varies continuously, 
so $b_\mr{test}$ varies continuously in the sup-metric by Lemma \ref{lemmaSupDistPartialApp}, 
so $b$ varies continuously in the sup-metric, 
so $s$ varies continuously in the sup-metric by Lemma \ref{lemmaExtendedInverse}, 
so $Z = Z(A,s(\eps))$ varies continuously.

Next, we show equivariance.
The map $\mox$ in the construction of $Z_\sigma$ for an edge covector $\sigma$ is $\orth_3$-equivariant by Lemma \ref{lemmaMox}, 
so 
\begin{align*}
Z_\sigma(QA,x,\sigma) &= [\mox(Q(R,B,P),h)]^{-1}([1-x,x]\times\mb{R}) \\
&= Q[\mox(R,B,P,h)]^{-1}([1-x,x]\times\mb{R}) \\
&=Q Z_\sigma(\sigma).  
\end{align*}
and likewise for other chains $\Sigma \ni \sigma$. 
Also, $Q\phi$ is isogonal 
for the map $\phi$ the construction of $Z_\tau$ 
for a facet covector $\tau$, 
and so $Q\phi$ satisfies the defining conditions of $\phi$ with $A$ replaced by $QA$,
and $[Q\phi]^{-1}(Z_{\widetilde\omega}(QA,x,\Sigma)) 
%= \phi^{-1}(Q^{-1}Q Z_{\widetilde\omega}(A,x,\Sigma)) 
= \phi^{-1}(Z_{\widetilde\omega}(\Sigma))$, 
so $C_\Sigma$ remains unchanged by the $\orth_3$-action on $A$, 
so $Z_\tau(QA,x,\Sigma) = QZ_\tau(A,x,\Sigma)$.  
Thus, $\zone$ is $\orth_3$-equivariant.

We now consider a general preference $(\lesssim)$ and argue by induction. 
We have continuity and equivariance and that $Z$ accommodates $A$ by induction and the same argument above applied to $Z_\mr{top}$.  

If $\eps>0$, 
then $\wider(Z_\mr{top},i,+)$ 
is a closed subset of the open region $S_i^+$, 
so $\eps_\mr{rec} > 0$  
and $Z$ is not degenerate by induction. 

If we had $\eps_\mr{rec} \geq \eps$, then 
$
\inner(Z_\mr{top},i,+)  \subset 
\wider(Z_\mr{top},i,+) 
$
would be disjoint from $\cl(S^-\oplus\eps) \subset \cl(S^-\oplus\eps_\mr{rec})$, 
and therefore bounded apart,  
%since $(\wider(Z_\mr{top},i,+) \oplus 2\eps_\mr{rec}) \subseteq \cl(S^+)$ 
by definition of $\eps_\mr{rec}$, 
but 
$\inner(Z_\mr{top},i,+)\cup (S^-\oplus\eps)$ covers $\sphere^2$  
since $Z_\mr{top}$ is $\eps$-tight, 
which is a contradiction.  
Hence, $\eps_\mr{rec} < \eps$ and  
$Z$ accommodates $A$ $\eps$-tightly by induction.

Lastly we show that 
$Z$ is indeed a generic $(\mc{M},\lesssim)$-zoning for $\eps>0$. 
Let $Z_k = \rest(\mc{N}_k,Z_\mr{rec})$
and $i \in N_k$ 
for $k<L$. Then, 
\[
\inner(Z_k,i,+)^\circ  
\supseteq \sphere^2 \setminus (S_i^-\oplus\eps_\mr{rec}) 
\supset \wider(Z_\mr{top},i,+).
\] 
The first containment holds since $Z_k$ accommodates $A$ $\eps_\mr{rec}$-tightly by induction, 
and the second containment holds 
since no point of $\wider(Z_\mr{top},i,+)$ is within distance $\eps_\mr{rec}$ 
of a point of $S^-$ 
by definition of $\eps_\mr{rec}$. 
Similarly $\inner(Z_k,i,-)^\circ \supset \wider(Z_\mr{top},i,-)$, 
so $Z$ is a $(\mc{M},\lesssim)$-zoning. 
\end{proof}

\section{Zone ebb}
\label{sectionEbbZ}

Here we define the deformation retraction $\ebb_\mr{D}$ of Lemma \ref{lemmaEbbD} 
using a map $\ebb_\mr{Z}$ with the following properties.

\begin{lemma}[Zone ebb wrapper]
\label{lemmaGebbWrap}
Given a preference $(\lesssim) = \pref(N_1,\dots, N_L)$ 
favoring nonloops of $\mc{M}$, 
a generic or fully degenerate 
$(\mc{M},\lesssim)$-zoning 
that accommodates $A \in\upa(\mc{M})$, 
and $t\in [0,1]_\mb{R}$,  
then $A_t = \ebb_\mr{Z}(Z,A,t) \in \upa(\mc{M})$ 
satisfies the following:  
\begin{enumerate}
\item
\label{itemGebbWrapContinuous} 
$A_t$ varies continuously as $(Z,A,t)$ vary.
\item
\label{itemGebbWrapZero} 
$A_0 = A$.
\item
\label{itemGebbWrapOne}
$A_1 \in \psv(\mc{M})$. 
\item
\label{itemGebbWrapStrong}
%If $A \in \pstief(\mc{M})$, then $A_t = A$ is unchanging. 
If $Z = A \in \psv(\mc{M})$ is degenerate, then $A_t = A$ is unchanging. 
\item
\label{itemGebbWrapEquivariant} 
$\ebb_\mr{Z}$ is equivariant, i.e., 
$\ebb_\mr{Z}(Q(Z,A),t) = Q A_t$ for $Q \in \orth_3$. 
\item 
\label{itemGebbWrapOT}
For $t<1$, order type does not change, i.e., $\ot(A_t) = \ot(A)$. 
\item 
\label{itemGebbWrapGuard}
$\facet(Z)$ accommodates $A_t$, 
\item 
\label{itemGebbWrapWeight}
weights of loops decrease 
and weights of nonloops do not change, 
i.e., $\wt_i(A_t) \leq \wt_i(A)$ 
with equality for nonloops. 
\end{enumerate}
\end{lemma}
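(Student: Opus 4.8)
The plan is to construct $\ebb_\mr{Z}(Z,A,t)$ recursively on the number $L$ of preferred sets of $(\lesssim)$, reducing the problem to a single $\mc{N}$-zoning at each stage, and then to assemble the stated properties from the corresponding properties of the single-zoning deformation. At the top level, with $(\lesssim) = \pref(N_1,\dots,N_L)$, we have the data $Z = (Z_\mr{rec},Z_\mr{top})$ where $Z_\mr{top}$ is an $\mc{N}_L$-zoning and $Z_\mr{rec}$ is an $(\mc{M},\lesssim_\mr{rec})$-zoning. The plan would be to first apply a deformation driven by $Z_\mr{top}$ that collapses the pseudocircles indexed by $N_L$ onto their pseudolinear positions relative to $\mc{N}_L$ — i.e. pushes $\rest(N_L,A)$ into $\psv(\mc{N}_L)$ — while keeping $\facet(Z_\mr{top})$ accommodating and not disturbing order type until the endpoint; then, inside each facet zone and each lower-dimensional zone of $Z_\mr{top}$, recursively run the $(\mc{M},\lesssim_\mr{rec})$-deformation driven by $Z_\mr{rec}$ on the remaining pseudocircles. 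The single-zoning deformation itself is what one builds zone-by-zone: inside an edge-covector zone $Z(\sigma)$ one uses $\mox$ (Lemma \ref{lemmaMox}) to make the relevant pseudocircles x-monotone and then scales them vertically onto the common horizontal segment $[0,1]\times h_{i_0}$, and inside facet and vertex zones one either leaves things fixed or interpolates via $\interp$ (Lemma \ref{lemmaInterp}) so that the pieces glue continuously along shared faces.

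The key steps in order would be: (1) Precisely define $\ebb_\mr{Z}$ recursively, specifying a stopping/reparametrization scheme — analogous to the one in Definition \ref{defCrush} — so that the composite deformation is continuous in $(Z,A,t)$ and reaches $\psv(\mc{M})$ exactly at $t=1$; here I would use $\ext_\mb{R}$ and $\tailsup$ to produce a monotone, continuously-varying stopping time. (2) Verify \ref{itemGebbWrapZero} and \ref{itemGebbWrapOne}: $A_0 = A$ is immediate from the construction, and $A_1 \in \psv(\mc{M})$ follows because after the $Z_\mr{top}$-stage $\rest(N_L,A_1) \in \psv(\mc{N}_L)$, and the recursion handles $\rest(N_{L-1},\cdot)$, etc., with the innermost stage a genuine pseudolinear collapse; one checks no nonloop of $\mc{M}$ becomes a loop before $t=1$. (3) Verify \ref{itemGebbWrapOT}: inside each zone the deformation is supported away from the crossing structure that determines $\ot$ — x-monotone vertical scaling and $\interp$ both preserve the combinatorial crossing pattern — so $\ot(A_t)=\ot(A)$ for $t<1$, using Lemma \ref{lemmaLSP} part \ref{itemLSPEastward} to know the pseudocircles only cross leaves one way. (4) Verify \ref{itemGebbWrapGuard}: the zone map was built (Definition \ref{defZone}, via the inner/wider machinery and $\eps_\mr{rec}$) precisely so that pseudocircles are forbidden from entering facet zones $Z(\tau)$; since the deformation only moves $S_i$ within $\inner(Z,i,0)$, the facet zones stay on the correct side, so $\facet(Z)$ accommodates $A_t$ throughout. (5) Verify \ref{itemGebbWrapContinuous}, \ref{itemGebbWrapStrong}, \ref{itemGebbWrapEquivariant}, \ref{itemGebbWrapWeight}: continuity from Lemmas \ref{lemmaMox}, \ref{lemmaLSP}, \ref{lemmaInterp}, \ref{lemmaCPDistContinuity}, \ref{lemmaSupDistPartialApp} and continuity of the stopping time; the strong/degenerate case because when $Z=A\in\psv(\mc{M})$ every zone $Z(\Sigma)$ is already a border $({\leq}1)$-cell so there is nothing to collapse; equivariance because $\mox$, $\lsp$, $\interp$, $\zone$ are all $\orth_3$-equivariant (and orthogonal maps are isogonal); and the weight claim because the deformation never creates or enlarges a pseudocircle — loops (weight $0$, or weights we are driving to $0$) only shrink, nonloop weights are literally untouched since the construction acts on the curves $S_i$ and their signs but not on $\wt_i$ except to send vanishing weights to $0$.

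The main obstacle I expect is step (1) together with the gluing in the construction: making the zone-by-zone deformations agree on shared faces $Z(\Sigma_1,\Sigma_2)$ while simultaneously (a) varying continuously as the zoning $Z$ varies — including across the degenerate/generic boundary, where zones degenerate to paths and the $\mox$ and $\lsp$ maps degenerate in the controlled way given by Lemma \ref{lemmaSquishConformalPath} — and (b) keeping $\facet(Z)$ accommodating for all $t$. The delicate point is that in an edge zone $Z(\sigma)$ the deformation must move the boundary curves $S_i\cap\partial Z(\sigma)$ (which lie on facet-zone boundaries) consistently with what the facet-zone deformation does, and the recursion's $\eps_\mr{rec}$ buffer is exactly the slack that makes this possible; getting the bookkeeping right so that the inductive hypothesis applies to $Z_\mr{rec}$ with the smaller preference is where the real work lies. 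I would isolate the single-$\mc{N}$-zoning deformation as a separate lemma (call it the "zone ebb core") proved first, establishing an analogue of each of \ref{itemGebbWrapContinuous}--\ref{itemGebbWrapWeight} at that level, and then obtain Lemma \ref{lemmaGebbWrap} by the recursion with the stopping-time wrapper, so that the present proof reduces to: invoke the core lemma, set up the recursion, choose the reparametrization, and propagate the eight properties through one inductive step.
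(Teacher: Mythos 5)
Your plan correctly identifies the architecture — a single-zoning "core" deformation, a recursion over the preferred sets, and a stopping-time reparametrization to keep everything continuous and reaching $\psv(\mc{M})$ exactly at $t=1$ — and it correctly flags the gluing-across-shared-faces and facet-zone issues as the crux. But there are two gaps worth naming, one structural and one a misconception.

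The structural gap: you don't identify the device that actually makes the recursion close. In the paper, the recursive map is the accumulator $\acc(\Omega,Z,A,t)$ of Lemma~\ref{lemmaGebbAcc}, whose extra argument $\Omega\in\psv(\mc{N}_k)$ is the portion of the arrangement that has \emph{already} been made pseudolinear, and the recursion carries $\Omega$ forward: each step $\psi_\mr{new}$ produces $\Omega_\mr{rec}=\psi_\mr{new}(1)*\rest(N_{k+1},A)\in\psv(\mc{N}_{k+1})$ and then recurses with $\acc(\Omega_\mr{rec},Z,A,t)$, while part~\ref{itemGebbAccOmega} of that lemma guarantees later stages never disturb the pseudocircles of $\Omega$. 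That preservation property is precisely what resolves the gluing/consistency worry you raise; without an explicit analogue of it, your "recursively run the deformation on the remaining pseudocircles" has no mechanism ensuring that the already-collapsed curves stay put, and the inductive hypothesis would not propagate. The wrapper lemma you were asked to prove is then almost a corollary: $\ebb_\mr{Z}(Z,A,t)=\acc(\emptyset,Z,A,t)*A_{\wt}(t)$, so parts~\ref{itemGebbWrapContinuous}--\ref{itemGebbWrapGuard} specialize $\Omega=\emptyset$, and the only additional work is part~\ref{itemGebbWrapWeight} (the $\acc$ map is by homeomorphism so never changes weights, and $A_{\wt}$ only scales loop weights) plus a short limit argument for continuity and $A_1\in\psv(\mc{M})$ as $t\to 1$, since the loops vanish only in the limit.

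The misconception: you worry about making the edge-zone deformation "consistent with what the facet-zone deformation does." There is no facet-zone deformation — the step map is the identity on every $Z(\tau)$ for $\tau$ a facet covector (part~\ref{itemGebbAccFacet}), so the facet zones never move; the genuine consistency issue is between adjacent edge-covector and vertex-covector zones, and it is handled by the $\mox$/$\interp$ parameterizations agreeing on shared faces together with the $\Omega$-preservation, not by the $\eps_\mr{rec}$ buffer (which serves to keep the nested zonings $\eps$-tight, not to reconcile deformations). Your stopping-time wrapper is also in the wrong place: the paper uses a $\tailinf/\ext_\mb{R}$ stopping time \emph{inside} the accumulator's recursive step, where it is needed to keep $\facet(Z)$ accommodating $A_t$, rather than as an outer wrapper around $\ebb_\mr{Z}$ as in Definition~\ref{defCrush}.
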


We will see later that $\ebb_\mr{Z}$ depends on a given zoning $Z$ that accommodates $A$, but that $Z$ might not continue to accommodate $A_t$ as it evolves, so while $\ebb_\mr{Z}$ has many properties of a deformation retraction, $\ebb_\mr{Z}$ is not exactly a deformation retraction, since $A_t$ leaves the domain of $\ebb_\mr{Z}(Z)$ for a fixed $Z$.  Nevertheless, we will get a deformation retraction $\ebb_\mr{D}$ using the continuous dependence on $Z$.

\begin{definition}[Diff ebb]
\label{defEbbD}
Given a chain $\mc{C}= \{\mc{M}_1 < \dots < \mc{M}_L\}$ and $\eps>0$, let 
\begin{align*}
 \ebb_\mr{D}(\mc{C},\eps;A,t) 
&= \ebb_\mr{Z}(Z,A,t)
\intertext{where}   
Z &= \zone(\mc{M}_L,\lesssim,\eps_\mr{Z},A), \\
\eps_\mr{Z} &= \min(\eps,\maxlit(\mc{M}_L,A)), 
\end{align*}
and 
$i\lesssim j$ when $i$ is a loop of more oriented matroids of $\mc{C}$ than $j$ is.

\end{definition}

\begin{remark}
\label{remarkEbbDFavorNonloops}
$(\lesssim)$ favors nonloops of $\mc{C}$ since the sets of nonloops of the oriented matroids of a chain are nested.  
\end{remark}

We will use the following to show that maxlit and minbig do not grow and shrink too much respectively.

\begin{lemma}
\label{lemmaNestedDisks}
If $D\subseteq C$ are nested 2-cells, 
then $\delta = \dist_\mr{F}(\partial D, \partial C) \geq \dist_\mr{H}(D,C)$.
Moreover, every point $p \in C \setminus D$ is strictly within distance $\delta$ of $\partial C$. 
\end{lemma}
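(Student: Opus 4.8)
The plan is to work with the Hausdorff–distance and Fréchet–distance definitions directly and exploit the nesting $D \subseteq C$. First I would unpack what $\delta = \dist_\mr{F}(\partial D, \partial C)$ gives us: there is a homeomorphism $\psi : \partial C \to \partial D$ (or a sequence of homeomorphisms approaching the infimum, handling the degenerate cases as in the definition of $\dist_\mr{F}$) with $\dist(\psi(y), y) \le \delta$ for all $y \in \partial C$, up to an arbitrarily small slack $\eta>0$ which I will let tend to $0$ at the end. The key geometric fact I want is: every point $p \in C \setminus D$ lies within distance $\delta$ of $\partial C$. Granting that, $\dist_\mr{H}(D,C) \le \delta$ is immediate, since $D \subseteq C$ means $D \subseteq C \oplus 0$, and every point of $C$ is either in $D \subseteq D\oplus\delta$ or in $C\setminus D$, hence within $\delta$ of $\partial C \subseteq C$, and a fortiori within $\delta$ of $D$ once one observes $\partial C$ is reached by a path staying in $C$... actually the cleaner route is: a point of $C\setminus D$ within $\delta$ of $\partial C$ is within $\delta$ of $\cl(C)$; but I need within $\delta$ of $D$. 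So I should instead argue directly that $p \in C\setminus D$ is within $\delta$ of $\partial D$: the segment (spherical geodesic) from $p$ toward a nearby point of $\partial C$ must cross $\partial D$ since $p \notin D$ but points near $\partial C$... no. Let me reconsider and push the crossing argument through $\partial D$ rather than $\partial C$.

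The honest approach: fix $p \in C\setminus D$. Since $C$ is a $2$-cell and $D$ a $2$-cell with $D\subseteq C$, consider a point $q \in \partial D$ realizing $\dist(p,\partial D)$; I want to bound this by $\delta$. Take the geodesic $\gamma$ from $p$; since $p \notin D$ and $D$ is closed with nonempty interior, a generic short geodesic ray from $p$ can be extended inside $C$ until it exits $C$, and by connectedness it will, if it enters $D$ at all... This is getting delicate, so the robust argument is via winding numbers / Jordan curve theorem exactly as in the proof of Lemma \ref{lemmaAreaContinuous}: if $p \in C\setminus D$ were at distance $>\delta$ from $\partial D$, then $p\oplus\delta$ misses $\partial D$, so $p$ is on a definite side of $\partial D$; the near-isometry $\psi:\partial C\to\partial D$ with displacement $\le\delta+\eta$ gives a homotopy (spherical linear interpolation) from $\partial C$ to $\partial D$ missing $p$ provided $p$ is far enough from both, forcing $p$ to be on the same side of $\partial C$ as of $\partial D$; but $p \in C$, i.e. inside $\partial C$, while $p\notin D$, i.e. outside $\partial D$ — a contradiction once the sides are identified via the homotopy. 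Letting $\eta\to0$ yields $\dist(p,\partial D)\le\delta$, which is ``within distance $\delta$ of $\partial D$''; combined with $\partial D \subseteq D$ this gives $p \in D\oplus\delta$, hence $C\subseteq D\oplus\delta$, hence $\dist_\mr{H}(D,C)\le\delta$.

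For the ``strictly within distance $\delta$'' refinement in the second sentence, I would note that if $p\in C\setminus D$ had $\dist(p,\partial C)=\delta$ exactly (or $\dist(p,\partial D)=\delta$), the infimum in $\dist_\mr{F}$ combined with compactness of $\partial C$ lets one extract an optimal or near-optimal matching, and the contradiction above actually produces a crossing at strictly positive ``slack'', so the inequality is strict; alternatively, perturb $\delta$ to $\delta' = \dist(p,\partial C) < \delta$ would already contradict nothing, so the statement to prove is simply $\dist(p,\partial C) < \delta$, which I get by the same homotopy argument applied with any $\delta'' \in (\dist_\mr{F}(\partial D,\partial C),\ \dist(p,\partial C))$ — if such $\delta''$ existed the interpolation homotopy of displacement $\le\delta''$ would separate wrongly, so no such $\delta''$ exists, i.e. $\dist(p,\partial C)\le\dist_\mr{F}(\partial D,\partial C)=\delta$, and strictness follows because $p\notin\partial C$ (it is in the open difference, and $C$ here denotes the closed $2$-cell, so I should be careful: $p\in C\setminus D$ with $C$ closed could have $p\in\partial C$, but then $p\in\partial C\subseteq C$ and also $p$ would need $p\notin D$; if $\partial C\subseteq D$ this can't happen, and in general a boundary point of $C$ at Fréchet distance $\delta$ from $\partial D$ is at distance $\le\delta$ trivially, with strictness failing only in edge cases that I would dispatch by hand).

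\textbf{Main obstacle.} The hard part will be making the Jordan-curve / homotopy separation argument rigorous on $\sphere^2$ with the two nested Jordan curves $\partial D, \partial C$, precisely because ``$2$-cell'' allows $\partial C$ or $\partial D$ to be wild curves of positive area and because the Fréchet matching $\psi$ is only near-optimal; I expect to handle this exactly as Lemma \ref{lemmaAreaContinuous} does — using spherical linear interpolation between $\partial D$ and a matched copy, observing it avoids any point bounded away from both curves, and invoking that a point's side relative to a Jordan curve is a homotopy invariant of the curve within the complement of that point. The rest (deducing the Hausdorff bound, the strict inequality) is then routine bookkeeping with $\oplus\delta$ notation.
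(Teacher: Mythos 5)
Your core approach — a homotopy from $\partial C$ to $\partial D$ built from a near-optimal Fréchet matching $\psi$, forced to hit $p$ because the winding number of the loop around $p$ would otherwise have to stay constant while it actually jumps from $\pm 1$ (for $\partial C$, with $p\in C^\circ$) to $0$ (for $\partial D$, with $p\notin D$) — is the same topological obstruction the paper uses, just packaged differently. The paper glues $D$ to a cylinder $\partial D\times[0,1]_\mb{R}$ to form a disk $\widetilde D$, maps the cylinder into the plane by linear interpolation between $x$ and $\psi(x)$, and observes that if the resulting map $\xi$ missed $p$ then the retraction $\rho$ of $\mb{R}^2\setminus p$ onto $\partial C$ would make $\rho\circ\xi$ a map from the disk $\widetilde D$ onto $\partial C$ restricting to a homeomorphism on the boundary circle, which is impossible. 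Gluing in $D$ is the paper's substitute for your separate winding-number-zero observation for $\partial D$; either way one extracts a hitting pair with a parameter $t$ strictly between $0$ and $1$, and the Hausdorff bound (first sentence of the lemma) follows as you indicate.

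The genuine gap is your strictness argument for the second sentence. You assert that strictness ``follows because $p\notin\partial C$,'' but that inference only gives $\dist(p,\partial C)>0$, not $\dist(p,\partial C)<\delta$; it leaves $\dist(p,\partial C)=\delta$ entirely possible. The ingredient you actually need is $p\notin D$: since $D$ is closed and $\partial D\subseteq D$, the quantity $d:=\dist(p,\partial D)$ is a fixed positive number independent of the slack $\eta$. Writing the hitting point as $p=(1-t)y+t\psi(y)$ with $y\in\partial C$ and $\psi(y)\in\partial D$, you get simultaneously $\dist(p,\partial C)\leq t(\delta+\eta)$ and $\dist(p,\partial D)\leq(1-t)(\delta+\eta)$, hence $\dist(p,\partial C)+\dist(p,\partial D)\leq\delta+\eta$; letting $\eta\to 0$ and using $\dist(p,\partial D)\geq d>0$ gives $\dist(p,\partial C)\leq\delta-d<\delta$. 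This is also what implicitly keeps the paper's parameter $t$ bounded away from the degenerate endpoint. In short: strictness must come from $p$ being bounded away from $D$, not from $p$ merely lying off $\partial C$.
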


\begin{proof}
By definition of Fréchet distance, for each $\eps>0$, 
there is a homeomorphism $\psi: \partial D \to \partial C$ such that 
$\|\psi(x)-x\|<\delta+\eps$. 
We can construct a deformation retraction $\rho$ from $\mb{R}^2\setminus p$ to $\partial C$
since $\partial C$ is a Jordan curve.  
We will also construct a map $\xi : \widetilde D \to \mb{R}^2$ 
from a 2-cell $\widetilde D$ 
that consists of $D$ together with formal linear interpolations between each $x \in \partial D$ and its image $\psi(x) \in \partial C$. 
That is, $\widetilde D$ is the union of 
$D$ and $\partial D \times [0,1]_\mr{R}$ 
with $\partial D$ and $\partial D \times \{0\}$ identified. 
Note that $\partial \widetilde D = \partial D \times \{1\}$. 
Let $\xi : \widetilde D \to \mb{R}^2$ be the identity on $D$ 
and send $(x,t) \in \partial D \times [0,1]$ 
to $\xi(x,t) = t\psi(x)+(1-t)x$.  
Note that the definitions of $\xi$ on $D$ and $\partial D \times [0,1]_\mr{R}$ agree on $\partial D$ and $\partial D \times \{0\}$.
If $\xi$ never hit $p$, then we would get a homotopy equivalence between $\widetilde D$ and $\partial C$ 
via the map $\rho\circ\xi$, but that is impossible, since $\widetilde D$ is simply connected, but $\partial C$ is not. 
The restriction of $\xi$ to $D$ cannot hit $p$, since $p \not\in D$, 
so there must be some $(x,t)$ such that $p = \xi(x,t) = t\psi(x)+(1-t)x$ with $t>0$. 
Hence, $p$ is within distance $(1-t)(\delta+\eps)$ of $\partial C$, 
and this holds for all $\eps > 0$, so the distance from $p$ to $\partial C$is strictly less than $\delta$. 
\end{proof}

\begin{lemma}
\label{lemmaWeakTope}
If $\mc{M}_0 \leq_\mr{w} \mc{M}_1$ have the same set of loops, 
then each facet covector $\sigma$ of $\cov(\mc{M}_0)$ is also a facet covector of $\cov(\mc{M}_1)$,
and $\maxcov^{-1}(\mc{M}_0,A,\sigma) = \maxcov^{-1}(\mc{M}_1,A,\sigma)$.
\end{lemma}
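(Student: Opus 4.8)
\textbf{Proof plan for Lemma \ref{lemmaWeakTope}.}

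The statement has two parts: first, that every facet covector $\sigma$ of $\mc{M}_0$ remains a facet covector of $\mc{M}_1$; second, that the $\maxcov$-regions agree. The plan is to exploit the fact that $\mc{M}_0$ and $\mc{M}_1$ have the same loops, so they have the same support $S = \supp(\mc{M}_0) = \supp(\mc{M}_1)$, and that weak maps are monotone on covectors. First I would recall that a facet covector (tope) $\sigma$ of a rank $3$ oriented matroid is a maximal element of $\csph(\mc{M})$, and that topes are exactly the sign vectors $\tau$ supported on $S$ with $\tau^0\cap S=\emptyset$ that are compatible with all cocircuits in the appropriate sense; equivalently, $\sigma$ is a tope of $\mc{M}_0$ iff $\sigma$ is a covector of $\mc{M}_0$ and $\sigma$ is ``full'', i.e. $\sigma(i)\neq 0$ for every nonloop $i$. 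Since $\mc{M}_0\leq_\mr{w}\mc{M}_1$, by Anderson's result on $\maxcov$ (or directly from the weak order on chirotopes/covectors) there is a covector $\sigma_1$ of $\mc{M}_1$ with $\maxcov(\mc{M}_0,\mc{M}_1,\sigma_1)=\sigma$, meaning $\sigma\leq_\mr{v}\sigma_1$ and $\sigma$ is the maximum covector of $\mc{M}_0$ below $\sigma_1$. Because $\sigma$ is already full on $S$ and $\sigma\leq_\mr{v}\sigma_1$, we must have $\sigma_1=\sigma$ (there is no room to increase a full sign vector in the $\leq_\mr{v}$ order without changing the zero set, and the zero set of $\sigma$ restricted to $S$ is empty). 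Hence $\sigma\in\cov(\mc{M}_1)$, and since it is full on $S$, it is a facet covector of $\mc{M}_1$.

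For the second part, I would unwind the definitions. By definition, $\maxcov^{-1}(\mc{M}_k,A,\sigma)=\{x\in\sphere^2:\max(\cov(\mc{M}_k)_{\leq_\mr{v}\sign(A,x)})=\sigma\}$. Fix $x$ and write $s=\sign(A,x)$; note $A\geq_\mr{w}\mc{M}_1\geq_\mr{w}\mc{M}_0$, so $s$ dominates covectors of both. The key observation is that since $\sigma$ is a facet covector of both $\mc{M}_0$ and $\mc{M}_1$ and they share the same support, $\sigma$ is a maximal element in each of $\cov(\mc{M}_0)$ and $\cov(\mc{M}_1)$; moreover, $\cov(\mc{M}_0)\cap\{\text{full on }S\} = \cov(\mc{M}_1)\cap\{\text{full on }S\}$ is not quite automatic, so instead I would argue: $x\in\maxcov^{-1}(\mc{M}_1,A,\sigma)$ iff $\sigma\leq_\mr{v}s$ and no covector of $\mc{M}_1$ strictly between $\sigma$ and $s$; since $\cov(\mc{M}_0)\subseteq$ (down-closure in weak map sense of) $\cov(\mc{M}_1)$ via $\maxcov$, and $\sigma$ is in both, one shows $\max(\cov(\mc{M}_0)_{\leq_\mr{v}s})=\sigma$ iff $\max(\cov(\mc{M}_1)_{\leq_\mr{v}s})=\sigma$. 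The forward direction uses that $\maxcov(\mc{M}_0,\mc{M}_1)$ is order preserving and surjective with $\maxcov(\mc{M}_1\text{-covector above }\sigma)$ landing at or above $\sigma$; the reverse uses that any $\mc{M}_0$-covector $\leq_\mr{v}s$ is $\leq_\mr{v}$ some $\mc{M}_1$-covector $\leq_\mr{v}s$ (take its image is wrong direction — rather, take the $\mc{M}_1$ covector $\maxcov^{-1}$... ) so I would instead use: $\max(\cov(\mc{M}_0)_{\leq s}) = \maxcov(\mc{M}_0,\mc{M}_1,\max(\cov(\mc{M}_1)_{\leq s}))$, which holds because $\maxcov$ commutes with taking the maximum below a fixed element (a general lattice fact about the composite of two surjective order-preserving ``floor'' maps). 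Then $\max(\cov(\mc{M}_1)_{\leq s})=\sigma$ forces $\max(\cov(\mc{M}_0)_{\leq s})=\maxcov(\mc{M}_0,\mc{M}_1,\sigma)=\sigma$; conversely if $\max(\cov(\mc{M}_0)_{\leq s})=\sigma$ then the $\mc{M}_1$-maximum below $s$, call it $\sigma_1'$, satisfies $\maxcov(\mc{M}_0,\mc{M}_1,\sigma_1')=\sigma$, and since $\sigma_1'\geq_\mr{v}\sigma$ with $\sigma$ full on $S$, we get $\sigma_1'=\sigma$ as in part one. This gives the equality of the two preimage sets pointwise, hence as sets.

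\textbf{Main obstacle.} The delicate point is the identity $\max(\cov(\mc{M}_0)_{\leq_\mr{v}s}) = \maxcov(\mc{M}_0,\mc{M}_1,\max(\cov(\mc{M}_1)_{\leq_\mr{v}s}))$, i.e. that the ``take the largest covector below $s$'' operation intertwines with the weak-map comparison map $\maxcov(\mc{M}_0,\mc{M}_1)$. This is essentially a statement that $\maxcov(\mc{M}_0,\mc{M}_1)$ is compatible with the two join-semilattice structures and with flooring at a common upper bound $s$ (which is a covector of $\mc{M}_1$, in fact of $\om(A)$). I expect to establish it by noting that $\maxcov(\mc{M}_0,\mc{M}_1)=\maxcov(\mc{M}_0,\om(A))\circ(\text{inclusion-type map})$ restricted appropriately, together with the fact from Anderson's work (cited in the excerpt) that $\maxcov$ between weakly comparable oriented matroids is a well-defined surjective order-preserving map, and that $\max(\cov(\mc{M})_{\leq_\mr{v}s})$ is exactly $\maxcov(\mc{M},\om(A),s)$ when $s\in\cov(\om(A))$. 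So the whole argument reduces to: $\maxcov(\mc{M}_0,\om(A),s) = \maxcov(\mc{M}_0,\mc{M}_1,\maxcov(\mc{M}_1,\om(A),s))$, which is the transitivity/functoriality of the $\maxcov$ construction along the chain $\mc{M}_0\leq_\mr{w}\mc{M}_1\leq_\mr{w}\om(A)$; this transitivity is the crux and I would either cite it from the oriented matroid literature or prove it directly from the characterization of $\maxcov(\sigma_1)$ as $\max(\cov(\mc{M}_0)_{\leq_\mr{v}\sigma_1})$ by a short lattice computation. Given transitivity, both parts of the lemma follow immediately by the fullness-of-$\sigma$-on-$S$ argument above.
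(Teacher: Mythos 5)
Your argument for the first claim (that $\sigma$ remains a facet covector of $\mc{M}_1$) is essentially the paper's: use surjectivity of Anderson's $\maxcov(\mc{M}_0,\mc{M}_1)$ to produce $\sigma_1 \in \cov(\mc{M}_1)$ with $\sigma \leq_\mr{v} \sigma_1$, then conclude $\sigma_1 = \sigma$ because covectors of $\mc{M}_1$ vanish outside the shared nonloop set $N$ and $\sigma$ is already full on $N$. That part is fine.

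For the second claim you take a different and substantially harder route, and it has a genuine gap. You reduce to the transitivity identity $\maxcov(\mc{M}_0,\om(A)) = \maxcov(\mc{M}_0,\mc{M}_1)\circ\maxcov(\mc{M}_1,\om(A))$, flag it yourself as the ``main obstacle,'' and do not prove it. This is not an immediate lattice computation: the nontrivial inequality is that $\max(\cov(\mc{M}_0)_{\leq_\mr{v} s}) \leq_\mr{v} \max(\cov(\mc{M}_1)_{\leq_\mr{v} s})$, and nothing you write establishes it. The paper does not need transitivity at all. Since $\sigma$ is a facet covector, it is a maximal element of $\csph(\mc{M}_k)$ for both $k=0,1$ (the second by part one). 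For any $x$ with $\sigma \leq_\mr{v} \sign(A,x)$, the unique maximum of $\cov(\mc{M}_k)_{\leq \sign(A,x)}$ (well-defined by Anderson) lies above $\sigma$ and hence, by maximality of $\sigma$, equals $\sigma$; conversely if the maximum equals $\sigma$ then $\sigma \leq_\mr{v} \sign(A,x)$. So
\[
\maxcov^{-1}(\mc{M}_k,A,\sigma) = \{x \in \sphere^2 : \sign(A,x) \geq_\mr{v} \sigma\} = \bigcup\{\cell(A,\upsilon) : \upsilon \in \cov(A),\ \upsilon \geq_\mr{v} \sigma\},
\]
which is manifestly the same for $k=0$ and $k=1$. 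You should replace your appeal to transitivity with this one-line characterization; it also makes clear why the first part of the lemma is needed (so that the characterization applies to $\mc{M}_1$ as well).
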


\begin{proof}
Consider a facet covector $\sigma$ of $\cov(\mc{M}_0)$. 
Then, $\maxcov(\mc{M}_0,\mc{M}_1)$ is a surjective map from $\cov(\mc{M}_1)$ to $\cov(\mc{M}_0)$ \cite{anderson2001representing}, 
so there is some $\tau \in \maxcov^{-1}(\mc{M}_0,\mc{M}_1,\sigma)$, which is a covector of $\mc{M}_1$ such that $\sigma \leq_\mr{v} \tau$. 
Since $\tau$ is a covector of $\mc{M}_1$, 
the support of $\tau$ is contained in the set of nonloops $N$ of $\mc{M}_1$, 
which is also the set of nonloops of $\mc{M}_0$. 
Since $\sigma$ is a facet covector of $\cov(\mc{M}_0)$, the support of $\sigma$ is $N$, 
and so the support of $\tau$ must contain $N$, since $\sigma\leq_\mr{v} \tau$.  
Hence, $\sigma$ and $\tau$ have the same support, so $\sigma=\tau$.  

For the last part observe that since $\sigma$ is a facet covector of $\mc{M}_0$, we have 
\[
\maxcov^{-1}(\mc{M}_0,A,\sigma) = \bigcup \{\cell(A,\upsilon) : \upsilon \in \cov(A), \upsilon \geq_\mr{v} \sigma\}
\]
and likewise for $\mc{M}_1$.
\end{proof}

%Let $A_t = \epsebb(\mc{C},\eps;A_0,t)$ 
Let $\phi = \ebb_\mr{D}(\mc{C},\eps)$, 
and $A_t = \phi(A,t)$, 
and $S_{i,t} = S_i(A_{t})$.
Let $N_k$ be the set of nonloops of $\mc{M}_k$, and let $\mc{N}_k = \rest(\mc{M}_L,N_k)$.

\begin{claim}
\label{claimEbbDTopeShrink}
Given a facet covector $\sigma$ of $\mc{M}_k$, 
the inradius of $C_t = \maxcov^{-1}(\mc{M}_k,A_t,\sigma)$ is at most $\eps$ smaller than that of $C_0$. 
\end{claim}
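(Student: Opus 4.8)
The plan is to track how the facet covector region $\maxcov^{-1}(\mc{M}_k,A_t,\sigma)$ can change under the deformation $\ebb_\mr{D}(\mc{C},\eps)$, using the fact that $\ebb_\mr{D}$ was built so that $\facet(Z)$ continues to accommodate $A_t$ for all $t$ (Lemma \ref{lemmaGebbWrap} part \ref{itemGebbWrapGuard}), and that the zoning $Z$ was constructed to $\eps_\mr{Z}$-tightly accommodate $A$ with $\eps_\mr{Z}\le\eps$. First I would reduce to the case $k=L$ by replacing $\mc{M}_k$ with $\mc{N}_k$ where convenient: since $\mc{M}_k$ and $\mc{M}_L$ may have different loop sets this needs a little care, but the facet covectors of $\mc{M}_k$ in question have support equal to the nonloops of $\mc{M}_k$, and Lemma \ref{lemmaWeakTope} (applied after restricting to the common nonloop set, or directly once loops are removed) identifies the relevant $\maxcov^{-1}$ region with one for a restriction to which $Z$'s component zoning $Z_k$ applies.

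Next I would compare $C_t=\maxcov^{-1}(\mc{M}_k,A_t,\sigma)$ with the zone $Z(\sigma)$, more precisely with the facet zone $F(\sigma)$ where $F=\facet(Z_k)$. The key inclusion is $F(\sigma)\subseteq C_t$ for all $t$: because $\facet(Z)$ accommodates $A_t$ by part \ref{itemGebbWrapGuard}, we have $F(\sigma)\subset S_{i,t}^\sigma$ for each $i\in\supp(\sigma)$, so every point of $F(\sigma)$ has sign vector $\ge_\mr{v}\sigma$ under $A_t$, hence lies in $\maxcov^{-1}(\mc{M}_k,A_t,\sigma)$. On the other side, the construction of $Z$ via $\zone$ guarantees that $Z(\sigma)$ is $\eps_\mr{Z}$-tightly nested inside $C_0=\maxcov^{-1}(\mc{M}_k,A_0,\sigma)$, i.e. $\partial Z(\sigma)$ is within Fréchet distance $\eps_\mr{Z}\le\eps$ of $\partial C_0$, and $Z(\sigma)\subseteq C_0$. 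Then by Lemma \ref{lemmaNestedDisks} applied to the nested 2-cells $Z(\sigma)\subseteq C_0$, the inradius of $Z(\sigma)$ is at least (inradius of $C_0$) $-\,\eps$: indeed $\dist_\mr{H}(Z(\sigma),C_0)\le\dist_\mr{F}(\partial Z(\sigma),\partial C_0)\le\eps$, and a largest inscribed disk of $C_0$, shrunk concentrically by $\eps$, lies in $Z(\sigma)$ since every point it would have to avoid (namely $C_0\setminus Z(\sigma)$) is within $\eps$ of $\partial C_0$ by the ``moreover'' clause of Lemma \ref{lemmaNestedDisks}. Combining, the inradius of $C_t\supseteq F(\sigma)=Z(\sigma)$ is at least the inradius of $Z(\sigma)$, which is at least (inradius of $C_0$)$-\eps$, giving the claim.

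The one point that needs attention — and which I expect to be the main obstacle — is verifying that $F(\sigma)$ (the facet zone of the relevant component zoning $Z_k$, not of $Z_L=Z_\mr{top}$) is actually the right thing to compare against, and that accommodation of $A_t$ by $\facet(Z)$ really does control $\maxcov^{-1}(\mc{M}_k,A_t,\sigma)$ rather than just $\maxcov^{-1}(\mc{M}_L,A_t,\sigma)$. This is where Lemma \ref{lemmaWeakTope} does the work: a facet covector $\sigma$ of $\mc{M}_k$ with the same-loops hypothesis is also a facet covector lower in the chain, and the identity $\maxcov^{-1}(\mc{M}_0,A,\sigma)=\maxcov^{-1}(\mc{M}_1,A,\sigma)$ there lets us pass freely. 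I would also need to note that $\eps_\mr{Z}=\min(\eps,\maxlit(\mc{M}_L,A))\le\eps$, so all the tightness bounds are by $\eps$. With those identifications in place the argument is the two-line nested-disk estimate above; I do not anticipate further difficulties, since everything else (continuity, the accommodation property) is already packaged in Lemmas \ref{lemmaGebbWrap}, \ref{lemmaZoneMap}, \ref{lemmaNestedDisks}, and \ref{lemmaWeakTope}.
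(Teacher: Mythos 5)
Your argument is correct and is essentially the paper's proof, just phrased directly rather than by contradiction: both reduce to the two inclusions $Z_k(\sigma)\subseteq C_t$ (from $\facet(Z)$ accommodating $A_t$, Lemma \ref{lemmaGebbWrap} part \ref{itemGebbWrapGuard}) and $Z_k(\sigma)\subseteq C_0$ with $\dist_\mr{F}(\partial Z_k(\sigma),\partial C_0)\leq\eps_\mr{Z}\leq\eps$ (from $\eps_\mr{Z}$-tightness, after identifying $\maxcov^{-1}(\mc{M}_k,A,\sigma)$ with $\maxcov^{-1}(\mc{N}_k,A,\sigma)$ via Lemma \ref{lemmaWeakTope}), then apply Lemma \ref{lemmaNestedDisks}. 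The subtlety you flagged about which zoning component and which $\maxcov^{-1}$ region to compare is precisely the point the paper handles with Lemma \ref{lemmaWeakTope}, so your use of it is correctly placed.
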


\begin{proof}
Suppose not.
Then, we could find a disk $D_0 \subseteq C_0$ of radius $r>\eps$ such that no disk of radius $r-\eps$ is contained in $C_t$. 
Let $D_1$ be the disk of radius $r-\eps$ concentric with $D_0$. 
Then, there must be some point $p \in D_1 \setminus C_t$, 
so $p \in D_0$ is at least distance $\eps$ away from $\partial D_0$, 
so $p$ is at least distance $\eps$ away from $\partial C_0$.  
Also, $\sigma$ is a facet covector of $\mc{N}_k$ by Lemma \ref{lemmaWeakTope}, 
and $N_k$ is a preferred set of $(\leq_\mc{C})$ by Remark \ref{remarkEbbDFavorNonloops}, 
and $\facet(Z)$ accommodates $A_t$ by Lemma \ref{lemmaGebbWrap} part \ref{itemGebbWrapGuard}, 
so we have a zone 
\[
Z(\sigma) \subseteq \maxcov^{-1}(X,\mc{N}_k,A_t) = C_t.
\]

$Z$ accommodates $A$ $\eps_\mr{Z}$-tightly by Lemma \ref{lemmaZoneMap}, 
so $\partial Z(\sigma)$ is within Fréchet distance $\eps_\mr{Z}\leq\eps$ of $\partial C_0$, 
and $p \not\in Z(\sigma)$, so 
$p$ is strictly closer than $\eps$ to $\partial C_0$ by Lemma \ref{lemmaNestedDisks}, 
which is a contradiction.
\end{proof}

\begin{claim}
\label{claimEbbDVanGrow}
The inradius of $\van(\mc{M}_k,A_t)$ is at most $\eps$ greater than that of $\van(\mc{M}_k,A_0)$. 
\end{claim}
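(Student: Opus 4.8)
The plan is to argue in direct parallel with Claim \ref{claimEbbDTopeShrink}, but with the roles of ``containing set shrinks'' replaced by ``contained set grows.'' Recall $\van(\mc{M}_k,A_t)$ is the union over $i\in\supp(\mc{M}_k)$ of the places where $\maxcov(\mc{M}_k,A_t)$ vanishes on $i$, and that a disk $D$ realizing the inradius of $\van(\mc{M}_k,A_t)$ lies inside $\van(\mc{M}_k,A_0)$ will be the goal. So suppose for contradiction that $\van(\mc{M}_k,A_t)$ contains a disk $D_1$ of radius $r_1 > r_0 + \eps$, where $r_0$ is the inradius of $\van(\mc{M}_k,A_0)$. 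Let $D_0$ be the concentric disk of radius $r_1 - \eps > r_0$. Then $D_0 \not\subseteq \van(\mc{M}_k,A_0)$ by maximality of $r_0$, so there is a point $p \in D_0 \setminus \van(\mc{M}_k,A_0)$, and every point of $D_0$ is at least distance $\eps$ from $\partial D_1$, hence $p$ is at least distance $\eps$ from $\sphere^2 \setminus D_1 \subseteq \sphere^2 \setminus \van(\mc{M}_k,A_t)$; equivalently $p$ together with its $\eps$-neighborhood sits inside $\van(\mc{M}_k,A_t)$.

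The next step is to extract a contradiction from the fact that $p \notin \van(\mc{M}_k,A_0)$ but a full $\eps$-ball around $p$ lies in $\van(\mc{M}_k,A_t)$. Since $p\notin\van(\mc{M}_k,A_0)$, for every $i\in\supp(\mc{M}_k)$ we have $\maxcov(\mc{M}_k,A_0,p,i)\neq 0$, so $p$ lies in the open region $\maxcov^{-1}(\mc{M}_k,A_0,\sigma)$ for a facet covector $\sigma$ of $\mc{M}_k$; in particular $p$ lies in $S_i^{\sigma(i)}(A_0)$ for each $i\in\supp(\sigma)=N_k$. I would then bring in the zoning: $\facet(Z)$ accommodates $A_0$ (indeed $Z$ accommodates $A$ by Lemma \ref{lemmaZoneMap}) so the facet zone $F(\sigma)=Z(\sigma)$ satisfies $Z(\sigma)\subseteq \maxcov^{-1}(\mc{M}_k,A_0,\sigma)$, and $Z$ accommodates $A$ $\eps_\mr{Z}$-tightly with $\eps_\mr{Z}\leq\eps$, so $\partial Z(\sigma)$ is within Fréchet distance $\eps$ of $\partial\maxcov^{-1}(\mc{M}_k,A_0,\sigma)=\partial C_0$ where $C_0 = \maxcov^{-1}(\mc{M}_k,A_0,\sigma)$. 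By Lemma \ref{lemmaGebbWrap} part \ref{itemGebbWrapGuard}, $\facet(Z)$ also accommodates $A_t$, so $Z(\sigma)\subseteq\maxcov^{-1}(\mc{M}_k,A_t,\sigma)$, which is disjoint from $\van(\mc{M}_k,A_t)$ since on $Z(\sigma)$ every $i\in N_k$ has $\maxcov(\mc{M}_k,A_t,\cdot,i)=\sigma(i)\neq 0$. Therefore the $\eps$-ball about $p$, which lies in $\van(\mc{M}_k,A_t)$, is disjoint from $Z(\sigma)$, so $p$ is at distance at least $\eps$ from $Z(\sigma)$, hence at distance at least $\eps$ from $\partial Z(\sigma)$; but by Lemma \ref{lemmaNestedDisks} applied to the nested 2-cells $Z(\sigma)\subseteq C_0$ (using that $Z(\sigma)$ has the same number of sides data as a subcell and $p\in C_0\setminus Z(\sigma)$), the point $p\in C_0\setminus Z(\sigma)$ is strictly within distance $\dist_\mr{F}(\partial Z(\sigma),\partial C_0)\leq\eps$ of $\partial C_0$, and since every point of $C_0\setminus Z(\sigma)$ is within $\eps$ of $\partial Z(\sigma)$ as well, this contradicts $p$ being at distance $\geq\eps$ from $\partial Z(\sigma)$.

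I should be careful about one point: the containment $p \in C_0 \setminus Z(\sigma)$ needs $p \in C_0$, which holds because $p\notin\van$ forces $p\in\maxcov^{-1}(\sigma)$ for exactly this $\sigma$ (the facet covector whose sign pattern $p$ realizes), and $Z(\sigma)\subseteq C_0$. Another subtlety is the edge case where the inradius disk is forced against a loop weight rather than a vanishing region of a nonloop; but $\maxlit$ separately bounds loop weights and ebb does not increase loop weights below what is already accounted, and in any case the statement here concerns only $\van(\mc{M}_k,\cdot)$, whose definition ranges over $\supp(\mc{M}_k)$, so loops do not enter. The main obstacle I anticipate is making rigorous the application of Lemma \ref{lemmaNestedDisks}: one must confirm $Z(\sigma)$ and $C_0 = \maxcov^{-1}(\mc{M}_k,A_0,\sigma)$ are genuinely nested $2$-cells (both are closed topological disks, $Z(\sigma)\subseteq C_0$ by accommodation, and $\partial C_0$ is a Jordan curve by Lemma \ref{lemmaMaxcovBorder}), and then to combine the two conclusions of that lemma — that $\dist_\mr{H}(Z(\sigma),C_0)\leq\eps$ and that points of $C_0\setminus Z(\sigma)$ are \emph{strictly} within $\eps$ of $\partial C_0$ — correctly against the ``$p$ is at distance $\geq\eps$ from $\partial Z(\sigma)$'' conclusion. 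Everything else is bookkeeping about which facet covector $p$ realizes and invoking tightness of $Z$.
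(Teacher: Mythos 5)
Your proposal takes essentially the same approach as the paper: contradiction via a disk $D_1 \subset \van(\mc{M}_k,A_t)$, a concentric $D_0$ of radius $r_1-\eps$ not fitting in $\van(\mc{M}_k,A_0)$, a point $p \in D_0\cap C_0$ for a facet covector $\sigma$, and then Lemma \ref{lemmaNestedDisks} applied to $Z(\sigma)\subseteq C_0$ against the tightness of the zoning, using that $\facet(Z)$ still accommodates $A_t$ (Lemma \ref{lemmaGebbWrap} part \ref{itemGebbWrapGuard}). That is the right skeleton and it is the paper's skeleton.

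Two places deserve tightening. First, you write down $Z(\sigma)$ for a facet covector $\sigma$ of $\mc{M}_k$ and treat it as a facet zone of $Z$ without justification, but $Z$ is an $(\mc{M}_L,\lesssim)$-zoning, not an $\mc{M}_k$-zoning, and a facet covector of $\mc{M}_k$ is not a priori a facet covector of $\mc{M}_L$. The paper routes this through Lemma \ref{lemmaWeakTope} (since $\mc{M}_k \leq_\mr{w} \mc{N}_k = \rest(N_k,\mc{M}_L)$ with the same loops, $\sigma$ is a facet covector of $\mc{N}_k$ and $\maxcov^{-1}(\mc{M}_k,A_0,\sigma) = \maxcov^{-1}(\mc{N}_k,A_0,\sigma)$) together with Remark \ref{remarkEbbDFavorNonloops} (so $N_k$ is a preferred set and $Z$ therefore contains an $\mc{N}_k$-zoning with a facet zone for $\sigma$). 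Without this step the symbol $Z(\sigma)$ has no referent. Second, your closing paragraph invokes both conclusions of Lemma \ref{lemmaNestedDisks} and winds up claiming that $p$ is ``within $\eps$ of $\partial Z(\sigma)$'' contradicts ``at distance $\geq\eps$ from $\partial Z(\sigma)$,'' which as stated is not a contradiction of non-strict inequalities. The cleaner move — which is what the paper does — is to go the other way: from $p\in C_0$ at distance $\geq\eps$ from $Z(\sigma)$ conclude $\dist_\mr{H}(Z(\sigma),C_0)\geq\eps$, hence by the first part of Lemma \ref{lemmaNestedDisks} $\dist_\mr{F}(\partial Z(\sigma),\partial C_0)\geq\eps$, which contradicts $\tight(Z,A) < \eps_\mr{Z} \leq \eps$ (the strictness of $\tight(Z,A) < \eps_\mr{Z}$ is what Lemma \ref{lemmaZoneMap} gives you). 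The second conclusion of \ref{lemmaNestedDisks}, about $p$ being strictly within $\delta$ of $\partial C_0$, is not needed here; it is the ingredient for the dual Claim \ref{claimEbbDTopeShrink}.
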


\begin{proof}
Suppose not. 
Then we could find a metric disk $D_1 \subset \van(\mc{M}_k,A_t)$ of radius $r > \eps$ such that no disk of radius $r-\eps$ is contained in $\van(\mc{M}_k,A_0)$. 
Let $D_0$ be the metric disk concentric with $D_1$ with radius $r-\eps$. 
Then, $D_0$ must intersect $C_0$ at a point $p$ 
for some some cell $C_t = \maxcov^{-1}(\mc{M}_k,A_t,\sigma)$ 
at time $t=0$, where $\sigma$ is a facet covector of $\mc{M}_k$. 
Then, $\sigma$ is also a facet covector of $\mc{N}_k$ by Lemma \ref{lemmaWeakTope}, 
and $N_k$ is a preferred set of $(\mc{M}_L,\lesssim)$ by Remark \ref{remarkEbbDFavorNonloops}. 
Also, $\facet(Z)$ accommodates $A_t$ by Lemma \ref{lemmaGebbWrap} part \ref{itemGebbWrapGuard}, so 
$Z(\sigma)\subset C_t$ is disjoint from $\van(\mc{M}_k,A_t)$, 
which contains $D_1$, 
so $D_1$ cannot intersect $Z(\sigma)$.
Hence, $p\in D_0$ is distance at least $\eps$ away from $Z(\sigma)$, but $p$ is contained in $C_0$, 
so $C_0$ and $Z(\sigma)$ are at least $\eps$ apart in Hausdorff distance, 
so $\partial C_0$ and $\partial Z(\sigma)$ are at least $\eps$ apart in Fréchet distance 
by Lemma \ref{lemmaNestedDisks}, which is a contradiction since $Z$ accommodates $A$ $\eps$-tightly in Definition \ref{defEbbD} by Lemma \ref{lemmaZoneMap}.
\end{proof}

\begin{proof}[Proof of Lemma \ref{lemmaEbbD}]
The map 
$\ebb_\mr{D}(\mc{C})$ is continuous 
since $Z$ varies continuously as $(\eps_\mr{Z},A)$ vary by Lemma \ref{lemmaZoneMap}, 
and $\eps_\mr{Z}$ varies continuously by Lemma \ref{lemmaBigLittleContinuous}, 
and $\ebb_\mr{Z}$ is continuous by Lemma \ref{lemmaGebbWrap} part \ref{itemGebbWrapContinuous}, 
so part \ref{itemEbbDContinuous} holds. 
Order type does not change for $t<1$ 
by part \ref{itemGebbWrapOT} of Lemma \ref{lemmaGebbWrap}, 
so part \ref{itemEbbDOT} holds. 

We have $A_0 = A$, $A_1 \in \pstief(\mc{M}_L)$, 
and $\ebb_\mr{D}(\mc{C},\eps)$ is equivariant 
by the respective parts \ref{itemGebbWrapZero}, \ref{itemGebbWrapOne}, 
and \ref{itemGebbWrapEquivariant} 
of Lemma \ref{lemmaGebbWrap}. 
In the case where $A \in \pstief(\mc{M}_L)$, 
we have $\maxlit(\mc{M}_L,A)=0$, so $\eps_\mr{Z}=0$, 
so $Z = \zone(\mc{M}_L,\lesssim,0,A) = A$ since $Z$ is 0-tight by Lemma \ref{lemmaZoneMap}, 
so $\ebb_\mr{D}(\mc{C},\eps)$ is trivial on $\pstief(\mc{M}_L)$ 
by Lemma \ref{lemmaGebbWrap} part \ref{itemGebbWrapStrong}. 
Thus, $\ebb_\mr{D}(\mc{C},\eps)$ is a strong equivariant deformation retraction to $\psv(\mc{M}_L)$.

By Claim \ref{claimEbbDTopeShrink}, 
$\inrad(\mc{M}_k,\sigma,\phi(A,t)) \geq \inrad(\mc{M}_k,\sigma,A)-\eps$ for each facet covector $\sigma$ of $\mc{M}_k$, 
and the nonloops of $\mc{M}_k$ are also nonloops of $\mc{M}_L \geq_\mr{w} \mc{M}_k$, 
so the weights of nonloops of $\mc{M}_k$ do not change by Lemma \ref{lemmaGebbWrap} part \ref{itemGebbWrapWeight}. 
Thus, $\minbig(\mc{M}_k,\phi(A,t)) \geq \minbig(\mc{M}_k,A)-\eps$, 
so part \ref{itemEbbDMinbig} holds.

By Claim \ref{claimEbbDVanGrow}, 
$\vanrad(\mc{M}_k,\phi(A,t)) \leq \vanrad(\mc{M}_k,A)+\eps$, and weights do not increase by Lemma \ref{lemmaGebbWrap} part \ref{itemGebbWrapWeight}, 
so $\maxlit(\mc{M}_k,\phi(A,t)) \leq \maxlit(\mc{M}_k,A)+\eps$, 
so part \ref{itemEbbDMaxlit} holds. 
\end{proof}

\subsection{Zone ebb accumulator}

%\bigskip
%\hrule
%\bigskip

%{\color{magenta} 
%add condition to acc so that greedy bases of $A$ locally extend $\Omega$ 
%}

To construct $\ebb_\mr{Z}$, we will recursively build the output for successively larger preferred sets.  For this, we define an accumulator map $\acc$ that takes as part of its input a partially constructed output and contributes the portion of the output for the next preferred set. 
In this subsection, we give the inductive argument for the accumulator and prove Lemma \ref{lemmaGebbWrap}. 
in the next subsection, 
we will construct the deformation $\step$ that the accumulator uses to construct the next portion of the output and show it has the desired properties. 

Recall that $\ext(\Omega)$ denotes the space of all extensions of an arrangement $\Omega$ on a ground set $N\subset [n]$ to an arrangement in $\psv_{3,n}$.

\newcounter{counterGuardEbb}

\begin{lemma}[Zone ebb accumulator]
\label{lemmaGebbAcc}
Let $N_L$ be the set of nonloops of $\mc{M}$, $N_0=\emptyset$, 
\[
(\lesssim) = \pref(N_1,\dots, N_L), 
\quad 
k \in \{0,\dots,L\}, 
\quad 
\mc{N}_k = \rest(N_k,\mc{M}),
%\quad 
%\Omega \in \pstief(\mc{N}_{k}), 
\]
$Z$ be either a generic or fully degenerate $(\mc{M},\lesssim)$-zoning, 
%that accommodates $A \in \up(N_L,\mc{M})$
%or a fully degenerate zoning $Z=A \in \psv(\mc{M})$, 
\[
Z \text{ accommodate } A \in \upa(\mc{M}), 
\quad 
Z_k = \rest(\mc{N}_k,Z) \text{ accommodate } \Omega \in \psv(\mc{N}_k),  
%\quad 
%F = \facet(Z). 
\]
%\[
%S_i(A)\cap Z_j(\Sigma) = S_i(\Omega)\cap Z_j(\Sigma) 
%\]
%for each chain $\Sigma$ 
%where 
%the first greedy basis element of the highest covector of $\Sigma$
%is $i\in\supp(\Omega)$.
\begin{equation}
\label{equationSAOmega}
S_i(A)\cap Z_k(\Sigma) = S_i(\Omega)\cap Z_k(\Sigma) 
\end{equation}
for each chain $\Sigma$ 
where 
$i=i_\Sigma$ is the greedy choice from $\bigcap\Sigma^0$.
%$i=i_\Sigma$ is the first greedy basis element of the highest covector of $\Sigma$.
%for each $i\in\supp(\Omega)$ 
%that is the first greedy basis element of the highest covector of $\Sigma$.
%for $i$ in the support of $\Omega$ 
%\cap \sigma^0$ 
%\[
%S_i(A)\cap Z_k(\sigma) = S_i(\Omega)\cap Z_k(\sigma) 
%\text{ for the greedy element $i \in\sigma^0$ of an edge covector $\sigma$ of $\mc{N}_k$.}
%\]
%
Then, 
%$\psi_t = \acc(\Omega,Z,A,t) : \sphere^2 \to \sphere^2$
\[\psi_t = \acc(\Omega,Z,A,t) : \sphere^2 \to \sphere^2\]
%\begin{align*}
%\psi_t &= \pinch_\mr{A}(\Omega,Z,A,t) : \sphere^2 \to \sphere^2 \\ 
%A_t &= \psi_t*A 
%%= \ebb_\mr{A}(\Omega,Z,A,t) 
%\in \up(\mc{M}) 
%\end{align*}
%\text{ for } t\in[0,1]_\mb{R}$ 
satisfies the following. 
\begin{enumerate}
%\item 
%\label{itemGebbAccHom}
%$A_t = \psi_t*A$ %for $t<1$.
\item 
\label{itemGebbAccHomeo}
$\psi_t \in \hom^{+}(\sphere^2)$ for $t<1$, 
\item 
\label{itemGebbAccContinuous}
%$A_t$ and 
$\psi_t$ varies continuously in the sup-metric as $(\Omega,Z,A,t)$ vary, 
\item 
\label{itemGebbAccZero}
%If $t=0$, then $\psi = \id$. 
$\psi_0 = \id$. 
\item 
\label{itemGebbAccOne}
$\psi_t*A \to \psi_1*A \in \psv(\mc{M})\cap\ext(\Omega)$ as $t\to 1$, 
\item 
\label{itemGebbAccStrong}
if $Z$ is degenerate, then $\psi_t = \id$, 
%If $Z = A\in\psv(\mc{M})\cap\ext(\Omega)$ is degenerate, then $\psi_t = \id$. 
\item 
\label{itemGebbAccEquivariant}
$\acc$ is $\orth_3$-equivariant, i.e., 
$\acc(Q(\Omega,Z,A),t) = Q\psi_t$, 
%$A_t$ is $\orth_3$-equivariant, i.e., 
%$\ebb_\mr{A}(Q(\Omega,Z,A),t) = QA_t$ for $Q\in\orth_3$, %and likewise for $A_t$.
\item 
\label{itemGebbAccFacet}
$\psi_t$ is the identity on $Z_L(\sigma)$ for each facet covector $\sigma$ of $\csph(\mc{M})$, 
%and $F$ accommodates $A_t$. 
\item 
\label{itemGebbAccOmega}
$\psi_t(S_i(\Omega)) = S_i(\Omega)$ for $i \in N_{k}$,
\item 
%\label{itemGebbAccGuardVan}
\label{itemGebbAccInner}
$\psi_1(\inner(Z_L,i,0)) = S_i(\Omega)$ 
%and 
%$\psi_1(\wider(Z_L,i,-)) = S_i^-(\Omega)$ 
for $i \in N_k$. 
%\n\n 
%$\psi_1(\avan(i,G_L)) = S_i(\Omega)$ for $i \in N_L$. 
%If $\sigma(i)=0$ for all $\sigma \in \Sigma$ for $i\in N_k$ 
%and a chain of $\csph(\mc{M})$, 
%then $\psi_1(G(\Sigma)) \subset S_i(\Omega)$. 
\setcounter{counterGuardEbb}{\value{enumi}}
\end{enumerate}
\end{lemma}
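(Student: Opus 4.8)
The plan is to prove Lemma \ref{lemmaGebbAcc} by induction on $k$, the number of preferred sets in the preference $(\lesssim)$, with the accumulator $\acc$ defined so that at stage $k$ it composes the previously accumulated homeomorphism with a new \emph{step} deformation that handles the pseudocircles indexed by $N_k \setminus N_{k-1}$. First I would set up the recursive structure: define $\acc(\Omega, Z, A, t) = \id$ in the base case $k=0$, and for $k>0$ let $\acc(\Omega, Z, A, t) = \step(\ldots) \circ \acc(\Omega_\mr{rec}, Z_\mr{rec}, A_\mr{rec}, t)$ where $\Omega_\mr{rec}$, $Z_\mr{rec}$, $A_\mr{rec}$ are the appropriate restrictions to $N_{k-1}$ and the $\step$ map is the one to be constructed in the next subsection (whose properties I would state as the hypotheses needed here). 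The key point is that the inductive hypothesis gives a homeomorphism $\psi_{\mr{rec},t}$ doing everything on the smaller ground set $N_{k-1}$, and $\step$ must extend this to handle the new level while not disturbing what was already achieved --- in particular $\step$ is the identity on a neighborhood of the pseudocircles already placed, which is possible because $Z_{k-1}$ accommodates them and the zoning condition nests the inner and wider zones across levels.

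The verification of properties \ref{itemGebbAccHomeo}--\ref{itemGebbAccInner} would then proceed property by property, each reducing to the corresponding property of $\step$ composed with the inductive hypothesis. Orientation-preserving homeomorphism for $t<1$ (part \ref{itemGebbAccHomeo}) and $\psi_0 = \id$ (part \ref{itemGebbAccZero}) are immediate from composition. Continuity (part \ref{itemGebbAccContinuous}) uses Lemma \ref{lemmaCPDistContinuity} on composition together with continuity of $\step$ and the inductive continuity of $\psi_{\mr{rec},t}$; equivariance (part \ref{itemGebbAccEquivariant}) is similar since both $\step$ and $\acc_\mr{rec}$ are $\orth_3$-equivariant. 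The statements about facet zones (part \ref{itemGebbAccFacet}) and degeneracy (part \ref{itemGebbAccStrong}) follow because $\step$ is the identity there and $\psi_{\mr{rec},t}$ is by induction; for degeneracy one also uses that a fully degenerate $(\mc{M},\lesssim)$-zoning restricts to fully degenerate zonings. The invariance $\psi_t(S_i(\Omega)) = S_i(\Omega)$ for $i \in N_{k-1}$ (part \ref{itemGebbAccOmega}) comes from the inductive hypothesis, and for $i \in N_k \setminus N_{k-1}$ from the fact that $\step$ moves $S_i(A)$ onto $S_i(\Omega)$ and thereafter fixes it; the equation \eqref{equationSAOmega} is exactly what guarantees the step is well-defined, since within each inner zone the pseudocircle already agrees with its target.

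The limit statements (parts \ref{itemGebbAccOne} and \ref{itemGebbAccInner}) are where the real content lies: I need $\psi_1 * A \in \psv(\mc{M}) \cap \ext(\Omega)$, meaning after the full deformation every pseudocircle of $A$ indexed by a nonloop sits exactly where $\Omega$ dictates (and the loops have vanished, handled separately via the weight decrease in the wrapper). The argument is that $\step$ at time $t=1$ crushes each inner $(i,0)$-zone of $Z_L$ onto $S_i(\Omega)$, using the $\mox$ coordinates from Lemma \ref{lemmaMox} to make $S_i \cap Z(\Sigma)$ an x-monotone graph that can be scaled vertically onto the horizontal segment $[0,1]\times h_i$; convergence as $t\to 1$ rather than attainment at $t=1$ is needed because the limiting map is only a continuous surjection, not a homeomorphism, consistent with part \ref{itemGebbAccHomeo} only asserting homeomorphism for $t<1$. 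I expect the main obstacle to be bookkeeping the compatibility between the restriction $Z_k$ of the zoning and the restriction $\Omega$ of the target arrangement across the inductive step --- precisely, checking that the hypothesis $S_i(A)\cap Z_k(\Sigma) = S_i(\Omega)\cap Z_k(\Sigma)$ for greedy $i$ is preserved for the recursive call (so the induction can even be invoked) and that the nesting conditions in the definition of a $(\mc{M},\lesssim)$-zoning guarantee the new step map does not re-perturb the already-finalized pseudocircles. Once the properties of $\step$ are in hand, the induction itself is routine; the delicate part is stating those properties correctly so that this lemma's hypotheses propagate.
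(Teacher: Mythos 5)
The overall shape of your plan — a recursion over preferred sets that composes $\step$ maps with a recursive accumulator, with each property of $\acc$ reduced to the corresponding property of $\step$ — matches the paper. But your recursion is pointed in the wrong direction, and that is not a cosmetic choice: it makes the construction vacuous. You define the base case as $k=0$ with $\acc = \id$, and you recurse by \emph{restricting} $\Omega$, $Z$, $A$ to $N_{k-1}$. The wrapper $\ebb_\mr{Z}$ calls $\acc(\emptyset, Z, A, t)$, i.e.\ with $\Omega$ on the empty ground set $N_0 = \emptyset$; under your scheme this lands immediately in the base case and returns $\id$, so $\psi_1*A = A \notin \psv(\mc{M})$ and part \ref{itemGebbAccOne} fails at the very first call. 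The paper instead puts the base case at $k \in \{L, L-1\}$ (where $\acc = \step$ outright), and the recursive step \emph{enlarges} the partial target: it first applies $\psi_\mr{new} = \step(\lesssim,\Omega,Z_{k+1},A,\cdot)$, then defines $\Omega_\mr{rec} = \psi_\mr{new}(1) * \rest(N_{k+1},A) \in \psv(\mc{N}_{k+1})$ as an \emph{extension} of $\Omega$ to the next preferred set, and finally sets $\psi_\mr{rec}(t) = \acc(\Omega_\mr{rec}, Z, A, t)$. You wrote $\Omega_\mr{rec}$ as "the appropriate restriction to $N_{k-1}$", but $\Omega$ must be built up, not pared down — the whole point is to synthesize the target arrangement level by level from the step maps, since nothing on $N_L\setminus N_k$ is known in advance. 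Checking that $(\Omega_\mr{rec}, Z, A)$ satisfies the hypotheses of the lemma (Claim \ref{claimGebbAccPsiRec}), and that $(\lesssim,\Omega, Z_{k+1}, A)$ satisfies those of Lemma \ref{lemmaGebbStep} (Claim \ref{claimGebbAccPsiNew}), is where the bookkeeping on equation \eqref{equationSAOmega} and the nested inner/wider zones actually gets used, and you correctly anticipated that this would be the delicate part — but you cannot even set it up with $\Omega_\mr{rec}$ pointing the wrong way.

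The second substantial omission is the stopping time. The paper does not set $\acc = \psi_\mr{rec}(t)\circ\psi_\mr{new}(1)$; it sets $\acc = \psi_\mr{com}(t,s) = \psi_\mr{rec}(t)\circ\psi_\mr{new}(s)$ where $s = s(t)$ is chosen via a monotone bound $b_t$ built from $\tailinf$ of a test function $b_\mr{test}(x) = \min_\sigma \dist_\mr{H}(Z(\sigma), C_{\sigma,t,x})$ over facet covectors. This machinery (Claims \ref{claimGebbAccF}--\ref{claimGebbAccEnd} and \ref{claimGebbFacetCell}) is what guarantees that the facet zones remain accommodated by $\psi_\mr{com}(t,s)*A$ throughout the deformation — a property inherited by $\ebb_\mr{Z}$ as part \ref{itemGebbWrapGuard} of Lemma \ref{lemmaGebbWrap} and required downstream in Claims \ref{claimEbbDTopeShrink} and \ref{claimEbbDVanGrow} to bound $\minbig$ and $\maxlit$. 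Simply composing the two time-$t$ homotopies does not keep the inner step frozen appropriately while the outer one evolves, and the continuity as $t\to 1$ and the claim $s(1)=0$ (so that $\psi_1 = \psi_\mr{rec}(1)$) is what makes part \ref{itemGebbAccOne} fall out cleanly. Your sketch of the properties would go through once these two pieces are in place, but as written the proposal has no mechanism for either.
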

%If $\Sigma \subset \{\sigma\in\csph(\mc{M}):\sigma(i)=0\}$,
%$\psi_t(S_i(A)) \subseteq V$ 
%and 
%$\psi_1(V) = S_i(\Omega)$ 
%for $i \in N_{k}$ and $V = \van(i,\rest(\mc{N}_k,G))$

\begin{lemma}[Zone ebb recursive step]
\label{lemmaGebbStep}
Given the setup of Lemma \ref{lemmaGebbAcc} 
with $k=L$ 
or $k=L-1$, 
but with a $\mc{M}$-zoning $Z=Z_k=Z_L$ 
that accommodates both $A$ and $\Omega$ 
where (\ref{equationSAOmega}) holds for $i\in\supp(\Omega)$, 
then 
%$A_t = \ebb_\mr{S}(\Omega,Z,A,t)$ and 
$\psi_t = \step(\lesssim,\Omega,Z,A,t)$ 
satisfies the \thecounterGuardEbb\ conditions stated in Lemma \ref{lemmaGebbAcc} along with the following.
\begin{enumerate}
\setcounter{enumi}{\value{counterGuardEbb}}
\item 
\label{itemGebbStepGreedy}
$\psi_t(S_i(A))\cap Z(\Sigma) = S_i(A)\cap Z(\Sigma)$ 
%for each chain $\Sigma$ 
where $i$ is the greedy choice from $\bigcap\Sigma^0$. 
%for each chain $\Sigma$ where $i$ is the first greedy basis element of the highest covector of $\Sigma$. 
\item 
\label{itemGebbStepInner} 
$\psi_t(\inner(Z,i,0))\subset\inner(Z,i,0)^\circ$ for $t>0$ and $Z$ generic, 
\item 
\label{itemGebbStepWider}
$\psi_t(\wider(Z,i,0))\subseteq\wider(Z,i,0)$, 
\end{enumerate}
\end{lemma}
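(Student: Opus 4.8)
The plan is to build $\psi_t$ zone by zone. It will be the identity on every facet zone and outside $\bigcup_i\inner(Z,i,0)$, and on each edge-covector zone $Z(\sigma)$ it flattens the pseudocircles vanishing on $\sigma$ into $x$-monotone graphs and squeezes them onto the greedy one. Explicitly, on $Z(\sigma)$ take $\xi=\mox(R,B,P,h)$ as in Definition \ref{defZone}, where $P$ consists of the arcs $S_i(A)\cap Z(\sigma)$ for $i\in\sigma^0$ and $h(i_\sigma)=0$ for the greedy $i_\sigma$; by Lemma \ref{lemmaMox} each $\xi(S_i(A))$ is the graph of a $1$-Lipschitz $f_i\colon[0,1]\to\mb{R}$ with $f_{i_\sigma}\equiv 0$. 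Let $\psi_t$ on $Z(\sigma)$ be $\xi^{-1}$ followed by the map fixing every vertical line and the upper and lower boundary of $\xi(Z(\sigma))$ and rescaling each vertical slice so that the height $f_i(x)$ goes to $(1-t)f_i(x)$. Extend $\psi_t$ over the buffer zones $Z(\{\upsilon,\sigma\})$ and the vertex zones $Z(\upsilon)$ by the analogous construction in $\mox$-coordinates, arranging it to agree with the edge-zone map on shared edges and to be the identity on the outer boundary $\partial Z(\upsilon)$; the matching required here is exactly what the compatibility conditions of a $(\mc{M},\lesssim)$-zoning (Definition \ref{defPreference}) and the fixedness of the borders $\cl(\maxcov^{-1}(\sigma))\cap\cl(\maxcov^{-1}(\upsilon))$ provide. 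By the gluing clause of Lemma \ref{lemmaCPDistContinuity} the per-zone maps assemble into a continuous $\psi_t$ on $\sphere^2$; for $t<1$ it is a homeomorphism on each closed zone, hence of $\sphere^2$, and at $t=1$ it crushes $\inner(Z,i,0)$ onto $\xi^{-1}([0,1]\times 0)=S_{i_\Sigma}(A)\cap Z(\Sigma)$. When $Z$ is degenerate the zones are already $({\leq}1)$-cells and the construction returns $\id$.

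\textbf{The routine part.} Continuity of $\psi_t$ (\ref{itemGebbAccContinuous}) follows from part \ref{itemMoxContinuous} of Lemma \ref{lemmaMox}, continuity of borders (Lemma \ref{lemmaMaxcovBorder}), and Lemmas \ref{lemmaCPDistContinuity} and \ref{lemmaPMFrechetDistance}; $\psi_0=\id$ (\ref{itemGebbAccZero}) because the $t=0$ rescaling is trivial; $\psi_t=\id$ for degenerate $Z$ (\ref{itemGebbAccStrong}) by construction; equivariance (\ref{itemGebbAccEquivariant}) is inherited from part \ref{itemMoxEquivariant} of Lemma \ref{lemmaMox} and equivariance of $\param$; and the homeomorphism-and-collapse claims (\ref{itemGebbAccHomeo}) were noted above. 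Condition \ref{itemGebbAccFacet} holds because a facet covector $\sigma$ has no nonloop in $\sigma^0$, so $Z_L(\sigma)$ carries no arcs and $\psi_t=\id$ there. Condition \ref{itemGebbStepGreedy} holds since $f_{i_\Sigma}\equiv 0$ is fixed by the squeeze; condition \ref{itemGebbAccOmega} follows because, by (\ref{equationSAOmega}) and $\Omega\in\psv(\mc{N}_k)$, inside $Z(\Sigma)$ the arc $S_i(\Omega)$ coincides with $S_{i_\Sigma}(\Omega)=S_{i_\Sigma}(A)$, which is fixed; and condition \ref{itemGebbAccInner} is the $t=1$ collapse combined with the same identification. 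Conditions \ref{itemGebbStepInner} and \ref{itemGebbStepWider} hold because for $t>0$ the rescaling moves the graph of $f_i$ strictly toward the $x$-axis while staying between the envelopes of $\xi(Z(\sigma))$, and $\xi^{-1}$ then carries $\inner(Z,i,0)$ into $\inner(Z,i,0)^\circ$ and $\wider(Z,i,0)$ into itself.

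\textbf{The main obstacle.} The genuinely delicate point is \ref{itemGebbAccOne}: the convergence $\psi_t*A\to\psi_1*A$ in $\psv_{3,n}$ and the assertion that the limit is an honest weighted pseudocircle arrangement in $\psv(\mc{M})\cap\ext(\Omega)$. Since $\psi_1$ degenerates — it crushes each $\inner(Z,i,0)$ onto a curve — one must show that the images of the pseudocircles that do \emph{not} vanish on a given zone still converge in Fréchet distance as their ambient regions are squeezed thin, rather than oscillating. This is precisely the scenario of Lemma \ref{lemmaSquishConformalPath}: in $\mox$-coordinates a crushed $\inner(Z,i,0)$ is a Jordan domain shrinking to a path, and the squishing lemma pins down where the conformal (hence $\mox$) coordinates of points land, forcing the remaining arcs to converge; Lemma \ref{lemmaCPDistLimits} then upgrades pointwise convergence of $\psi_t$ to convergence of $\psi_t*A$, and lower semicontinuity of $\om$ (Lemma \ref{lemmaOmSemicontinuous}) together with the zone structure identifies the limiting covector sphere with that of $\mc{M}$ and pins its restriction to $N_k$ to $\Omega$. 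Keeping the bookkeeping of the greedy choice, which changes from chain to chain, consistent across all shared zone boundaries is the remaining, more mechanical, difficulty.
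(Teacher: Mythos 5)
Your edge-zone construction --- apply $\mox$ to the arcs of $P$, get $1$-Lipschitz graphs, and squeeze toward the greedy reference curve --- is indeed the paper's strategy for edge covectors, but two structural pieces are missing or wrong, and the ``main obstacle'' you identify is not where the difficulty actually lies.

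First, the vertex zones. You propose to extend ``by the analogous construction in $\mox$-coordinates,'' but $\mox$ requires a cardinal region: a disk with designated $N$, $S$, $E$, $W$ borders and a system of paths each running from $B_\mr{W}$ to $B_\mr{E}$. A vertex zone $Z(\upsilon)$ is a $2K$-gon into which $K$ parallel classes of pseudocircles plunge and cross one another; it has no canonical west--east bipartition and its arcs do not form a pseudomonotonic path system. The paper therefore does something genuinely different there: it builds a cell-decomposition isomorphism $\lambda$ from a fixed reference disk $3\disk$ subdivided by $3K$ evenly spaced diameters, takes $\zeta = \interp(\lambda)$ via the Douady--Earle extension, and conjugates a radial collapse by $\zeta$ (Definition \ref{defEbbStepVertex}). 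That interpolation machinery, not $\mox$, is what makes all curves through $\upsilon^0$ concur at $\zeta^{-1}(0)$ at time $1$ and keeps the map trivial on $\partial D_\upsilon$; without a substitute for it, your construction does not reach the vertex zones at all.

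Second, you propose to assemble per-zone maps by the gluing clause of Lemma \ref{lemmaCPDistContinuity}, but your squeeze on $Z(\sigma)$ rescales every vertical slice, including those hitting $B_\mr{W}$ and $B_\mr{E}$, so it is not the identity on $\partial Z(\sigma)$, and nothing forces it to agree with whatever you do on the adjacent $Z(\{\upsilon,\sigma\})$ or $Z(\upsilon)$. The paper avoids this altogether: it does not glue zone-local maps; it defines $\psi_\sigma$ and $\psi_\upsilon$ as global homeomorphisms supported in $D_\sigma$ and $D_\upsilon$, tapers each to the identity on its own outer boundary (the function $g$ in Definition \ref{defWEEdgeMap}, the annular interpolation of $f_\upsilon$), and then \emph{composes} them, handling the overlaps $D_\sigma\cap D_\upsilon$ by the order of composition rather than by matching boundary data. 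Finally, Lemma \ref{lemmaSquishConformalPath} is not needed here: the $\mox$ and $\interp$ coordinates are fixed and only the $t$-dependent rescaling degenerates, so Fréchet convergence of $\psi_t*A$ as $t\to 1$ is obtained directly from the explicit Lipschitz and isotopy estimates (Claims \ref{claimEbbSEdgeIsotopy} and \ref{claimEbbSFrechet}), with no degenerating conformal parameterization in sight. The squishing lemma is deployed elsewhere, in the continuity proof for the coordinate foliations of Lemma \ref{lemmaLSP}, not in the proof of $\step$.
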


Note that 
for $Z$ to accommodate $\Omega$ in the hypotheses of Lemma \ref{lemmaGebbStep}, 
only the conditions of Definition \ref{defAccommodate} for $i\in N_k$ need to be satisfied   
since $N_k$ is the ground set of $\Omega$.

%We let $A \in \upa(\mc{N}_L)$ in Lemmas \ref{lemmaGebbAcc} and \ref{lemmaGebbStep}
%rather than in $\upa(\mc{M})$ since 
%$\pinch_\mr{A}$ is just used to deform pseudocircles.  

To deform an arrangement in $\upa(\mc{M})$, we also have to reduce the weights of loops to 0.

\begin{definition}[Zone ebb wrapper]
\label{defGebbWrap}
\[
\ebb_\mr{Z}(Z,A,t) = \acc(\emptyset,Z,A,t)*A_{\wt}(t)
\]
%\[
%\ebb_\mr{Z}(Z,A,t) = \begin{cases} 
%\hom_\mr{A}(\emptyset,Z,\widetilde A,t)*A_{\wt}(t) & t<1 \\
%\ebb_\mr{A}(\emptyset,Z,\widetilde A,1) & t=1
%\end{cases}
%\]
%where $\widetilde A = \rest(A,N)$ and $N$ is the set of nonloops of the oriented matroid $\mc{M}$ associated to $Z$, 
where $A_{\wt}(t)$ is the arrangement where loops of the oriented matroid $\mc{M}$ associated to $Z$ are scaled by $1-t$.  
That is, 
\[
\wt_i(A_{\wt}(t)) = \begin{cases} 
(1-t)\wt_i(A) & i \in N \\
\wt_i(A) & i \not\in N, 
\end{cases}
\] 
$N$ is the set of nonloops of $\mc{M}$, 
and $S_i(A_{\wt}(t)) = S_i(A)$ is unchanging 
except that $S_i(A_{\wt}(1))=0$ if $i$ is a loop.
\end{definition}

\begin{proof}[Proof of Lemma \ref{lemmaGebbWrap}]
Each part of the lemma follows immediately from the corresponding part of Lemma \ref{lemmaGebbAcc} as a special case where $\Omega=\emptyset$, except 
parts \ref{itemGebbWrapWeight} and \ref{itemGebbWrapOne} and 
part \ref{itemGebbWrapContinuous} in the case where $t\to 1$ from below.  

Applying $\acc(\emptyset,Z,A,t)$ to $A_{\wt}(t)$ does not change weights, 
and $A_{\wt}(t)$ only decreases weights of loops, so part \ref{itemGebbWrapWeight} holds. 

Consider $t\to 1$ from below. 
Then, 
\begin{align*}
\rest(N,\ebb_\mr{Z}(Z,A,t)) 
&= \rest(N,\acc(\emptyset,Z,A,t)*A_{\wt}(t)) \\    
&= \acc(\emptyset,Z,A,t)*\rest(N,A) \\    
&\to \acc(\emptyset,Z,A,1)*\rest(N,A) \in \rest(N,\psv(\mc{M})) 
\end{align*}
by Lemma \ref{lemmaGebbAcc} parts \ref{itemGebbAccContinuous} and \ref{itemGebbAccOne}, 
and loops $i\not\in N$ are scaled by $(1-t)$ in $\ebb_\mr{Z}(Z,A,t)$ 
by definition of $A_{\wt}(t)$,  
so loops vanish as $t\to 1$, 
so $\ebb_\mr{Z}(Z,A,t) \to \ebb_\mr{Z}(Z,A,1)$, 
which means parts \ref{itemGebbWrapOne} and \ref{itemGebbWrapContinuous} hold. 
\end{proof}

\begin{definition}[Zone ebb accumulator]
%It suffices to define $\psi_t = \hom_\mr{A}$ 
%since $\ebb_\mr{A} = \psi_t * A$. 
%In the case where $k=L$, 
In the case where $k\in\{L,L-1\}$,  
%or equivalently $\Omega \in \pstief(\mc{M})$, 
let $\acc(\Omega,Z,A,t) = \step(\lesssim,\Omega,Z,A,t)$. 
Otherwise, let 
\begin{align*}
\acc(\Omega,Z,A,t) 
&= \psi_\mr{com}(t,s) \\
\intertext{where}
\psi_\mr{com}(t,x) 
&= \psi_\mr{rec}(t)\circ \psi_\mr{new}(x), \\
\psi_\mr{rec}(t) 
&= \acc(\Omega_\mr{rec},Z,A,t), \\
\psi_\mr{new}(x) 
&= \step(\lesssim,\Omega,Z_{k+1},A,x), \\
\Omega_\mr{rec} 
&= \psi_\mr{new}(1)*\rest(N_{k+1},A), \\ 
%\Omega_\mr{rec} 
%&= \psi_\mr{new}(1)*A_{k+1}, \\ 
%A_{k+1} 
%&= \rest(N_{k+1},A), \\
%\intertext{$Z_\mr{new}$ and $A_\mr{new}$ are the restrictions of $Z$ and $A$ to $N_{k+1}$, which is the smallest preferred set that properly contains the support of $\Omega$,}
s 
&= \ext(b_t^{-1},b_0(0)) 
%s(\Omega,Z,A,t)
= \begin{cases}
b_t^{-1}(b_0(0)) & b_t(0) < b_0(0) \\
0 & \text{otherwise}, 
\end{cases} \\
b_t(x)  
&= \text{tailinf}(b_\mr{test},x)+c(x-1), \\
b_{\mr{test}}(x) 
&= b_{\mr{test},t}(x) 
= \min\{\dist_\mr{H}(Z(\sigma), C_{\sigma,t,x}) 
: \sigma\in\facet(\mc{N}_{k+1})\}, \\
C_{\sigma,t,x} 
&= \psi_\mr{com}(t,x,\maxcov^{-1}(\mc{N}_{k+1},A,\sigma)), \\
c 
&= (\nicefrac12)\inf\{b_{\mr{test},0}(x) : x \in [0,1]_\mb{R}\}. 
\end{align*}
\end{definition}

Next, we will prove Lemma \ref{lemmaGebbAcc}. 
We assume that 
Lemma \ref{lemmaGebbAcc} holds for $\psi_\mr{rec}(t)$ 
%in the definition of $\acc$  
by induction on the size of $N_L\setminus N_k$ 
and we assume that Lemma \ref{lemmaGebbStep} holds. 
We start with some claims needed for the proof,  
the first of which are that 
the respective inputs to $\step$ and $\acc$ in the definitions of $\psi_\mr{new}$ and $\psi_\mr{rec}$ are valid. 
%$(\Omega_\mr{rec},Z,A)$ is a valid input. 

%\bigskip
%\hrule
%\bigskip

\begin{claim}
\label{claimGebbAccPsiNew}
$(\lesssim,\Omega,Z_{k+1},A)$ satisfies the hypotheses of Lemma \ref{lemmaGebbStep}. 
Hence,  
%$\psi_\mr{rec}=\acc(\Omega_\mr{rec},Z,A)$ 
$\psi_\mr{new}$ is well-defined. 
\end{claim}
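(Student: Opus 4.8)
\textbf{Plan for the proof of Claim~\ref{claimGebbAccPsiNew}.}
The goal is simply to verify, line by line, that the tuple $(\lesssim,\Omega,Z_{k+1},A)$ meets every hypothesis of Lemma~\ref{lemmaGebbStep}. Recall that Lemma~\ref{lemmaGebbStep} asks for the setup of Lemma~\ref{lemmaGebbAcc} \emph{with $k$ replaced by $k+1$}, except that the $(\mc{M},\lesssim)$-zoning is replaced by a single $\mc{N}_{k+1}$-zoning $Z_{k+1} = \rest(\mc{N}_{k+1},Z)$ that must accommodate \emph{both} $A$ and $\Omega$, and with the compatibility equation~(\ref{equationSAOmega}) holding for $i\in\supp(\Omega)$ rather than only for the greedy indices. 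So the plan is: (i)~note that the index shift $k \mapsto k+1$ is legitimate because $k \le L-2$ in this branch of the definition, so $k+1 \le L-1$, which is exactly the range allowed in Lemma~\ref{lemmaGebbStep}; (ii)~observe that $\lesssim = \pref(N_1,\dots,N_L)$ is unchanged and still favors nonloops of $\mc{M}$; (iii)~check that $Z_{k+1}$ accommodates $A$; (iv)~check that $Z_{k+1}$ accommodates $\Omega$; and (v)~check the compatibility equation~(\ref{equationSAOmega}) for all $i\in\supp(\Omega)$.

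For (iii), $Z$ accommodates $A$ by hypothesis of Lemma~\ref{lemmaGebbAcc}, and by Definition~\ref{defPreference}/\ref{defAccommodate} this means each $\mc{N}_j$-zoning $\rest(\mc{N}_j,Z)$ accommodates $A$; in particular $Z_{k+1} = \rest(\mc{N}_{k+1},Z)$ does. For (v), the hypothesis of Lemma~\ref{lemmaGebbAcc} gives $S_i(A)\cap Z_k(\Sigma) = S_i(\Omega)\cap Z_k(\Sigma)$ for the greedy choice $i=i_\Sigma$ from $\bigcap\Sigma^0$; but we need this for $Z_{k+1}$ and for all $i\in\supp(\Omega) = N_k$. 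The point is that for $i\in N_k$, the zone $Z_{k+1}(\Sigma)$ is contained in the inner $(i,0)$-zone of $Z_k$ whenever $i\in\bigcap\Sigma^0$ (using the nesting condition $\inner(Z_k,i,\pm)^\circ \supseteq \wider(Z_{k+1},i,\pm)$ from Definition~\ref{defPreference}, together with Remark~\ref{remarkInnerWiderSubdivision}), so $S_i(A)$ restricted to $Z_{k+1}(\Sigma)$ is determined by $S_i(A)$ restricted to a single zone of $Z_k$ where $A$ and $\Omega$ already agree; and for $i\in N_k$ with $i\notin\bigcap\Sigma^0$, the zone $Z_{k+1}(\Sigma)$ lies entirely on one side of $S_i(A)$ and of $S_i(\Omega)$, both equal by the same nesting, so the intersection is empty for both. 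Combining these, $S_i(A)\cap Z_{k+1}(\Sigma) = S_i(\Omega)\cap Z_{k+1}(\Sigma)$ for every $i\in\supp(\Omega)$ and every chain $\Sigma$.

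The genuinely delicate part is (iv): showing $Z_{k+1}$ accommodates $\Omega$. By the hypothesis of Lemma~\ref{lemmaGebbAcc}, $Z_k = \rest(\mc{N}_k,Z)$ accommodates $\Omega\in\psv(\mc{N}_k)$, but we need the finer zoning $Z_{k+1}$ to accommodate $\Omega$, and $\Omega$ has ground set $N_k \subsetneq N_{k+1}$, so only the conditions of Definition~\ref{defAccommodate} for $i\in N_k$ are at stake (as noted in the remark following Lemma~\ref{lemmaGebbStep}). For such $i$, the defining inequalities $\inner(Z_{k+1},i,0)^\circ \supset S_i(\Omega)$, $\wider(Z_{k+1},i,+)\subset S_i(\Omega)^+$, $\wider(Z_{k+1},i,-)\subset S_i(\Omega)^-$, and the requirement that $S_i(\Omega)\cap Z_{k+1}(\Sigma)$ be a path with the right endpoints, all need to be derived. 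The containments for $\wider$ follow from $\wider(Z_{k+1},i,\pm)\subset\inner(Z_k,i,\pm)^\circ \subset S_i(\Omega)^\pm$ (first inclusion by the zoning nesting condition in Definition~\ref{defPreference}, second because $Z_k$ accommodates $\Omega$). The containment $\inner(Z_{k+1},i,0)^\circ \supset S_i(\Omega)$ requires more care: one uses that $S_i(\Omega) = S_i(A)$ on the relevant zones by step~(v) together with the fact that $Z_{k+1}$ accommodates $A$, so $S_i(A)\subset\inner(Z_{k+1},i,0)^\circ$, and this transfers to $S_i(\Omega)$. The path condition on $S_i(\Omega)\cap Z_{k+1}(\Sigma)$ likewise transfers from the corresponding statement for $A$ via~(v). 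I expect this accommodation-of-$\Omega$ verification to be where essentially all the work lies; the remaining items are bookkeeping against the definitions. Once all five items are in hand, the hypotheses of Lemma~\ref{lemmaGebbStep} are met, so $\step(\lesssim,\Omega,Z_{k+1},A,x)$ is defined for all $x\in[0,1]$, i.e.\ $\psi_\mr{new}$ is well-defined, as claimed.
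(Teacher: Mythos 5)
There is a genuine gap, rooted in a misreading of the hypothesis of Lemma~\ref{lemmaGebbStep} and an argument that would in fact fail. Equation~(\ref{equationSAOmega}) cannot be expected to hold for every $i\in\supp(\Omega)$: for a non-greedy $i\in N_k$ parallel to the greedy choice $i_0$, we have $S_i(\Omega) = S_{i_0}(\Omega) = S_{i_0}(A)$ (the first equality because $\Omega\in\psv(\mc{N}_k)$, the second by the hypothesized agreement for the greedy choice), but $S_i(A)$ is \emph{not} forced to equal $S_{i_0}(A)$, since $A$ need not be a pseudolinear realization. So your step~(v) as stated is false, and the justification you give --- ``$S_i(A)$ restricted to $Z_{k+1}(\Sigma)$ is determined by $S_i(A)$ restricted to a single zone of $Z_k$ where $A$ and $\Omega$ already agree'' --- invokes an agreement that the hypothesis of Lemma~\ref{lemmaGebbAcc} simply does not supply for non-greedy indices. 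The paper's Claim~\ref{claimGebbAccPsiNew} only ever proves~(\ref{equationSAOmega}) for the greedy choice $i$, and the phrase ``(\ref{equationSAOmega}) holds for $i\in\supp(\Omega)$'' in the statement of Lemma~\ref{lemmaGebbStep} should be read as restricting attention to those chains whose greedy choice lands in $\supp(\Omega)$, not as a quantifier over all $i\in\supp(\Omega)$.

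This misreading then infects your step~(iv). You argue that $S_i(\Omega)\subset\inner(Z_{k+1},i,0)^\circ$ ``transfers'' from $S_i(A)\subset\inner(Z_{k+1},i,0)^\circ$ via equality on the relevant zones --- but the relevant zones are precisely the inner $(i,0)$-zones, so this would presuppose that $S_i(\Omega)$ already lies inside $\inner(Z_{k+1},i,0)$, which is what you are trying to prove; the argument is circular. The paper instead obtains this containment directly from the chain $S_i^\pm(\Omega)\supset\wider(Z_k,i,\pm)\supset\inner(Z_k,i,\pm)\supseteq\wider(Z_{k+1},i,\pm)$ (accommodation of $\Omega$ by $Z_k$, then the nesting condition of Definition~\ref{defPreference}), combined with Remark~\ref{remarkInnerWiderSubdivision} --- making no appeal to agreement with $A$ at all. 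For the path condition, the paper uses the key fact $\Omega\in\psv(\mc{N}_k)$ to replace $S_i(\Omega)$ by $S_{i_0}(\Omega) = S_{i_0}(A)$ for the greedy $i_0$, and then appeals to the accommodation of $A$; and for singleton chains $\Sigma=\{\upsilon\}$ it needs a separate rank-2 contraction argument, which your proposal glosses over. You also omit the fully degenerate case, which the paper handles explicitly. The essential missing idea is the systematic use of $\Omega\in\psv(\mc{N}_k)$ to collapse a parallel class onto its greedy representative before invoking~(\ref{equationSAOmega}).
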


\begin{proof}
Let us first show that Equation \ref{equationSAOmega} holds. 
Consider a chain $\Sigma$ of $\csph(\mc{N}_{k+1})$ with highest covector $\sigma = \max\Sigma$ 
and let $i\in N_k$ be the greedy choice from $\bigcap \Sigma^0=\sigma^0$. 
Let $\tau = \rest(N_k,\sigma)$. 
Then, $\sigma^0 \cap N_{k+1} = \sigma^0 \cap N_{k}$, otherwise the greedy choice from $\sigma^0$ would be chosen from $N_{k+1}$, 
so $i$ is also the greedy choice from $\tau^0$.   
Additionally, 
\begin{align*}
Z_{k+1}(\Sigma) 
&\subset \bigcap\left\{ \wider(Z_{k+1},j,\sigma) : j\in \supp(\sigma)\right\} \\ 
&\subset \bigcap\left\{ \wider(Z_{k+1},j,\tau) : j\in \supp(\tau) = \supp(\sigma)\cap N_k \right\} \\ 
&\subseteq \bigcap\left\{ \inner(Z_{k},j,\tau) : j\in \supp(\tau)\right\} \\ 
&= \bigcup\left\{Z_k(\widetilde \Sigma) : \forall\, \widetilde\sigma \in \widetilde\Sigma, j\in \supp(\tau) : \widetilde\sigma(j) = \tau(j) \right\} \\
&\subseteq \bigcup \left\{ Z_k(\widetilde \Sigma) : \widetilde \Sigma \in \oc(\csph(\mc{N}_k)), \max \widetilde \Sigma \geq \tau \right\}
\end{align*} 
where the third containment is 
by Definition \ref{defPreference} since $Z$ is a $(\mc{M},\lesssim)$-zoning. 

Consider $\widetilde \Sigma \in \oc(\csph(\mc{N}_k))$ such that $\max \widetilde \Sigma \geq \tau$.  
Either, $i\in\max(\widetilde \Sigma)^0$, in which case  
$i$ is the greedy choice from $\bigcap\widetilde \Sigma^0$ 
since $\max(\widetilde \Sigma)^0 \subseteq \tau^0$, 
so $S_i(\Omega)\cap Z_{k}(\widetilde \Sigma) = S_i(A)\cap Z_{k}(\widetilde \Sigma)$ 
by Equation \ref{equationSAOmega} for $Z_k$ in the hypotheses of the lemma. 
Or, $i\not\in\max(\widetilde \Sigma)^0$, 
in which case $S_i(\Omega)\cap Z_{k}(\widetilde \Sigma) = \emptyset = S_i(A)\cap Z_{k}(\widetilde \Sigma)$ 
since $Z_k$ accommodates both $\Omega$ and $A$. 
Hence, $S_i(\Omega)\cap Z_{k+1}(\Sigma) = S_i(A)\cap Z_{k+1}(\Sigma)$ 
since this holds analogously in each zone $Z_k(\widetilde \Sigma)$, 
so (\ref{equationSAOmega}) holds for $Z_{k+1}$. 

Next, let us show that $Z_{k+1}$ accommodates $\Omega$. 
According to Definition \ref{defAccommodate}, we only need to consider $S_i(\Omega)$ for $i \in N_k$, which is the ground set of $\Omega$. 
In the case where $Z$ is fully degenerate we have $\rest(N_k,Z_{k+1}) = \rest(N_k,\rest(N_{k+1},A)) = \rest(N_k,A) = \Omega$, 
so $Z_{k+1}$ is a fully degenerate $\mc{N}_{k+1}$-zoning the accommodates $\Omega$. 
Let us consider the case where $Z$ is generic. 
Then,  
\[
S_i^+(\Omega) 
\supset \wider(Z_k,i,+) 
\supset \inner(Z_{k},i,+) 
\supseteq \wider(Z_{k+1},i,+)
\]
where the first containment holds since $Z_k$ accommodates $\Omega$ 
and the last containment holds since $Z$ is a $(\mc{M},\lesssim)$-zoning, 
and similarly for $S_i^-(\Omega)$, 
and so $S_i(\Omega)\subset \inner(Z_{k+1},i,0)^\circ$ by Remark \ref{remarkInnerWiderSubdivision}.

Consider the intersection of $S_i(\Omega)$ and an inner $(i,0)$-zone $Z_{k+1}(\Sigma)$. 
Let us first consider the case where $\Sigma = \{\upsilon,\sigma\}$ has 2 elements,  
and let $\sigma$ be an edge covector.  
Then, 
\[
S_i(\Omega)\cap Z_{k+1}(\Sigma) 
= S_{i_0}(\Omega)\cap Z_{k+1}(\Sigma) 
= S_{i_0}(A)\cap Z_{k+1}(\Sigma) 
\]
where $i_0$ is the greedy choice from the span of $i$ 
since $\Omega \in \psv(\mc{N}_k)$ 
and by (\ref{equationSAOmega}).
%, which we have just shown holds.
Therefore, $S_i(\Omega)\cap Z_{k+1}(\Sigma)$ is a path with endpoints on $\partial Z_{k+1}(\Sigma)$ 
since $Z_{k+1}$ accommodates $A$. 

Now consider the case where $\Sigma$ is a singleton. 
Then, $\Sigma = \{\upsilon\}$ is contained in exactly two other chains $\Sigma_j = \{\upsilon,\sigma_j\}$ of $\csph(\mc{N}_{k+1}/i)$ 
since the contraction $\mc{N}_{k+1}/i$ has rank 2 and $\Cov(\mc{N}_{k+1}/i)$ is a thin lattice,  
so $Z_{k+1}(\Sigma)$ is adjacent to exactly two other inner $(i,0)$-zones, namely $Z_{k+1}(\Sigma_j)$. 
From the previous case, we have  
$S_i(\Omega)\cap Z_{k+1}(\Sigma_j) = S_{i_0}(A)\cap Z_{k+1}(\Sigma_j)$ 
since $\Sigma_j$ has two elements, 
so $S_i(\Omega)\cap Z_{k+1}(\Sigma)$ intersects the boundary $\partial Z_{k+1}(\Omega)$ at exactly two points, 
and therefore $S_i(\Omega)\cap Z_{k+1}(\Sigma)$ is a path.  
Thus, $Z_{k+1}$ accommodates $\Omega$, 
and the rest of the hypotheses of Lemma \ref{lemmaGebbStep} follow immediately from our assumption that $(\Omega,Z,A)$ satisfy the hypotheses of Lemma \ref{lemmaGebbAcc}.  
\end{proof}

\begin{claim}
\label{claimGebbAccPsiRec}
$(\Omega_\mr{rec},Z,A)$ satisfies the hypotheses of Lemma \ref{lemmaGebbAcc}. 
Hence,  
%$\psi_\mr{rec}=\acc(\Omega_\mr{rec},Z,A)$ 
$\psi_\mr{rec}$ is well-defined. 
\end{claim}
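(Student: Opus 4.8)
The plan is to verify each hypothesis of Lemma \ref{lemmaGebbAcc} for the triple $(\Omega_\mr{rec},Z,A)$ with the index $k$ replaced by $k+1$, so that the inductive hypothesis (Lemma \ref{lemmaGebbAcc} applied to a strictly larger preferred set $N_{k+1}\supsetneq N_k$) legitimately applies to $\psi_\mr{rec}(t) = \acc(\Omega_\mr{rec},Z,A,t)$. The preference data $(\lesssim)$, the sets $N_1\subset\dots\subset N_L$, the oriented matroid $\mc{M}$, and the zoning $Z$ are all unchanged, and $\mc{N}_{k+1} = \rest(N_{k+1},\mc{M})$ by definition; also $Z$ is still the same generic or fully degenerate $(\mc{M},\lesssim)$-zoning accommodating $A\in\upa(\mc{M})$. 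So the only substantive things to check are: first, that $\Omega_\mr{rec} \in \psv(\mc{N}_{k+1})$; second, that $Z_{k+1} = \rest(\mc{N}_{k+1},Z)$ accommodates $\Omega_\mr{rec}$; and third, that the compatibility equation \eqref{equationSAOmega} holds for $\Omega_\mr{rec}$ and $Z_{k+1}$, i.e.\ that $S_i(A)\cap Z_{k+1}(\Sigma) = S_i(\Omega_\mr{rec})\cap Z_{k+1}(\Sigma)$ for each chain $\Sigma$ of $\csph(\mc{N}_{k+1})$ with $i = i_\Sigma$ the greedy choice from $\bigcap\Sigma^0$.

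First I would recall that $\Omega_\mr{rec} = \psi_\mr{new}(1)*\rest(N_{k+1},A)$ where $\psi_\mr{new}(1) = \step(\lesssim,\Omega,Z_{k+1},A,1)$, and that by Claim \ref{claimGebbAccPsiNew} the input $(\lesssim,\Omega,Z_{k+1},A)$ satisfies the hypotheses of Lemma \ref{lemmaGebbStep} with the index $k+1$ in the role of the lemma's $k$ (this is exactly the case $k=L-1$ or $k=L$ of that lemma when $k+1\in\{L-1,L\}$, and otherwise we invoke the fact that $\step$ is defined for all preferred-set indices via the statement of Lemma \ref{lemmaGebbStep}). Then Lemma \ref{lemmaGebbStep} part \ref{itemGebbAccOne} gives $\psi_\mr{new}(t)*A \to \psi_\mr{new}(1)*A \in \psv(\mc{N}_{k+1})\cap\ext(\Omega)$; restricting to the ground set $N_{k+1}$ and using that $\rest$ commutes with the right action and is continuous (Lemma \ref{lemmaCPDistContinuity}), this yields $\Omega_\mr{rec} = \psi_\mr{new}(1)*\rest(N_{k+1},A) \in \psv(\mc{N}_{k+1})$. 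Moreover $\Omega_\mr{rec} \in \ext(\Omega)$ restricted to $N_{k+1}$, so $\rest(N_k,\Omega_\mr{rec}) = \Omega$, which will be useful for the compatibility check below.

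Next I would check that $Z_{k+1}$ accommodates $\Omega_\mr{rec}$. By Lemma \ref{lemmaGebbStep} part \ref{itemGebbAccFacet}, $\psi_\mr{new}(t)$ fixes each facet zone $Z_{k+1}(\sigma)$, and by parts \ref{itemGebbStepInner}, \ref{itemGebbStepWider}, and \ref{itemGebbStepGreedy} it preserves the inner and wider $(i,0)$-zones appropriately and fixes the relevant greedy pseudocircles inside each zone; since $Z_{k+1}$ accommodates $A$ (because $Z_{k+1} = \rest(\mc{N}_{k+1},Z)$ and $Z$ accommodates $A$, as recorded in the hypotheses), pushing forward the accommodation conditions through the homeomorphism $\psi_\mr{new}(1)$ and taking the limit $t\to1$ (using that $\psi_\mr{new}(1)(\inner(Z_{k+1},i,0)) = S_i(\Omega_\mr{rec})$ for $i\in N_{k+1}$ by part \ref{itemGebbAccInner}) shows $Z_{k+1}$ accommodates $\Omega_\mr{rec}$; in the fully degenerate case $\psi_\mr{new}=\id$ by part \ref{itemGebbAccStrong} and $Z_{k+1} = \rest(N_{k+1},A)$ with $\Omega_\mr{rec} = \rest(N_{k+1},A)$, so accommodation is immediate. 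For the compatibility equation \eqref{equationSAOmega}: fix a chain $\Sigma$ of $\csph(\mc{N}_{k+1})$ and let $i = i_\Sigma$ be the greedy choice from $\bigcap\Sigma^0$; if $i\in N_{k+1}\setminus N_k$ then, since $\psi_\mr{new}(1)$ fixes $S_i(A)\cap Z_{k+1}(\Sigma)$ by Lemma \ref{lemmaGebbStep} part \ref{itemGebbStepGreedy} applied with this $\Sigma$, and $S_i(\Omega_\mr{rec})\cap Z_{k+1}(\Sigma) = \psi_\mr{new}(1)(S_i(A))\cap Z_{k+1}(\Sigma) = \psi_\mr{new}(1)(S_i(A)\cap \psi_\mr{new}(1)^{-1}Z_{k+1}(\Sigma))$, the fact that $\psi_\mr{new}(1)$ fixes the zone (or at least maps it into the union of zones governed by the same greedy index) gives equality; if instead $i\in N_k$, then the greedy structure forces $\bigcap\Sigma^0\cap N_{k+1} = \bigcap\Sigma^0\cap N_k$, so the relevant pseudocircle already lives in the smaller ground set and the equation for $Z_{k+1}$ follows from the one for $Z_k$ together with $\rest(N_k,\Omega_\mr{rec}) = \Omega$, exactly as in the argument of Claim \ref{claimGebbAccPsiNew}. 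The main obstacle I anticipate is bookkeeping the greedy-choice case split cleanly — ensuring that whichever of $N_k$ or $N_{k+1}$ the greedy index lands in, the right pieces of Lemma \ref{lemmaGebbStep} and of the inductive hypothesis get invoked — rather than any deep new idea; once that is organized, the conclusion that $\psi_\mr{rec}$ is well-defined is immediate from the inductive form of Lemma \ref{lemmaGebbAcc}.
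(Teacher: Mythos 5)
Your overall approach mirrors the paper's: verify the hypotheses of Lemma \ref{lemmaGebbAcc} for $(\Omega_\mr{rec},Z,A)$ with $k$ replaced by $k+1$, using Lemma \ref{lemmaGebbStep} for $\psi_\mr{new}$ and invoking the recursion for the remaining hypotheses. Two places where you diverge, and one is a genuine miscitation worth fixing.

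For the compatibility equation \eqref{equationSAOmega}, your case split on whether the greedy index $i$ lies in $N_{k+1}\setminus N_k$ or in $N_k$ is unnecessary. The paper's argument is uniform: for the greedy choice $i$ from $\bigcap\Sigma^0$, Lemma \ref{lemmaGebbStep} part \ref{itemGebbStepGreedy} gives $\psi_\mr{new}(1,S_i(A))\cap Z_{k+1}(\Sigma) = S_i(A)\cap Z_{k+1}(\Sigma)$ directly, and $S_i(\Omega_\mr{rec}) = \psi_\mr{new}(1,S_i(A))$ by definition of $\Omega_\mr{rec}$. This works regardless of where $i$ sits in the filtration, so falling back to Claim \ref{claimGebbAccPsiNew} for the $i\in N_k$ case is redundant.

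For the accommodation of $\Omega_\mr{rec}$ by $Z_{k+1}$, your key step cites part \ref{itemGebbAccInner} to get $\psi_\mr{new}(1)(\inner(Z_{k+1},i,0)) = S_i(\Omega_\mr{rec})$ for $i\in N_{k+1}$, but that is not what part \ref{itemGebbAccInner} says: it gives $\psi_1(\inner(Z_L,i,0)) = S_i(\Omega)$ only for $i$ in the ground set $N_k$ of $\Omega$, not for all $i\in N_{k+1}$. The paper uses part \ref{itemGebbStepInner} instead: $\psi_\mr{new}(t,\inner(Z_{k+1},i,0))\subset\inner(Z_{k+1},i,0)^\circ$ for $t>0$ and $Z$ generic, combined with $S_i(A)\subset\inner(Z_{k+1},i,0)$ from $Z_{k+1}$ accommodating $A$, to conclude $S_i(\Omega_\mr{rec}) = \psi_\mr{new}(1,S_i(A))\subset\inner(Z_{k+1},i,0)^\circ$ for every nonloop $i\in N_{k+1}$. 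The $\pm$ containments then follow from Remark \ref{remarkInnerWiderSubdivision}, and the path condition by the same argument as Claim \ref{claimGebbAccPsiNew}. You do name part \ref{itemGebbStepInner} in passing, but you lean on the wrong piece for the key containment; swap the citation and the argument closes. Your explicit verification that $\Omega_\mr{rec}\in\psv(\mc{N}_{k+1})$ is reasonable and spells out something the paper subsumes under ``the rest of the hypotheses hold.''
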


\begin{proof}
Consider a chain $\Sigma \subset \csph(\mc{N}_j)$, let $\sigma$ be the highest covector of $\Sigma$, 
and let $i$ be the greedy choice from $\sigma^0$. 
Then, 
\[
S_i(A)\cap Z_{k+1}(\Sigma) 
= \psi_\mr{new}(1,S_i(A))\cap Z_{k+1}(\Sigma) 
= S_i(\Omega_\mr{rec})\cap Z_{k+1}(\Sigma)
\]
by Lemma \ref{lemmaGebbStep} part \ref{itemGebbStepGreedy}, 
so Equation \ref{equationSAOmega} holds. 
%and the other hypotheses hold by our assumption that $(\Omega,Z,A)$ satisfies these hypotheses. 

Next, let us show that $Z_{k+1}$ accommodates $\Omega_\mr{rec}$. 
In the case where $Z$ is fully degenerate, we have 
$Z_{k+1} = \rest(N_{k+1},A)=\Omega_\mr{rec}$ 
since $Z$ accommodates $A$ and by part \ref{itemGebbAccStrong} of Lemma \ref{lemmaGebbStep}, 
so $Z_{k+1}$ accommodates $\Omega_\mr{rec}$. 
Let us consider the case where $Z$ is generic. 
Then, 
$S_i(A) \subset \inner(Z_{k+1},i,0)$ 
since $Z_{k+1}$ accommodates $A$, so 
\[
S_i(\Omega_\mr{rec}) = \psi_\mr{new}(1,S_i(A)) \subset \inner(Z_{k+1},i,0)^\circ  
\]
by Lemma \ref{lemmaGebbStep} part \ref{itemGebbStepInner}, 
and $S_i^+(\Omega_\mr{rec})  \supset \wider(Z_{k+1},i,+)$ 
and similarly for $S_i^-$ 
%by Lemma \ref{lemmaGebbStep} part \ref{itemGebbAccFacet} 
by Remark \ref{remarkInnerWiderSubdivision}.
Also, the intersection of $S_i(\Omega_\mr{rec})$ with each inner $(i,0)$-zone $Z_{k+1}(\Sigma)$ 
is a path with endpoints on $\partial Z_{k+1}(\Sigma)$ by the same argument as in Claim \ref{claimGebbAccPsiNew}.
The rest of the hypotheses hold by our assumption that $(\Omega,Z,A)$ satisfies the hypotheses of the Lemma \ref{lemmaGebbAcc}. 
\end{proof}

%{\color{orange}
%Show $Z_{k+1}$ accommodates $\Omega_\mr{rec}$
%}

%If $j\leq k$, then 
%\[
%S_i(A)\cap Z_j(\Sigma)
%= S_i(\Omega)\cap Z_j(\Sigma) 
%= S_i(\Omega_\mr{rec})\cap Z_j(\Sigma) 
%\]
%since $(\Omega,Z,A)$ satisfy 
%the hypotheses of Lemma \ref{lemmaGebbAcc}
%and by part \ref{itemGebbAccOmega} of Lemma \ref{lemmaGebbStep}. 
%This completes the case $j\leq k$, so let $j=k+1$. 
%%so $S_i(A)\cap Z_{k+1}(\Sigma) \subseteq S_i(\Omega_\mr{rec})\cap Z_{k+1}(\Sigma)$
%%\begin{align*}
%%S_i(A)\cap Z_{k+1}(\Sigma) 
%%& \subseteq \psi_\mr{new}(1,S_i(A)\cap Z_{k+1}(\Sigma)) 
%%&& \text{Lemma \ref{lemmaGebbStep} part \ref{itemGebbStepSA}} \\
%%& \subseteq \psi_\mr{new}(1,S_i(A)) \\ 
%%& = S_i(\Omega_\mr{rec})
%%\end{align*} 
%%Thus, $(\Omega_\mr{rec},Z,A)$ satisfy the hypotheses of Lemma \ref{lemmaGebbAcc}. 

%\bigskip
%\hrule
%\bigskip

%{\color{magenta} $j>k+1$?
%\color{orange} just require condition for $j=k$. 
%} 
%\end{proof}

\begin{claim}
\label{claimGebbAccF}
$b_t$ is continuous and varies continuously in the sup-metric as $(\Omega,Z,A,t)$ vary.
\end{claim}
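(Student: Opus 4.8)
The plan is to unwind the definition of $b_t$ from the definition of $\acc$ and reduce continuity of $b_t$ to continuity of each of its constituent pieces, all of which have already been established either in the general tools of Section \ref{sectionTools} or by the inductive hypothesis. Recall that
\[
b_t(x) = \tailinf(b_\mr{test},x) + c(x-1), \quad
b_\mr{test}(x) = \min\{\dist_\mr{H}(Z(\sigma),C_{\sigma,t,x}) : \sigma \in \facet(\mc{N}_{k+1})\},
\]
with $C_{\sigma,t,x} = \psi_\mr{com}(t,x,\maxcov^{-1}(\mc{N}_{k+1},A,\sigma))$ and $\psi_\mr{com}(t,x) = \psi_\mr{rec}(t)\circ\psi_\mr{new}(x)$. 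So I need continuity of $\psi_\mr{com}$, then of the cell $C_{\sigma,t,x}$, then of $b_\mr{test}$ as a function of $(t,x)$ and of the varying data, then of $\tailinf$, $c$, and finally the sum.

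First I would note $\psi_\mr{new}(x) = \step(\lesssim,\Omega,Z_{k+1},A,x)$ varies continuously in the sup-metric as $(\Omega,Z,A,x)$ vary, by Lemma \ref{lemmaGebbStep} part \ref{itemGebbAccContinuous}, using that $Z_{k+1} = \rest(\mc{N}_{k+1},Z)$ varies continuously with $Z$; and the inputs are valid by Claim \ref{claimGebbAccPsiNew}. Likewise $\Omega_\mr{rec} = \psi_\mr{new}(1)*\rest(N_{k+1},A)$ varies continuously, so by the inductive hypothesis (Lemma \ref{lemmaGebbAcc} part \ref{itemGebbAccContinuous}, applicable by Claim \ref{claimGebbAccPsiRec}) $\psi_\mr{rec}(t)$ varies continuously in the sup-metric. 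Composition of homeomorphisms of the compact space $\sphere^2$ is continuous in the sup-metric (via the partial map metric and Lemma \ref{lemmaCPDistContinuity}, since $\psi_\mr{new}(x)$ is total), so $\psi_\mr{com}(t,x)$ varies continuously in $(\Omega,Z,A,t,x)$. Next, $\maxcov^{-1}(\mc{N}_{k+1},A,\sigma)$ varies continuously in Hausdorff distance as $A$ varies by Lemma \ref{lemmaMaxcovContinuous}; applying a continuously varying homeomorphism to a continuously varying compact set is continuous in Hausdorff distance, so $C_{\sigma,t,x}$ varies continuously. Then $\dist_\mr{H}(Z(\sigma),C_{\sigma,t,x})$ varies continuously (Hausdorff distance is continuous with respect to Hausdorff convergence of both arguments, and $Z(\sigma)$ varies continuously in Hausdorff distance as $Z$ varies), and $b_\mr{test}$, being a minimum over the finite set $\facet(\mc{N}_{k+1})$ of continuous functions, varies continuously as a function of $x$ and of the data $(\Omega,Z,A,t)$.

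From here the finish is routine. Since $b_\mr{test}$ varies continuously jointly, by Lemma \ref{lemmaSupDistPartialApp} it varies continuously in the sup-metric as a function of $x\in[0,1]_\mb{R}$; then $\tailinf(b_\mr{test})$ varies continuously in the sup-metric by Lemma \ref{lemmaTailsupContinuous}, and $c = (\nicefrac12)\inf\{b_{\mr{test},0}(x):x\in[0,1]_\mb{R}\}$ varies continuously. The function $x\mapsto c(x-1)$ is continuous and varies continuously in the sup-metric with $c$, so $b_t(x) = \tailinf(b_\mr{test},x)+c(x-1)$ is continuous in $x$ and varies continuously in the sup-metric as $(\Omega,Z,A,t)$ vary. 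I expect the only mild subtlety — and hence the place to be slightly careful in writing — is the bookkeeping that lets us apply the inductive hypothesis and Lemmas \ref{lemmaGebbStep}, \ref{claimGebbAccPsiNew}, \ref{claimGebbAccPsiRec} to conclude that $\psi_\mr{rec}$ and $\psi_\mr{new}$ really do depend continuously on all the varying data simultaneously (not just on $t$ or $x$ with the rest fixed); everything else is a direct appeal to the already-established continuity lemmas for composition, Hausdorff distance, $\tailinf$, and extrema over finite sets.
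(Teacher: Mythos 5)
Your proposal is correct and takes essentially the same approach as the paper: unwind the definition of $b_t$ to $\psi_\mr{com}$, then appeal in turn to the inductive hypothesis and Lemma \ref{lemmaGebbStep} for continuity of $\psi_\mr{rec}$ and $\psi_\mr{new}$, Lemma \ref{lemmaMaxcovContinuous} for the cells, and Lemmas \ref{lemmaSupDistPartialApp} and \ref{lemmaTailsupContinuous} to pass to sup-metric continuity of $b_t$. Your version spells out a few steps the paper leaves implicit (continuity of $\dist_\mr{H}$, of the finite minimum, and of $c$), but the decomposition and key lemmas are identical.
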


\begin{proof}
The maps $\psi_\mr{rec}(t)$ and $\psi_\mr{new}(x)$ vary continuously in the sup-metric as $(\Omega,Z,A,t)$ and $x$ vary by part \ref{itemGebbAccContinuous} and induction, 
so $\psi_\mr{com}(t,x)$ varies continuously. 
Also, $\maxcov^{-1}(\sigma)$ varies continuously in Hausdorff distance 
by Lemma \ref{lemmaMaxcovContinuous}, 
so $C_{\sigma,t,x}$ varies continuously,  
so $b_{\mr{test}}(x)$ varies continuously,
so $b_t(x)$ varies continuously by Lemma \ref{lemmaTailsupContinuous}.
Hence, $b_t$ varies continuously in the sup-metric by Lemma \ref{lemmaSupDistPartialApp} 
since $x$ is chosen from a compact set.  
\end{proof}

\begin{claim}
\label{claimGebbAccC}
$b_0(0) = c > 0$.
\end{claim}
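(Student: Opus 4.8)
## Proof proposal for Claim \ref{claimGebbAccC}

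The plan is to compute $b_0(0)$ directly from the definition and show it equals $c$, then argue $c>0$ via compactness. Recall $b_t(x) = \tailinf(b_\mr{test},x) + c(x-1)$, so evaluating at $t=0$, $x=0$ gives $b_0(0) = \tailinf(b_{\mr{test},0},0) + c(0-1) = \inf\{b_{\mr{test},0}(x): x\in[0,1]_\mb{R}\} - c$. By definition $c = (\nicefrac12)\inf\{b_{\mr{test},0}(x): x\in[0,1]_\mb{R}\}$, so $\inf\{b_{\mr{test},0}(x): x\in[0,1]\} = 2c$, and therefore $b_0(0) = 2c - c = c$. So the first equality is just unwinding definitions.

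For $c > 0$, I would argue that $b_{\mr{test},0}(x) > 0$ for every $x \in [0,1]_\mb{R}$ and then invoke compactness of $[0,1]_\mb{R}$ together with continuity of $b_{\mr{test},0}$ (established in Claim \ref{claimGebbAccF}) to conclude the infimum is attained and hence strictly positive. To see $b_{\mr{test},0}(x) > 0$: by definition $b_{\mr{test},0}(x) = \min\{\dist_\mr{H}(Z(\sigma), C_{\sigma,0,x}) : \sigma \in \facet(\mc{N}_{k+1})\}$ where $C_{\sigma,0,x} = \psi_\mr{com}(0,x,\maxcov^{-1}(\mc{N}_{k+1},A,\sigma))$. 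So it suffices to show that for each facet covector $\sigma$ of $\mc{N}_{k+1}$, the Hausdorff distance between the zone $Z(\sigma)$ and the deformed facet region $C_{\sigma,0,x}$ is strictly positive, i.e. that $Z(\sigma)$ is a proper subset of $C_{\sigma,0,x}$ (or they are genuinely distinct sets). Since $Z$ accommodates $A$ and $\sigma$ is a facet covector, $Z_{k+1}(\sigma) = Z(\sigma)$ sits strictly inside $\maxcov^{-1}(\mc{N}_{k+1},A,\sigma)$ with its boundary at positive Fréchet distance (the zone map produces a guard region with $\eps$-tight but not coincident boundary when $Z$ is generic; when $Z$ is degenerate the claim is vacuous or handled by part \ref{itemGebbAccStrong}). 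At $t=0$ we have $\psi_\mr{com}(0,x) = \psi_\mr{rec}(0)\circ\psi_\mr{new}(x)$, and $\psi_\mr{new}(x)$ by Lemma \ref{lemmaGebbStep} part \ref{itemGebbStepGreedy} and the accommodation properties keeps $\facet$ regions accommodating, so $Z(\sigma) \subseteq C_{\sigma,0,x}$ remains a strict containment of nested $2$-cells, giving $\dist_\mr{H}(Z(\sigma),C_{\sigma,0,x}) > 0$ by Lemma \ref{lemmaNestedDisks}.

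The main obstacle I anticipate is pinning down exactly why the strict containment $Z(\sigma) \subsetneq C_{\sigma,0,x}$ persists for all $x$, including $x=1$, and in both the generic and degenerate cases — in particular making sure the facet zone $Z(\sigma)$ does not accidentally coincide with the deformed region at some parameter value, which would make the Hausdorff distance zero and kill the positivity of $c$. This requires carefully citing that $\facet(Z)$ accommodates the arrangement throughout (the analog of Lemma \ref{lemmaGebbWrap} part \ref{itemGebbWrapGuard} at the accumulator level, which should follow from part \ref{itemGebbAccFacet} stating $\psi_t$ is the identity on $Z_L(\sigma)$) and that accommodation of a generic zoning entails strict inclusion of the facet zone in its covector region with positive-distance boundaries. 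Once that geometric fact is in hand, the compactness argument is routine.
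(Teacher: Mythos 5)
Your proposal is correct and follows essentially the same route as the paper: the algebraic identity $b_0(0) = \tailinf(b_{\mr{test},0},0) - c = 2c - c = c$, then positivity of $b_{\mr{test},0}$ on the compact domain $[0,1]_{\mb{R}}$ because the facet zone $Z(\sigma)$ lies in the interior of $C_{\sigma,0,x}$ (after noting $\psi_\mr{com}(0,x) = \psi_\mr{new}(x)$ since $\psi_\mr{rec}(0) = \id$). The one slip is the mid-paragraph citation of Lemma \ref{lemmaGebbStep} part \ref{itemGebbStepGreedy} --- the ingredient actually needed is part \ref{itemGebbAccFacet} ($\psi_t$ is the identity on facet zones), which you do correctly identify in your closing remarks.
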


\begin{proof}
$\psi_\mr{com}(0,x) = \psi_\mr{new}(x)$ by Lemma \ref{lemmaGebbAcc} part \ref{itemGebbAccZero} and induction, so 
$F_{k+1} = \facet(Z_{k+1})$ accommodates  
$\psi_\mr{com}(0,x)*A = \psi_\mr{new}(x)*A$
by Lemma \ref{lemmaGebbStep} part \ref{itemGebbAccFacet}, 
so $Z(\sigma)$ is in the interior of $C_{\sigma,0,x}$ for each facet covector $\sigma$, 
so $b_{\mr{test},0}(x) > 0$, and since $x$ is chosen from a compact domain, 
we have 
$\tailinf(b_{\mr{test},0},0) = 2c > 0$ by definition of $c$, 
so $b_0(0) = \tailinf(b_{\mr{test},0},0) -c = c > 0$.
\end{proof}

\begin{claim}
\label{claimGebbAccVert}
Vertices of $\Omega_\mr{rec}$ are fixed points of $\psi_\mr{rec}(t)$, 
and edges of $\Omega_\mr{rec}$ are preserved by $\psi_\mr{rec}(t)$. 
%Hence, edges of $\Omega_\mr{rec}$
\end{claim}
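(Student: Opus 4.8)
The plan is to reduce Claim \ref{claimGebbAccVert} to properties of the recursively-constructed map $\psi_\mr{rec}(t) = \acc(\Omega_\mr{rec},Z,A,t)$ that are already available by induction and from Lemma \ref{lemmaGebbAcc}. Recall that $\Omega_\mr{rec}$ has ground set $N_{k+1}$, and by Claim \ref{claimGebbAccPsiRec} the triple $(\Omega_\mr{rec},Z,A)$ satisfies the hypotheses of Lemma \ref{lemmaGebbAcc}. So the inductive version of that lemma applies to $\psi_\mr{rec}(t)$. The key observation is that every vertex $v$ of $\Omega_\mr{rec}$ lies on $S_i(\Omega_\mr{rec})\cap S_j(\Omega_\mr{rec})$ for two nonloops $i,j\in N_{k+1}$, and likewise every (open) edge of $\Omega_\mr{rec}$ lies on a single curve $S_i(\Omega_\mr{rec})$ with $i\in N_{k+1}$.

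First I would invoke Lemma \ref{lemmaGebbAcc} part \ref{itemGebbAccOmega} applied to $\psi_\mr{rec}$, which gives $\psi_\mr{rec}(t)(S_i(\Omega_\mr{rec})) = S_i(\Omega_\mr{rec})$ for each $i\in N_{k+1}$ (here the role of ``$N_k$'' in the statement of that lemma is played by $N_{k+1}$, the ground set of $\Omega_\mr{rec}$, as set up in Claim \ref{claimGebbAccPsiRec}). Since each edge of $\Omega_\mr{rec}$ is a connected arc of some $S_i(\Omega_\mr{rec})$, its image under the homeomorphism $\psi_\mr{rec}(t)$ is again a connected arc of $S_i(\Omega_\mr{rec})$; to see that it is exactly the same edge, note that $\psi_\mr{rec}(t)$ permutes the vertices of $\Omega_\mr{rec}$ among themselves (a vertex is the transverse intersection point of two of the preserved curves, and $\psi_\mr{rec}(t)$, being a homeomorphism preserving each $S_i(\Omega_\mr{rec})$, must send such intersection points to such intersection points), and an edge is determined by its pair of endpoints together with which curve it lies on. Thus edges are preserved as sets.

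To upgrade ``vertices are permuted'' to ``vertices are fixed'', I would use continuity and the fact that $\psi_\mr{rec}(0) = \id$ by Lemma \ref{lemmaGebbAcc} part \ref{itemGebbAccZero}: the set of vertices of $\Omega_\mr{rec}$ is finite, the map $t\mapsto \psi_\mr{rec}(t)$ is continuous in the sup-metric by part \ref{itemGebbAccContinuous}, and $\psi_\mr{rec}(0)$ fixes every vertex, so by a connectedness argument on $[0,1)$ each vertex stays at its starting position for all $t<1$; then the $t=1$ case follows by taking limits using part \ref{itemGebbAccContinuous} again (or part \ref{itemGebbAccOne} for the relevant convergence). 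An alternative, cleaner route avoids continuity entirely: a vertex $v$ of $\Omega_\mr{rec}$ lies in the intersection of two inner zones $\inner(Z_L,i,0)$ and $\inner(Z_L,j,0)$, and by Lemma \ref{lemmaGebbAcc} part \ref{itemGebbAccInner} we have $\psi_1(\inner(Z_L,i,0)) = S_i(\Omega_\mr{rec})$, so $v$ must map into $S_i(\Omega_\mr{rec})\cap S_j(\Omega_\mr{rec})$; combined with preservation of each curve this pins $\psi_t(v)$ to a vertex, and the continuity/base-case argument then forces it to be $v$ itself.

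The main obstacle I anticipate is purely bookkeeping: making sure the indexing matches, i.e.\ that the hypothesis ``$\Omega\in\psv(\mc{N}_k)$ accommodated by $Z_k$'' in Lemma \ref{lemmaGebbAcc} is correctly instantiated with $\Omega_\mr{rec}\in\psv(\mc{N}_{k+1})$ and $Z_{k+1}$ (so that the inductive hypothesis genuinely applies, which is exactly the content of Claim \ref{claimGebbAccPsiRec}), and that ``vertex'' and ``edge'' of $\Omega_\mr{rec}$ refer to the cells of the pseudocircle arrangement $\Omega_\mr{rec}$ rather than of $A$ or $Z$. Once the indices are straight, the argument is short: curves are preserved by part \ref{itemGebbAccOmega}, hence the $1$-skeleton of $\Omega_\mr{rec}$ is preserved, and the finitely many vertices are fixed because they cannot move off their defining curve-intersections and the deformation starts at the identity.
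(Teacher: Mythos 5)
Your proposal is correct and takes essentially the same approach as the paper: invoke part \ref{itemGebbAccOmega} of the inductive hypothesis (applied to $\Omega_\mr{rec}$ with ground set $N_{k+1}$, valid because of Claim \ref{claimGebbAccPsiRec}) to see that $\psi_\mr{rec}(t)$ preserves each curve $S_i(\Omega_\mr{rec})$, observe that a vertex $v$ sits on $S_i(\Omega_\mr{rec})\cap S_j(\Omega_\mr{rec}) = \{u,v\}$, and then use continuity in $t$ (part \ref{itemGebbAccContinuous}) together with $\psi_\mr{rec}(0)=\id$ (part \ref{itemGebbAccZero}) to pin $\psi_\mr{rec}(t;v)$ to $v$; edges are preserved because they lie on preserved curves with fixed endpoints. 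Your detour through ``$\psi_\mr{rec}(t)$ permutes vertices'' and the alternative via part \ref{itemGebbAccInner} are harmless elaborations, but the core argument matches the paper's.
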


\begin{proof}
Each vertex $v$ of $\Omega_\mr{rec}$ is on a pair of curves $[S_i\cap S_j](\Omega_\mr{rec}) = \{u,v\}$ for some independent pair $i,j$ of $\mc{N}_{k+1}$, which only meet at a pair of points, and $\psi_\mr{rec}(t;v)$ moves continuously by part \ref{itemGebbAccContinuous} and stays within the pair $\{u,v\}$ by part \ref{itemGebbAccOmega} for our inductive assumption. 
Hence, $u$ and $v$ must be fixed points, and edges are preserved by part \ref{itemGebbAccOmega}. 
\end{proof}

\begin{claim}
\label{claimGebbAccS}
$s$ is well-defined, varies continuously, and $s<1$. 
\end{claim}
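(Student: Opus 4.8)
## Proof plan for Claim~\ref{claimGebbAccS}

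The plan is to verify the three assertions — well-definedness of $s$, continuous dependence of $s$ on $(\Omega,Z,A,t)$, and the strict inequality $s<1$ — by reducing each to properties of $b_t$ already established and to Lemma~\ref{lemmaExtendedInverse}, exactly in the style of Claim~\ref{claimCrushBDecreasing} and Claim~\ref{claimCrushSContinuous} in the earlier $\crush$ construction. First I would record that $b_t(x) = \tailinf(b_\mr{test},x) + c(x-1)$ is the sum of a \emph{nondecreasing} function of $x$, namely $\tailinf(b_\mr{test},x)$, and $c(x-1)$, which is \emph{strictly increasing} because $c>0$ by Claim~\ref{claimGebbAccC}. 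Hence $b_t$ is strictly increasing on $[0,1]$, so $b_t^{-1}$ is a well-defined strictly monotonic partial function on a closed interval, and the two expressions given for $s=\ext(b_t^{-1},b_0(0))$ agree: either $b_t(0)\geq b_0(0)$, in which case $b_0(0)$ is at or below the bottom of the range of $b_t$ and $\ext(b_t^{-1},b_0(0))=0$; or $b_t(0)<b_0(0)$, and since $b_0(0)=c>0$ while $b_t(1)=\tailinf(b_\mr{test},1) \ge 2c > b_0(0)$ fails in general — wait, here I must be careful: what I actually need is that $b_0(0)$ lies in the range $[b_t(0),b_t(1)]$ whenever $b_t(0)<b_0(0)$, which follows once I check $b_t(1)\geq b_0(0)$. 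I would get this from $b_t(1)=\tailinf(b_\mr{test},1)=b_{\mr{test},t}(1)$ together with the fact that $\psi_\mr{com}(t,1)$ sends things into $\psv(\mc{N}_{k+1})$ so that $C_{\sigma,t,1}$ contains the zone $Z(\sigma)$ with room to spare; more simply, $b_t(1)=\tailinf(b_\mr{test},1)\ge \tailinf(b_{\mr{test},0},0)=2c>c=b_0(0)$ will do if $\tailinf$ comparisons go the right way, and otherwise I fall back on directly bounding $b_{\mr{test},t}(1)$ below by examining the accommodation of $\facet(Z_{k+1})$ at the endpoint. In either branch $b_0(0)$ is in the range of the strictly increasing $b_t$, so $s=b_t^{-1}(b_0(0))$ is a single well-defined value in $[0,1)$.

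For continuity, I would argue that $b_t$ varies continuously in the sup-metric as $(\Omega,Z,A,t)$ vary by Claim~\ref{claimGebbAccF}, that $b_0(0)=c$ varies continuously by Claim~\ref{claimGebbAccC} (it is $(\nicefrac12)\tailinf(b_{\mr{test},0})$, continuous by Lemma~\ref{lemmaTailsupContinuous} applied to $\tailinf$ — or rather by the $\tailinf$ half of that lemma — composed with the continuity of $b_{\mr{test},0}$ from Claim~\ref{claimGebbAccF}), and that $b_t$ is strictly monotonic on a closed interval; then $s=\ext(b_t^{-1},b_0(0))$ varies continuously by Lemma~\ref{lemmaExtendedInverse}, whose statement is exactly that $\ext_\mb{R}(f^{-1})$ varies continuously for strictly monotonic $f$ defined on a closed interval, combined with continuity of the point $b_0(0)$ at which we evaluate.

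Finally, for $s<1$: by Lemma~\ref{lemmaExtendedInverse} $\ext(b_t^{-1})$ only attains the value $1$ at $b_t(1)$, so it suffices to show $b_0(0) \neq b_t(1)$, or more strongly $b_0(0) < b_t(1)$. I expect the cleanest route is $b_0(0)=c$ and $b_t(1)=\tailinf(b_\mr{test},1)=b_{\mr{test},t}(1) \ge 2c'$ for the analogous constant, but since $c$ is defined from the $t=0$ slice this needs the observation that $\facet(Z_{k+1})$ accommodates $\psi_\mr{com}(t,1)*A$ for every $t$ (by Lemma~\ref{lemmaGebbStep} part~\ref{itemGebbAccFacet} applied to $\psi_\mr{new}(1)$ and the facet-accommodation stability under $\psi_\mr{rec}$), giving $b_{\mr{test},t}(1)>0$ uniformly, hence $b_t(1) = b_{\mr{test},t}(1) > 0$; combined with $b_0(0) = c = (\nicefrac12)\inf_x b_{\mr{test},0}(x) \le (\nicefrac12) b_{\mr{test},0}(1) \le (\nicefrac12) b_{\mr{test},t}(1) \cdot(\text{const})$ — the delicate point being whether $b_{\mr{test},0}(1)$ and $b_{\mr{test},t}(1)$ are comparable. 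If they are not directly comparable I would instead argue abstractly: $c>0$, so $b_0(0)=c>0$; and $b_t$ attains the value $1$ of $\ext(b_t^{-1})$ only when $b_0(0)=b_t(1)$, whereas $b_t(1)\geq 2c > c = b_0(0)$ because $\tailinf(b_\mr{test},1)$ is the largest of the $\tailinf$ values while $c$ is half of $\tailinf(b_{\mr{test},0},0)$, so $s$ never reaches $1$. The main obstacle is this last comparison of endpoint values of $b_{\mr{test},t}$ across different $t$; I expect it resolves by noting that the facet-zone accommodation provided by $\psi_\mr{new}(1)$ (Claim~\ref{claimGebbAccPsiRec} and Lemma~\ref{lemmaGebbStep} part~\ref{itemGebbAccFacet}) is what forces $C_{\sigma,t,1}$ to strictly contain $Z(\sigma)$ independently of $t$, so that $b_{\mr{test},t}(1)$ is bounded below by a $t$-independent positive quantity, which is what makes $b_t(1)>b_0(0)$ hold for all $t$.
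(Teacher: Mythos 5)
There is a genuine gap, and to your credit you explicitly identify it: the "delicate point" of whether $b_{\mr{test},0}(1)$ and $b_{\mr{test},t}(1)$ are comparable. The first two routes you sketch do not work. Writing $b_t(1)=\tailinf(b_\mr{test},1)\geq\tailinf(b_{\mr{test},0},0)=2c$ is not valid because $b_\mr{test}=b_{\mr{test},t}$ is a different function for each $t$; a comparison of tails of one function tells you nothing about a tail of another. The same objection applies to the claim that ``$\tailinf(b_\mr{test},1)$ is the largest of the tailinf values while $c$ is half of $\tailinf(b_{\mr{test},0},0)$'': this slides between $t$-slices without justification. Your final suggestion — that facet accommodation of $\psi_\mr{new}(1)*A$ yields a $t$-independent positive lower bound on $b_{\mr{test},t}(1)$ — gets closer, but accommodation alone only gives $b_{\mr{test},t}(1)>0$ for each $t$; you still need the lower bound to exceed $c$, and positivity does not deliver that.

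The missing ingredient is Claim~\ref{claimGebbAccVert} (vertices of $\Omega_\mr{rec}$ are fixed points of $\psi_\mr{rec}(t)$ and its edges are preserved). Since $\psi_\mr{new}(1)*A=\Omega_\mr{rec}$, the set $C_{\sigma,0,1}$ is a cell of $\Omega_\mr{rec}$, and its boundary consists of edges of $\Omega_\mr{rec}$; by Claim~\ref{claimGebbAccVert} applying $\psi_\mr{rec}(t)$ does not move this boundary, so $\partial C_{\sigma,t,1}=\partial C_{\sigma,0,1}$ for every $t$. Consequently $b_{\mr{test},t}(1)$ is literally \emph{unchanging} in $t$ — not merely bounded below — so $b_t(1)=b_0(1)$, and the strict monotonicity of $b_0$ gives $b_0(0)<b_0(1)=b_t(1)$. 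This settles both well-definedness (either $b_0(0)\leq b_t(0)$, giving $s=0$, or $b_0(0)\in(b_t(0),b_t(1))$, giving a unique preimage) and the strict inequality $s<1$. Your treatment of continuity via Claim~\ref{claimGebbAccF} and Lemma~\ref{lemmaExtendedInverse} is correct and matches the paper.
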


\begin{proof}
Since $c>0$ by Claim \ref{claimGebbAccC}, 
and $b_t$ is strictly increasing since $\mr{tailinf}(b_\mr{test})$ is non-decreasing by definition and $c(x-1)$ is strictly increasing since $c>0$ by Claim \ref{claimGebbAccC}, 
and $b_t$ is continuous by Claim \ref{claimGebbAccF},
so $b_t^{-1}$ is well-defined on the interval $[b_t(0),b_t(1)]_\mb{R}$.

We have $\psi_\mr{new}(1)*A = \Omega_\mr{rec}$ by definition, 
so the boundary of 
%$C_{\sigma,0,1} = \psi_\mr{new}(1;\cell(\sigma,\rest(A,N_{k+1})))$ 
$C_{\sigma,0,1} = \cell(\sigma,\Omega_\mr{rec})$ 
consists of edges of $\Omega_\mr{rec}$, 
so $\partial C_{\sigma,t,1} = \psi_\mr{rec}(t;\partial C_{\sigma,0,1}) = \partial C_{\sigma,0,1}$ by Claim \ref{claimGebbAccVert}, 
so $b_\mr{test}(1)$ is unchanging as $t$ varies, so 
$b_t(1) = b_\mr{test}(1) = b_{\mr{test},0}(1) = b_0(1)$,  
and $b_0$ is strictly increasing, 
so $b_0(0) < b_0(1) = b_t(1)$.
Hence, either $b_0(0)$ is in the range of $b_t$ or $b_0(0) < b_t(0)$.
In either case $s$ is well defined. 
The only way we can have $s=1$ would be $b_t(1) = b_0(0)$, 
which we have just seen is impossible, so $s<1$.  

Since $b_t$ varies continuously in the sup-metric, 
we can extend $b_t^{-1}$ to a function that varies continuously in the sup-metric as in the definition of $s$ by Lemma \ref{lemmaExtendedInverse}. 
Thus, $s$ varies continuously. 
\end{proof}

\begin{lemma}
\label{lemmaNearHomeoBoundary}
If $\psi$ is a near-homeomorphism of $\sphere^2$ 
and $A \subset \sphere^2$, 
%a compact locally path connected metric space $X$, 
then $\partial \psi(A) \subseteq \psi(\partial A)$. 
%for $A \subset X$ open.  
\end{lemma}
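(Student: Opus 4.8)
The plan is to work from the definition of a near-homeomorphism as a uniform limit $\psi_k\to\psi$ of homeomorphisms of $\sphere^2$, and to reduce the claim to the single topological fact that $\psi^{-1}(y)$ is connected for every $y\in\sphere^2$. First I would record two easy consequences of taking uniform limits. Since each $\psi_k$ is a surjection of the compact space $\sphere^2$ onto itself, $\psi$ is surjective: given $y$, write $y=\psi_k(x_k)$, pass to a convergent subsequence $x_k\to x$, and use uniform convergence together with continuity to get $\psi(x)=y$. Surjectivity gives $\psi(A)^\mr{c}\subseteq\psi(A^\mr{c})$, since any point with no preimage in $A^\mr{c}$ still has some preimage, which must then lie in $A$; hence $\cl(\psi(A)^\mr{c})\subseteq\cl(\psi(A^\mr{c}))$. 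Also, if $y\in\cl(\psi(A))$ then choosing $a_n\in A$ with $\psi(a_n)\to y$ and passing to a limit $a\in\cl(A)$ shows $\psi(a)=y$; the same argument applied to $A^\mr{c}$ shows that for $y\in\partial\psi(A)=\cl(\psi(A))\cap\cl(\psi(A)^\mr{c})$ the preimage $\psi^{-1}(y)$ meets both $\cl(A)$ and $\cl(A^\mr{c})$.

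The main step is to show $K:=\psi^{-1}(y)$ is connected. Suppose not, so $K=K_1\sqcup K_2$ with $K_1,K_2$ nonempty and compact, hence more than $3\delta$ apart for some $\delta>0$; set $U_i=K_i\oplus\delta$, which are disjoint open sets. A standard compactness argument shows that for $\eps>0$ small enough the preimage of the closed metric disk of radius $\eps$ about $y$ lies in $U_1\cup U_2$ (otherwise an accumulation point of escaping preimages would lie in $K\setminus(U_1\cup U_2)$). Fix such an $\eps$ with the open metric disk $D$ of radius $\eps/2$ about $y$ connected, and choose $k$ with $\sup_x\dist(\psi_k(x),\psi(x))<\eps/2$. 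Then $\psi_k^{-1}(D)\subseteq U_1\cup U_2$; being the preimage of the connected set $D$ under the homeomorphism $\psi_k$, it is connected, hence contained in a single $U_i$, say $U_1$. But every $x_2\in K_2$ satisfies $\dist(\psi_k(x_2),y)=\dist(\psi_k(x_2),\psi(x_2))<\eps/2$, so $x_2\in\psi_k^{-1}(D)\subseteq U_1$, contradicting $K_2\cap U_1=\emptyset$. Thus $K$ is connected.

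Finally, $K=\psi^{-1}(y)$ is a connected subset of $\sphere^2=\cl(A)\cup\cl(A^\mr{c})$ meeting both closed sets; were it disjoint from $\partial A=\cl(A)\cap\cl(A^\mr{c})$, the sets $K\cap\cl(A)$ and $K\cap\cl(A^\mr{c})$ would be a separation of $K$. Hence $K\cap\partial A\neq\emptyset$, i.e.\ there is $x\in\partial A$ with $\psi(x)=y$, so $y\in\psi(\partial A)$. Since $y\in\partial\psi(A)$ was arbitrary, $\partial\psi(A)\subseteq\psi(\partial A)$. I expect the connectedness of $\psi^{-1}(y)$ to be the only delicate point; one could instead quote that point-inverses of near-homeomorphisms of a manifold are cellular, and cellular sets are connected, but the argument above via the approximating homeomorphisms is self-contained.
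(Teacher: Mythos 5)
Your proof is correct, and it takes a genuinely different route from the paper's. The paper works directly: given $y\in\partial\psi(A)$, it picks $a_k\in A$ with $\psi(a_k)\to y$ and $b_k\notin A$ with $\psi(b_k)\to y$, joins $\psi(a_k)$ to $\psi(b_k)$ by small paths $\gamma_k$ shrinking to $y$, pulls these back through approximating homeomorphisms $\psi_k$ (chosen to agree with $\psi$ at $a_k,b_k$) to get curves in the domain from $A$ to $A^\mr{c}$, extracts boundary crossing points $x_k\in\partial A$, and passes to a limit $x\in\partial A$ with $\psi(x)=y$. You instead isolate the general fact that point-inverses of a near-homeomorphism of $\sphere^2$ are connected (proved cleanly: a separation $K=K_1\sqcup K_2$ of $K=\psi^{-1}(y)$ forces $\psi_k^{-1}(D)$, which is connected, to lie in one side of the separation, contradicting that both $K_1$ and $K_2$ have images under $\psi_k$ near $y$), and then conclude by an elementary separation argument since $K$ meets both $\cl(A)$ and $\cl(A^\mr{c})$. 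Your version buys a reusable, more general statement (it is essentially the cellularity of point-inverses, specialized to connectedness) and dispenses with the local path-connectedness of $\sphere^2$ that the paper invokes to build the $\gamma_k$, relying only on compactness and the metric. The paper's version is slightly more compressed and self-contained in a single pass, but conceptually your factoring is cleaner.
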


\begin{proof}
Consider $y \in \partial \psi(A)$.
Then, 
there are a sequences $a_k\in A$ such that 
$\psi(a_k) \to y$ and 
$z_k \to y$ such that 
$\psi^{-1}(z_k)$ is disjoint from $A$. 
Hence, $z_k\neq \psi(a_k)$. 
Let $b_k \in \psi^{-1}(z_k)$ 
since $\psi$ is homotopic to a homeomorphism 
and therefore surjective.  
By compactness, we may assume $a_k \to a$ and $b_k \to b$, 
so $\psi(a_k) \to \psi(a)$, so $\psi(a)=y$ and similarly $\psi(b)=y$.  
%Since $A$ is open $b \not\in A$. 
%
Since $\sphere^2$ is locally path connected, 
let $\gamma_k:[0,1] \to \sphere^2$ 
such that $\gamma_k(0) = \psi(a_k)$, $\gamma_k(1) = z_k$ 
and $\gamma_k([0,1]) \to y$ in Hausdorff distance. 
Since $\psi$ is a near-homeomorphism, 
there is $\psi_k \in\hom(\sphere^2)$ such that $\psi_k \to \psi$,  
and we may choose $\psi_k$ so that 
$\psi_k(a_k)=\psi(a_k)$ 
and 
$\psi_k(b_k) = \psi(b_k)$ since $\psi(b_k)\neq\psi(a_k)$. 
Otherwise, compose $\psi_k$ with  
a homeomorphism converging to the identity 
that sends 
$\psi_k(a_k)$ to $\psi(a_k)$ 
and 
$\psi_k(b_k)$ to $\psi(b_k)$. 
Then, $\psi_k^{-1}\gamma_k$ is a curve from $a_k\in A$ to $b_k \not\in A$, 
so there is $t_k$ such that $x_k = \psi_k^{-1}\gamma_k(t_k) \in \partial A$, 
and by compactness we may assume $x_k \to x \in \partial A$.    
Hence, 
$\psi_k(x_k) \to \psi(x)$ 
%by Remark \ref{remarkUniformConvergence} 
and 
$\psi_k(x_k) = \gamma_k(t_k) \to y$, 
so $\psi(x) = y$. 
Thus, $y\in \psi(\partial A)$.
\end{proof}

\begin{claim}
\label{claimGebbFacetCell}
$C_{\tau,1,x} = \cell(\Omega_\mr{rec},\tau)$. 
%for each facet covector $\tau$ of $\mc{N}_{k+1}$. 
\end{claim}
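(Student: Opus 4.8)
The plan is to unfold the definition of $C_{\tau,1,x}$, push the facet region through the two near‑homeomorphisms whose composite is $\psi_\mr{com}(1,x)$, control the boundary with Lemma~\ref{lemmaNearHomeoBoundary}, and then upgrade the resulting boundary inclusion to an equality of regions by a covering argument.

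First I would write $C_{\tau,1,x} = \psi_\mr{com}(1,x)(R_\tau)$ with $R_\tau := \maxcov^{-1}(\mc{N}_{k+1},A,\tau)$ and $\psi_\mr{com}(1,x) = \psi_\mr{rec}(1)\circ\psi_\mr{new}(x)$. Since $\tau$ is a facet covector of $\mc{N}_{k+1}$ its support is all of $N_{k+1}$, so (as in the proofs of Lemmas~\ref{lemmaMaxcovBorder} and~\ref{lemmaMaxcovContinuous}) $R_\tau$ is a Jordan domain disjoint from every $S_i(A)$ with $i\in N_{k+1}$, whose boundary is a cyclic concatenation of arcs of those pseudocircles together with their intersection points. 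I would also record that $\psi_\mr{new}(x)$ is a homeomorphism for $x<1$ and a uniform limit of such at $x=1$, and that $\psi_\mr{rec}(1)$ is a uniform limit of orientation‑preserving homeomorphisms (part~\ref{itemGebbAccHomeo} and part~\ref{itemGebbAccOne} of Lemmas~\ref{lemmaGebbAcc}/\ref{lemmaGebbStep} and the inductive hypothesis), so $\psi_\mr{com}(1,x)$ is a near‑homeomorphism. When $Z$ is fully degenerate the statement degenerates as well: $\psi_\mr{new}=\psi_\mr{rec}=\id$ by part~\ref{itemGebbAccStrong}, $\Omega_\mr{rec}=\rest(N_{k+1},A)\in\psv(\mc{N}_{k+1})$, and then $C_{\tau,1,x}=R_\tau=\maxcov^{-1}(\mc{N}_{k+1},\rest(N_{k+1},A),\tau)=\cell(\Omega_\mr{rec},\tau)$; so I may assume $Z$ generic.

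The core step is to identify $\psi_\mr{com}(1,x)(\partial R_\tau)$. Each arc of $\partial R_\tau$ lies on some $S_i(A)$ with $i\in N_{k+1}$, hence inside $\inner(Z_{k+1},i,0)$ because $Z_{k+1}$ accommodates $A$; by Lemma~\ref{lemmaGebbStep} parts~\ref{itemGebbStepGreedy}, \ref{itemGebbStepInner}, \ref{itemGebbStepWider} the map $\psi_\mr{new}(x)$ carries each $S_i(A)$ within the inner $(i,0)$‑zones of $Z_{k+1}$, and the zoning compatibility of Definition~\ref{defPreference} (applied to the pair $N_{k+1}\subset N_L$) gives $\inner(Z_{k+1},i,0)\subseteq\inner(Z_L,i,0)$, so $\psi_\mr{new}(x)(S_i(A))\subseteq\inner(Z_L,i,0)$. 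Then the inductive Lemma~\ref{lemmaGebbAcc} part~\ref{itemGebbAccInner} for $\psi_\mr{rec}$ gives $\psi_\mr{rec}(1)(\inner(Z_L,i,0))=S_i(\Omega_\mr{rec})$, and part~\ref{itemGebbAccOmega} together with Claim~\ref{claimGebbAccVert} gives that $\psi_\mr{rec}(1)$ preserves the curves, edges and vertices of $\Omega_\mr{rec}$. Combining these, $\psi_\mr{com}(1,x)(\partial R_\tau)$ is exactly the Jordan curve formed by the arcs of the $S_i(\Omega_\mr{rec})$ that bound $\cell(\Omega_\mr{rec},\tau)$ — in particular it is independent of $x$ — so by Lemma~\ref{lemmaNearHomeoBoundary}, $\partial C_{\tau,1,x}\subseteq\partial\cell(\Omega_\mr{rec},\tau)$.

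To finish, I would upgrade this to $C_{\tau,1,x}=\cell(\Omega_\mr{rec},\tau)$. Since $R_\tau$ is connected so is $C_{\tau,1,x}$, hence it lies in the closure of one of the two complementary regions of the Jordan curve $\partial\cell(\Omega_\mr{rec},\tau)$; running the same argument for every facet covector $\sigma$ of $\mc{N}_{k+1}$ shows each $\cl(C_{\sigma,1,x})$ is contained in the closure of a single facet cell of $\Omega_\mr{rec}$, while, because $\psi_\mr{com}(1,x)$ is surjective and the facet regions $\maxcov^{-1}(\mc{N}_{k+1},A,\sigma)$ together with the lower cells cover $\sphere^2$, the sets $\cl(C_{\sigma,1,x})$ cover $\sphere^2$; a pigeonhole argument (there are as many facet cells of $\Omega_\mr{rec}$ as facet covectors of $\mc{N}_{k+1}$) then forces $C_{\sigma,1,x}=\cell(\Omega_\mr{rec},\sigma)$ for every $\sigma$, in particular for $\tau$. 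The main obstacle is the bookkeeping in the core step — tracking precisely how the composite near‑homeomorphism moves each boundary arc using only the local ``moves within inner zones'' guarantees on $\psi_\mr{new}$ and the ``collapses inner zones to a curve, fixes $\Omega_\mr{rec}$'' guarantees on $\psi_\mr{rec}(1)$ — together with ensuring that no collapsing occurs on the interior of $R_\tau$, which is exactly what the final covering/pigeonhole argument is designed to rule out.
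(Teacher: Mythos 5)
Your first half (the boundary inclusion via Lemma~\ref{lemmaNearHomeoBoundary}, pushing arcs of $\partial R_\tau$ through inner zones into $\bigcup_i S_i(\Omega_\mr{rec})$) is essentially the paper's argument, though you overclaim: the chain of containments only shows $\psi_\mr{com}(1,x)(\partial R_\tau)\subseteq\bigcup_{i\in\supp(\tau)}S_i(\Omega_\mr{rec})$, not that the image is ``exactly the Jordan curve'' $\partial\cell(\Omega_\mr{rec},\tau)$ — a near-homeomorphism can collapse or fold a Jordan curve, and nothing in parts~\ref{itemGebbAccOmega}, \ref{itemGebbAccInner}, or Claim~\ref{claimGebbAccVert} forces the image to be the boundary of that particular cell rather than a subset of the 1-skeleton.

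The real gap is the covering/pigeonhole step. From $\partial C_{\sigma,1,x}\subseteq\partial\cell(\Omega_\mr{rec},\sigma)$ and connectedness you can only conclude that $C_{\sigma,1,x}$ lies in the closure of one of the two complementary regions of that single Jordan curve. That is \emph{not} the same as lying in the closure of a single facet cell: the exterior of $\partial\cell(\Omega_\mr{rec},\sigma)$ is the union of \emph{all the other} facet cells of $\Omega_\mr{rec}$ together with the skeleton, so a priori $C_{\sigma,1,x}$ could sprawl over several of them. Hence your assertion ``each $\cl(C_{\sigma,1,x})$ is contained in the closure of a single facet cell'' is unjustified, and the pigeonhole count does not get off the ground. (Even if it did, you would still need to argue that the induced matching $\sigma\mapsto\sigma'$ is the identity, which you never address.)

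The paper supplies exactly the missing anchors. First, $\psi_\mr{com}(1,x)$ is the identity on the guard zone $Z_L(\tau)$ by part~\ref{itemGebbAccFacet}, so $Z_L(\tau)\subset C_{\tau,1,x}$; this immediately places $C_{\tau,1,x}$ on the $\cell(\Omega_\mr{rec},\tau)$ side. Second, $C_{\tau,0,0}$ is disjoint from $\wider(Z_{k+1},i,-\tau)$, part~\ref{itemGebbStepWider} preserves this under $\psi_\mr{new}(x)$, and part~\ref{itemGebbAccInner} with induction then gives $C_{\tau,1,x}\subseteq S_i^\tau$ for each $i$, hence $C_{\tau,1,x}\subseteq\cell(\Omega_\mr{rec},\tau)$. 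With these two containments plus the boundary inclusion, $C_{\tau,1,x}=\cell(\Omega_\mr{rec},\tau)$ falls out directly for the single fixed $\tau$ — no counting over all facet covectors is needed, and the argument never has to decide sides of a Jordan curve by process of elimination. You should drop the pigeonhole and instead observe that $\psi_\mr{com}(1,x)$ fixes $Z_L(\tau)$ pointwise, and add the $\wider(Z_{k+1},i,-\tau)$ disjointness argument for the upper bound.
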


\begin{proof} 
%Consider a facet covector $\tau$ of $\mc{N}_{k+1}$. 
%Then 
We have 
$\psi_\mr{com}(1,x)$ is the identity on $Z_L(\tau) \subset \cell(\Omega_\mr{rec},\tau)$, 
by part \ref{itemGebbAccFacet} of Lemma \ref{lemmaGebbStep} and the inductive hypothesis, 
and $C_{\tau,0,0} = \maxcov^{-1}(\mc{N}_{k+1},A,\tau) \supset Z_L(\tau)$ 
since $Z$ accommodates $A$, so 
$Z_L(\tau) \subset C_{\tau,1,x}$.  
Also, $C_{\tau,0,0}$ is disjoint from $\wider(Z_{k+1},i,-\tau)$, 
%since $Z$ accommodates $A$, 
so $C_{\tau,0,x}$ is disjoint from $\wider(Z_{k+1},i,-\tau)$ 
by Lemma \ref{lemmaGebbStep} part \ref{itemGebbStepWider}, 
so $C_{\tau,0,x} \subset \inner(Z_{k+1},i,-\tau)\cup(Z_{k+1},i,\tau)$, 
so $C_{\tau,1,x} \subseteq S_i^\tau$ 
by part \ref{itemGebbAccInner} and induction. 
Hence, $C_{\tau,1,x} \subseteq \cell(\Omega_\mr{rec},\sigma)$. 
Also, $\partial C_{\tau,0,0}$ consists of arcs along pseudocircles $S_i$ with $i\in I = \supp(\tau)$, which are contained in $\inner(Z_{k+1},i,0)$, so 
\begin{align*}
\partial C_{\tau,1,x} 
&= \partial \psi_\mr{com}(1,x,C_{\tau,0,0}) \\
&\subseteq \psi_\mr{com}(1,x,\partial C_{\tau,0,0}) 
&& \text{by Lemma \ref{lemmaNearHomeoBoundary}} \\
%&& \psi_\mr{com}(1,x) \text{ is a near-homeomorphism} \\
&\subseteq [\psi_\mr{com}(1,x)]\bigcup_{i\in I} \inner(Z_{k+1},i,0) \\  
%&& \text{with } I = \supp(\tau) \\[-.75em]
%\intertext{since $\partial C_{\tau,0,0} = \partial \maxcov^{-1}(\mc{N}_{k+1},A,\tau)$ 
%consists of arcs along pseudocircles $S_i$ with $i\in I = \supp(\tau)$ 
%and $Z$ accommodates $A$} \\[-2.25em]
&\subseteq [\psi_\mr{com}(1,x)]\bigcup_{i\in I} \wider(Z_{k+1},i,0)  \\ 
&\subseteq [\psi_\mr{rec}(1)]\bigcup_{i\in I} \wider(Z_{k+1},i,0) 
&& \text{Lemma \ref{lemmaGebbStep} part \ref{itemGebbStepWider}} \\
&\subseteq [\psi_\mr{rec}(1)]\bigcup_{i\in I} \inner(Z_L,i,0) 
&& Z \text{ is a $(\mc{M},\lesssim)$-zoning}  \\
&\subseteq \bigcup_{i\in I} S_i(\Omega_\mr{rec}) 
&& \text{part \ref{itemGebbAccInner} and induction.} 
\end{align*}
Thus, $Z(\tau)\subset C_{\tau,1,x}\subseteq \cell(\Omega_\mr{rec},\tau)$, 
and $\partial C_{\tau,1,x}\subseteq \partial \cell(\Omega_\mr{rec},\tau)$, 
so $C_{\tau,1,x}= \cell(\Omega_\mr{rec},\tau)$.
\end{proof}

\begin{claim}
\label{claimGebbAccEnd}
If $t=1$, then $s=0$. 
\end{claim}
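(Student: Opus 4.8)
\textbf{Plan for the proof of Claim \ref{claimGebbAccEnd}.}
The goal is to show that at $t=1$ the stopping time $s$ defined in the accumulator is $0$. Recall that $s = \ext(b_t^{-1},b_0(0))$, which equals $0$ precisely when $b_t(0) \geq b_0(0)$. So the whole task reduces to establishing the single inequality $b_1(0) \geq b_0(0)$. Since $b_t(x) = \text{tailinf}(b_\mr{test},x) + c(x-1)$, evaluating at $x=0$ gives $b_t(0) = \text{tailinf}(b_{\mr{test},t},0) - c = \inf_{x\in[0,1]} b_{\mr{test},t}(x) - c$. The constant $c$ does not depend on $t$, so it suffices to show $\inf_{x\in[0,1]} b_{\mr{test},1}(x) \geq \inf_{x\in[0,1]} b_{\mr{test},0}(x) = 2c$, and in fact I would aim to prove the stronger pointwise statement that $b_{\mr{test},1}(x) \geq 2c$ for every $x\in[0,1]$, or even that $b_{\mr{test},1}$ is bounded below by the same quantity governing $b_0(0)$.

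First I would unwind $b_{\mr{test},1}(x) = \min\{\dist_\mr{H}(Z(\sigma), C_{\sigma,1,x}) : \sigma\in\facet(\mc{N}_{k+1})\}$. The key input is Claim \ref{claimGebbFacetCell}, which gives $C_{\sigma,1,x} = \cell(\Omega_\mr{rec},\sigma)$ — crucially, this is \emph{independent of $x$}. So $b_{\mr{test},1}(x) = \min_\sigma \dist_\mr{H}(Z(\sigma),\cell(\Omega_\mr{rec},\sigma))$ is a constant function of $x$; call its value $v$. Then $\inf_{x} b_{\mr{test},1}(x) = v = b_{\mr{test},1}(0)$. Now compare with $b_{\mr{test},0}$: at $t=0$, $\psi_\mr{com}(0,x) = \psi_\mr{new}(x)$ by part \ref{itemGebbAccZero} and induction, and in particular at $x$ we have $C_{\sigma,0,x} = \psi_\mr{new}(x,\maxcov^{-1}(\sigma))$. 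The relationship I want is $C_{\sigma,0,1} = \cell(\Omega_\mr{rec},\sigma) = C_{\sigma,1,x}$: indeed $\psi_\mr{new}(1)*A = \Omega_\mr{rec}$ by definition, so $C_{\sigma,0,1} = \cell(\Omega_\mr{rec},\sigma)$, matching the expression in Claim \ref{claimGebbFacetCell}. Therefore $b_{\mr{test},1}(x) = b_{\mr{test},0}(1)$ for all $x$. Hence $\inf_x b_{\mr{test},1}(x) = b_{\mr{test},0}(1) \geq \inf_x b_{\mr{test},0}(x) = 2c$, which yields $b_1(0) = 2c - c = c = b_0(0)$ by Claim \ref{claimGebbAccC}, so $b_1(0) \geq b_0(0)$ and therefore $s = \ext(b_1^{-1},b_0(0)) = 0$ by the ``otherwise'' branch of the definition.

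The main obstacle is making the identification $C_{\sigma,1,x} = C_{\sigma,0,1}$ fully rigorous, i.e.\ confirming that Claim \ref{claimGebbFacetCell} (proved for $C_{\tau,1,x}$) applies uniformly and that the $x=0$, $t=0$ evaluation genuinely produces $\cell(\Omega_\mr{rec},\sigma)$ when $x=1$; this is really just chasing the definitions of $\psi_\mr{com}$, $\psi_\mr{new}$, $\psi_\mr{rec}$ and the fact that $\psi_\mr{com}(t,1) = \psi_\mr{rec}(t)\circ\psi_\mr{new}(1)$, together with $\psi_\mr{rec}(t)$ preserving the cells $\cell(\Omega_\mr{rec},\sigma)$ by Claim \ref{claimGebbAccVert}. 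Once the constancy of $b_{\mr{test},1}$ in $x$ and the value $b_{\mr{test},0}(1)$ are pinned down, the inequality $b_1(0) \geq b_0(0)$ is immediate and the ``otherwise'' clause of the $\ext$ definition gives $s=0$. I would write the proof in three or four lines: invoke Claim \ref{claimGebbFacetCell} to get $C_{\sigma,1,x} = \cell(\Omega_\mr{rec},\sigma) = C_{\sigma,0,1}$; deduce $b_{\mr{test},1}(x) = b_{\mr{test},0}(1)$; conclude $b_1(0) = \inf_x b_{\mr{test},1}(x) - c = b_{\mr{test},0}(1) - c \geq \inf_x b_{\mr{test},0}(x) - c = b_0(0)$; hence $s=0$.
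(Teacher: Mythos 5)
Your proof is correct and follows essentially the same route as the paper: invoke Claim \ref{claimGebbFacetCell} to see that $C_{\sigma,1,x}=\cell(\Omega_\mr{rec},\sigma)$ is constant in $x$ and equal to $C_{\sigma,0,1}$, hence $b_{\mr{test},1}(x)=b_{\mr{test},0}(1)\geq 2c$, giving $b_1(0)\geq c=b_0(0)$ and thus $s=0$. One minor slip: the intermediate line ``$b_1(0)=2c-c=c=b_0(0)$'' should read $b_1(0)\geq 2c-c$, which you do fix in your final sentence.
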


\begin{proof}
$C_{\tau,1,x} =\cell(\tau,\Omega_\mr{rec})$ is unchanging as $x$ varies 
by Claim \ref{claimGebbFacetCell}, 
so $b_\mr{test}(x)$ is unchanging, 
so 
\begin{gather*}
b_{\mr{test},1}(x) = b_{\mr{test},1}(1) = b_{\mr{test},0}(1) \geq \tailinf(b_{\mr{test},0},0) = 2c, \\ 
b_1(0) = \tailinf(b_{\mr{test},1},0) -c \geq 2c-c = c = b_0(0), 
\end{gather*}
so $s=0$ in the case $t=1$.
\end{proof}

\begin{proof}[Proof of Lemma \ref{lemmaGebbAcc}]
Here we assume that Lemma \ref{lemmaGebbStep} holds, which will also serve as the base case for induction. 
Parts \ref{itemGebbAccStrong}, \ref{itemGebbAccEquivariant}, and \ref{itemGebbAccOmega} hold by induction and the corresponding parts of Lemma \ref{lemmaGebbStep}.

We have $\psi_\mr{new}(s) \in \hom^+(\sphere^2)$ by Lemma \ref{lemmaGebbStep} part \ref{itemGebbAccHomeo} since $s<1$ by Claim \ref{claimGebbAccS}, 
and $\psi_\mr{rec}(t) \in \hom^+(\sphere^2)$ for $t<1$ by part \ref{itemGebbAccHomeo} and induction, 
so $\psi_t = \psi_\mr{rec}(t)\circ\psi_\mr{new}(s) \in \hom^+(\sphere^2)$, 
which means part \ref{itemGebbAccHomeo} holds.

Let us show that $\psi_t$ varies continuously in the sup-metric as $\Omega,Z,A,t$ vary.
We have $s$ varies continuously by Claim \ref{claimGebbAccS}, 
and $Z_{k+1}$ varies continuously since this is a restriction of $Z$, 
so $\psi_\mr{new}(s) = \step(\lesssim,\Omega,Z_{k+1},A,s)$ 
and $\Omega_\mr{rec} = \step(\lesssim,\Omega,Z_{k+1},A,1)*\rest(N_{k+1},A)$ 
vary continuously by Lemma \ref{lemmaGebbStep} part \ref{itemGebbAccContinuous}, 
so $\psi_\mr{rec}(t) = \acc(\Omega_\mr{rec},Z,A,t)$ 
varies continuously by part \ref{itemGebbAccContinuous} of the inductive assumption, 
so $\psi_t$ varies continuously since composition is continuous with respect to the sup-metric, 
which means that part \ref{itemGebbAccContinuous} holds.

Consider the case where $t=0$.  Then, $b_t(0) = b_0(0)$, so $s = 0$, 
so $\psi_\mr{new}(s) = \id$  by Lemma \ref{lemmaGebbStep} part \ref{itemGebbAccZero} 
and $\psi_\mr{rec}(t) = \id$  by part \ref{itemGebbAccZero} and induction, 
so part \ref{itemGebbAccZero} holds.

Let us show that $\psi_1*A \in \pstief(\mc{M})\cap\ext(\Omega)$. 
We have $\Omega_\mr{rec}$ is an extension of $\Omega$ by Lemma \ref{lemmaGebbStep} part \ref{itemGebbAccOne}, so $\ext(\Omega_\mr{rec}) \subset \ext(\Omega)$. 
Also, $\psi_\mr{new}(0) = \id$ by Lemma \ref{lemmaGebbStep} part \ref{itemGebbAccZero}, 
and $s=0$ at $t=1$ by Claim \ref{claimGebbAccEnd}, 
so $\psi_1 = \psi_\mr{rec}(1)$, 
and $\psi_\mr{rec}(1) *A \in \pstief(\mc{M})\cap\ext(\Omega_\mr{rec})$ 
by part \ref{itemGebbAccOne} of the inductive assumption, so 
\[\psi_1 *A = \psi_\mr{rec}(1) *A \in \pstief(\mc{M})\cap\ext(\Omega_\mr{rec}) \subset \pstief(\mc{M})\cap\ext(\Omega).\] 
Now consider the limit as $t\to 1$. 
Let $\eps>0$. 
Since $[0,1]_\mb{R}\times \sphere^2$ is compact, $\psi_\mr{rec}$ is uniformly continuous by part \ref{itemGebbAccContinuous} and induction, 
so there is some $\delta_1$ such that if $\|p-q\|<\delta_1$,  
then 
we have $\|\psi_\mr{rec}(t,p) -\psi_\mr{rec}(t,q)\| < \eps$ for all $t$. 
Also, $s \to 0$ as $t\to 1$ by Claims \ref{claimGebbAccS} and \ref{claimGebbAccEnd}, so $\psi_\mr{new}(s) \to \id$ in the sup-metric by parts \ref{itemGebbAccContinuous} and \ref{itemGebbAccZero}, 
so there is some $\delta_2$ such that if $|t-1|<\delta_2$, 
then $\|\psi_\mr{new}(s;p) -p\| < \delta_1$. 
Hence, $\psi_t(S_i) = \psi_\mr{rec}(t,\psi_\mr{new}(s;S_i))$ 
is less than $\eps$ from $\psi_\mr{rec}(t,S_i)$ in Fréchet distance. 
Also, $\psi_\mr{rec}(t,S_i) \to \psi_\mr{rec}(1,S_i) = \psi_1(S_i)$ by induction, 
so $\psi_\mr{rec}(t,S_i)$ is less than $\eps$ from $\psi_1(S_i)$ in Fréchet distance for $t$ sufficiently close to 1. 
Hence, 
\[
\dist_\mr{F}(\psi_t(S_i),\psi_1(S_i)) 
\leq \dist_\mr{F}(\psi_t(S_i),\psi_\mr{rec}(t,S_i)) +\dist_\mr{F}(\psi_\mr{rec}(t,S_i),\psi_1(S_i))
< 2\eps, 
\]
so $\psi_t*A \to \psi_1*A$, 
so part \ref{itemGebbAccOne} holds.

Let $\sigma$ be a facet covector of $\mc{M}$.
Then, $\tau = \rest(N_{k+1},\sigma)$ is a facet covector of $\mc{N}_{k+1}$, 
\[
Z(\tau) 
= \bigcap_{i\in N_{k+1}} \inner(Z_{k+1},i,\tau) 
\supset \bigcap_{i\in N_{L}} \wider(Z_L,i,\sigma) 
\supset Z(\sigma),
\]
%so $Z(\sigma)\subseteq Z(\tau)$ 
%since $Z$ is a $(\mc{M},\lesssim)$-zoning,  
%by Definition \ref{defGuard}, 
%definition of a $(\mc{M},\lesssim)$ guard, 
and $\psi_\mr{new}$ is the identity on $Z(\tau)$ by Lemma \ref{lemmaGebbStep} part \ref{itemGebbAccFacet}, so part \ref{itemGebbAccFacet} holds by induction.

Finally, 
$\psi_1 = \psi_\mr{rec}(1)$ by Claim \ref{claimGebbAccEnd}, 
and $S_i(\Omega_\mr{rec}) = S_i(\Omega)$, 
%so $\psi_1(\avan(i,G_L)) = S_i(\Omega)$ for $i\in N_k$ 
so part \ref{itemGebbAccInner} holds by induction.
\end{proof}

\subsection{Zone ebb recursive step}

In this subsection we define 
$\psi_t = \step(\lesssim,\Omega,Z,A,t)$ 
and prove Lemma \ref{lemmaGebbStep}. 
%We construct the map $\psi = \hom_\mr{S}(\Omega,G,A,t)$.  
Let $N_1$ be the support of $\Omega$ and $N_2$ be the support of $\mc{M}$. 
Note that these respectively correspond to $N_k$ and $N_L$ in Lemma \ref{lemmaGebbStep}. 
Let us assume that $Z$ is not degenerate; otherwise let $\psi_t = \id$. 
For each edge or facet covector $\sigma$, let 
\[
D_\sigma
= \bigcup_{\Sigma\ni\sigma} Z(\Sigma).
\]
We will define $\psi_t$ as the composition of maps $\psi_\sigma$, which are the identity on the complement of $D_\sigma$.

\subsubsection{Edge covectors}
\label{subsubsectionStepEdgeCovectors}

Let $\sigma$ be an edge covector of $\mc{M}$, 
and $i_0$ be the greedy choice from $\sigma^0$, 
and $\tau_1,\tau_2$ be the facet covectors incident to $\sigma$, 
and $\upsilon_1,\upsilon_2$ be the vertex covectors incident to $\sigma$.
We partition $D_\sigma$ into 3 regions, which we call columns, 
and we define $\psi_\sigma$ in each column.

\begin{definition}[Central column]
\label{defCentralEdgeMap}
Let us start with the central column. 
Let 
\begin{gather*}
C_\sigma = Z(\sigma) \cup Z(\{\sigma,\tau_1\}) \cup Z(\{\sigma,\tau_2\}), \\
P = \{S_i(A)\cap Z(\sigma) : i \in I \} \cup \{P_\mr{N2},P_\mr{N1},P_\mr{S1},P_\mr{S2}\}
\end{gather*}  
where $I$ is the set of nonloops of $\mc{M}$ in $\sigma^0$, 
and 
\begin{align*}
P_\mr{N1} &= Z(\sigma,\{\sigma,\tau_1\}), 
& P_\mr{S1} &= Z(\sigma,\{\sigma,\tau_2\}), \\ 
P_\mr{N2} &= B_\mr{N} = Z(\tau_1,\{\sigma,\tau_1\}), 
& P_\mr{S2} &= B_\mr{S} = Z(\tau_2,\{\sigma,\tau_2\}). 
\end{align*}
Let $B_\mr{E}$ and $B_\mr{W}$ be the closure of the components of $\partial C_\sigma \setminus (B_\mr{N}\cup B_\mr{S})$.
%
%Let $\nu : \{\mr{S},i_0,\mr{N}\} \to (-2,2)$ by 
Let $h(\mr{N2},\mr{N1},i_0,\mr{S1},\mr{S2}) = (2h,h,0,-h,-2h)$ 
where $h=h(\mr{N1})$ is the distance between $Z$ and $A$ with $A$ regarded as a degenerate zoning as in Definition \ref{defZone}. 
Let 
\[\xi=\xi_\sigma = \mox(C_\sigma, B, P,h).\] 
Let $f_\sigma(t,y)$ scale $y$ on the interval $[-h,h]_\mb{R}$ by $(1-t)$, and keep $y$ at $2h,-2h$ fixed, and interpolate linearly on the rest of $[-2h,2h]_\mb{R}$,  
and let $\psi_t$ be $f(t)$ conjugated by $\xi$. 
That is, 
\begin{align*}
f=f_\sigma(t,y) 
&= \begin{cases} 
(1-t)y & y \in [-h,h]_\mb{R} \\ 
(y-h)2 +(2h-y)(1-t) & y \in [h,2h]_\mb{R} \\
(y+h)2 -(2h+y)(1-t) & y \in [-h,-2h]_\mb{R}
\end{cases} \\[6pt]
\psi_\sigma(t;p) 
&= \xi^{-1}(x,f(t,y))
\end{align*}
for $p \in C_\sigma$ 
where $(x,y) = \xi(p)$. 
\end{definition}

\begin{definition}[Side columns]
\label{defWEEdgeMap}
We define $\psi_\sigma$ in the western and eastern columns to act similarly, 
but to taper to the identity on boundary of $D_\sigma$. 
We choose one of the vertex covectors $\upsilon_1$ to be west of $\sigma$. 
We just define $\psi_\sigma$ formally in the western column $C_{\upsilon_1,\sigma}$; 
the definition in the eastern column is analogous. 
Let 
\begin{gather*}
C_{\upsilon_1,\sigma} = Z(\{\upsilon_1,\sigma\}) \cup Z(\{\upsilon_1,\sigma,\tau_1\}) \cup Z(\{\upsilon_1,\sigma,\tau_2\}), \\
P = \{S_i(A)\cap Z(\upsilon_1,\sigma) : i \in I \} \cup \{P_\mr{N2},P_\mr{N1},P_\mr{S1},P_\mr{S2}\}
\end{gather*}  
with $I$ as above.  
%where $I$ is the set of nonloops of $\mc{M}$ in $\sigma^0$, 
%and 
\begin{align*}
P_\mr{N1} &= Z(\{\upsilon_1,\sigma\},\{\upsilon_1,\sigma,\tau_1\}), 
& P_\mr{S1} &= Z(\{\upsilon_1,\sigma\},\{\upsilon_1,\sigma,\tau_2\}), \\ 
P_\mr{N2} &= B_\mr{N} = Z(\tau_1,\{\upsilon_1,\sigma,\tau_1\}), 
& P_\mr{S2} &= B_\mr{S} = Z(\tau_2,\{\upsilon_1,\sigma,\tau_2\}). 
\end{align*}
Let $B_\mr{E}$ and $B_\mr{W}$ be the closure of the components of $\partial C_\sigma \setminus (B_\mr{N}\cup B_\mr{S})$, 
and $h$ be the same as in $\xi_\sigma$. 
Let 
\begin{align*}
\xi = \xi_{\upsilon_1,\sigma} 
&= \mox(C_{\upsilon_1,\sigma},B,P,h), \\
f = f_{\upsilon_1,\sigma}(t,y) 
&= \xi\psi_\sigma(t;\xi^{-1}(1,y))), \\
g = g_{\upsilon_1,\sigma}(t;x,y) 
&= \begin{cases}
(x,f(t,y)) & x \geq \nicefrac12 \\ 
(x,f(2xt,y)) & x < \nicefrac12 
\end{cases} \\
\psi_\sigma(t;p) 
&= \xi^{-1} g(t;\xi(p))
\end{align*}
for $p \in C_{\upsilon_1,\sigma}$. 
In the case where $p \in C_{\upsilon_2,\sigma}$ replace 
%$\tfrac{xt}{1-t}$ for $0<x<1-t$ with $\tfrac{(1-x)t}{1-t}$ for $1>x>t$ 
$x$ with $1-x$ where appropriate 
in the definition of $g$.

Let 
\begin{align*}
H_{\upsilon_1,\sigma} 
&= \xi_{\upsilon_1,\sigma}^{-1}([0,\nicefrac12]\times[\tfrac{-3h}{2},\tfrac{3h}{2}]). \\[3pt]
H_{\upsilon_2,\sigma} 
&= \xi_{\upsilon_2,\sigma}^{-1}([\nicefrac12,1]\times[\tfrac{-3h}{2},\tfrac{3h}{2}]). \\[3pt]
H_\sigma 
&= (D_\sigma\cap \inner(Z,i_0,0)) \setminus (H_{\upsilon_1,\sigma}\cup H_{\upsilon_2,\sigma})
\end{align*}

\end{definition}

%\begin{definition}[Edge covector deformations]
Let $\psi_\sigma(t)$ be defined as above on the columns $C_\sigma$ and $C_{\upsilon,\sigma}$ 
and be the identity on $\sphere^2 \setminus D_\sigma$. 
%\end{definition}

%Warning: Not continuous at $t=1$. 

\begin{claim}
\label{claimEbbSEdgeZero}
$\psi_\sigma(0) = \id$.  
\end{claim}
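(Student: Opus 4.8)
The plan is to verify directly that every piece of the construction of $\psi_\sigma$ in Definitions \ref{defCentralEdgeMap} and \ref{defWEEdgeMap} reduces to the identity when $t=0$, and then observe that $\psi_\sigma$ is glued together from these pieces together with the identity on $\sphere^2\setminus D_\sigma$, so the whole map is the identity at $t=0$. Since $\psi_\sigma$ is defined by cases on the columns $C_\sigma$, $C_{\upsilon_1,\sigma}$, $C_{\upsilon_2,\sigma}$, and the complement of $D_\sigma$, it suffices to check each case separately; the cases agree on overlaps because the overall map is well-defined (this is part of what Lemma \ref{lemmaGebbStep} will assert, but for the identity claim we only need the pieces to individually be the identity, which forces agreement).

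First I would handle the central column $C_\sigma$. Here $\psi_\sigma(t;p) = \xi^{-1}(x,f_\sigma(t,y))$ where $(x,y)=\xi(p)$ and $\xi = \xi_\sigma = \mox(C_\sigma,B,P,h)$ is a fixed embedding independent of $t$. Setting $t=0$ in the three-case formula for $f_\sigma$ gives $f_\sigma(0,y) = (1-0)y = y$ on $[-h,h]_\mb{R}$, and $f_\sigma(0,y) = (y-h)2 + (2h-y)(1) = 2y-2h+2h-y = y$ on $[h,2h]_\mb{R}$, and symmetrically $f_\sigma(0,y) = y$ on $[-2h,-h]_\mb{R}$; so $f_\sigma(0,\cdot)$ is the identity on all of $[-2h,2h]_\mb{R}$. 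Hence $\psi_\sigma(0;p) = \xi^{-1}(x,y) = \xi^{-1}\xi(p) = p$ for $p\in C_\sigma$.

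Next I would handle the side columns, say $C_{\upsilon_1,\sigma}$ (the case of $C_{\upsilon_2,\sigma}$ being identical after replacing $x$ by $1-x$). By the previous paragraph, $\psi_\sigma(0)$ restricted to $C_\sigma$ is the identity, so $f_{\upsilon_1,\sigma}(0,y) = \xi\,\psi_\sigma(0;\xi^{-1}(1,y)) = \xi\,\xi^{-1}(1,y) = (1,y)$, i.e. $f_{\upsilon_1,\sigma}(0,\cdot)$ picks out $y$ unchanged in its second coordinate. Then in the definition of $g = g_{\upsilon_1,\sigma}$ at $t=0$: for $x\geq\nicefrac12$ we get $g(0;x,y) = (x, f_{\upsilon_1,\sigma}(0,y)) = (x,y)$ (reading off the second coordinate), and for $x<\nicefrac12$ we get $g(0;x,y) = (x, f_{\upsilon_1,\sigma}(2x\cdot 0,y)) = (x,f_{\upsilon_1,\sigma}(0,y)) = (x,y)$; so $g(0;\cdot)$ is the identity on its domain. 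Therefore $\psi_\sigma(0;p) = \xi^{-1}g(0;\xi(p)) = \xi^{-1}\xi(p) = p$ for $p\in C_{\upsilon_1,\sigma}$, and likewise on $C_{\upsilon_2,\sigma}$. Finally $\psi_\sigma(0)$ is the identity on $\sphere^2\setminus D_\sigma$ by definition. Combining all cases, $\psi_\sigma(0) = \id$ on $\sphere^2$.

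I do not anticipate a real obstacle here: the claim is purely a matter of plugging $t=0$ into explicit piecewise-linear formulas and using that the $\mox$-coordinates $\xi$ do not depend on $t$. The only mild care needed is bookkeeping at the boundaries between the three pieces of $f_\sigma$ (where one must check the linear interpolation formulas actually evaluate to $y$, not merely to something continuous), and noting that the degenerate case $Z = \Omega \in \psv(\mc{M})$ sets $\psi_t = \id$ outright so there is nothing to prove there. A short remark that the pieces agree on overlaps — inherited from well-definedness, or checked directly since each piece is the identity — completes the argument.
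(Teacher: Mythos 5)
Your proposal is correct and follows essentially the same route as the paper's own proof: check $f_\sigma(0,\cdot)=\id$ piecewise, conclude $\psi_\sigma(0)=\id$ on $C_\sigma$, feed that into $f_{\upsilon,\sigma}$ and $g$ to get the identity on the side columns, and glue with the identity on $\sphere^2\setminus D_\sigma$. The extra bookkeeping you do (explicitly evaluating the interpolation formula on $[h,2h]_\mb{R}$ to confirm it collapses to $y$, and noting the degenerate case) is sound but just a more spelled-out version of the same computation the paper records tersely.
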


\begin{proof}
Observe that $f_\sigma(0,y)=y$, 
so we have $\psi_\sigma(0) = \xi_\sigma^{-1}\xi_\sigma=\id$ on $C_\sigma$, 
so $f_{\upsilon,\sigma}(0) = \xi_{\upsilon,\sigma}^{-1}\xi_{\upsilon,\sigma}=\id$, 
so $g(0)=\id$, 
so $\psi_\sigma(0) = \id$ on $C_{\upsilon,\sigma}$. 
\end{proof}

\begin{claim}
\label{claimEbbSEdgeHom}
$\psi_\sigma(t) \in \hom^+(\sphere^2)$ for $t<1$.  
\end{claim}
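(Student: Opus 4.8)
The plan is to verify directly from Definition \ref{defCentralEdgeMap} and Definition \ref{defWEEdgeMap} that $\psi_\sigma(t)$ is an orientation preserving homeomorphism for $t<1$, by checking that it is built from homeomorphisms glued along matching boundaries. First I would observe that the map $\mox$ provides an embedding $\xi_\sigma : C_\sigma \to [0,1]\times\mb{R}$ by Lemma \ref{lemmaMox}, so to show $\psi_\sigma(t)$ restricted to $C_\sigma$ is a homeomorphism onto $C_\sigma$, it suffices to show the conjugating map $(x,y)\mapsto (x,f_\sigma(t,y))$ is a homeomorphism of $\xi_\sigma(C_\sigma)$ onto itself. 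The key point is that for each fixed $t<1$, the function $y\mapsto f_\sigma(t,y)$ is a strictly increasing continuous bijection of $[-2h,2h]_\mb{R}$ onto itself: on $[-h,h]$ it is multiplication by $1-t>0$, and on the two outer intervals it is an affine interpolation between a point with slope $2$ and a point with slope $1-t$, both positive, so the slope is positive throughout; moreover the piecewise definition agrees at $y=\pm h$ and at $y=\pm 2h$. Since $\xi_\sigma(C_\sigma)$ is the region between the graphs $\xi_\sigma(B_\mr{N})$ and $\xi_\sigma(B_\mr{S})$ over $[0,1]$, and these are the images of $P_\mr{N2}$ and $P_\mr{S2}$, which $\xi_\sigma$ sends to the horizontal lines $y=2h$ and $y=-2h$ by the latitude condition in Lemma \ref{lemmaMox} part \ref{itemMoxH}, the map $(x,y)\mapsto(x,f_\sigma(t,y))$ preserves this region. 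Hence $\psi_\sigma(t)$ is a homeomorphism of $C_\sigma$ fixing $B_\mr{N}\cup B_\mr{S}$, and it is orientation preserving since in $\xi_\sigma$-coordinates it is a fiberwise increasing map.

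Next I would treat the side columns. In the western column $C_{\upsilon_1,\sigma}$, the map $g_{\upsilon_1,\sigma}(t)$ is, for each fixed $x$, the map $y\mapsto f_{\upsilon_1,\sigma}(2xt,y)$ when $x<\nicefrac12$ and $y\mapsto f_{\upsilon_1,\sigma}(t,y)$ when $x\geq\nicefrac12$, where $f_{\upsilon_1,\sigma}(t',\cdot)$ is the conjugate of $\psi_\sigma(t')$ restricted to the vertical fiber $\xi_{\upsilon_1,\sigma}^{-1}(1\times\mb{R})$, which we have just shown is a strictly increasing continuous bijection of the relevant interval for $t'<1$. For $t<1$ we have $2xt<1$ on $x<\nicefrac12$, so the fiber maps are increasing bijections for every $x\in[0,1]$, and they vary continuously in $x$ (matching at $x=\nicefrac12$ where $2xt=t$), so $g_{\upsilon_1,\sigma}(t)$ is a homeomorphism of $\xi_{\upsilon_1,\sigma}(C_{\upsilon_1,\sigma})$ onto itself; at $x=0$ the map $f_{\upsilon_1,\sigma}(0,\cdot)=\id$ by Claim \ref{claimEbbSEdgeZero}, so $\psi_\sigma(t)$ is the identity on the western boundary $B_\mr{W}$ of $D_\sigma$, and similarly on the northern and southern borders of the side column it fixes $B_\mr{N}$ and $B_\mr{S}$ since those are sent to the extremal horizontal lines and $f$ fixes the endpoints $\pm 2h$. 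The eastern column is handled the same way with $x$ replaced by $1-x$.

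Finally I would check the gluing. The three columns $C_\sigma$, $C_{\upsilon_1,\sigma}$, $C_{\upsilon_2,\sigma}$ form a closed cover of $D_\sigma$, meeting along the shared $({\leq}1)$-cells $P_\mr{N1},P_\mr{S1}$ and their translates, which are exactly the edges $Z(\sigma,\{\upsilon_i,\sigma\})$ of the zoning. On the seam between $C_\sigma$ and $C_{\upsilon_1,\sigma}$, namely the fiber $\xi_\sigma^{-1}(0\times\mb{R}) = \xi_{\upsilon_1,\sigma}^{-1}(1\times\mb{R})$, the definition $f_{\upsilon_1,\sigma}(t,y) = \xi_\sigma\psi_\sigma(t;\xi_\sigma^{-1}(0,y))$ forces $\psi_\sigma(t)$ computed from the $C_{\upsilon_1,\sigma}$ side to agree with $\psi_\sigma(t)$ computed from the $C_\sigma$ side, so the two definitions are consistent; and on $\partial D_\sigma$ the map is the identity, which matches the definition $\psi_\sigma(t)=\id$ on $\sphere^2\setminus D_\sigma$. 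By the gluing lemma, $\psi_\sigma(t)$ is a well-defined continuous bijection of $\sphere^2$ with continuous inverse, hence a homeomorphism, and it is orientation preserving because it is orientation preserving on each column and the identity elsewhere. I expect the only mild obstacle to be bookkeeping: confirming that the various $({\leq}1)$-cells named $P_\mr{N1}$, etc., in the two definitions are literally the same subsets (so the cover really is a closed cover glued along full boundary cells) and that the latitude map $h$ is consistent between $\xi_\sigma$ and $\xi_{\upsilon_i,\sigma}$, which is guaranteed since both use the same value $h$ and the same greedy element $i_0$; once that is pinned down the homeomorphism claim is routine.
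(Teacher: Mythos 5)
Your proof is correct and follows essentially the same route as the paper's: verify that $\psi_\sigma(t)$ is a homeomorphism on each column, check that the definitions agree on the seams and restrict to the identity on $\partial D_\sigma$, and apply the gluing lemma. The paper's version is terser, concentrating almost entirely on the boundary conditions and leaving implicit the fact that $f_\sigma(t,\cdot)$ and $g_{\upsilon,\sigma}(t)$ are fiberwise increasing bijections for $t<1$, which you usefully spell out.
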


\begin{proof}
Since $2h$ and $-2h$ are fixed points of $f_\sigma(t)$, 
and the northern and southern boarders of $C_\sigma$ are mapped to horizontal segments at height $2h$ and $-2h$ 
by Lemma \ref{lemmaMox} part \ref{itemMoxH}, 
so $\psi_\sigma$ acts trivially on the northern and southern boarders of $C_\sigma$. 
Also, $\psi_\sigma$ on the eastern and western columns agree with $\psi_\sigma$ on the central column and acts trivially on the northern and southern boarders by definition. 
On the western boarder, we have 
$g(t;0,y) = (0,f(0,y)) = (0,y)$ for $t<1$ 
since $\psi_\sigma(0)=\id$ on the central column $C_\sigma$, 
so $\psi_\sigma$ acts trivially on the western boarder of $D_\sigma$, and similarly on the eastern boarder. 
Hence, the restriction of $\psi_\sigma$ to $D_\sigma$ acts trivially on the boundary and $\psi_\sigma(t)$ keeps points of $D_\sigma$ in $D_\sigma$, so $\psi_\sigma(t)$ is a homeomorphism for $t<1$ by the gluing lemma. 
\end{proof}

\begin{claim}
\label{claimEbbSEdgeContinuous}
$\psi_\sigma$ varies continuously in the sup-metric as $(\Omega,Z,A,t)$ vary, 
as well as $\xi_\sigma$ and $\xi_{\upsilon,\sigma}$ in the partial map metric 
and their respective inputs. 
\end{claim}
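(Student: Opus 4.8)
The plan is to trace continuity through the chain of constructions defining $\psi_\sigma$, relying on the continuity statements already established for the ingredient maps. First I would note that the inputs $(R,B,P,h)$ to the two $\mox$ calls vary continuously: the region $C_\sigma$ (resp.\ $C_{\upsilon,\sigma}$) is a union of zones of $Z$, which vary continuously in Fréchet distance on edges by Lemma \ref{lemmaZoneMap}, hence $C_\sigma$ varies continuously in Hausdorff distance and its borders $B$ vary continuously in Fréchet distance; each path $S_i(A)\cap Z(\sigma)$ varies continuously in Fréchet distance since the endpoints are crossings of continuously varying curves (Lemma \ref{lemmaConvergentCrossingPoint}) and restriction of a curve to a continuously varying subregion is continuous \cite[Lemma 3.2.4]{dobbins2021grassmannians}; and the latitude value $h$ is the distance between $Z$ and the degenerate zoning $A$, which varies continuously since that distance is a maximum of Fréchet distances between continuously varying paths. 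Therefore $\xi_\sigma = \mox(C_\sigma,B,P,h)$ varies continuously in the partial map metric by Lemma \ref{lemmaMox} part \ref{itemMoxContinuous}, and $\xi_\sigma^{-1}$ varies continuously in the partial map metric by the Inversion part of Lemma \ref{lemmaCPDistContinuity} (since $\xi_\sigma$ is an embedding of the compact set $C_\sigma$).

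Next I would handle the central column. The map $f_\sigma(t,\cdot)$ is given by an explicit piecewise-linear formula in $y$ and $t$ whose breakpoints depend continuously on $h$, so $f_\sigma$ varies continuously in the sup-metric as $(h,t)$ vary; composing with the continuously varying $\xi_\sigma$ and $\xi_\sigma^{-1}$ via the Composition part of Lemma \ref{lemmaCPDistContinuity} (the relevant preimages are honest products or contain the image, so the hypotheses hold), $\psi_\sigma(t) = \xi_\sigma^{-1}\circ(x,f_\sigma(t,y))\circ\xi_\sigma$ varies continuously on $C_\sigma$. Then for the side columns, $\xi_{\upsilon,\sigma}$ varies continuously by the same $\mox$ argument, $f_{\upsilon,\sigma}(t,y) = \xi_{\upsilon,\sigma}\psi_\sigma(t;\xi_{\upsilon,\sigma}^{-1}(1,y))$ varies continuously by Composition since $\psi_\sigma$ on $C_\sigma$ was just shown continuous, the piecewise formula for $g_{\upsilon,\sigma}$ is again an explicit continuous expression in $(t,x,y)$, and so $\psi_\sigma(t) = \xi_{\upsilon,\sigma}^{-1} g(t;\xi_{\upsilon,\sigma}(p))$ varies continuously on $C_{\upsilon_1,\sigma}$ and on $C_{\upsilon_2,\sigma}$. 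Finally, on $\sphere^2\setminus D_\sigma$ the map is the identity, which is trivially continuous.

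To conclude I would glue the pieces: $\sphere^2$ is covered by the closed sets $C_\sigma$, $C_{\upsilon_1,\sigma}$, $C_{\upsilon_2,\sigma}$, and $\cl(\sphere^2\setminus D_\sigma)$, each of which varies continuously (in Hausdorff distance — again because it is a union of zones, or a complement thereof), and on each piece $\psi_\sigma$ varies continuously; the definitions agree on overlaps since $\psi_\sigma$ acts trivially on the shared northern/southern borders (mapped to horizontal segments by Lemma \ref{lemmaMox} part \ref{itemMoxH}) and on the western/eastern borders of $D_\sigma$ (where $g$ reduces to $f(0,y)=y$). Hence by the Gluing part of Lemma \ref{lemmaCPDistContinuity}, $\psi_\sigma$ varies continuously in the sup-metric as $(\Omega,Z,A,t)$ vary. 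The main obstacle I anticipate is bookkeeping the hypotheses of the Composition part of Lemma \ref{lemmaCPDistContinuity} at each nesting level — in particular verifying that the relevant preimages (e.g.\ $g(W)\cap\pre(f)$) vary continuously, which here is immediate because the domains in question are either honest products of intervals under $\xi$-coordinates or are the whole continuously varying region $C_\sigma$ — and keeping straight which metric (sup, partial-map, Fréchet, Hausdorff) applies to which object at which stage.
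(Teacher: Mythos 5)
Your proposal is correct and follows essentially the same route as the paper's proof: continuity of the $\mox$ inputs (zones, borders, paths, latitude $h$), then Lemma \ref{lemmaMox} for $\xi_\sigma$ and $\xi_{\upsilon,\sigma}$, then Lemma \ref{lemmaCPDistContinuity} for inversion and composition with the explicit maps $f$ and $g$, finishing by gluing. The paper's version is more compressed (and attributes the continuity of zones and borders to the definition of the metric on zonings rather than to Lemma \ref{lemmaZoneMap}, since $Z$ here varies as a direct input), but the structure and the lemmas invoked are the same.
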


\begin{proof}
The zones $Z(\Sigma)$ vary continuously in Hausdorff distance 
and the boarders $B$ vary continuously in Fréchet distance as $Z$ varies by definition of the metric on zonings, 
so $S_i\cap B_\mr{W}$ and $S_i\cap B_\mr{E}$ vary continuously by Lemma \ref{lemmaConvergentCrossingPoint}, 
so the paths of $P$ vary continuously in Fréchet distance, and so $h$ varies continuously. 
Hence, $\xi_\sigma$ and $\xi_{\upsilon_i,\sigma}$ vary continuously in the partial map metric by Lemma \ref{lemmaMox}.  
and so $\xi_\sigma^{-1}$ and $\xi_{\upsilon_i,\sigma}^{-1}$ vary continuously by Lemma \ref{lemmaCPDistContinuity}. 
Also, the maps $f$ and $g$ vary continuously, 
so $\psi_\sigma$ varies continuously by Lemma \ref{lemmaCPDistContinuity}. 
\end{proof}

%\bigskip
%\hrule
%\bigskip

\begin{claim}
\label{claimEbbSEdgeIsotopy}
%${}$
Let $i\in\sigma^0$ be a nonloop. 
\begin{enumerate}
\item 
\label{itemEbbSEdgeIsotopy}
The restriction of $\psi_\sigma(t)$ to $S_i(A)\cap H_\sigma$ 
is an isotopy 
to $S_{i_0}(A)\cap H_\sigma$ 
that varies continuously in the sup metric as $(\Omega,Z,A)$ vary. 
\item 
\label{itemEbbSEdgeOne}
$\psi_\sigma(1,H_\sigma) = S_{i_0}(A)\cap H_\sigma$. 
\item 
\label{itemEbbSEdgeSubset}
If $Z$ is generic and $t>0$, then  
$\psi_\sigma(t,H_\sigma) \subset \inner(Z,i,0)^\circ$.  
%for $t>0$ 
\end{enumerate}
\end{claim}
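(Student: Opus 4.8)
The plan is to prove each of the three parts of Claim~\ref{claimEbbSEdgeIsotopy} by first understanding the action of $\psi_\sigma$ on the region $H_\sigma$, which by construction is the portion of the inner $(i_0,0)$-zone inside $D_\sigma$ that lies strictly between the horizontal bands $[\tfrac{-3h}{2},\tfrac{3h}{2}]$ cut off in the two side columns. First I would record the key structural fact coming from Definition~\ref{defCentralEdgeMap} and Definition~\ref{defWEEdgeMap}: under the coordinate $\xi_\sigma$ on the central column (and the analogous $\xi_{\upsilon_j,\sigma}$ on the side columns), every pseudocircle $S_i(A)$ with $i \in \sigma^0$ a nonloop is an x-monotone curve by Lemma~\ref{lemmaMox} part~\ref{itemMoxMonotone}, and by the accommodation hypothesis together with Equation~\eqref{equationSAOmega} and the greedy choice the curve $S_{i_0}(A)$ is sent to the horizontal segment $[0,1]\times 0$ (Lemma~\ref{lemmaMox} part~\ref{itemMoxH}). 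Inside $H_\sigma$ these x-monotone curves all have $y$-coordinate in $(-h,h)$, precisely the interval where $f_\sigma(t,y) = (1-t)y$ simply scales $y$ toward $0$; and in the side columns, for the part of $H_\sigma$ there, $g$ also acts by scaling (with the modified time $2xt$ near the outer edge) since we stay within the central band $[-h,h] \subset [\tfrac{-3h}{2},\tfrac{3h}{2}]$ — wait, I should be careful that $H_\sigma$ in the side columns actually sits inside the band where $g$ is defined by $(x,f(\cdot,y))$, which it does by the definition $H_\sigma = (D_\sigma \cap \inner(Z,i_0,0)) \setminus (H_{\upsilon_1,\sigma}\cup H_{\upsilon_2,\sigma})$ cutting out exactly those bands.

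For part~\ref{itemEbbSEdgeIsotopy}, I would argue that restricted to $S_i(A)\cap H_\sigma$, the map $t \mapsto \psi_\sigma(t)$ is the conjugate by $\xi$ of the vertical scaling $y \mapsto (1-t)y$ (respectively $y\mapsto (1-2xt)y$ near the west/east edges), which is a continuous family of embeddings starting at the identity at $t=0$ by Claim~\ref{claimEbbSEdgeZero}; since $\xi$ is an embedding (Lemma~\ref{lemmaMox}) and vertical scaling preserves x-monotonicity and injectivity for $t<1$, this is an isotopy of $S_i(A)\cap H_\sigma$ within $H_\sigma$. Continuity in the sup-metric as $(\Omega,Z,A)$ vary then follows from Claim~\ref{claimEbbSEdgeContinuous} together with Lemma~\ref{lemmaCPDistContinuity} (composition and inversion of the continuously-varying maps $\xi$, $\xi^{-1}$, $f$, $g$) and Lemma~\ref{lemmaPMFrechetDistance} for the restriction to the path $S_i(A)\cap H_\sigma$, whose endpoints on $\partial H_\sigma$ vary continuously by Lemma~\ref{lemmaConvergentCrossingPoint}. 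For part~\ref{itemEbbSEdgeOne}, at $t=1$ the scaling $y\mapsto (1-t)y$ collapses the whole band $[-h,h]$ to $y=0$; in the central column this sends all of $H_\sigma \cap C_\sigma$ onto $\xi_\sigma^{-1}([0,1]\times 0) = S_{i_0}(A)\cap Z(\sigma)$, and in the side columns the analogous collapse (noting $f(2xt,y)$ at $t=1$ still sends $[-h,h]$ to $0$ for $x>0$, and the single line $x=0$ is on $\partial D_\sigma$ hence outside $H_\sigma$) sends the remaining part onto $S_{i_0}(A)$; piecing these together over the columns and using that $S_{i_0}(A)$ restricted to each zone agrees with $\xi^{-1}$ of the appropriate horizontal segment gives $\psi_\sigma(1,H_\sigma) = S_{i_0}(A)\cap H_\sigma$.

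For part~\ref{itemEbbSEdgeSubset}, when $Z$ is generic and $t>0$, I want $\psi_\sigma(t,H_\sigma)$ to land in the open inner $(i,0)$-zone $\inner(Z,i,0)^\circ$. The point is that $H_\sigma \subset \inner(Z,i_0,0)$, and the image under the vertical scaling (with $t>0$, so the band strictly contracts) of each x-monotone curve $S_i(A)$ is pushed strictly toward the horizontal axis $\xi_\sigma^{-1}([0,1]\times 0)$, which is $S_{i_0}(A)$, sitting in the interior of the inner zone; since $i \in \sigma^0$ and $\inner(Z,i_0,0)$, $\inner(Z,i,0)$ both contain $Z(\sigma)$ and more generally every inner $(i_0,0)$-zone is an inner $(i,0)$-zone when $i\in\bigcap\Sigma^0$ fails but $i_0 \in \bigcap\Sigma^0$ — here I need the fact that on the inner $(i_0,0)$-zones the relevant pseudocircles for $i$ stay well inside, which comes from $Z$ accommodating $A$ so that $S_i(A)$ meets each such zone in a path with endpoints on the boundary, and the strict contraction for $t>0$ moves it off the boundary edges $P_\mr{N1},P_\mr{S1}$. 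The main obstacle I anticipate is exactly this last part: carefully verifying that the contracted image stays in the \emph{open} inner $(i,0)$-zone, since $H_\sigma$ touches the boundary of $D_\sigma$ along the east/west sides and along the $B_\mr{N},B_\mr{S}$ pieces, and one must check the tapering in the side columns (the $f(2xt,y)$ clause) together with the genericity of $Z$ indeed keeps the image away from $\partial\inner(Z,i,0)$ for all $x$ and all $t>0$ uniformly; I expect this to require a short case analysis on which column a point of $H_\sigma$ lies in and tracking where its image sits relative to the edges $Z(\sigma,\{\sigma,\tau_j\})$.
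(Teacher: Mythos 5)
Your overall strategy matches the paper's: analyze the central column via $\xi_\sigma$ and the vertical scaling $f_\sigma$, then extend to the side columns via $\xi_{\upsilon_j,\sigma}$ and $g$. But you have a persistent confusion about the definition of $H_\sigma$ and which clause of $g$ applies there, and this produces concrete errors in your arguments for parts~\ref{itemEbbSEdgeOne} and~\ref{itemEbbSEdgeSubset}.

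In the western side column, $D_\sigma\cap\inner(Z,i_0,0)$ is $Z(\{\upsilon_1,\sigma\})$, which $\xi_{\upsilon_1,\sigma}$ sends to the strip $y\in[-h,h]$ (since $P_\mr{N1},P_\mr{S1}$ map to heights $\pm h$ by Lemma~\ref{lemmaMox} part~\ref{itemMoxH}). Because $[-h,h]\subset[\tfrac{-3h}{2},\tfrac{3h}{2}]$, removing $H_{\upsilon_1,\sigma}=\xi_{\upsilon_1,\sigma}^{-1}([0,\nicefrac12]\times[\tfrac{-3h}{2},\tfrac{3h}{2}])$ removes exactly the $x\le\nicefrac12$ portion of that strip. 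So $H_\sigma$ in the side columns lies entirely where $x>\nicefrac12$, and there $g$ acts as $(x,f(t,y))$, the unmodified scaling. The tapering clause $(x,f(2xt,y))$ simply never applies inside $H_\sigma$; that is exactly the point of the definition. You half-notice this mid-paragraph but then fall back on the tapering clause when arguing parts~\ref{itemEbbSEdgeOne} and~\ref{itemEbbSEdgeSubset}.

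This matters because your assertion that ``$f(2xt,y)$ at $t=1$ still sends $[-h,h]$ to $0$ for $x>0$'' is false: at $t=1$ and $x<\nicefrac12$ we have $f(2x,y)=(1-2x)y\neq 0$ for $y\neq 0$. If the tapered clause actually acted inside $H_\sigma$, part~\ref{itemEbbSEdgeOne} would fail. Part~\ref{itemEbbSEdgeOne} holds precisely because $H_\sigma$ excludes the tapered region, not because the taper happens to collapse at $t=1$.

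Two further corrections. $H_\sigma$ does not touch $B_\mr{N}$ or $B_\mr{S}$: those map to heights $\pm 2h$, while $H_\sigma$ is confined to $|y|\le h$. Nor does it reach the east or west borders of $D_\sigma$: those sit at $x=0$ in the side-column coordinates, inside the removed region $x\le\nicefrac12$. (And even if it did, those borders would be interior to $\inner(Z,i,0)$ because $Z(\upsilon_j)$ is also an inner $(i,0)$-zone whenever $i\in\sigma^0\subseteq\upsilon_j^0$.) So the case analysis you anticipate for part~\ref{itemEbbSEdgeSubset} dissolves: once you know $g$ acts as $(x,f(t,y))$ throughout $H_\sigma$ and that $f_\sigma(t)$ strictly contracts $[-h,h]$ for $t>0$, the image stays away from $P_\mr{N1},P_\mr{S1}$ at heights $\pm h$, which are the only pieces of $\partial\inner(Z,i,0)$ that $H_\sigma$ touches.

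The paper handles the side column by observing that $f_{\upsilon_1,\sigma}$ is defined to agree with the central column's deformation on the shared border $x=1$, and that $g$ restricted to $[\nicefrac12,1]\times[-h,h]$ preserves the first coordinate and applies that same vertical deformation at every $x\ge\nicefrac12$; the central-column analysis then transfers directly. That re-use is cleaner than trying to analyze $g$ from scratch and sidesteps the confusion above.
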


\begin{proof}
Let us start with the restriction to the central column. 
There, $\xi_\sigma(P_i)$ varies continuously by Lemma \ref{lemmaPMFrechetDistance} 
since $\xi_\sigma$ and $P_i$ vary continuously by Claim \ref{claimEbbSEdgeContinuous}, 
so $\xi_\sigma(P_i)$ is the graph of a function $\gamma:[0,1]\to\mb{R}$ 
by Lemma \ref{lemmaMox} part \ref{itemMoxMonotone} 
that varies continuously in the sup metric by Lemma \ref{lemmaCPDistSup}, 
and $S_i$ only intersects the central column in $Z(\sigma)$, 
so $|\gamma|\leq h$,  
so $f_\sigma(t)\gamma = (1-t)\gamma$ is an isotopy to $[0,1]\times 0$, 
and $\xi^{-1}(1-t)\gamma$ varies continuously by Lemma \ref{lemmaCPDistContinuity}, 
so $\rest(S_i\cap C_\sigma,\psi_\sigma(t))$ 
is an isotopy to $S_{i_0}\cap C_\sigma$ that varies continuously, 
which means part \ref{itemEbbSEdgeIsotopy} holds on the restriction to $C_\sigma$. 
Likewise, parts \ref{itemEbbSEdgeOne} and \ref{itemEbbSEdgeSubset} hold on the restriction to $C_\sigma$ by a similar argument. 
In particular, the restriction of $\psi_\sigma$ to the border $Z(\sigma,\{\upsilon_1,\sigma\})$ between the central and western columns is a deformation retraction to $S_{i_0}\cap Z(\sigma,\{\upsilon_1,\sigma\})$, 
so $g = g_{\upsilon_1,\sigma}$ restricted to $[\nicefrac12,1]\times[-h,h]$ is a deformation retraction to $[\nicefrac12,1]\times 0$ that preserves the first coordinate, 
so the restriction of $g$ to $\xi_{\upsilon_1,\sigma}(P_i)$ is an isotopy to $[\nicefrac12,1]\times 0$, 
so the restriction of $\psi_\sigma$ to $S_i\cap H_\sigma \cap C_{\upsilon_1,\sigma}$ 
is an isotopy to $S_{i_0}\cap H_\sigma \cap C_{\upsilon_1,\sigma}$ that varies continuously 
like in the case of the central column. 
Hence, part \ref{itemEbbSEdgeIsotopy} holds on the restriction to the western column, 
and a similar argument applies in the eastern column and for parts \ref{itemEbbSEdgeOne} and \ref{itemEbbSEdgeSubset}. 
\end{proof}

\subsubsection{Vertex covectors}

\begin{definition}[Vertex covector deformation]
\label{defEbbStepVertex}
Consider a vertex covector $\upsilon$ of $\mc{M}$. 
To define $\psi_\upsilon$, we interpolate between two cell decompositions as in Figure \ref{figureVertexStepInterp}. 
let $\mc{C}_\upsilon$ be the subdivision of $D_\upsilon$ by 
$\partial Z(\upsilon)$, 
$\Omega$, 
$S_{i_1}(A)$ where $i_1$ is the greedy choice from $\upsilon^0$, 
$\partial Z(\{\upsilon,\sigma\})$ 
for each edge covector $\sigma > \upsilon$ 
and $S_i(A)\cap Z(\{\upsilon,\sigma\})$ 
where $i=i_\sigma$ is the greedy element of $\sigma$, 
and the boundary of a 2-cell $C=C_\upsilon$ as follows; 
$C\subset D_\upsilon$ is the region bounded by the paths 
$E_{\upsilon,\sigma} = \partial H_{\upsilon,\sigma} \cap D_\sigma^\circ$ 
as in Definition \ref{defWEEdgeMap} 
for each edge covector $\sigma > \upsilon$ 
and the hyperbolic geodesic $E_{\upsilon,\tau}$
between endpoints of $E_{\upsilon,\sigma_k}$ and $E_{\upsilon,\sigma_{k+1}}$ 
through 
$Z(\{\upsilon,\tau\})$
for each facet covector $\tau$ 
where $\tau>\{\sigma_{k},\sigma_{k+1}\}>\upsilon$ with addition mod $2K$. 
%connecting end points of these paths for each facet covector $\tau > \upsilon$. 

\begin{figure}
\centering
\includegraphics[scale=1]{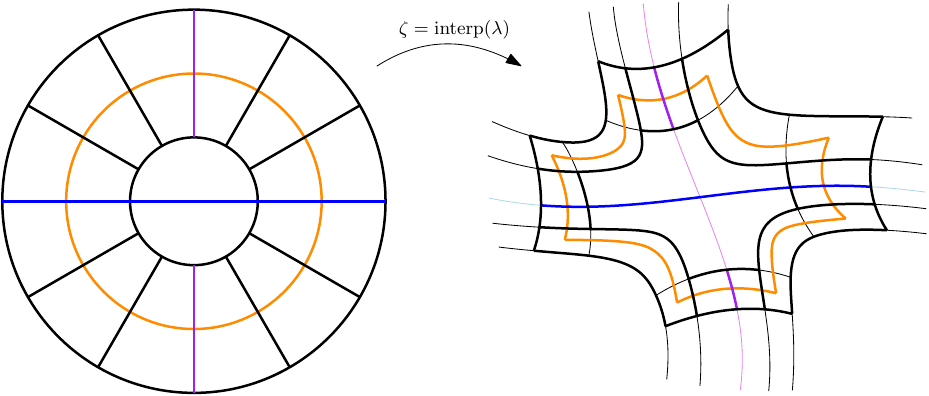}
\caption{$\zeta$ in the definition of $\psi_\upsilon$ where the blue pseudocircle is the greedy choice and the purple pseudocircle is not in $N_1$.}
\label{figureVertexStepInterp}
\end{figure}

Note that 
$S_i(A)\cap Z(\{\upsilon,\sigma\}) = S_i(\Omega)\cap Z(\{\upsilon,\sigma\})$ for $i$ in the ground set of $\Omega$  
and 
$S_{i_1}(A) = S_{i_1}(\Omega)$ 
by the hypotheses of the Lemma \ref{lemmaGebbStep} 
unless the ground set of $\Omega$ is disjoint from $\upsilon^0$. 
 
Let $K$ be the number of parallel classes in $\upsilon^0$. 
Let $\sigma_k,\sigma_{k+K} > \upsilon$ be the edge covector where 
$\sigma_k(i_k) = \sigma_{k+K}(i_k) = 0$ 
and $i_k$ is a greedy element of $(\mc{M},\lesssim)$ 
ordered cyclically according to the rank 2 ordered matroid $\mc{N} = \rest(\upsilon^0,\mc{M})$. 
Let $\mc{C}_{K}$ be the subdivision of the disk $3\disk$ by $\sphere^1$, $2\sphere^1$, 
and $3K$ evenly spaced diameters $L_{k,j}$ corresponding to $i_{k,+},i_{k,0},i_{k,-}$ ordered according to $\mc{N}$, 
but where only the diameters $L_{1,0}$ and $L_{k,0}$ for $i_k \in N_1$ subdivide the unit disk.  

Let $\lambda$ map each cell of $\mc{C}_K$ to the corresponding cell of $\mc{C}_\upsilon$. 
Here we order diameters $L_{k,+},L_{k,0},L_{k,-}$ in $\mc{C}_K$ so that the arc of $\partial Z(\{\upsilon,\sigma_k\})$ 
corresponding to a segment of $L_{k,+}$ is on the positive side of $S_{i_k}$, 
%intersect the arc corresponding to $\sigma_k$ on the positive side of $L_{k,0}$,  
and so the arc of $\partial Z(\{\upsilon,\sigma_{k+K}\})$ 
corresponding to a segment of $L_{k,+}$ is on the negative side of $S_{i_k}$.

% \lambda for labels,  \zeta for map to zones 

%which we will show are combinatorially equivalent in Claim \ref{claimEbbSVertexEquivalent}, 
and let $\zeta : 3\disk \to D_{\upsilon}$ by $\zeta= \interp(\lambda)$. 
Let $f(t,z)$ scale $z$ in $2\disk$ by $(1-t)$ and keep the circle of radius 3 fixed and interpolate radially on the rest of $3\disk$,  
and let $\psi_\upsilon$ be $f$ conjugated by $\zeta$.
That is, 
\begin{align*}
f = f_\upsilon(t,z) 
&= \begin{cases}
(1-t)z & |z|\leq 2 \\ 
(1-t(3-|z|))z & |z|\geq 2, 
\end{cases} \\[6pt] 
\psi_{\upsilon}(t,p) 
&= \begin{cases}
[\zeta f(t)\zeta^{-1}](p) & p\in D_\upsilon \\ 
p & p \not\in D_\upsilon.  
\end{cases}
\end{align*}
\end{definition}

\begin{remark}
For a nonloop $i\not\in N_1$, we can have vertex covectors $\upsilon_1$ and $\upsilon_2$ where $i$ is the greedy choice from $\upsilon_1^0$ but is not the greedy choice from $\upsilon_2^0$. 
In this case, $\zeta_{\upsilon_1}^{-1}(S_i\cap D_{\upsilon_1})$ is a diameter of $3\disk$, 
but $\zeta_{\upsilon_2}^{-1}(S_i\cap D_{\upsilon_2})$ need not be, 
so $\psi_{\upsilon_1}$ preserves $S_i$, but $\psi_{\upsilon_2}$ can move $S_i$. 
\end{remark}

%\begin{claim}
%\label{claimEbbSVertexWellDef}
%$\psi_\upsilon$ is well-defined.  
%\end{claim}

%\begin{proof}
%For this, we show that $C$ is a well-defined simple closed curve and that $\mc{C}_\upsilon$ and $\mc{C}_K$ are combinatorially equivalent cell decompositions. 
%$\rest(\upsilon^0,\mc{M})$ is rank 2 with a cocircuit for each edge covector $\sigma>\upsilon$ and a tope of each facet covector $\tau>\upsilon$, so each tope is adjacent to 
%\end{proof}

\begin{claim}
\label{claimEbbSVertexHom}
$\psi_\upsilon(t) \in \hom^+(\sphere^2)$ for $t<1$.  
\end{claim}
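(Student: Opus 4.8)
\textbf{Plan for the proof of Claim \ref{claimEbbSVertexHom}.}
The goal is to show that $\psi_\upsilon(t) = \zeta f_\upsilon(t)\zeta^{-1}$ (extended by the identity outside $D_\upsilon$) is an orientation-preserving homeomorphism of $\sphere^2$ for $t<1$. The plan is to verify first that each ingredient is a homeomorphism of its relevant domain, then that the pieces glue. Specifically, $\zeta = \interp(\lambda)$ is a homeomorphism from $3\disk$ onto $D_\upsilon$ by Lemma \ref{lemmaInterp}, since $\lambda$ is a genuine isomorphism of regular cell decompositions $\mc{C}_K \to \mc{C}_\upsilon$ (the cells of $\mc{C}_K$ obtained by the $3K$ diameters, the circles of radius $2$ and $3$, and the two subdividing diameters of the unit disk match up with the cells of $\mc{C}_\upsilon$ listed in Definition \ref{defEbbStepVertex}). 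So it suffices to check that $f_\upsilon(t)$ is a homeomorphism of $3\disk$ fixing the boundary circle $\partial(3\disk) = 3\sphere^1$ for each $t<1$.

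Next I would verify that $f_\upsilon(t):3\disk \to 3\disk$ is indeed a homeomorphism for $t<1$. For $|z|\le 2$ it is multiplication by $(1-t)>0$, and for $2\le|z|\le 3$ it is $z \mapsto (1-t(3-|z|))z$, which on each ray is the radial map $r \mapsto (1-t(3-r))r$. The plan is to observe that this radial profile $g_t(r) = r -t(3-r)r = r(1+3t) -tr^2 \cdot\ldots$ — more simply, write $g_t(r) = (1-t(3-r))r$ and compute $g_t'(r) = (1-3t) +2tr$; for $r\in[2,3]$ and $t<1$ we get $g_t'(r) \ge (1-3t)+4t = 1+t > 0$, so $g_t$ is strictly increasing on $[2,3]$, with $g_t(2) = 2(1-t)$ matching the inner piece at $|z|=2$ and $g_t(3)=3$, so $f_\upsilon(t)$ is a continuous radial bijection of $3\disk$ onto itself, fixing $3\sphere^1$ pointwise, hence a homeomorphism, and it is isotopic to the identity (e.g.\ along $t'\in[0,t]$) so it is orientation-preserving.

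Finally, to conclude I would glue: $\psi_\upsilon(t)$ restricted to $D_\upsilon$ is $\zeta f_\upsilon(t)\zeta^{-1}$, a homeomorphism of $D_\upsilon$ onto itself, and on $\partial D_\upsilon$ it is the identity because $f_\upsilon(t)$ fixes $\partial(3\disk)$ pointwise and $\zeta$ carries $\partial(3\disk)$ onto $\partial D_\upsilon$. Outside $D_\upsilon$, $\psi_\upsilon(t)$ is the identity. Since $D_\upsilon$ and $\cl(\sphere^2\setminus D_\upsilon)$ form a closed cover of $\sphere^2$ whose intersection is $\partial D_\upsilon$, and the two definitions agree there, the gluing lemma gives that $\psi_\upsilon(t)$ is a continuous bijection of the compact space $\sphere^2$, hence a homeomorphism; orientation-preservation is inherited from $f_\upsilon(t)$ on $D_\upsilon$ and the identity elsewhere. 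The main point requiring care — the only genuine obstacle — is confirming that $\lambda$ really is a cell-decomposition isomorphism, i.e.\ that $\mc{C}_\upsilon$ as described (with the caveat that only the diameters $L_{1,0}$ and $L_{k,0}$ for $i_k\in N_1$ subdivide the unit disk, reflecting which pseudocircles of $\Omega$ and the greedy pseudocircle $S_{i_1}(A)$ actually pass through $Z(\upsilon)$) has exactly the combinatorial structure of $\mc{C}_K$; this uses the hypotheses of Lemma \ref{lemmaGebbStep} guaranteeing $S_i(A)\cap Z(\{\upsilon,\sigma\}) = S_i(\Omega)\cap Z(\{\upsilon,\sigma\})$ for $i$ in the ground set of $\Omega$ and $S_{i_1}(A)=S_{i_1}(\Omega)$, together with the fact that $D_\upsilon$ is the union of $Z(\Sigma)$ over chains $\Sigma\ni\upsilon$.
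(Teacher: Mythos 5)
Your proof is correct and follows the same strategy as the paper's: express $\psi_\upsilon(t)$ on $D_\upsilon$ as the conjugate $\zeta f_\upsilon(t)\zeta^{-1}$ via Lemma \ref{lemmaInterp}, check that $f_\upsilon(t)$ is a homeomorphism of $3\disk$ fixing its boundary circle pointwise, and glue with the identity on the complement of $D_\upsilon$. One small point in your favor: the paper's proof asserts $f_\upsilon(t,z)=z$ at $|z|=1$, which appears to be a typo for $|z|=3$ (at $|z|=1$ we have $f_\upsilon(t,z)=(1-t)z$), whereas you correctly work with $3\sphere^1 = \partial(3\disk)$, and your explicit monotonicity computation for the radial profile on $[2,3]$ and your remark on orientation fill in details the paper leaves implicit rather than departing from its argument.
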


\begin{proof}
The restriction of $\psi_\upsilon(t)$ to $D_\upsilon$ is a homeomorphism of $D_\upsilon$ for $t<1$  
by Lemma \ref{lemmaInterp} 
since $f(t)$ is a homeomorphism of $3\disk$. 
Also, $f_\upsilon(t,z)=z$ in the case where $|z|=1$, 
so on the boundary of $D_\upsilon$ we have $\psi_\upsilon(t) = \zeta\zeta^{-1}=\id$ 
so $\psi_\upsilon(t)$ is a homeomorphism by the gluing lemma. 
%Also, $\lambda$ is nullity preserving by Lemma \ref{lemmaInterpL}, and $f(t)$ is nullity preserving for $t<1$, 
%so $\psi_\upsilon(t) \in \hom_\mr{N}^+$ for $t<1$.  
\end{proof}

\begin{claim}
\label{claimEbbSVertexContinuous}
$\psi_\sigma$ varies continuously in the sup metric as $(\Omega,Z,A,t)$ vary 
\end{claim}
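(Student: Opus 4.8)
The plan is to prove Claim~\ref{claimEbbSVertexContinuous} by tracing continuity through each object in Definition~\ref{defEbbStepVertex}, much as in the proof of Claim~\ref{claimEbbSEdgeContinuous}. Consider a convergent sequence of valid inputs $(\Omega_m,Z_m,A_m,t_m)\to(\Omega_\infty,Z_\infty,A_\infty,t_\infty)$; we want $\psi_\upsilon(\Omega_m,Z_m,A_m,t_m)\to\psi_\upsilon(\Omega_\infty,Z_\infty,A_\infty,t_\infty)$ in the sup metric. Since $\psi_\upsilon(t,p)=p$ off $D_\upsilon$ and equals $\zeta f(t)\zeta^{-1}$ on $D_\upsilon$, and since these definitions agree on $\partial D_\upsilon$ (where $f$ fixes the unit circle, as in Claim~\ref{claimEbbSVertexHom}), by the gluing part of Lemma~\ref{lemmaCPDistContinuity} it suffices to show: $D_\upsilon$ varies continuously in Hausdorff distance (it is a union of zones $Z(\Sigma)$ and hence varies continuously by the definition of the metric on zonings and Lemma~\ref{lemmaMaxcovBorder}); $\zeta=\interp(\lambda)$ varies continuously in the partial map metric; $\zeta^{-1}$ varies continuously (by the Inversion part of Lemma~\ref{lemmaCPDistContinuity}); $f_\upsilon(t,z)$ varies continuously in the sup metric as a function of $t$ and $z$ (immediate from its explicit piecewise formula, which is jointly continuous in $(t,z)$, so by Lemma~\ref{lemmaSupDistPartialApp} it varies continuously in the sup metric as $t$ varies); and then the composition $\zeta f(t)\zeta^{-1}$ varies continuously by the Composition part of Lemma~\ref{lemmaCPDistContinuity}, noting that $f(t)(3\disk)=3\disk\subseteq\pre(\zeta)$ and $\zeta^{-1}$ maps into $3\disk\subseteq\pre(f(t))$ so the composability hypotheses hold.

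The substantive step is continuity of $\zeta=\interp(\lambda)$, which by Lemma~\ref{lemmaInterp} part~\ref{itemInterpLContinuous} follows once we know the two cell decompositions $\mc{C}_K$ and $\mc{C}_\upsilon$ vary continuously in Fréchet distance on $1$-cells and Hausdorff distance on $2$-cells, and that the combinatorial type of $\lambda$ (equivalently, the combinatorics of $\mc{C}_\upsilon$) is locally constant. The decomposition $\mc{C}_K$ of $3\disk$ is fixed once the integer $K$ (the number of parallel classes in $\upsilon^0$) and the cyclic order from $\mc{N}=\rest(\upsilon^0,\mc{M})$ are fixed, and which diameters $L_{k,0}$ subdivide the unit disk is determined by whether $i_k\in N_1=\supp(\Omega)$ — all of this is constant along the sequence since $\mc{M}$, $\upsilon$, $(\lesssim)$ and $\supp(\Omega)$ are fixed. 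For $\mc{C}_\upsilon$: its $1$-cells are arcs of $\partial Z(\upsilon)$, of $\Omega$, of $S_{i_1}(A)$, of the $\partial Z(\{\upsilon,\sigma\})$, of the $S_{i_\sigma}(A)\cap Z(\{\upsilon,\sigma\})$, and the boundary $\partial C_\upsilon$; the zone boundaries and the paths $E_{\upsilon,\sigma}=\partial H_{\upsilon,\sigma}\cap D_\sigma^\circ$ vary continuously in Fréchet distance by Claim~\ref{claimEbbSEdgeContinuous} (the $H_{\upsilon,\sigma}$ are built from $\xi_{\upsilon,\sigma}$, which varies continuously there) and the definition of the metric on zonings; the arcs of pseudocircles $S_i(A)$ vary continuously in Fréchet distance since their endpoints on the relevant zone boundaries vary continuously by Lemma~\ref{lemmaConvergentCrossingPoint} and the pseudocircles themselves vary continuously; the hyperbolic-geodesic arcs $E_{\upsilon,\tau}$ vary continuously in Fréchet distance by Claim~\ref{claimParamPathG} (or Lemma~\ref{lemmaPathParam}) since their endpoints do; hence $\partial C_\upsilon$, a concatenation of such arcs, varies continuously by Lemma~\ref{lemmaConvergentPathSequence}, and $C_\upsilon$ varies continuously in Hausdorff distance by Lemma~\ref{lemmaNestedDisks} or directly. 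Each $2$-cell of $\mc{C}_\upsilon$ is a region cut out by these $1$-cells whose combinatorial adjacency pattern is pinned down by the (fixed) combinatorics of $\mc{M}$, $\upsilon$, and $\Omega$, so the $2$-cells vary continuously in Hausdorff distance.

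The main obstacle I anticipate is the bookkeeping needed to verify that the combinatorial type of $\mc{C}_\upsilon$ — and hence of $\lambda$ — really is constant along the sequence despite the possible degeneracies allowed when $Z_m$ is degenerate or $A_m$ lies above $\mc{M}$: one has to check that no $1$-cell of $\mc{C}_\upsilon$ collapses to a point in a way that changes the isomorphism type of $\lambda$, using that $Z$ accommodates $A$ (so the zones and the relevant pseudocircle arcs remain nondegenerate in the required sense) and that $S_i(A)\cap Z(\{\upsilon,\sigma\})=S_i(\Omega)\cap Z(\{\upsilon,\sigma\})$ for $i$ in the ground set of $\Omega$, as recorded in Definition~\ref{defEbbStepVertex}. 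Since it suffices to prove convergence after passing to a subsequence on which all such discrete data are fixed (there are only finitely many possibilities), this reduces to the continuous-variation statements above; I would phrase the argument that way to avoid case analysis. With the combinatorial type fixed, Lemma~\ref{lemmaInterp} part~\ref{itemInterpLContinuous} applies and gives $\zeta_m\to\zeta_\infty$ in the partial map metric, completing the proof as outlined.
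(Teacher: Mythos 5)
Your argument is correct and follows essentially the same route as the paper's: show that the cell decomposition $\mc{C}_\upsilon$ varies continuously (one-cells in Fréchet distance via Lemma~\ref{lemmaConvergentCrossingPoint}, Claim~\ref{claimEbbSEdgeContinuous}, and Lemma~\ref{lemmaConvergentPathSequence}, two-cells in Hausdorff distance), that $\mc{C}_K$ is fixed, then apply Lemma~\ref{lemmaInterp} to get continuity of $\zeta$ and compose with the explicitly continuous $f_\upsilon$. The one small mis-citation is for the hyperbolic geodesic arcs $E_{\upsilon,\tau}$: Claim~\ref{claimParamPathG} and Lemma~\ref{lemmaPathParam} concern geodesics from a point to itself in the complement of a path, whereas $E_{\upsilon,\tau}$ is a geodesic between two distinct points through the varying Jordan domain $Z(\{\upsilon,\tau\})$, so its continuity really comes from Radó's theorem (the conformal parameterization of $Z(\{\upsilon,\tau\})$ varies continuously) together with Lemma~\ref{lemmaPMFrechetDistance} — which is what the paper cites.
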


\begin{proof}
The boarders between zones of $Z$ vary continuously as $Z$ varies by the definition of the metric on zonings, 
and $S_i(A)$ and $S_i(\Omega)$ vary continuously in Fréchet distance by the definition of the metric on arrangements, 
and so the vertices where edge of $Z$ meet pseudocircles of $A$ or $\Omega$ 
vary continuously by Lemma \ref{lemmaConvergentCrossingPoint}.
Also, $\xi_{\upsilon,\sigma}^{-1}$ varies continuously in the partial map metric by Claim \ref{claimEbbSEdgeContinuous}, 
so $E_{\upsilon,\sigma}$ varies continuously in Fréchet distance by Lemma \ref{lemmaPMFrechetDistance}, 
and in particular the endpoints of $E_{\upsilon,\sigma}$ vary continuously, 
so the hyperbolic geodesic $E_{\upsilon,\tau}$ varies continuously by Radó's theorem (\ref{theoremRado}). 
Therefore, the edges of $\partial C_\upsilon$ vary continuously in Fréchet distance by Lemma \ref{lemmaConvergentPathSequence} 
since the vertices vary continuously and the edges consist of arcs along paths that vary continuously, 
so the 2-cells vary continuously by Radó's theorem, and $\mc{C}_K$ is fixed, 
so $\zeta = \interp(\lambda)$ varies continuously in the sup metric by Lemma \ref{lemmaInterp}. 
Also, $f$ varies continuously is the sup metric, so $\psi_\upsilon$ varies continuously in the sup metric. 
%by Lemmas \ref{lemmaCPDistContinuity} and \ref{lemmaCPDistSup}. 
\end{proof}

%\bigskip
%\hrule
%\bigskip

\begin{claim}
\label{claimEbbSVertexNoEscape}
$\psi_\sigma$ does not move points out of $D_\upsilon$ or out of $C_\upsilon$. 
\end{claim}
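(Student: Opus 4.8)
The plan is to verify the two containment claims separately, since both follow directly from the explicit construction of $\psi_\upsilon$ via the conjugated radial contraction $f_\upsilon$. First I would observe that outside $D_\upsilon$ the map $\psi_\upsilon(t)$ is the identity by definition, so no point of $D_\upsilon$ can be sent outside $D_\upsilon$ provided that $\zeta f_\upsilon(t)\zeta^{-1}$ maps $D_\upsilon$ into itself. This reduces to showing $f_\upsilon(t)$ maps $3\disk$ into $3\disk$, which is immediate: for $|z|\leq 2$ we have $|f_\upsilon(t,z)| = (1-t)|z| \leq |z| \leq 2 < 3$, and for $2\leq |z|\leq 3$ we have $|f_\upsilon(t,z)| = (1-t(3-|z|))|z|$, where the factor $1-t(3-|z|)$ lies in $[1-t,1]\subseteq[0,1]$ since $3-|z|\in[0,1]$ and $t\in[0,1]$, so $|f_\upsilon(t,z)|\leq |z|\leq 3$. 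Hence $\zeta(3\disk) = D_\upsilon$ is preserved, so $\psi_\upsilon(t)(D_\upsilon) \subseteq D_\upsilon$, and together with $\psi_\upsilon(t)$ being the identity off $D_\upsilon$, no point escapes $D_\upsilon$.

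For the statement about $C_\upsilon$, I would use the interpolation structure: $C_\upsilon = \zeta(2\disk)$ since $2\disk$ is the union of the cells of $\mc{C}_K$ interior to the circle of radius $2$, and $\lambda$ sends these to the cells of $\mc{C}_\upsilon$ comprising $C_\upsilon$, so by Lemma \ref{lemmaInterp} part \ref{itemInterpLFace}, $\zeta(2\disk) = C_\upsilon$. Then for $p \in C_\upsilon$, write $p = \zeta(z)$ with $|z|\leq 2$, so $\psi_\upsilon(t,p) = \zeta(f_\upsilon(t,z)) = \zeta((1-t)z)$, and $|(1-t)z| \leq |z| \leq 2$, so $\psi_\upsilon(t,p) \in \zeta(2\disk) = C_\upsilon$. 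For $p \not\in C_\upsilon$, if $p \not\in D_\upsilon$ then $\psi_\upsilon(t,p)=p \not\in C_\upsilon$; if $p \in D_\upsilon\setminus C_\upsilon$ then $p = \zeta(z)$ with $|z| > 2$, and $|f_\upsilon(t,z)| = (1-t(3-|z|))|z| \geq |z|(1-t) $; more to the point, I would check that $|f_\upsilon(t,z)| \geq 2$ when $|z|\geq 2$ and $t<1$, so $f_\upsilon(t,z)$ stays in the closed annulus $2\leq|w|\leq 3$, hence $\psi_\upsilon(t,p) = \zeta(f_\upsilon(t,z))\not\in C_\upsilon^\circ$. The boundary case $|z|=2$ keeps $\psi_\upsilon(t,p)$ on $\partial C_\upsilon$. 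Combining, $\psi_\upsilon(t)$ maps $C_\upsilon$ into $C_\upsilon$ and its complement into the complement.

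I do not anticipate a real obstacle here — this is a routine consequence of the radial contraction formula and the face-respecting property of $\interp$. The one point requiring a little care is the annulus claim: that $|f_\upsilon(t,z)| \geq 2$ whenever $|z|\geq 2$, which holds because $1 - t(3-|z|) \geq 1-t \geq 0$ is not quite enough; rather, $|f_\upsilon(t,z)| = (1-t(3-|z|))|z|$, and at $|z|=2$ this equals $(1-t)\cdot 2$, which can be less than $2$. So in fact points in the outer annulus can be pulled inward past radius $2$ — meaning $\psi_\upsilon$ can move points of $D_\upsilon\setminus C_\upsilon$ into $C_\upsilon$. Rereading the claim, it only asserts $\psi_\upsilon$ does not move points \emph{out of} $C_\upsilon$, i.e. $\psi_\upsilon(t)(C_\upsilon)\subseteq C_\upsilon$, not that it preserves the complement; so the western-column-style tapering argument is unneeded, and the proof is exactly the first half of the second paragraph: $p\in C_\upsilon$ gives $p=\zeta(z)$, $|z|\leq 2$, and $|f_\upsilon(t,z)|\leq|z|\leq2$, so $\psi_\upsilon(t,p)\in\zeta(2\disk)=C_\upsilon$; similarly for $D_\upsilon$ using $f_\upsilon(t)(3\disk)\subseteq 3\disk$. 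The short proof:

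\begin{proof}
We may assume $Z$ is not degenerate, as otherwise $\psi_\upsilon = \id$.
Outside $D_\upsilon$ the map $\psi_\upsilon(t)$ is the identity, so it suffices to check that $\psi_\upsilon(t)$ maps $D_\upsilon$ into $D_\upsilon$ and $C_\upsilon$ into $C_\upsilon$.
Since $\psi_\upsilon(t) = \zeta f_\upsilon(t)\zeta^{-1}$ on $D_\upsilon$ and $\zeta : 3\disk \to D_\upsilon$ is a homeomorphism with $\zeta(2\disk) = C_\upsilon$ by Lemma \ref{lemmaInterp} part \ref{itemInterpLFace},
it is enough to show $f_\upsilon(t)$ maps $3\disk$ into $3\disk$ and $2\disk$ into $2\disk$.
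For $|z|\leq 2$ we have $f_\upsilon(t,z) = (1-t)z$, so $|f_\upsilon(t,z)| = (1-t)|z| \leq |z|$, which is at most $2$, and hence also at most $3$.
For $2\leq|z|\leq 3$ we have $f_\upsilon(t,z) = (1-t(3-|z|))z$ with $3-|z|\in[0,1]$ and $t\in[0,1]$, so $1-t(3-|z|)\in[0,1]$, giving $|f_\upsilon(t,z)|\leq|z|\leq 3$.
Thus $f_\upsilon(t)(3\disk)\subseteq 3\disk$ and $f_\upsilon(t)(2\disk)\subseteq 2\disk$, so $\psi_\upsilon(t)(D_\upsilon)\subseteq D_\upsilon$ and $\psi_\upsilon(t)(C_\upsilon)\subseteq C_\upsilon$.
\end{proof}
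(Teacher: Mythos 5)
Your proof addresses the wrong map. The claim is stated for $\psi_\sigma$, where $\sigma$ is an \emph{edge} covector — not for $\psi_\upsilon$, the vertex-covector deformation. You read the subscript as $\upsilon$ and proved the (nearly tautological) fact that $\psi_\upsilon = \zeta f_\upsilon(t) \zeta^{-1}$ preserves $D_\upsilon = \zeta(3\disk)$ and $C_\upsilon = \zeta(2\disk)$, which follows instantly from $f_\upsilon(t)$ being a radial contraction. But that is not what the claim asserts, and it is not what the claim is used for.

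The actual content is that the edge-covector deformations $\psi_\sigma$, which are defined on the regions $D_\sigma$ containing the side columns $C_{\upsilon,\sigma}$, cannot push points out of a neighboring vertex-covector region $D_\upsilon$ or out of the central cell $C_\upsilon$ that the subsequent map $\psi_\upsilon$ contracts. This is genuinely something to check, because $D_\upsilon$ and $C_\upsilon$ overlap $D_\sigma$ without being faces of it. The paper's argument: if $\upsilon \not< \sigma$ then $D_\upsilon \cap D_\sigma = \emptyset$ and $\psi_\sigma$ is the identity on $D_\upsilon$; if $\upsilon < \sigma$ then $D_\upsilon \cap D_\sigma$ is the side column $C_{\upsilon,\sigma}$, and the side-column construction (Definition \ref{defWEEdgeMap}) preserves columns. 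For $C_\upsilon$, one uses that $C_\upsilon \cap D_\sigma = H_{\upsilon,\sigma}$ is a coordinate rectangle $[0,\nicefrac12]\times[-\tfrac{3h}{2},\tfrac{3h}{2}]$ in $\xi_{\upsilon,\sigma}$-coordinates, centered on the $x$-axis, and $g_{\upsilon,\sigma}(t)$ fixes the first coordinate while shrinking the second toward $0$, so this rectangle is mapped into itself. None of this appears in your proof, which only works with $f_\upsilon$, $\zeta$, and $3\disk$, objects from the vertex-covector construction. The claim's later uses (e.g., $\psi_{\sigma_k}(1,C_\upsilon)\subseteq C_\upsilon$ and $\psi_t(C_\upsilon)\subseteq\psi_\upsilon(t,C_\upsilon)$ in the proof of Lemma \ref{lemmaGebbStep}) confirm it is invoked precisely to control the edge-covector factors.

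Also, be careful with the aside in your first paragraph asserting $\psi_\upsilon$ might \emph{not} preserve the complement of $C_\upsilon$: while you correctly note the claim wouldn't require that anyway, the computation you cite ($|f_\upsilon(t,2)| = 2(1-t) < 2$) only shows points on the circle of radius $2$ are pulled inward — consistent with $f_\upsilon(t)$ being radial and monotone in radius for $t<1$, hence still a homeomorphism; it does not create any issue, but it's a distraction from the main misreading.
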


\begin{proof}
If $\upsilon < \sigma$, then $D_\upsilon \cap D_\sigma$ is a side column of $D_\sigma$, and $\psi_\sigma$ does not move points out of the columns. 
Otherwise, $D_\upsilon \cap D_\sigma = \emptyset$, so $\psi_\sigma$ is the identity on $D_\upsilon$. 
Thus, $\psi_\sigma$ does not move points out of $D_\upsilon$. 

If $\upsilon < \sigma$, then $\xi_{\upsilon,\sigma}(C_\upsilon \cap D_\sigma) = H_{\upsilon,\sigma}$, 
which is a coordinate rectangle centered on the x-axis, and $g_{\upsilon,\sigma}(t)$ shrinks the vertical component, 
so $g_{\upsilon,\sigma}(t,H_{\upsilon,\sigma})$, so $\psi_\sigma$ does not move points out of $C_\upsilon \cap D_\sigma$ 
and is the identity on the rest of $C_\upsilon$. Otherwise, $\psi_\sigma$ is the identity on $C_\upsilon$.
Thus, $\psi_\sigma$ does not move points out of $C_\upsilon$. 
\end{proof}

\subsubsection{The composition}

\begin{definition}[Zone ebb recursive step]
\label{defEbbStep}
Let 
\[
\psi_t  
= \step(\lesssim,\Omega,Z,A,t) 
= \prod_{\upsilon \in \Sigma_0} \psi_{\upsilon}(t) \circ \prod_{\sigma \in \Sigma_1} \psi_{\sigma}(t)
\]
where 
$\Sigma_0,\Sigma_1$ are the respective sets of vertex and edge covectors of $\csph(\mc{M})$.  
\end{definition}

\begin{claim}
\label{claimGebbStepDegenerate}
$\step(\lesssim,\Omega,Z,A,t) \to \id$ in the sup metric as $Z \to A$ and $\Omega,A,t$ converge. 
\end{claim}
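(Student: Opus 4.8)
The plan is to trace through the three families of component deformations that make up $\step$ and show that each degenerates to the identity as $Z\to A$. The key structural fact is Definition~\ref{defEbbStep}: $\step$ is a finite composition $\prod_{\upsilon}\psi_\upsilon(t)\circ\prod_\sigma\psi_\sigma(t)$ indexed by the vertex and edge covectors of $\csph(\mc{M})$, a fixed finite set. Since composition is continuous in the sup metric (Lemma~\ref{lemmaCPDistContinuity}), it suffices to show that each factor $\psi_\sigma(t)$ and $\psi_\upsilon(t)$ converges to the identity as $Z\to A$ (with $\Omega,A,t$ converging). First I would recall that the degenerate zoning defined by $A$ has $Z(\Sigma)=\bigcap\{\cl(\maxcov^{-1}(\mc{M},A;\sigma)):\sigma\in\Sigma\}$, which by Lemma~\ref{lemmaMaxcovBorder} part~\ref{itemMaxcovBorderPath} is a $({\leq}1)$-cell; in particular every zone of the limiting degenerate zoning has empty interior. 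So as $Z\to A$, the domains $D_\sigma$ and $D_\upsilon$ all shrink (in Hausdorff distance) onto lower-dimensional sets, and a homeomorphism supported on a set shrinking to a measure-zero set, acting trivially on its boundary, must converge uniformly to the identity.

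For the edge-covector factors $\psi_\sigma(t)$: by Definition~\ref{defCentralEdgeMap} and Definition~\ref{defWEEdgeMap}, $\psi_\sigma(t)$ is supported on $D_\sigma$, acts trivially on $\partial D_\sigma$ (Claim~\ref{claimEbbSEdgeHom}), and keeps $D_\sigma$ invariant. The parameter $h$ controlling the vertical scaling is the distance between $Z$ and $A$ regarded as a degenerate zoning, so $h\to 0$ as $Z\to A$. Since $\xi_\sigma=\mox(C_\sigma,B,P,h)$ maps $C_\sigma$ into $[0,1]\times[-2h,2h]$ by Lemma~\ref{lemmaMox} parts~\ref{itemMoxSides},~\ref{itemMoxH}, every point of $C_\sigma$ is moved by $f_\sigma(t,\cdot)$ at most $4h$ in the $\xi_\sigma$-coordinates, hence $\psi_\sigma(t)$ moves every point of $C_\sigma$ within a set of diameter $O(h)$ plus the diameter of the preimage fibers; and since $C_\sigma\subseteq D_\sigma$ itself has diameter $\to 0$ as $Z\to A$, we get $\sup_p\|\psi_\sigma(t,p)-p\|\to 0$. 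The side columns are handled the same way, using that $g_{\upsilon,\sigma}$ tapers $f$ to the identity. I would also invoke Claim~\ref{claimEbbSEdgeContinuous} for the fact that $\xi_\sigma,\xi_{\upsilon,\sigma}$ and their inverses vary continuously, so the only thing driving the collapse is the vanishing of the domain and of $h$.

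For the vertex-covector factors $\psi_\upsilon(t)$: by Definition~\ref{defEbbStepVertex}, $\psi_\upsilon(t)=\zeta f_\upsilon(t)\zeta^{-1}$ on $D_\upsilon$ and the identity off $D_\upsilon$, where $\zeta=\interp(\lambda):3\disk\to D_\upsilon$ and $f_\upsilon(t,z)=z$ whenever $|z|=1$, so $\psi_\upsilon(t)$ acts trivially on $\partial D_\upsilon$ and keeps $D_\upsilon$ invariant (Claims~\ref{claimEbbSVertexHom},~\ref{claimEbbSVertexNoEscape}). As $Z\to A$, the region $D_\upsilon$ shrinks onto the degenerate zone $\bigcap\{\cl(\maxcov^{-1}(\upsilon))\} $, which is a single point by Lemma~\ref{lemmaFlagPoint} (and the adjacent zones collapse by Lemma~\ref{lemmaMaxcovBorder}), so $\diam(D_\upsilon)\to 0$; since $\psi_\upsilon(t)$ is supported on $D_\upsilon$ it converges uniformly to the identity regardless of what $f_\upsilon(t)$ and $\zeta$ do. I would note $\zeta$ varies continuously by Claim~\ref{claimEbbSVertexContinuous}, but again this is not even needed — confinement to a shrinking support is enough. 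Assembling: each factor $\to\id$ uniformly, there are finitely many factors, and composition is sup-metric continuous, so $\step(\lesssim,\Omega,Z,A,t)\to\id$; if $Z$ becomes degenerate exactly (the case $Z=A\in\psv$) then by the convention at the start of the subsection $\psi_t=\id$ outright, which is consistent with the limit.

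The main obstacle I anticipate is the bookkeeping to rigorously show $\diam(D_\sigma)\to 0$ and $\diam(D_\upsilon)\to 0$: $D_\sigma=\bigcup_{\Sigma\ni\sigma}Z(\Sigma)$ is a union of finitely many zones, each of which converges in the zoning metric (maximum Fréchet distance of corresponding edges) to a $({\leq}1)$-cell of the degenerate limit zoning; one must argue that convergence of the \emph{boundary edges} in Fréchet distance forces the \emph{2-cells} to converge in Hausdorff distance to the limiting $({\leq}1)$-cell, and hence to have diameter tending to that of a cell with empty interior. This is the same kind of ``boundary controls interior'' argument used in Lemma~\ref{lemmaMaxcovContinuous} (winding-number / Jordan-curve reasoning), and I would cite or mimic that. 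A minor secondary point is that $h$, defined as the distance between $Z$ and the degenerate zoning $A$, genuinely tends to $0$ — this is essentially the statement that the zoning metric restricted to comparisons with degenerate zonings detects convergence, which follows from the definition of $\dist$ on $\mc{M}$-zonings together with Lemma~\ref{lemmaMaxcovBorder} part~\ref{itemMaxcovBorderPath} guaranteeing the relevant objects are $({\leq}1)$-cells so Fréchet distance is well-defined.
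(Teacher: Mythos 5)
Your approach differs from the paper's in a structural way: the paper restricts $\psi_t$ to each zone $Z(\Sigma)$, shows the restriction converges to the identity on that zone, and glues via Lemma~\ref{lemmaCPDistContinuity}; you instead argue factor-by-factor in the composition from Definition~\ref{defEbbStep} and invoke continuity of composition in the sup metric. Both decompositions are reasonable, and your argument for the vertex-covector factors is essentially correct (and essentially the paper's): $D_\upsilon$ does converge to the single point $\cell(A,\upsilon)$, and $\psi_\upsilon$ is supported on $D_\upsilon$.

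However, your argument for the edge-covector factors has a genuine gap. You assert that ``$C_\sigma\subseteq D_\sigma$ itself has diameter $\to 0$ as $Z\to A$,'' but this is false: in the fully degenerate limit $Z(\sigma)=\cl(\maxcov^{-1}(\mc{M},A;\sigma))=\cl(\cell(A,\sigma))$, which is the closed arc of a pseudocircle between two crossing points --- a 1-cell of generally \emph{positive} diameter --- and $C_\sigma$ converges to exactly this 1-cell in Hausdorff distance, not to a point. The adjacent zones $Z(\{\sigma,\tau_j\})$ collapse onto that same arc, not onto a point. Relatedly, your opening heuristic that ``a homeomorphism supported on a set shrinking to a measure-zero set, acting trivially on its boundary, must converge uniformly to the identity'' is not a valid principle; what you want is small \emph{diameter} of the support, not small measure or codimension, and the support $D_\sigma$ does not have small diameter. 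The argument that does work (and that the paper uses) is that the $y$-range $[-2h,2h]$ shrinks since $h\to 0$, while $\xi_\sigma = \mox(C_\sigma,B,P,h)$ converges in the partial-map metric to a parameterization of the limiting 1-cell by Lemma~\ref{lemmaMox} (and Claim~\ref{claimEbbSEdgeContinuous}), so the conjugation $\psi_\sigma=\xi_\sigma^{-1}\circ(\id\times f_\sigma(t))\circ\xi_\sigma$ converges to the identity map on that 1-cell by Lemma~\ref{lemmaCPDistContinuous}; equivalently (and closer to your phrasing), the individual $x$-coordinate \emph{fibers} $\xi_\sigma^{-1}(\{x\}\times[-2h,2h])$ shrink uniformly to points since $B_\mr{N}$ and $B_\mr{S}$ converge to the same path, and $\psi_\sigma$ preserves each fiber. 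Either of these would patch your edge-covector step; the claim $\diam(C_\sigma)\to 0$ as written would not.
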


\begin{proof}
It suffices to show that the restriction of $\psi_t = \sep(\lesssim,\Omega,Z,A,t)$ to $Z(\Sigma)$ for each chain $\Sigma$ converges to the identity map on $Z(\Sigma)$ in the partial map topology by Lemma \ref{lemmaCPDistContinuity} (gluing). 
If $\Sigma = \{\tau\}$ consists of a facet covector, 
then $Z(\Sigma)$ only intersects regions $D_\sigma$ along the boundary for a vertex of edge covector $\sigma$, 
so the restriction of $\psi_t$ to $Z(\Sigma)$ is the identity. 
If $\Sigma \ni \upsilon$ contains a vertex covector, then $Z(\Sigma)$ converges to a point, and the map $\psi_t$ converges to the identity map on point.  
Consider an edge covector $\sigma$. 
Then, $h \to 0$ as $Z \to A$, 
so $f_\sigma$ converges to the identity on the interval $[0,1]$ in the partial map topology, 
and $\xi_\sigma$ converges to a parameterization of the path $P_i = S_i(A)\cap D_\sigma$ from $[0,1]$ 
where $i$ is the greedy element of $\sigma^0$ 
by Lemma \ref{lemmaMox}, 
so $\psi_\sigma$ converges to the identity map on the path $P_i$ 
by Lemma \ref{lemmaCPDistContinuity}, 
and likewise for $Z(\{\sigma,\tau\})$. 
\end{proof}

\begin{claim}
\label{claimGebbStepWider}
%\label{claimGebbStepGuard}
$\psi_t(\wider(Z,i,0)) \subseteq \wider(Z,i,0)$.
\end{claim}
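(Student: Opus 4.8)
The plan is to show that each individual factor $\psi_\sigma(t)$ and $\psi_\upsilon(t)$ in the composition defining $\psi_t$ maps $\wider(Z,i,0)$ into itself, and then conclude the same for the composition. Since $\wider(Z,i,0)$ is the union of all zones $Z(\Sigma)$ with $i\in\bigcup\Sigma^0$, it suffices to track how each factor moves such zones, and to observe that every factor $\psi_\mathrm{x}$ is supported on its region $D_\mathrm{x}$ and acts trivially on $\sphere^2\setminus D_\mathrm{x}$.

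First I would dispose of the easy containments. For a factor $\psi_\mathrm{x}$ with $D_\mathrm{x}$ disjoint from $\wider(Z,i,0)$, there is nothing to check. For a factor $\psi_\mathrm{x}$ whose region meets $\wider(Z,i,0)$, I would argue that $D_\mathrm{x}\cap\wider(Z,i,0)$ is itself a union of zones $Z(\Sigma)$ with $i\in\bigcup\Sigma^0$ that is invariant under $\psi_\mathrm{x}$. The key structural point is that, in the zoning, if $i\in\bigcup\Sigma^0$ for a zone $Z(\Sigma)$ adjacent to (or contained in) $D_\mathrm{x}$, then the covectors of nearby zones relevant to $D_\mathrm{x}$ differ only by comparisons that keep $i$ in the zero set or push it to a definite sign only on the ``outer'' side. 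Concretely, for an edge covector $\sigma$ with $D_\sigma$ subdivided into the central and side columns of Definitions~\ref{defCentralEdgeMap} and~\ref{defWEEdgeMap}: if $i\in\sigma^0$, then every zone of $D_\sigma$ has $i$ in the zero set of at least one of its covectors (the edge covector $\sigma$ itself), so $D_\sigma\subseteq\wider(Z,i,0)$, and since $\psi_\sigma(t)$ keeps points of $D_\sigma$ in $D_\sigma$ (shown in Claims~\ref{claimEbbSEdgeHom} and~\ref{claimEbbSVertexNoEscape}), the containment is immediate. If instead $i\notin\sigma^0$ but $i\in\sigma'^0$ for some other covector $\sigma'$ with $Z(\Sigma')\subseteq D_\sigma\cap\wider(Z,i,0)$, then $\sigma'$ must be a vertex covector $\upsilon<\sigma$ with $i\in\upsilon^0$, and the only zones of $D_\sigma$ with this property lie in the side column $C_{\upsilon,\sigma}$; there $\psi_\sigma(t)$ is built to taper to the identity toward $B_\mathrm{W}$ (resp.\ $B_\mathrm{E}$) and in particular preserves the vertical columns $\xi^{-1}(\{x\}\times\mathbb{R})$, hence preserves the decomposition into these zones set-theoretically at the level of $\wider(Z,i,0)$. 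A parallel analysis applies to vertex-covector factors $\psi_\upsilon$: $\psi_\upsilon$ is supported on $D_\upsilon$, and if $i\in\upsilon^0$ then $D_\upsilon\subseteq\wider(Z,i,0)$ and $\psi_\upsilon(t)$ keeps $D_\upsilon$ in $D_\upsilon$ by Claim~\ref{claimEbbSVertexNoEscape}; if $i\notin\upsilon^0$ then $D_\upsilon$ is disjoint from any zone of $\wider(Z,i,0)$ meeting it in more than a boundary, so again nothing moves out.

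Having the per-factor statement $\psi_\mathrm{x}(t)(\wider(Z,i,0))\subseteq\wider(Z,i,0)$, I would finish by noting that $\psi_t$ is a finite composition of such factors and that set-containment is preserved under composition, so $\psi_t(\wider(Z,i,0))\subseteq\wider(Z,i,0)$. The main obstacle I anticipate is the bookkeeping in the side-column case for an edge covector $\sigma$: one must check carefully that the region $\psi_\sigma(t)$ can move a point of a zone $Z(\Sigma)\subseteq C_{\upsilon,\sigma}$ with $i\in\bigcup\Sigma^0$ into never lands in a zone $Z(\Sigma'')$ with $i\notin\bigcup\Sigma''^0$ — this comes down to the fact that $\psi_\sigma(t)$ preserves each vertical fiber $\xi_\sigma^{-1}(\{x\}\times\mathbb{R})$ and only rescales the vertical coordinate within the band $[-2h,2h]$, so it cannot cross the pseudocircle $S_i$ when $i\in\sigma^0$, nor push a point across $\partial Z(\{\upsilon,\sigma\})$ into the pure facet zone $Z(\tau)$, since those borders are themselves vertical-fiber-preserved by Lemma~\ref{lemmaMox} parts~\ref{itemMoxSides} and~\ref{itemMoxH}. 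Once this invariance of the column structure is in hand, the containment is routine.
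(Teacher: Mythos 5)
Your proposal is correct and takes essentially the same factor-by-factor approach as the paper: for each compositional factor $\psi_\sigma(t)$ or $\psi_\upsilon(t)$, either its support region lies entirely in $\wider(Z,i,0)$ (when $i$ is in the relevant zero set), or only the side column $C_{\upsilon,\sigma}$ can intersect $\wider(Z,i,0)$ and that column is preserved. One minor slip in citation: the fact that $\psi_\upsilon$ keeps $D_\upsilon$ in $D_\upsilon$ follows directly from its construction ($\psi_\upsilon$ is the identity outside $D_\upsilon$ and $f_\upsilon(t)$ maps $3\disk$ into itself), not from Claim~\ref{claimEbbSVertexNoEscape}, which concerns what the edge maps $\psi_\sigma$ do to $D_\upsilon$ and $C_\upsilon$.
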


\begin{proof}
Consider a point $p \in \wider(Z,i,0)$,  
and 
consider an edge covector $\sigma$.  
Then, $\psi_\sigma(t)$ 
only moves $p \in D_\sigma$. 
In the case where $\sigma(i) = 0$, we have $D_\sigma \subset \wider(Z,i,0)$, 
so $\psi_\sigma(t;p) \in \wider(Z,i,0)$. 
Otherwise, there must be some vertex covector $\upsilon<\sigma$ such that $\upsilon(i)=0$, 
so $p \in Z(\{\upsilon,\sigma\})$, 
so $\psi_\sigma(t;p) \in Z(\{\upsilon,\sigma\}) \subset \wider(Z,i,0)$. 
Hence, $\psi_\sigma(t)$ does not move points out of $\wider(Z,i,0)$. 
Also, $\psi_\upsilon(t)$ 
for a vertex covector $\upsilon$
only moves $p \in D_\upsilon$,
in which case $\sigma(i)=0$ for some chain $\Sigma\supset\{\upsilon,\sigma\}$ since $p \in \wider(Z,i,0)$,
and $\sigma \geq \upsilon$ since $\upsilon$ is a vertex covector, 
%in which case $\sigma(i)=0$ for some $\sigma \geq \upsilon$ since $p \in \wider(Z,i,0)$,
so $\upsilon(i)=0$, 
so $D_\upsilon \subset \wider(Z,i,0)$.
Hence, $\psi_\upsilon(t)$ does not move points out of $\wider(Z,i,0)$. 
Thus, $\psi_t$ is a composition of maps that do not move points out of $\wider(Z,i,0)$, 
so $\psi_t$ does not move points out of $\wider(Z,i,0)$.  
\end{proof}

\begin{claim}
\label{claimEbbSFrechet}
Let $i$ be a nonloop of $\upsilon^0$, 
%for a vertex covector $\upsilon$,  
and $i_k$ be the greedy element parallel to $i$. 
%and $\sigma_1,\sigma_2>\upsilon$ be the edge covectors 
%with  $\sigma_1(i)=\sigma_2(i)=0$. 
Then, 
%$[\psi_\upsilon(t)\psi_{\sigma_k}(t)\psi_{\sigma_{k+K}}(t)](S_i\cap D_\upsilon)$
$\psi_t(S_i\cap D_\upsilon)$
is a 1-cell that 
converges to $\zeta^{-1}(L_{k,0})$ in Fréchet distance as $t\to 1$. 
\end{claim}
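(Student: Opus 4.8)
The plan is to track $S_i \cap D_\upsilon$ through each of the maps $\psi_\sigma(t)$ for $\sigma$ an edge covector $\sigma > \upsilon$ and then through $\psi_\upsilon(t)$, using the order in which these maps are composed in Definition \ref{defEbbStep}. First I would note that the maps $\psi_{\sigma'}(t)$ for edge covectors $\sigma' \not> \upsilon$ and vertex covectors $\upsilon' \neq \upsilon$ are the identity on $D_\upsilon$, or at least do not affect the relevant portion: by Claim \ref{claimEbbSVertexNoEscape} and the structure of the $D$-regions, only the $\psi_\sigma$ with $\sigma > \upsilon$ and $\psi_\upsilon$ itself can move points of $D_\upsilon$, and these act first (edge maps) then last (the vertex map). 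So the claim reduces to analyzing the composite $\psi_\upsilon(t) \circ \bigl(\prod_{\sigma > \upsilon}\psi_\sigma(t)\bigr)$ restricted to $S_i \cap D_\upsilon$.

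Next I would handle the edge-covector stage. For each edge covector $\sigma > \upsilon$ with $\sigma(i) = 0$, the arc $S_i \cap Z(\{\upsilon,\sigma\}) \subseteq S_i \cap D_\upsilon$ lies in the side column $C_{\upsilon,\sigma}$ of $D_\sigma$, and by Claim \ref{claimEbbSEdgeIsotopy}, part \ref{itemEbbSEdgeIsotopy}, the restriction of $\psi_\sigma(t)$ to $S_i \cap H_\sigma \cap C_{\upsilon,\sigma}$ is an isotopy toward $S_{i_\sigma} \cap H_\sigma \cap C_{\upsilon,\sigma}$ (where $i_\sigma$ is the greedy element of $\sigma$, which is $i_k$ when $i_k \in \sigma^0$). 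Outside $H_\sigma$ but still in $D_\sigma$, the map $g_{\upsilon,\sigma}$ tapers to the identity on $\partial D_\upsilon$. I would assemble from these that after applying all the edge maps, $S_i \cap D_\upsilon$ becomes a 1-cell whose image in each zone $Z(\{\upsilon,\sigma\})$ agrees with the corresponding arc of $\partial C_\upsilon$ (an arc of $S_{i_k}$ or of $E_{\upsilon,\sigma}$) — this is exactly the arc that $\lambda$, and hence $\zeta = \interp(\lambda)$, sends to a segment of the diameter $L_{k,0}$ of $\mathbf{3D}$, by the construction of $\mc{C}_\upsilon$ and $\mc{C}_K$ in Definition \ref{defEbbStepVertex}. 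That this image is a 1-cell and varies continuously follows from Claims \ref{claimEbbSEdgeContinuous} and \ref{claimEbbSEdgeIsotopy} and Lemma \ref{lemmaConvergentPathSequence} (the image is a finite concatenation of continuously-varying arcs).

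Then I would pass to the vertex stage. Writing $S_i'(t)$ for the image of $S_i \cap D_\upsilon$ after the edge maps, I would show that $\zeta^{-1}(S_i'(t))$ converges in Fréchet distance as $t \to 1$ to the diameter $L_{k,0}$: inside the inner part of $C_\upsilon$ (preimage under $\zeta$ of $\mathbf{2D}$) the map $f_\upsilon(t,z) = (1-t)z$ collapses toward the center, so $\zeta^{-1}\psi_\upsilon(t)\zeta$ restricted to $\zeta^{-1}(S_i'(t))$ is a radial scaling converging to the portion of $L_{k,0}$ in $\mathbf{2D}$; and the portion of $S_i'(t)$ between $\mathbf{2D}$ and $\partial D_\upsilon$ is already an arc along $L_{k,0}$ (or, for $\sigma$ with $\sigma(i)\neq 0$, lives in a column where the edge map has already put it onto $\partial C_\upsilon = \zeta(\partial\mathbf{3D}$-relevant arcs$)$), and $f_\upsilon$ interpolates radially there, fixing $\partial\mathbf{3D}$. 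Combining the two pieces via a continuity/gluing argument (Lemma \ref{lemmaCPDistContinuity}, Lemma \ref{lemmaConvergentPathSequence}, and continuity of $\zeta$ from Claim \ref{claimEbbSVertexContinuous}) gives $\zeta^{-1}(\psi_t(S_i \cap D_\upsilon)) \to L_{k,0}$ in Fréchet distance, hence $\psi_t(S_i \cap D_\upsilon) \to \zeta^{-1}(L_{k,0})$ as stated (reading $\zeta^{-1}(L_{k,0})$ as the image $\zeta(L_{k,0})$, matching the paper's notation usage here).

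The main obstacle I anticipate is bookkeeping at the seams: showing carefully that after the edge maps the curve $S_i \cap D_\upsilon$ has become \emph{exactly} a concatenation of cell-boundary arcs of $\mc{C}_\upsilon$ that $\lambda$ maps to $L_{k,0}$, rather than just approximately so — in particular handling the case $i \notin N_1$ where $i = i_k$ may fail to be the greedy element of some $\sigma > \upsilon$, so that $S_i$ is not literally pinned to $\partial C_\upsilon$ in that column but only carried isotopically toward $S_{i_k}$ by the edge map, and one must invoke the $t \to 1$ limit of $\psi_\sigma$ together with the vertex collapse simultaneously. Getting the two limiting processes ($t\to 1$ in the edge maps and $t \to 1$ in $f_\upsilon$) to cooperate, and confirming the result is genuinely a 1-cell (no pinch points) for all $t < 1$, is where the care is needed; the rest is routine application of the continuity lemmas already established.
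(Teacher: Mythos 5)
Your overall decomposition matches the paper's: you restrict attention to $\psi_\upsilon(t)\circ\psi_{\sigma_k}(t)\circ\psi_{\sigma_{k+K}}(t)$, track $S_i\cap H_{\sigma_k}\cap D_\upsilon$ and its mirror through the edge isotopy of Claim~\ref{claimEbbSEdgeIsotopy} and then through $\psi_\upsilon$, and separately track the piece inside $C_\upsilon$. However, one step in your vertex stage is wrong, and it is exactly where the paper spends its care. You say that on $\zeta^{-1}(S_i'(t))\cap 2\disk$ the radial scaling $(1-t)z$ ``converges to the portion of $L_{k,0}$ in $2\disk$.'' It does not: it converges to the single point $0$, so the part of $S_i\cap D_\upsilon$ inside $C_\upsilon$ contributes only $\zeta(0)$ to the limit. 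The middle segment of $\zeta(L_{k,0})$ is produced instead by the \emph{outer} pieces $X_0 = S_i\cap H_{\sigma_k}\cap D_\upsilon$: since $f_\upsilon(1)$ stretches the annulus $2\leq|z|\leq 3$ radially onto all of $3\disk$, the image of $\zeta^{-1}(X_1)$ (the radial segment from radius $2$ to $3$) fills the whole spoke of $L_{k,0}$. If you try to fill in the details with the attribution you wrote, you will find a point where an arc is expected and be unable to reconstruct $\zeta(L_{k,0})$.

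Two smaller points. The ``two limiting processes'' concern you raise at the end is resolved in the paper by noting $\psi_\upsilon(t)$ is uniformly Lipschitz in $t$, so the composition $\psi_\upsilon(t)\circ\psi_{\sigma_k}(t)$ applied to $X_0$ converges in the Hausdorff sense to $\psi_\upsilon(1,X_1)$ as $t\to 1$; this is the ingredient you anticipated needing but did not supply. Also, your worry about ``no pinch points for all $t<1$'' is misplaced: for $t<1$ the maps $\psi_\sigma(t)$ and $\psi_\upsilon(t)$ are homeomorphisms by Claims~\ref{claimEbbSEdgeHom} and~\ref{claimEbbSVertexHom}, so $\psi_t(S_i\cap D_\upsilon)$ is automatically a $1$-cell; the only delicate case is $t=1$, which the paper handles by observing that the non-injective set of $\psi_\upsilon(1)$ is $C_\upsilon$ and $X_1\cap C_\upsilon$ is a single point, so the restriction to $X_0$ remains an isotopy (not merely a homotopy) to its limit. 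The parenthetical about columns with $\sigma(i)\neq 0$ is also a red herring: accommodation forces $S_i\cap D_\upsilon$ to lie in $\inner(Z,i,0)^\circ$, so $S_i$ never enters those columns at all.
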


\begin{proof}
$\psi_t(S_i\cap D_\upsilon)$ is a 1-cell for $t<1$  
since $\psi_t$ is a homeomorphism for $t<1$, 
so we only have to show this is a 1-cell for $t=1$ and convergence as $t\to 1$. 
Let $X_0 = S_i \cap  H_{\sigma_k} \cap D_\upsilon$ 
and $X_1 = S_{i_k} \cap  H_{\sigma_k} \cap D_\upsilon$, 
and consider the image of $X_0$. 
Then, $\psi_{\widetilde\sigma}(t)$ 
is the identity on $X_0$ unless $\widetilde\sigma\in\{\sigma_k,\upsilon\}$, so 
$\rest(X_0,\psi_t) = \rest(X_0,\psi_\upsilon(t)\circ\psi_{\sigma_k}(t))$ 
and we can disregard the other compositional factors of $\psi_t$. 
The restriction of $\psi_{\sigma_k}$ to $S_i\cap  H_{\sigma_k}$ 
is an isotopy to $S_{i_k}\cap  H_{\sigma_k}$ 
by Claim \ref{claimEbbSEdgeIsotopy}. 
%so 
%the restriction of $\psi_{\sigma_k}$ to $X$ 
%is an isotopy to $Y$. 
%the restriction of $\psi_{\sigma_k}$ to $S_i \cap  H_{\sigma_k} \cap D_\upsilon$ 
%is an isotopy to $S_{i_0}\cap H_{\sigma_k} \cap D_\upsilon$ 
%since $\psi_\sigma$ does not move points out of $D_\upsilon$ by Claim \ref{claimEbbSVertexNoEscape}
%since $X$ and $Y$ are contained in 
Also, $H_{\sigma_k} \subset D_{\sigma_k}$ and $D_{\sigma_k}\cap D_{\upsilon} = C_{\upsilon,{\sigma_k}}$, 
which is a side column of $D_{\sigma_k}$,  
so $X_0 = S_i \cap  H_{\sigma_k} \cap C_{\upsilon,{\sigma_k}}$ 
and $X_1 = S_{i_k} \cap  H_{\sigma_k} \cap C_{\upsilon,{\sigma_k}}$, 
and so the restriction of $\psi_{\sigma_k}$ to $X_0$ 
is an isotopy to $X_1$ 
since $\psi_{\sigma_k}$ preserves columns of $D_{\sigma_k}$.  
Also, $\psi_\upsilon(t)$ is 2-Lipschitz for all $t$, 
so $\psi_t$ restricted to $X_0$ 
is a homotopy to $Y = \psi_\upsilon(1,X_1)$, 
which is the preimage by $\zeta$ of the radial segment along $L_{k,0}$ corresponding to $\sigma_k$. 
Also, $\psi_\upsilon(t)$ is a homeomorphism except on $C_\upsilon$ and $S_{i_k}\cap H_{\sigma_k}\cap C_\upsilon$ 
consists of a single point, 
so $\psi_t$ restricted to $X_0$ 
is an isotopy to $Y$. 
Therefore, $\psi_t(X_0)$ 
is a 1-cell that 
converges to $Y_1$ in Fréchet distance as $t\to 1$.   
Similarly, $\psi_t(S_i\cap D_\upsilon \cap  H_{\sigma_{k+K}})$ 
is a 1-cell that converges to the preimage by $\zeta$ of the other radial segment of $L_{k,0}$, 
and the rest of the path  
$S_i\cap D_\upsilon$  
is contained in $C_\upsilon$, and as such converges to the point $\zeta^{-1}(0)$. 
Thus, $\psi_t(S_i\cap D_\upsilon)$
converges in Fréchet distance to $\zeta^{-1}(L_{k,0})$, 
and $\psi_1(S_i\cap D_\upsilon) = \zeta^{-1}(L_{k,0})$ a 1-cell. 
\end{proof}

\begin{proof}[Proof of Lemma \ref{lemmaGebbStep}]
Let $\sigma$ be an edge covector and $\upsilon$ be a vertex covector of $\mc{M}$.

For $t<1$, the maps $\psi_\sigma(t)$ and $\psi_\upsilon(t)$ 
%for a vertex or edge covector $\sigma\in\csph(\mc{M})$ 
are orientation preserving homeomorphisms by Claims \ref{claimEbbSEdgeHom} and \ref{claimEbbSVertexHom}, 
so their composition $\psi_t$ is an orientation preserving homeomorphisms, 
which means part \ref{itemGebbAccHomeo} holds.

Each $\psi_\sigma$ %for an edge covector $\sigma$ 
varies continuously in the sup metric by Claim \ref{claimEbbSEdgeContinuous}, 
and each $\psi_\upsilon$ %for an vertex covector $\upsilon$ 
varies continuously in the sup metric by Claim \ref{claimEbbSVertexContinuous}, 
so the composition $\psi_t$ varies continuously, which means part \ref{itemGebbAccContinuous} holds. 

By Claim \ref{claimEbbSEdgeZero}, $\psi_\sigma(0) =\id$ and $f_\upsilon(0)=\id$ in Definition \ref{defEbbStepVertex}, 
so $\psi_\upsilon(0)=\id$, so $\psi_0=\id$, which means part \ref{itemGebbAccZero} holds. 

%{\color{orange} 
%part \ref{itemGebbAccOne} will use parts \ref{itemGebbAccFacet} and \ref{itemGebbAccOmega}, 
%so prove it later. 
%}

We will prove part \ref{itemGebbAccOne} after part \ref{itemGebbAccOmega}.

Suppose $Z$ is degenerate. 
Then, $D_\sigma$ is a path, which means that $D_\sigma^\circ = \empty$, 
and $\psi_\sigma$ is the identity everywhere except possibly in $D_\sigma$, 
so $\psi_\sigma = \id$. 
Also, $D_\upsilon$ is a point, so $\psi_\upsilon=\id$, 
so $\psi_t=\id$, which means part \ref{itemGebbAccStrong} holds. 

The maps $\xi_\sigma$ and $\xi_{\upsilon,\sigma}$ in Definitions \ref{defCentralEdgeMap} and \ref{defWEEdgeMap} are equivariant by Lemma \ref{lemmaMox} part \ref{itemMoxEquivariant}, 
and $\zeta_\upsilon$ is equivariant by Lemma \ref{lemmaInterp} part \ref{itemInterpLEquivariant}, 
so $\psi_t$ is equivariant, which means that part \ref{itemGebbAccEquivariant} holds. 

If $\tau$ is a facet covector of $\mc{M}$, then $Z(\tau)$ is disjoint from each region $D_\sigma$ and $D_\upsilon$, so each map $\psi_\sigma(t)$ and $\psi_\upsilon(t)$ is trivial on $Z(\tau)$, so $\psi_t$ is trivial on $Z(\tau)$, which means that part \ref{itemGebbAccFacet} holds.

Consider a chain $\Sigma \subset \csph(\mc{M})$ and $i\in\bigcap\Sigma^0$ that is either a greedy element in $N_1$ or is the greedy choice from $\bigcap \Sigma^0$. 
Let $S_i = S_i(A)$ when $i$ is the greedy choice and $S_i = S_i(\Omega)$ when $i\in N_1$. 
Note that this is consistent since the hypotheses of the lemma require $S_i(A)=S_i(\Omega)$ if the greedy choice is in $N_1$. 
If $\sigma \in \Sigma$, then $\xi_\sigma$ and $\xi_{\upsilon,\sigma}$ send $S_i \cap D_\sigma$ to the x-axis, which is preserved by $f_\sigma(t)$ and $g_{\upsilon,\sigma}(t)$, so $\psi_\sigma$ does not deform $S_i$. 
If $\upsilon \in \Sigma$, then $\zeta_\upsilon^{-1}$ sends $S_i \cap D_\upsilon$ to a diameter of $3\disk$, 
which is preserved by $f_\upsilon(t)$, 
so $\psi_\upsilon$ does not deform $S_i$. 
If $i \in N_1$, then $S_j(\Omega)=S_i$ for $j \in N_1$ parallel to $i$ since $\Omega \in \psv(\mc{N}_1)$, and $S_i$ only intersects $D_{\widetilde \sigma}$ for $i \in \widetilde \sigma^0$
since $Z$ accommodates $\Omega$, and $\psi_{\widetilde \sigma}$ is trivial on the complement of $D_{\widetilde \sigma}$. 
Hence, $\psi_t$ does not deform $\Omega$, which means part \ref{itemGebbAccOmega} holds.
Also, $Z(\Sigma) =\bigcap_{\widetilde\sigma \in\Sigma} D_{\widetilde \sigma}$, 
so $\psi_t$ does not deform $S_i\cap Z(\Sigma)$, which means part \ref{itemGebbStepGreedy} holds.

For each parallel class $I$ of nonloops, 
each path $\psi_t(S_i\cap D_\upsilon)$ for $i \in I\cap\upsilon^0$ 
converges as $t\to 1$ to the same path in Fréchet distance by Claim \ref{claimEbbSFrechet}. 
Also, each path $\psi_t(S_i\cap D_\sigma \setminus \bigcup \{D_\upsilon : \upsilon < \sigma\})$ 
for $i \in I\cap\sigma^0$
converges to the same path by Claim \ref{claimEbbSEdgeIsotopy}, 
so the pseudocircles $\psi_t(S_i)$ for $i\in I$ converge to a common pseudocircle as $t\to 1$. 
Furthermore, the pseudocircles $\psi_1(S_i)$ for $i \in \upsilon^0$ all intersect at the common point $\zeta_\upsilon^{-1}(0)$
by Claim \ref{claimEbbSFrechet}.  
Also, the top elements of $\csph(\mc{M})$ appear among $\cov(\psi_1*A)$
by part \ref{itemGebbAccFacet}, 
so $\psi_1*A\geq_\mr{w} \mc{M}$,  
and we have just seen that $\psi_1*A$ has the same 
dependencies as $\mc{M}$, 
so $\om(\psi_1*A) = \mc{M}$. 
Also, $\rest(N_1,\psi_1*A) = \Omega$ by part \ref{itemGebbAccOmega}.
Hence, $\psi_t*A \to \psi_1*A \in \psv(\mc{M})\cap\ext(\Omega)$, 
which means part \ref{itemGebbAccOne} holds.

Next we show part \ref{itemGebbAccInner}.
Consider $D_\sigma$ that intersects $\inner(Z,i,0)$ for $i\in N_1$, 
and let $X_\sigma = \inner(Z,i,0) \cap C_\sigma$ 
where $C_\sigma$ is the central column in Definition \ref{defCentralEdgeMap}.  
Then, 
%If $D_\sigma$ intersects $\Omega$, then
the support $N_1$ of $\Omega$ intersects $\sigma^0$, 
so $i_0 \in N_1$ 
since $i_0$ in Subsubsection \ref{subsubsectionStepEdgeCovectors} is the greedy choice from $\sigma^0$, 
so 
\begin{align*}
\psi_\sigma(1,H_\sigma) 
&= S_{i_0}(A) \cap H_\sigma
&& \text{by Claim \ref{claimEbbSEdgeIsotopy} part \ref{itemEbbSEdgeOne}} \\
&= \psi_\sigma(1,S_{i}(A)) \cap H_\sigma
&& \text{by Claim \ref{claimEbbSEdgeIsotopy} part \ref{itemEbbSEdgeIsotopy}} \\
&= S_i(\Omega) \cap H_\sigma
&& \text{by part \ref{itemGebbAccOne} of the lemma.}
\end{align*}
Hence, 
$\psi_1(X_\sigma) = \psi_\sigma(1,X_\sigma) = S_i(\Omega) \cap X_\sigma$ 
since all other compositional factors in the definition of $\psi_t$ are the identity on $X_\sigma \subseteq H_\sigma$. 

Let $X_\upsilon = \inner(Z,i,0)\cap D_\upsilon$, 
and suppose $X_\upsilon$ is nonempty; otherwise $\psi_\upsilon$ is trivial on $X_\upsilon$.  
%and consider $D_\upsilon$ where $X_\upsilon$ is nonempty.  
%Next consider $D_\upsilon$ that intersects $\inner(Z,i,0)$. 
%Next consider $D_\upsilon$ where $X_\upsilon = \inner(Z,i,0)\cap D_\upsilon$ is nonempty, 
Let $i_k$ be the greedy choice from that parallel class of $i$ as in Definition \ref{defEbbStepVertex}. 
%Then, $i_1 \in N_1$ since $i_1$ in Definition \ref{defEbbStepVertex} is the greedy choice from $\upsilon^0$, 
Then, $i_k \in N_1$ 
and $X_\upsilon \subset C_\upsilon\cup H_{\sigma_k}\cup H_{\sigma_{k+K}}$. 
Also, $\psi_{\sigma_k}(1,H_{\sigma_k}) \subseteq S_i(\Omega) \cap H_{\sigma_k}$
as above %case for $D_\sigma$, 
and $\psi_{\sigma_k}(1,C_\upsilon) \subseteq C_\upsilon$ by Claim \ref{claimEbbSVertexNoEscape}, 
and $\psi_{\sigma_k}(1)$ is the identity on $H_{\sigma_{k+K}}$,  
%since $\psi_\sigma$ acts trivially outside $D_\sigma^\circ$    
and analogously for $\sigma_{k+K}$, 
so 
$\psi_1(X_\upsilon) \subseteq \psi_\upsilon(1,C_\upsilon \cup S_i(\Omega))$.  
Also, $\psi_\upsilon(1,S_i(\Omega)) = S_i(\Omega)$ as shown in the proof of part \ref{itemGebbAccOmega}, 
and $\psi_\upsilon(1,C_\upsilon) = \zeta_\upsilon^{-1}(0) \in S_i(\Omega)$, 
so $\psi_1(X_\upsilon) \subseteq S_i(\Omega)$. 
Hence, $\psi_1(\inner(Z,i,0)) \subseteq  S_i(\Omega)$ since the sets of the form $X_\sigma$ and $X_\upsilon$ cover $\inner(Z,i,0)$. 
Also, $\psi_1(\inner(Z,i,0)) \supseteq \psi_1(S_i(\Omega)) = S_i(\Omega)$ by part \ref{itemGebbAccOmega} of the lemma, 
so $\psi_1(\inner(Z,i,0)) = S_i(\Omega)$, which means part \ref{itemGebbAccInner} holds.

We have already shown part \ref{itemGebbStepGreedy} along with part \ref{itemGebbAccOmega}, so let us show part \ref{itemGebbStepInner}. 
Consider $t>0$ and suppose $Z$ is generic. 
If $D_\sigma$ intersects $\inner(Z,i,0)$, 
then $\psi_\sigma(t,H_\sigma) \subset \inner(Z,i,0)^\circ$ 
by Claim \ref{claimEbbSEdgeIsotopy} part \ref{itemEbbSEdgeSubset}, 
so $\psi_t(X_\sigma) \subset \inner(Z,i,0)^\circ$.  
Now consider $D_\upsilon$ where $X_\upsilon$ is nonempty.  
Then, $W = \zeta^{-1}(X_\upsilon)$ consists of the disk $2\disk$ and an opposite pair of cones 
$C_{\upsilon,k}, C_{\upsilon,k+K}$, which are bounded by the diameters $L_{k,+}$ and $L_{k,-}$, 
where $C_{\upsilon,k}$ is the non-negative span of $\zeta^{-1}(H_{\sigma_k})$ 
and $C_{\upsilon,k+K}$ likewise. 
The only compositional factors of $\psi_t$ that are nontrivial on $H_{\sigma_k}\cap D_\upsilon$ 
are $\psi_\upsilon$ and $\psi_{\sigma_k}$, so 
\begin{align*}
\psi_t(H_{\sigma_k}\cap D_\upsilon) 
&= [\psi_\upsilon(t)\psi_{\sigma_k}(t)](H_{\sigma_k}\cap D_\upsilon) \\
&\subseteq \psi_\upsilon(t,\inner(Z,i,0)^\circ\cap D_\upsilon) 
&& \text{by Claim \ref{claimEbbSVertexNoEscape} and the case for $\psi_\sigma$ above} \\
&= [\zeta f(t)\zeta^{-1}](X_\upsilon^\circ) 
&& \text{in the induced topology on $D_\upsilon$} \\
&= [\zeta f(t)](W_\upsilon^\circ) 
&& \text{in the induced topology on $3\disk$} \\
&\subseteq \zeta(W_\upsilon^\circ) 
\subset \inner(Z,i,0)^\circ
\end{align*}
since 
$W_\upsilon$ is star shaped and  
$f(t)$ moves points radially toward 0.
Also, 
$\psi_t(C_\upsilon) \subseteq \psi_\upsilon(t,C_\upsilon)$ by Claim \ref{claimEbbSVertexNoEscape}, 
so $\psi_t(C_\upsilon) \subset C_\upsilon^\circ$
since $\zeta(2\disk) = C_\upsilon$ and $f(t)$ scales points in $2\disk$ by $1-t$. 
Hence, 
$\psi_t(X_\upsilon) \subset \inner(Z,i,0)^\circ$, 
and since $\inner(Z,i,0)$ is covered by sets of the form $X_\sigma$ and $X_\upsilon$, 
we have $\psi_t(\inner(Z,i,0)) \subset \inner(Z,i,0)^\circ$, 
which means part \ref{itemGebbStepInner} holds.

Consider $p\in \wider(Z,i,0)$ 
and a map $\psi_\sigma$. 
In the case where $p \not\in D_\sigma$, we have $\psi_\sigma(t,p)=p$ is unchanging. 
In the case where $i \in \sigma^0$,  
we have $D_\sigma \subset \wider(Z,i,0)$ so $\psi_\sigma$ does not move points out of $\wider(Z,i,0)$ 
since $\psi_\sigma(t)$ preserves $D_\sigma$ and is the identity outside $D_\sigma$. 
In the case where $p \in D_\sigma$ and $i\not\in\sigma^0$, then $Z(\sigma)$ is not a wider $(i,0)$-zone, 
so $p \in Z(\{\upsilon,\sigma\})$ for a vertex covector $\upsilon$ with $i\in\upsilon^0$, 
so $p$ is in a side column $C_{\upsilon,\sigma} \subset \wider(Z,i,0)$, 
so $\psi_\sigma(t,p) \in C_{\upsilon,\sigma} \subset \wider(Z,i,0)$. 
Hence, the maps $\psi_\sigma$ do not move points out of $\wider(Z,i,0)$. 
If $p \in D_\upsilon$, then $i \in \upsilon^0$, so $D_\upsilon \subset \wider(Z,i,0)$, 
so the maps $\psi_\upsilon$ also do not move points out of $\wider(Z,i,0)$.
Hence $\psi_t(\wider(Z,i,0))\subseteq\wider(Z,i,0)$, which means 
part \ref{itemGebbStepWider} holds. 
\end{proof}

\bibliographystyle{plain}
\bibliography{macp}

\end{document}